\documentclass[11pt]{amsart}
\usepackage[margin=1.0in]{geometry} 
\usepackage{esint,amsthm,amsmath,amssymb,amsfonts,graphicx,color,comment,enumerate,psfrag,caption,bbm,bm,hyperref,enumitem,mathrsfs}
\usepackage{mathtools}

\usepackage{tikz,pgf}
\usepackage{pgfplots}
\usetikzlibrary{calc}
\usetikzlibrary{patterns}
\usetikzlibrary{patterns.meta}

\allowdisplaybreaks

\DeclareMathOperator\supp{supp}

\newtheorem{lemma}{Lemma}[section]
\newtheorem{remark}{Remark}[section]
\numberwithin{equation}{section}
\newtheorem{theorem}{Theorem}[section]
\newtheorem{proposition}[theorem]{Proposition}

\newtheorem{corollary}[theorem]{Corollary}  
\newtheorem{conjecture}[theorem]{Conjecture}

\begin{document}
\title[Square function and its applications]{Stein's square function associated with the Bochner-Riesz means on M\'etivier groups and its applications}

\author[J. Singh]{Joydwip Singh} 

\address[J. Singh]{Department of Mathematical Sciences, Indian Institute of Science Education and Research Mohali, Mohali--140306, Punjab, India.}
\email{joydwipsingh@iisermohali.ac.in}

\subjclass[2020]{Primary 43A80, 22E25; Secondary 42B15, 42B25}

\keywords{Stein's square function, Maximal Bochner-Riesz means, Bilinear Bochner-Riesz means, M\'etivier groups, Spectral multipliers}

\begin{abstract}
In this paper, we study the $L^p$-boundedness of Stein's square function $\mathfrak{S}^{\alpha}(\mathcal{L})$ associated with the sub-Laplacian $\mathcal{L}$ on M\'etivier group $G$. A key aspect of our result is that the smoothness condition is expressed in terms of the topological dimension $d$ of the underlying M\'etivier group $G$. Consequently, we also present several applications of the $L^p$-boundedness of $\mathfrak{S}^{\alpha}(\mathcal{L})$.

First, we provide an alternate proof of the sharp $L^p$-boundedness result for spectral multipliers on M\'etivier groups, recently obtained by Niedorf [Niedorf, Studia Math., 2025]. Next we prove $L^p$-boundedness of maximal spectral multipliers and consequently establish sharp $L^p$-boundedness result for the maximal Bochner-Riesz operator on M\'etivier groups, which also yields pointwise almost everywhere convergence of Bochner-Riesz means with smoothness parameter given in terms of the topological dimension of $G$. In case of M\'etivier groups our result improves upon the existing works of Mauceri-Meda [Mauceri, Meda, Rev. Mat. Iberoam., 1990] and Horwich-Martini [Horwich, Martini, J. Lond. Math. Soc., 2021]. Our result further imply the mixed norm regularity estimates for the solution of fractional Schr\"odinger equation on M\'etivier groups, where the regularity index is again expressed in terms of the topological dimension of $G$.

Finally, we study the $L^{p_1}(G) \times L^{p_2}(G)$ to $L^p(G)$ boundedness of the bilinear Bochner-Riesz means and its maximal version, associated with the sub-Laplacian on M\'etivier group $G$. Our result improves upon the recent work of the author with Bagchi and Molla [Bagchi, Molla, Singh, J. Funct. Anal., 2026] in the range $2\leq p_1, p_2 <\infty$. In the same range, we also prove boundedness of the bilinear Bochner-Riesz square functions and maximal bilinear spectral multipliers on M\'etivier groups. In all of these results the smoothness parameter is achieved in terms of the topological dimension of $G$.

\end{abstract}

\maketitle

\tableofcontents

\section{Introduction and Main results}
\subsection{Stein's square function}
\label{Subsection: stein square function on Euclidean}
The convergence of Bochner–Riesz means is one of the most fundamental and extensively studied topics in harmonic analysis and spectral multiplier theory. Broadly speaking, the Bochner–Riesz problem focuses on understanding the convergence and summability properties of Fourier series and Fourier integrals in $L^p$-spaces. For $R>0$, the Bochner-Riesz means of order $\alpha \geq 0$ in $\mathbb{R}^n$ are defined by
\begin{align}
\label{Definition: Euclidean Bochner-Riesz means}
    S^{\alpha}_R f(x) = \int_{\mathbb{R}^n} \left(1-\frac{|\xi|^2}{R^2}\right)_{+}^{\alpha} \widehat{f}(\xi)\, e^{2 \pi i x \cdot \xi} \, d\xi,
\end{align}
where $(r)_{+} = \max\{r, 0\}$ for $r \in \mathbb{R}$, $f \in \mathcal{S}(\mathbb{R}^n)$, the space of all Schwartz class functions in $\mathbb{R}^n$, and $\widehat{f}$ is its Fourier transform. The study of Bochner-Riesz means lies at the heart of harmonic analysis, which connects many different problems in this area including restriction theory, estimates for oscillatory integrals, and the study of dispersive partial differential equations. A fundamental question is to determine the optimal values of the parameter $\alpha \geq 0$, for which the operator $S^{\alpha}_R$ extends to a bounded operator on $L^p$-spaces for $1\leq p \leq \infty$. When $\alpha=0$, a celebrated result of Fefferman \cite{Fefferman_Ball_multiplier_problem_1971} shows that $S_R^{0}$ is bounded on $L^p(\mathbb{R}^n)$ if and only if $p=2$ and $n\geq 2$. Consequently, for $\alpha>0$, the famous Bochner-Riesz conjecture asserts that for $1\leq p\leq \infty $ and $ p\neq 2$, the estimate 
\begin{align*}
    \|S^{\alpha}_R f\|_{L^p(\mathbb{R}^n)} \leq C \|f\|_{L^p(\mathbb{R}^n)} \quad \text{holds if and only if} \quad \alpha> \alpha_n(p) := \max\{n|\tfrac{1}{p}-\tfrac{1}{2}|-\tfrac{1}{2}, 0 \}.
\end{align*}

For $n=2$, this conjecture was settled by Carleson and Sj\"olin \cite{Carleson_Sjolin_Multiplier_Problem_on_Disc_1972}, but for $n\geq 3$ the conjecture still remains open. However, some partial progress has been made for $n\geq 3$, for developments and recent results, see \cite{Fefferman_Strongly_Singular_Convolution_Operator_1970}, \cite{Bourgain_Besicovitch_type_maximal_1991}, \cite{Tao_Vargas_Bilinear_approach_2000}, \cite{Sanghyuk_Lee_Improved_Bochner-Riesz_2004}, \cite{Bourgain_Guth_Oscillatory_Integral_2011}, \cite{Guth_Hickman_Marina_Bochner_Riesz_Latest_2019}, \cite{Wu_Bochner_Riesz_On_R3_case_2023}, \cite{Guo_Oh_Wang_Wu_Zhang_The_Bochner_Riesz_problem_2025} and references therein.

\medskip
The Stein's square function associated with the Bochner-Riesz means is defined by
\begin{align*}
    \mathfrak{S}^{\alpha}f(x) &= \left(\int_0^{\infty} \left|\frac{\partial}{\partial t}S^{\alpha}_t f(x) \right|^2 t\, dt \right)^{1/2} .
\end{align*}
In 1958, Stein \cite{Stein_Summability_Fourier-Series_1958} first introduce this square function in order to study the pointwise converges of Fourier series and studied the $L^2$-boundedness of $\mathfrak{S}^{\alpha}$. Since then, many works have been devoted to determine the sharp smoothness parameter $\alpha$, so that the $L^p$-boundedness of $\mathfrak{S}^{\alpha}$ holds for $1< p < \infty$. When $1<p\leq 2$, using Plancherel theorem, Calder\'on-Zygmund theory of vector valued singular integral and interpolation it was shown that $\mathfrak{S}^{\alpha}$ is bounded on $L^p(\mathbb{R}^n)$ whenever $\alpha>n(\frac{1}{p}-\frac{1}{2})+\frac{1}{2}$, see \cite{Stein_Summability_Fourier-Series_1958, Sunouchi_Littlewood_Paley_1967, Igari_Kuratsubo_Lp_multipliers_1971}. In fact, the condition on $\alpha$ is also known to be necessary in this case, see \cite{Lee_Roger_Seeger_Square_Function_Maximal_Fourier_Mult_2014}. On the other hand, for $p>2$ it was conjectured that
\begin{align*}
    \|\mathfrak{S}^{\alpha}f\|_{L^p(\mathbb{R}^n)} &\leq C \|f\|_{L^p(\mathbb{R}^n)} \quad  \text{holds if and only if} \quad \alpha>\max\{n(\tfrac{1}{2}-\tfrac{1}{p}), \tfrac{1}{2}\} = \alpha_n(p)+\tfrac{1}{2} .
\end{align*}

When $n=2$, the conjecture is known to be true, which is due to the work of Carbery \cite{Carbery_Maximal_Bochner_Riesz_R2_case_1983}. Although the condition on $\alpha$ is known to be necessary \cite{Lee_Roger_Seeger_Square_Function_Maximal_Fourier_Mult_2014}, but for $n \geq 3$, the conjecture is only verified for restrictive ranges. When $n\geq 3$, for $p\geq \frac{2(n+1)}{n-1}$, the conjecture was proved by Christ \cite{Christ_almost_Everywhere_Bochner_Riesz_1985} and independently by Seeger \cite{Seeger_Maximal_Fourier_Mult_1986}. The range of $p$ was further improved by Lee, Rogers and Seeger \cite{Lee_Roger_Seeger_Improved_Stein_Square_Function_2012} and Lee \cite{Lee_Square_Function_to_Bochner_Riesz_2018}. The best known result till now was recently obtained in \cite{Gan_Jing_Wu_Stein_square_on_R3_case} for $n=3$ and in \cite{Gan_Oh_Wu_New_Bound_higher_dimension_2025} for higher dimensions. In fact, they proved the following result.

\begin{theorem}\cite[Theorem 1.1]{Gan_Oh_Wu_New_Bound_higher_dimension_2025}
\label{Theorem: Boundedness of Euclidean square function}
Define
\begin{align*}
    p_n &= \min_{2\leq k\leq n-1} \max\{2+\frac{4}{2n-k}, 2+\frac{6}{2(n-1)+(k-1)\prod_{i=k}^{n-1} \frac{2i}{2i+1}} \}.
\end{align*}
Let $n \geq 3$ and $p\geq p_n$. Then whenever $\alpha>n(\frac{1}{2}-\frac{1}{p})$ we have
\begin{align*}
    \|\mathfrak{S}^{\alpha}f\|_{L^p(\mathbb{R}^n)} &\leq C \|f\|_{L^p(\mathbb{R}^n)} .
\end{align*}
\end{theorem}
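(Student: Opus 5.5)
This statement is quoted from Gan--Oh--Wu, so here I only outline the route one would take to reprove it. The overall strategy is the standard reduction of Stein's square function to a restriction/Kakeya-type problem, followed by polynomial partitioning and broad--narrow analysis.

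\textbf{Dyadic decomposition.} Write
\[
\partial_t S^\alpha_t f = \frac{2\alpha}{t}\Bigl(\frac{|\xi|^2}{t^2}\Bigr)\Bigl(1-\frac{|\xi|^2}{t^2}\Bigr)_+^{\alpha-1}\widehat f
\]
and decompose $(1-s)_+^{\alpha-1}$ dyadically near $s=1$ into pieces supported where $1-s\sim\delta=2^{-j}$. This reduces $\mathfrak{S}^\alpha$ to square functions
\[
\mathfrak{S}_\delta f=\Bigl(\int_1^2|m_\delta(D/t)f|^2\,dt\Bigr)^{1/2},
\]
where $m_\delta$ is a smooth bump adapted to a $\delta$-neighbourhood of the sphere. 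Littlewood--Paley theory removes the dyadic scale in $t$. It then suffices to prove, for every $\epsilon>0$ and $p\ge p_n$,
\[
\|\mathfrak{S}_\delta f\|_p\lesssim_\epsilon \delta^{-n(\frac12-\frac1p)+\frac12-\epsilon}\|f\|_p .
\]
Summing over $\delta$ against the weight $\delta^{\alpha-1/2}$ then gives the claimed range $\alpha>n(\tfrac12-\tfrac1p)$.

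\textbf{Reduction to a local square function estimate.} By the Lee--Rogers--Seeger reduction, the bound for $\mathfrak{S}_\delta$ on $L^p$ is equivalent, up to $\delta^{-\epsilon}$ losses, to a local smoothing / square function estimate. The model is the cone estimate
\[
\Bigl\|\Bigl(\sum_\theta |f_\theta|^2\Bigr)^{1/2}\Bigr\|_{L^p(\mathbb{R}^{n+1})}\text{-type}
\]
for $e^{it\sqrt{-\Delta}}f$ restricted to $t\in[1,2]$, with sectors $\theta$ of aperture $\delta^{1/2}$.

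\textbf{Main step: the square function estimate.} This is the obstacle, and here I would follow Guth--Hickman--Iliopoulou. First, apply Guth's polynomial partitioning iteratively, obtaining a cellular part and an algebraic part at each step. The algebraic part is tangent to varieties of dimension $k$. For it one uses a $k$-broad estimate together with a refined Kakeya/nested polynomial Wolff estimate along the varieties. That estimate produces the product $\prod_{i=k}^{n-1}\frac{2i}{2i+1}$, which comes from the multiscale polynomial Wolff axiom gains of Hickman--Rogers--Zhang. The narrow part, where the frequency lies near a $(k-1)$-plane, is handled by induction on scales using a lower-dimensional decoupling of Bourgain--Demeter type. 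That decoupling carries the constraint $p\ge 2+\frac{4}{2n-k}$.

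Balancing the broad and narrow thresholds and optimizing over $k$ gives $p_n$. Finally, one must check that the estimate is compatible with the $t$-square function rather than a single time. This is done by treating $t$ as an extra variable and using the bilinear-to-square-function device of Gan--Oh--Wu to pass from a maximal/broad estimate to an $\ell^2$ sum. I expect this last compatibility step, i.e.\ controlling the square function rather than $\sup_t$, to be the main technical difficulty.
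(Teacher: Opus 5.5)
The paper does not prove this statement. It is quoted from Gan--Oh--Wu only as Euclidean background; the paper's own square-function theorem is proved only in the Stein--Tomas range, by a Christ--Seeger-type weighted $L^2$ argument. Your outline therefore has to stand on its own.

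\textbf{What is fine.} The preliminary steps are standard:
the dyadic decomposition of $(1-|\xi|^2/t^2)_+^{\alpha-1}$;
the passage from the global to the local square function;
and the Plancherel-in-time identification of the local $\mathfrak{S}_\delta$ with an $L^p_x(L^2_s)$ bound for $e^{is\sqrt{-\Delta}}$ on $|s|\lesssim\delta^{-1}$.
By contrast, a Mockenhaupt-type cone square function estimate $\|(\sum_\theta|f_\theta|^2)^{1/2}\|_p$ is a different and much stronger statement, open for $n\ge 3$, so it cannot serve as your model. Your numerology is right up to attribution. The term $2+\frac{4}{2n-k}$ is not a constraint of the decoupling theorem itself. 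It is the threshold at which the narrow-part induction closes: the loss $K^{(k-2)(\frac12-\frac1p)}$ from $(k-1)$-dimensional decoupling plus H\"older must be beaten by the rescaling gain $K^{-(n-1)+\frac{2n}{p}}$. The second term is the Hickman--Rogers $k$-broad exponent coming from the polynomial Wolff axioms, not from Hickman--Rogers--Zhang.

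\textbf{The gap.} Both inputs are theorems about a single extension operator $Eg$ with data measured by $\|g\|_2^{2/p}\|g\|_\infty^{1-2/p}$. The polynomial Wolff gain genuinely needs the $L^\infty$ part: it bounds the measure of the set of directions of wave packets tangent to a variety, and this controls the tangential $L^2$ mass only through $\|g\|_\infty$. What you must prove is an $L^p(\mathbb{R}^n)\to L^p(\mathbb{R}^n;L^2(dt))$ bound for a one-parameter family of multipliers with physical-space $L^p$ data. That requires two things your sketch does not supply:
\begin{enumerate}
\item[(i)] a $k$-broad estimate incorporating the polynomial Wolff axioms for the vector-valued quantity $(\int_1^2|m_\delta(D/t)f|^2\,dt)^{1/2}$, together with a narrow-part induction that closes for this square function;
\item[(ii)] a mechanism converting $L^p$ control of $f$ into the $L^2/L^\infty$ wave-packet control that tube counting needs.
\end{enumerate}
This is exactly the obstruction Lee identified: the square sum prevents a direct use of restriction or oscillatory-integral estimates. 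The $L^2$-based multilinear estimates with localization that he used to get around it reach only the Bourgain--Guth-type exponent $p_0(n)$, not $p_n$. Your last step hands both (i) and (ii) to ``the device of Gan--Oh--Wu'', i.e.\ to the theorem being proved. The argument is therefore circular precisely where the new content lies, and nothing in the sketch shows that the exponent $p_n$ survives the passage from restriction estimates to the square function.
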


\medskip
There are various applications of the $L^p$-boundedness of Stein's square function. For example, sharp $L^p$-estimates of $\mathfrak{S}^{\alpha}$ gives the sharp $L^p$-bound for maximal Bochner-Riesz means (see \cite{Lee_Square_Function_to_Bochner_Riesz_2018}), which again implies the pointwise convergence and $L^p$-boundedness of the Bochner-Riesz means. One can also deduce the sharp version of H\"ormander-Mikhlin multiplier type theorem for general radial Fourier multipliers \cite{Carbery_Gasper_Trebels_Radial_Fourier_Mult_1984} and $L^p$-boundedness of maximal radial Fourier multipliers \cite{Carbery_Maximal_Radial_Fourier_Mult_1985}. $L^p$-boundedness of $\mathfrak{S}^{\alpha}$ has also close connection in obtaining regularity estimates of the solution of Schr\"odinger and wave equation as well as spherical means \cite{Lee_Roger_Seeger_Improved_Stein_Square_Function_2012}. On the other hand, note that apart from the Euclidean setup, the $L^p$-boundedness of Stein's square function has been also studied on the space of homogeneous type, see \cite{Chen_Duong_Yan_Stein_Square_function_Homogeneous_2013}.

\medskip
In this paper, our aim is to study the $L^p$-boundedness of square functions associated with the sub-Laplacian on M\'etivier group $G$ and present several applications of this, where the smoothness parameter is expressed in terms of the topological dimension of the underlying M\'etivier group. Note that, M\'etivier groups are the generalization of Heisenberg or Heisenberg-type groups (see \cite{Niedorf_Metivier_group_2023}), which are two-step stratified Lie groups. On such groups there are notion of homogeneous and topological dimensions, and in general topological dimension is strictly smaller than the homogeneous dimension. M\"uller-Stein \cite{Muller_Stein_Spectral_Multiplier_Heisenberg_Related_groups_1994} and independently Hebisch \cite{Hebisch_Spectral_Multiplier_Heisenberg_1993} first studied the $L^p$-boundedness of spectral multipliers on Heisenberg-type groups with smoothness parameter are expressed in terms of the topological dimension of the group. This results are turn out to be sharp. Since the seminal work of M\"uller-Stein \cite{Muller_Stein_Spectral_Multiplier_Heisenberg_Related_groups_1994} and Hebisch \cite{Hebisch_Spectral_Multiplier_Heisenberg_1993}, spectral multiplier results with smoothness parameter described in terms of the topological dimension have attracted considerable attention and a plenty of research has been carried out in this direction, see for example \cite{Cowling_Klima_Sikora_Kohn_Laplacian_On_Sphere_2011}, \cite{Martini_Lie_groups_Polynomial_Growth_2012}, \cite{Martini_Muller_New_Class_Two_Step_Stratified_Groups_2014}, \cite{Gorges_Muller_Pointwise_Heisenberg_2002}, \cite{Bagchi_Molla_Singh_Bilinear_Metivier}, \cite{Bagchi_Molla_Singh_Bilinear_Bochner_Riesz_Grushin}, \cite{Molla_Singh_Bochner_Riesz_Commutators_Grushin} and references therein. Motivated by this, our main objective in this paper is to obtain boundedness of the Stein's square function on M\'etivier groups and give various applications, where the smoothness parameter is given in terms of the topological dimension of the underlying M\'etivier groups. Some of the results are also turn out to be sharp. In the below we summarize our key findings in this paper, which are listed in order as follows.
\begin{enumerate}
    \item \textbf{Stein's square function on M\'etivier groups:} We prove $L^p$-boundedness of the Stein's square function associated with the Bochner-Riesz means on M\'etivier groups. For $p \geq 2$, on the Stein-Tomas restriction range ($p \geq \mathfrak{p}_G$), our result (Theorem \ref{Theorem: Stein square estimate}) can be seen as an analogue of the Euclidean result of Christ \cite[Lemma 1]{Christ_almost_Everywhere_Bochner_Riesz_1985} and Seeger \cite{Seeger_Maximal_Fourier_Mult_1986}, while for $1<p<2$, our result (Theorem \ref{Theorem: Stein square function for p less that 2 case}) is an analogue of the result of Sunouchi \cite[Theorem 2]{Sunouchi_Littlewood_Paley_1967}, where the Euclidean dimension $n$ in the smoothness parameter is replaced by the topological dimension of the underlying M\'etivier group. 
    \medskip
    \item \textbf{Sharp spectral multipliers on M\'etivier groups:} As an applications of $L^p$-boundedness of the Stein's square function on M\'etivier groups, we prove a sharp \emph{$p$-specific spectral multiplier theorem} on M\'etivier groups (Theorem \ref{Theorem: Analogue of Niedorf theorem}), which also provides a different proof of the recent result of Niedorf \cite{Niedorf_Metivier_group_2023}. As a corollary we also obtain the sharp $L^p$-boundedness of Bochner-Riesz means on M\'etivier groups (Corollary \ref{Corollary: Lp boundedness of Bochner-Riesz on mativier}).
    \medskip
    \item \textbf{Maximal spectral multipliers and sharp maximal Bochner-Riesz mean on M\'etivier groups:} We show that maximal spectral multipliers can be pointwise dominated by Stein's square function on M\'etivier groups. This can be seen as an analogue of the Euclidean result by Carbery \cite{Carbery_Maximal_Radial_Fourier_Mult_1985}. Therefore as an application of the $L^p$-boundedness of Stein's square function on M\'etivier groups we obtain $L^p$-boundedness of the maximal spectral multipliers (Theorem \ref{Theorem: Maximal spectral multiplier}). In case of M\'etivier groups our result improves upon the existing result of Mauceri and Meda \cite[Theorem 2.6]{Mauceri_Meda_Multipliers_Stratified_groups_1990}. As a corollary we also obtain the $L^p$-boundedness of maximal Bochner-Riesz mean on M\'etivier groups. For $p \geq 2$, on the Stein-Tomas range ($p \geq \mathfrak{p}_G$), our result turn out to be sharp (Corollary \ref{Corollary: Boundedness of linear maximal Bochner-Riesz}), which can be seen as M\'etivier analogue of the Euclidean results of \cite{Christ_almost_Everywhere_Bochner_Riesz_1985, Seeger_Maximal_Fourier_Mult_1986}. Our result also improves the corresponding result of Horwich and Martini \cite[Theorem 1.2]{Horwich_Martini_Pointwise_Heisenberg_type_2021} in case of M\'etivier groups (see Figure \ref{Figure: Linear pointwise convergence}). As a byproduct we also obtain the pointwise almost everywhere convergence of the Bochner-Riesz means on M\'etivier groups with smoothness parameter expressed in terms of the topological dimension of the M\'etivier group (Corollary \ref{Corollary: Pointwise convergence of maximal Bochner_Riesz}). This result improves the result of \cite[Theorem 1.1]{Horwich_Martini_Pointwise_Heisenberg_type_2021} regarding pointwise convergence on Heisenberg-type groups for certain ranges of $p$ in two ways: first our result applies to M\'etivier gorups, strictly larger than the class of Heisenberg-type groups, and secondly for large values of $p$, range of the smoothness parameter is also improved (see Figure \ref{Figure: Linear pointwise convergence}).
    \medskip
    \item \textbf{Regularity estimates for the solution of fractional Schr\"odinger equation on M\'etivier groups:} On $\mathbb{R}^n$, the $L^p(\mathbb{R}^n; L^2(I))$ mixed norm regularity estimate for the solution of fractional Schr\"odinger equation was obtained by Lee, Roger and Seeger \cite[Proposition 5.1]{Lee_Roger_Seeger_Improved_Stein_Square_Function_2012}. In case of M\'etivier group, we have proved analogous result for the fractional Schr\"odinger equation associated with the sub-Laplacian on M\'etivier groups (Theorem \ref{Theorem: Lp to L2 local smoothing for Metivier}), again smoothness parameter described in terms of the topological dimension of the underlying group. To the best of authors knowledge, this type of estimates has not been considered before in case of M\'etivier or Heisenberg-type groups.
    \medskip
    \item \textbf{Bilinear Bochner-Riesz means and its maximal version on M\'etivier groups:} One of the other main contribution of this paper is that, we prove improved estimate for the bilinear Bochner-Riesz means on M\'etivier group (Corollary \ref{Corollary: Bochner-Riesz main theorem Metivier}) for $2 \leq p_1, p_2 < \infty$, over the recent result of Bagchi, Molla and the author \cite[Theorem 1.2]{Bagchi_Molla_Singh_Bilinear_Metivier}. We also show that boundedness of the maximal bilinear Bochner-Riesz mean (Theorem \ref{Theorem: Bilinear maximal on Metivier}) hold on the same range and with the same smoothness threshold as of the boundedness of bilinear Bochner-Riesz means on M\'etivier groups (Corollary \ref{Corollary: Bochner-Riesz main theorem Metivier}). Both of our results can be seen as an analogue of the Euclidean result by Jeong, Lee and Vargas \cite[Corollary 1.3]{Jeong_Lee_Vargas_Bilinear_Bochner_Riesz_2018} and Jotsaroop-Shrivastava \cite[Theorem 2.1]{Jotsaroop_Shrivastava_Maximal_Bochner_Riesz_2022} respectively in the context of M\'etivier groups.
    \medskip
    \item \textbf{Bilinear Bochner-Riesz square function on M\'etivier groups:} As an application of the $L^p$-boundedness of Stein's square function, we prove boundedness of the bilinear Bochner-Riesz square functions on M\'etivier groups (Theorem \ref{Theorem: Bilinear Stein square function estimate}), analogue of the Euclidean result \cite[Theorem 2.2]{Choudhary_Jotsaroop_Shrivastava_Shuin_Bilinear_square_function}. Again here also in the smoothness parameter, the Euclidean dimension is replaced by the topological dimension of the underlying M\'etivier groups.
    \medskip
    \item \textbf{Maximal bilinear spectral multipliers on M\'etivier groups:} Finally, we prove boundedness of a maximal bilinear spectral multiplier on M\'etvier groups (Theorem \ref{Theorem: Bilinear maximal spectral multiplier}), with smoothness parameter of the corresponding multiplier is expressed in terms of the topological dimension of the underlying groups.
\end{enumerate}

\medskip
Let us start our discussion with Stein's square function associated with the sub-Laplacian on M\'etivier groups. First we briefly recall the definition of M\'etivier groups and spectral theory of the sub-Laplacian, then we move to the corresponding Stein's square function defined on it.

\subsection{Stein's square function on M\'etivier groups}
\label{Subsection: Stein's square function on Metivier}
Let $G$ denote the connected, simply connected nilpotent Lie group with associated Lie algebra $\mathfrak{g}$, which admits the decomposition $\mathfrak{g} = \mathfrak{g}_1 \oplus \mathfrak{g}_2$ such that $[\mathfrak{g}, \mathfrak{g}_1] = \mathfrak{g}_2$ and $[\mathfrak{g}, \mathfrak{g}_2] = \{0\}$. $G$ is also called the two-step stratified Lie group. Let $\{X_1, \ldots, X_{d_1}\}$ and $\{T_1, \ldots, T_{d_2}\}$ be a basis of $\mathfrak{g}_1$ and $\mathfrak{g}_2$ respectively, where $d_i = \dim \mathfrak{g}_i$ for $i=1,2$. Denote $d:=d_1+d_2$ and $Q=d_1+2d_2$ to be the topological and homogeneous dimension of $G$ respectively. There exists an inner product $\langle \cdot, \cdot \rangle$ on $\mathfrak{g}$ such that the basis $\{X_1, \ldots, X_{d_1}, T_1, \ldots, T_{d_2}\}$ become an orthonormal basis on $\mathfrak{g}$. Also let $|\cdot|$ denote the norm on $\mathfrak{g}_2^*$, the dual of $\mathfrak{g}_2$, induced from the inner product $\langle \cdot, \cdot \rangle$. For every $\lambda \in \mathfrak{g}_2^*$, the map $\omega_{\lambda}: (x_1, x_2) \to \lambda([x_1, x_2])$ defines a skew symmetric bilinear form on $\mathfrak{g}_1 \times \mathfrak{g}_1$ and hence there exists a skew symmetric endomorphism $J_{\lambda}$ on $\mathfrak{g}_1$ such that
\begin{align*}
    \omega_{\lambda}(x_1, x_2) &= \langle J_{\lambda} x_1, x_2 \rangle , \quad \quad \text{for} \quad x_1, x_2 \in \mathfrak{g}_1 .
\end{align*}
We call $G$ to be a M\'etivier group if $J_{\lambda}$ is invertible for all $\lambda \in \mathfrak{g}_2^* \setminus \{0\}$, which is again equivalent to say that the bilinear form $\omega_{\lambda}$ is non-degenerate for all $\lambda \in \mathfrak{g}_2^* \setminus \{0\}$. The group $G$ is said to be the Heisenberg-type group if $J_{\lambda}^2 = -|\lambda|^2 \, \text{id}_{\mathfrak{g}_1}$. Note that class of Heisenberg-type groups is contained in the class of M\'etivier groups, in fact the containment is strict, see \cite{Muller_Seeger_Singular_Spherical_maximal_operator_Nilpotent_2004}. In this paper, we will always assume $G$ to be the M\'etivier group unless or otherwise specified.

\medskip
On M\'etivier groups the sub-Laplacian $\mathcal{L}$ is defined to be the negative of the sum of squares of the basis vectors $\{X_1, \ldots, X_{d_1}\}$ of $\mathfrak{g}_1$, that is
\begin{align*}
    \mathcal{L} &= -(X_1^2 + \cdots + X_{d_1}^2) .
\end{align*}
The sub-Laplacian $\mathcal{L}$ become positive and essentially self-adjoint operator on $L^2(G)$ on the domain dom\,$ \mathcal{L}= \{f \in L^2(G) : \mathcal{L}f \in L^2(G)\}$, where $G$ is endowed with a left-invariant Haar measure. Therefore using spectral theorem one can define the functional calculus for $\mathcal{L}$, that is for any bounded Borel function $F : \mathbb{R} \to \mathbb{C}$ we have
\begin{align}
\label{Defintion: Spectral resolution for subLaplacian}
    F(\mathcal{L}) &= \int_0^{\infty} F(\eta) \, dE(\eta) , \quad \text{on} \quad L^2(G) ,
\end{align}
where $dE(\eta)$ denotes the spectral measure of $\mathcal{L}$. 

The spectral decomposition of $\mathcal{L}$ is known explicitly (see \cite[Theorem 3.10]{Niedorf_Restriction_Stratified_group_2025}). Let $\Lambda \in \mathbb{N}$, $\mathbf{b}=(b_1, \ldots, b_{\Lambda}) \in (0, \infty)^\Lambda$, $\mathbf{r}=(r_1, \ldots, r_\Lambda) \in \mathbb{N}^\Lambda$, $\mathbf{k}=(k_1, \ldots, k_\Lambda) \in \mathbb{N}_0^\Lambda$. We define the $(\mathbf{b}, \mathbf{r})$-rescaled Laguerre functions $\varphi_{\mathbf{k}}^{\mathbf{b}, \mathbf{r}}$ by setting
\begin{align}
\label{Definition of rescaled Laguerre functions}
    \varphi_{\mathbf{k}}^{\mathbf{b}, \mathbf{r}} &= \varphi_{k_1}^{(b_1, r_1)} \otimes \cdots \otimes \varphi_{k_\Lambda}^{(b_\Lambda, r_\Lambda)},
\end{align}
where $\varphi_{k}^{(\mu, m)}(z) = \mu^m L^{m-1}_k(\tfrac{1}{2}\mu |z|^2) e^{-\frac{1}{2}\mu |z|^2}$ for $z \in \mathbb{R}^{2m}$, $\mu>0$ is the $\mu$-rescaled Laguerre function and $L^{m-1}_k$ denotes the $k$-th Laguerre polynomial of type $m-1$. 

For $f, g \in \mathcal{S}(\mathfrak{g}_1)$, let us define the $\lambda$-twisted convolution of $f$ and $g$ by
\begin{align}
\label{definition: twisted convolution}
    f \times_{\lambda} g(x) &= \int_{\mathfrak{g}_1} f(x') g(x-x') e^{\frac{i}{2} \omega_{\lambda}(x, x')} \, dx' , \quad \quad x \in \mathfrak{g}_1 .
\end{align}
For any bounded Borel function $F : \mathbb{R} \to \mathbb{C}$, the spectral multiplier associated to the sub-Laplacian $\mathcal{L}$ is given by
\begin{align}
\label{Definition: general spectral multipler on Metivier}
    F(\mathcal{L})f(x,u) &= \frac{1}{(2\pi)^{d_2}} \int_{\mathfrak{g}_{2,r}^{*}} \sum_{\mathbf{k} \in \mathbb{N}^\Lambda} F(\eta_{\mathbf{k}}^{\lambda}) \left[f^{\lambda} \times_{\lambda} \varphi_{\mathbf{k}}^{\mathbf{b}^{\lambda}, \mathbf{r}}(R_{\lambda}^{-1}\cdot) \right](x) \, e^{i \langle \lambda, u \rangle} \, d\lambda ,
\end{align}
where $\eta_{\mathbf{k}}^{\lambda} := \eta_{\mathbf{k}}^{\mathbf{b}^{\lambda}, \mathbf{r}} = \sum_{n=1}^{\Lambda} (2 k_n + r_n) b_n^{\lambda}$, for $\lambda \in \mathfrak{g}_{2}^{*}$ the function $f^{\lambda}$ denotes the Fourier transform of $f$ along $\mathfrak{g}_2$, that is, $f^{\lambda}(x) = \int_{\mathfrak{g}_2} f(x,u)\, e^{- i \langle \lambda, u \rangle} \, du$, the set $\mathfrak{g}_{2,r}^{*}$ is the Zariski open subset of $\mathfrak{g}_{2}^{*}$, the functions $\lambda \to b_n^{\lambda}$ are homogeneous of degree $1$ for $n=1, \ldots, \Lambda$ and the functions $\lambda \to R_{\lambda}$ are homogeneous of degree $0$ (see \cite[p. 5]{Bagchi_Molla_Singh_Bilinear_Metivier}).

\medskip
For $\alpha \geq 0$ and $R>0$, if we take $F(\eta) = (1-\frac{\eta}{R^2})_{+}^{\alpha}$ in \eqref{Definition: general spectral multipler on Metivier}, then the Bochner-Riesz means associated with the sub-Laplacian $\mathcal{L}$ on the M\'etivier groups is defined by
\begin{align}
\label{Definition: Bochner-Riesz means}
    S_R^{\alpha}(\mathcal{L})f(x,u) &= \frac{1}{(2\pi)^{d_2}} \int_{\mathfrak{g}_{2,r}^{*}} \sum_{\mathbf{k} \in \mathbb{N}^\Lambda} \left(1-\frac{\eta_{\mathbf{k}}^{\lambda}}{R^2} \right)_{+}^{\alpha} \left[f^{\lambda} \times_{\lambda} \varphi_{\mathbf{k}}^{\mathbf{b}^{\lambda}, \mathbf{r}}(R_{\lambda}^{-1}\cdot) \right](x) \ e^{i \langle \lambda, u \rangle} \, d\lambda .
\end{align}

Consequently, Stein's square function for the Bochner-Riesz means associated with the sub-Laplacian $\mathcal{L}$ on M\'etivier group is defined by
\begin{align}
\label{Stein square function for subLaplacian}
    \mathfrak{S}^{\alpha}(\mathcal{L})f(x, u) &= \left( \int_0^{\infty} \left| \frac{\partial}{\partial t} S^{\alpha}_t(\mathcal{L})f(x, u) \right|^2 t \, dt \right)^{1/2} .
\end{align}
We often call the Stein's square function for the Bochner-Riesz means associated with the sub-Laplacian $\mathcal{L}$ on M\'etivier group simply as the square function or Stein's square function $\mathfrak{S}^{\alpha}(\mathcal{L})$. Here we are interested in the $L^p$-boundedness of $\mathfrak{S}^{\alpha}(\mathcal{L})$ for $1<p<\infty$, where the range of the smoothness parameter $\alpha$ is possibly expressed in terms of the topological dimension $d$ of the M\'etivier groups. Analogous to the Euclidean results (see \cite{Sunouchi_Littlewood_Paley_1967, Christ_almost_Everywhere_Bochner_Riesz_1985, Seeger_Maximal_Fourier_Mult_1986}) as discussed above, regarding the $L^p$-boundedness of $\mathfrak{S}^{\alpha}(\mathcal{L})$, here also we get two different ranges of the smoothness parameter $\alpha$, depending on whether $p$ is bigger than or less than $2$. Before presenting our results, we first define the relevant exponents that will be used throughout the paper.

\medskip
For $m \in \mathbb{N}$ and $p \in [1, \infty]$, set 
\begin{align*}
    \alpha_m(p) := \max\{m|\tfrac{1}{2}-\tfrac{1}{p}|-\tfrac{1}{2}, \tfrac{1}{2}\} .
\end{align*}
Recall that the topological dimension of $G$ is denoted by $d=d_1+d_2$. Let us set $p_{d_2} := \frac{2(d_2+1)}{(d_2-1)}$, $P_{d_1, d_2} = p_{d_2}$ if $(d_1, d_2) \notin \{(4,3), (8,6), (8,7)\}$, $P_{4,3}=6$, $P_{8,6}=17/5$ and $P_{8,7}=14/3$. Further, we define
\begin{align}
\label{Definition of mathfrak pG}
    \mathfrak{p}_G := \left\{\begin{array}{ll}
        P_{d_1, d_2} & \quad \text{if} \quad G \ = \ \text{M\'etivier groups} \\
        p_{d_2} & \quad \text{if} \quad G \ = \ \text{Heisenberg-type groups} .
    \end{array} \right.
\end{align}
We call this $\mathfrak{p}_G$ to be the Stein-Tomas exponent on M\'etivier or Heisenberg-type groups.

\medskip
The following are our first main results of this paper.

\begin{theorem}
\label{Theorem: Stein square estimate}
Let $\mathfrak{p}_G \leq p < \infty$ or $p=2$. Whenever $\alpha> \max\{d(\frac{1}{2}-\frac{1}{p}), \frac{1}{2}\} = \alpha_d(p) +\frac{1}{2}$, the Stein's square function $\mathfrak{S}^{\alpha}(\mathcal{L})$ is bounded on $L^p(G)$, that is there exists a constant $C>0$ such that
\begin{align}
\label{Boundedness inequality for Stein square function}
    \|\mathfrak{S}^{\alpha}(\mathcal{L})f\|_{L^p(G)} &\leq C \|f\|_{L^p(G)} .
\end{align}
    
\end{theorem}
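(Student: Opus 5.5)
The case $p=2$ follows from the spectral theorem. Write $\frac{\partial}{\partial t}S^\alpha_t(\mathcal{L}) = \frac{2\alpha}{t}\,\psi(\mathcal{L}/t^2)$ with $\psi(\eta)=\eta(1-\eta)_+^{\alpha-1}$. Then
\[
\|\mathfrak{S}^\alpha(\mathcal{L})f\|_2^2=\int_0^\infty \langle |\psi|^2(\mathcal{L}/t^2)f,f\rangle \tfrac{4\alpha^2\,dt}{t}.
\]
The scale-invariant integral $\int_0^\infty|\psi(\eta/t^2)|^2\,dt/t$ is a finite constant as soon as $\alpha>1/2$, since $\int_0^1 (1-s)^{2\alpha-2}\,ds<\infty$.

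For $p\ge\mathfrak{p}_G$, I follow the Christ–Seeger scheme. First I decompose the multiplier dyadically near the singularity. Choose a smooth partition $(1-\eta)_+^{\alpha-1}=\sum_{j\ge0}2^{-j(\alpha-1)}\phi_j(2^j(1-\eta))$, where $\phi_j$ is supported in $[1/2,2]$ for $j\ge1$ and $\phi_0$ is supported away from $\eta=1$. This gives
\[
\mathfrak{S}^\alpha(\mathcal{L})f\le \sum_j 2^{-j(\alpha-1)}\Big(\int_0^\infty |\psi_j(\mathcal{L}/t^2)f|^2\,\tfrac{dt}{t}\Big)^{1/2}=:\sum_j 2^{-j(\alpha-1)}\mathfrak{S}_j f,
\]
with $\psi_j(\eta)=\eta\,\phi_j(2^j(1-\eta))$, which is supported in a $2^{-j}$-neighbourhood of $1$. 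The $j=0$ term is a smooth Littlewood–Paley type square function and is bounded on every $L^p$ by standard vector-valued Calderón–Zygmund theory on the homogeneous group. It then suffices to prove
\[
\|\mathfrak{S}_jf\|_{L^p}\lesssim 2^{j(d(1/2-1/p)-1/2)+\epsilon j}\|f\|_p .
\]
Summing, using $2^{-j(\alpha-1)}\cdot 2^{j(d(1/2-1/p)-1/2)}=2^{-j(\alpha-d(1/2-1/p)-1/2)}$, gives convergence for $\alpha>d(\tfrac12-\tfrac1p)+\tfrac12$. That looks off by $1/2$ from the claimed threshold. The fix is that the $t$-integral gains an extra $2^{-j/2}$ at $L^2$: in $\int|\psi_j(\eta/t^2)|^2\,dt/t$, the $t$-support of $\psi_j(\eta/t^2)$ has relative length $2^{-j}$. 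So the target bound is
\[
\|\mathfrak{S}_jf\|_{L^p}\lesssim 2^{j(d(1/2-1/p)-1)+\epsilon j}\|f\|_p ,
\]
which at $p=2$ reads $2^{-j/2}$, consistent with the first step.

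To prove the target bound, I localize the kernel. By finite propagation speed for $\cos(t\sqrt{\mathcal{L}})$, split $\psi_j(\mathcal{L}/t^2)=A_{j,t}+B_{j,t}$, where $A_{j,t}$ has kernel supported in the Carnot–Carathéodory ball of radius $2^{j(1+\epsilon)}/t$ and $B_{j,t}$ has rapidly decaying norm $O(2^{-jN})$. Let $\{B\}$ be a cover of $G$ by balls of radius $2^{j(1+\epsilon)}/t$; the scale-invariance in $t$ lets me treat each dyadic $t$-range separately. On each ball, Hölder gives
\[
\|\cdot\|_{L^p(B;L^2(dt/t))}\le |B|^{1/2-1/p}\,\|\cdot\|_{L^2(\tilde B;L^2(dt/t))}.
\]
Homogeneous-volume counting here would lose $2^{jQ(1/2-1/p)}$. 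To get $d$ instead, I use the restriction-type estimate of Niedorf for $p\ge\mathfrak{p}_G$ (the Stein–Tomas range defining $\mathfrak{p}_G$), namely
\[
\|F(\mathcal{L})\|_{L^{p'}\to L^2}\lesssim \|F\|_{L^2}^{\theta}\cdots
\]
for $F$ supported in $[1/2,2]$. I also use the truncation along the centre: decompose in $|\lambda|$ and, following Niedorf, exploit that the kernel is effectively supported in a set of measure comparable to $R^{d_1}\cdot R^{d_2}$ rather than $R^{d_1}\cdot R^{2d_2}$ at scale $R=2^j$. This is the mechanism behind the topological-dimension results of Müller–Stein, Hebisch and Niedorf. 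Dualizing the restriction estimate into $L^2\to L^p$ and combining with the local $L^2$ bound on $\tilde B$ yields the $d$-exponent, and summing over balls by disjointness gives the target bound.

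The main obstacle is this last step. The restriction estimate has to be made vector-valued in $t$, i.e. uniform over the family $\psi_j(\cdot/t^2)$ in an $L^2(dt/t)$ sense. It also has to be paired with the anisotropic (central-variable) kernel localization so that the volume factor is $2^{jd}$ rather than $2^{jQ}$. I would first handle $t$ in a single dyadic range via the Niedorf $L^{p'}\to L^2$ restriction bound with weighted Plancherel in the central variable, and then use the $dt/t$ scale invariance to sum ranges with Littlewood–Paley orthogonality. This requires verifying that the weighted Plancherel estimate is uniform in the mixed $L^p(L^2)$ norm.
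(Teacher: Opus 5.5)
Your reduction matches the paper's: the $L^2$ case, the dyadic splitting near $\eta=t^2$, and the target $\|\mathfrak{S}_jf\|_{L^p}\lesssim 2^{j(d(\frac12-\frac1p)-1)+\epsilon j}\|f\|_{L^p}$ for $p\ge\mathfrak{p}_G$, which is Proposition~\ref{Proposition: Square function estimate} with $\delta=2^{-j}$. (That exponent is meant only for $p\ge\mathfrak{p}_G$; at $p=2$ it gives $2^{-j}$, not the correct $2^{-j/2}$.) But there is a genuine gap. That bound is the whole content of the theorem, you leave it as ``the main obstacle'', and the mechanism you sketch for it is not the one that works.

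The gain is not a localization in the central variable. For central frequencies $|\lambda|\sim2^{-M}$ the kernel still occupies $|u|\lesssim2^{2j}$. What happens instead:
\begin{itemize}
\item The kernel concentrates in the \emph{first layer}, $|x|\lesssim2^{M}$, by the first-layer weighted Plancherel estimate (Proposition~\ref{Proposition : First layer weighted Plancherel}).
\item The truncated restriction estimate (Proposition~\ref{Proposition: Truncated Restriction Estimate strong form}) supplies an extra factor $2^{-Md_2(\frac1q-\frac12)}$.
\item So for $M\le j$ you must cut $B(0,2^j)$ into $\sim2^{(j-M)d_1}$ boxes $B^{|\cdot|}(x_n,2^M)\times B^{|\cdot|}(0,2^{2j})$. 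Interactions between boxes separated by $\ge2^{M+\gamma j}$ in $x$ are handled by the first-layer weighted Plancherel bound.
\item On the diagonal the count is $(2^{Md_1+2jd_2})^{\frac1q-\frac12}2^{-Md_2(\frac1q-\frac12)}=2^{jd(\frac1q-\frac12)}2^{-(j-M)(d_1-d_2)(\frac1q-\frac12)}$.
\item On general M\'etivier groups the restriction bound also contains $2^{-M\beta\theta_q}\|F\|_{L^2}^{1-\theta_q}\|F\|_{L^2_\beta}^{\theta_q}$. Here this costs $2^{(j-M)\beta\theta_q}$, and summing over $M\le j$ needs $d_1-d_2-r\theta_q\ge0$ (with $\frac1r=\frac2q-1$). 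This is exactly why $\mathfrak{p}_G\ne p_{d_2}$ for $(d_1,d_2)\in\{(4,3),(8,6),(8,7)\}$.
\end{itemize}
None of this is in your plan, and ``weighted Plancherel in the central variable'' points at the wrong variable.

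The $t$-integral is also not handled correctly. Your H\"older step $\|\cdot\|_{L^p(B;L^2)}\le|B|^{\frac12-\frac1p}\|\cdot\|_{L^2(\tilde B;L^2)}$ is backwards for $p>2$. It must be applied to the input, $\|\chi_Sf\|_{L^2}\le|S|^{\frac12-\frac1p}\|\chi_Sf\|_{L^p}$, after an $L^2\to L^p$ (dual restriction) bound. The extra $\delta^{1/2}$ from the $t$-integral comes from writing, for $t$ in an interval $I_\nu$ of length $\delta$, $T_t=\sum_{|\nu'-\nu|\lesssim2^{\ell}}T_t\,\Omega_{\nu'}(\sqrt{\mathcal L})$ and using orthogonality of the $\Omega_{\nu'}(\sqrt{\mathcal L})$.

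The paper organizes this by duality, $\|\mathfrak{S}f\|_{L^p}^2=\sup_{\|\omega\|_{L^{(p/2)'}}\le1}\int|\mathfrak{S}f|^2\omega$, together with the weighted inequality of Lemma~\ref{Lemma: Weighted L2 estimate for square function}, which puts $L^2_t$ outside $L^{q'}_x$. Because the boxes are not Carnot--Carath\'eodory balls, the weight has to be controlled by the strong maximal function $\mathcal{M}^{|\cdot|}_r$; this is the paper's key new ingredient.

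An unweighted version of your scheme can be made to work. You must first localize pointwise to the boxes and only then apply Minkowski in $t$ box by box. Doing Minkowski first puts the $\ell^1_\nu$ and $\ell^{p/2}_n$ sums in the wrong order.

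Finally, for $p>2$ the dyadic $t$-ranges cannot be summed by Littlewood--Paley orthogonality alone. Minkowski only gives $(\sum_k\|P_kf\|_{L^p}^2)^{1/2}$ for the Littlewood--Paley pieces $P_kf$, which is not controlled by $\|f\|_{L^p}$. A separate local-to-global argument is needed; the paper adapts Guo--Roos--Yung.
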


\begin{theorem}
\label{Theorem: Stein square function for p less that 2 case}
Let $1<p<2$. Whenever $\alpha>d(\frac{1}{p}-\frac{1}{2})+\frac{1}{2}$, the Stein's square function $\mathfrak{S}^{\alpha}(\mathcal{L})$ is bounded on $L^p(G)$, that is there exists a constant $C>0$ such that
\begin{align*}
    \|\mathfrak{S}^{\alpha}(\mathcal{L})f\|_{L^p(G)} &\leq C \|f\|_{L^p(G)} .
\end{align*}
Moreover, if $\alpha>\frac{d+1}{2}$, then the Stein's square function $\mathfrak{S}^{\alpha}(\mathcal{L})$ is also of weak-type $(1,1)$.    
\end{theorem}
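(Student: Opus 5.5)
The plan is to follow the Euclidean template of Sunouchi and Igari--Kuratsubo: treat $\mathfrak{S}^{\alpha}(\mathcal{L})$ as a vector-valued spectral multiplier with values in $L^2((0,\infty), t\,dt)$, prove a weak type $(1,1)$ bound when $\alpha>\frac{d+1}{2}$, and interpolate with the $L^2$ bound from Theorem \ref{Theorem: Stein square estimate} at $p=2$, valid for $\alpha>\frac12$.

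\textbf{Reduction.} Write
\begin{align*}
t\,\partial_t S^{\alpha}_t(\mathcal{L}) = 2\alpha\, \tfrac{\mathcal{L}}{t^2}\bigl(1-\tfrac{\mathcal{L}}{t^2}\bigr)_+^{\alpha-1} =: \phi^{\alpha}(\mathcal{L}/t^2), \qquad \phi^{\alpha}(s)=2\alpha s(1-s)_+^{\alpha-1}.
\end{align*}
With the change of variable to $dt/t$, this gives
\begin{align*}
\mathfrak{S}^{\alpha}(\mathcal{L})f=\Bigl(\int_0^\infty |\phi^{\alpha}(\mathcal{L}/t^2)f|^2\,\tfrac{dt}{t}\Bigr)^{1/2}.
\end{align*}
Decompose $\phi^{\alpha}=\sum_{j\ge0}\phi^{\alpha}_j$ dyadically near the singularity $s=1$, so that $\phi^{\alpha}_j$ is supported where $1-s\sim 2^{-j}$, is bounded by $2^{-j(\alpha-1)}$, and is smooth at scale $2^{-j}$. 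The $j=0$ piece is smooth and compactly supported away from $s=1$, and it vanishes at $0$.

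\textbf{Key steps.} For each piece I will prove
\begin{align*}
\|\mathfrak{S}_j f\|_{L^{1,\infty}}\lesssim 2^{-j(\alpha-\frac{d+1}{2}-\varepsilon)}\|f\|_1
\end{align*}
using vector-valued Calderón--Zygmund theory on the homogeneous space $G$ with its Carnot--Carathéodory metric. The $L^2$ bound for $\mathfrak{S}_j$ is $2^{-j(\alpha-\frac12)}$ by the spectral theorem, since $\int|\phi_j^{\alpha}(\eta/t^2)|^2\,dt/t\lesssim 2^{-2j(\alpha-1)}2^{-j}$. For the kernel, let $K_{j,t}$ be the convolution kernel of $\phi_j^{\alpha}(\mathcal{L}/t^2)$, and verify the Hörmander condition
\begin{align*}
\int_{|x|>2|y|}\|K_{j,\cdot}(y^{-1}x)-K_{j,\cdot}(x)\|_{L^2(dt/t)}\,dx\lesssim 2^{-j(\alpha-\frac{d+1}{2}-\varepsilon)}
\end{align*}
in the standard way. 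Split the region according to $|x|$ versus $2^{j}/t$. In the far region, use the weighted Plancherel estimate with weights $(1+t|x|)^{s}$ and Cauchy--Schwarz. In the near region, use the mean value theorem with horizontal gradients.

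\textbf{Main obstacle.} Crucially, the integrability loss must be measured in the topological dimension $d$ rather than $Q$. This is exactly the point of the Hebisch / Müller--Stein / Niedorf machinery. I would invoke the restriction-type weighted $L^2$ estimate with the extra central weight $|u|$, available on Métivier groups (from \cite{Niedorf_Metivier_group_2023} or \cite{Bagchi_Molla_Singh_Bilinear_Metivier}): for $F$ supported in $[1/4,4]$ and any $\gamma<d_2/2$,
\begin{align*}
\|(1+|\cdot|)^{\beta}(1+|u|)^{\gamma}K_{F(\mathcal{L})}\|_{L^2}\lesssim\|F\|_{L^2_{\beta+\gamma}}.
\end{align*}
This gives $L^1$ bounds for kernels at the cost of $d/2$ derivatives instead of $Q/2$. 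Applied to $\phi_j^{\alpha}$, whose $L^2_s$ norm is about $2^{-j(\alpha-1)}2^{-j/2}2^{js}$, with $s>d/2$ and uniformly in $t$ after scaling, it yields the bound $2^{-j(\alpha-\frac{d+1}{2}-\varepsilon)}$. Here the extra $\frac12$ comes from the $L^2(dt/t)$ integration. Summing in $j$ gives weak $(1,1)$ for $\alpha>\frac{d+1}{2}$.

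\textbf{Interpolation.} Interpolate between this endpoint and the $L^2$ bound. Since the threshold is not sharp at a single $\alpha$, I would use Stein's analytic interpolation in $\alpha$ on the family $\mathfrak{S}_j$; alternatively, I would interpolate the per-$j$ bounds $2^{-j(\alpha-\frac12)}$ on $L^2$ and $2^{-j(\alpha-\frac{d+1}{2}-\varepsilon)}$ on $L^{1,\infty}$ via Marcinkiewicz. This gives an $L^p$ bound of $2^{-j(\alpha-\frac12-d(\frac1p-\frac12)-\varepsilon)}$, which is summable exactly when $\alpha>d(\frac1p-\frac12)+\frac12$.
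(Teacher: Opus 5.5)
Your plan works and reaches the same thresholds, but by a different route from the paper's.

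\textbf{Comparison of routes.} The paper gets weak type $(1,1)$ for $\alpha>\frac{d+1}{2}$ from the Duong--Ouhabaz--Sikora criterion (Proposition~\ref{Proposition: Criterion for weak boundedness}). It bounds the $L^2(dR/R)$-valued kernel of $\mathfrak{S}^{\alpha}(\mathcal{L})(I-e^{-t^2\mathcal{L}})$ only off the ball of radius $t$, so no spatial smoothness of the kernel is needed. It cuts dyadically in $\eta/R$ toward $\eta=0$ (pieces $\eta\sim 2^{-j}R$). It absorbs the singularity at $\eta=R$ in one stroke through the Sobolev regularity of $(1-s)_+^{\alpha-1}$ (Lemma~\ref{Lemma: Finiteness of Sobolev norm for Bochner-Riesz}). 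Decay in $Rt$ comes from the factor $1-e^{-t^2\eta^2}$.

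You instead cut at the singularity and run vector-valued Calder\'on--Zygmund theory piece by piece. This costs you weighted bounds for horizontal derivatives of the kernels in the near region. Those are available by writing $\phi^{\alpha}_j(\mathcal{L})=G_j(\mathcal{L})e^{-\mathcal{L}}$ with $G_j(\eta)=e^{\eta}\phi^{\alpha}_j(\eta)$ and letting $X$ fall on the heat kernel. In return you get explicit rates $2^{-j(\alpha-\frac12)}$ on $L^2$ and $2^{-j(\alpha-\frac{d+1}{2}-\varepsilon)}$ on $L^{1,\infty}$.

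That bookkeeping is genuinely useful. The paper only says the range $1<p<2$ follows by interpolating the weak $(1,1)$ bound at $\alpha>\frac{d+1}{2}$ with the $L^2$ bound at $\alpha>\frac12$. Since these endpoints sit at different values of $\alpha$, this implicitly requires interpolation in $\alpha$ as well. Your per-$j$ Marcinkiewicz interpolation followed by summation in $j$ makes the sloped threshold $d(\frac1p-\frac12)+\frac12$ rigorous.

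\textbf{One ingredient to correct.} The weighted Plancherel estimate actually available on M\'etivier groups is Proposition~\ref{Prop: Weighted Plancherel using weight and distance} (from \cite{Bagchi_Molla_Singh_Bilinear_Metivier} and \cite{Martini_Lie_groups_Polynomial_Growth_2012}). It carries a \emph{first-layer} weight $(1+R|x|)^{\gamma}$, $0\le\gamma<d_2/2$, at no Sobolev cost beyond $\varepsilon$. The reason is that on each $\lambda$-slice $|x|^{\gamma}$ is traded for $|\lambda|^{-\gamma}$, which is integrable against $|\lambda|^{d_2-1}\,d\lambda$. This is combined with the homogeneous-norm weight $(1+R\|(x,u)\|)^{\beta}$ at cost $\beta+\varepsilon$.

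A central weight $|u|^{\gamma}$ costing only $\gamma$ derivatives, as you wrote it, is not what these references provide. Proving it would require $\lambda$-derivatives of $b^{\lambda}_n$ and $R_{\lambda}$, which are only smooth on the Zariski-open set $\mathfrak{g}_{2,r}^{*}$.

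Proposition~\ref{Prop: Weighted Plancherel using weight and distance} together with Lemma~\ref{Lemma: Integration of distance and weight} (using $d_1>d_2$) gives $\|\mathcal{K}_{F(\sqrt{\mathcal{L}})}\|_{L^1}\lesssim\|F\|_{L^2_{s}}$ for $F$ supported in $[0,1]$ and any $s>d/2$. That is exactly the count you need, so your per-piece bounds go through unchanged after this substitution.
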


\medskip
It is important to note that Theorem \ref{Theorem: Stein square estimate} is an analogue of the Euclidean result obtained independently by Christ \cite[Lemma 1]{Christ_almost_Everywhere_Bochner_Riesz_1985} and Seeger \cite{Seeger_Maximal_Fourier_Mult_1986}, while Theorem \ref{Theorem: Stein square function for p less that 2 case} is analogue to the result of Sunouchi \cite[Theorem 2]{Sunouchi_Littlewood_Paley_1967} in the context of M\'etivier groups, where the Euclidean dimension $n$ in the smoothness threshold $\alpha_n(p)$ is replaced by the topological dimension $d$ of the underlying M\'etivier group $G$. One can also interpolate \eqref{Boundedness inequality for Stein square function} between the two instances of $p=2$ and $p=\mathfrak{p}_G$, to obtain the boundedness of $\mathfrak{S}^{\alpha}(\mathcal{L})$ for the range $2 < p <\mathfrak{p}_G$.

\subsection{Sharp spectral multipliers on M\'etivier groups}
\label{Subsection: sharp spectral multiplier}
Recall that from \eqref{Defintion: Spectral resolution for subLaplacian} and spectral theorem, for any bounded Borel function $F: \mathbb{R} \to \mathbb{C}$ the spectral multiplier operator $F(\mathcal{L})$ is bounded on $L^2(G)$ if and only if the function $F$, also called the multiplier is $E$-essentially bounded. Therefore one can ask whether the operator $F(\mathcal{L})$ initially defined on $L^2(G) \cap L^p(G)$ also extends to a bounded operator on $L^p(G)$ for $1<p<\infty$ and $p\neq 2$ or may be with some additional assumption on $F$. Characterizing the multiplier $F$ so that the corresponding spectral multiplier $F(\mathcal{L})$ is bounded on $L^p$-spaces for $1\leq p \leq \infty$, is one of the most classical problem in harmonic analysis. In case of $\mathbb{R}^n$, if we take the Laplacian $\mathcal{L}=-\Delta$, then a sufficient condition regarding the $L^p$-boundedness of $F(-\Delta)$ is given by the celebrated Mikhlin-H\"ormander multiplier theorem.

\medskip
Let $\varphi : (0, \infty) \to \mathbb{R}$ be a nontrivial smooth function with compact support. Then for any bounded Borel function $F: \mathbb{R} \to \mathbb{C}$ and $s>0$, we define
\begin{align*}
    \|F\|_{L^{2}_{s, sloc}} &:= \sup_{t>0}\|\varphi F(t\cdot)\|_{L^2_s(\mathbb{R})} .
\end{align*}

\begin{theorem}\cite{Hormander_Translation_invariant_operator_1960}
The operator $F(-\Delta)$ is bounded on $L^p(\mathbb{R}^n)$ for $1<p<\infty$, whenever $\|F\|_{L^{2}_{s, sloc}} < \infty$ for some $s>n/2$.
    
\end{theorem}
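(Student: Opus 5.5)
The plan is to follow the classical Calder\'on--Zygmund route. Boundedness on $L^2(\mathbb{R}^n)$ is immediate from Plancherel, because $\|F\|_{L^\infty}\lesssim \|F\|_{L^2_{s,sloc}}$ when $s>1/2$. It therefore suffices to show that the convolution kernel $K$ of $F(-\Delta)$ satisfies the H\"ormander integral condition
\begin{align*}
\sup_{y\neq 0}\int_{|x|>2|y|}|K(x-y)-K(x)|\,dx\leq C\|F\|_{L^2_{s,sloc}} .
\end{align*}
From this we get weak type $(1,1)$, then $L^p$ for $1<p<2$ by Marcinkiewicz interpolation, and $2<p<\infty$ by duality, since $F(-\Delta)^*=\overline{F}(-\Delta)$ and the hypothesis is invariant under conjugation.

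To estimate $K$, we decompose dyadically. Fix $\psi\in C_c^\infty(1/2,2)$ with $\sum_{j\in\mathbb{Z}}\psi(2^{-j}\eta)=1$ for $\eta>0$, and set $F_j(\eta)=\psi(2^{-j}\eta)F(\eta)$, with kernel $K_j$, so that $K=\sum_j K_j$ in the sense of distributions. By dilation invariance of $\|\cdot\|_{L^2_{s,sloc}}$ (the choice of $\varphi$ only changes the norm by a constant), it is enough to treat $j=0$ and rescale: $K_j(x)=2^{jn/2}\widetilde K_j(2^{j/2}x)$, where $\widetilde K_j$ is the kernel of the multiplier $F(2^j\cdot)\psi$. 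The two key estimates I aim for, for a fixed $n/2<s'<s$, are
\begin{align*}
\int_{\mathbb{R}^n}|K_j(x)|\,(1+2^{j/2}|x|)^{\varepsilon}\,dx \lesssim \|F\|_{L^2_{s,sloc}},\qquad \int_{\mathbb{R}^n}|\nabla K_j(x)|\,dx\lesssim 2^{j/2}\|F\|_{L^2_{s,sloc}},
\end{align*}
with $\varepsilon=s'-n/2>0$. Both follow from the weighted Plancherel bound
\begin{align*}
\int|\widetilde K_j(x)|^2(1+|x|)^{2s'}dx\lesssim\|m_j\|_{H^{s'}(\mathbb{R}^n)}^2,\qquad m_j(\xi)=F(2^j|\xi|^2)\psi(|\xi|^2),
\end{align*}
combined with Cauchy--Schwarz against $(1+|x|)^{-2s'+2\varepsilon}$, which is integrable because $s'>n/2+\varepsilon$ fails only at the endpoint; we choose $\varepsilon$ slightly smaller if needed. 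For the gradient estimate we multiply by $\xi$, which costs only a constant since $m_j$ is supported in the annulus $\{1/\sqrt2\le|\xi|\le\sqrt2\}$.

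The step I expect to be the main obstacle is the transference $\|m_j\|_{H^{s'}(\mathbb{R}^n)}\lesssim\|F(2^j\cdot)\psi\|_{L^2_{s'}(\mathbb{R})}$, i.e.\ passing from a one-dimensional Sobolev norm of the profile to an $n$-dimensional Sobolev norm of the radial function, for non-integer $s'$. For integer $s'$ this is the chain rule on the annulus, where $|\xi|^2$ is a smooth submersion and polar coordinates have a bounded Jacobian. For general $s'$ I would first try complex interpolation between consecutive integers applied to the linear map $g\mapsto g(|\xi|^2)\widetilde\psi(|\xi|^2)$, where $\widetilde\psi$ is a cutoff equal to $1$ on $\operatorname{supp}\psi$. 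If needed, I would instead use the difference characterization of Besov spaces, since the loss $s\to s'$ leaves room for an $\epsilon$ of smoothness.

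Finally we sum over $j$ for a fixed $y$. When $2^{j/2}|y|\le1$ we use the mean value theorem and the gradient estimate, which gives the bound $2^{j/2}|y|$; these terms form a geometric series. When $2^{j/2}|y|>1$ we bound $\int_{|x|>2|y|}|K_j(x-y)|+|K_j(x)|\,dx$ using the weighted estimate. On this region $|x|\gtrsim|y|$, and also $|x-y|\ge|y|$, so the integral is $\lesssim(2^{j/2}|y|)^{-\varepsilon}$, again a geometric series. Both sums are uniform in $y$, which gives the H\"ormander condition and completes the argument.
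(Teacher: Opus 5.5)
The paper gives no proof of this statement. It is quoted from H\"ormander as Euclidean background, so there is no argument of the paper to match. Your argument is the standard Calder\'on--Zygmund proof, and it is correct. The following steps all check out:
\begin{itemize}
\item the rescaling $K_j(x)=2^{jn/2}\widetilde K_j(2^{j/2}x)$;
\item the weighted Plancherel bound followed by Cauchy--Schwarz;
\item the transfer $\|g(|\xi|^2)\|_{H^{s'}(\mathbb{R}^n)}\lesssim\|g\|_{L^2_{s'}(\mathbb{R})}$ for $g$ supported in $[1/2,2]$, via the chain rule at integer orders and then complex interpolation of the fixed linear map $g\mapsto g(|\xi|^2)\widetilde\psi(|\xi|^2)$;
\item the two geometric sums in $2^{j/2}|y|$.
\end{itemize}
Two routine points are worth a line each in a write-up. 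First, take $\varepsilon<s'-n/2$ strictly from the start, as you noticed. Second, rather than asserting $K=\sum_j K_j$, run the Calder\'on--Zygmund argument on the partial sums $\sum_{|j|\le N}F_j(-\Delta)$ uniformly in $N$ and pass to the limit on $L^2\cap L^p$. With $\psi\ge 0$, the $L^2$ operator norms of these partial sums are at most $\|F\|_{L^\infty}$.

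For comparison, the paper proves its own multiplier theorem on M\'etivier groups (Theorem~\ref{Theorem: Analogue of Niedorf theorem}) without any kernel smoothness condition. After reducing by duality to $p\ge\mathfrak{p}_G$, it does three things:
\begin{itemize}
\item it bounds $\|F(\mathcal{L})f\|_{L^p}$ by the Littlewood--Paley square function $\mathcal{G}_\sigma(\mathcal{L})F(\mathcal{L})f$;
\item it uses the Riemann--Liouville formula \eqref{Riemann-Liouville formula} to dominate that square function pointwise by $\|F\|_{L^2_{\alpha,sloc}}\,\mathfrak{S}^{\alpha}(\mathcal{L})f$;
\item it invokes the restriction-based bound for Stein's square function.
\end{itemize}
Transplanted to $\mathbb{R}^n$, that route gives the sharper $p$-dependent threshold $s>n|1/p-1/2|$, but only for $p\ge 2(n+1)/(n-1)$ and, by duality, for $p\le 2(n+1)/(n+3)$, and it needs Stein--Tomas restriction theory. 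Your route is elementary and yields weak type $(1,1)$ and all $1<p<\infty$ at once, at the cost of the uniform threshold $n/2$. Your kernel estimates are close in spirit to the weighted-Plancherel-plus-Cauchy--Schwarz step the paper uses for the weak type $(1,1)$ bound of $\mathfrak{S}^{\alpha}(\mathcal{L})$ in Theorem~\ref{Theorem: Stein square function for p less that 2 case}.
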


Note that the smoothness threshold $n/2$ in the above theorem is sharp and can not be decreased further (see \cite{Christ_Spectral_multiplier_Nilpotent_groups_1991}, \cite{Sikora_Wright_Imaginary_Power_Laplace_2001}). On the other hand instead of boundedness of $F(-\Delta)$ on all $L^p$ spaces for $1<p<\infty$, one can ask, for a given $p$ lying between $1$ and $\infty$, what is minimum order of smoothness needed on the multiplier in order to ensure the boundedness of the spectral multipliers on that $L^p$-space. This problem is also known as $p$-specific spectral multiplier problem. In case of $\mathbb{R}^n$, the following sharp result is well known.

\begin{theorem}\cite[Theorem 1.4]{Lee_Roger_Seeger_Square_Function_Maximal_Fourier_Mult_2014}
Let $1<p<2(n+1)/(n+3)$. Then the operator $F(-\Delta)$ is bounded on $L^p(\mathbb{R}^n)$ provided $\|F\|_{L^{2}_{s, sloc}} < \infty$ for $s>\max\{n|1/p-1/2|,1/2\}$.
    
\end{theorem}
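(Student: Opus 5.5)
The plan is to derive the $p$-specific multiplier bound from the Stein square function estimate on the dual side, following Carbery--Gasper--Trebels. The key inputs are the Christ/Seeger theorem in the Stein--Tomas range and a subordination formula expressing $F$ through Bochner--Riesz means.

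\emph{Reduction by duality.} Since $F(-\Delta)^*=\overline{F}(-\Delta)$ and $\|\overline F\|_{L^2_{s,sloc}}=\|F\|_{L^2_{s,sloc}}$, it suffices to prove boundedness on $L^{q}(\mathbb{R}^n)$ with $q=p'>2(n+1)/(n-1)$. In this range the Stein square function $\mathfrak{S}^{\alpha}$ is bounded on $L^{q}$ for every $\alpha>n(\frac12-\frac1q)$, by Christ and Seeger. Note also that for $p<2(n+1)/(n+3)$ we have $n(\frac1p-\frac12)>\frac{n}{n+1}\geq\frac12$. Hence the maximum in the hypothesis is just $n|\frac1p-\frac12|$, and the task is to show that $s>\alpha-\frac12$ suffices whenever $\mathfrak{S}^\alpha$ is bounded on $L^q$.

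\emph{Single dyadic piece.} Fix $\varphi$ and split $F=\sum_j F_j$ with $F_j=F\varphi(2^{-j}\cdot)$; by scaling, first treat one $F_j$ supported in $[1/2,2]$. I will use the Weyl fractional integration formula. Writing $m(\rho)=F_j(\rho^2)$, one has
\begin{align*}
m(\rho)=c_\alpha\int_0^\infty \Big(1-\frac{\rho^2}{t^2}\Big)_+^{\alpha}\, t^{\alpha+1}\, \partial_t^{\alpha+1} m(t)\, \frac{dt}{t}.
\end{align*}
Integrating by parts once in $t$, this gives $F_j(-\Delta)f=\int_0^\infty h(t)\, t\,\partial_t S^\alpha_t f\, \frac{dt}{t}$ with $h(t)\approx t^{\alpha}\partial_t^{\alpha}m(t)$. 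Cauchy--Schwarz in $t$ then yields the pointwise bound
\begin{align*}
|F_j(-\Delta)f(x)|\leq \Big(\int_0^\infty |h(t)|^2\,\frac{dt}{t}\Big)^{1/2}\mathfrak{S}^{\alpha}f(x).
\end{align*}
Because $m$ is supported in $[1/2,2]$, the Weyl derivative $h$ is supported in $(0,2]$, and a standard computation bounds $\|h\|_{L^2(dt/t)}$ by $\|F_j\|_{L^2_{\alpha-1/2+\epsilon}}$. Choosing $\alpha$ slightly above $n(\frac12-\frac1q)+\frac12$ gives the single-scale estimate with $s>n|\frac1p-\frac12|$, uniformly in $j$ by dilation invariance.

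\emph{Summing the dyadic pieces (main obstacle).} The pointwise bound cannot be applied to $\sum_j F_j$ directly, because $\sum_j |h_j|^2$ is not integrable against $dt/t$ under a merely scale-local hypothesis. I would insert Littlewood--Paley projections, writing $F_j(-\Delta)=F_j(-\Delta)\tilde P_j$ with $\tilde P_j$ a slightly wider projection. The single-scale argument then gives
\begin{align*}
|F_j(-\Delta)f|\lesssim \|F\|_{L^2_{s,sloc}}\, \mathfrak{S}^\alpha_{(j)}\tilde P_j f,
\end{align*}
where $\mathfrak{S}^\alpha_{(j)}$ is the square function with $t$ restricted to $(0,2^{j+1}]$. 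The small-$t$ part $t\ll 2^j$ should be controlled with rapid decay, since $(1-\rho^2/t^2)_+$ vanishes there on $\operatorname{supp}\tilde P_j$ up to the cutoff tails. It then remains to prove the vector-valued estimate
\begin{align*}
\Big\|\Big(\sum_j |\mathfrak{S}^\alpha_{(j)}\tilde P_j f|^2\Big)^{1/2}\Big\|_{L^q}\lesssim \|f\|_{L^q}.
\end{align*}
I expect this $\ell^2$-extension of the Christ/Seeger square function bound to be the hard part. First, I would check that their proof (a Stein--Tomas restriction estimate on annuli, plus a decomposition of $t\partial_tS^\alpha_t$ into pieces near the sphere at scale $2^{-k}$ with $\ell^2$-valued kernel estimates) is already an $\ell^2$-valued argument that runs over $j$ verbatim. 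Combined with the Littlewood--Paley inequality for $\{\tilde P_j\}$ on $L^q$, this would finish the proof on $L^q$, and hence on $L^p$ by duality. If the verbatim extension fails, the fallback is to bound the sum by a Fefferman--Stein type vector-valued inequality for the underlying annular pieces.
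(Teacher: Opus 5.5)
Your route works and differs from the paper's. As written, though, it contains a false intermediate claim, and it leaves open a step that is actually routine.

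\textbf{Smoothness bookkeeping.} The bound $\|h\|_{L^2(dt/t)}\lesssim\|F_j\|_{L^2_{\alpha-1/2+\epsilon}}$ is false. Here $h$ is essentially $t^{\alpha}\partial_t^{\alpha}m$, whose $L^2$ norm for $t\sim 1$ is comparable to $\|m\|_{L^2_{\alpha}}$, and no Sobolev norm of lower order controls it. Your framing ``$s>\alpha-\frac12$ suffices whenever $\mathfrak{S}^\alpha$ is bounded on $L^q$'' cannot hold either. Combined with the Christ--Seeger range $\alpha>n(\frac12-\frac1q)$ that you quote, it would give $s>n|\frac1p-\frac12|-\frac12$, which is below the necessary threshold. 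The correct relation is $s=\alpha$, so take $\alpha=s>n(\frac12-\frac1q)$ directly. Your choice of $\alpha$ slightly above $n(\frac12-\frac1q)+\frac12$ and the false Sobolev bound cancel, so your final exponent is right, but for the wrong reason. A cosmetic point: with the kernel $(1-\rho^2/t^2)_+^{\alpha}$, the Weyl derivative must be taken in the variable $t^2$; alternatively, use $(1-\rho/t)_+^{\alpha}$.

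\textbf{The step you call the main obstacle.} You do not need to reopen the Christ--Seeger proof. The map $f\mapsto(t\partial_tS^\alpha_tf)_{t>0}$ is linear and bounded from $L^q$ to $L^q(L^2(dt/t))$, so the Marcinkiewicz--Zygmund argument gives its $\ell^2$-valued extension. Apply it to $\sum_j g_jf_j$ with independent standard Gaussians $g_j$. For $q\ge2$, Jensen gives $\mathbb{E}\|\sum_j g_jv_j\|_{H}^{q}\ge(\sum_j\|v_j\|_H^2)^{q/2}$, hence $\|(\sum_j|\mathfrak{S}^\alpha f_j|^2)^{1/2}\|_q\lesssim\|(\sum_j|f_j|^2)^{1/2}\|_q$. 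Take $f_j=\tilde P_jf$. The truncation $\mathfrak{S}^\alpha_{(j)}$ is superfluous, since $\mathfrak{S}^\alpha_{(j)}\le\mathfrak{S}^\alpha$. Moreover, $S^\alpha_t\tilde P_jf$ vanishes identically, not just up to tails, for $t$ below the frequency support of $\tilde P_j$.

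You must state explicitly the reverse Littlewood--Paley inequality $\|\sum_jF_j(-\Delta)f\|_q\lesssim\|(\sum_j|F_j(-\Delta)f|^2)^{1/2}\|_q$. It holds because the pieces have Fourier support in boundedly overlapping dyadic annuli. With it and the forward inequality for $\{\tilde P_j\}$, and with $\alpha=s$, the chain closes:
\[
\|F(-\Delta)f\|_q\lesssim\Big\|\Big(\sum_j|F_j(-\Delta)f|^2\Big)^{1/2}\Big\|_q\lesssim\|F\|_{L^2_{s,sloc}}\Big\|\Big(\sum_j|\mathfrak{S}^{\alpha}\tilde P_jf|^2\Big)^{1/2}\Big\|_q\lesssim\|F\|_{L^2_{s,sloc}}\|f\|_q .
\]

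\textbf{Comparison with the paper.} The paper quotes this Euclidean theorem rather than proving it. Its proof of the M\'etivier analogue (Theorem~\ref{Theorem: Analogue of Niedorf theorem}) follows Carbery--Gasper--Trebels:
\begin{enumerate}
\item It applies the Littlewood--Paley lower bound to the \emph{output}, using the continuous square function $\mathcal{G}_\sigma$ of Proposition~\ref{Proposition: Lp boundedness of square function with bump}.
\item It writes each scale-$R$ piece through the Riemann--Liouville formula with $s\sim R$ and applies Cauchy--Schwarz in $s$.
\item It integrates in $dR/R$, so the double integral reassembles into $\int_0^\infty|\cdot|^2\,ds/s$. This gives the pointwise bound $\mathcal{G}_\sigma(F(\mathcal{L})f)\lesssim\|F\|_{L^2_{\alpha,sloc}}\,\mathfrak{S}^{\alpha}(\mathcal{L})f$.
\end{enumerate}
That route uses only the scalar bound for $\mathfrak{S}^\alpha$ and no vector-valued extension. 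The price is the rearrangement over scales, which also requires absorbing the tail of the Weyl derivative for $s<R$ (e.g.\ with a weight $\min\{1,(s/R)^{\epsilon}\}$ in the Cauchy--Schwarz step); the paper's formula with $\int_R^{2R}$ suppresses this tail. Your argument localizes the \emph{input} instead and needs only the single-scale pointwise bound, so the tail is harmless. You pay with two-sided discrete Littlewood--Paley theory and Marcinkiewicz--Zygmund, both of which are standard here.
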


Mikhlin-H\"ormander multiplier theorem has been generalized to various elliptic and sub-elliptic operators in many different setup. For instance, on any two stratified Lie groups Christ \cite{Christ_Spectral_multiplier_Nilpotent_groups_1991} and independently Mauceri-Meda \cite{Mauceri_Meda_Multipliers_Stratified_groups_1990} (actually they proved for any steps) proved the $L^p$-boundedness of $F(\mathcal{L})$ for $1<p<\infty$, whenever $\|F\|_{L^{2}_{s, sloc}} < \infty$ for some $s>Q/2$, where $Q=d_1+2d_2$ is so called the homogeneous dimension of the underlying group $G$. In case of Heisenberg-type groups, M\"uller-Stein \cite{Muller_Stein_Spectral_Multiplier_Heisenberg_Related_groups_1994} and independently Hebisch \cite{Hebisch_Spectral_Multiplier_Heisenberg_1993} showed that the smoothness order $Q/2$ is not sharp, and it can be pushed down to $d/2$, which turns out to be sharp, where $d=d_1+d_2$ is the topological dimension of the underlying Heisenberg-type group. In case of M\'etivier groups, sharp $L^p$-boundedness of the spectral multipliers was proved in \cite{Martini_Lie_groups_Polynomial_Growth_2012} with smoothness parameter $s>d/2$.

On the other hand, $p$-specific type spectral multiplier problem has been recently studied in \cite{Niedorf_p_specific_Heisenberg_group_2024}, \cite{Niedorf_Metivier_group_2023} for Heisenberg-type and M\'etivier groups. As an application of $L^p$-boundedness of $\mathfrak{S}^{\alpha}(\mathcal{L})$ (see Theorem \ref{Theorem: Stein square estimate}), here we will present a alternate proof of the $p$-specific type of the spectral multiplier results associated with the sub-Laplacian on M\'etivier groups, compared to the one already proved in \cite[Theorem 1.1]{Niedorf_p_specific_Heisenberg_group_2024} and \cite[Theorem 1.1]{Niedorf_Metivier_group_2023}. The following theorem gives the sharp $p$-specific type spectral multiplier result on M\'etivier groups.

\begin{theorem}
\label{Theorem: Analogue of Niedorf theorem}
Let $p \in (1, \mathfrak{p}_G'] \cup [\mathfrak{p}_G, \infty)$ and $s>d|\frac{1}{p}-\frac{1}{2}|$. Then for any bounded Borel function $F: \mathbb{R} \to \mathbb{C}$ we have
\begin{align*}
    \|F(\mathcal{L})f\|_{L^p(G)} &\leq C \|F\|_{L^{2}_{s, sloc}} \|f\|_{L^p(G)} .
\end{align*} 
\end{theorem}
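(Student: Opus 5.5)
We will deduce the theorem from the square function bound of Theorem \ref{Theorem: Stein square estimate} by the classical subordination argument of Carbery--Gasper--Trebels. The case $1<p\le \mathfrak{p}_G'$ follows by duality from $p'\in[\mathfrak{p}_G,\infty)$, since $F(\mathcal{L})^*=\overline{F}(\mathcal{L})$ and $\|\overline F\|_{L^2_{s,sloc}}=\|F\|_{L^2_{s,sloc}}$. So we assume $p\ge \mathfrak{p}_G\ (>2)$ and $s>d(\frac12-\frac1p)$.

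\emph{Step 1: Littlewood--Paley reduction.} Choose a dyadic partition of unity $\sum_j \psi(2^{-j}\eta)=1$ on $(0,\infty)$. We write $F=\sum_j F_j$ with $F_j=F\psi(2^{-j}\cdot)$, where each $F_j$ is supported in $[2^{j-1},2^{j+1}]$. By dilation invariance of $\mathcal{L}$ and of the $L^2_{s,sloc}$ norm, the pieces are uniformly controlled. To sum them we use the Littlewood--Paley square function $(\sum_j|\tilde\psi(2^{-j}\mathcal{L})f|^2)^{1/2}$ for $\mathcal{L}$, which is bounded on all $L^p$ by standard Gaussian heat-kernel theory. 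This reduces the theorem to a vector-valued estimate
\[
\Big\|\Big(\sum_j |F_j(\mathcal{L}) g_j|^2\Big)^{1/2}\Big\|_{L^p}\lesssim \sup_j\|F_j(2^j\cdot)\|_{L^2_s}\Big\|\Big(\sum_j|g_j|^2\Big)^{1/2}\Big\|_{L^p}.
\]

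\emph{Step 2: Subordination to Bochner--Riesz.} For a compactly supported $H$ (in the variable $\lambda=\sqrt\eta$, say $H(\lambda)=F_j(\lambda^2)$ suitably rescaled), we use the Riesz-type representation
\[
H(\lambda)=\frac{(-1)^{\alpha+1}}{\Gamma(\alpha+1)}\int_0^\infty t^{\alpha}H^{(\alpha+1)}(t)\Big(1-\frac{\lambda}{t}\Big)_+^{\alpha}\,dt,
\]
with fractional derivatives when $\alpha$ is non-integer. This yields $H(\mathcal{L})f=c\int_0^\infty t^{\alpha}H^{(\alpha+1)}(t)\,S_t^\alpha f\,dt$. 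Passing to $\partial_t S^{\alpha}_t$ (equivalently using $\alpha-1$ and an integration by parts) and applying Cauchy--Schwarz in $t$ gives the pointwise bound
\[
|H(\mathcal{L})f(x)|\lesssim \Big(\int_0^\infty |t^{\alpha+1/2}H^{(\alpha+1)}(t)|^2\,dt\Big)^{1/2}\,\mathfrak{S}^{\alpha}(\mathcal{L})f(x),
\]
with weighted $L^2_{\alpha+1/2}$-type norm $\|H\|_{L^2_{\alpha+1/2}}$. For $H$ supported near $1$, this norm is bounded by $\|H\|_{L^2_{\beta}}$ with $\beta=\alpha+\frac12$. Theorem \ref{Theorem: Stein square estimate} allows any $\alpha>\max\{d(\frac12-\frac1p),\frac12\}$. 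This produces $s>d(\frac12-\frac1p)+\frac12$, which is off by $\frac12$ from the claim. Recovering that $\frac12$ is the main obstacle.

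\emph{Step 3: Recovering the $1/2$ loss.} Following Carbery's refinement, we avoid the sup over all $t$. Because $F_j$ lives at a single scale, we only need $\partial_t S_t^\alpha$ for $t\sim 2^{j/2}$, and the Cauchy--Schwarz step is taken with the measure $t\,dt$ against $|\partial_tS^{\alpha}_t f|^2$. The weight $t^{\alpha+1/2}$ is then balanced by $t^{1/2}$, so the correct estimate is
\[
|H(\mathcal{L})f|\lesssim \|H\|_{L^2_{\alpha+1/2}}\,\mathfrak{S}^{\alpha+1/2-1/2}\ldots
\]
More precisely, one uses the identity $H(\lambda)=c\int t^{\alpha}H^{(\alpha+1)}(t)(1-\lambda/t)^{\alpha}_+dt$ with $\alpha$ replaced by $s-1/2$. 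Since $\mathfrak S^{\alpha}$ involves $\partial_t S^\alpha_t\sim t^{-1}S^{\alpha-1}$-type terms, $\mathfrak{S}^{\alpha}$ controls multipliers of order $\alpha$, not $\alpha+\frac12$. Then $s=\alpha>d(\frac12-\frac1p)$, which is permissible since for $p\ge\mathfrak p_G$ we have $d(\frac12-\frac1p)\ge\frac12$, because $\mathfrak p_G\ge 2d/(d-1)$ in the relevant cases. That last inequality must be checked case by case.

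\emph{Step 4: Vector-valued version.} We apply Step 3 to each $j$ and sum using the Fefferman--Stein vector-valued inequality for $\mathfrak{S}^\alpha$. This is obtained from the proof of Theorem \ref{Theorem: Stein square estimate} via its $\ell^2$-valued extension, or alternatively via a weighted $L^{p/2}$ argument. The vector-valued extension of Theorem \ref{Theorem: Stein square estimate} is the second genuinely delicate point. I would try to obtain it by re-running that proof, which rests on restriction-type estimates and Calder\'on--Zygmund decompositions that extend to $\ell^2$-valued functions.
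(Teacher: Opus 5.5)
\textbf{The gap is in Step 4, and the reduction in Step 1 creates it.} Once you pass to the vector-valued inequality with arbitrary $g_j$, you need an $\ell^2$-valued version of Theorem \ref{Theorem: Stein square estimate}:
\[
\Big\|\Big(\sum_j|\mathfrak{S}^{\alpha}(\mathcal{L})g_j|^2\Big)^{1/2}\Big\|_{L^p}\lesssim\Big\|\Big(\sum_j|g_j|^2\Big)^{1/2}\Big\|_{L^p}.
\]
The paper does not provide this. You only propose to get it by re-running the whole chain in $\ell^2$-valued form: Lemma \ref{Lemma: Weighted L2 estimate for square function}, Propositions \ref{Proposition: Local Square function estimate} and \ref{Proposition: Square function estimate}, and the dyadic summation in the proof of the theorem. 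As written, your argument does not close at this point.

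\textbf{The missing idea: no vector-valued bound is needed, because you can apply the subordination to $f$ itself at every scale.} The Littlewood--Paley pieces $\tilde\psi(2^{-j}\mathcal{L})F(\mathcal{L})f=(\tilde\psi(2^{-j}\cdot)F)(\mathcal{L})f$ are single-scale multipliers acting on $f$. As you begin to notice in Step 3, the Riemann--Liouville representation of such a piece involves $\partial_tS^{\alpha}_t(\mathcal{L})f$ essentially only for $t^2$ in the matching dyadic window. Apply Cauchy--Schwarz in $t$ on that window, then use the bounded overlap of the windows. This gives the pointwise bound
\[
\Big(\sum_j|(\tilde\psi(2^{-j}\cdot)F)(\mathcal{L})f|^2\Big)^{1/2}\lesssim\|F\|_{L^2_{\alpha,sloc}}\,\mathfrak{S}^{\alpha}(\mathcal{L})f ,
\]
so only the scalar Theorem \ref{Theorem: Stein square estimate} is needed.

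This is exactly the paper's proof, in continuous form:
\begin{itemize}
\item The lower bound in Proposition \ref{Proposition: Lp boundedness of square function with bump} gives $\|F(\mathcal{L})f\|_{L^p}\lesssim\|\mathcal{G}_{\sigma}(\mathcal{L})(F(\mathcal{L})f)\|_{L^p}$ with $\sigma(\eta)=\eta\varphi(\eta)$.
\item Apply \eqref{Riemann-Liouville formula} to $\varphi(R^{-1}\cdot)F$ over $s\in[R,2R]$, use Cauchy--Schwarz in $s$, and integrate in $dR/R$. This yields $\mathcal{G}_{\sigma}(\mathcal{L})(F(\mathcal{L})f)\lesssim\|F\|_{L^2_{\alpha,sloc}}\,\mathfrak{S}^{\alpha}(\mathcal{L})f$ pointwise.
\end{itemize}

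\textbf{Two smaller points.}
\begin{itemize}
\item The $\tfrac12$ loss in Step 2 is an indexing error, not a real obstacle. Pair $(1-\eta/s)_+^{\alpha-1}$ with $\frac{d^{\alpha}}{ds^{\alpha}}$ as in \eqref{Riemann-Liouville formula}, and use $t\partial_t(1-\eta/t^2)_+^{\alpha}=2\alpha\frac{\eta}{t^2}(1-\frac{\eta}{t^2})_+^{\alpha-1}$. This matches $\mathfrak{S}^{\alpha}$ with $L^2_{\alpha}$ directly. Your Step 3 display is also left unfinished.
\item The requirement $\alpha>\tfrac12$ needs no case-by-case check. Since $\mathfrak{p}_G\ge p_{d_2}=2+\frac{4}{d_2-1}>2+\frac{2}{d-1}$, we get $d(\tfrac12-\tfrac1p)>\tfrac12$ for every $p\ge\mathfrak{p}_G$.
\end{itemize}
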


As an immediate corollary of the above Theorem \ref{Theorem: Analogue of Niedorf theorem} we also get the $p$-specific boundedness of the Bochner-Riesz multiplier on M\'etivier groups (see \eqref{Definition: Bochner-Riesz means}). For more details, see \cite{Christ_Spectral_multiplier_Nilpotent_groups_1991}, \cite{Dallara_Martini_Optimal-grushin_2022}.

\begin{corollary}
\label{Corollary: Lp boundedness of Bochner-Riesz on mativier}
Let $p \in (1, \mathfrak{p}_G'] \cup [\mathfrak{p}_G, \infty)$ and $\alpha>d|\frac{1}{p}-\frac{1}{2}|-\tfrac{1}{2}$. Then we have
\begin{align*}
    \|S_R^{\alpha}(\mathcal{L})f \|_{L^p(G)} &\leq C \|f\|_{L^p(G)} .
\end{align*}
    
\end{corollary}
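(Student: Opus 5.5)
The corollary will follow from Theorem \ref{Theorem: Analogue of Niedorf theorem} by applying it to $F_R(\eta) = (1-\eta/R^2)_+^{\alpha}$, so that $S_R^{\alpha}(\mathcal{L}) = F_R(\mathcal{L})$. Two points need checking: that $F_R$ has finite $L^2_{s,sloc}$ norm for some $s>d|\frac1p-\frac12|$, and that this norm does not depend on $R$.

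\emph{Reduction to $R=1$.} The $L^{2}_{s,sloc}$ norm is dilation invariant, since $F_R(t\cdot) = F_1((t/R^2)\cdot)$ and the supremum runs over all $t>0$. Hence $\|F_R\|_{L^2_{s,sloc}} = \|F_1\|_{L^2_{s,sloc}}$ for every $R$, and it suffices to treat $F_1(\eta)=(1-\eta)_+^{\alpha}$.

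\emph{Sobolev regularity of $F_1$.} The standard fact is that $(1-\eta)_+^{\alpha}$, localized by a smooth compactly supported cutoff, lies in $L^2_s(\mathbb{R})$ exactly when $s<\alpha+\frac12$. To see this, away from $\eta=1$ the function is smooth, so we may cut off near $\eta=1$. There the Fourier transform of $\chi(\eta)(1-\eta)_+^\alpha$ decays like $|\xi|^{-\alpha-1}$, and $|\xi|^{-2\alpha-2}(1+|\xi|^2)^{s}$ is integrable at infinity iff $s<\alpha+\frac12$.

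\emph{Uniformity over dilations.} We must also control $\sup_{t>0}\|\varphi F_1(t\cdot)\|_{L^2_s}$, where $\operatorname{supp}\varphi\subset[a,b]\subset(0,\infty)$. For $t$ large, $F_1(t\cdot)$ vanishes on $\operatorname{supp}\varphi$. For $t$ small, $F_1(t\eta)=(1-t\eta)^\alpha$ is smooth on $[a,b]$ with all derivatives bounded uniformly in $t$. For $t$ in a compact range, the singularity sits at $\eta=1/t\in[a',b']$, and a translation/dilation-uniform version of the previous step bounds the norm. The supremum is therefore finite whenever $s<\alpha+\frac12$.

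\emph{Choice of $s$.} Since $\alpha>d|\frac1p-\frac12|-\frac12$, we can pick $s$ with $d|\frac1p-\frac12|<s<\alpha+\frac12$. Theorem \ref{Theorem: Analogue of Niedorf theorem} then gives $\|S_R^\alpha(\mathcal{L})f\|_{L^p(G)}\le C\|F_1\|_{L^2_{s,sloc}}\|f\|_{L^p(G)}$ with $C$ independent of $R$.

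The only step with any content is the fractional Sobolev estimate for the truncated power, together with its uniformity in the dilation parameter. Both are classical; see \cite{Christ_Spectral_multiplier_Nilpotent_groups_1991}, \cite{Dallara_Martini_Optimal-grushin_2022}.
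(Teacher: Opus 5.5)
Your proposal is correct and follows the same route as the paper, which states the corollary as an immediate consequence of Theorem~\ref{Theorem: Analogue of Niedorf theorem} applied to $(1-\eta/R^2)_+^{\alpha}$, citing \cite{Christ_Spectral_multiplier_Nilpotent_groups_1991, Dallara_Martini_Optimal-grushin_2022} for details. The two facts you verify are exactly the content those references supply: dilation invariance of the $L^2_{s,sloc}$ norm, and the uniform Sobolev bound for the localized truncated power whenever $s<\alpha+\frac12$ (cf.\ Lemma~\ref{Lemma: Finiteness of Sobolev norm for Bochner-Riesz} with $q=2$).
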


It is important to note that, in both of the above two results Theorem \ref{Theorem: Analogue of Niedorf theorem} and Corollary \ref{Corollary: Lp boundedness of Bochner-Riesz on mativier}, the smoothness parameters $s$ and $\alpha$ respectively, are sharp and can not be decreased further, see \cite{Niedorf_Metivier_group_2023}, \cite{Martini_Muller_Golo_Spectral_Multiplier_Lower_Regularity_2023}.

\subsection{Maximal spectral multipliers and sharp maximal Bochner-Riesz means on M\'etivier groups}
Here we will discuss about the maximal version of the corresponding spectral multiplier theorem discussed in subsection \ref{Subsection: sharp spectral multiplier}. First let us start with the definition of some relevant norms of a multiplier. For $s>1/2$, let $L^2_s(\mathbb{R}^+)$ denote the completion of all compactly supported smooth functions on $(0, \infty)$ under the norm
\begin{align*}
    \|F\|_{L^2_s(\mathbb{R}^+)} &= \left( \int_0^{\infty} \left|\lambda^{s+1} \frac{d^{s}}{d\lambda^{s}}\left(\frac{F(\lambda)}{\lambda} \right) \right|^2 \, \frac{d\lambda}{\lambda} \right)^{1/2} ,
\end{align*}
where $\left(\frac{d^{s}h}{d\lambda^{s}}\right)^{\widehat{}}(\xi) = (-i \xi)^{s} \widehat{h}(\xi)$.

Using Mellin transform one can identify the space $L^2_s(\mathbb{R}^+)$ with the usual fractional Sobolev space $L^2_s(\mathbb{R})$ under the map $\lambda \mapsto \exp \lambda$ (\cite[p. 53]{Carbery_Maximal_Radial_Fourier_Mult_1985}), that is, we have
\begin{align*}
    \|F\|_{L^2_s(\mathbb{R}^+)} &= \|F \circ \exp\|_{L^2_{s}(\mathbb{R})} .
\end{align*}
In 1985, Carbery \cite{Carbery_Maximal_Radial_Fourier_Mult_1985} first studied the maximal operator associated with the radial Fourier multipliers on $\mathbb{R}^n$ and obtained the following pointwise inequality: Let $F: \mathbb{R} \to \mathbb{C}$ be a bounded Borel function, then for $s>1/2$
\begin{align}
\label{Pointwise inequality for maximal Euclidean}
    \sup_{R>0} |F(R^{-1}\Delta)f(x) | &\leq C \, \|F\|_{L^2_s(\mathbb{R}^+)} \, \mathfrak{S}^{\alpha}f(x) ,
\end{align}
where $\Delta$ denotes the Laplacian on $\mathbb{R}^n$. From the above pointwise estimate and the $L^p$-boundedness of $\mathfrak{S}^{\alpha}$ (see Theorem \ref{Theorem: Boundedness of Euclidean square function}, Theorem \ref{Theorem: Stein square function for p less that 2 case}) one can obtain the $L^p$-boundedness of the corresponding maximal operator of $F(R^{-1}\Delta)$. For other related results, see \cite{Dappa_Trebels_Maximal_Fourier_Mult_1985}, \cite{Rubio_Maximal_Fourier_1986}, \cite{Deleaval_Kriegler_Maximal_Hormander_2023}, \cite{Chen_Lin_Wu_Yan_Maximal_Hormander_multiplier_JFA}, \cite{Gan_Oh_Wu_New_Bound_higher_dimension_2025} and references therein.

Now let us consider the maximal operator associated to the spectral multiplier for the sub-Laplacian $\mathcal{L}$ on M\'etivier groups. For any bounded Borel function $F: \mathbb{R} \to \mathbb{C}$, we define the maximal spectral multiplier by
\begin{align}
\label{definition of maximal spectral multipliers}
    F^{*}(\mathcal{L})f(x,u) &= \sup_{R>0} \left|F(R^{-1}\mathcal{L})f(x,u) \right| .
\end{align}

On any stratified Lie groups Mauceri and Meda \cite{Mauceri_Meda_Multipliers_Stratified_groups_1990} proved the following result.

\begin{theorem}\cite[Theorem 2.6]{Mauceri_Meda_Multipliers_Stratified_groups_1990}
Let $G$ be any stratified Lie groups with $Q$ be the homogeneous dimension. Also let $F$ be a function defined on $(0, \infty)$ such that it satisfies
\begin{align}
\label{Conition for maximal spectral multiplier}
    \sum_{k \in \mathbb{Z}} \|\varphi F(2^k \cdot)\|_{L^2_{\alpha}}^2 = D < \infty \quad \quad \text{for some} \quad \alpha>0 ,
\end{align}
where $\varphi$ is a nontrivial smooth function with compact support. Then
\begin{align*}
    \|F^{*}(\mathcal{L})f\|_{L^p(G)} &\leq C  \, D \, \|f\|_{L^p(G)} ,
\end{align*}
whenever
\begin{enumerate}
    \item $\alpha> Q(\frac{1}{p}-\frac{1}{2})+\frac{1}{2}$ and $1<p\leq 2$.
    \item $\alpha>(Q-1)(\frac{1}{2}-\frac{1}{p})+\frac{1}{2}$ and $2 \leq p \leq \infty$.
\end{enumerate}

\end{theorem}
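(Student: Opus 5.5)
This is the Mauceri–Meda theorem, quoted from \cite[Theorem 2.6]{Mauceri_Meda_Multipliers_Stratified_groups_1990}. I would prove it by Carbery's scheme: dominate the maximal operator pointwise by a Stein-type square function built from homogeneous-dimension kernel estimates, then bound that square function on $L^p$.

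\textbf{Step 1: decomposition and Mellin reduction.} Take a dyadic partition of unity $\sum_k \varphi(2^{-k}\eta)=1$ and write $F=\sum_k F_k$ with $F_k=\varphi(2^{-k}\cdot)F$. For each piece I use the fundamental theorem of calculus in the dilation parameter, $F_k(R^{-1}\eta)=-\int_R^\infty \partial_t F_k(t^{-1}\eta)\,dt$, followed by Cauchy--Schwarz in $t$ with weight $t$. This turns $\sup_R|F_k(R^{-1}\mathcal{L})f|$ into a square function in $t$ with the usual $\alpha-\tfrac12$ versus $\tfrac12$ derivative split. The factor $\tfrac12$ in both thresholds comes from exactly this step, since $L^2_{\alpha}$ controls the sup in $R$ once $\alpha>\tfrac12$. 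The hypothesis \eqref{Conition for maximal spectral multiplier} then gives $\sup_R|F(R^{-1}\mathcal{L})f|\lesssim D\,\mathcal{G}^{\alpha}f$, where $\mathcal{G}^{\alpha}$ is a Littlewood--Paley square function with fractional-derivative multipliers $m_j(t\mathcal{L})$ localized to $|\eta|\sim 2^{j}$. This is the analogue of \eqref{Pointwise inequality for maximal Euclidean}.

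\textbf{Step 2: the range $1<p\le2$.} At $p=2$ the bound is just the spectral theorem and Plancherel in $t$. At $p=1$ I regard $\mathcal{G}^{\alpha}$ as a vector-valued ($L^2(dt/t)$-valued) singular integral on the homogeneous space $G$. The Christ / Mauceri--Meda weighted Plancherel estimate
\begin{align*}
\|\,|x|^{\beta}K_{m(t\mathcal{L})}\|_{L^2}\lesssim t^{\beta/2-Q/4}\|m\|_{L^2_{\beta+\epsilon}},
\end{align*}
together with Cauchy--Schwarz, gives the H\"ormander integral condition as soon as $\alpha>Q/2+1/2$. Calder\'on--Zygmund theory then gives weak $(1,1)$. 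Complex interpolation in $\alpha$ between $p=1$ and $p=2$ (Stein's analytic family $F_z$) yields $\alpha>Q(\tfrac1p-\tfrac12)+\tfrac12$.

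\textbf{Step 3: the range $p\ge2$, the main obstacle.} Duality does not apply directly to a square function, so I would use the standard $L^{p/2}$ argument: test $\|\mathcal{G}^{\alpha}f\|_p^2$ against a weight $w\in L^{(p/2)'}$. On each dyadic spectral piece, the kernel estimates above let me bound $\int|m_j(t\mathcal{L})f|^2w$ by $\int|f|^2\,\mathcal{M}_{j}w$, where $\mathcal{M}_j$ averages over balls of radius $2^{j/2}$ with a tail. At $p=\infty$ this requires $\alpha>\tfrac{Q-1}{2}+\tfrac12$. The saving of $\tfrac12$ (i.e.\ $Q-1$ rather than $Q$) comes from the $t$-integration gaining one dimension, as in the Euclidean $L^\infty$ Stein square function bound. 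Interpolating with $p=2$ then gives $(Q-1)(\tfrac12-\tfrac1p)+\tfrac12$.

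I expect the delicate part to be this endpoint: producing a maximal-function control of $\mathcal{M}_j w$, uniform in $j$, with exactly the $(Q-1)/2$ smoothness. I would first try obtaining it from Sobolev embedding in $t$ combined with the Hardy--Littlewood maximal operator on $G$, using the doubling property with exponent $Q$.
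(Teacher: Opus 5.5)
Your architecture (a Carbery-type domination of $F^{*}(\mathcal{L})$ by a Stein square function, followed by $L^p$ bounds for that square function) is the one the paper uses for its own Theorem \ref{Theorem: Maximal spectral multiplier}. The Mauceri--Meda result itself is only cited in the paper. Step 2 is essentially fine, but interpolate piece by piece in the dyadic decomposition of $(1-u)_+^{\alpha-1}$, or start from an $H^1$ endpoint, rather than complex-interpolating a weak-type bound.

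The genuine gap is Step 3, which is exactly where $Q-1$ has to come from, and none of the tools you list produces it. At $p=\infty$ the $L^{p/2}$-duality forces $w\in L^{(p/2)'}=L^1$, where no maximal operator covering all scales is bounded; moreover $\mathfrak{S}^{\alpha}(\mathcal{L})$ is not bounded on $L^\infty$ at all. Bounding the $t$-integral by a supremum (Sobolev embedding in $t$) and the kernel in $L^1$ gives only $2^{j(Q-1)/2}$ for the $j$-th piece, i.e.\ the Calder\'on--Zygmund threshold $Q(\frac12-\frac1p)+\frac12$. The Euclidean gain rests on the explicit radial oscillation of Bessel kernels, which has no analogue on a general stratified group.

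The missing idea is duality in $t$ on local pieces. Write $(1-u)_+^{\alpha-1}=\sum_j 2^{-j(\alpha-1)}\Psi(2^j(1-u))$ up to a harmless smooth term. At each point, $\bigl(\int_1^2|\Psi(2^j(1-\mathcal{L}/t))f|^2\,dt\bigr)^{1/2}=\sup_{\|h\|_{L^2}\le 1}|m_h(\mathcal{L})f|$ with $m_h(\eta)=\int_1^2 h(t)\Psi(2^j(1-\eta/t))\,dt$. Schur's test gives $\|m_h\|_{L^2_s}\lesssim 2^{j(s-1)}$, a factor $2^{-j/2}$ better than $\|\Psi(2^j(1-\cdot))\|_{L^2_s}$. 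Weighted Plancherel and Cauchy--Schwarz then give $\|\mathcal{K}_{m_h(\mathcal{L})}\|_{L^1}\lesssim 2^{j(Q/2-1)+\epsilon}$, which is an $L^\infty\to L^\infty(L^2_t)$ bound. Interpolate this with the $L^2$ bound $2^{-j/2}$, pass from local to global as in Proposition \ref{Proposition: Square function estimate}, and sum in $j$. This yields $L^p$-boundedness of $\mathfrak{S}^{\alpha}(\mathcal{L})$ for $\alpha>(Q-1)(\frac12-\frac1p)+\frac12$, $2\le p<\infty$.

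Two further points. First, Step 1 as written does not yield domination by a single $F$-independent square function. Applying the fundamental theorem of calculus piece by piece and then summing over $k$ costs $\sum_k\|\varphi F(2^k\cdot)\|_{L^2_\alpha}$, an $\ell^1$ quantity that $D$ does not control. What works is the Riemann--Liouville identity \eqref{Main identity to use maximal multiplier result} followed by Cauchy--Schwarz in $s$, as in \eqref{Application of Holder in maximal spectral}. This gives $F^{*}(\mathcal{L})f\lesssim\|F\|_{L^2_\alpha(\mathbb{R}^+)}\,\mathfrak{S}^{\alpha}(\mathcal{L})f$ with $\|F\|_{L^2_\alpha(\mathbb{R}^+)}^2\lesssim D$.

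Second, the endpoint $p=\infty$ is beyond any square-function argument. There $\|F^{*}(\mathcal{L})\|_{L^\infty\to L^\infty}=\|\mathcal{K}_{F(\mathcal{L})}\|_{L^1}$, which requires $\ell^1$-summability of the dyadic pieces; given that, the per-piece bound $\|\mathcal{K}_{m(\mathcal{L})}\|_{L^1}\lesssim\|m\|_{L^2_s}$, $s>Q/2$, suffices. Under the square-summed hypothesis as stated the $p=\infty$ claim actually fails. Take $\psi\in C_c^\infty((1,2))$ and $F=\sum_{|k|\le N}\pm(1+|k|)^{-1}\psi(2^{-k}\cdot)$ with suitable signs. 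Then $D=O(1)$, while Khintchine's inequality applied to the $L^1$-normalized dilates $\mathcal{K}_{\psi(2^{-k}\mathcal{L})}$ gives $\|\mathcal{K}_{F(\mathcal{L})}\|_{L^1}\gtrsim\log N$. So even a completed Step 3 only covers $2\le p<\infty$.
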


Before we are going to state our result of this section, let us mention that, (\cite[Lemma 3.2]{Chen_Lin_Wu_Yan_Maximal_Hormander_multiplier_JFA}) if a function $F$ defined on $(0, \infty)$ satisfies \eqref{Conition for maximal spectral multiplier}, then $F \in L^2_{\alpha}(\mathbb{R}^+)$ and
\begin{align*}
    \|F\|_{L^2_{\alpha}(\mathbb{R}^+)} &\leq C_{\alpha} \sum_{k \in \mathbb{Z}} \|\varphi F(2^k \cdot)\|_{L^2_{\alpha}}^2 .
\end{align*}
The following is our main result regarding the $L^p$-boundedness of the maximal spectral multipliers on M\'etivier groups, which improves the result of Mauceri and Meda \cite[Theorem 2.6]{Mauceri_Meda_Multipliers_Stratified_groups_1990} in this case by improving the range of the smoothness parameter.

\begin{theorem}
\label{Theorem: Maximal spectral multiplier}
Let $F: \mathbb{R} \to \mathbb{C}$ be a bounded Borel function. Then we have
\begin{align}
\label{Estimate for maximal spectral mutliplier}
    \|F^{*}(\mathcal{L})f\|_{L^p(G)} &\leq C \, \|F\|_{L^2_{\alpha}(\mathbb{R}^+)} \, \|f\|_{L^p(G)} ,
\end{align}
whenever
\begin{enumerate}
    \item $\alpha>\max\{d(\frac{1}{2}-\frac{1}{p}), \frac{1}{2}\}= \alpha_d(p) + \frac{1}{2}$ and $\mathfrak{p}_G \leq p<\infty$ or $p=2$ .
    \item $\alpha>d(\frac{1}{p}-\frac{1}{2}) + \frac{1}{2}$ and $1<p <2$.
\end{enumerate}
    
\end{theorem}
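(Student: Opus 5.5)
The plan is to prove a Carbery-type pointwise domination of $F^{*}(\mathcal{L})$ by Stein's square function and then invoke Theorems \ref{Theorem: Stein square estimate} and \ref{Theorem: Stein square function for p less that 2 case}. Concretely, we aim for
\begin{align}
\label{proposal pointwise}
    \sup_{R>0}|F(R^{-1}\mathcal{L})f(x,u)| \leq C\,\|F\|_{L^2_{\alpha}(\mathbb{R}^+)}\,\mathfrak{S}^{\alpha}(\mathcal{L})f(x,u),
\end{align}
valid for $\alpha>1/2$, first for $f$ in a dense class such as $\mathcal{S}(G)$, and then extend by density. Given \eqref{proposal pointwise}, both parts of the theorem follow at once: in case (1) we take $\alpha>\alpha_d(p)+\frac12$ (which is $\ge 1/2$) and apply Theorem \ref{Theorem: Stein square estimate}; in case (2) we take $\alpha>d(\frac1p-\frac12)+\frac12>\frac12$ and apply Theorem \ref{Theorem: Stein square function for p less that 2 case}.

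To get \eqref{proposal pointwise}, we first reduce by density to $F$ smooth and compactly supported in $(0,\infty)$. We then write $m(\eta)=F(\eta^2)$, which is harmless since the $L^2_\alpha(\mathbb{R}^+)$ norm is invariant under $\lambda\mapsto\lambda^2$ up to constants because it is a Sobolev norm in the logarithmic variable. Next we use the Bochner–Riesz subordination formula: for $\alpha>0$,
\begin{align*}
    m(\eta) = \frac{(-1)^{\lceil\alpha\rceil}}{\Gamma(\alpha+1)}\int_0^\infty \left(1-\frac{\eta^2}{t^2}\right)_+^{\alpha}\, t^{\alpha}\, \frac{d^{\alpha+1}m}{dt^{\alpha+1}}(t)\,dt ,
\end{align*}
where fractional derivatives are taken via the Weyl integral. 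This is valid for compactly supported smooth $m$ by the Riemann–Liouville identity.

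By the functional calculus of $\mathcal{L}$ on $L^2(G)$, which makes sense via the spectral resolution \eqref{Defintion: Spectral resolution for subLaplacian}, this gives
\begin{align*}
    F(R^{-1}\mathcal{L})f = c_\alpha\int_0^\infty S^{\alpha}_{t}(\mathcal{L})f\; (R^{1/2}t)^{\alpha}\,\frac{d^{\alpha+1}m}{dt^{\alpha+1}}(R^{1/2}t)\,R^{1/2}\,dt .
\end{align*}
Integration by parts in $t$, lowering $\alpha$ to $\alpha-1$ on $m$ and differentiating $S_t^\alpha$, followed by Cauchy–Schwarz with the measure $t\,dt$, yields
\begin{align*}
    |F(R^{-1}\mathcal{L})f| \le \left(\int_0^\infty |\partial_t S_t^{\alpha}(\mathcal{L})f|^2\,t\,dt\right)^{1/2}\left(\int_0^\infty |s^{\alpha}m^{(\alpha)}(s)|^2\,\frac{ds}{s}\right)^{1/2}.
\end{align*}
The second factor is scale invariant in $R$ and is controlled by $\|F\|_{L^2_\alpha(\mathbb{R}^+)}$ exactly as in Carbery's argument \cite{Carbery_Maximal_Radial_Fourier_Mult_1985}, using the Mellin identification. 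To avoid the slightly delicate integration by parts with fractional orders, a cleaner route is to represent $m$ with order $\alpha-1$ and use the identity $\partial_t S^{\alpha}_t = 2\alpha t^{-1}(S^{\alpha-1}_t-S^{\alpha}_t)$. This expresses $S^{\alpha-1}$ combinations through $t\partial_t S_t^\alpha$, as in Stein's classical argument.

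The main obstacle is justifying these manipulations pointwise on $G$ rather than merely in $L^2$. We must show that $t\mapsto S_t^\alpha(\mathcal{L})f(x,u)$ is differentiable, with the formula above holding pointwise for $f\in\mathcal{S}(G)$, and that boundary terms vanish as $t\to0,\infty$. For this we use the explicit kernel representation \eqref{Definition: Bochner-Riesz means}, together with the fact that for Schwartz $f$ the spectral measure $dE(\eta)f(x,u)$ is given by a smooth density in $\eta$ with adequate decay. Alternatively, we can apply the identity to the spectrally localized function $\chi(\mathcal{L})f$ with $\chi\in C_c^\infty(0,\infty)$ and pass to a limit. Once this is in place, the extension from the dense class to general $f\in L^p$ and general $F$ is routine via Fatou's lemma and the $L^p$ bound for $\mathfrak{S}^\alpha(\mathcal{L})$.
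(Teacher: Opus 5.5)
Your overall plan is the paper's plan. You prove the Carbery-type bound $F^{*}(\mathcal{L})f\le C\|F\|_{L^2_{\alpha}(\mathbb{R}^+)}\,\mathfrak{S}^{\alpha}(\mathcal{L})f$ for $\alpha>1/2$, then invoke Theorem~\ref{Theorem: Stein square estimate} and Theorem~\ref{Theorem: Stein square function for p less that 2 case}. However, your derivation of that pointwise bound, which is the entire content of the proof, does not work as written.

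First, the displayed subordination formula is false. Combining the Bochner--Riesz kernel $(1-\eta^2/t^2)_+^{\alpha}$ with the Riesz-type weight $t^{\alpha}m^{(\alpha+1)}(t)$ does not reproduce $m$. Already for $\alpha=1$ one gets $\int_0^\infty(1-\eta^2/t^2)_+\,t\,m''(t)\,dt=m(\eta)-\eta^2\int_\eta^\infty m'(t)t^{-2}\,dt$. The correct identity lives in the spectral variable $\mu=\eta^2$ with kernel $(1-\mu/\tau)_+^{\alpha}$, and after $\tau=t^2$ it has weight $t^{2\alpha+1}F^{(\alpha+1)}(t^2)$. Also, the rescaling produces $R^{-1/2}$, not $R^{1/2}$.

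Second, and more seriously, even a correct order-$\alpha$ identity leaves a remainder after integration by parts in $t$, because you differentiate the whole product. For example,
$\partial_t\big[(1-\mu/t^2)_+^{\alpha}t^{2\alpha}\big]=t^{2\alpha}\partial_t(1-\mu/t^2)_+^{\alpha}+2\alpha t^{2\alpha-1}(1-\mu/t^2)_+^{\alpha}$.
So besides the $\partial_tS_t^{\alpha}$ term you are left with $\int_0^\infty S_t^{\alpha}(\mathcal{L})f\,t^{2\alpha-1}F^{(\alpha)}(t^2)\,dt$ (suitably rescaled), which is not controlled by $\mathfrak{S}^{\alpha}(\mathcal{L})f$. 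Your ``cleaner route'' has the same defect if you represent $F$ itself with order $\alpha-1$. You then obtain $S_t^{\alpha-1}=S_t^{\alpha}+\frac{t}{2\alpha}\partial_tS_t^{\alpha}$, again with a leftover $S_t^{\alpha}$.

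The missing idea is to apply the Riemann--Liouville formula to $h(\eta)=F(\eta)/\eta$ rather than to $F$. Multiplying back by $\eta$ gives
\[
F\Big(\frac{\eta}{R}\Big)=C\int_0^\infty\frac{\eta}{Rs}\Big(1-\frac{\eta}{Rs}\Big)_+^{\alpha-1}\,s^{\alpha+1}\frac{d^{\alpha}}{ds^{\alpha}}\Big(\frac{F(s)}{s}\Big)\,\frac{ds}{s}.
\]
At $t^2=Rs$ the kernel is exactly $(2\alpha)^{-1}t\partial_t(1-\eta/t^2)_+^{\alpha}$, i.e.\ the symbol of $S_t^{\alpha-1}-S_t^{\alpha}$, with no remainder. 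Cauchy--Schwarz in $L^2(ds/s)$ then bounds $|F(R^{-1}\mathcal{L})f|$ by a constant times $\mathfrak{S}^{\alpha}(\mathcal{L})f$, times $\big(\int_0^\infty|s^{\alpha+1}\frac{d^{\alpha}}{ds^{\alpha}}(F(s)/s)|^2\frac{ds}{s}\big)^{1/2}$. The substitution $t^2=Rs$, $ds/s=2\,dt/t$ removes all dependence on $R$, and the last factor is $\|F\|_{L^2_{\alpha}(\mathbb{R}^+)}$ by definition.

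So you need no integration by parts, no passage to $m(\eta)=F(\eta^2)$, and no Mellin comparison. The very form of the norm $\|\cdot\|_{L^2_{\alpha}(\mathbb{R}^+)}$, built from $(F/\lambda)^{(\alpha)}$, is tailored to this identity. Once the identity is in place, the pointwise justification you single out as the main obstacle is routine; the paper simply works with $F\in C_c^\infty(0,\infty)$ through the functional calculus. The rest of your argument then goes through.
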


Actually, analogous to \eqref{Pointwise inequality for maximal Euclidean}, we have also obtained a pointwise inequality for the sub-Laplacian $\mathcal{L}$ on M\'etivier groups (see \eqref{After application of Holder final estimate}), in order to prove the above theorem. Note that interpolating between the two instances $p=2$ and $p=\mathfrak{p}_G$ in (1) of Theorem \ref{Theorem: Maximal spectral multiplier} one can also obtain the estimate \eqref{Estimate for maximal spectral mutliplier} for the range $2\leq p \leq \mathfrak{p}_G$.

\medskip
Next we will discuss about sharp maximal Bochner-Riesz means on M\'etivier groups, which we obtain as an application of the above Theorem \ref{Theorem: Maximal spectral multiplier}. Since $L^p$-boundedness of the maximal Bochner-Riesz mean on Euclidean setup has been attracted lot of attention over the past several years, let us first start with it. On $\mathbb{R}^n$, associated to the Bochner-Riesz operator $S_R^{\alpha}$ (see \eqref{Definition: Euclidean Bochner-Riesz means}), the maximal Bochner-Riesz mean is denoted by $S^{\alpha}_{*}$ and defined by
\begin{align*}
    S^{\alpha}_{*}f(x) &= \sup_{R>0}|S_R^{\alpha}f(x)| .
\end{align*}
As in the case of Bochner-Riesz means, many works have been devoted to determine the optimal range of $\alpha$ such that $S^{\alpha}_{*}$ is bounded on $L^p$-spaces. For $p>2$, it has been conjectured that $L^p$-boundedness of $S^{\alpha}_{*}$ holds on the same range as of $S_R^{\alpha}$, that is, for $p>2$ the following estimate
\begin{align}
\label{Maximal Bochner-Riesz conjecture}
    \|S^{\alpha}_{*}f\|_{L^p(\mathbb{R}^n)} \leq C \|f\|_{L^p(\mathbb{R}^n)} \quad \text{holds if and only if} \quad \alpha> \alpha_n(p) .
\end{align}
It is known that sharp bounds for $\mathfrak{S}^{\alpha}$ implies the sharp estimates of $S^{\alpha}_{*}$ \cite{Lee_Square_Function_to_Bochner_Riesz_2018}. For $n=2$, the conjecture was settled down by Carbery \cite{Carbery_Maximal_Bochner_Riesz_R2_case_1983}. For $n\geq 3$, partial results are known, for instance, in the range $p\geq \frac{2(n+1)}{n-1}$, one can see \cite{Christ_almost_Everywhere_Bochner_Riesz_1985, Seeger_Maximal_Fourier_Mult_1986} and consequently, this range was further extended by Lee in \cite{Sanghyuk_Lee_Improved_Bochner-Riesz_2004} and the latest result can be found in \cite{Lee_Square_Function_to_Bochner_Riesz_2018} and \cite{Gan_Oh_Wu_New_Bound_higher_dimension_2025}. 


\medskip
While for $1<p<2$, the range of $\alpha$ where $S^{\alpha}_{*}$ is bounded on $L^p$ is not same as $S_R^{\alpha}$, and additional restriction is needed and Tao \cite{Tao_Bochner_Riesz_P_less_2_case_1998} conjectured that for $1<p<2$
\begin{align*}
    \|S^{\alpha}_{*}f\|_{L^p(\mathbb{R}^n)} \leq C \|f\|_{L^p(\mathbb{R}^n)} \quad \text{ holds if and only if} \quad \alpha> \tfrac{2n-1}{2p}-\tfrac{n}{2} .
\end{align*}
In \cite{Tao_Bochner_Riesz_P_less_2_case_1998} Tao proves that the above condition is necessary. Note that $S^{\alpha}_{*}$ is bounded of $L^p(\mathbb{R}^n)$ for $1< p < 2$ whenever $\alpha>(n-1)(\frac{1}{p}-\frac{1}{2})$, which follows from the real interpolation (see \cite{Gan_Wu_Maximal_Bochner_Riesz_2025}). When $n=2$, in \cite{Tao_Maximal_Bochner_Riesz_Plane_2002}, Tao first proved some new estimates for $S^{\alpha}_{*}$ and later it was further improved by Li and Wu \cite{Li_Wu_Maximal_Bochner_Riesz_2020}. Recently, Gan and Wu \cite{Gan_Wu_Maximal_Bochner_Riesz_2025} again improved this result for the case $n=2$ and $3$.

\medskip
Instead of asking for the $L^p$-boundedness of $S^{\alpha}_{*}$ one can also ask for a slightly weaker property, that is, what is the optimal range of $\alpha$, so that $S_R^{\alpha}f \to f$ almost everywhere (a.e.) as $R \to \infty$ for $f \in L^p(\mathbb{R}^n)$. When $p \geq 2$, it has been conjectured that almost everywhere convergence of $S_R^{\alpha}$ holds on the same range in which the $L^p$-boundedness of $S_*^{\alpha}$ hold. Although the maximal Bochner-Riesz conjecture remains open for $n \geq 3$ (see \eqref{Maximal Bochner-Riesz conjecture}), almost everywhere convergence problem for $p>2$ and $\alpha>\alpha_n(p)$ was completely solved by Carbery, Rubio de Francia and Vega \cite{Carbery_Rubio_Vega_Almost_everywhere_1988}. However, for $1<p<2$, by Stein's maximal principle a.e. convergence of $S_R^{\alpha}$ for $f \in L^p(\mathbb{R}^n)$ is equivalent to the $L^p \to L^{p, \infty}$ estimate for the corresponding maximal operator $S_*^{\alpha}$, see \cite{Tao_Maximal_Bochner_Riesz_Plane_2002}, \cite{Li_Wu_Maximal_Bochner_Riesz_2020}, \cite{Gan_Wu_Maximal_Bochner_Riesz_2025}.

\medskip
For the sub-Laplacian $\mathcal{L}$ on M\'etivier group, corresponding to the Bochner-Riesz operator $S_R^{\alpha}(\mathcal{L})$ (see \eqref{Definition: Bochner-Riesz means}), the maximal Bochner-Riesz operator is defined by
\begin{align*}
    S^{\alpha}_{*}(\mathcal{L})f(x,u) &:= \sup_{R>0} |S_R^{\alpha}(\mathcal{L})f(x,u)| .
\end{align*}
In this paper our another aim is to prove the $L^p$-boundedness of $S^{\alpha}_{*}(\mathcal{L})$ with smoothness index $\alpha$ expressed in terms of the topological dimension $d$ of $G$. Outside the Euclidean setup, the maximal Bochner-Riesz for elliptic operators on space of homogeneous type also has been studied in \cite{Chen_Lee_Sikora_Yan_Bochner_Elliptic_2020} and on a stratified group in \cite[Corollary 2.8]{Mauceri_Meda_Multipliers_Stratified_groups_1990}. Since M\'etivier groups are fall into these classes, if one try to use that results for the sub-Laplacian $\mathcal{L}$ on M\'etivier group $G$, one may only get result in terms of the homogeneous dimension $Q$ of $G$. On the other hand, in \cite{Horwich_Martini_Pointwise_Heisenberg_type_2021}, Horwich and Martini proved the following result:

\begin{theorem}\cite[Theorem 1.2]{Horwich_Martini_Pointwise_Heisenberg_type_2021}, \cite{Martini_Lie_groups_Polynomial_Growth_2012}
\label{Theorem: Maximal Bochner-Riesz Horwich Martini}
    Let $G$ be a M\'etivier group and $2 \leq p \leq \infty$. Then whenever $\alpha>(d-1)(\tfrac{1}{2}-\tfrac{1}{p})$ we have
    \begin{align*}
        \|S^{\alpha}_{*}(\mathcal{L})f\|_{L^p(G)} &\leq C \|f\|_{L^p(G)} .
    \end{align*}
\end{theorem}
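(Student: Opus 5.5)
The plan is to split the Bochner--Riesz multiplier dyadically near the boundary of its support, estimate the maximal operator of each piece at $p=2$ and at $p=\infty$, and interpolate. Write
\[
(1-\eta)_+^{\alpha}=m_0(\eta)+\sum_{j\geq 1}2^{-j\alpha}m_j(\eta),
\]
where $m_0\in C_c^\infty([0,1))$ and each $m_j$ is smooth and supported in $\{1-2^{-j+1}\le \eta\le 1-2^{-j-1}\}$. The $m_j$ satisfy $|\partial^k m_j|\lesssim_k 2^{jk}$ uniformly in $j$. Then
\[
S^{\alpha}_{*}(\mathcal{L})f\le \sup_{R>0}|m_0(\mathcal{L}/R^2)f|+\sum_{j\ge1}2^{-j\alpha}\,M_jf,\qquad M_jf:=\sup_{R>0}|m_j(\mathcal{L}/R^2)f|.
\]
The smooth term $\sup_R|m_0(\mathcal{L}/R^2)f|$ is dominated by a maximal function of Hardy--Littlewood type, because the kernels of $m_0(\mathcal{L}/R^2)$ are $L^1$-normalised dilates of a Schwartz kernel on the homogeneous group. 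It is therefore bounded on $L^p$ for all $1<p\le\infty$. The question reduces to proving, for every $\varepsilon>0$,
\[
\|M_jf\|_{L^p}\lesssim_\varepsilon 2^{j((d-1)(\frac12-\frac1p)+\varepsilon)}\|f\|_{L^p}.
\]
Summing this against $2^{-j\alpha}$ then gives the theorem whenever $\alpha>(d-1)(\frac12-\frac1p)$. The estimate for $M_j$ comes from the two endpoint bounds below, combined by interpolation. Each $M_j$ can be linearised by a measurable choice $R=R(x,u)$, so Riesz--Thorin (or Marcinkiewicz) interpolation applies.

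\textbf{$L^2$ endpoint: $\|M_j\|_{2\to2}\lesssim_\varepsilon 2^{j\varepsilon}$.} I use the fundamental theorem of calculus in $\log R$. With $t=\log R$ and $g(t)=m_j(e^{-2t}\mathcal{L})f$,
\[
\sup_t|g(t)|^2\le |g(t_0)|^2+2\Big(\int|g|^2dt\Big)^{1/2}\Big(\int|g'|^2dt\Big)^{1/2}.
\]
The spectral theorem converts the two integrals into $\int_0^\infty|m_j(\eta)|^2\,d\eta/\eta\sim2^{-j}$ and $\int_0^\infty|\eta m_j'(\eta)|^2\,d\eta/\eta\sim 2^{j}$. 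Both are multiplied by $\|f\|_2^2$ after integrating over $G$. This yields an $O(1)$ bound, and in particular $\lesssim 2^{j\varepsilon}$.

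\textbf{$L^\infty$ endpoint: $\|M_j\|_{\infty\to\infty}\lesssim_\varepsilon 2^{j(\frac{d-1}{2}+\varepsilon)}$.} Here $M_jf\le \sup_R\|K_{m_j(\mathcal{L}/R^2)}\|_{L^1(G)}\|f\|_\infty$. By homogeneity of $\mathcal{L}$ the kernel $L^1$-norm does not depend on $R$, so it suffices to bound $\|K_{m_j(\mathcal{L})}\|_{L^1(G)}$. For this I invoke the sharp $L^1$ kernel estimate on M\'etivier groups from \cite{Martini_Lie_groups_Polynomial_Growth_2012}, which is of topological-dimension type: for $F$ supported in $[1/4,1]$ and $s>d/2$,
\[
\|K_{F(\mathcal{L})}\|_{L^1}\lesssim\|F\|_{L^2_s}.
\]
Since $\|m_j\|_{L^2_s}\sim 2^{j(s-\frac12)}$, taking $s=\frac d2+\varepsilon$ gives the stated bound.

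\textbf{Interpolation and summation.} Interpolating the two endpoint bounds with $\theta=1-\frac2p$ gives $\|M_j\|_{p\to p}\lesssim 2^{j((d-1)(\frac12-\frac1p)+\varepsilon)}$. This is summable against $2^{-j\alpha}$ under the hypothesis on $\alpha$, which completes the argument.

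The main obstacle is the $L^\infty$ endpoint. The whole improvement from $Q$ to $d$ sits in the weighted Plancherel estimate with weights in the second-layer variable, which underlies the $d/2$ kernel bound. That is the deep input, and I take it from \cite{Martini_Lie_groups_Polynomial_Growth_2012} rather than reprove it. The remaining steps are routine.
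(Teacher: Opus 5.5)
Your proposal is correct and follows essentially the route behind the paper's citation-based proof. The paper gets the theorem by inserting Martini's threshold $\varsigma_+(\mathcal{L})=d/2$ into Horwich--Martini's general stratified-group result, which interpolates an $L^2$ maximal bound with the $L^\infty$ bound coming from the $L^2_s$-Sobolev $L^1$-kernel estimate ($s>d/2$) that you also use. You simply make that interpolation explicit through the dyadic decomposition, losing only an $\varepsilon$, which is harmless since the condition on $\alpha$ is open.
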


Actually in \cite{Horwich_Martini_Pointwise_Heisenberg_type_2021}, they proved the above result for sub-Laplacian on any stratified Lie groups and under the assumption the Mikhlin-H\"ormander theorem hold with threshold $\varsigma_{+}(\mathcal{L})$, see \cite{Martini_Muller_spectral_mult_two_step_2016}. On the other hand, for M\'etivier groups it is known that $\varsigma_{+}(\mathcal{L})=d/2$ \cite{Martini_Lie_groups_Polynomial_Growth_2012}. Hence, combining this two results one get the above Theorem \ref{Theorem: Maximal Bochner-Riesz Horwich Martini}.

\medskip
As a corollary of Theorem \ref{Theorem: Maximal spectral multiplier} (for more details, see \cite[proof of Corollary 2.8]{Mauceri_Meda_Multipliers_Stratified_groups_1990}), we have the following result and which can be seen as an analogue of the Euclidean result obtained by Christ \cite{Christ_almost_Everywhere_Bochner_Riesz_1985} and Seeger \cite{Seeger_Maximal_Fourier_Mult_1986}, where the Euclidean dimension $n$ in the smoothness threshold is replaced by the topological dimension $d$ of the M\'etivier group.

\begin{figure}[!ht]
\begin{centering}
\definecolor{qqqqff}{rgb}{0,0,1}
\begin{tikzpicture}[line cap=round,line join=round,x=1.5cm,y=0.8cm]
\fill[color=yellow,fill=yellow,fill opacity=0.3] (0,7) -- (5,7) -- (5,0) -- (0,6) -- cycle;
\fill[color=brown,fill=brown,fill opacity=0.5] (0,4) -- (5,0) -- (0,6) -- cycle;
\fill[color=green,fill=green,fill opacity=0.4] (0,4) -- (4,0) -- (5,0) -- cycle;
\fill[pattern={Lines[angle=45,distance=4pt,line width=0.6pt]},
    pattern color=red] (0,4) -- (1,2) -- (5,0) -- (5,7) -- (0,7) -- cycle;
\draw [dashed](0,4)-- (2,0);
\draw [dashed](1,2)-- (5,0);
\draw [-] (0,4) -- (4,0);
\draw [-] (0,6) -- (5,0);
\draw [-] (0,4) -- (5,0);
\draw [->] (0,0) -- (6,0);
\draw [->] (0,0) -- (0,8);
\draw [dashed](5,0)-- (5,7);
\draw [dashed](0,7)-- (5,7);
\begin{scriptsize}
\fill [color=qqqqff] (0,0) circle (1.5pt);
\fill [color=qqqqff] (0,6) circle (1.5pt);
\fill [color=qqqqff] (5,0) circle (1.5pt);
\fill [color=qqqqff] (4,0) circle (1.5pt);
\fill [color=qqqqff] (2,0) circle (1.5pt);
\fill [color=qqqqff] (1,0) circle (1.5pt);
\fill [color=qqqqff] (0,2) circle (1.5pt);
\draw [] (0,4) circle (1.5pt);
\draw[color=qqqqff] (-0.3,6) node {$\frac{Q-1}{2}$};
\draw[color=qqqqff] (-0.3,4) node {$\frac{d-1}{2}$};
\draw[color=qqqqff] (-0.5,2) node {$\frac{2d_1+d_2-1}{2(d_2+1)}$};
\draw[color=qqqqff] (5.0,-0.4) node {$\frac{1}{2}$};
\draw[color=qqqqff] (1,-0.4) node {$\frac{d_2-1}{2(d_2+1)}$};
\draw[color=qqqqff] (2,-0.4) node {$\frac{d-1}{2d}$};
\draw[color=qqqqff] (4,-0.5) node {$\frac{Q-2/3}{2Q}$};
\draw[color=qqqqff] (-0.25,-0.5) node {$(0,0)$};
\draw[color=qqqqff] (6.3,-0.16) node {$\frac{1}{p}$};
\draw[color=qqqqff] (-0.3,8) node {$\alpha$};
\end{scriptsize}
\end{tikzpicture}
        \caption{$S^{\alpha}_{*}(\mathcal{L})$ \textbf{boundedness}: Yellow region \cite[Corollary 2.8]{Mauceri_Meda_Multipliers_Stratified_groups_1990} (any stratified Lie groups), Brown region (Theorem \ref{Theorem: Maximal Bochner-Riesz Horwich Martini}) (\cite[Theorem 1.2]{Horwich_Martini_Pointwise_Heisenberg_type_2021} + \cite{Martini_Lie_groups_Polynomial_Growth_2012}) (M\'etivier groups), Red shaded region Corollary \ref{Corollary: Boundedness of linear maximal Bochner-Riesz} (Our result on M\'etivier groups); \\ \textbf{Pointwise convergence}: Yellow region \cite[Corollary 2.8]{Mauceri_Meda_Multipliers_Stratified_groups_1990} (any stratified Lie groups), Brown + Green region \cite[Theorem 1.1]{Horwich_Martini_Pointwise_Heisenberg_type_2021} (Heisenberg-type groups), Red shaded region Corollary \ref{Corollary: Pointwise convergence of maximal Bochner_Riesz} (Our result on M\'etivier groups).}
        \label{Figure: Linear pointwise convergence}
\end{centering}        
\end{figure}

\begin{corollary}
\label{Corollary: Boundedness of linear maximal Bochner-Riesz}
We have
\begin{align*}
    \|S^{\alpha}_{*}(\mathcal{L})f\|_{L^p(G)} &\leq C \|f\|_{L^p(G)} ,
\end{align*}
whenever
\begin{enumerate}
    \item $\alpha>\alpha_d(p)$ and $\mathfrak{p}_G \leq p<\infty$ or $p=2$.
    \item $\alpha>d(\frac{1}{p}-\frac{1}{2})$ and $1<p <2$.
\end{enumerate}
    
\end{corollary}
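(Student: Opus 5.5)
We will deduce the corollary from Theorem \ref{Theorem: Maximal spectral multiplier}, following the scheme of \cite[proof of Corollary 2.8]{Mauceri_Meda_Multipliers_Stratified_groups_1990}. Write $m_\alpha(\eta)=(1-\eta)_+^{\alpha}$, so that $S_R^{\alpha}(\mathcal{L})=m_\alpha(R^{-2}\mathcal{L})$ and $S^{\alpha}_*(\mathcal{L})f=\sup_{R>0}|m_\alpha(R^{-1}\mathcal{L})f| = m_\alpha^{*}(\mathcal{L})f$ (the reparametrization $R^2\to R$ does not change the supremum). The difficulty is that $m_\alpha$ lies in $L^2_{\beta}(\mathbb{R}^+)$ only for $\beta<\alpha+\frac12$. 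It also fails to decay at the origin, so the weighted norm $\|\cdot\|_{L^2_\beta(\mathbb{R}^+)}$, which involves $F(\lambda)/\lambda$ and hence forces vanishing at $0$, is not finite for it directly. We therefore split the multiplier.

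\textbf{Step 1 (splitting).} Fix $\psi\in C_c^\infty([0,\infty))$ with $\psi\equiv1$ on $[0,1/4]$ and $\supp\psi\subset[0,1/2]$. Write $m_\alpha=m_\alpha\psi+m_\alpha(1-\psi)=:F_0+F_1$. Then $F_0$ is smooth and compactly supported, and equals the smooth function $(1-\eta)^{\alpha}\psi(\eta)$. The maximal function $\sup_R|F_0(R^{-1}\mathcal{L})f|$ is controlled by the Hardy--Littlewood maximal function on the homogeneous group $G$, since the kernels of $F_0(R^{-1}\mathcal{L})$ are dilates of a single Schwartz function (Hulanicki). This gives $L^p$-boundedness for every $1<p\le\infty$. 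Alternatively, $F_0=\tilde F_0+F_0(0)e^{-\eta}$, where $\tilde F_0$ vanishes at $0$ with Schwartz decay and therefore lies in every $L^2_\beta(\mathbb{R}^+)$, while the heat-semigroup maximal function is bounded on $L^p$ for $1<p\le\infty$.

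\textbf{Step 2 (the singular part).} The function $F_1$ is supported in $[1/4,1]$ and behaves like $(1-\eta)^\alpha$ near $\eta=1$. Under $\lambda\mapsto e^{\lambda}$ it becomes a compactly supported function with a singularity of type $(-\lambda)_+^{\alpha}$ at $\lambda=0$. A standard Fourier computation, using $\widehat{(\cdot)_+^\alpha}(\xi)\sim|\xi|^{-\alpha-1}$, shows that $F_1\circ\exp\in L^2_\beta(\mathbb{R})$ if and only if $\beta<\alpha+\frac12$. Hence $\|F_1\|_{L^2_\beta(\mathbb{R}^+)}<\infty$ for all $\beta<\alpha+\frac12$.

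\textbf{Step 3 (choice of $\beta$).} In case (1), assume $\alpha>\alpha_d(p)$. Then $\alpha+\frac12>\max\{d(\frac12-\frac1p),\frac12\}$, noting that $\alpha_d(p)+\frac12$ equals this maximum. We may therefore pick $\beta$ with $\max\{d(\frac12-\frac1p),\frac12\}<\beta<\alpha+\frac12$, and Theorem \ref{Theorem: Maximal spectral multiplier}(1) gives $\|F_1^*(\mathcal{L})f\|_{L^p}\le C\|F_1\|_{L^2_\beta(\mathbb{R}^+)}\|f\|_{L^p}$ for $\mathfrak{p}_G\le p<\infty$ or $p=2$. In case (2), $\alpha>d(\frac1p-\frac12)$ allows $d(\frac1p-\frac12)+\frac12<\beta<\alpha+\frac12$, and Theorem \ref{Theorem: Maximal spectral multiplier}(2) applies. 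Combining this with Step 1 via $S^\alpha_*(\mathcal{L})f\le F_0^*(\mathcal{L})f+F_1^*(\mathcal{L})f$ proves the corollary.

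The main obstacle is the precise Sobolev computation in Step 2. One must verify $F_1\circ\exp\in L^2_\beta$ for fractional $\beta$ up to $\alpha+\frac12$, including in the limit case $\alpha$ close to the endpoint. We plan to handle this by writing $(1-e^{\lambda})^\alpha=(-\lambda)_+^\alpha\, h(\lambda)$ with $h$ smooth near $0$, and using the explicit Fourier transform of $(-\lambda)_+^\alpha\chi(\lambda)$ for a smooth cutoff $\chi$. Since the Fourier transform decays like $|\xi|^{-\alpha-1}$, the integral $\int(1+|\xi|)^{2\beta}|\xi|^{-2\alpha-2}\,d\xi$ is finite exactly when $\beta<\alpha+\frac12$.
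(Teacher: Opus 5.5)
Your proposal is correct and follows the paper's route. The paper deduces the corollary directly from Theorem~\ref{Theorem: Maximal spectral multiplier} in the manner of Mauceri--Meda's Corollary 2.8, using the fact that $(1-\eta)_+^{\alpha}$ has $L^2$-Sobolev regularity just below $\alpha+\frac12$. Your explicit splitting $m_\alpha=F_0+F_1$, which treats the non-vanishing at the origin by a Hulanicki or heat-maximal argument, fills in the detail that the paper leaves to that reference.
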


One can also interpolate between the two instances $p=2$ and $p=\mathfrak{p}_G$ of (1) in the above Corollary \ref{Corollary: Boundedness of linear maximal Bochner-Riesz}, to obtain the exact range of $\alpha$ when $p \in [2, \mathfrak{p}_G]$. Note that Corollary \ref{Corollary: Boundedness of linear maximal Bochner-Riesz} improves the result of \cite[Corollary 2.8]{Mauceri_Meda_Multipliers_Stratified_groups_1990} and part (1) of Corollary \ref{Corollary: Boundedness of linear maximal Bochner-Riesz} also improves Theorem \ref{Theorem: Maximal Bochner-Riesz Horwich Martini} (see Figure \ref{Figure: Linear pointwise convergence}). It is important to mention that for $\mathfrak{p}_G \leq p<\infty$ or $p=2$ our result is sharp. In fact, the $L^p$-boundedness of $S^{\alpha}_{*}(\mathcal{L})$ implies the $L^p$-boundedness of $S^{\alpha}_R(\mathcal{L})$, which was already known to be sharp, see \cite{Niedorf_Metivier_group_2023}, \cite{Martini_Muller_Golo_Spectral_Multiplier_Lower_Regularity_2023} (see also Corollary \ref{Corollary: Lp boundedness of Bochner-Riesz on mativier}). Therefore in view of the above Corollary \ref{Corollary: Boundedness of linear maximal Bochner-Riesz}, it is natural to make the following conjecture:

\begin{conjecture}
    Let $G$ be a M\'etivier group. Then for $2 \leq p\leq \infty$, whenever $\alpha>\alpha_d(p)$
    \begin{align*}
    \|S^{\alpha}_{*}(\mathcal{L})f\|_{L^p(G)} &\leq C \|f\|_{L^p(G)} .
\end{align*}
\end{conjecture}

\medskip
Next we discuss about to the pointwise almost everywhere convergence of $S_R^{\alpha}(\mathcal{L})$. This problem has been already studied on Heisenberg groups \cite{Gorges_Muller_Pointwise_Heisenberg_2002} and on Heisenberg-type groups \cite{Horwich_Martini_Pointwise_Heisenberg_type_2021}. As a consequence of the $L^p$-boundedness of the maximal Bochner-Riesz operator on M\'etivier groups we have the following pointwise almost everywhere convergence result for $S_R^{\alpha}(\mathcal{L})$.

\begin{corollary}
\label{Corollary: Pointwise convergence of maximal Bochner_Riesz}
Let $\mathfrak{p}_G \leq p<\infty$ or $p=2$ and $\alpha>\alpha_d(p)$. Then for $f \in L^p(G)$,
\begin{align*}
    \lim_{R \to \infty} S_R^{\alpha}(\mathcal{L})f(x,u) = f(x,u) \quad \quad \text{a.e.} \quad (x,u) \in G .
\end{align*}
    
\end{corollary}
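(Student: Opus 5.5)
The plan is the standard density argument: Stein's maximal principle combined with the maximal estimate in part (1) of Corollary \ref{Corollary: Boundedness of linear maximal Bochner-Riesz}. Fix $p$ with $\mathfrak{p}_G \leq p<\infty$ or $p=2$, and fix $\alpha>\alpha_d(p)$. The argument has three steps.

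\emph{Step 1: a dense class.} I would first find a dense subclass $\mathcal{D}\subset L^p(G)$ on which $S_R^{\alpha}(\mathcal{L})f \to f$ pointwise everywhere. The natural choice is
\begin{align*}
    \mathcal{D}=\{f=\psi(\mathcal{L})g : g\in \mathcal{S}(G),\ \psi\in C_c^\infty(\mathbb{R})\}.
\end{align*}
Density of $\mathcal{D}$ follows from $\psi(t\mathcal{L})g\to g$ in $L^p$ as $t\to 0$, with $\psi(0)=1$; this is a standard Mikhlin--H\"ormander type statement on stratified groups. Alternatively, one can simply take $f\in\mathcal{S}(G)$.

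For $f=\psi(\mathcal{L})g$ with $\operatorname{supp}\psi\subset[-M,M]$, whenever $R^2> 2M$ we can write
\begin{align*}
    S_R^\alpha(\mathcal{L})f-f=m_R(\mathcal{L})f, \qquad m_R(\eta)=\big((1-\eta/R^2)_+^\alpha-1\big)\tilde\psi(\eta),
\end{align*}
where $\tilde\psi\in C_c^\infty$ equals $1$ on $\operatorname{supp}\psi$. Then $m_R\tilde\psi$ is smooth, supported in a fixed compact set, and all its derivatives are $O(R^{-2})$. Since the convolution kernel of $m_R(\mathcal{L})$ is Schwartz with seminorms controlled by finitely many derivatives of $m_R$ (Hulanicki's theorem), we get
\begin{align*}
    \|m_R(\mathcal{L})f\|_{L^\infty}\le \|K_{m_R}\|_{L^{1}}\,\|f\|_{L^\infty}\lesssim R^{-2}.
\end{align*}
Hence $S_R^\alpha(\mathcal{L})f\to f$ uniformly on $G$ for every $f\in\mathcal{D}$.

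\emph{Step 2: the maximal estimate.} Corollary \ref{Corollary: Boundedness of linear maximal Bochner-Riesz}(1) gives $\|S^\alpha_*(\mathcal{L})f\|_{L^p}\le C\|f\|_{L^p}$.

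\emph{Step 3: the oscillation argument.} For $f\in L^p(G)$, define
\begin{align*}
    \Omega f(x,u)=\limsup_{R\to\infty}\big|S_R^\alpha(\mathcal{L})f(x,u)-f(x,u)\big|.
\end{align*}
Write $f=h+(f-h)$ with $h\in\mathcal{D}$ and $\|f-h\|_p<\varepsilon$. By Step 1, $\Omega h=0$, so
\begin{align*}
    \Omega f\le S_*^\alpha(\mathcal{L})(f-h)+|f-h|.
\end{align*}
Chebyshev's inequality and Step 2 then give
\begin{align*}
    |\{\Omega f>\delta\}|\le \delta^{-p}(C+1)^p\varepsilon^p .
\end{align*}
Letting $\varepsilon\to0$ and then $\delta\to 0$ shows $\Omega f=0$ a.e., which is the claimed convergence.

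The only step needing care is the uniform convergence on the dense class, specifically the kernel bound for $m_R(\mathcal{L})$. I would handle it as above: since the multipliers are uniformly smooth and compactly supported, the known $L^1$-kernel bounds for $F(\mathcal{L})$ with $F\in C_c^\infty$ reduce it to the $O(R^{-2})$ size of the derivatives of $m_R$. The rest is routine.
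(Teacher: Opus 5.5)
Your argument is correct and follows the paper's route: the paper states this corollary as a direct consequence of the maximal bound in Corollary~\ref{Corollary: Boundedness of linear maximal Bochner-Riesz}(1), via the standard density (Stein maximal principle) argument. You carry that argument out in full, using the spectrally localized class $\psi(\mathcal{L})\mathcal{S}(G)$ and kernel bounds for compactly supported smooth multipliers to get uniform convergence on a dense set. (Only your aside that one could ``simply take $f\in\mathcal{S}(G)$'' is not covered by your Step~1, which relies on compact spectral support; it would need a separate justification, e.g.\ a Sobolev embedding, but your proof does not use it.)
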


Note that our result Corollary \ref{Corollary: Pointwise convergence of maximal Bochner_Riesz} improves the previously known result of \cite[Theorem 1.1]{Horwich_Martini_Pointwise_Heisenberg_type_2021} for the Heisenberg-type groups in two direction: firstly our results applies not only on Heisenberg-type groups but also on M\'etivier groups, which are more general class than Heisenberg-type groups and secondly the range of $\alpha$ where the pointwise almost everywhere convergence of $S_R^{\alpha}(\mathcal{L})$ hold for large $p$, (for example when $p\geq \tfrac{2(d_2+1)}{d_2-1}$) are also improved. For more details, see Figure \ref{Figure: Linear pointwise convergence}. It is also important to remark that, both of our results Corollary \ref{Corollary: Boundedness of linear maximal Bochner-Riesz} and Corollary \ref{Corollary: Pointwise convergence of maximal Bochner_Riesz} are not applicable for the Heisenberg groups, since in this case the dimension of the center $d_2=1$ forces $\mathfrak{p}_G = \infty$, which is not included in our results, see (2) of Remark \ref{Remark: Combinind remark} for more details.

\subsection{Regularity estimate for the solution of fractional Schr\"odinger equation on M\'etivier groups}
For $a \in (0, \infty)$, consider the fractional Schr\"odinger equation:
\begin{align*}
    i \partial_s u + (-\Delta)^{a/2}u = 0 \quad \text{with} \quad u(\cdot, 0) = f .
\end{align*}
Then the solution is given by $u(x,s) = e^{i s (-\Delta)^{a/2}}f(x)$.

Note that when $a=2$ and $a=1$, this corresponds to the Schr\"odinger equation and wave equation respectively.

There are various studies regarding the mixed norm estimates of $u$, see \cite{Roger_Seeger_Maximal_Smoothing_Schrodinger_2010, Lee_Roger_Seeger_Improved_Stein_Square_Function_2012, Lee_Roger_Seeger_On_space_time_Schrodinger_2013, Lee_Roger_Seeger_Square_Function_Maximal_Fourier_Mult_2014} The following result gives $L^p(\mathbb{R}^n ; L^2(I))$ mixed norm regularity estimates for the solution of the fractional Schr\"odinger equation.

\begin{theorem}\cite[Theorem 1.4]{Lee_Roger_Seeger_Improved_Stein_Square_Function_2012}
\label{Theorem: Euclidean Lp L2 local smoothing}
Let $n \geq 2$, $a \in (0, \infty)$ and $I$ be a compact interval. Then whenever $p>2+4/n$ and $\beta \geq a \, \alpha_n(p)$ we have
\begin{align*}
    \left\|\left(\int_I |e^{i s (-\Delta)^{a/2}}f|^2 \, ds \right)^{1/2} \right\|_{L^p(\mathbb{R}^n)} &\leq C \|f\|_{L^{p}_{\beta}(\mathbb{R}^n)} .
\end{align*}
    
\end{theorem}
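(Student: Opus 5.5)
The statement just above is the Euclidean result of Lee, Rogers and Seeger (Theorem \ref{Theorem: Euclidean Lp L2 local smoothing}), quoted from \cite{Lee_Roger_Seeger_Improved_Stein_Square_Function_2012}. My plan is to derive it from sharp $L^p$ bounds for Stein's square function $\mathfrak{S}^{\alpha}$ on $\mathbb{R}^n$. This is the same mechanism the paper will later use on M\'etivier groups.

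\textbf{Reductions.} By a Littlewood--Paley decomposition in frequency and the Littlewood--Paley inequality with $L^2(I)$-valued functions, it suffices to treat $f$ with $\widehat f$ supported in the annulus $\{|\xi|\sim 2^j\}$. The low frequencies are handled by the trivial bound that $e^{is(-\Delta)^{a/2}}$ is bounded on $L^p$ for $s\in I$. Set $\lambda=2^{ja}$ and write $\varphi$ for a smooth bump supported near $1$. We then need
\begin{align*}
\Big\|\Big(\int_I |e^{is(-\Delta)^{a/2}}\varphi(2^{-j}\sqrt{-\Delta})f|^2\,ds\Big)^{1/2}\Big\|_{L^p}\lesssim 2^{ja\,\alpha_n(p)+\epsilon j}\|f\|_{L^p}.
\end{align*}
The endpoint $\beta=a\,\alpha_n(p)$ will need an additional $\epsilon$-removal or interpolation argument, which I would treat separately.

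\textbf{Main step: subordination to the square function.} Write $m_s(\rho)=e^{is\rho^{a}}\varphi(2^{-j}\rho)$, and in the variable $t=\rho$ decompose the $s$-dependence through the Fourier transform in $\rho$. The key identity is the representation of a multiplier $F(|\xi|)$ as $\int_0^\infty \partial_t S^{\alpha}_t f\cdot t^{\alpha}\,\cdots$, namely
\[F(|\xi|)=c_\alpha\int_0^\infty t^{\alpha}\,\partial_t^{\alpha+1}F(t)\,(1-|\xi|^2/t^2)_+^{\alpha}\,dt .\]
Applying Cauchy--Schwarz in $t$ and integrating in $s\in I$ gives the pointwise bound
\begin{align*}
\Big(\int_I|e^{is(-\Delta)^{a/2}}f_j|^2ds\Big)^{1/2}\lesssim \sup_{s\in I}\Big(\int |t^{\alpha+1/2}\partial_t^{\alpha+1}m_s(t)|^2\,\tfrac{dt}{t}\Big)^{1/2}\mathfrak{S}^{\alpha}f_j .
\end{align*}
This Carbery-type estimate is the analogue of \eqref{Pointwise inequality for maximal Euclidean}. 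For $t\sim2^j$ each derivative of $m_s$ costs $2^{j(a-1)}$, so the weighted Sobolev norm is about $2^{ja(\alpha+1/2)}2^{-j/2}\cdot 2^{j/2}$. This is too crude: it gives $\beta=a(\alpha_n(p)+1/2)$.

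\textbf{Fixing the half-derivative loss.} This is the main obstacle. The remedy I would try first is to use the $L^2(I)$ integration in $s$ to recover the $\tfrac12$. Concretely, one should not estimate $\sup_s$ but keep $\int_I ds$, and apply Plancherel in $s$ after the change of variable $\tau=t^a$. The $s$-integral of $|\int g(t)e^{ist^a}dt|^2$ is then controlled by $\int |g|^2\, t^{1-a}dt$, which gains exactly $2^{-ja/2}$. Combined with the square function bound for $p>2+4/n$ at $\alpha>\alpha_n(p)+\tfrac12$ (Lee--Rogers--Seeger), this yields $\beta>a\,\alpha_n(p)$. Finally, I would reassemble the Littlewood--Paley pieces.
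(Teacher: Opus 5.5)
The paper only quotes this theorem. The route it points to, via Proposition~\ref{Proposition: Lp l2 localized local smothing}, and carries out for its M\'etivier analogue, Theorem~\ref{Theorem: Lp to L2 local smoothing for Metivier}, is: bound $\int_I$ by $\int_{\mathbb{R}}|\widehat{\phi}(s)\,\cdot\,|^2\,ds$, use Plancherel in $s$ to get $\int|\phi((-\Delta)^{a/2}-\tau)f_j|^2\,d\tau$, rescale to a local square function $\mathfrak{S}^{\widetilde\phi}_{\delta,loc}$ at scale $\delta\sim 2^{-ja}$, and apply a bound $\delta^{\frac12-\alpha}$ for it.

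Measured against the statement, your proposal has a genuine gap: the theorem includes the endpoint $\beta=a\,\alpha_n(p)$, and your route cannot reach it. Its only analytic input is boundedness of $\mathfrak{S}^{\alpha}$ for $\alpha>\alpha_n(p)+\frac12$, which is an open condition. So you necessarily get $2^{ja\alpha_n(p)+\epsilon j}$ on each annulus, and that loss cannot be removed afterwards. Interpolating $\epsilon$-lossy bounds in $p$ keeps the loss, and summing dyadic pieces with Bourgain's trick gives at best restricted weak type. The endpoint in Lee--Rogers--Seeger rests on Proposition~\ref{Proposition: Lp l2 localized local smothing} at $\alpha=\alpha_n(p)$. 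After Plancherel in $s$, that is a local square function bound
\[
\Big\|\Big(\int_{1/2}^{2}\Big|\phi\Big(\frac{t+\Delta}{\delta}\Big)f\Big|^2dt\Big)^{1/2}\Big\|_{L^p}\lesssim\delta^{\frac12-\alpha_n(p)}\|f\|_{L^p}
\]
with no $\epsilon$-loss, a genuinely stronger input than open-range bounds for $\mathfrak{S}^{\alpha}$. This is also why the paper's own M\'etivier analogue is stated only for $\alpha>\alpha_d(p)$: Proposition~\ref{Proposition: Local Square function estimate} carries an $\epsilon$-loss. Your first reduction also fails at the endpoint. For $p>2$, vector-valued Littlewood--Paley plus Minkowski only bounds the left side by $(\sum_j 2^{2j\beta}\|P_jf\|_{L^p}^2)^{1/2}$, a $B^{\beta}_{p,2}$-norm that is not dominated by $\|f\|_{L^p_\beta}$. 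You would additionally need an $\ell^2$-valued form of the single-annulus estimate.

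Off the endpoint your idea is sound, but the main step needs repair. As written, Plancherel in $s$ does not apply. After subordination the $s$-dependence sits inside $t^{\alpha}\partial_t^{\alpha}[e^{ist^a}\varphi(2^{-j}t)]$, a nonlocal fractional derivative. It has the form $e^{ist^a}g(t)$ with $g$ independent of $s$ only to leading order. Your displayed identity is also not Carbery's. The correct one, \eqref{Main identity to use maximal multiplier result}, pairs $\frac{\eta}{t}(1-\frac{\eta}{t})_+^{\alpha-1}$ (up to a constant the multiplier of $R\partial_RS^{\alpha}_R$ at $R=\sqrt t$) against $t^{\alpha}\partial_t^{\alpha}(F(t)/t)$. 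Your version with $(1-|\xi|^2/t^2)_+^{\alpha}$ produces $S^{\alpha}_t$ rather than $\partial_tS^{\alpha}_t$.

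The clean fix is to reverse the order. Do Plancherel in $s$ first, as above, and then apply Carbery's identity to $\eta\mapsto\phi(\frac{t-\eta}{\delta})$. The resulting kernel satisfies $|k(t,u)|\lesssim\delta^{-\alpha}(1+|t-u|/\delta)^{-1-\alpha}$ for $u\sim1$, with harmless tails elsewhere. Schur's test in $t$ then gives the pointwise bound $\mathfrak{S}^{\phi}_{\delta,loc}f\lesssim\delta^{1-\alpha}\mathfrak{S}^{\alpha}f$, hence $\delta^{\frac12-\alpha_n(p)-\epsilon}$ and $\beta>a\,\alpha_n(p)$.

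Repaired this way, yours is a legitimate alternative for the open range. It shows that Stein's square function bounds alone suffice there, much as Subsection~\ref{Subsection: Sharp spectral multiplier} derives multiplier bounds from them. The paper's route instead feeds the local square function estimate in directly, and that estimate in loss-free form is what the endpoint requires.
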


It is important to note that the smoothness index $\beta \geq a \, \alpha_n(p)$ in the above theorem is sharp except for the case $a=1$. Recently, for $n \geq 3$ the range of $p$ in the above Theorem \ref{Theorem: Euclidean Lp L2 local smoothing} was further improved, see \cite[Corollary 1.4]{Gan_Oh_Wu_New_Bound_higher_dimension_2025}. As a consequence of the above $L^p(\mathbb{R}^n ; L^2(I))$ mixed norm estimates, one can also obtain $L^p(\mathbb{R}^n ; L^q(I))$ mixed norm estimates for $q \geq 2$ (see \cite{Lee_Roger_Seeger_Improved_Stein_Square_Function_2012}, \cite{Gan_Oh_Wu_New_Bound_higher_dimension_2025}). In particular, if one takes $p=q$ in the $L^p(\mathbb{R}^n ; L^q(I))$ mixed norm estimates, one gets the local smoothing estimate for the fractional Schr\"odinger equation.

Let $\chi$ be a compactly supported non-negative smooth function supported in $(\frac{5}{8}, \frac{15}{8})$ such that
\begin{align*}
    \sum_{k \in \mathbb{Z}} \chi(2^{-k} \eta) = 1 , \quad \quad \eta>0 .
\end{align*}

In \cite{Lee_Roger_Seeger_Improved_Stein_Square_Function_2012}, it was shown that the above Theorem \ref{Theorem: Euclidean Lp L2 local smoothing} can be deduced from the following proposition.

\begin{proposition}\cite[Proposition 5.1]{Lee_Roger_Seeger_Improved_Stein_Square_Function_2012}
\label{Proposition: Lp l2 localized local smothing}
Let $n \geq 2$, $p > 2+4/n$, $a \in (0, \infty)$ and $I$ be a compact interval. Then for $k \geq 1$ and $\alpha \geq \alpha_n(p)$ we have
\begin{align*}
    \left\|\left( \int_I |e^{i s (-\Delta)^{a/2}} \chi(2^{-k}\sqrt{\Delta}) f|^2 \, ds \right)^{1/2} \right\|_{L^p(\mathbb{R}^n)} &\leq C 2^{k a \alpha} \|f\|_{L^p(\mathbb{R}^n)} .
\end{align*}
    
\end{proposition}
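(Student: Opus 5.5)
This is the Euclidean result of Lee--Rogers--Seeger, and the plan follows their route: reduce the $L^p(L^2(I))$ estimate to a square function estimate for frequency-localized pieces, then deal with the oscillation in $s$ by decomposing the spectral side.

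\textbf{Step 1: localize in time and frequency.} By rescaling $s$ and using a smooth cutoff in $s$, we may replace $\int_I$ by $\int_{\mathbb{R}} |\cdot|^2 \psi(s)\,ds$ with $\psi$ a Schwartz function whose Fourier transform has compact support. The frequency variable $\rho=|\xi|$ lies in $[2^{k-1},2^{k+1}]$. We subdivide this interval into pieces $J_\nu$ of length comparable to $2^{k(1-a)}$. The length is chosen so that the phase $s\rho^a$ varies by $O(1)$ over each piece when $s\in I$.

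\textbf{Step 2: almost orthogonality in $s$.} Apply Plancherel in $s$. Writing $\chi(2^{-k}\rho)=\sum_\nu \chi_\nu(\rho)$, the functions $s\mapsto \psi(s)e^{is\rho^a}$ for $\rho$ in different $J_\nu$ have essentially disjoint Fourier supports near $\rho^a$. Their separation is of order $2^{k(a-1)}|J_\nu|\sim 1$. This gives the pointwise bound
\begin{align*}
\int |e^{is(-\Delta)^{a/2}}\chi(2^{-k}\sqrt{-\Delta})f(x)|^2\psi(s)\,ds \lesssim \sum_\nu \int |e^{is(-\Delta)^{a/2}}\chi_\nu(\sqrt{-\Delta})f(x)|^2 |\psi(s)|\,ds ,
\end{align*}
up to rapidly decaying tails in $|\nu-\nu'|$. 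On each piece, $e^{is\rho^a}\chi_\nu(\rho)=e^{is\rho_\nu^a}m_{\nu,s}(\rho)$, where $m_{\nu,s}$ is a smooth bump adapted to $J_\nu$, uniformly in $s\in I$. Taylor-expanding $m_{\nu,s}$ in $(\rho-\rho_\nu)/|J_\nu|$ separates the $s$-dependence into bounded coefficients. This reduces matters to the square function
\[
\Big\|\Big(\sum_\nu |\tilde\chi_\nu(\sqrt{-\Delta})f|^2\Big)^{1/2}\Big\|_{L^p}
\]
over intervals of length $\delta 2^k$ with $\delta=2^{-ka}$.

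\textbf{Step 3: the Stein-type square function bound (main obstacle).} We need
\[
\Big\|\Big(\sum_\nu |\tilde\chi_\nu f|^2\Big)^{1/2}\Big\|_p\lesssim \delta^{-\alpha_n(p)}\|f\|_p \quad\text{for } p>2+4/n .
\]
This is the square function estimate for a decomposition into annuli of width $\delta$. It is equivalent to the bound for Stein's square function with $\alpha>\alpha_n(p)+\tfrac12$ restricted to frequency scale $2^k$. The intended route is to cite the sharp Stein square function estimate, in the form of Theorem~\ref{Theorem: Boundedness of Euclidean square function}'s predecessor in Lee--Rogers--Seeger. That estimate uses bilinear restriction in the range $p>2+4/n$ to control the sum over thin annuli.

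Two points need care here. First, the endpoint $\alpha=\alpha_n(p)$ rather than $\alpha>\alpha_n(p)$ requires the $\varepsilon$-free version of that estimate, which is available in that range. Second, summing the Taylor coefficients must cost only a constant. Combining the three steps gives $C2^{ka\alpha_n(p)}\|f\|_p$, and then $C2^{ka\alpha}\|f\|_p$ for any $\alpha\ge\alpha_n(p)$.
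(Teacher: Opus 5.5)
Your overall plan is the right one: a time weight with compactly supported Fourier transform, Plancherel in $s$, reduction to a square function over annuli of relative width $\delta=2^{-ka}$, then the sharp thin-annulus estimate of Lee--Rogers--Seeger. It is the template the paper follows for its M\'etivier analogue, Theorem~\ref{Theorem: Lp to L2 local smoothing for Metivier}; the Euclidean statement itself is only quoted. However, Step~2 fails as written.

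A nonzero $\psi$ with compactly supported Fourier transform cannot itself be compactly supported. So both sides of your orthogonality inequality are integrals over all $s\in\mathbb{R}$, not over $s\in I$. Consequently the bumps $m_{\nu,s}(\rho)=e^{is(\rho^a-\rho_\nu^a)}\chi_\nu(\rho)$ are not adapted to $J_\nu$ uniformly; their $j$-th derivatives are only $O((1+|s|)^j|J_\nu|^{-j})$. Worse, on $J_\nu$ you have $s(\rho^a-\rho_\nu^a)\approx c\,s\,(\rho-\rho_\nu)/|J_\nu|$ with $c\sim 1$. Taylor-expanding the exponential and then using the triangle inequality therefore gives coefficients whose absolute sum is $\sim e^{c|s|}$. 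This cannot be integrated against your weight. If $\int e^{c|s|}|\psi(s)|\,ds<\infty$, then $\widehat\psi$ extends holomorphically to the strip $|\mathrm{Im}\,z|<c$; since it vanishes on a real interval, it would vanish identically. So the claim that summing the Taylor coefficients costs only a constant fails exactly in the regime your Step~1 puts you in.

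The cleanest repair is the paper's route: never return to the $s$-side. Plancherel gives exactly $\int_{\mathbb{R}}|\widehat\psi(\tau-(-\Delta)^{a/2})\chi(2^{-k}\sqrt{-\Delta})f|^2\,d\tau$. Then proceed as follows.
\begin{enumerate}
\item Restrict to a dyadic range $\tau\sim(2^kR)^a$ and substitute $\tau=\nu^{-1}r^{a/2}$ with $\nu=(2^kR)^{-a}$, rescaling $x$ accordingly.
\item Write $\widehat\psi(\nu^{-1}(\eta^{a/2}-r^{a/2}))=\widehat\psi(\zeta(\eta,r)(r-\eta)/\nu)$, where $\zeta(\eta,r)=(\eta^{a/2}-r^{a/2})/(\eta-r)$ is smooth and nonvanishing for $\eta,r\sim 1$.
\item You now have a continuous local square function of width $\nu$. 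Its sharp bound $\nu^{1/2-\alpha_n(p)}$, times the factor $\nu^{-1/2}$ from $d\tau$, gives $2^{ka\alpha_n(p)}$.
\end{enumerate}
No $J_\nu$-decomposition and no Taylor expansion are needed.

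If you prefer to keep your decomposition, apply the thin-annulus estimate for each fixed $s$ to the family $\{m_{\nu,s}\}_\nu$. Use a version for general bump families whose constant is controlled by finitely many derivatives, hence polynomial in $1+|s|$. Then integrate in $s$ using Minkowski's inequality in $L^{p/2}$.

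Either way, as you say, the endpoint $\alpha=\alpha_n(p)$ needs the $\varepsilon$-free thin-annulus estimate itself. The Stein square function bound for $\alpha>n(\tfrac12-\tfrac1p)$ only returns $\delta^{-\alpha_n(p)-\varepsilon}$, so the ``equivalence'' in your Step~3 holds only up to $\varepsilon$-losses.
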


\medskip

Now analogous to the Euclidean setup, here we consider the fractional Schr\"odinger equation associated with the sub-Laplacian $\mathcal{L}$ on M\'etivier groups, which is defined as
\begin{align*}
    i \partial_s v((x,u),s) + \mathcal{L}^{a/2} v((x,u),s) = 0 \quad \text{with} \quad v((x,u), 0) = f(x,u) .
\end{align*}
Then the solution $v : G \times \mathbb{R} \to \mathbb{C}$ is given by
\begin{align*}
    v((x,u),s) &= e^{i s \mathcal{L}^{a/2}}f(x,u) .
\end{align*}

In particular, it we take $a=1$ and $a=2$, then the operator $e^{i s \mathcal{L}^{a/2}}$ corresponds to the wave and Schr\"odinger operators on the M\'etivier groups. When $a=1$, finding sharp threshold $\beta$ so that the wave operator $e^{i s \sqrt{\mathcal{L}}}$ is bounded from $L^p_{\beta}$ to $L^p$, has been attracted lots of attention over past several years, see for example, in case of Heisenberg group \cite{Muller_Stein_Wave_Equation_1999}, Heisenberg-type groups \cite{Muller_Seeger_Wave_Equation_Heisenberg_Type}, M\'etivier groups \cite{Martini_Muller_Wave_Metivier}. On the the hand, for $a \neq 1$ the sharp threshold is different from that of $a=1$ case. In this case, see \cite{Bramati_Ciatti_Green_Wright_Oscillating_Multiplier_2022}, \cite{Bui_Hong_Hu_Oscillating_Multiplier_JGA} for more details.

\medskip
Here we are interested in the $L^p(G; L^2(I))$ regularity estimate of the solution of the fractional Schr\"odinger equation associated with the sub-Laplacian on M\'etivier groups. This can be seen as vector-valued analogue of the operator considered in \cite{Bramati_Ciatti_Green_Wright_Oscillating_Multiplier_2022}. In case of M\'etivier groups, similar to Proposition \ref{Proposition: Lp l2 localized local smothing}, the following is our main result in this direction. To the best of authors knowledge, this type of estimates on M\'etivier groups has not been considered before.

\begin{theorem}
\label{Theorem: Lp to L2 local smoothing for Metivier}
Let $\mathfrak{p}_G \leq p < \infty$, $a \in (0, \infty)$ and $I$ be a compact interval. Then for $k \geq 1$ and $\alpha>\alpha_d(p)$ we have
\begin{align*}
    \left\|\left( \int_I |e^{i s \mathcal{L}^{a/2}} \chi(2^{-k} \sqrt{\mathcal{L}}) f|^2 \, ds \right)^{1/2} \right\|_{L^p(G)} &\leq C 2^{k a \alpha} \|f\|_{L^p(G)} .
\end{align*}
    
\end{theorem}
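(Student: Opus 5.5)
The plan follows the Euclidean strategy of Lee--Rogers--Seeger \cite[Proposition 5.1]{Lee_Roger_Seeger_Improved_Stein_Square_Function_2012}: reduce the mixed-norm estimate to an $L^p$ bound for a square function built from frequency-localized pieces at scale $2^{k}$, then control that square function by the Stein-type estimates behind Theorem \ref{Theorem: Stein square estimate}.

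\textbf{Step 1: rescaling and removal of the fractional power.} By the homogeneity of $\mathcal{L}$ under the dilations of $G$, we rescale $\sqrt{\mathcal{L}}\to 2^{-k}\sqrt{\mathcal{L}}$ and substitute $s=2^{-ka}t$. Since $\|\cdot\|_{L^p}$ is dilation invariant after normalization, the claim becomes
\[
\Big\|\Big(\int_{2^{ka}I}|e^{it\mathcal{L}^{a/2}}\chi(\sqrt{\mathcal{L}})f|^2\,dt\Big)^{1/2}\Big\|_{L^p(G)}\lesssim 2^{ka(\alpha+1/2)}\|f\|_{L^p(G)} .
\]
Set $T=2^{ka}$. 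On $\operatorname{supp}\chi$ the function $\eta\mapsto \eta^{a}$ is a diffeomorphism with bounded derivatives. We therefore split $2^{ka}I$ into intervals of length $\sim1$ and write, via Fourier inversion in $t$,
\[
e^{it\mathcal{L}^{a/2}}\chi(\sqrt{\mathcal{L}})=\int \widehat{m}(\tau)\,e^{it\tau}\,\ldots
\]
By Plancherel in $t$, the $L^2(dt)$ norm over $|t|\lesssim T$ is dominated by a square function $\big(\sum_{j}|\psi(T(\sqrt{\mathcal{L}}-\rho_j))f|^2\big)^{1/2}$. Here the $\rho_j\in[1/2,2]$ are $T^{-1}$-separated and $\psi$ is Schwartz; this is the standard ``local-in-time equals frequency-localization at scale $T^{-1}$'' argument. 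It costs a factor $T^{1/2}$, and the Schwartz tails are harmless.

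\textbf{Step 2: square function over thin spectral shells.} We then need
\[
\Big\|\Big(\sum_j|\psi(T(\sqrt{\mathcal{L}}-\rho_j))f|^2\Big)^{1/2}\Big\|_{L^p(G)}\lesssim T^{\alpha}\|f\|_{L^p(G)},\qquad \alpha>\alpha_d(p).
\]
This is exactly the estimate underlying Theorem \ref{Theorem: Stein square estimate}. There, after a dyadic decomposition of $\partial_tS^\alpha_t$ in the distance $\delta=2^{-\ell}$ to the edge of the spectrum, one bounds such sums of $\delta$-shell projections by $\delta^{-\alpha_d(p)-\varepsilon}$ for $p\ge\mathfrak{p}_G$. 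I would take from the proof of Theorem \ref{Theorem: Stein square estimate} the corresponding key lemma, with $\delta=T^{-1}$.

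If that lemma is not available in this form, I would prove it directly as follows. First truncate the kernels spatially at scale $T^{1+\varepsilon}$, using finite propagation speed for $\cos(t\sqrt{\mathcal{L}})$; the tails give $O(T^{-N})$. Then localize $f$ to balls of radius $T$. On each ball, H\"older's inequality passes from $L^p$ to $L^2$ of the square function. The resulting $L^2$ quantity is handled by the restriction-type (Stein--Tomas) estimate $\|\chi_{[\rho,\rho+T^{-1}]}(\sqrt{\mathcal{L}})\|_{L^{p'}\to L^2}$ on M\'etivier groups, valid for $p\ge\mathfrak{p}_G$, combined with the orthogonality in $j$.

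\textbf{Main obstacle.} The hardest part is obtaining the threshold in terms of the topological dimension $d$ rather than $Q$. The naive ball-localization gives volume factors $T^{Q(1/2-1/p)}$. To do better, I would follow Niedorf's approach: decompose further in the spectral variable $\lambda$ of the center (the $|\lambda|$-scales), and use the improved kernel support in the central variable together with the restriction estimates with the Laguerre-weighted $L^2$ bounds. This yields exactly $T^{d(1/2-1/p)}$. Combining this with the $T^{-1/2}$ gained from shell thickness in Step 1 gives the bound $T^{\alpha_d(p)+1/2+\varepsilon}$, which is what is needed since $\alpha>\alpha_d(p)$.
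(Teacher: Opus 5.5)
Your proposal is correct and is essentially the paper's proof. Plancherel in the time variable turns the $L^2(I)$ norm into a square function over spectral shells of width $\nu\sim 2^{-ka}$ at $\mathcal{L}\sim 1$, after the dilation and the smooth change of variable between $\mathcal{L}^{a/2}$ and $\mathcal{L}$. That square function is then bounded by the local square function estimate, Proposition~\ref{Proposition: Local Square function estimate} in the form of Proposition~\ref{Proposition: Generalized Square function estimate}, giving $\nu^{-1/2}\cdot\nu^{1/2-\alpha}=\nu^{-\alpha}$.

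The paper uses the continuous square function directly, so your discretization into $\rho_j$ is unnecessary, and so is your fallback sketch. Note that the fallback misplaces the source of the improvement from $Q$ to $d$. That gain comes from localizing the kernel in the \emph{first-layer} variable to scale $2^{M}$ when $|\lambda|\sim 2^{-M}$, with the weight controlled by the strong maximal function $\mathcal{M}_r^{|\cdot|}$, not from the central variable.
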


\subsection{Bilinear Bochner-Riesz means and its maximal version on M\'etivier groups}
In this subsection, we discuss about the bilinear analogue of the Bochner-Riesz means and its maximal version on M\'etivier groups. Before going to the bilinear Bochner-Riesz means on M\'etivier groups, let us start our discussion with the bilinear Bochner-Riesz means on Euclidean setup. While the classical Bochner-Riesz means has been studied extensively, its bilinear counterpart have only attracted attention in recent years, due to its more complexity arises from mixed summability. For $\alpha \geq 0$, $R>0$ and suitable functions $f, g$, the bilinear Bochner-Riesz operator on $\mathbb{R}^n$ is defined by
\begin{align}
\label{Definition: Euclidean bilinear Bochner-Riesz means}
    B_R^{\alpha}(f,g)(x) &= \int_{\mathbb{R}^n} \int_{\mathbb{R}^n} \left(1-\frac{|\xi|^2+|\eta|^2}{R^2} \right)_{+}^{\alpha} \widehat{f}(\xi) \, \widehat{g}(\eta)\, e^{2 \pi i x \cdot (\xi+\eta)} \, d\xi \, d\eta .
\end{align}

A central problem in the study of bilinear Bochner-Riesz means is to determine the sharp range of the parameter $\alpha \geq 0$ such that $B^{\alpha}_R$ is bounded from $L^{p_1}(\mathbb{R}^n) \times L^{p_2}(\mathbb{R}^n) \to L^p(\mathbb{R}^n)$ for $(p_1, p_2, p)$ satisfying $1\leq p_1, p_2 \leq \infty$ and the H\"older's relation $1/p=1/p_1 +1/p_2$. Due to the transference theory, the problem is also related to the convergence of the product of two $n$-dimensional Fourier series (see \cite{Bernicot_Grafakos_Song_Yan_Bilinear_Bochner_Riesz_2015}). When $\alpha=0$, the operator $B^{\alpha}_R$ is called bilinear ball multiplier operator. In $n=1$, Grafakos and Li \cite{Grafakos_Li_Disc_Multiplier_2006} proved that for $\alpha=0$, $B^{\alpha}_R$ is bounded from $L^{p_1}(\mathbb{R}^n) \times L^{p_2}(\mathbb{R}^n) \to L^p(\mathbb{R}^n)$ whenever $2\leq p_1, p_2, p' <\infty$, where $p'$ denotes the conjugate exponent of $p$. On contrary for $n \geq 2$, Diestel and Grafakos \cite{Diestel_Grafakos_Ball_multiplier_problem_2007} showed that for $\alpha=0$, $B^{\alpha}_R$ is not bounded if exactly one of $p_1, p_2, p'$ is less than $2$. For $\alpha>0$ and $n=1$, this problem has been well studied, see for example \cite{Bernicot_Germain_Bilinear_multilier_narrow_support_2013}, \cite[Theorem 4.1]{Bernicot_Grafakos_Song_Yan_Bilinear_Bochner_Riesz_2015}, \cite{Jotsaroop_Shrivastava_Maximal_Bochner_Riesz_2022}. For $n \geq 2$, the problem was first studied by Bernicot, Grafakos, Song and Yan in \cite{Bernicot_Grafakos_Song_Yan_Bilinear_Bochner_Riesz_2015}. Actually they proved the boundedness result for more general bilinear multiplier, which are compactly supported, radial in both variable and satisfies some smoothness condition. From this they deduce the boundedness of $B^{\alpha}_R$ for $(p_1, p_2,p)$ satisfies the H\"older's relation. But their result does not give the sharp range of $\alpha$ except at $(p_1,p_2,p)=(1,1,1/2)$ and $(2,2,1)$. Later in 2020, Liu and Wang \cite{Liu_Wang_Bilinear_Bochner_Riesz_Non_Banach_2020} further improved the result of \cite{Bernicot_Grafakos_Song_Yan_Bilinear_Bochner_Riesz_2015} in the non-Banach region, that is when $p<1$. On the other hand, when $p\geq 1$, Jeong, Lee and Vargas in \cite{Jeong_Lee_Vargas_Bilinear_Bochner_Riesz_2018} significantly improved the result of Bernicot et.al. \cite{Bernicot_Grafakos_Song_Yan_Bilinear_Bochner_Riesz_2015} for the range $p_1, p_2 \geq 2$. In order to describe the result of \cite{Jeong_Lee_Vargas_Bilinear_Bochner_Riesz_2018} in details, first we need to define some notation.

\medskip

For $\Upsilon>0$, $m \in \mathbb{N}$, we define 
\begin{align}
\label{Bilinear smoothness index}
\alpha_{*}(p_1, p_2, m, \Upsilon) =
    \left\{ \begin{array}{ll}
        \medskip
        \alpha_m(p_1) + \alpha_m(p_2) , & \quad \Upsilon \leq p_1, p_2 \leq \infty \hspace{1.6cm} [\text{Region}\  R_1(p_s)] \\ 
        \medskip
        \frac{2-2 p_1^{-1}-2 p_2^{-1}}{1-2 \Upsilon^{-1}}\alpha_m(\Upsilon) , & \quad 2\leq p_1, p_2 \leq \Upsilon \hspace{1.8cm} [\text{Region}\  R_2(p_s)] \\
        \medskip
        \alpha_m(p_2) + \alpha_m(\Upsilon) \frac{1-2 p_1^{-1}}{1-2 \Upsilon^{-1}} , & \quad 2 \leq p_1 < \Upsilon \leq p_2 \leq \infty \hspace{0.55cm} [\text{Region}\  R_3(p_s)] \\
        \medskip
        \alpha_m(p_1) + \alpha_m(\Upsilon) \frac{1-2 p_2^{-1}}{1-2 \Upsilon^{-1}} , & \quad 2 \leq p_2 < \Upsilon \leq p_1 \leq \infty \hspace{0.55cm} [\text{Region}\  R_3(p_s)]
    \end{array} \right.
\end{align}

\begin{theorem}\cite[Corollary 1.3]{Jeong_Lee_Vargas_Bilinear_Bochner_Riesz_2018}
\label{Theorem: Jeong_Lee_Vargas_Bilinear}
For $n \geq 2$, let
\begin{align*}
    p_0(n) = 2+\frac{12}{4n-6-k}, \  n \equiv k (\hspace{-4mm}\mod 3), \ k=0,1,2 ,\quad \text{and} \quad p_s  = \min\{p_0(n), 2(n+2)/n \} .
\end{align*}
Also let $2 \leq p_1, p_2 \leq \infty$ with $1/p=1/p_1 +1/p_2$. Then $B_R^{\alpha}$ is bounded from $L^{p_1}(\mathbb{R}^n) \times L^{p_2}(\mathbb{R}^n)$ to $L^{p}(\mathbb{R}^n)$ whenever $\alpha> \alpha_*(p_1, p_2, n, p_s)$.
    
\end{theorem}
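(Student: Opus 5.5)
Since this is the Euclidean result of Jeong, Lee and Vargas, the plan is to reduce the bilinear multiplier to products of \emph{linear} spectrally localized pieces in the two variables separately. The linear pieces are then controlled by square function estimates of the type in Theorem \ref{Theorem: Boundedness of Euclidean square function}; this is the same scheme I would later transplant to M\'etivier groups.

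\textbf{Step 1: dyadic decomposition near the sphere.} By scaling take $R=1$. Write $(1-s)_+^{\alpha}=\sum_{j\geq 0}2^{-j\alpha}\psi_j(1-s)$, where $\psi_j$ is a smooth bump supported where $1-s\sim 2^{-j}$; the piece $j=0$ is a smooth compactly supported bilinear multiplier and is harmless. It then suffices to show, for each $j$ and every $\epsilon>0$,
\begin{align*}
\|T_j(f,g)\|_{L^p}\lesssim 2^{j(\alpha_*(p_1,p_2,n,p_s)+\epsilon)}\|f\|_{L^{p_1}}\|g\|_{L^{p_2}},
\end{align*}
where $T_j$ has multiplier $\psi_j(1-|\xi|^2-|\eta|^2)$.

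\textbf{Step 2: splitting into products.} Partition $[0,1]$ in the variable $|\xi|^2$ into intervals $I_\ell$ of length $2^{-j}$, using a smooth partition $\sum_\ell \chi_\ell(|\xi|^2)$. On the support of $\chi_\ell(|\xi|^2)\psi_j(1-|\xi|^2-|\eta|^2)$, the variable $|\eta|^2$ is confined to an interval $\tilde I_\ell$ of length $\sim 2^{-j}$. Taylor/Fourier-series expand $\psi_j(1-|\xi|^2-|\eta|^2)$ in the $2^{-j}$-rescaled variables on $I_\ell\times\tilde I_\ell$. This writes $T_j(f,g)$ as a rapidly convergent sum, with coefficients harmless up to $2^{\epsilon j}$, of expressions
\begin{align*}
\sum_\ell \chi_\ell(|D|^2)f\cdot\tilde\chi_\ell(|D|^2)g .
\end{align*}
By Cauchy--Schwarz in $\ell$ and H\"older,
\begin{align*}
\|T_j(f,g)\|_{L^p}\lesssim 2^{\epsilon j}\Big\|\Big(\sum_\ell|\chi_\ell(|D|^2)f|^2\Big)^{1/2}\Big\|_{L^{p_1}}\Big\|\Big(\sum_\ell|\tilde\chi_\ell(|D|^2)g|^2\Big)^{1/2}\Big\|_{L^{p_2}} .
\end{align*}

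\textbf{Step 3: linear square function bounds.} For $p\geq p_s$ one needs
\begin{align*}
\Big\|\Big(\sum_\ell|\chi_\ell(|D|^2)f|^2\Big)^{1/2}\Big\|_{L^{p}}\lesssim 2^{j(\alpha_n(p)+\epsilon)}\|f\|_{L^p}.
\end{align*}
This follows from the sharp Stein square function estimate: $\chi_\ell(|D|^2)$ is a $2^{-j}$-thin annular multiplier and is dominated by $t\partial_tS^{\beta}_t$ averaged over a $2^{-j}$-window, which gives the loss $2^{j(\beta-1/2)}$ with $\beta>\alpha_n(p)+\frac12$. The exponent $p_s$ is exactly the range where such sharp estimates were available to Jeong--Lee--Vargas: through $p_0(n)$, via bilinear/multilinear restriction, and through $2(n+2)/n$. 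At $p=2$, orthogonality gives no loss. At $p=\infty$, a kernel bound gives $2^{j(n-1)/2}=2^{j\alpha_n(\infty)}$.

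\textbf{Step 4: the three regions and interpolation.} In region $R_1(p_s)$, Step 3 applied to both factors gives the exponent $\alpha_n(p_1)+\alpha_n(p_2)$. The trouble in the other regions is that Step 3 is only known at the endpoints $2$ and $p_s$, not on $(2,p_s)$, so one cannot simply use it there. Instead, obtain the bounds for $2\le p_i\le p_s$ by interpolating the sharp endpoint estimates, bilinear complex interpolation in $(1/p_1,1/p_2)$ for the fixed-$j$ operators, between the points $(2,2)$, $(2,p_s)$, $(p_s,2)$, $(p_s,p_s)$. Along the boundary lines one factor is fixed in $[p_s,\infty]$. Because the bound on $(2,2)$ is $2^{\epsilon j}$ and the bound at $p_s$ is $2^{j\alpha_n(p_s)}$, the interpolated exponent is linear in $1-2/p_i$. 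This produces the factors $\frac{1-2p_i^{-1}}{1-2p_s^{-1}}\alpha_n(p_s)$ and the combined factor $\frac{2-2p_1^{-1}-2p_2^{-1}}{1-2p_s^{-1}}\alpha_n(p_s)$ of regions $R_2,R_3$. Summing over $j$ with $\alpha>\alpha_*+2\epsilon$ completes the proof.

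\textbf{Main obstacle.} I expect Step 2 to be the delicate part. The Taylor/Fourier expansion coupling $|\xi|^2$ and $|\eta|^2$ at scale $2^{-j}$ has to be done so that both factors remain genuinely $2^{-j}$-thin annular projections uniformly in $\ell$, and so that only $2^{\epsilon j}$ is lost. The endpoint square function input at $p_s$ in Step 3 is deep, but I take it as given from the linear theory.
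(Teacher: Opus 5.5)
Your argument is correct, and it is essentially the original Jeong--Lee--Vargas proof. The paper does not prove this statement; it only quotes it. It does remark that this same route, using the discrete square function bounds of Proposition \ref{Proposition: Discrete square function estimate}, would give a direct proof of the M\'etivier analogue, Corollary \ref{Corollary: Bochner-Riesz main theorem Metivier}.

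What the paper actually carries out for that analogue is different. It deduces the corollary from the maximal Theorem \ref{Theorem: Bilinear maximal on Metivier}, following Jotsaroop--Shrivastava:
\begin{itemize}
\item It localizes dyadically only in $1-\eta_1/R^2$, and writes $(1-\frac{\eta_1+\eta_2}{R^2})_+^{\alpha}=(1-\frac{\eta_1}{R^2})^{\alpha}(1-\frac{\eta_2}{R^2(1-\eta_1/R^2)})_+^{\alpha}$.
\item It uses the Stein--Weiss subordination identity with $\alpha=\beta+\delta$ to express the $g$-factor as an average, over $t\lesssim 2^{-j/2}R$, of the lower-order means $S^{\delta}_t g$.
\item Cauchy--Schwarz in $t$ then puts $g$ into the averaged maximal operator $\mathcal{M}^{\delta}(\mathcal{L})$ of Proposition \ref{Proposition: Maximal square function estimate}, which is controlled by Stein's square function once $\delta>\alpha_d(p_2)-\frac12$. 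It puts $f$ into the maximal localized square function of Proposition \ref{Proposition: Main proposition of S in full range}.
\end{itemize}
Your route is much shorter and needs only linear discrete square function bounds, but it handles a fixed $R$ only. The subordination route is heavier, but it controls $\sup_{R>0}$ at the same threshold, and with small changes also the bilinear square function.

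Several of your side remarks should be corrected, though none affects validity.
\begin{itemize}
\item Step 2 is not the delicate part. Set $b=2^j|\xi|^2-\ell$ and $a=2^j(1-|\eta|^2)-\ell$. The rescaled symbol $\psi(a-b)\chi(b)\tilde\chi(a)$ is then independent of $\ell$. The identity $\psi(a-b)=\int\widehat{\psi}(\tau)e^{2\pi i\tau a}e^{-2\pi i\tau b}\,d\tau$ factorizes it with bounds polynomial in $\tau$, so there is no $2^{\epsilon j}$ loss.
\item In Step 4 it is not true that nothing is available on $(2,p_s)$. The map $f\mapsto(\chi_\ell(|D|^2)f)_\ell$ is linear into $L^p(\ell^2)$, so you can interpolate its bounds at $p=2$ and $p=p_s$. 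This gives $2^{j(\frac{1-2/p}{1-2/p_s}\alpha_n(p_s)+\epsilon)}$ on $[2,p_s]$, and H\"older then yields all three regions at once. Your bilinear interpolation also works, since $p\ge 1$.
\item At $p=\infty$, summing the $L^1$ kernel bounds $2^{j(n-1)/2}$ over the $2^j$ pieces loses $2^{j/2}$. Instead, dualize against $c\in\ell^2$ and bound $\|\mathcal{F}^{-1}[\sum_\ell c_\ell\chi_\ell(|\cdot|^2)]\|_{L^1}$ by weighted Plancherel. This gives $2^{j((n-1)/2+\epsilon)}$.
\item In Step 3, the Riemann--Liouville domination has polynomial tails, because the fractional derivative is not supported in the $2^{-j}$ window. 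These tails are summable in $\ell$.
\item Also in Step 3, the pieces with $|\xi|^2\ll 1$ need a dyadic rescaling, as in the paper's proof of Proposition \ref{Proposition: Discrete square function estimate}.
\end{itemize}
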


Now let us turn our attention to the maximal analogue of the bilinear Bochner-Riesz means. On $\mathbb{R}^n$, corresponding to the bilinear Bochner-Riesz operator $B_R^{\alpha}$ (see \eqref{Definition: Euclidean bilinear Bochner-Riesz means}), associated maximal bilinear Bochner-Riesz operator is defined by
\begin{align*}
    B^{\alpha}_{*}(f,g)(x) &= \sup_{R>0}|B_R^{\alpha}(f,g)(x)| .
\end{align*}
Similar to the bilinear Bochner-Riesz operator, there are various studies regarding finding the optimal range of $\alpha$ such that $B^{\alpha}_{*}$ is bounded from $L^{p_1}(\mathbb{R}^n) \times L^{p_2}(\mathbb{R}^n)$ to $L^{p}(\mathbb{R}^n)$, where the triple $(p_1,p_2,p)$ satisfies the H\"older's relation $1/p_1+1/p_2=1/p$ and $1\leq p_1, p_2 \leq \infty$. This problem was first studied by Grafakos, He and Honz\'ik \cite{Grafakos-He_Honzik_Maximal_Bilinear_mult_2021} and they proved that boundedness of $B^{\alpha}_{*}$ holds at $(p_1,p_2,p)=(2,2,1)$ whenever $\alpha>\frac{2n+3}{4}$. After that, Jeong and Lee \cite{Jeong_Lee_MaximaL_Bilinear_Bochner_Riesz_2020} significantly improved the range of the parameter $\alpha$ when $p_1, p_2 \geq 2$. Recently, Jotsaroop and Shrivastava \cite{Jotsaroop_Shrivastava_Maximal_Bochner_Riesz_2022} further improved the result of Jeong and Lee, in fact they proved that for $p_1, p_2 \geq 2$, the boundedness of $B^{\alpha}_{*}$ holds with the same range of the smoothness parameter as of $B^{\alpha}_R$ (see Theorem \ref{Theorem: Jeong_Lee_Vargas_Bilinear}, Figure \ref{Figure: Figure for Bilinear Bochner-Riesz Euclidean}).

\begin{theorem}\cite[Theorem 2.1]{Jotsaroop_Shrivastava_Maximal_Bochner_Riesz_2022}
\label{Theorem: Euclidean maximal bilinear Bochner-Riesz}
Let $2 \leq p_1, p_2 \leq \infty$ with $1/p=1/p_1 +1/p_2$. Then $B^{\alpha}_{*}$ is bounded from $L^{p_1}(\mathbb{R}^n) \times L^{p_2}(\mathbb{R}^n)$ to $L^{p}(\mathbb{R}^n)$ whenever $\alpha> \alpha_*(p_1, p_2, n, p_s)$.
    
\end{theorem}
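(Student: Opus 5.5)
The plan is to reduce the bilinear maximal operator to linear objects on $\mathbb{R}^n$ (the maximal Bochner–Riesz operator $S^{\beta}_{*}$ and Stein's square function $\mathfrak{S}^{\beta}$), and then invoke the sharp linear estimates. For $\alpha=\alpha_1+\alpha_2+1$ with $\alpha_1,\alpha_2>-1$, I would start from the elementary beta-type identity
\begin{align*}
    \Big(1-\frac{|\xi|^2+|\eta|^2}{R^2}\Big)_{+}^{\alpha} = c_{\alpha_1,\alpha_2}\int_0^1 \Big(1-\frac{|\xi|^2}{tR^2}\Big)_{+}^{\alpha_1}\Big(1-\frac{|\eta|^2}{(1-t)R^2}\Big)_{+}^{\alpha_2} t^{\alpha_1}(1-t)^{\alpha_2}\,dt .
\end{align*}
This identity gives the representation
\begin{align*}
    B^{\alpha}_R(f,g)(x)=c\int_0^1 S^{\alpha_1}_{R\sqrt t}f(x)\,S^{\alpha_2}_{R\sqrt{1-t}}g(x)\,t^{\alpha_1}(1-t)^{\alpha_2}\,dt .
\end{align*}
Taking the supremum in $R$ naively gives $B^{\alpha}_{*}(f,g)\lesssim S^{\alpha_1}_{*}f\cdot S^{\alpha_2}_{*}g$. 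This costs a full unit of smoothness, because of the ``$+1$'', so it cannot reach the sharp exponent.

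To recover the lost $1/2$ on each side, I would instead split the $t$-integral dyadically near $t=0$ and $t=1$ and replace one maximal factor by a square function. On a piece where, say, $t\sim 1$, I write $S^{\alpha_1}_{R\sqrt t}f$ as $S^{\alpha_1}_{R}f$ minus $\int \partial_s S^{\alpha_1}_s f\,ds$ over $s$ between $R\sqrt t$ and $R$. Cauchy–Schwarz in $s$ with the weight $s\,ds$ then bounds that piece by $\mathfrak{S}^{\alpha_1+1/2+\epsilon}$-type quantities times the maximal function of $g$. The factor $(1-t)^{\alpha_2}$ is integrable, which absorbs the remaining half unit, and symmetrically near $t=0$. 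The outcome I aim for is the pointwise bound
\begin{align*}
    B^{\alpha}_{*}(f,g)\lesssim \mathfrak{S}^{\beta_1}f\cdot S^{\beta_2}_{*}g + S^{\beta_1'}_{*}f\cdot\mathfrak{S}^{\beta_2'}g + S^{\beta_1}_{*}f\cdot S^{\beta_2}_{*}g .
\end{align*}
Here $\beta_1+\beta_2$ (and $\beta_1'+\beta_2'$) is just above $\alpha$, with the square-function index exceeding the maximal index by $1/2$.

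In region $R_1(p_s)$ I then apply H\"older. I use the sharp square function bound $\|\mathfrak{S}^{\beta}f\|_{p_i}\lesssim\|f\|_{p_i}$ for $\beta>\alpha_n(p_i)+1/2$, valid for $p_i\ge p_s$ from Theorem \ref{Theorem: Boundedness of Euclidean square function} and its predecessors. I also use the corresponding maximal bound $S^{\beta}_{*}$ for $\beta>\alpha_n(p_i)$, which follows from the square function bound in the standard way. This yields $\alpha>\alpha_n(p_1)+\alpha_n(p_2)$. The case $p_i=\infty$ is trivial via integrable kernels, and the endpoint $p_i=2$ follows from Plancherel, since $\alpha_n(2)=0$ in the bilinear index.

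For regions $R_2(p_s)$ and $R_3(p_s)$ I would interpolate between the vertices $(2,2)$, $(2,p_s)$, $(p_s,2)$, $(p_s,p_s)$ and the $R_1$ edges. I would first linearize the supremum via a measurable choice $R(x)$, which gives a bilinear operator, and then apply Stein's bilinear complex interpolation to the analytic family $\alpha=z$. Because the thresholds $\alpha_*(p_1,p_2,n,p_s)$ are affine in $(1/p_1,1/p_2)$ on each region, interpolation produces exactly the stated exponents. I expect this interpolation step to be the main obstacle. The pointwise decomposition must hold with admissible growth for complex $\alpha$: the constants $c_{\alpha_1,\alpha_2}$ and the square-function bounds must have at most exponential growth in $\operatorname{Im}\alpha$. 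Moreover, the $(2,2)$ endpoint must be handled for the full maximal operator, not just for fixed $R$. There I would use the $L^2$ square function bound for $\alpha>1/2$ combined with the splitting of the $t$-integral.
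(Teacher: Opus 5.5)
Your integral representation is correct; it is the same Riemann--Liouville convolution identity as the Stein--Weiss formula the paper uses. You are also right that taking the supremum inside costs a full unit. The gap is in the repair.

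\textbf{The bookkeeping does not close.} Every term of your target bound keeps a maximal Bochner--Riesz factor. Take $\beta_1+\beta_2$ just above $\alpha$, with $\mathfrak{S}^{\beta_1}$ bounded on $L^{p_1}$ only for $\beta_1>\alpha_n(p_1)+\frac12$ and $S^{\beta_2}_*$ bounded on $L^{p_2}$ only for $\beta_2>\alpha_n(p_2)$. Then H\"older gives only $\alpha>\alpha_n(p_1)+\alpha_n(p_2)+\frac12$, not $\alpha>\alpha_n(p_1)+\alpha_n(p_2)$. At $(p_1,p_2)=(2,2)$ this is $\alpha>\frac12$ instead of $\alpha>0$. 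That is also all your treatment of that vertex, via the $L^2$ bound for $\mathfrak{S}^{\beta}$ with $\beta>\frac12$, can deliver.

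\textbf{The mechanism does not produce even that bound.} Write $S^{\alpha_1}_{R\sqrt t}f-S^{\alpha_1}_Rf=-\int_{R\sqrt t}^{R}\partial_sS^{\alpha_1}_sf\,ds$ and apply Cauchy--Schwarz with $s\,ds$. You get $(\log t^{-1/2})^{1/2}\,\mathfrak{S}^{\alpha_1}f$, a square function of the \emph{same} order $\alpha_1$. It is bounded only for $\alpha_1>\alpha_n(p_1)+\frac12$, so on the $f$ side this term is worse than $S^{\alpha_1}_*$. The extra $(1-t)^{1/2}$ cannot compensate, because integrability of the $t$-weight is not what limits the $g$-factor.

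The missing idea is to keep the $L^2$ structure in the splitting parameter on \emph{both} factors. Apply Cauchy--Schwarz in $t$ directly to your representation:
\[
|B^{\alpha}_R(f,g)(x)|\le c\Big(\int_0^1 t^{2\alpha_1}\big|S^{\alpha_1}_{R\sqrt t}f(x)\big|^2\,dt\Big)^{1/2}\Big(\int_0^1 (1-t)^{2\alpha_2}\big|S^{\alpha_2}_{R\sqrt{1-t}}g(x)\big|^2\,dt\Big)^{1/2}.
\]
With $s=R\sqrt t$ the first factor becomes $\big(\frac{2}{R}\int_0^R (s/R)^{4\alpha_1+1}|S^{\alpha_1}_s f|^2\,ds\big)^{1/2}$. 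Summing dyadic pieces in $s/R$ bounds it by $\mathcal{A}^{\alpha_1}f:=\sup_{R>0}\big(\frac1R\int_0^R|S^{\alpha_1}_sf|^2\,ds\big)^{1/2}$, provided $\alpha_1>-\frac12$; the $g$ factor is handled likewise. Hence $B^{\alpha}_*(f,g)\lesssim\mathcal{A}^{\alpha_1}f\,\mathcal{A}^{\alpha_2}g$ with $\alpha_1+\alpha_2=\alpha-1$.

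The telescoping $S^{\delta}_s=\sum_{k=1}^{N}(S^{\delta+k-1}_s-S^{\delta+k}_s)+S^{\delta+N}_s$ gives $\mathcal{A}^{\delta}\lesssim\sum_{k=1}^N\mathfrak{S}^{\delta+k}+S^{\delta+N}_*$. So $\mathcal{A}^\delta$ is bounded on $L^p$ for $\delta>\alpha_n(p)-\frac12$. That is half a unit better than $S^{\delta}_*$ on each side, which is exactly the lost unit.

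This is the Euclidean form of Proposition \ref{Proposition: Maximal square function estimate}. It is how the paper, adapting Jotsaroop--Shrivastava in the proof of Theorem \ref{Theorem: Bilinear maximal on Metivier}, handles the $g$-factor after the Stein--Weiss identity and Cauchy--Schwarz in $t$. The $f$-factor there is the $L^2_t$ quantity of Proposition \ref{Proposition: Main proposition of S in full range}, with an extra localization $\Psi(2^j(1-\eta_1/R^2))$ that produces decay in $j$.

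Once the bound is a product of sublinear operators, $R_2(p_s)$ and $R_3(p_s)$ follow by interpolating the \emph{linear} square-function estimates between $p=2$ and $p=p_s$, then applying H\"older. No bilinear analytic family is needed.

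Finally, $p_i=\infty$ is not trivial via integrable kernels. Integrability of the $2n$-dimensional kernel only gives $\alpha>n-\frac12$ at $(\infty,\infty)$, while the claim is $\alpha>n-1$. You need $\mathcal{A}^{\delta}$ bounded on $L^\infty$ for $\delta>\frac{n-2}{2}$.
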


See Figure \ref{Figure: Figure for Bilinear Bochner-Riesz Euclidean} for more details on the range of $\alpha$ in Theorem \ref{Theorem: Jeong_Lee_Vargas_Bilinear} and \ref{Theorem: Euclidean maximal bilinear Bochner-Riesz} depending on the region $R_i(p_s)$ for $i=1,2,3$. In this paper, one of our main aim is to prove analogue of Theorem \ref{Theorem: Jeong_Lee_Vargas_Bilinear} and \ref{Theorem: Euclidean maximal bilinear Bochner-Riesz} for the case of M\'etivier groups.

\begin{figure}[!ht]
\begin{centering}
\definecolor{qqqqff}{rgb}{0,0,1}
\begin{tikzpicture}[line cap=round,line join=round,x=0.8cm,y=0.8cm]
\draw (6,6)-- (0,6);
\draw (6,0)-- (6,6);
\draw (0,4.5)-- (6,4.5);
\draw (4.5,0)-- (4.5,6);
\draw [->] (0,0) -- (8,0);
\draw [->] (0,0) -- (0,7.5);
\begin{scriptsize}
\fill [color=qqqqff] (0,0) circle (1.5pt);
\draw[color=qqqqff] (-1.3,0.3) node {$\alpha>2\alpha_n(\infty)$};
\fill [color=qqqqff] (0,6) circle (1.5pt);
\draw[color=qqqqff] (3.65,0.5) node {$\alpha > \alpha_n(p_{s})+\alpha_n(\infty)$};
\draw[color=qqqqff] (-2.7,4.6) node {$\alpha > \alpha_n(p_{s})+\alpha_n(\infty)$};
\fill [color=qqqqff] (6,6) circle (1.5pt);
\draw[color=qqqqff] (6.29,6.5) node {$\alpha>0$};
\fill [color=qqqqff] (6,0) circle (1.5pt);
\draw[color=qqqqff] (7.2,4.5) node {$\alpha>\alpha_n(p_{s})$};
\fill [color=qqqqff] (4.5,0) circle (1.5pt);
\draw[color=qqqqff] (7.2,0.5) node {$\alpha>\alpha_n(\infty)$};
\fill [color=qqqqff] (4.5,6) circle (1.5pt);
\draw[color=qqqqff] (4,6.5) node {$\alpha>\alpha_n(p_{s})$};
\fill [color=qqqqff] (0,4.5) circle (1.5pt);
\draw[color=qqqqff] (-1.7,6.2) node {$\alpha>\alpha_n(\infty)$};
\fill [color=qqqqff] (4.5,4.5) circle (1.5pt);
\draw[color=qqqqff] (3.3,4.1) node {$\alpha>2\alpha_n(p_s)$};
\fill [color=qqqqff] (6,4.5) circle (1.5pt);
\fill [color=qqqqff] (6,-0.5) circle (0pt);
\draw[color=qqqqff] (6.0,-0.38) node {$\frac{1}{2}$};
\fill [color=qqqqff] (3,-0.5) circle (0pt);
\draw[color=qqqqff] (4.5,-0.48) node {$\frac{1}{p_{s}}$};
\fill [color=qqqqff] (-0.25,-0.42) circle (0pt);
\draw[color=qqqqff] (-0.25,-0.5) node {$(0,0)$};
\fill [color=qqqqff] (-0.5,3) circle (0pt);
\draw[color=qqqqff] (-0.38,4.5) node {$\frac{1}{p_{s}}$};
\fill [color=qqqqff] (-0.5,6) circle (0pt);
\draw[color=qqqqff] (-0.38,6.0) node {$\frac{1}{2}$};
\fill [color=qqqqff] (8.5,-0.5) circle (0pt);
\draw[color=qqqqff] (8.5,-0.16) node {$\frac{1}{p_1}$};
\fill [color=qqqqff] (-0.5,8.5) circle (0pt);
\draw[color=qqqqff] (-0.58,7.5) node {$\frac{1}{p_2}$};
\draw[color=qqqqff] (2.3,2.1) node {$R_1(p_{s})$};
\draw[color=qqqqff] (5.25,2.1) node {$R_3(p_{s})$};
\draw[color=qqqqff] (2.3,5.25) node {$R_3(p_{s})$};
\draw[color=qqqqff] (5.25,5.25) node {$R_2(p_{s})$};
\end{scriptsize}
\end{tikzpicture}
        \caption{Here $\alpha>\alpha_*(p_1, p_2, n, p_s)$ represents the range of the parameter such that $B_R^{\alpha}$ and $B_*^{\alpha}$ are bounded from $L^{p_1}(\mathbb{R}^n) \times L^{p_2}(\mathbb{R}^n) $ to $ L^p(\mathbb{R}^n)$ (see Theorem \ref{Theorem: Jeong_Lee_Vargas_Bilinear} and Theorem \ref{Theorem: Euclidean maximal bilinear Bochner-Riesz}).}
        \label{Figure: Figure for Bilinear Bochner-Riesz Euclidean}
\end{centering}        
\end{figure}
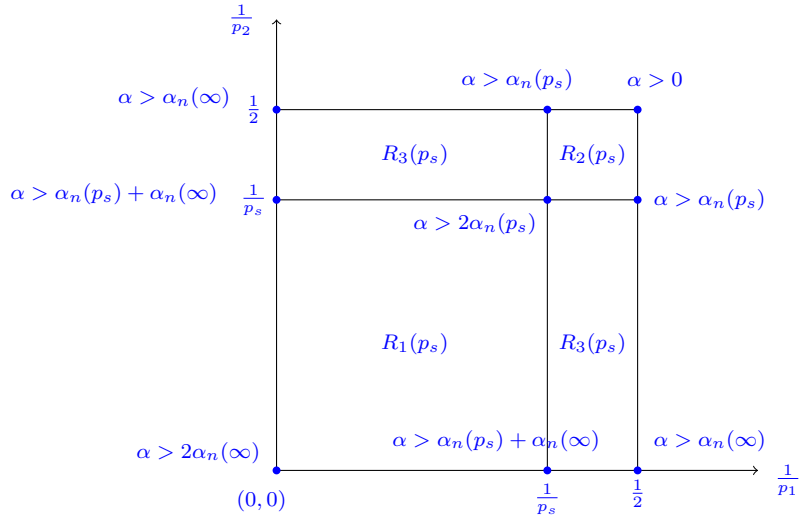

Now we turn our discussion towards the bilinear Bochner-Riesz means on M\'etivier groups. For $f, g \in \mathcal{S}(G)$ and $\alpha\geq 0$, $R>0$, the bilinear Bochner-Riesz means of order $\alpha$ associated with the sub-Laplacian $\mathcal{L}$ on M\'etivier group, denoted by $\mathcal{B}_R^{\alpha}$ and is defined by
\begin{align}
\label{Bilinear Bochner-Riesz menas}
    \mathcal{B}_R^{\alpha}(f,g)(x,u) &= \frac{1}{(2\pi)^{2 d_2}} \int_{\mathfrak{g}_{2,r}^{*}} \int_{\mathfrak{g}_{2,r}^{*}} e^{i \langle \lambda_1 + \lambda_2 , u \rangle} \sum_{\mathbf{k}_1, \mathbf{k}_2 \in \mathbb{N}^\Lambda} \left(1-\frac{\eta_{\mathbf{k}_1}^{\lambda_1}+ \eta_{\mathbf{k}_2}^{\lambda_2}}{R^2} \right)_{+}^{\alpha} \\
    &\nonumber \hspace{1cm} \left[f^{\lambda_1} \times_{\lambda_1} \varphi_{\mathbf{k}_1}^{\mathbf{b}^{\lambda_1}, \mathbf{r}_1}(R_{\lambda_1}^{-1}\cdot) \right](x) \left[g^{\lambda_2} \times_{\lambda_2} \varphi_{\mathbf{k}_2}^{\mathbf{b}^{\lambda_2}, \mathbf{r}_2}(R_{\lambda_2}^{-1}\cdot) \right](x) \,  d\lambda_1 \,  d\lambda_2 .
\end{align}

Analogous to the Euclidean case, it is natural to ask for the optimal range of the parameter $\alpha>0$, which may be possibly expressed in terms of the topological dimension $d=d_1+d_2$ of $G$, such that the operator $\mathcal{B}_R^{\alpha}$ extends to a bounded operator from $L^{p_1}(G) \times L^{p_2}(G)$ to $L^p(G)$ with $(p_1, p_2, p)$ satisfies the H\"older's relation $1/p=1/p_1+1/p_2$ and $1\leq p_1, p_2 \leq \infty$. This problem was recently studied by the author and his collaborators in \cite{Bagchi_Molla_Singh_Bilinear_Metivier} and they obtained the boundedness of $\mathcal{B}_R^{\alpha}$ for $1\leq p_1, p_2 \leq \infty$ with $1/p=1/p_1+1/p_2$. One of the main aim was there to prove the M\'etivier analogue of the result \cite{Bernicot_Grafakos_Song_Yan_Bilinear_Bochner_Riesz_2015} with smoothness parameter given in terms of the topological dimension of M\'etivier groups. In this paper, we significantly improve the result of \cite{Bagchi_Molla_Singh_Bilinear_Metivier} by finding better range of the smoothness parameter $\alpha$ when $p_1, p_2 \in [2,\infty)$. Since here we are only interested in the range $2\leq p_1, p_2 < \infty$, we state the result of \cite{Bagchi_Molla_Singh_Bilinear_Metivier} for that region only.

\begin{theorem}\cite[Theorem 1.2]{Bagchi_Molla_Singh_Bilinear_Metivier}
\label{Theorem: Bilinear Bochner-Riesz Metivier old result}
Let $2\leq p_1, p_2 \leq \infty$ with $1/p = 1/p_1 +1/p_2$. Then $\mathcal{B}_R^{\alpha}$ is bounded from $L^{p_1}(G) \times L^{p_2}(G)$ to $L^{p}(G)$, if $p_1, p_2, p$ and $\alpha> \alpha_d(p_1, p_2)$ satisfy one of the following conditions:
\begin{enumerate}
    \medskip
    \item (Region I) $2 \leq p_1, p_2 \leq \infty$, $1\leq p \leq 2$ and $\alpha_d(p_1, p_2)= (d-1)(1-\frac{1}{p})$.
    \medskip
    \item (Region II) $2 \leq p_1, p_2, p \leq \infty$ and $\alpha_d(p_1, p_2)= \frac{d-1}{2} + d(\frac{1}{2}-\frac{1}{p})$.
\end{enumerate}
\end{theorem}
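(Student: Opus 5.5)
The plan is to dyadically decompose the bilinear multiplier near its singular set, prove weighted bounds for each piece at a few endpoint triples $(p_1,p_2,p)$, and interpolate.

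\emph{Decomposition and reduction.} By the homogeneity of $\mathcal{L}$ under the group dilations, it suffices to treat $R=1$. Fix a smooth partition of unity on $(0,1)$ adapted to the points $1-2^{-j}$ and write
\begin{align*}
(1-\eta_1-\eta_2)_+^{\alpha}=\sum_{j\geq 0}2^{-j\alpha}\,\psi_j(\eta_1+\eta_2),
\end{align*}
where $\psi_j$ is supported where $1-\eta_1-\eta_2\sim 2^{-j}$ and has $C^N$ norms of size $2^{jN}$. Let $\mathcal{B}_j$ be the bilinear operator with multiplier $\psi_j(\eta_{\mathbf{k}_1}^{\lambda_1}+\eta_{\mathbf{k}_2}^{\lambda_2})$. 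It then suffices to show
\begin{align*}
\|\mathcal{B}_j\|_{L^{p_1}\times L^{p_2}\to L^p}\lesssim_{\epsilon} 2^{j(\alpha_d(p_1,p_2)+\epsilon)}
\end{align*}
in both regions, since summing against the factor $2^{-j\alpha}$ converges for $\alpha>\alpha_d(p_1,p_2)$. For $p<1$ one would use the $p$-triangle inequality instead, but here $p\geq 1$.

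\emph{Separation of variables.} For each $j$, I would expand $\psi_j(\eta_1+\eta_2)$ on $[0,1]^2$ as a Fourier series in $\eta_1$ and $\eta_2$. This gives
\begin{align*}
\psi_j(\eta_1+\eta_2)=\sum_{m,n}c^j_{m,n}\,e^{i\pi m\eta_1}\chi(\eta_1)\,e^{i\pi n\eta_2}\chi(\eta_2).
\end{align*}
The coefficients decay rapidly outside the range $|m|,|n|\lesssim 2^{j}$, and they satisfy $\ell^2$-bounds via Plancherel. Consequently $\mathcal{B}_j(f,g)$ becomes a sum of products $F_m(\mathcal{L})f\cdot G_n(\mathcal{L})g$ of linear spectral multipliers, each supported in $[0,1]$ and of frequency $\lesssim 2^j$. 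The earlier linear theory then applies. This includes the weighted Plancherel and restriction-type estimates expressed in the topological dimension (the mechanism behind the sharp $d/2$ Mikhlin–H\"ormander theorem of \cite{Martini_Lie_groups_Polynomial_Growth_2012}), which bound the kernel of $F(\mathcal{L})$ in $L^1$ by $2^{j(d/2)}\|F\|_{L^2}$-type quantities, up to $\epsilon$-losses.

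\emph{Endpoints and interpolation.} I would prove three endpoint bounds for $\mathcal{B}_j$.
\begin{enumerate}
\item At $(2,2,1)$: Cauchy–Schwarz in $x$ together with orthogonality in the spectral variables (Plancherel in $\lambda_1,\lambda_2$ and the Laguerre expansions) gives a bound $2^{j\epsilon}$.
\item At $(2,\infty,2)$ and $(\infty,2,2)$: put the $L^2$ function through Plancherel. The $L^\infty$ factor is controlled by the $L^1$ norm of the kernel of the localized piece, which contributes $2^{j(d-1)/2}$.
\item At $(\infty,\infty,\infty)$: an $L^1$ kernel bound for the full bilinear piece gives $2^{j(d-1)}$, using the fact that the set $\{\eta_1+\eta_2\approx 1\}$ has width $2^{-j}$. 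This gains one factor $2^{-j/2}$ per variable over the naive $2^{jd/2}$ bound.
\end{enumerate}
Bilinear complex (or multilinear real) interpolation between $(2,2,1)$ and $(\infty,\infty,\infty)$, through the points $(2,\infty,2)$ and $(\infty,2,2)$, yields Region I with exponent $(d-1)(1-\frac1p)$. For Region II, with $p\geq 2$, I would instead interpolate $(2,\infty,2)$ and $(\infty,2,2)$, which are at level $\frac{d-1}{2}$, against a cruder $L^\infty$ estimate of size $\frac{d-1}{2}+\frac d2$. The latter comes from bounding one factor by the $L^\infty\to L^\infty$ norm $2^{jd/2}$ of a localized linear multiplier. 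This produces $\frac{d-1}{2}+d(\frac12-\frac1p)$.

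\emph{Main obstacle.} The delicate step is the $L^\infty$-type kernel estimate with exponent in $d$ rather than $Q$, uniformly over the $2^{j}$ separated pieces. Summing the separated pieces naively loses powers of $2^j$. I would handle this by using the $\ell^2$ control of $c^j_{m,n}$ together with weighted Plancherel estimates that carry the extra weight $(1+|x|)^{\beta}$ on the first-layer variable, which the M\'etivier condition (invertibility of $J_\lambda$) makes available.
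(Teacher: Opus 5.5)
The paper does not prove this statement; it is quoted from \cite{Bagchi_Molla_Singh_Bilinear_Metivier}. So I can only assess your outline on its own terms.

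Your skeleton is right. Apply interpolation to each dyadic piece $\mathcal{B}_j$ and then sum against $2^{-j\alpha}$. The two regions are exactly the convex interpolation of these corner values, which are the ones shown in Figure~\ref{Figure: Old Metivier picture}:
\begin{itemize}
\item $0$ at $(2,2,1)$;
\item $\frac{d-1}{2}$ at $(2,\infty,2)$ and $(\infty,2,2)$;
\item $d-\frac12$ at $(\infty,\infty,\infty)$.
\end{itemize}
Region~I uses only the first three corners. Endpoint (1) is also fine: your coefficients satisfy $|c^j_{m,n}|\lesssim_N 2^{-j}(1+|m-n|)^{-N}(1+2^{-j}|n|)^{-N}$, so $\sum_{m,n}|c^j_{m,n}|\lesssim 1$, and Cauchy--Schwarz in $(x,u)$ gives an $O(1)$ bound. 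Two steps, however, fail as written.

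\emph{Endpoint (3) is false as justified.} The set $\{\eta_1+\eta_2\approx 1\}$ has codimension one in the spectral plane. Its thinness therefore buys a single factor $2^{-j/2}$ (the $L^2$ norm of $\psi_j(\eta_1+\eta_2)$), not one per variable. An $L^1$ bound on the bilinear kernel gives $2^{j(d-\frac12+\epsilon)}$ and no better: in the Euclidean model, the kernel of $\psi(2^j(1-|\zeta|^2))$ on $\mathbb{R}^{2n}$ has $L^1$ norm exactly of order $2^{j(n-\frac12)}$. Your outline is also internally inconsistent here. If (3) held, interpolating it with (2) would give $(d-1)(1-\frac1p)$ on all of Region~II, and your ``cruder'' estimate would be pointless. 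Replace (3) by the $d-\frac12$ kernel bound; that is all the statement needs.

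\emph{Endpoint (2) is not established, and it is where both $d$ (instead of $Q$) and the gain of $\frac12$ must come from.} The $g$-multiplier $\psi_j(\eta_1+\cdot)$ depends on the spectral variable of $f$. So you cannot apply Plancherel to $f$ and pull out $\sup_{\eta_1}\|\psi_j(\eta_1+\mathcal{L})\|_{L^\infty\to L^\infty}$: a symbol that is, for each frozen $\eta_1$, an $L^\infty$-multiplier of norm one need not give an operator bounded $L^2\times L^\infty\to L^2$. In your separated form, the absolute sum is
\[
\sum_{m,n}|c^j_{m,n}|\,\|\mathcal{K}_{G_n(\mathcal{L})}\|_{L^1}\approx\sum_{|n|\lesssim 2^j}2^{-j}|n|^{d/2}\approx 2^{jd/2},
\]
which loses $2^{j/2}$ and would only give $d(1-\frac1p)$ in Region~I.

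One correct mechanism is to write
\[
\mathcal{B}_j(f,g)(x,u)=\int_G g\big((x,u)(y,s)^{-1}\big)\,\big[M_{(y,s)}(\mathcal{L})f\big](x,u)\,d(y,s),\qquad M_{(y,s)}(\eta_1)=\mathcal{K}_{\psi_j(\eta_1+\mathcal{L})}(y,s).
\]
Then:
\begin{itemize}
\item apply Minkowski in $(y,s)$;
\item apply Plancherel in $f$ for each fixed $(y,s)$;
\item apply Cauchy--Schwarz in $(y,s)$ against $W(y,s)=(1+\|(y,s)\|)^{\beta}(1+|y|)^{\gamma}$, with $\gamma<d_2/2$ and $\beta+\gamma>Q/2$, so that $W^{-1}\in L^2$ by Lemma~\ref{Lemma: Integration of distance and weight}.
\end{itemize}
After Fubini this yields
\[
\|\mathcal{B}_j(f,g)\|_{L^2}\lesssim \sup_{\eta_1}\big\|W\,\mathcal{K}_{\psi_j(\eta_1+\mathcal{L})}\big\|_{L^2(G)}\,\|f\|_{L^2}\|g\|_{L^\infty},
\]
with the supremum \emph{outside} the $(y,s)$-integral. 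Proposition~\ref{Prop: Weighted Plancherel using weight and distance} bounds this supremum by $2^{j(\beta-\frac12+\epsilon)}$ with $\beta=\frac d2+\epsilon$, which gives $2^{j(\frac{d-1}{2}+2\epsilon)}$.

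Equivalently, in your Fourier-series picture, use Cauchy--Schwarz in the index $n$. Orthogonality gives $2^{-j/2}$ on the $f$-side. On the $g$-side, duality against $\ell^2$-sequences and $\|\mathcal{K}_{F(\mathcal{L})}\|_{L^1}\lesssim\|F\|_{L^2_{d/2+\epsilon}}$ give $2^{j(d/2+\epsilon)}$ for
\[
\sup_{(x,u)}\big(\textstyle\sum_n (1+2^{-j}|n|)^{-2N}|G_n(\mathcal{L})g(x,u)|^2\big)^{1/2}.
\]
Your final paragraph gestures at ``$\ell^2$ control'', but this decoupling is the heart of the proof and must be carried out explicitly.

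(Running the same Cauchy--Schwarz on both factors at $(\infty,\infty,\infty)$ does produce $2^{j(d-1+\epsilon)}$. So the number in (3) is reachable, but by a square-function argument, not by a kernel bound.)
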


\begin{figure}[!ht]
\begin{centering}
\definecolor{qqqqff}{rgb}{0,0,1}
\begin{tikzpicture}[line cap=round,line join=round,x=0.8cm,y=0.8cm]
\draw (6,6)-- (0,6);
\draw (6,0)-- (6,6);
\draw (0,6)-- (6,0);
\draw [dashed](0,4.5)-- (6,4.5);
\draw [dashed](4.5,0)-- (4.5,6);
\draw [->] (0,0) -- (8,0);
\draw [->] (0,0) -- (0,7.5);
\begin{scriptsize}
\fill [color=qqqqff] (0,0) circle (1.5pt);
\draw[color=qqqqff] (-1.3,0.3) node {$\alpha>d-\frac{1}{2}$};
\fill [color=qqqqff] (0,6) circle (1.5pt);
\fill [color=qqqqff] (6,6) circle (1.5pt);
\draw[color=qqqqff] (6.29,6.5) node {$\alpha>0$};
\fill [color=qqqqff] (6,0) circle (1.5pt);
\draw[color=qqqqff] (7,4.5) node {$\alpha>\frac{d-1}{d_2+1}$};
\fill [color=qqqqff] (4.5,0) circle (1.5pt);
\draw[color=qqqqff] (7,0.5) node {$\alpha>\frac{d-1}{2}$};
\fill [color=qqqqff] (4.5,6) circle (1.5pt);
\draw[color=qqqqff] (4,6.5) node {$\alpha>\frac{d-1}{d_2+1}$};
\fill [color=qqqqff] (0,4.5) circle (1.5pt);
\draw[color=qqqqff] (-1.5,6) node {$\alpha>\frac{d-1}{2}$};
\fill [color=qqqqff] (4.5,4.5) circle (1.5pt);
\draw[color=qqqqff] (3.3,0.5) node {$\alpha>\frac{d_2(d-1)+(3d-2)}{2(d_2+1)}$};
\draw[color=qqqqff] (-2.6,4.6) node {$\alpha>\frac{d_2(d-1)+(3d-2)}{2(d_2+1)}$};
\fill [color=qqqqff] (6,4.5) circle (1.5pt);
\draw[color=qqqqff] (3.5,4) node {$\alpha>\frac{2(d-1)}{d_2+1}$};
\fill [color=qqqqff] (6,-0.5) circle (0pt);
\draw[color=qqqqff] (6.0,-0.38) node {$\frac{1}{2}$};
\fill [color=qqqqff] (3,-0.5) circle (0pt);
\draw[color=qqqqff] (4.5,-0.48) node {$\frac{1}{p_{d_2}}$};
\fill [color=qqqqff] (-0.25,-0.42) circle (0pt);
\draw[color=qqqqff] (-0.25,-0.5) node {$(0,0)$};
\fill [color=qqqqff] (-0.5,3) circle (0pt);
\draw[color=qqqqff] (-0.38,4.5) node {$\frac{1}{p_{d_2}}$};
\fill [color=qqqqff] (-0.5,6) circle (0pt);
\draw[color=qqqqff] (-0.38,6.0) node {$\frac{1}{2}$};
\fill [color=qqqqff] (8.5,-0.5) circle (0pt);
\draw[color=qqqqff] (8.5,-0.16) node {$\frac{1}{p_1}$};
\fill [color=qqqqff] (-0.5,8.5) circle (0pt);
\draw[color=qqqqff] (-0.58,7.5) node {$\frac{1}{p_2}$};
\draw[color=qqqqff] (5.3,5.1) node {$I$};
\draw[color=qqqqff] (2.3,2.1) node {$II$};
\end{scriptsize}
\end{tikzpicture}
        \caption{Region $I =$ Upper half triangle and Region $II=$ Lower half triangle. Here $\alpha>\alpha_d(p_1, p_2)$ represents that $\mathcal{B}_R^{\alpha}$ is bounded on $L^{p_1}(G) \times L^{p_2}(G) \to L^p(G)$ for $\alpha>\alpha_d(p_1, p_2)$ (see Theorem \ref{Theorem: Bilinear Bochner-Riesz Metivier old result}).}
        \label{Figure: Old Metivier picture}
\end{centering}        
\end{figure}
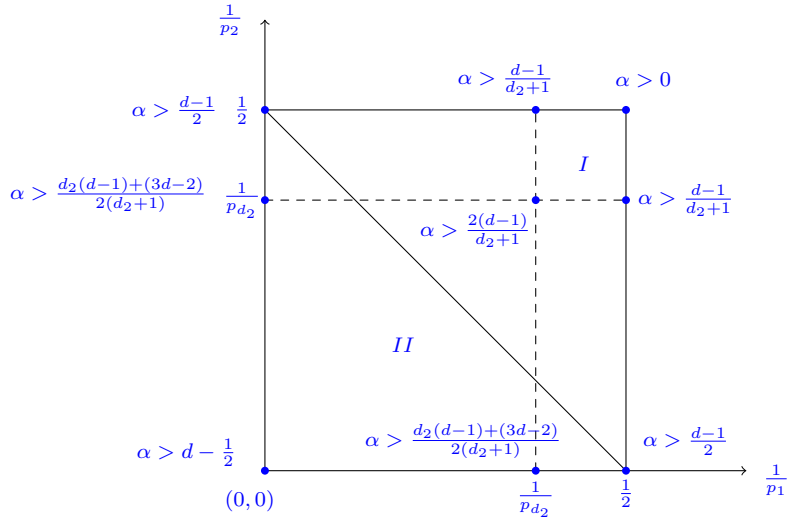

In a similar vain as in the Euclidean settings, here we consider the maximal bilinear Bochner-Riesz means associated with the sub-Laplacian $\mathcal{L}$ on M\'etivier group, defined as
\begin{align*}
    \mathcal{B}^{\alpha}_{*}(f,g)(x,u) &= \sup_{R>0}|\mathcal{B}_R^{\alpha}(f,g)(x,u)| .
\end{align*}
In this paper, one of our other main aim is to study the $L^{p_1}(G) \times L^{p_2}(G)$ to $L^p(G)$ boundedness of $\mathcal{B}^{\alpha}_{*}$ for $1\leq p_1, p_2 \leq \infty$ under the H\"older's relation $1/p=1/p_1+1/p_2$. Although boundedness of $\mathcal{B}_R^{\alpha}$ has been studied recently on M\'etivier groups (Theorem \ref{Theorem: Bilinear Bochner-Riesz Metivier old result}), but the boundedness of associated maximal operators has not been studied yet. In this direction, the following is our main result.

\begin{theorem}
\label{Theorem: Bilinear maximal on Metivier}
Let $2 \leq p_1, p_2 < \infty$ with $1/p=1/p_1 +1/p_2$. Then $\mathcal{B}^{\alpha}_{*}$ is bounded from $L^{p_1}(G) \times L^{p_2}(G)$ to $L^{p}(G)$ whenever $\alpha> \alpha_*(p_1, p_2, d, \mathfrak{p}_G)$.
    
\end{theorem}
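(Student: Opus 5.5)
We will follow the Jotsaroop--Shrivastava strategy, adapted to the sub-Laplacian: decompose the bilinear multiplier into products of linear spectral multipliers, and control the supremum in $R$ by the Stein square function of Theorem \ref{Theorem: Stein square estimate} together with the maximal multiplier bound of Theorem \ref{Theorem: Maximal spectral multiplier}.

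\textbf{Decomposition.} By homogeneity we reduce to the multiplier $m(\eta_1,\eta_2)=(1-\eta_1-\eta_2)_+^{\alpha}$ evaluated at $(R^{-2}\mathcal{L},R^{-2}\mathcal{L})$ acting on $f\otimes g$. The standard identity (as in the Euclidean proof, via the Beta-function formula)
\[
(1-\eta_1-\eta_2)_+^{\alpha} = c_{\alpha,\beta}\int_0^1 (1-\tfrac{\eta_1}{t})_+^{\beta}\,(t-\eta_1)_+^{0}\,\cdots
\]
is replaced by the more flexible version
\[
(1-\eta_1-\eta_2)^{\alpha}_+ = C\int_0^1 (1-\tfrac{\eta_1}{t})_+^{\alpha_1-1}\, t^{\alpha_1}\,(1-t-\eta_2)_+^{\alpha_2}\,\frac{dt}{t}\cdot t, \qquad \alpha=\alpha_1+\alpha_2 ,
\]
valid for $\alpha_1>0$ (after a fractional differentiation identity). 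Rather than handle $\alpha_1-1<0$, we first insert a smooth dyadic decomposition of $(1-\eta_1-\eta_2)^{\alpha}_+$ near the boundary of the triangle, writing $\mathcal{B}^{\alpha}_R=\sum_{j\ge 0}2^{-j\alpha}\mathcal{B}_{R,j}$ with each piece supported where $1-\eta_1-\eta_2\sim 2^{-j}$. Each piece we expand as
\[
\mathcal{B}_{R,j}(f,g)=\int_0^1 \partial_t\big[S^{\alpha_1}_{R\sqrt t}(\mathcal{L})f\big]\cdot \psi_{j}\big(\tfrac{R^2(1-t)-\mathcal{L}}{R^2}\big)g\; t\,dt,
\]
in the spirit of \cite[Theorem 2.1]{Jotsaroop_Shrivastava_Maximal_Bochner_Riesz_2022}.

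\textbf{Cauchy--Schwarz in $t$.} Applying Cauchy--Schwarz in $t$ to this expansion gives the pointwise bound
\[
\sup_R|\mathcal{B}_R^{\alpha}(f,g)| \lesssim \mathfrak{S}^{\alpha_1+1/2}(\mathcal{L})f\cdot \sup_{R}\Big(\int_0^1 |T^{\alpha_2}_{R,t}g|^2\,dt\Big)^{1/2},
\]
and similarly with the roles of $f$ and $g$ swapped. The second factor is a maximal square-type operator in $g$. We will dominate it by a maximal spectral multiplier: writing $(1-t-\eta_2)_+^{\alpha_2}$ as a function of $\eta_2/R^2$ and integrating in $t$, Minkowski's inequality together with Theorem \ref{Theorem: Maximal spectral multiplier} bounds it in $L^{p_2}$ once $\alpha_2>\alpha_d(p_2)$, for $p_2\ge\mathfrak{p}_G$ or $p_2=2$. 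For the first factor, Theorem \ref{Theorem: Stein square estimate} requires $\alpha_1+\tfrac12>\alpha_d(p_1)+\tfrac12$. Hölder's inequality then gives boundedness at $L^{p_1}\times L^{p_2}\to L^p$ whenever $\alpha>\alpha_d(p_1)+\alpha_d(p_2)$, for $p_i\in\{2\}\cup[\mathfrak{p}_G,\infty)$. This is exactly region $R_1(\mathfrak{p}_G)$ plus the corner cases $p_i=2$; at $p_i=2$ the sharp threshold $\alpha_d(2)=0$ in Theorem \ref{Theorem: Stein square estimate} handles the endpoint.

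\textbf{Interpolation.} Regions $R_2$ and $R_3$ will follow by bilinear complex interpolation in $(1/p_1,1/p_2,\alpha)$ between the corner estimates at $p_i\in\{2,\mathfrak{p}_G,\infty^-\}$. Since we have a maximal operator, we linearize by choosing a measurable $R(x,u)$; this gives a family of bilinear operators, analytic in $\alpha$, to which Stein's analytic interpolation applies with polynomial growth in $\operatorname{Im}\alpha$. The interpolated thresholds are linear in $1/p_i$, and a direct check reproduces formula \eqref{Bilinear smoothness index} with $m=d$ and $\Upsilon=\mathfrak{p}_G$.

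\textbf{Main obstacle.} The hard step is justifying the $t$-integral representation on the sub-Laplacian spectrum and verifying that the $g$-factor really is controlled by Theorem \ref{Theorem: Maximal spectral multiplier} with the right $L^2_{\alpha_2}(\mathbb{R}^+)$ norm. This amounts to computing the Mellin-type Sobolev norm of $\eta\mapsto(1-t-\eta)_+^{\alpha_2}$ uniformly in $t$, which is finite precisely for $\alpha_2>\alpha_d(p_2)$ minus the $\tfrac12$ gained from integrating in $t$. We would try to mirror the Euclidean computation in \cite{Jotsaroop_Shrivastava_Maximal_Bochner_Riesz_2022}, and if needed use the pointwise inequality \eqref{After application of Holder final estimate}.
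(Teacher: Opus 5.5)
\textbf{Verdict: there is a genuine gap.} As set up, your scheme does not reach $\alpha_d(p_1)+\alpha_d(p_2)$. It loses half a derivative, and the stated threshold appears only because two bookkeeping errors cancel.

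\emph{The $f$-factor.} In your slicing identity the $f$-factor has order $\alpha_1-1$. After Cauchy--Schwarz, the quantity $\bigl(\int_0^1|t\,\partial_t S^{\alpha_1}_{R\sqrt t}(\mathcal{L})f|^2\,dt\bigr)^{1/2}$ is dominated by $\mathfrak{S}^{\alpha_1}(\mathcal{L})f$ (substitute $r=R\sqrt t$), not by $\mathfrak{S}^{\alpha_1+1/2}(\mathcal{L})f$. So Theorem \ref{Theorem: Stein square estimate} requires $\alpha_1>\alpha_d(p_1)+\tfrac12$, not $\alpha_1>\alpha_d(p_1)$.

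\emph{The $g$-factor.} Once Minkowski puts the supremum in $R$ inside the $t$-integral, the integrand is $(1-t)^{\alpha_2}\sup_R|S^{\alpha_2}_R(\mathcal{L})g|$. You are then using the maximal Bochner--Riesz operator of order $\alpha_2$, which needs $\alpha_2>\alpha_d(p_2)$ (Corollary \ref{Corollary: Boundedness of linear maximal Bochner-Riesz}, sharp for that operator). No ``$\tfrac12$ gained from integrating in $t$'' survives that step. Counted correctly, your argument gives only $\alpha>\alpha_d(p_1)+\alpha_d(p_2)+\tfrac12$.

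\emph{The dyadic decomposition.} Pieces localized to $1-\eta_1-\eta_2\sim 2^{-j}$ are functions of $\eta_1+\eta_2$ and do not factor. Your displayed representation of $\mathcal{B}_{R,j}$ is not an identity for such a cutoff. Both $j$ and $2^{-j\alpha}$ also disappear from your Cauchy--Schwarz bound. So that step is not usable as written.

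\textbf{The missing idea} is to keep the $L^2(dt)$ structure on both factors. The tool is the square-type maximal operator $\mathcal{M}^{\delta}(\mathcal{L})f=\sup_{R>0}\bigl(\frac1R\int_0^R|S^{\delta}_t(\mathcal{L})f|^2\,dt\bigr)^{1/2}$ of Proposition \ref{Proposition: Maximal square function estimate}. It is bounded for $\delta>\alpha_d(p)-\tfrac12$, which is half a derivative better than the maximal Bochner--Riesz operator.

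This is what the paper does:
\begin{enumerate}
\item It localizes $\eta_1$ by $\Psi(2^j(1-\eta_1/R^2))$.
\item It applies the Stein--Weiss formula \eqref{Stein_Weiss expression} to the $\eta_2$-factor with $\alpha=\beta+\delta$.
\item It bounds the $g$-factor by $\mathcal{M}^{\delta}(\mathcal{L})g$ with $\delta>\alpha_d(p_2)-\tfrac12$.
\item It bounds the $f$-factor by Proposition \ref{Proposition: Main proposition of S in full range} with $\beta>\alpha_d(p_1)+\tfrac12$.
\item It sums $2^{-j/4}\,2^{j(\alpha_d(p_1)-\alpha+1/4+\epsilon)}$ over $j$.
\end{enumerate}

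The same repair appears to close your un-localized Beta slicing as well. Substitute $r=R\sqrt t$ in the first factor and $r=R\sqrt{1-t}$ in the second. Up to constants, the two Cauchy--Schwarz factors become $\frac1R\int_0^R (r/R)^{4\alpha_1-3}|S^{\alpha_1-1}_r(\mathcal{L})f|^2\,dr$ and $\frac1R\int_0^R (r/R)^{4\alpha_2+1}|S^{\alpha_2}_r(\mathcal{L})g|^2\,dr$. A dyadic splitting in $r$ bounds these by $(\mathcal{M}^{\alpha_1-1}(\mathcal{L})f)^2$ and $(\mathcal{M}^{\alpha_2}(\mathcal{L})g)^2$, provided $\alpha_1>\tfrac12$ and $\alpha_2>-\tfrac12$. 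The conditions $\alpha_1-1>\alpha_d(p_1)-\tfrac12$ and $\alpha_2>\alpha_d(p_2)-\tfrac12$ then add up to $\alpha>\alpha_d(p_1)+\alpha_d(p_2)$.

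With this in place, your bilinear analytic interpolation of the linearized maximal operator can be replaced by something simpler. Interpolate the linear bounds for $\mathfrak{S}^{\alpha}(\mathcal{L})$, and hence for $\mathcal{M}^{\gamma}(\mathcal{L})$, then apply H\"older. This reproduces the thresholds in $R_2(\mathfrak{p}_G)$ and $R_3(\mathfrak{p}_G)$.
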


As an immediate corollary of the above Theorem \ref{Theorem: Bilinear maximal on Metivier}, we get the boundedness of the bilinear Bochner-Riesz means $\mathcal{B}_R^{\alpha}$ on M\'etivier groups. Here we show that our result improves the smoothness threshold $\alpha_d(p_1, p_2)$ in Theorem \ref{Theorem: Bilinear Bochner-Riesz Metivier old result}. Following is our main result regarding the boundedness of the bilinear Bochner-Riesz means on M\'etivier groups.

\begin{corollary}
\label{Corollary: Bochner-Riesz main theorem Metivier}
Let $2 \leq p_1, p_2 < \infty$ with $1/p=1/p_1 +1/p_2$. Then $\mathcal{B}_R^{\alpha}$ is bounded from $L^{p_1}(G) \times L^{p_2}(G)$ to $L^{p}(G)$ whenever $\alpha> \alpha_*(p_1, p_2, d, \mathfrak{p}_G)$.
    
\end{corollary}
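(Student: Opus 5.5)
The corollary follows from Theorem \ref{Theorem: Bilinear maximal on Metivier} by a pointwise domination. For each fixed $R>0$ and $(x,u)\in G$ we have $|\mathcal{B}_R^{\alpha}(f,g)(x,u)| \leq \mathcal{B}^{\alpha}_{*}(f,g)(x,u)$. We first prove the estimate for $f,g\in\mathcal{S}(G)$, where both operators are defined by \eqref{Bilinear Bochner-Riesz menas}. There, for $2\le p_1,p_2<\infty$, $1/p=1/p_1+1/p_2$ and $\alpha>\alpha_*(p_1,p_2,d,\mathfrak{p}_G)$, Theorem \ref{Theorem: Bilinear maximal on Metivier} gives
\begin{align*}
\|\mathcal{B}_R^{\alpha}(f,g)\|_{L^p(G)} \leq \|\mathcal{B}^{\alpha}_{*}(f,g)\|_{L^p(G)} \leq C\|f\|_{L^{p_1}(G)}\|g\|_{L^{p_2}(G)} .
\end{align*}
The constant $C$ is uniform in $R$. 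This uniformity could also be seen directly, since the dilation structure of $G$ (under which $\mathcal{L}$ is homogeneous of degree $2$) reduces everything to $R=1$. Here it comes for free because the maximal operator already takes the supremum over $R$.

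Next we extend to all of $L^{p_1}(G)\times L^{p_2}(G)$ by density. Since $p_1,p_2<\infty$, Schwartz functions are dense in $L^{p_1}(G)$ and $L^{p_2}(G)$. The operator $\mathcal{B}_R^{\alpha}$ is bilinear and, by the bound above, bounded on this dense class. Approximate $f,g$ by Schwartz sequences $f_j\to f$ and $g_j\to g$, and write
\begin{align*}
\mathcal{B}_R^{\alpha}(f_j,g_j)-\mathcal{B}_R^{\alpha}(f_k,g_k)=\mathcal{B}_R^{\alpha}(f_j-f_k,g_j)+\mathcal{B}_R^{\alpha}(f_k,g_j-g_k) .
\end{align*}
This shows that $\mathcal{B}_R^{\alpha}(f_j,g_j)$ is Cauchy in $L^p(G)$. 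That holds in the quasi-Banach case $p<1$ as well, using the $p$-triangle inequality $\|a+b\|_p^p\le\|a\|_p^p+\|b\|_p^p$. Hence $\mathcal{B}_R^{\alpha}$ extends uniquely to a bounded bilinear operator with the same constant.

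The only real work is the maximal Theorem \ref{Theorem: Bilinear maximal on Metivier} itself, which we take as given; there is no genuine obstacle in the deduction. The one point to watch is well-definedness on the dense class, namely that \eqref{Bilinear Bochner-Riesz menas} converges for Schwartz $f,g$. This holds because $\mathcal{B}_R^{\alpha}(f,g)$ is a product-type combination of spectral projections of $f$ and $g$ with a bounded multiplier, and is therefore defined via the $L^2$ functional calculus.
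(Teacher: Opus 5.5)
Your proposal is correct and follows the paper's route: the paper obtains the corollary "immediately" from Theorem \ref{Theorem: Bilinear maximal on Metivier} via the pointwise bound $|\mathcal{B}_R^{\alpha}(f,g)|\le \mathcal{B}^{\alpha}_{*}(f,g)$, which gives a bound uniform in $R$. Your density extension from $\mathcal{S}(G)\times\mathcal{S}(G)$, including the $p$-triangle inequality when $p<1$, is a routine detail the paper leaves implicit.
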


Since the above boundedness result on $\mathcal{B}_R^{\alpha}$ is obtained as a corollary of Theorem \ref{Theorem: Bilinear maximal on Metivier}, when $p_1, p_2 \geq 2$, the range of the smoothness parameter for the boundedness of $\mathcal{B}^{\alpha}_{*}$ (Theorem \ref{Theorem: Bilinear maximal on Metivier}) is exactly same as of the boundedness of $\mathcal{B}_R^{\alpha}$ (Corollary \ref{Corollary: Bochner-Riesz main theorem Metivier}). However, using the ideas of \cite{Jeong_Lee_Vargas_Bilinear_Bochner_Riesz_2018} and the bounds of the discrete square functions obtained in Proposition \ref{Proposition: Discrete square function estimate} one can give a direct proof of Corollary \ref{Corollary: Bochner-Riesz main theorem Metivier}.

\begin{figure}[!ht]
\begin{centering}
\definecolor{qqqqff}{rgb}{0,0,1}
\begin{tikzpicture}[line cap=round,line join=round,x=0.7cm,y=0.7cm]
\draw (6,6)-- (0,6);
\draw (6,0)-- (6,6);
\draw (0,4.5)-- (6,4.5);
\draw (4.5,0)-- (4.5,6);
\draw [->] (6,0) -- (8,0);
\draw [->] (0,6) -- (0,7.5);
\draw [dashed](0,0)-- (6,0);
\draw [dashed](0,0)-- (0,6);
\begin{scriptsize}
\draw [] (0,0) circle (1.5pt);
\draw[color=qqqqff] (-1.3,0.3) node {$\alpha>d-1$};
\draw [] (0,6) circle (1.5pt);
\draw[color=qqqqff] (3.45,0.5) node {$\alpha > \alpha_d(p_{d_2})+\alpha_d(\infty)$};
\draw[color=qqqqff] (-0.4,3.7) node {$\alpha > \alpha_d(p_{d_2})+\alpha_d(\infty)$};
\fill [color=qqqqff] (6,6) circle (1.5pt);
\draw[color=qqqqff] (6.29,6.5) node {$\alpha>0$};
\draw [] (6,0) circle (1.5pt);
\draw[color=qqqqff] (7.5,4.5) node {$\alpha>\alpha_d(p_{d_2})$};
\draw [] (4.5,0) circle (1.5pt);
\draw[color=qqqqff] (7,0.5) node {$\alpha>\frac{d-1}{2}$};
\fill [color=qqqqff] (4.5,6) circle (1.5pt);
\draw[color=qqqqff] (4,6.5) node {$\alpha>\alpha_d(p_{d_2})$};
\draw [] (0,4.5) circle (1.5pt);
\draw[color=qqqqff] (-1.5,5.9) node {$\alpha>\frac{d-1}{2}$};
\fill [color=qqqqff] (4.5,4.5) circle (1.5pt);
\draw[color=qqqqff] (3.8,4.1) node {$\alpha>2\alpha_d(p_{d_2})$};
\fill [color=qqqqff] (6,4.5) circle (1.5pt);
\fill [color=qqqqff] (6,-0.5) circle (0pt);
\draw[color=qqqqff] (6.0,-0.38) node {$\frac{1}{2}$};
\fill [color=qqqqff] (3,-0.5) circle (0pt);
\draw[color=qqqqff] (4.5,-0.48) node {$\frac{1}{p_{d_2}}$};
\fill [color=qqqqff] (-0.25,-0.42) circle (0pt);
\draw[color=qqqqff] (-0.25,-0.5) node {$(0,0)$};
\fill [color=qqqqff] (-0.5,3) circle (0pt);
\draw[color=qqqqff] (-0.38,4.5) node {$\frac{1}{p_{d_2}}$};
\fill [color=qqqqff] (-0.5,6) circle (0pt);
\draw[color=qqqqff] (-0.38,6.0) node {$\frac{1}{2}$};
\fill [color=qqqqff] (8.5,-0.5) circle (0pt);
\draw[color=qqqqff] (8.5,-0.16) node {$\frac{1}{p_1}$};
\fill [color=qqqqff] (-0.5,8.5) circle (0pt);
\draw[color=qqqqff] (-0.58,7.5) node {$\frac{1}{p_2}$};
\draw[color=qqqqff] (2.3,2.1) node {$R_1(p_{d_2})$};
\draw[color=qqqqff] (5.25,2.1) node {$R_3(p_{d_2})$};
\draw[color=qqqqff] (2.3,5.25) node {$R_3(p_{d_2})$};
\draw[color=qqqqff] (5.25,5.25) node {$R_2(p_{d_2})$};
\end{scriptsize}
\end{tikzpicture}
\begin{tikzpicture}[line cap=round,line join=round,x=0.7cm,y=0.7cm]
\fill[color=black,fill=black,fill opacity=0.3] (0,7) -- (5,7) -- (5,0) -- (2.5,2.5) -- (0,6) -- cycle;
\fill[color=red,fill=red,fill opacity=0.3] (0,6) -- (2.5,2.5) -- (5,0) -- (3.5,0.8) -- (0,5) -- cycle;
\draw [dashed](0,6)-- (2.5,2.5);
\draw [dashed](2.5,2.5)-- (5,0);
\draw [dashed](0,5)-- (3.5,0.8);
\draw [dashed](3.5,0.8)-- (5,0);
\draw [->] (0,0) -- (6,0);
\draw [->] (0,0) -- (0,7.5);
\draw [dashed](5,0)-- (5,7);
\draw [dashed](0,7)-- (5,7);
\begin{scriptsize}
\fill [color=qqqqff] (0,0) circle (1.5pt);
\fill [color=qqqqff] (0,6) circle (1.5pt);
\fill [color=qqqqff] (5,0) circle (1.5pt);
\fill [color=qqqqff] (2.5,0) circle (1.5pt);
\fill [color=qqqqff] (0,2.5) circle (1.5pt);
\draw [] (0,5) circle (1.5pt);
\fill [color=qqqqff] (3.5,0.8) circle (1.5pt);
\fill [color=qqqqff] (2.5,2.5) circle (1.5pt);
\fill [color=qqqqff] (0,0.8) circle (1.5pt);
\fill [color=qqqqff] (3.5,0) circle (1.5pt);
\draw[color=qqqqff] (-0.8,6) node {$d-\frac{1}{2}$};
\draw[color=qqqqff] (-0.8,5) node {$d-1$};
\draw[color=qqqqff] (-0.7,2.5) node {$\frac{d-1}{2}$};
\draw[color=qqqqff] (-1.1,0.8) node {$2\alpha_d(p_{d_2})$};
\draw[color=qqqqff] (5.0,-0.4) node {$\frac{1}{2}$};
\draw[color=qqqqff] (2.5,-0.4) node {$\frac{1}{4}$};
\draw[color=qqqqff] (3.5,-0.5) node {$\frac{1}{p_{d_2}}$};
\draw[color=qqqqff] (-0.25,-0.5) node {$(0,0)$};
\draw[color=qqqqff] (6.5,-0.16) node {$\frac{1}{p_0}$};
\draw[color=qqqqff] (-0.58,7.5) node {$\alpha$};
\end{scriptsize}
\end{tikzpicture}
        \caption{Left hand side picture represents that $\mathcal{B}_*^{\alpha}$ and $\mathcal{B}_R^{\alpha}$ are bounded on $L^{p_1}(G) \times L^{p_2}(G) \to L^p(G)$ for $\alpha>\alpha_*(p_1, p_2, d, \mathfrak{p}_G)$ (see Theorem \ref{Theorem: Bilinear maximal on Metivier} and Corollary \ref{Corollary: Bochner-Riesz main theorem Metivier}). Compare this picture with Figure \ref{Figure: Old Metivier picture}. Right hand side picture tells about the boundedness of $\mathcal{B}_R^{\alpha}$ on $L^{p_0}(G) \times L^{p_0}(G) \to L^{p_0/2}(G)$. Black region denotes the previously known result (see Theorem \ref{Theorem: Bilinear Bochner-Riesz Metivier old result}), while red region denote the extended range proved in Corollary \ref{Corollary: Bochner-Riesz main theorem Metivier}.}
        \label{Figure: New Metivier picture}
\end{centering}        
\end{figure}
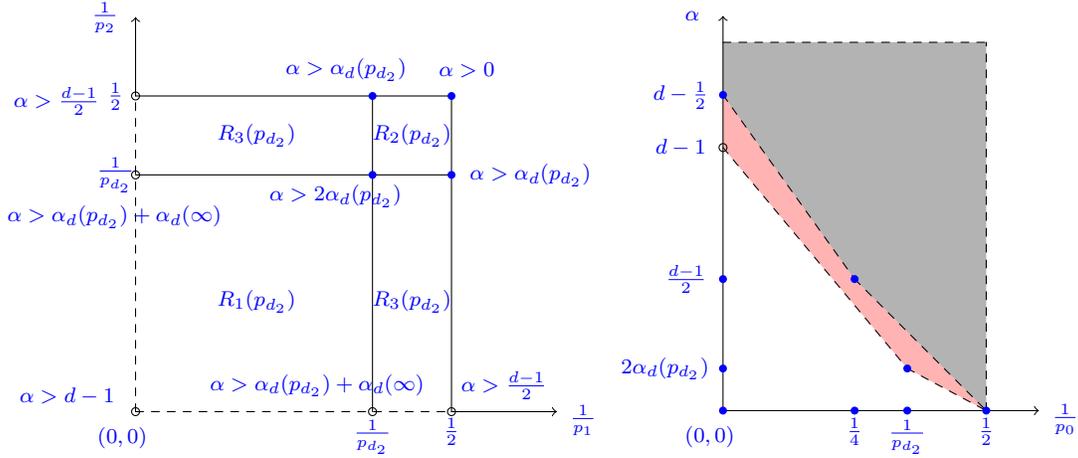

Note that, Corollary \ref{Corollary: Bochner-Riesz main theorem Metivier} provides improved estimate for the range of smoothness parameter $\alpha_d(p_1, p_2)$ over Theorem \ref{Theorem: Bilinear Bochner-Riesz Metivier old result} for the case of M\'etivier and Heisenberg-type groups (compare the range of $\alpha$ between the Figure \ref{Figure: Old Metivier picture} and left side picture of Figure \ref{Figure: New Metivier picture}). This improvement can also seen if one consider $L^{p_0}(G) \times L^{p_0}(G)$ to $L^{p_0/2}(G)$-boundedness of $\mathcal{B}_R^{\alpha}$, see right side picture of Figure \ref{Figure: New Metivier picture}. Also Theorem \ref{Theorem: Bilinear maximal on Metivier} and Corollary \ref{Corollary: Bochner-Riesz main theorem Metivier} can be seen as an analogue of Theorem \ref{Theorem: Euclidean maximal bilinear Bochner-Riesz} and Theorem \ref{Theorem: Jeong_Lee_Vargas_Bilinear} respectively, where the Euclidean dimension $n$ in $\mathbb{R}^n$ is replaced by the topological dimension $d$ of the underlying group $G$ and the number $p_s$ is replaced by $\mathfrak{p}_G$ in case of M\'etivier or Heisenberg-type groups.

\subsection{Bilinear Bochner-Riesz square function on M\'etivier groups}
Analogous to the Stein's square function $\mathfrak{S}^{\alpha}(\mathcal{L})$ \eqref{Stein square function for subLaplacian}, corresponding to the bilinear Bochner-Riesz operator (see \eqref{Definition: Euclidean bilinear Bochner-Riesz means}), for $\alpha\geq 0$, the bilinear Bochner-Riesz square function of order $\alpha$ on $\mathbb{R}^n$ is defined by
\begin{align}
\label{Definition of Bilinear Bochner-Riesz square function}
    \mathscr{G}^{\alpha}(f,g)(x) &= \left( \int_0^{\infty} \left| \frac{\partial}{\partial t} B_t^{\alpha+1}(f,g)(x) \right|^2 t \, dt \right)^{1/2} .
\end{align}
For $p_1, p_2 \geq 2$, the $L^{p_1}(\mathbb{R}^n) \times L^{p_2}(\mathbb{R}^n)$ to $L^p(\mathbb{R}^n)$-boundedness of $\mathscr{G}^{\alpha}$ has been recently studied in \cite{Choudhary_Jotsaroop_Shrivastava_Shuin_Bilinear_square_function}. There they have also shown connection of the bilinear Bochner-Riesz square function with the generalized bilinear Bochner-Riesz means, bilinear fractional Schr\"odinger operators and bilinear radial multipliers. Regarding the boundedness of $\mathscr{G}^{\alpha}$ they obtained the following result.

\begin{theorem}\cite[Theorem 2.2]{Choudhary_Jotsaroop_Shrivastava_Shuin_Bilinear_square_function}
Let $2 \leq p_1, p_2 \leq \infty$ with $1/p=1/p_1 +1/p_2$. Then $\mathscr{G}^{\alpha}$ is bounded from $L^{p_1}(\mathbb{R}^n) \times L^{p_2}(\mathbb{R}^n)$ to $L^{p}(\mathbb{R}^n)$ whenever $\alpha> \alpha_*(p_1, p_2, n, p_s)$.
    
\end{theorem}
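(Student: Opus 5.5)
The plan is to reduce $\mathscr{G}^{\alpha}$ to products of linear operators: one factor controlled by the Euclidean Stein square function $\mathfrak{S}^{\beta}$, the other by a maximal Bochner--Riesz operator, in the spirit of \cite{Jeong_Lee_Vargas_Bilinear_Bochner_Riesz_2018} and \cite{Jotsaroop_Shrivastava_Maximal_Bochner_Riesz_2022}.

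\textbf{Step 1 (the $t$-derivative).} First compute
\begin{align*}
t\,\partial_t \Big(1-\tfrac{|\xi|^2+|\eta|^2}{t^2}\Big)_+^{\alpha+1} = 2(\alpha+1)\,\tfrac{|\xi|^2+|\eta|^2}{t^2}\Big(1-\tfrac{|\xi|^2+|\eta|^2}{t^2}\Big)_+^{\alpha} .
\end{align*}
Hence $\mathscr{G}^{\alpha}(f,g)$ is an $L^2(dt/t)$-norm of a bilinear multiplier of Bochner--Riesz type of order $\alpha$, dilated by $t$.

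\textbf{Step 2 (decomposing the multiplier).} By scaling, take $t=1$ and decompose as in \cite{Jeong_Lee_Vargas_Bilinear_Bochner_Riesz_2018}. Split smoothly according to $1-|\xi|^2\sim 2^{-j}$ (and symmetrically in $\eta$). On each piece write
\begin{align*}
(1-|\xi|^2-|\eta|^2)_+^{\alpha}=(1-|\xi|^2)^{\alpha}\Big(1-\tfrac{|\eta|^2}{1-|\xi|^2}\Big)_+^{\alpha}.
\end{align*}
Then expand the second factor: either by Taylor expansion when $|\eta|^2\ll 1-|\xi|^2$, or by a Fourier/Mellin-type representation in the variable $|\eta|^2/(1-|\xi|^2)$. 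This writes the multiplier as a rapidly convergent sum
\begin{align*}
\sum_j\sum_\ell c_{j,\ell}\, m_{j,\ell}(|\xi|^2)\,n_{j,\ell}(|\eta|^2).
\end{align*}
Here $m_{j,\ell}$ is a linear multiplier adapted to the annulus $1-|\xi|^2\sim2^{-j}$ of size roughly $2^{-j\alpha_1}$, and $n_{j,\ell}$ is a smooth (or Bochner--Riesz-type) multiplier of order $\alpha_2$ at scale $2^{-j}$, with $\alpha_1+\alpha_2=\alpha$.

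\textbf{Step 3 (separating $f$ and $g$).} For each $t$, apply Cauchy--Schwarz in $j$, then take the $L^2(dt/t)$-norm, putting the supremum in $t$ on one factor:
\begin{align*}
\mathscr{G}^{\alpha}(f,g)\lesssim \sum_\ell |c_\ell| \Big(\sum_j\int_0^\infty |m_{j,\ell}(t^{-2}\Delta)f|^2\tfrac{dt}{t}\Big)^{1/2}\Big(\sum_j 2^{-j\epsilon}\sup_{t>0}|n_{j,\ell}(t^{-2}\Delta)g|^2\Big)^{1/2}.
\end{align*}
Then apply Hölder with $1/p=1/p_1+1/p_2$.

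\textbf{Step 4 (the linear estimates).} The first factor is a (discrete) Stein-type square function. By Theorem \ref{Theorem: Boundedness of Euclidean square function} and its localized dyadic form, it is bounded on $L^{p_1}$ with a loss $2^{j(\alpha_n(p_1)+\epsilon)}$. The $+\tfrac12$ in $\alpha_n(p_1)+\tfrac12$ is absorbed by the $t$-derivative in Step 1, which is why $\mathscr{G}^\alpha$ is built from $B^{\alpha+1}$. The second factor is bounded on $L^{p_2}$ by maximal Bochner--Riesz estimates with loss $2^{j(\alpha_n(p_2)+\epsilon)}$. These bounds are known for $p_i\ge p_s$, and at $p_i=\infty$ follow from trivial kernel bounds.

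\textbf{Step 5 (summing and interpolating).} Summing in $j$ and $\ell$ gives region $R_1(p_s)$ whenever $\alpha>\alpha_n(p_1)+\alpha_n(p_2)$. The point $(2,2)$ with $\alpha>0$ follows from Plancherel, since the $L^2$ square function is trivial there. Regions $R_2$ and $R_3$ then follow by bilinear complex interpolation in $(1/p_1,1/p_2,\alpha)$ between the vertices $(p_s,p_s)$, $(2,p_s)$, $(p_s,\infty)$, $(2,\infty)$, etc., after embedding $\mathscr{G}^{\alpha}$ into an analytic family in $\alpha$. This produces exactly $\alpha_*(p_1,p_2,n,p_s)$.

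\textbf{Main obstacle.} The hardest step is Step 4. We need $j$-localized square function bounds with the sharp loss $2^{j\alpha_n(p)}$ uniformly in $\ell$, while keeping track of the $t$-integration on only one factor, so that no extra $\tfrac12$ is lost. I would first try to derive this from the Stein square function bound through Littlewood--Paley decoupling in $j$, as in the discrete square function argument of \cite{Jotsaroop_Shrivastava_Maximal_Bochner_Riesz_2022}.
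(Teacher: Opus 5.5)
Your Steps 2--3 contain a genuine gap, and it sits exactly where the difficulty of the problem lies.

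\textbf{Why the decoupling in Steps 2--4 fails.} After localizing to $1-|\xi|^2\sim 2^{-j}$, the factor $\big(1-\frac{|\eta|^2}{1-|\xi|^2}\big)_+^{\alpha}$ is singular on $|\eta|^2=1-|\xi|^2$. This set couples $\xi$ and $\eta$, and it moves by an amount comparable to its own scale as $\xi$ runs through the annulus. Neither of your two tools separates variables there:
\begin{enumerate}
\item Taylor expansion in $u=|\eta|^2/(1-|\xi|^2)$ separates variables, but it converges only away from $u=1$, i.e.\ away from the singular set.
\item A Mellin representation in $u$ separates variables only through factors $|\eta|^{-2z}(1-|\xi|^2)^{z}$ with $\operatorname{Re} z>0$. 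These are singular at $\eta=0$ and have multiplier norms growing polynomially in $|\operatorname{Im} z|$. Meanwhile the Mellin coefficients of $(1-u)_+^{\alpha}$ decay only like $|\operatorname{Im} z|^{-\alpha-1}$, so this route spends smoothness rather than giving a rapidly convergent expansion.
\end{enumerate}
So the expansion $\sum_\ell c_{j,\ell}\,m_{j,\ell}(|\xi|^2)\,n_{j,\ell}(|\eta|^2)$ with the properties you list is asserted, not constructed.

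The exponent count in Step 4 is also not derived from that expansion:
\begin{enumerate}
\item A square function adapted to an annulus of width $2^{-j}$ costs $2^{j(\alpha_n(p_1)-\frac12)}$, not $2^{j\alpha_n(p_1)}$.
\item $n_{j,\ell}$ lives on the ball $|\eta|^2\lesssim 2^{-j}$. Taking $\sup_t$ therefore produces no $2^{j\alpha_n(p_2)}$ loss. But as a pointwise maximal Bochner--Riesz-type operator it forces the full threshold $\alpha_2>\alpha_n(p_2)$.
\item ``The $+\frac12$ is absorbed by the $t$-derivative'' is not a mechanism.
\end{enumerate}
Likewise, $(2,2)$ is not a Plancherel triviality. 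An $L^2\times L^2\to L^1$ bilinear bound has to come out of the same decomposition, fed with the $L^2$ linear estimates.

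\textbf{The missing idea.} The paper only cites this Euclidean theorem. Its proof of the M\'etivier analogue, Theorem \ref{Theorem: Bilinear Stein square function estimate}, follows \cite{Choudhary_Jotsaroop_Shrivastava_Shuin_Bilinear_square_function}, and the key step there is the Stein--Weiss subordination identity \eqref{Stein_Weiss expression}. With $a=1-|\xi|^2/R^2$ and $\alpha=\beta+\delta$,
\[
\Big(1-\frac{|\eta|^2}{R^2a}\Big)_+^{\alpha}=C\,R^{-2\alpha}a^{-\alpha}\int_0^{R\sqrt a}(R^2a-t^2)_+^{\beta-1}\,t^{2\delta+1}\Big(1-\frac{|\eta|^2}{t^2}\Big)_+^{\delta}\,dt .
\]
This is an exact decoupling whose $\eta$-factor is a genuine Bochner--Riesz mean $S^{\delta}_t g$; see \eqref{Writing bilinear operator as product of two}. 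The argument then runs as follows.
\begin{enumerate}
\item Split $\frac{|\xi|^2+|\eta|^2}{R^2}$ into its two terms, so that the $R$-square function lands on $f$ in one term and on $g$ in the other.
\item Apply Cauchy--Schwarz in $t$, not in $j$.
\item On $g$ this leaves either a square function of $S^\delta_t g$ or the averaged maximal function $\sup_R\big(\frac1R\int_0^R|S^\delta_t g|^2\,dt\big)^{1/2}$. Both are controlled by Stein's square function as soon as $\delta>\alpha_n(p_2)-\frac12$ (cf.\ Proposition \ref{Proposition: Maximal square function estimate}). This is half a derivative below the maximal Bochner--Riesz threshold.
\item On $f$ it leaves $L^2(dt)$-integrals over $t\lesssim 2^{-j/2}$ of $\Psi\big(2^j(1-\frac{|\xi|^2}{R^2})\big)\big(1-\frac{|\xi|^2}{R^2}-t^2\big)_+^{\beta-1}$, maximal or square in $R$. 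These need $\beta>\alpha_n(p_1)+\frac12$. They are handled by localized square function bounds plus a finer $2^{-(1+\epsilon)k}$ Taylor decomposition (Propositions \ref{Proposition: Main proposition of S in full range} and \ref{Proposition: Estimate of mathscr S in full range}).
\end{enumerate}
The threshold $\alpha>\alpha_n(p_1)+\alpha_n(p_2)$ is exactly the cancellation of these $\pm\frac12$ in $\alpha=\beta+\delta$. The endpoints $p_i=2$ go through the same scheme, and $R_2$, $R_3$ then follow by interpolating the $j$-pieces.
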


Here we also consider bilinear Bochner-Riesz square function on M\'etivier groups. Corresponding to $\mathcal{B}_R^{\alpha}$ \eqref{Bilinear Bochner-Riesz menas}, for $\alpha \geq 0$ the bilinear Bochner-Riesz square function of order $\alpha$ on M\'etivier group is defined by
\begin{align*}
    \mathscr{G}^{\alpha}(\mathcal{L})(f,g)(x,u) &= \left( \int_0^{\infty} \left| \frac{\partial}{\partial t} \mathcal{B}_t^{\alpha+1}(f,g)(x, u) \right|^2 t \, dt \right)^{1/2} .
\end{align*}

The following theorem is our main result regarding the $L^{p_1}(G) \times L^{p_2}(G) \to L^p(G)$ boundedness of the bilinear Bochner-Riesz square function on M\'etivier groups.

\begin{theorem}
\label{Theorem: Bilinear Stein square function estimate}
Let $2 \leq p_1, p_2 < \infty$ with $1/p=1/p_1 +1/p_2$. Then $\mathscr{G}^{\alpha}(\mathcal{L})$ is bounded from $L^{p_1}(G) \times L^{p_2}(G)$ to $L^{p}(G)$ whenever $\alpha> \alpha_*(p_1, p_2, d, \mathfrak{p}_G)$.
    
\end{theorem}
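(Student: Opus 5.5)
We follow the Euclidean strategy of \cite{Choudhary_Jotsaroop_Shrivastava_Shuin_Bilinear_square_function}, adapted to the joint spectral calculus of $\mathcal{L}$ acting separately on $f$ and $g$. First we compute the $t$-derivative. Writing $\eta_1,\eta_2$ for the spectral variables of $f$ and $g$,
\begin{align*}
t\,\partial_t\Big(1-\tfrac{\eta_1+\eta_2}{t^2}\Big)_+^{\alpha+1} = 2(\alpha+1)\,\tfrac{\eta_1+\eta_2}{t^2}\Big(1-\tfrac{\eta_1+\eta_2}{t^2}\Big)_+^{\alpha}.
\end{align*}
By homogeneity we rescale to $t=1$ and decompose the bilinear multiplier $m(\eta_1,\eta_2)=(\eta_1+\eta_2)(1-\eta_1-\eta_2)_+^{\alpha}$ dyadically near the singular set $\eta_1+\eta_2=1$. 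Concretely, we write $(1-r)_+^{\alpha}=\sum_{j\ge 0}2^{-j\alpha}\phi_j(1-r)$ with $\phi_j$ smooth and supported where $1-r\sim 2^{-j}$, together with a smooth piece $j=0$.

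Next, as in \cite{Jeong_Lee_Vargas_Bilinear_Bochner_Riesz_2018}, we split the support into the regions where $\eta_1\le 1-2^{-j}$ is (roughly) away from $1$ and its complement. We expand $\phi_j(1-\eta_1-\eta_2)$ as a sum over intervals $I_\nu$ of length $2^{-j}$ in the $\eta_1$ variable. Each summand is then, up to Taylor or Fourier-series errors in the two variables, a product $\psi_{j,\nu}(\eta_1)\,\tilde\psi_{j,\nu}(\eta_2)$ of functions localized to intervals of length $2^{-j}$.

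The square function in $t$ then becomes, after a change of variables and Cauchy–Schwarz in $\nu$, pointwise bounded by products of linear objects. The first is a linear Stein-type square function $\big(\int_0^\infty |\psi_j(\mathcal{L}/t^2)f|^2\,dt/t\big)^{1/2}$ in one variable. The second is a discrete square function $\big(\sum_\nu|\tilde\psi_{j,\nu}(\mathcal{L}/t^2)g|^2\big)^{1/2}$, maximal or square in $t$, in the other. We then apply H\"older's inequality with $1/p=1/p_1+1/p_2$.

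The linear estimates needed are of two kinds. The continuous square function bound comes from Theorem \ref{Theorem: Stein square estimate}: for $p\ge\mathfrak{p}_G$ it gives the operator norm $\lesssim 2^{j(\alpha_d(p)+1/2+\epsilon)}$ of the localized piece, and for $2\le p\le \mathfrak{p}_G$ it follows by interpolation with $p=2$. The discrete square function estimate is Proposition \ref{Proposition: Discrete square function estimate}, with a bound of the form $2^{j(\alpha_d(p)+\epsilon)}$ for $p\ge\mathfrak{p}_G$. At $p=\infty$ it would give $2^{j(d-1)/2}$, but since $p_i<\infty$ we only need finite $p$. We combine these with the Sobolev-embedding-in-$t$ trick (the $2^{j/2}$ loss from differentiating in $t$ is exactly compensated by the gain $1/2$ in $\alpha_d(p)+\frac12$), so that the $j$-th piece has norm $\lesssim 2^{-j\alpha}2^{j(\alpha_*(p_1,p_2,d,\mathfrak{p}_G)+\epsilon)}$. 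Summing over $j$ gives the claim in region $R_1(\mathfrak{p}_G)$.

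In regions $R_2$ and $R_3$ we instead use bilinear complex interpolation in $(1/p_1,1/p_2)$ with analytic family in $\alpha$. The endpoints are $(2,2)$, where the estimate follows from Plancherel together with the $L^\infty$-in-$\eta$ bound $2^{j/2}$ and gives $\alpha>0$, and the corners $(\mathfrak{p}_G,\mathfrak{p}_G)$, $(\mathfrak{p}_G,\infty^-)$, $(2,\mathfrak{p}_G)$. This interpolation is exactly what produces the piecewise-linear formula \eqref{Bilinear smoothness index}. Since $p_i<\infty$ is required, we interpolate with points of large finite $p$ and let them tend to the limit, absorbing $\epsilon$.

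The main obstacle is the decoupling step. Unlike the Euclidean case, the spectrum is a joint spectrum of discrete Laguerre eigenvalues $\eta^{\lambda}_{\mathbf{k}}$ integrated over $\lambda$, so we cannot simply work with Fourier transforms. We will handle this by working purely through the functional calculus $F(\mathcal{L})$: the product $\psi(\eta_1)\tilde\psi(\eta_2)$ factorizes as $\psi(\mathcal{L})f\cdot\tilde\psi(\mathcal{L})g$ by \eqref{Bilinear Bochner-Riesz menas}. The error terms from Taylor expansion are controlled by the (weighted Plancherel) kernel estimates underlying Proposition \ref{Proposition: Discrete square function estimate}.
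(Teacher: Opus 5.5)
Your proposal has a gap at the step you pass over quickly: separating the $t$-integral after Cauchy--Schwarz in $\nu$. Pointwise you get $\int_0^\infty A_tB_t\,\frac{dt}{t}$ with $A_t=\sum_\nu|\psi_{j,\nu}(\mathcal{L}/t^2)f|^2$ and $B_t=\sum_\nu|\tilde\psi_{j,\nu}(\mathcal{L}/t^2)g|^2$. So you need $(\int_0^\infty A_t\frac{dt}{t})^{1/2}$ in $L^{p_1}$ and $\sup_tB_t^{1/2}$ in $L^{p_2}$.

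The first factor is not a single localized Stein-type square function. For $\psi_{j,\nu}(s)=\psi(2^js-\nu)$ (up to the Fourier-series modulations), the substitution $t^2=2^j\tau^2/\nu$ turns the $\nu$-th term into $\int_0^\infty|\psi(\nu(\mathcal{L}/\tau^2-1))f|^2\frac{d\tau}{\tau}$. This is a localized square function with $\delta=1/\nu\sim2^{-j}$, and there are $\sim2^j$ of them. Hence its norm is $\sim2^{j/2}\cdot2^{-j(\frac12-\alpha_d(p_1)-\epsilon)}=2^{j(\alpha_d(p_1)+\epsilon)}$. This is sharp already at $p_1=2$, where the multiplier is $\sum_\nu\int_0^\infty|\psi_{j,\nu}(s)|^2\frac{ds}{2s}\sim1$.

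To reach $2^{-j\alpha}2^{j(\alpha_*+\epsilon)}$ you therefore need $\|\sup_tB_t^{1/2}\|_{L^{p_2}}\lesssim2^{j(\alpha_d(p_2)+\epsilon)}\|g\|_{L^{p_2}}$, with no further loss. None of the available tools gives this:
\begin{itemize}
\item Proposition \ref{Proposition: Discrete square function estimate} is for fixed $t$ only.
\item Proposition \ref{Prop: Maximal Lp norm estimates of localized} treats a single bump; summing it in $\ell^2$ over $2^j$ values of $\nu$ loses $2^{j/2}$.
\item Sobolev embedding in the dilation parameter $t$ gives only $2^{j(\alpha_d(p_2)+\frac12+\epsilon)}$. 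Indeed, $\int_1^2B_t\,dt$ has size $\sim1$ on $L^2$ (each of the $\sim2^j$ bumps contributes $2^{-j}$), while $\partial_t$ costs $2^j$ per bump.
\end{itemize}
The gain of $\frac12$ you invoke to compensate has already been spent on the $\nu$-sum. (Also, Proposition \ref{Proposition: Square function estimate} gives $2^{j(\alpha_d(p)-\frac12+\epsilon)}$, not $2^{j(\alpha_d(p)+\frac12+\epsilon)}$, for one localized piece.) As written, the argument only yields $\alpha>\alpha_*(p_1,p_2,d,\mathfrak{p}_G)+\frac12$.

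The Jeong--Lee--Vargas route can be rescued, but it needs an idea you do not state. For fixed $t$, first replace the $\nu$-sum by an integral in a translation parameter: $\sum_\nu|h(\nu)|^2\lesssim\int_{\mathbb{R}}(|h|^2+|h'|^2)\,ds$ with $h(s)=\tilde\psi(2^j\mathcal{L}/t^2-s)g$. Then substitute $s=2^ju/t^2$. This gives $2^jt^{-2}\int|\tilde\psi(2^j(\mathcal{L}-u)/t^2)g|^2\,du$ (and the same with $\tilde\psi'$), in which $t$ only changes the width of the bump. So $\partial_t$ costs nothing, and Proposition \ref{Proposition: Local Square function estimate} gives the maximal bound with only a $2^{\epsilon j}$ loss, after dyadic bookkeeping in $t$ and near $u=0$.

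You must also place the $t$-square function on the input whose spectral variable is $\gtrsim t^2$. Otherwise the bump $\psi_{j,0}$ at $\eta_1\approx0$ gives $\int_0^\infty|\psi_{j,0}(\eta/t^2)|^2\frac{dt}{t}=\infty$.

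For comparison, the paper never decouples in $\nu$. It localizes only $\eta_1$, through $\Psi(2^j(1-\eta_1/R^2))$ plus a separate $\Psi_0(\eta_1/R^2)$ piece. It rewrites the remaining factor via the Stein--Weiss subordination identity as $\int_0^{R_j}(\cdots)f\cdot S_t^\delta(\mathcal{L})g\,t^{2\delta+1}\,dt$, producing the terms $E_{j,R}$ and $F_{j,R}$, and applies Cauchy--Schwarz in the subordination variable. It then uses Propositions \ref{Proposition: Main proposition of S in full range} and \ref{Proposition: Estimate of mathscr S in full range} for $f$ with $\beta>\alpha_d(p_1)+\frac12$, and Theorem \ref{Theorem: Stein square estimate} and Proposition \ref{Proposition: Maximal square function estimate} for $g$ with $\delta>\alpha_d(p_2)-\frac12$. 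This gives $\alpha=\beta+\delta>\alpha_*$. In that scheme no maximal estimate over $\sim2^j$ simultaneous bumps is ever needed.
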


\subsection{Maximal bilinear spectral multipliers on M\'etivier groups}
Similar to \eqref{definition of maximal spectral multipliers}, in this subsection we consider the maximal version of some particular bilinear spectral multipliers. Let $F : \mathbb{R} \to \mathbb{C}$ be a bounded Borel function. Then for $R>0$, we define the following bilinear spectral multiplier given by
\begin{align*}
    & F\left(\frac{\mathcal{L}_{bi}}{R} \right)(f, g)(x,u) = \frac{1}{(2\pi)^{2 d_2}} \int_{\mathfrak{g}_{2,r}^{*}} \int_{\mathfrak{g}_{2,r}^{*}} e^{i \langle \lambda_1 +\lambda_2 , u \rangle} \sum_{\mathbf{k}_1, \mathbf{k}_2 \in \mathbb{N}^{\Lambda}} F\left(\frac{\eta_{\mathbf{k}_1}^{\lambda_1}+ \eta_{\mathbf{k}_2}^{\lambda_2}}{R} \right) \\
    &\nonumber \hspace{3cm} \times \left[f^{\lambda_1} \times_{\lambda_1} \varphi_{\mathbf{k}_1}^{\mathbf{b}^{\lambda_1}, \mathbf{r}_1}(R_{\lambda_1}^{-1}\cdot) \right](x) \left[g^{\lambda_2} \times_{\lambda_2} \varphi_{\mathbf{k}_2}^{\mathbf{b}^{\lambda_2}, \mathbf{r}_2}(R_{\lambda_2}^{-1}\cdot) \right](x) \,  d\lambda_1 \,  d\lambda_2 .
\end{align*}

\medskip
Corresponding to this bilinear spectral multiplier we define the associated maximal bilinear spectral multiplier as
\begin{align}
\label{Bilinear spectral multiplier definition}
    F^{*}(\mathcal{L}_{bi})(f, g)(x,u) &= \sup_{R>0} \left|F\left(\frac{\mathcal{L}_{bi}}{R} \right)(f, g)(x,u) \right| .
\end{align}

Analogous to Theorem \ref{Theorem: Maximal spectral multiplier} in the linear case, here we are interested in the $L^{p_1}(G) \times L^{p_2}(G) \to L^p(G)$ boundedness of \eqref{Bilinear spectral multiplier definition}. In fact, we have the following boundedness result for the above defined maximal bilinear spectral multiplier \eqref{Bilinear spectral multiplier definition}.

\begin{theorem}
\label{Theorem: Bilinear maximal spectral multiplier}
Let $2 \leq p_1, p_2 < \infty$ with $1/p=1/p_1 +1/p_2$ and $F: \mathbb{R} \to \mathbb{C}$ be a bounded Borel function. Then whenever $\alpha> \alpha_*(p_1, p_2, d, \mathfrak{p}_G)+1$ we have
\begin{align*}
    \|F^{*}(\mathcal{L}_{bi})(f, g)\|_{L^p(G)} &\leq C \, \|F\|_{L^2_s(\mathbb{R}^+)} \, \|f\|_{L^{p_1}(G)} \|g\|_{L^{p_2}(G)} .
\end{align*}
    
\end{theorem}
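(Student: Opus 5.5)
We prove Theorem \ref{Theorem: Bilinear maximal spectral multiplier} by reducing the maximal bilinear multiplier to the bilinear Bochner-Riesz square function $\mathscr{G}^{\beta}(\mathcal{L})$ of Theorem \ref{Theorem: Bilinear Stein square function estimate}. This mirrors the linear argument behind Theorem \ref{Theorem: Maximal spectral multiplier}, which is Carbery's pointwise domination \eqref{Pointwise inequality for maximal Euclidean}. Note that the hypothesis reads $\alpha>\alpha_*(p_1,p_2,d,\mathfrak{p}_G)+1$ while the norm on $F$ is $L^2_s(\mathbb{R}^+)$; we read this as $s=\alpha$ and set $\beta:=\alpha-1>\alpha_*(p_1,p_2,d,\mathfrak{p}_G)$.

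\textbf{Step 1: Bochner-Riesz representation of $F$.} By density we may take $F$ smooth and compactly supported in $(0,\infty)$. For $\gamma>0$ we write $F$ as a superposition of Riesz kernels:
\begin{align*}
F(\eta) = \frac{(-1)^{\gamma+1}}{\Gamma(\gamma+1)} \int_0^\infty F^{(\gamma+1)}(t)\, t^{\gamma} \left(1-\frac{\eta}{t}\right)_+^{\gamma} dt ,
\end{align*}
with fractional derivatives understood through the Weyl/Mellin calculus. We apply this with $\gamma=\alpha$ and with the bilinear argument $\eta=(\eta_{\mathbf{k}_1}^{\lambda_1}+\eta_{\mathbf{k}_2}^{\lambda_2})/R$. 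After the substitution $t\mapsto t^2/R$ this gives
\begin{align*}
F\left(\tfrac{\mathcal{L}_{bi}}{R}\right)(f,g) = c_\alpha \int_0^\infty F^{(\alpha+1)}\!\left(\tfrac{t^2}{R}\right)\left(\tfrac{t^2}{R}\right)^{\alpha}\mathcal{B}^{\alpha}_t(f,g)\,\frac{2t\,dt}{R} .
\end{align*}

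\textbf{Step 2: switch to the square-function order.} The square function $\mathscr{G}^{\beta}$ involves $\partial_t\mathcal{B}^{\beta+1}_t$, and $\beta+1=\alpha$. The identity $t\,\partial_t \mathcal{B}^{\alpha}_t = 2\alpha(\mathcal{B}^{\alpha-1}_t-\mathcal{B}^{\alpha}_t)$ lets us replace the integrand of Step 1 by one containing $t\partial_t\mathcal{B}^{\alpha}_t(f,g)$, up to an integration by parts in $t$. This produces a kernel $m_R(t)$ built from $F^{(\alpha)}$ evaluated at $t^2/R$.

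\textbf{Step 3: Cauchy--Schwarz in $t$ with measure $t\,dt$.} We obtain
\begin{align*}
\left|F\left(\tfrac{\mathcal{L}_{bi}}{R}\right)(f,g)(x,u)\right| \le \Big(\int_0^\infty |m_R(t)|^2 t\, dt\Big)^{1/2}\, \mathscr{G}^{\beta}(\mathcal{L})(f,g)(x,u) .
\end{align*}
The first factor is scale invariant in $R$ because $t\mapsto t^2/R$ is a dilation. After the change of variables $t=e^{v}$ it is bounded by $\|F\|_{L^2_\alpha(\mathbb{R}^+)}=\|F\circ\exp\|_{L^2_\alpha(\mathbb{R})}$. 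Taking the supremum over $R>0$ gives the pointwise bound
\begin{align*}
F^*(\mathcal{L}_{bi})(f,g)\lesssim \|F\|_{L^2_\alpha(\mathbb{R}^+)}\,\mathscr{G}^{\beta}(\mathcal{L})(f,g) .
\end{align*}
Theorem \ref{Theorem: Bilinear Stein square function estimate} with $\beta>\alpha_*(p_1,p_2,d,\mathfrak{p}_G)$ then finishes the proof. A density and limiting argument removes the smoothness assumption on $F$, using the $L^2$ functional calculus for $(\mathcal{L}\otimes I, I\otimes\mathcal{L})$.

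\textbf{Main obstacle.} The hard part is Step 3 for non-integer $\alpha$: identifying $\int|m_R|^2t\,dt$ with the Mellin-Sobolev norm, where the loss of one derivative between the order in $\mathscr{G}^{\beta}$ and $\alpha$ must be handled exactly. The plan is to follow Carbery's computation, or the linear proof of Theorem \ref{Theorem: Maximal spectral multiplier} (see \eqref{After application of Holder final estimate}), essentially verbatim. That computation is purely one-dimensional in the spectral variable $\eta_{\mathbf{k}_1}^{\lambda_1}+\eta_{\mathbf{k}_2}^{\lambda_2}$, so the bilinear structure plays no role.
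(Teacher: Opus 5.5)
Your proposal is correct and follows essentially the paper's route. You prove a pointwise bound $F^{*}(\mathcal{L}_{bi})(f,g)\lesssim \|F\|_{L^2_\alpha(\mathbb{R}^+)}\,\mathscr{G}^{\alpha-1}(\mathcal{L})(f,g)$ from a fractional-integration representation of $F$ and Cauchy--Schwarz, then apply Theorem \ref{Theorem: Bilinear Stein square function estimate} with $\alpha-1>\alpha_*(p_1,p_2,d,\mathfrak{p}_G)$.

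The paper sidesteps your ``main obstacle'' by applying \eqref{Riemann-Liouville formula} directly to $h(\eta)=F(\eta)/\eta$. This yields the kernel $\frac{\eta}{Rs}\bigl(1-\frac{\eta}{Rs}\bigr)_+^{\alpha-1}$ against the weight $s^{\alpha+1}\frac{d^{\alpha}}{ds^{\alpha}}\bigl(F(s)/s\bigr)$, whose $L^2(ds/s)$-norm is $\|F\|_{L^2_\alpha(\mathbb{R}^+)}$ by definition. Your integrated-by-parts weight $\int_\tau^\infty F^{(\alpha+1)}(s)s^{\alpha}\,ds$ equals this same weight up to a constant: both have Mellin multiplier $\Gamma(1+\alpha+\mu)/\Gamma(1+\mu)$. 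So your Step~3 closes exactly as you intended.
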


\medskip
Let use make few remarks regarding what we have discussed till now.
\begin{remark}
\label{Remark: Combinind remark}
\begin{enumerate}
    \item It is important to note that in all of the our results about linear and bilinear multipliers and its maximal versions, Theorem \ref{Theorem: Stein square estimate}, Theorem \ref{Theorem: Stein square function for p less that 2 case}, Theorem \ref{Theorem: Analogue of Niedorf theorem}, Corollary \ref{Corollary: Lp boundedness of Bochner-Riesz on mativier}, Theorem \ref{Theorem: Maximal spectral multiplier}, Corollary \ref{Corollary: Boundedness of linear maximal Bochner-Riesz}, Corollary \ref{Corollary: Pointwise convergence of maximal Bochner_Riesz}, Theorem \ref{Theorem: Lp to L2 local smoothing for Metivier}, Theorem \ref{Theorem: Bilinear maximal on Metivier}, Corollary \ref{Corollary: Bochner-Riesz main theorem Metivier}, Theorem \ref{Theorem: Bilinear Stein square function estimate} and Theorem \ref{Theorem: Bilinear maximal spectral multiplier} the smoothness threshold $\alpha_d(p)$ or $\alpha_*(p_1, p_2, d, \mathfrak{p}_G)$ has been expressed in terms of $d$, the topological dimension rather than $Q$, the homogeneous dimension of the underlying group $G$. For stratified Lie groups with step bigger than two, $Q$ is always strictly bigger than $d$. As discussed in the Subsection \ref{Subsection: stein square function on Euclidean}, obtaining multiplier results with smoothness parameter expressed in terms of $d$ has been attracted a lot of attention after the pioneer work of M\"uller-Stein \cite{Muller_Stein_Spectral_Multiplier_Heisenberg_Related_groups_1994} and Hebisch \cite{Hebisch_Spectral_Multiplier_Heisenberg_1993} and subsequently by many authors, see \cite{Cowling_Klima_Sikora_Kohn_Laplacian_On_Sphere_2011}, \cite{Martini_Lie_groups_Polynomial_Growth_2012}, \cite{Martini_Muller_New_Class_Two_Step_Stratified_Groups_2014}, \cite{Bagchi_Molla_Singh_Bilinear_Metivier}, \cite{Bagchi_Molla_Singh_Bilinear_Bochner_Riesz_Grushin}, \cite{Molla_Singh_Bochner_Riesz_Commutators_Grushin} and references therein. One can also ask similar type of problems for other of sub-elliptic operators. In particular for Grushin operators we are working on this problem and it will appear in our upcoming paper.
    \medskip
    \item In all of our results the group $G$ has to be either Heisenberg-type groups or M\'etivier groups but not Heisenberg group. Note that if dimension of the center $\mathfrak{g}_2$ is one, that is, if $d_2 =1$, then we have $\mathfrak{p}_G' = p_{d_2}' = 1$. Since our results are stated with either $1< p \leq \mathfrak{p}_G'$ or $\mathfrak{p}_G \leq
    p< \infty$, therefore $G$ can not be Heisenberg group. This is mainly due to the fact that there is no good restriction theory available on Heisenberg group, the only non-trivial restriction hold on Heisenberg group is $L^1 \to L^{\infty}$, which is because it has one dimensional center, see \cite{Muller_restriction_Heisenberg_group_1990}.
    \medskip
    \item One of the key observation of this paper is the use of strong Hardy-Littlewood maximal functions $\mathcal{M}_{r}^{|\cdot|}$ on M\'etivier groups (see Subsection \ref{Subsection: Variant of Hardy-littlewood}) in the proof of Lemma \ref{Lemma: Weighted L2 estimate for square function}. Due to this crucial lemma, in the proof of $L^p$-boundedness of the local square function with localized frequency $\mathfrak{S}_{\delta, loc}^{\phi}(\mathcal{L})$ (Proposition \ref{Proposition: Local Square function estimate}) we are able to obtain the smoothness threshold in terms of the topological dimension $d$ of the underlying group. This in turns implies the $L^p$-boundedness of the Stein's square function $\mathfrak{S}^{\alpha}(\mathcal{L})$ associated with the Bochner-Riesz means on M\'etivier group (Theorem \ref{Theorem: Stein square estimate}) with again the range of the smoothness parameter $\alpha$ is expressed in terms of the topological dimension $d$. Consequently, as applications we also get all the required results with smoothness parameter achieving in terms of the topological dimension $d$ of $G$.
\end{enumerate}
    
\end{remark}

\medskip

\noindent \textbf{Notational conventions:}
We use the standard notation throughout the paper.
\begin{itemize}
    \item Denote $\mathbb{N}_0 = \{0,1,2, \ldots \}$.
    \item We use the letter $C$ to indicate a positive constant and independent of the main parameters of the estimate, but may differ at each occurrence.
    \item We use the notation $f \lesssim g$ to indicate $f \leq Cg$ for some $C > 0$, and whenever $f \lesssim g\lesssim f$, we shall write $f \sim g$. We sometimes write $f \lesssim_{\epsilon} g$ to indicate that the explicit constant $C$ may depend on the parameter $\epsilon$.
    \item For any ball $ B \subseteq \mathbb{R}^n$, we write $\Bar{B}$, $|B|$ and $B^c$ to denote the closure of the ball $B$, Lebesgue measure of the ball $B$ and complement of the ball $B$ respectively.
    \item For any Lebesgue measurable subset $E$ of $\mathbb{R}^d$, we denote $\chi_E$ to be the characteristic function of the set $E$.
    \item We use the standard notation $L^p_s(\mathbb{R}^n)$ to denote the Sobolev spaces of order $s \in \mathbb{R}$ on $\mathbb{R}^n$ such that $\|f\|_{L^p_s(\mathbb{R}^n)} = \|((1+|\cdot|^2)^{s/2} \hat{f})^{\Check{}}\|_{L^p(\mathbb{R}^n)}$.
    \item We denote the identity element of the group $G$ as $0:=(0,0)$.
    \item For two functions $f,g \in \mathcal{S}(G)$, the space of all Schwartz class function on $G$ (where we identified $G \cong \mathbb{R}^d$) the group convolution of $f$ and $g$ is given by
\begin{align*}
    f*g(x,u) &= \int_G f(x',u') g((x',u')^{-1}(x,u)) \ d(x',u'), \quad \text{for} \quad (x,u) \in G.
\end{align*}
\item For a multiplier function $F$, we always denote $\mathcal{K}_{F(\mathcal{L})}$ to be the convolution kernel of the corresponding spectral multiplier operator $F(\mathcal{L})$.
\end{itemize}

\medskip
\noindent \textbf{Structure of the article:}
Our article is organized as follows:
\begin{itemize}
    \item In Section \ref{Section: Preliminaries} first we carry out some preliminary results on M\'etivier groups \eqref{Subsection: Priliminaries on Metivier groups} and then discuss about $(L^p, L^2)$ restriction-type estimates for the joint functional of $\mathcal{L}$ and $T$ \eqref{Subsection: Lp L2 restriction type estimates}, some variant of the Hardy-Littlewood maximal functions on $G$ \eqref{Subsection: Variant of Hardy-littlewood}, pointwise and $L^2$-kernel estimates on M\'etivier groups \eqref{Subsection: Kernel estimate}, criterion for weak-type $(1,1)$ and $L^p$-boundedness \eqref{Subsection: Littlewood-Paley} and boundedness of bilinear spectral multipliers for some nice class of multipliers \eqref{Subsection: Bilinear spectral multiplier}.
    \medskip
    \item Section \ref{Section: Square function on Metivier groups} contains $L^p$-boundedness of the various square functions on M\'etivier groups and proof of Theorem \ref{Theorem: Stein square estimate} and Theorem \ref{Theorem: Stein square function for p less that 2 case}.
    \begin{itemize}
    \medskip
    \item[$\blacksquare$] In Subsection \ref{Subsection: Local Square function estimate} we prove the $L^p$-boundedness of the local square function with localized frequency \eqref{Definition: Square function} which is one of our main ingredients to prove various results of this paper and also as applications we prove discrete square function estimate (Proposition \ref{Proposition: Discrete square function estimate}) and some maximal operator boundedness (Proposition \ref{Prop: Maximal Lp norm estimates of localized}).
    \medskip
    \item[$\blacksquare$] In Subsection \ref{Subsection Third section}, we prove equivalence of the $L^p$-boundedness of the local and global square function with localized frequency (Proposition \ref{Proposition: Square function estimate}). In addition we also prove some $L^p$-boundedness result of maximal operator associated with some square function of Bochner-Riesz operator (Proposition \ref{Proposition: Maximal square function estimate}) and some square function estimate (Proposition \ref{Proposition: Lp boundedness of square function with bump}).
    \medskip
    \item[$\blacksquare$] In Subsection \ref{Subsection: Stein square function} we prove one of our first main result of this paper, $L^p$ boundedness of the Stein's square function associated with the sub-Laplacian on M\'etiver groups (Theorem \ref{Theorem: Stein square estimate} and Theorem \ref{Theorem: Stein square function for p less that 2 case}).
    \end{itemize}
    \medskip
    \item In Section \ref{Section: Applications of Steins square function}, we present several applications of the $L^p$-boundedness of the Stein's square function and other square functions defined on the previous section on M\'etivier groups.
    \begin{itemize}
    \medskip
    \item[$\blacksquare$] In Subsection \ref{Subsection: Sharp spectral multiplier} we prove a sharp version of Mikhlin-H\"ormander type multiplier theorem associated with the sub-Laplacian on M\'etivier groups (Theorem \ref{Theorem: Analogue of Niedorf theorem}). This result was recently proved in \cite{Niedorf_Metivier_group_2023}, but here we give a different proof then the previous one, which also gives sharp result.
    \medskip
    \item[$\blacksquare$] While in Subsection \ref{Subsection: Boundedness of maximal spectral multiplier} we show that as an application of the $L^p$-boundedness of the Stein's square function we get the $L^p$-boundedness of the maximal spectral multiplier operators (Theorem \ref{Theorem: Maximal spectral multiplier}) on the same range as of Stein's square function. We also prove sharp $L^p$-boundedness result for the maximal Bochner-Riesz operator on M\'etivier groups (Corollary \ref{Corollary: Boundedness of linear maximal Bochner-Riesz}), which also improves the work of \cite[Theorem 1.2]{Horwich_Martini_Pointwise_Heisenberg_type_2021} on M\'etivier groups. Consequently, this result also implies pointwise almost everywhere convergence result of the same (Corollary \ref{Corollary: Pointwise convergence of maximal Bochner_Riesz}), see Subsection \ref{Subsection: Boundedness of maximal spectral multiplier} for details.
    \medskip
    \item[$\blacksquare$] Subsection \ref{Subsection: Local smoothing} concerns about the proof of some regularity estimates for the solution of the fractional Schr\"odinger equations on M\'etivier groups (Theorem \ref{Theorem: Lp to L2 local smoothing for Metivier}).
    \medskip
    \item[$\blacksquare$] While Subsection \ref{Subsection: Boundedness of bilinear maximal Bochner-Riesz means} is devoted to the proof of boundedness of the maximal bilinear Bochner-Riesz operator on M\'etivier groups (Theorem \ref{Theorem: Bilinear maximal on Metivier}). Our result shows that for $p_1, p_2 \geq 2$, boundedness of bilinear Bochner-Riesz means (Corollary \ref{Corollary: Bochner-Riesz main theorem Metivier}) and its associated maximal operators (Theorem \ref{Theorem: Bilinear maximal on Metivier}) on M\'etivier groups holds in the same range and with the same smoothness threshold.
    \medskip
    \item[$\blacksquare$] In Subsection \ref{Subsection: Bilinear Bochner-Riesz square function} we prove the boundedness of the bilinear Bochner-Riesz square functions on M\'etivier groups (Theorem \ref{Theorem: Bilinear Stein square function estimate}), which is bilinear analogue the Stein's square function on M\'etivier groups (see \eqref{Stein square function for subLaplacian}).
    \medskip
    \item[$\blacksquare$] Finally, in Subsection \ref{Subsection: Bilinear maximal spectral multipliers} we prove a maximal bilinear spectral multiplier theorem on M\'etivier groups (Theorem \ref{Theorem: Bilinear maximal spectral multiplier}).
    \end{itemize}
\end{itemize}

\section{Preliminaries and useful estimates on M\'etivier groups}
\label{Section: Preliminaries}
In this section, first we collect some preliminary details and results about M\'etivier groups, and then discuss about some variant of Hardy-Littlewood maximal functions, $L^p$ to $L^2$ restriction type estimate for the joint functional of $\mathcal{L}$ and $T$, several kernel estimates associated to the spectral multiplier of the sub-Laplacian $\mathcal{L}$, some weak-type $(1,1)$ and $L^p$-boundedness criterion and boundedness of bilinear spectral multipliers for some nice class of multipliers. 

\subsection{Preliminaries on M\'etivier groups}
\label{Subsection: Priliminaries on Metivier groups}
Note that in this paper we have assumed $G$ to be a M\'etivier group, which is two-step stratified Lie group and via the exponential coordinates $G$ can be identified with its Lie algebra $\mathfrak{g}= \mathfrak{g}_1 \oplus \mathfrak{g}_2$. The group law is defined by
\begin{align*}
    (x_1,u_1) (x_2,u_2) &:= (x_1+x_2, u_1+u_2+\tfrac{1}{2}[x_1,x_1']), \quad \text{where} \quad x_1, x_2 \in \mathfrak{g}_1, \  u_1, u_2 \in \mathfrak{g}_2 .
\end{align*}

Also recall that we have chosen $\{X_1, \ldots, X_{d_1}, T_1, \ldots, T_{d_2}\}$ in such a way, so that they become an orthonormal basis of $\mathfrak{g}$. Then corresponding to the first-layer left-invariant vector fields, that is $X_1, \ldots, X_{d_1}$, the associated sub-Laplacian $\mathcal{L}$ is defined by 
\begin{align*}
    \mathcal{L} = -(X_1^2 + \cdots + X_{d_1}^2) .
\end{align*}
If we consider $\mathcal{L}$, $-i T_1, \ldots, -i T_{d_2}$, then they form a system of formally self-adjoint, pairwise commuting differential operators, therefore they admit a joint functional calculus.

On $G$, there is family of non-isotropic dilation defined by
\begin{align}
\label{Definition of dilation}
    \delta_R (x,u) &:= (Rx, R^2u) \quad \text{for} \quad (x, u) \in G \quad \text{and} \quad R>0 .
\end{align}
Also for any $(x,u) \in G$, if we define
\begin{align}
\label{Definition of homogeneous norm}
    \|(x,u)\| &:= (|x|^4 + |u|^2)^{1/4} ,
\end{align}
then this become a homogeneous norm with respect to the dilation $\delta_R$ for $R>0$.

Let $\varrho$ denote the sub-Riemannian distance on $G$ and for $R>0$ we denote $B((x,u), R)$ to be the sub-Riemannian ball centered at $(x,u)$ and of radius $R$. The volume of the ball  is given by
\begin{align*}
    |B((x,u),R)| \sim R^Q |B(0,1)|,
\end{align*}
where $|\cdot|$ denote the Lebesgue measure on $\mathbb{R}^{d_1} \times \mathbb{R}^{d_2}$ and $Q=d_1+2d_2$. Thus, the metric measure space $(G, \varrho, |\cdot|)$ is indeed a space of homogeneous type with homogeneous dimension $Q$. We also call $d=d_1+d_2$ to be the topological dimension of $G$.

Now let us mention one important fact about the balls $B(0, R)$ for $R>0$, which will play a very useful role in our proof later. There exists a constant $C>0$ such that
\begin{align}
\label{Decomposition of ball into Euclidean balls}
    B(0, R) \subseteq B^{|\cdot|}(0, C R) \times B^{|\cdot|}(0, C R^2) \subseteq \mathbb{R}^{d_1} \times \mathbb{R}^{d_2} ,
\end{align}
where $B^{|\cdot|}(a, R)$ denotes the ball of radius $R$ and centered at $a$ with respect to corresponding Euclidean distance (see \cite[Lemma 2.1]{Bagchi_Molla_Singh_Bilinear_Metivier}).

\subsection{\texorpdfstring{$L^p \to L^2$}{} restriction type estimates}
\label{Subsection: Lp L2 restriction type estimates}
Let us start with the definition of a discrete norm of function defined on $\mathbb{R}$. For any bounded Borel function $m : \mathbb{R} \to \mathbb{C}$ with $\supp m \subseteq [0,1]$, we define the discrete norm $m$ as
\begin{align*}
    \|m\|_{N, 2}= \left( \frac{1}{N} \sum_{k =1}^{N} \sup_{\lambda \in [\frac{k-1}{N}, \frac{k}{N}]} |m(\lambda)|^2 \right)^{1/2}, \quad \quad N>0.
\end{align*}
This discrete norm of $m$ is related to the $L^{\infty}$, $L^2$ and $L^2$-Sobolev norm via the following estimates (see \cite[eq. (1.7)]{Niedorf_Restriction_Stratified_group_2025}):
\begin{align}
\label{discrete norm is dominated by sup norm}
    \|m\|_{N,2} &\leq C\, \|m\|_{L^{\infty}} ,
\end{align}
and
\begin{align}
\label{equation: cowling-sikora norm relation}
 \|m\|_{L^2}\leq  \|m\|_{N,2} \leq C \left(\|m\|_{L^2} + N^{-s} \|m\|_{L^2_s} \right) \quad \quad \text{for} \quad s>1/2 .
\end{align}

Let $\Theta : \mathbb{R} \to [0,1]$ be a compactly supported even smooth function with $\supp \Theta \subseteq [R^2/2, 2R^2]$ for $R>0$ and satisfies 
\begin{align}
\label{Definition: Cutoff function theta preli}
    \sum_{M \in \mathbb{Z}} \Theta_{M}(\tau) =1 , \quad \text{where} \quad \Theta_{M}(\tau) = \Theta(2^{M} \tau) .
\end{align}
Also, let $F : \mathbb{R} \to \mathbb{C}$ be a bounded Borel function supported in $ [R/8, 8R]$. Then for $M \in \mathbb{Z}$, we define $F_M : \mathbb{R} \times \mathbb{R} \to \mathbb{C}$ by 
\begin{align}
\label{Introducing theta in multiplier preli}
    F_M(\kappa, \tau) = \left\{ \begin{array}{ll}
        F(\sqrt{\kappa}) \Theta( 2^{M} \tau), & \quad \kappa \geq 0 \\
        0, & \quad \text{otherwise} .
    \end{array} \right.
\end{align}
Let us set $T := (-(T_1^2 + \cdots + T_{d_2}^2))^{1/2}$. Then since $\mathcal{L}$, $-i T_1, \ldots, -i T_{d_2}$ admit a joint functional calculus, it follows that $\mathcal{L}$ and $T$ also admit joint functional calculus and hence from \eqref{Definition: Cutoff function theta preli} and \eqref{Introducing theta in multiplier preli} we have the following decomposition:
\begin{align}
\label{Writing F as sum of FM}
    F(\mathcal{\sqrt{L}}) f &= \sum_{M \in \mathbb{Z}} F_{M}(\mathcal{L}, T) f ,
\end{align}
where
\begin{align}
\label{Joint functional of L and T}
    F_{M}(\mathcal{L}, T)f(x,u) &= \frac{1}{(2\pi)^{d_2}} \int_{\mathfrak{g}_{2,r}^{*}} \sum_{\mathbf{k} \in \mathbb{N}^\Lambda} F_M(\eta_{\mathbf{k}}^{\lambda}, |\lambda|) \left[f^{\lambda} \times_{\lambda} \varphi_{\mathbf{k}}^{\mathbf{b}^{\lambda}, \mathbf{r}}(R_{\lambda}^{-1}\cdot) \right](x) \, e^{i \langle \lambda, u \rangle} \, d\lambda ,
\end{align}
with $\eta_{\mathbf{k}}^{\lambda} = \sum_{n=1}^{\Lambda} (2 k_n + r_n) b_n^{\lambda}$ (see \eqref{Definition: general spectral multipler on Metivier}).

Actually, in \eqref{Writing F as sum of FM} the sum over $M \in \mathbb{Z}$ is non-zero only for $M \in [-l_0, \infty)$ for some $l_0 \in \mathbb{N}$, which depends on $J_{\lambda}$ and the inner product on $\mathfrak{g}$. In fact, there exists $l_0 \in \mathbb{N}$ such that (see \cite[p. 14]{Molla_Singh_Commutator_Metivier_Arxiv})
\begin{align}
\label{In the Thete reducing M from Z to l}
    F(\mathcal{\sqrt{L}}) f &= \sum_{M =-l_0}^{\infty} F_{M}(\mathcal{L}, T) f .
\end{align}

Now we are in position to discuss about the $L^p \to L^2$ restriction type estimates for the operator $F_{M}(\mathcal{L}, T)$. This estimates will play an important role later in our proof of Proposition \ref{Proposition: Local Square function estimate}. Recall that $p_{d_2} = \frac{2(d_2+1)}{(d_2-1)}$.

\begin{proposition}
\label{Proposition: Truncated Restriction Estimate strong form}
Let $1 \leq p \leq p_{d_2}'$. Then for $\beta>1/2$ we have
\begin{align}
\label{Metivier group restriction}
    & \| F_M(\mathcal{L}, T) f\|_{L^2(G)} \\
    &\nonumber \leq C R^{Q(1/p - 1/2) } 2^{-M d_2(1/p-1/2)} \left(\|F(R \cdot)\|_{L^2} + 2^{-M \beta \theta_p} \|F(R \cdot)\|_{L^2}^{1-\theta_p}  \|F(R\cdot)\|_{L^2_{\beta}}^{\theta_p} \right) \|f\|_{L^p(G)} ,
\end{align}
where $\theta_p \in [0,1]$  satisfies $1/p = (1- \theta_p) + \theta_p/p_{d_2}'$.

\medskip
In particular, if $G$ is Heisenberg-type group, then for $1 \leq p \leq p_{d_2}'$ we have
\begin{align}
\label{Heisenberg type restriction}
    \| F_M(\mathcal{L}, T) f\|_{L^2(G) } &\leq C R^{Q(1/p - 1/2) } 2^{-M d_2(1/p-1/2)} \,\|F(R \cdot)\|_{L^2} \|f\|_{L^p(G)} .
\end{align}
\end{proposition}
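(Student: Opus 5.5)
The plan is to obtain \eqref{Metivier group restriction} by interpolating two endpoint bounds for $F_M(\mathcal{L},T)$: a trivial one at $p=1$ and a restriction-type one at $p=p_{d_2}'$. For both, $\|F_M(\mathcal{L},T)f\|_{L^2}^2=\langle |F_M|^2(\mathcal{L},T)f,f\rangle$, so a $TT^*$ argument reduces everything to the convolution kernel $\mathcal{K}$ of $|F_M|^2(\mathcal{L},T)$. We first rescale: by homogeneity ($b_n^{\lambda}$ is of degree $1$ in $\lambda$, and $\delta_R$ acts on $(\mathcal{L},T)$ by $(R^{-2}\mathcal{L},R^{-2}T)$), it suffices to take $R=1$. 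The factor $R^{Q(1/p-1/2)}$ is exactly the dilation factor $L^p\to L^2$ on a group of homogeneous dimension $Q$. After rescaling, $F$ is supported in $[1/8,8]$, $\Theta(2^M\cdot)$ localizes to $|\lambda|\sim 2^{-M}$, and $F$ is replaced by $F(R\cdot)$.

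\emph{Endpoint $p=1$.} Here we need $\|F_M(\mathcal{L},T)\|_{L^1\to L^2}=\|\mathcal{K}_{F_M(\mathcal{L},T)}\|_{L^2(G)}$. Plancherel in $u$ together with the orthogonality of the Laguerre projections gives
\[
\|\mathcal{K}\|_{L^2}^2\sim\int_{|\lambda|\sim 2^{-M}}\sum_{\mathbf{k}}|F(\sqrt{\eta^\lambda_{\mathbf{k}}})|^2\,\prod_n (b_n^\lambda)^{r_n}\,d\lambda .
\]
Since $b_n^\lambda\sim|\lambda|\sim 2^{-M}$ and $\eta^\lambda_{\mathbf{k}}\le 64$, the sum over $\mathbf{k}$ is a Riemann sum with mesh $\sim 2^{-M}$. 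It is therefore controlled by $\|F\|_{N,2}^2$ with $N\sim 2^M$, times $\int_{|\lambda|\sim2^{-M}}d\lambda\sim 2^{-Md_2}$. (The Riemann-sum comparison uses that the $b_n^\lambda$ are comparable to $|\lambda|$ on $\mathfrak{g}_{2,r}^*$, plus a Fubini argument in the radial variable $|\lambda|$.) We then invoke \eqref{equation: cowling-sikora norm relation}:
\[
\|F\|_{N,2}\lesssim \|F\|_{L^2}+2^{-M\beta}\|F\|_{L^2_\beta}, \qquad \beta>1/2 .
\]
This yields the bound $2^{-Md_2/2}(\|F\|_{L^2}+2^{-M\beta}\|F\|_{L^2_\beta})$, consistent with the $p=1$ case of \eqref{Metivier group restriction}, where $\theta_1=0$. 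The $L^2_\beta$ term must in fact be absorbed differently: on the uniform grid we may instead use \eqref{discrete norm is dominated by sup norm}. We will set this up so that at $p=1$ only $\|F\|_{L^2}$ survives, using the averaging over $|\lambda|$, which smooths the grid and turns the Riemann sum into a genuine integral.

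\emph{Endpoint $p=p_{d_2}'$.} Here we apply the Stein--Tomas type restriction theorem for the joint functional calculus of $(\mathcal{L},T)$ on M\'etivier groups from \cite{Niedorf_Restriction_Stratified_group_2025}. For multipliers localized to $|\lambda|\sim 2^{-M}$, it gives
\[
\|F_M(\mathcal{L},T)\|_{p'_{d_2}\to 2}\lesssim 2^{-Md_2(1/p_{d_2}'-1/2)}\,\|F\|_{N,2},\qquad N\sim 2^M .
\]
The mechanism is as follows. Freezing $\lambda$, the twisted-Laguerre structure gives an $L^1\to L^\infty$ estimate, and the restriction of the Fourier transform in $u$ to the sphere $|\lambda|=\text{const}$ in $\mathbb{R}^{d_2}$ supplies the Euclidean Stein--Tomas exponent $p_{d_2}$. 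Applying \eqref{equation: cowling-sikora norm relation} again gives the factor $\|F\|_{L^2}+2^{-M\beta}\|F\|_{L^2_\beta}$.

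\emph{Interpolation.} The final step is complex (or real) interpolation between $p=1$ and $p=p_{d_2}'$ with parameter $\theta_p$. To make this legitimate, we interpolate the analytic family obtained by replacing $F$ by a family whose $L^2$ norm stays fixed and whose $L^2_\beta$ norm varies; equivalently, we use the interpolation inequality $\|F\|_{L^2_{\beta\theta}}\le\|F\|_{L^2}^{1-\theta}\|F\|_{L^2_\beta}^{\theta}$ together with the intermediate $N$-norm bound at smoothness $\beta\theta_p$. This produces the mixed term $2^{-M\beta\theta_p}\|F\|_{L^2}^{1-\theta_p}\|F\|_{L^2_\beta}^{\theta_p}$. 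The powers of $2^{-M}$ combine linearly to give $2^{-Md_2(1/p-1/2)}$.

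\emph{Heisenberg-type case.} For Heisenberg-type groups, $b_n^\lambda=|\lambda|$ and $\Lambda=1$, so the eigenvalues $(2k+r)|\lambda|$ depend on $\lambda$ only through $|\lambda|$. Integrating in $|\lambda|$ then exactly converts the $\mathbf{k}$-sum into an $L^2$ integral of $F$, with no grid error. Hence $\|F\|_{N,2}$ can be replaced by $\|F\|_{L^2}$ at both endpoints, which gives \eqref{Heisenberg type restriction}.

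\emph{Main obstacle.} The hardest step is the $p_{d_2}'$ endpoint for general M\'etivier groups. There the eigenvalue branches $b_n^\lambda$ are only homogeneous and analytic on the Zariski-open set $\mathfrak{g}_{2,r}^*$, so the replacement of $F$ by its $L^2$ norm fails and the discrete $N$-norm loss is unavoidable. We would take this endpoint essentially from \cite{Niedorf_Restriction_Stratified_group_2025}, checking only the dependence on $M$ via scaling.
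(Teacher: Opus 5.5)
Your proposal is correct in outline and is essentially the paper's proof with the citation unpacked. The paper takes the already-interpolated bound
\[
\|F_M(\mathcal{L},T)\|_{L^p\to L^2}\le C R^{Q(1/p-1/2)}2^{-Md_2(1/p-1/2)}\|F(R\cdot)\|_{L^2}^{1-\theta_p}\|F(R\cdot)\|_{2^M,2}^{\theta_p}
\]
from \cite[Proposition 3.1]{Molla_Singh_Commutator_Metivier_Arxiv}, or \cite[Theorem 3.2]{Niedorf_p_specific_Heisenberg_group_2024} in the Heisenberg-type case. It then applies \eqref{equation: cowling-sikora norm relation} with $N=2^M$, $s=\beta$, together with $(a+b)^{\theta_p}\le a^{\theta_p}+b^{\theta_p}$. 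You rebuild that bound from its two endpoints: a Plancherel bound at $p=1$ and Niedorf's truncated restriction estimate at $p=p_{d_2}'$. The restriction endpoint is cited in both versions, so your route adds transparency but not independence.

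Several steps in your write-up need repair.

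\textbf{Interpolation.} No analytic family is needed. For fixed $F$, $F_M(\mathcal{L},T)$ is a single linear operator, so Riesz--Thorin with endpoint constants $A_0=C2^{-Md_2/2}\|F\|_{L^2}$ and $A_1=C2^{-Md_2(1/p_{d_2}'-1/2)}\|F\|_{2^M,2}$ gives $A_0^{1-\theta_p}A_1^{\theta_p}$ directly. The mixed term then follows as in the paper. Your ``equivalent'' variant, via an $N$-norm bound at smoothness $\beta\theta_p$, is not equivalent. It breaks down whenever $\beta\theta_p\le 1/2$ (e.g.\ $p$ near $1$), because $\|m\|_{N,2}$ involves suprema and is not controlled by $\|m\|_{L^2}+N^{-s}\|m\|_{L^2_s}$ for $s\le 1/2$.

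\textbf{The $p=1$ endpoint.} You were right to retract the $N$-norm bound there, and this is essential. If the $p=1$ constant contained $2^{-M\beta}\|F\|_{L^2_\beta}$, interpolation would only yield the weaker term $2^{-M\beta}\|F\|_{L^2_\beta}$ instead of $2^{-M\beta\theta_p}\|F\|_{L^2}^{1-\theta_p}\|F\|_{L^2_\beta}^{\theta_p}$. The clean endpoint is exactly Proposition \ref{Proposition : First layer weighted Plancherel} with $\alpha=0$. Your Riemann-sum count also needs the multiplicities $\prod_n\binom{k_n+r_n-1}{k_n}$ in $\|\varphi_{\mathbf{k}}^{\mathbf{b}^\lambda,\mathbf{r}}\|_{L^2}^2$; otherwise the powers of $2^M$ do not balance when $\Lambda<d_1/2$.

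\textbf{Dependence on $M$.} It cannot be recovered ``via scaling''. The dilations $\delta_r$ rescale $\mathcal{L}$ and $T$ by the same factor, so they preserve the ratio of spectral scales that $M$ measures (with the paper's normalization of $\Theta$). They only produce the factor $R^{Q(1/p-1/2)}$. The factor $2^{-Md_2(1/p_{d_2}'-1/2)}$ and the mesh $N\sim 2^M$ must come from the cited restriction theorem itself.

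\textbf{Heisenberg-type case.} The $p_{d_2}'$ endpoint with $\|F\|_{L^2}$ is Niedorf's theorem. It does not follow from your averaging-in-$|\lambda|$ remark alone, which only handles the $p=1$ side.
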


\begin{proof}
From \cite[Proposition 3.1]{Molla_Singh_Commutator_Metivier_Arxiv} for $1 \leq p \leq p_{d_2}'$ we have
\begin{align*}
    \| F_M(\mathcal{L},T)f \|_{L^2(G)} & \leq  C R^{Q  (1/p -1/2)} 2^{-M d_2(1/p - 1/2)}\, \|\delta_{R}F\|_{L^2} ^{1- \theta_p}\,\|\delta_{R}F\|_{2^{M}, 2} ^{\theta_p}\,  \|f\|_{L^p (G)},
\end{align*}
where $\theta_p \in [0,1]$  satisfies $1/p = (1- \theta_p) + \theta_p/p_{d_2}'$.

Now using the fact \eqref{equation: cowling-sikora norm relation}, one get the required estimate \eqref{Metivier group restriction}. On the other hand, the estimate \eqref{Heisenberg type restriction} can be deduced similarly as above with the help of \cite[Theorem 3.2]{Niedorf_p_specific_Heisenberg_group_2024}.
\end{proof}

The following result follows from the above proposition, where instead of $L^2$ or Sobolev norm on the multiplier function $F$, we have $L^{\infty}$-norm of $F$.
\begin{corollary}
\label{Corollary: Truncated restriction equation weak form I}
Let $1 \leq p \leq p_{d_2}'$. Then
\begin{align}
\label{Restriction with L infinity norm}
    \| F_M(\mathcal{L}, T) f\|_{L^2(G) } \leq C R^{Q(1/p - 1/2) } 2^{-M d_2(1/p-1/2)} \|F(R\cdot)\|_{L^{\infty}} \, \|f\|_{L^p(G)} ,
\end{align}
and
\begin{align}
\label{Stein Tomas restriction with infinity norm on multiplier}
    \| F(\sqrt{\mathcal{L}}) f\|_{L^2(G) } \leq C R^{Q(1/p - 1/2) } \|F(R \cdot)\|_{L^{\infty}} \, \|f\|_{L^p(G)} .
\end{align}
\end{corollary}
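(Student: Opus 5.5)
For \eqref{Restriction with L infinity norm} we use Proposition \ref{Proposition: Truncated Restriction Estimate strong form} directly, but not in its Sobolev-norm form. The proof of that proposition went through the intermediate bound from \cite[Proposition 3.1]{Molla_Singh_Commutator_Metivier_Arxiv},
\begin{align*}
    \| F_M(\mathcal{L},T)f \|_{L^2(G)} & \leq  C R^{Q  (1/p -1/2)} 2^{-M d_2(1/p - 1/2)}\, \|\delta_{R}F\|_{L^2} ^{1- \theta_p}\,\|\delta_{R}F\|_{2^{M}, 2} ^{\theta_p}\,  \|f\|_{L^p (G)} .
\end{align*}
We bound both norms of $\delta_R F = F(R\cdot)$ by the sup norm. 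Since $\supp F\subseteq [R/8,8R]$, the function $F(R\cdot)$ is supported in $[1/8,8]$, so $\|F(R\cdot)\|_{L^2}\le \sqrt{8}\,\|F(R\cdot)\|_{L^\infty}$. For the discrete norm we use \eqref{discrete norm is dominated by sup norm}, which gives $\|F(R\cdot)\|_{2^M,2}\le C\|F(R\cdot)\|_{L^\infty}$ with $C$ independent of $N=2^M$. The support $[1/8,8]$ is not $[0,1]$, but a fixed rescaling by $8$ handles this and changes only constants. Inserting both bounds, the exponents $1-\theta_p$ and $\theta_p$ add to one, which yields \eqref{Restriction with L infinity norm}.

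For \eqref{Stein Tomas restriction with infinity norm on multiplier} we use the decomposition \eqref{In the Thete reducing M from Z to l}, namely $F(\sqrt{\mathcal{L}})=\sum_{M\ge -l_0}F_M(\mathcal{L},T)$. Summing \eqref{Restriction with L infinity norm} term by term, the factor $2^{-Md_2(1/p-1/2)}$ is summable over $M\ge -l_0$ whenever $p<2$. The sum is bounded by $C_{l_0}$, and we are done in that range.

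The main obstacle is the endpoint $p=2$ of the admissible range, which occurs only if $p_{d_2}'=2$; note that this fails for $d_2\ge 2$, since there $p_{d_2}'<2$. At that endpoint, and for uniformity in general, we avoid the crude triangle inequality and use orthogonality instead. The operators $F_M(\mathcal{L},T)=F(\sqrt{\mathcal{L}})\Theta(2^MT)$ act on essentially disjoint spectral pieces in $T$. Since $\sum_M\Theta_M=1$ and each $\Theta_M\ge 0$ overlaps only boundedly many neighbours, we have
\begin{align*}
\|F(\sqrt{\mathcal{L}})f\|_{L^2}^2\lesssim \sum_{M} \|F(\sqrt{\mathcal{L}})\tilde\Theta(2^MT)\Theta(2^MT) f\|_{L^2}^2 .
\end{align*}
Here $\tilde\Theta$ is a fattened cutoff. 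Applying \eqref{Restriction with L infinity norm} to $F\tilde\Theta_M$, which is the same estimate with a harmless change of cutoff, gives the square-summable factors $2^{-2Md_2(1/p-1/2)}$. At $p=2$ one instead uses the trivial bound $\|F(\sqrt{\mathcal L})\|_{2\to2}\le\|F\|_\infty$. Either route gives \eqref{Stein Tomas restriction with infinity norm on multiplier} with $R^{Q(1/p-1/2)}\|F(R\cdot)\|_{L^\infty}$.
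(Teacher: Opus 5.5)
Your proposal is correct and follows the paper's proof. For the first estimate, both $\|F(R\cdot)\|_{L^2}$ and the discrete norm $\|F(R\cdot)\|_{2^M,2}$ in the intermediate bound behind Proposition \ref{Proposition: Truncated Restriction Estimate strong form} are bounded by $\|F(R\cdot)\|_{L^\infty}$, and the second estimate then follows by summing the first over $M\ge -l_0$ via \eqref{In the Thete reducing M from Z to l}. Your endpoint and orthogonality discussion is unnecessary, since $p_{d_2}'<2$ for every $d_2\ge 1$ (for $d_2=1$ one has $p_{d_2}'=1$); hence $1/p-1/2$ is bounded below by a positive constant on the whole range, and the geometric series alone suffices.
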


\begin{proof}
In the proof of \eqref{Metivier group restriction} of Proposition \ref{Proposition: Truncated Restriction Estimate strong form}, instead of using \eqref{equation: cowling-sikora norm relation}, if we use \eqref{discrete norm is dominated by sup norm} and the fact $\|F(R \cdot)\|_{L^2} \leq C \|F(R \cdot)\|_{L^{\infty}}$, then we get the first estimate \eqref{Restriction with L infinity norm}.

The second estimate \eqref{Stein Tomas restriction with infinity norm on multiplier} can be deduced from the fact \eqref{In the Thete reducing M from Z to l} and first estimate \eqref{Restriction with L infinity norm}.
\end{proof}

\subsection{Strong Hardy-Littlewood maximal functions}
\label{Subsection: Variant of Hardy-littlewood}
In this subsection, we define a variant of the Hardy-Littlewood maximal function on $G$. These maximal functions will appear in the Lemma \ref{Lemma: Weighted L2 estimate for square function}, play a very crucial role in the proof of Proposition \ref{Proposition: Local Square function estimate}, to achieve the smoothness parameter in terms of the topological dimension $d$ of $G$.

Define the Hardy-Littlewood maximal function on $G$ by
\begin{align*}
    \mathcal{M}f(x,u) &= \sup_{B(0,R) \ni (x,u)} \frac{1}{|B(0,R)|} \int_{B(0,R)} |f(y,t)| \, d(y,t) .
\end{align*}
Recall that $B^{|\cdot|}(0, R) \subseteq \mathbb{R}^n$ denote the ball of radius $R$ and centered at $0$ with respect to the Euclidean distance. For a function $f \in L^1_{loc}(\mathbb{R}^{d_1} \times \mathbb{R}^{d_2})$, we define strong Hardy-Littlewood maximal function by
\begin{align*}
    \mathcal{M}^{|\cdot|}f(x,u) &= \sup_{B^{|\cdot|}(0,R) \times B^{|\cdot|}(0,S) \ni (x,u)} \frac{1}{|B^{|\cdot|}(0,R)| |B^{|\cdot|}(0,S)|} \int_{B^{|\cdot|}(0,R)} \int_{B^{|\cdot|}(0,S)} |f(y,t)|\, dy\, dt .
\end{align*}
For $r>0$, let us also define the following maximal functions:
\begin{align}
\label{Definition: Maximal with r factor}
    \mathcal{M}_{r}f(x,u) := \mathcal{M}(|f|^r)^{1/r}(x,u) \quad \text{and} \quad \mathcal{M}_{r}^{|\cdot|}f(x,u) := \mathcal{M}^{|\cdot|}(|f|^r)^{1/r}(x,u) .
\end{align}

The following lemma discusses about the boundedness of all these maximal functions.

\begin{lemma}
\label{Lemma: Boundedness of maximal and Euclidean maximal}
Let $1<p\leq \infty$. Then $\mathcal{M}$ and $\mathcal{M}^{|\cdot|}$ are bounded on $L^p(G)$. In particular, if $1\leq r<p \leq \infty$, then $\mathcal{M}_{r}$ and $\mathcal{M}_{r}^{|\cdot|}$ are bounded on $L^p(G)$.
    
\end{lemma}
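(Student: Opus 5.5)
The plan is to treat the two maximal operators separately and reduce each to a classical theorem; the $\mathcal{M}_r$ statements then follow by a rescaling argument.

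\emph{Step 1: $\mathcal{M}$.} By the volume estimate $|B((x,u),R)|\sim R^Q|B(0,1)|$ recorded in Subsection \ref{Subsection: Priliminaries on Metivier groups}, $(G,\varrho,|\cdot|)$ is a space of homogeneous type, and the sub-Riemannian balls are doubling. The Vitali covering lemma then gives the weak type $(1,1)$ bound for $\mathcal{M}$, the uncentered maximal function over sub-Riemannian balls. The $L^\infty$ bound is trivial. Marcinkiewicz interpolation yields boundedness on $L^p(G)$ for $1<p\le\infty$. This is the standard Coifman--Weiss theory, so I would simply cite it.

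\emph{Step 2: $\mathcal{M}^{|\cdot|}$.} Identify $G$ with $\mathbb{R}^{d_1}\times\mathbb{R}^{d_2}$; the Haar measure is Lebesgue measure. Any product of Euclidean balls $B^{|\cdot|}(a,R)\times B^{|\cdot|}(b,S)$ containing $(x,u)$ has normalized average bounded by iterated one-factor maximal functions:
\[
\mathcal{M}^{|\cdot|}f(x,u)\le M_1 M_2 f(x,u).
\]
Here $M_2$ is the Euclidean Hardy--Littlewood maximal operator in $u\in\mathbb{R}^{d_2}$ with $x$ fixed, and $M_1$ is the same operator in $x\in\mathbb{R}^{d_1}$. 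To see this, integrate first in $t$ over the second ball, which gives at most $M_2f(y,u)$ for $y$ in the first ball. Then average in $y$, which gives at most $M_1(M_2 f)(x,u)$. Measurability of $M_2f$ in the joint variables is routine: restrict to rational centers and radii.

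\emph{Step 3: $L^p$ bound for $\mathcal{M}^{|\cdot|}$.} Apply Fubini together with the $L^p(\mathbb{R}^{d_2})$ bound for $M_2$ (uniform in $x$) and the $L^p(\mathbb{R}^{d_1})$ bound for $M_1$ (uniform in $u$), for $1<p\le\infty$. This is the Jessen--Marcinkiewicz--Zygmund strong maximal theorem. There is no weak $(1,1)$ bound for this operator, but none is needed, since only $p>1$ is claimed.

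\emph{Step 4: $\mathcal{M}_r$ and $\mathcal{M}_r^{|\cdot|}$.} For $1\le r<p\le\infty$, set $q=p/r>1$. Then
\[
\|\mathcal{M}_r f\|_{L^p}=\|\mathcal{M}(|f|^r)\|_{L^{q}}^{1/r}\lesssim \||f|^r\|_{L^q}^{1/r}=\|f\|_{L^p},
\]
and the same computation applies to $\mathcal{M}_r^{|\cdot|}$.

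The only point requiring care is Step 2: justifying the domination for uncentered product balls and checking measurability. I expect no real obstacle; the remaining steps are standard.
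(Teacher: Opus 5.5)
Your proposal is correct and follows the paper's proof essentially step for step. The paper cites the standard $L^p$ bound for $\mathcal{M}$. It then dominates $\mathcal{M}^{|\cdot|}f(x,u)$ by the iterated Euclidean maximal function $\mathcal{M}^{|\cdot|_1}(\mathcal{M}^{|\cdot|_2}f_{(\cdot)}(u))(x)$ and concludes from the one-factor bounds, with the $\mathcal{M}_r$ versions obtained by applying these to $|f|^r$ on $L^{p/r}$. Because you work with uncentered balls and products of balls, your bounds also cover the paper's literal definitions, which average over $B(0,R)$ and $B^{|\cdot|}(0,R)\times B^{|\cdot|}(0,S)$ and are pointwise dominated by your operators.
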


\begin{proof}
For function $h_1$ and $h_2$ defined on $\mathbb{R}^{d_1}$ and $\mathbb{R}^{d_2}$ respectively, we first define two maximal functions as follows:
\begin{align*}
    \mathcal{M}^{|\cdot|_1}h_1(x) := \sup_{B^{|\cdot|}(0,R) \ni x} \frac{1}{|B^{|\cdot|}(0,R)|} \int_{B^{|\cdot|}(0,R)} |h_{1}(y)| \, dy ,
\end{align*}
and 
\begin{align*}
    \mathcal{M}^{|\cdot|_2}h_2(u) := \sup_{B^{|\cdot|}(0,R) \ni u} \frac{1}{|B^{|\cdot|}(0,R)|} \int_{B^{|\cdot|}(0,R)} |h_{2}(t)| \, dt .
\end{align*}
For each $y \in \mathbb{R}^{d_1}$, let us set $f_y : \mathbb{R}^{d_2} \to \mathbb{C}$ such that $ f_y(t) = f(y,t) $. Then one can easily see that
\begin{align}
\label{Euclidean maximal in terms of two maximal}
    \mathcal{M}^{|\cdot|}f(x,u) &\leq \mathcal{M}^{|\cdot|_1}(\mathcal{M}^{|\cdot|_2} f_{(\cdot)}(u))(x) .
\end{align}
For $1<p\leq \infty$, the $L^p$-boundedness of $\mathcal{M}$ is well known, see \cite{Stein_Harmonic_Analysis_1993}. Since $\mathcal{M}^{|\cdot|_1}$ and $\mathcal{M}^{|\cdot|_2}$ are just the usual Hardy-Littlewood maximal functions on $\mathbb{R}^{d_1}$ and $\mathbb{R}^{d_2}$ respectively, so they are also bounded on $L^p$ for $1<p\leq \infty$, (\cite{Stein_Harmonic_Analysis_1993}). On the other hand, using \eqref{Euclidean maximal in terms of two maximal} and the $L^p$-boundedness of $\mathcal{M}^{|\cdot|_1}$ and $\mathcal{M}^{|\cdot|_2}$, the $L^p$-boundedness of $\mathcal{M}^{|\cdot|}$ is immediate. Now since $1\leq r<p \leq\infty$, the boundedness of $\mathcal{M}_{r}$ and $\mathcal{M}_{r}^{|\cdot|}$ are easy consequences of the boundedness of corresponding $\mathcal{M}$ and $\mathcal{M}^{|\cdot|}$.
\end{proof}

\subsection{Kernel estimates on M\'etivier groups}
\label{Subsection: Kernel estimate}
In this subsection, we discuss various pointwise and $L^2$-kernel estimate of the convolution kernel associated to some spectral multiplier. Recall that, $\mathcal{K}_{F(\mathcal{L})}$ denote the convolution kernel corresponding to the spectral multiplier $F(\mathcal{L})$.

The following proposition sometimes call the \emph{weighted Plancherel estimates with respect to the first-layer weight} for the joint functional calculus of $\mathcal{L}$ and $T$.

\begin{proposition}\cite[Proposition 3.3]{Molla_Singh_Commutator_Metivier_Arxiv}
\label{Proposition : First layer weighted Plancherel}
Let $\mathcal{K}_{F_M(\mathcal{L}, T)}$ denote the convolution kernel of the operator $F_M(\mathcal{L}, T)$ as defined in \eqref{Joint functional of L and T}. Then for any $\alpha \geq 0$, we have 
\begin{align*}
 \int_G \left| |x|^{\alpha} \mathcal{K}_{F_M(\mathcal{L}, T)} (x, u) \right|^2 \, d(x, u) &\leq C\, 2^{M(2\alpha -d_2)}\, R^{Q- 2\alpha} \|F(R \cdot)\|_{L^2}^2.
\end{align*}
\end{proposition}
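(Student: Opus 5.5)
The plan is to reduce the estimate, via the Euclidean Plancherel theorem in the central variable $u$, to a weighted $L^2$ estimate for rescaled Laguerre expansions on $\mathfrak{g}_1$ for each fixed $\lambda$. Then the $\lambda$-integration turns the resulting discrete sum over $\mathbf{k}$ into the continuous norm $\|F(R\cdot)\|_{L^2}$.

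\textbf{Step 1 (reduction to fixed $\lambda$).} From \eqref{Joint functional of L and T}, the convolution kernel is, up to a harmless normalising constant,
\begin{align*}
\mathcal{K}_{F_M(\mathcal{L},T)}(x,u) = c \int_{\mathfrak{g}_{2,r}^{*}} \sum_{\mathbf{k}} F(\sqrt{\eta^{\lambda}_{\mathbf{k}}})\, \Theta(2^M|\lambda|)\, \varphi^{\mathbf{b}^{\lambda},\mathbf{r}}_{\mathbf{k}}(R_\lambda^{-1}x)\, e^{i\langle \lambda,u\rangle}\, d\lambda .
\end{align*}
Plancherel in $u$ then gives
\begin{align*}
\int_G |x|^{2\alpha} |\mathcal{K}(x,u)|^2 \,d(x,u) = c' \int_{\mathfrak{g}_{2,r}^{*}} \Theta(2^M|\lambda|)^2 \int_{\mathfrak{g}_1} |x|^{2\alpha} \Bigl|\sum_{\mathbf{k}} F(\sqrt{\eta^{\lambda}_{\mathbf{k}}})\, \varphi^{\mathbf{b}^{\lambda},\mathbf{r}}_{\mathbf{k}}(x)\Bigr|^2 dx\, d\lambda .
\end{align*}
Here $R_\lambda$ has been dropped, since it is orthogonal and so preserves $|x|$. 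On the support, $|\lambda| \sim 2^{-M}R^2$, each $b^\lambda_n \sim |\lambda|$ by homogeneity and compactness of the unit sphere, and $\eta^\lambda_{\mathbf{k}} \sim R^2$ forces $|\mathbf{k}| \sim R^2/|\lambda| \sim 2^M$.

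\textbf{Step 2 (weighted Laguerre estimate).} The tensor structure \eqref{Definition of rescaled Laguerre functions} and $|x|^{2\alpha} \lesssim \sum_n |z_n|^{2\alpha}$ reduce matters to a single factor $\varphi_k^{(\mu,m)}$ on $\mathbb{R}^{2m}$. For integer $\alpha$, I would use the Laguerre recurrence: multiplication by $\mu|z|^2/2$ acts on $\{\varphi^{(\mu,m)}_k\}_k$ as a three-term operator with coefficients $O(k+1)$. Orthogonality, together with $\|\varphi_k^{(\mu,m)}\|_{L^2}^2 \sim \mu^{m} k^{m-1}$, then gives
\begin{align*}
\int_{\mathfrak{g}_1} |x|^{2\alpha}\Bigl|\sum_{\mathbf{k}} a_{\mathbf{k}}\varphi^{\mathbf{b},\mathbf{r}}_{\mathbf{k}}\Bigr|^2 dx \lesssim \sum_{\mathbf{k}} \Bigl(\frac{1+|\mathbf{k}|}{|\mathbf{b}|}\Bigr)^{2\alpha} |a_{\mathbf{k}}|^2 \, \|\varphi^{\mathbf{b},\mathbf{r}}_{\mathbf{k}}\|_{L^2}^2 ,
\end{align*}
provided the coefficients are localised (here, $a_{\mathbf{k}}$ supported in $|\mathbf{k}|\sim 2^M$). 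For non-integer $\alpha$, I would use Hölder or log-convexity in $\alpha$ between consecutive integers. With $|\mathbf{k}|\sim 2^M$ and $|\mathbf{b}|\sim 2^{-M}R^2$, the weight factor is $\sim 2^{2M\alpha}R^{-2\alpha}$, which is exactly the gain claimed in the statement.

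\textbf{Step 3 (summing and integrating).} After Step 2 it remains to bound
\begin{align*}
\int_{|\lambda|\sim 2^{-M}R^2} \sum_{\mathbf{k}} |F(\sqrt{\eta^{\lambda}_{\mathbf{k}}})|^2 \prod_n (b_n^\lambda)^{r_n} (k_n+1)^{r_n-1}\, d\lambda .
\end{align*}
Write $\lambda=\rho\omega$ in polar coordinates. Then $\eta^\lambda_{\mathbf{k}}=\rho\,\eta^{\omega}_{\mathbf{k}}$, so for fixed $\mathbf{k}$ and $\omega$ the substitution $\eta=\rho\,\eta^\omega_{\mathbf{k}}$ in the $\rho$ integral converts $|F(\sqrt{\cdot})|^2$ into an honest integral, with Jacobian $1/\eta^\omega_{\mathbf{k}}\sim 2^{-M}R^{2}/R^2$ scaled appropriately. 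This averaging in $|\lambda|$ is what lets us use $\|F(R\cdot)\|_{L^2}$ in place of a sup-norm.

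Counting powers then finishes the estimate. The weights contribute $(2^{-M}R^2)^{d_1/2}(2^{M})^{d_1/2}$ times the number of $\mathbf{k}$ with $|\mathbf{k}| \sim 2^M$, divided by the density in $\mathbf{k}$. The $\rho$-measure contributes $\rho^{d_2-1}d\rho \sim (2^{-M}R^2)^{d_2-1}$ together with the Jacobian. Collecting these should give $R^{Q}2^{-Md_2}\|F(R\cdot)\|_{L^2}^2$ times $2^{2M\alpha}R^{-2\alpha}$, as required.

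\textbf{Main obstacle.} I expect the main difficulty in Step 2. For a genuinely Métivier (non-Heisenberg-type) group there are several distinct $b_n^\lambda$, which are only homogeneous, are merely smooth on the Zariski-open set, and for which the eigenvalues may cross. I would handle this by applying the one-factor estimate coordinatewise in $z_n$, using only the comparability $b_n^\lambda\sim|\lambda|$ and not any smoothness in $\lambda$. The change of variables in Step 3 uses only the radial homogeneity, so it is unaffected.
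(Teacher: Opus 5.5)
The paper gives no argument of its own here (it simply imports Proposition~3.3 of Molla--Singh), and your route is the standard proof of such first-layer weighted estimates and is correct: Plancherel in $u$, then a Laguerre-recurrence bound for the first-layer weight on each $\lambda$-slice using $|\mathbf{k}|+1\sim 2^{M}$ and $b_n^{\lambda}\sim|\lambda|\sim 2^{-M}R^{2}$, then polar coordinates in $\lambda$ to convert the $\mathbf{k}$-sum into $\|F(R\cdot)\|_{L^2}^2$. The one slip is in your Step~2 display, where the exponent should be $\alpha$, i.e.\ the factor is $\bigl((1+|\mathbf{k}|)/|\mathbf{b}|\bigr)^{\alpha}$, not exponent $2\alpha$: the weight $|z|^{2\alpha}$ scales like $(k/\mu)^{\alpha}$, and the $2\alpha$ version already fails for $k=0$, $\mu\to\infty$; with exponent $\alpha$ it gives exactly the factor $2^{2M\alpha}R^{-2\alpha}$ that you then use.
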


Now we discuss a weighted Plancherel estimates for the sub-Laplacian on M\'etivier groups \cite{Hebisch_Zienkiewicz_Product_Generalized_Heisenberg_1996}, \cite{Martini_Lie_groups_Polynomial_Growth_2012}. Such estimates play a crucial role in obtaining sharp spectral multiplier results.

\begin{proposition}
\label{Prop: Weighted Plancherel using weight and distance}
If $F: \mathbb{R} \to \mathbb{C}$ is a bounded Borel function supported in $[0, R]$, then for all $\beta \geq 0$, all $\epsilon, R>0$, all $0\leq \alpha <d_2/2$ we have
    \begin{align*}
        \left(\int_G | (1+R\|(x,u)\|)^{\beta} (1+R|x|)^{\alpha} \mathcal{K}_{F(\sqrt{\mathcal{L}})}(x,u)|^2 \, d(x,u) \right)^{1/2} &\leq C R^{Q/2} \|F(R \cdot)\|_{L^2_{\beta+\epsilon}} .
    \end{align*}
\end{proposition}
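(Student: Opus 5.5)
The plan is to reduce the estimate to the joint functional calculus of $\mathcal{L}$ and $T$, and to treat the two weights separately. First, we may rescale. By homogeneity of $\mathcal{L}$ under the dilations $\delta_R$, the kernel satisfies $\mathcal{K}_{F(\sqrt{\mathcal{L}})}(x,u) = R^{Q}\mathcal{K}_{F(R\,\sqrt{\mathcal{L}})}(Rx,R^2u)$. A change of variables then reduces everything to the case $R=1$ with $F$ supported in $[0,1]$, because the factor $R^{Q/2}$ appears exactly from the $L^2$ normalisation.

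Second, we separate the two weights. Since $(1+\|(x,u)\|)^{\beta}(1+|x|)^{\alpha}$ is a product, we split the integral into the region $\|(x,u)\|\le 1$ and dyadic shells $\|(x,u)\|\sim 2^{j}$. On the shell $2^j$ the first weight is bounded by $2^{j\beta}$. The standard Hebisch/Martini argument then controls the kernel on that shell: replace $F$ by $F-F*\psi_j$, a smooth truncation adapted to the finite propagation speed of the wave equation for $\sqrt{\mathcal{L}}$. The kernel of $(F*\psi_j)(\sqrt{\mathcal{L}})$ is supported in $\|(x,u)\|\lesssim 2^{j}$ up to Schwartz tails, and this gains a factor $2^{-j(\beta+\epsilon)}\|F\|_{L^2_{\beta+\epsilon}}$ in the usual way. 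This reduces the proposition to the case $\beta=0$, with $F$ replaced by a function of comparable $L^2$ norm whose support may become slightly noncompact. Compactness is harmless: it can be restored with the $\epsilon$-loss, by summing over unit frequency pieces using rapid decay.

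Third, and this is the core step, we prove the first-layer estimate for $\beta=0$:
\begin{align*}
\int_G |(1+|x|)^{\alpha}\mathcal{K}_{F(\sqrt{\mathcal{L}})}(x,u)|^2\,d(x,u)\lesssim \|F\|_{L^2}^2, \qquad 0\le\alpha<d_2/2 .
\end{align*}
For this I would use the decomposition \eqref{In the Thete reducing M from Z to l}, $F(\sqrt{\mathcal{L}})=\sum_{M\ge -l_0}F_M(\mathcal{L},T)$, together with Proposition~\ref{Proposition : First layer weighted Plancherel} at $R=1$. That proposition gives $\||x|^{\alpha}\mathcal{K}_{F_M}\|_{L^2}\lesssim 2^{M(\alpha-d_2/2)}\|F\|_{L^2}$, and the unweighted case gives $2^{-Md_2/2}\|F\|_{L^2}$. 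Since $\alpha<d_2/2$, the geometric series in $M$ converges. Summing via the triangle inequality gives the claim, with no need for orthogonality.

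The main obstacle is the second step: making the interplay of the two weights rigorous. The truncation in $\|(x,u)\|$ must not destroy the first-layer gain, so after replacing $F$ by the dyadic pieces we need the third step to hold uniformly for the modified multipliers. I would first try to run the shell decomposition with $F$ replaced by $F_j=F-F*\psi_j$ restricted via a smooth cutoff to $[0,2]$, apply the third step to $F_j$, and then use $\|F_j\|_{L^2}\lesssim 2^{-j(\beta+\epsilon)}\|F\|_{L^2_{\beta+\epsilon}}$. Alternatively, one can interpolate in the Stein–complex sense between the $\beta$-weight estimate (the known $\alpha=0$ case from \cite{Martini_Lie_groups_Polynomial_Growth_2012}) and the pure $|x|^{\alpha}$ estimate.
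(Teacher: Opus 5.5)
Your proof is correct, but your primary route differs from the paper's. The paper never decomposes into shells. It quotes Martini's weighted Plancherel bound $\|(1+R\|(x,u)\|)^{\beta}\mathcal{K}_{F(\sqrt{\mathcal{L}})}\|_{L^2}\le CR^{Q/2}\|F(R\cdot)\|_{L^\infty_{\beta+\epsilon}}$. It then uses $|x|\le\|(x,u)\|$ and Sobolev embedding to turn this into an $L^2_{\beta+\alpha+1/2+\epsilon}$ bound carrying both weights. Finally it interpolates, following Mauceri--Meda, with the pure first-layer bound $\|(1+R|x|)^{\alpha}\mathcal{K}_{F(\sqrt{\mathcal{L}})}\|_{L^2}\le CR^{Q/2}\|F(R\cdot)\|_{L^2}$, $0\le\alpha<d_2/2$, quoted from Bagchi--Molla--Singh. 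That bound is exactly what your third step proves. The surplus derivatives are absorbed by interpolating very close to the first-layer endpoint, with a large homogeneous-weight exponent at the other endpoint. So your fallback suggestion is essentially the paper's proof.

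Your main route instead gets both weights at once, from finite propagation speed plus the first-layer bound applied to the remainders $F-F*\psi_j$. It avoids Martini's theorem and any interpolation. Because every step is an $L^2$ estimate, for $\beta>0$ it even gives the bound with $\|F(R\cdot)\|_{L^2_{\beta}}$, since $\sum_{j}2^{2j\beta}\|F-F*\psi_j\|_{L^2}^2\lesssim\|F\|_{L^2_\beta}^2$. The paper's argument is shorter but, starting from an $L^\infty$-Sobolev input, yields only the $\epsilon$-lossy form. The obstacle you flag is not real: the $\beta=0$ bound has a constant independent of the multiplier, so it applies uniformly to all remainders.

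What you still need to write out is routine bookkeeping:
\begin{enumerate}
\item Finite propagation speed needs an even multiplier. Replace $F$ by $F+F(-\cdot)$, which agrees with $F$ on $(0,\infty)$ and has at most twice the $L^2_s$ norm because $F$ vanishes on $(-\infty,0)$.
\item Take $\widehat{\psi}$ compactly supported, so the kernel of $(F*\psi_j)(\sqrt{\mathcal{L}})$ vanishes identically on the $j$-th shell. There are then no spatial tails, only spectral ones.
\item Proposition~\ref{Proposition : First layer weighted Plancherel} is formulated for multipliers supported in $[R/8,8R]$, so your third step needs a dyadic decomposition near $0$. A piece at scale $2^{k}$, $k\le 0$, costs $2^{k(Q/2-\alpha-1/2)}$ times its $L^2$ norm, which is summable.
\item On $[2^k,2^{k+1}]$, $k\ge 1$, the remainder equals $-F*\psi_j$ and has $L^2$ norm $O(2^{j(1-N)}2^{-k(N-1/2)}\|F\|_{L^2})$. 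This beats both the factor $2^{k(Q-1)/2}$ from rescaling the first-layer bound and the weight $2^{j\beta}$.
\end{enumerate}
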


\begin{proof}
First note that from \cite[Theorem 6.1]{Martini_Sharp_Multiplier_Kohn_Laplacian_2017} for all $\beta \geq 0$, all $\epsilon, R>0$, and all $F: \mathbb{R} \to \mathbb{C}$ with $\supp F \subseteq [0, R]$ we have
\begin{align*}
    \left(\int_G | (1+R\|(x,u)\|)^{\beta} \mathcal{K}_{F(\sqrt{\mathcal{L}})}(x,u)|^2 \, d(x,u) \right)^{1/2} &\leq C R^{Q/2} \|F(R \cdot)\|_{L^{\infty}_{\beta+\epsilon}} .
\end{align*}
Since we have $|x| \leq \| (x,u) \|$, an application of Sobolev embedding and from the above estimate for all $\alpha \geq 0$ we obtain
\begin{align}
\label{Kernel estimate with distance}
    \left(\int_G | (1+R\|(x,u)\|)^{\beta} (1+R|x|)^{\alpha} \mathcal{K}_{F(\sqrt{\mathcal{L}})}(x,u)|^2 \, d(x,u) \right)^{1/2} &\leq C R^{Q/2} \|F(R \cdot)\|_{L^{2}_{\beta+\alpha+\frac{1}{2}+\epsilon}} .
\end{align}
On the other hand from \cite[Proposition 3.2]{Bagchi_Molla_Singh_Bilinear_Metivier} for all $\alpha \in [0,d_2/2)$, and all $F: \mathbb{R} \to \mathbb{C}$ with $\supp F \subseteq [0, R]$ yields 
\begin{align}
\label{kernel estimate with weight}
    \left(\int_G | (1+R|x|)^{\alpha} \mathcal{K}_{F(\sqrt{\mathcal{L}})}(x,u)|^2 \, d(x,u) \right)^{1/2} &\leq C R^{Q/2} \|F(R \cdot)\|_{L^{2}} .
\end{align}
Finally, interpolating \eqref{Kernel estimate with distance} and \eqref{kernel estimate with weight} (see \cite[proof of Lemma 1.2]{Mauceri_Meda_Multipliers_Stratified_groups_1990}) we obtain the required estimate.    
\end{proof}

Before proceed further let us mention a useful lemma which we will use in the upcoming proofs. Since the proof of the following Lemma \ref{Pointwise convolution dominated by Maximal function} is quite standard, so we omit the details here (see also \cite[Lemma 2.4]{Bagchi_Molla_Singh_Bilinear_Bochner_Riesz_Grushin}).

\begin{lemma}
\label{Pointwise convolution dominated by Maximal function}
Suppose $K$ be a function on $G$ which satisfies
\begin{align*}
    |K(x,u)| &\leq A\, R^Q (1+R\|(x,u)\|)^{-N}, 
\end{align*}
for $N>0$ and some constant $A>0$. Then whenever $N>Q$, we have
\begin{align*}
    |f * K(x,u)| &\leq C A\, \mathcal{M}f(x,u) .
\end{align*}
    
\end{lemma}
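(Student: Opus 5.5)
The plan is the standard dyadic-shell argument on the homogeneous group $G$. Write $y=(x,u)$ and $z=(x',u')$, and recall that $\|\cdot\|$ is a homogeneous norm. Since $(G,\varrho,|\cdot|)$ is a space of homogeneous type, $|B(0,r)|\sim r^Q$. Left translation and the group inversion preserve Haar measure, so we may write
\begin{align*}
f*K(y) = \int_G f(z)\, K(z^{-1}y)\, dz .
\end{align*}
Put $\rho(z,y):=\|z^{-1}y\|$. It is a quasi-distance comparable to $\varrho$, and its balls $\{z:\rho(z,y)<r\}$ are translates of $B(0,cr)$ up to constants. This is the only geometric input needed.

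We split $G$ into the region $E_0=\{z : \rho(z,y)\le R^{-1}\}$ and the shells
\begin{align*}
E_j=\{z : 2^{j-1}R^{-1}<\rho(z,y)\le 2^jR^{-1}\}, \qquad j\ge 1 .
\end{align*}
On $E_j$ the hypothesis gives $|K(z^{-1}y)|\le A R^Q 2^{-(j-1)N}$, and $E_j$ is contained in a ball around $y$ of radius about $2^jR^{-1}$. That ball has measure about $2^{jQ}R^{-Q}$ and contains $y$. Hence
\begin{align*}
\int_{E_j}|f(z)|\,|K(z^{-1}y)|\,dz \lesssim A R^Q 2^{-jN}\cdot 2^{jQ}R^{-Q}\cdot \frac{1}{|B_j|}\int_{B_j}|f| \lesssim A\,2^{-j(N-Q)}\mathcal{M}f(y).
\end{align*}
The term $j=0$ is handled in the same way with the bound $|K|\le AR^Q$.

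Summing over $j$ gives a convergent geometric series exactly because $N>Q$. This yields $|f*K(y)|\le C A\,\mathcal{M}f(y)$ with $C$ depending only on $N$ and $Q$.

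The main obstacle is notational rather than analytic. The paper's $\mathcal{M}$ is written with balls $B(0,R)\ni(x,u)$, so one must check that the translated balls $yB(0,r)$ (equivalently, balls containing $y$) are admissible, or at least dominated up to a constant by the balls in that definition. I would handle this by interpreting $\mathcal{M}$ as the usual uncentered maximal function over $\varrho$-balls containing the point, which is clearly the intended meaning. I would also invoke the quasi-triangle inequality for $\|\cdot\|$ and its equivalence with $\varrho$. With these in place, the inclusion $E_j\subseteq B(y,C2^jR^{-1})$ is immediate, and the rest of the argument is routine.
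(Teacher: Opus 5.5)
Your dyadic-shell argument is correct and is essentially the standard proof that the paper omits; the paper only cites the analogous Grushin-setting lemma from the literature. The shell $E_j$ costs $2^{-jN}$ from the kernel bound and gains $2^{jQ}$ from the ball volume, so $N>Q$ is exactly what makes the geometric series converge, and the constant comes out independent of $R$. You are also right to read $\mathcal{M}$ as the uncentered maximal function over (left-translated) balls containing the point. The literal definition, with balls centred at $0$, is a typo, and under it the lemma would fail: take $f=\chi_{B((x,u),1)}$ with $\|(x,u)\|$ large and $R=1$.
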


In the sequel, we also require the pointwise weighted kernel estimate.  Let us set the following $X:=(X_1, \ldots, X_{d_1}, T_1, \ldots, T_{d_2})$.

\begin{lemma}
\label{Lemma: Pointwise kernel estimate for linear kernel}
Let $\Gamma \in \mathbb{N}^{d_1 + d_2}$.  Also let $F : \mathbb{R} \to \mathbb{C}$ be a bounded Borel function supported in $[0,R^2]$. Then for any $\beta \geq 0$ and $\epsilon>0$ we have
\begin{align}
\label{Kernel estimate for linear multiplier}
    (1+ R\|(x,u)\|)^{\beta} |\mathcal{K}_{F(\mathcal{L})}(x,u)| &\leq C R^Q \|F(R^2 \cdot)\|_{L^{\infty}_{\beta+\epsilon}(\mathbb{R})} \\
    \text{and} \label{Pointwise Kernel estimate with weight second} \quad (1+R \|(x,u)\|)^{\beta} |X^{\Gamma} \mathcal{K}_{F(\mathcal{L})}(x,u) | &\leq C\, R^{|\Gamma|}\, R^Q\, \|F(R^2 \cdot)\|_{L^{\infty}_{\beta+\epsilon}(\mathbb{R})} .
\end{align}
Moreover,
\begin{align}
\label{Linear multiplier estimate by maximal function}
    |F(\mathcal{L})f(x,u)| &\leq C \|F(R^2 \cdot)\|_{L^{\infty}_{Q+\epsilon}(\mathbb{R})} \mathcal{M}f(x,u) .
\end{align}
In particular,
\begin{align}
\label{Dominate operator by maximal function}
    \sup_{R>0} \left|\Psi\left(2^j \left(1-\frac{\mathcal{L}}{R^2} \right) \right)f(x,u) \right| &\leq C 2^{j(Q+\epsilon)} \mathcal{M}f(x,u) .
\end{align}
    
\end{lemma}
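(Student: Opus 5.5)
Estimate \eqref{Kernel estimate for linear multiplier} comes from the weighted Plancherel bound and a Sobolev embedding on $G$; \eqref{Pointwise Kernel estimate with weight second} comes from the same argument applied to derivatives of the kernel; \eqref{Linear multiplier estimate by maximal function} and \eqref{Dominate operator by maximal function} then follow from Lemma \ref{Pointwise convolution dominated by Maximal function}.

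\textbf{Scaling.} Since $\mathcal{K}_{F(R^2\mathcal{L})}(x,u) = R^{-Q}\mathcal{K}_{F(\mathcal{L})}(\delta_{R^{-1}}(x,u))$, it suffices to take $R=1$ and $\supp F\subseteq[0,1]$. Set $G_0(\eta)=F(\eta^2)$, so that $F(\mathcal{L}) = G_0(\sqrt{\mathcal{L}})$ with $\supp G_0\subseteq[0,1]$.

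\textbf{Kernel bound.} Choose $\psi\in C_c^\infty(-2,2)$ with $\psi=1$ on $[0,1]$, so that $F(\mathcal{L}) = F(\mathcal{L})e^{\mathcal{L}}e^{-\mathcal{L}}\psi(\mathcal{L})$, and write
\begin{align*}
\mathcal{K}_{F(\mathcal{L})} = \mathcal{K}_{H(\mathcal{L})} * \mathcal{K}_{e^{-\mathcal{L}}}, \qquad H(\eta) := F(\eta)e^{\eta}\psi(\eta).
\end{align*}
Here $\|H\|_{L^\infty_{\beta+\epsilon}}\lesssim \|F\|_{L^\infty_{\beta+\epsilon}}$. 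The heat kernel $p_1=\mathcal{K}_{e^{-\mathcal{L}}}$ is Schwartz, so $|X^\Gamma p_1(x,u)|\lesssim (1+\|(x,u)\|)^{-N}$ for every $N$. The homogeneous norm is quasi-subadditive, hence $(1+\|g h\|)^\beta \lesssim (1+\|g\|)^\beta(1+\|h\|)^\beta$. Using this together with Cauchy--Schwarz gives
\begin{align*}
(1+\|(x,u)\|)^\beta|\mathcal{K}_{F(\mathcal{L})}(x,u)| \lesssim \big\|(1+\|\cdot\|)^\beta \mathcal{K}_{H(\mathcal{L})}\big\|_{L^2}\,\big\|(1+\|\cdot\|)^{\beta} p_1\big\|_{L^2}.
\end{align*}
The weighted $L^2$ norm of $\mathcal{K}_{H(\mathcal{L})}$ is bounded by $\|F\|_{L^\infty_{\beta+\epsilon}}$. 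This follows from \cite[Theorem 6.1]{Martini_Sharp_Multiplier_Kohn_Laplacian_2017}, quoted in the proof of Proposition \ref{Prop: Weighted Plancherel using weight and distance}, after passing from $H(\eta)$ to $H(\eta^2)$, which preserves $L^\infty_{\beta+\epsilon}$ norms on compact support. This proves \eqref{Kernel estimate for linear multiplier}.

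\textbf{Derivatives.} For \eqref{Pointwise Kernel estimate with weight second}, left-invariant vector fields acting on a convolution fall on the right factor: $X^\Gamma(k*p_1)=k*X^\Gamma p_1$. The $T_j$ are central, so the same holds for them. The same Cauchy--Schwarz argument therefore applies with $p_1$ replaced by $X^\Gamma p_1$. Undoing the scaling produces the factor $R^{|\Gamma|}$, since $X_i$ has degree one and $T_j$ degree two. If $\Gamma$ counts $T$-derivatives with weight one, the power should read $R^{2|\Gamma_T|}$; I would check which is intended, but for $R\ge1$, or for first-layer derivatives only, the stated form holds.

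\textbf{Maximal bounds.} Taking $\beta=Q+\epsilon/2>Q$ in \eqref{Kernel estimate for linear multiplier}, Lemma \ref{Pointwise convolution dominated by Maximal function} gives \eqref{Linear multiplier estimate by maximal function}. For \eqref{Dominate operator by maximal function}, set $F(\eta)=\Psi(2^j(1-\eta/R^2))$. This is supported in $[0,R^2]$ once $\Psi$ has compact support near the origin, and then
\begin{align*}
\|F(R^2\cdot)\|_{L^\infty_{Q+\epsilon}} = \|\Psi(2^j(1-\cdot))\|_{L^\infty_{Q+\epsilon}}\lesssim 2^{j(Q+\epsilon)},
\end{align*}
uniformly in $R$. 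Taking the supremum over $R>0$ finishes the proof.

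\textbf{Main obstacle.} The delicate step is the weighted $L^2$ bound, with the $L^\infty_{\beta+\epsilon}$ norm, for the kernel of $H(\mathcal{L})$ when the multiplier is not supported away from $0$. The cited weighted Plancherel theorem covers compact support in $[0,R]$, so this works. The rest is routine.
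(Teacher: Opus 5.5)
Your proof is correct. For the two maximal-function estimates it matches the paper's argument: take $\beta>Q$ in \eqref{Kernel estimate for linear multiplier}, apply Lemma~\ref{Pointwise convolution dominated by Maximal function}, then specialise to $F(\eta)=\Psi(2^j(1-\eta/R^2))$.

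For the kernel bounds \eqref{Kernel estimate for linear multiplier} and \eqref{Pointwise Kernel estimate with weight second}, however, the paper gives no argument. It simply quotes Proposition~3.4 of Molla--Singh. You instead derive them as follows:
\begin{itemize}
\item rescale to $R=1$ and factor $F(\mathcal{L})=H(\mathcal{L})e^{-\mathcal{L}}$;
\item split the weight over the two convolution factors with the quasi-triangle inequality and apply Cauchy--Schwarz;
\item bound the weighted $L^2$ norm of $\mathcal{K}_{H(\mathcal{L})}$ by Martini's $L^\infty_{\beta+\epsilon}$ weighted Plancherel estimate (his Theorem~6.1).
\end{itemize}
The paper itself quotes that estimate in the proof of Proposition~\ref{Prop: Weighted Plancherel using weight and distance}. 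This is a standard heat-semigroup factorization argument. It makes the lemma self-contained, relying only on an ingredient the paper already uses. Any small regularity loss in passing from $H(\eta)$ to $H(\eta^2)$ is absorbed because $\epsilon>0$ is arbitrary. Your route also exposes the correct scaling: the honest factor is $R^{|\Gamma_X|+2|\Gamma_T|}$, so $|\Gamma|$ in the statement must be read as the homogeneous degree.

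There are two small slips.
\begin{itemize}
\item If $|\Gamma|$ denotes the ordinary length, the stated bound follows from yours when $R\le 1$, not when $R\ge 1$. For $R\ge1$ and $\Gamma_T\neq 0$ it is actually false, by the same scaling.
\item For \eqref{Dominate operator by maximal function} the relevant condition is $\supp\Psi\subseteq(0,2^j]$, not ``support near the origin''. For example, $\supp\Psi\subseteq[1/2,2]$ with $j\ge1$ works, as in the paper's applications. In any case, support in $[0,CR^2]$ would suffice after rescaling.
\end{itemize}
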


\begin{proof}
The estimates \eqref{Kernel estimate for linear multiplier} and \eqref{Pointwise Kernel estimate with weight second} was proved in \cite[Proposition 3.4]{Molla_Singh_Commutator_Metivier_Arxiv}. For the estimate of \eqref{Linear multiplier estimate by maximal function} first note that 
\begin{align*}
    F(\mathcal{L})f(x,u) &= f * \mathcal{K}_{F(\mathcal{L})}(x,u) .
\end{align*}
Therefore from \eqref{Kernel estimate for linear multiplier} and applying Lemma \ref{Pointwise convolution dominated by Maximal function} with $\beta>Q$, we get the required estimate \eqref{Linear multiplier estimate by maximal function}. Finally, the estimate \eqref{Dominate operator by maximal function} follows from \eqref{Linear multiplier estimate by maximal function} by taking $F(\eta)= \Psi\left(2^j \left(1-\frac{\eta}{R^2} \right) \right)$.
\end{proof}

Let us mention two integral estimates about the homogeneous norm \eqref{Definition of homogeneous norm} and the first layer weight function $|x|$ for $(x,u) \in G$.

\begin{lemma}\cite[Lemma 2.1]{Molla_Singh_Commutator_Metivier_Arxiv}
\label{lemma: outside distance}
    Let $R, r> 0$. Then for any $s> Q$ we have
    \begin{align*}
        \int_{\|(x, u)\| > r} \frac{d(x,u)}{\big( 1 + R\|(x, u)\| \big)^s} &\leq C R^{-s} r^{-s + Q}.
    \end{align*}
\end{lemma}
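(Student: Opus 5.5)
The plan is to reduce the integral to a one-dimensional radial computation using the homogeneous structure of $G$. The region $\{\|(x,u)\|>r\}$ and the integrand depend only on the homogeneous norm $\|(x,u)\| = (|x|^4+|u|^2)^{1/4}$. This norm is homogeneous of degree one under the dilations $\delta_t$ of \eqref{Definition of dilation}, and Lebesgue measure scales by $t^{Q}$ under them. So I would pass to homogeneous polar coordinates: there is a Radon measure $\sigma$ on the unit sphere $\Sigma=\{\|(x,u)\|=1\}$ such that
\begin{align*}
    \int_G g(\|(x,u)\|)\, d(x,u) = \sigma(\Sigma)\int_0^{\infty} g(\rho)\, \rho^{Q-1}\, d\rho
\end{align*}
for nonnegative measurable $g$. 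Rather than invoking the general polar decomposition, I would derive it from the distribution function. Set $V(\rho)=|\{\|(x,u)\|<\rho\}|$. By the dilation $\delta_\rho$ we have $V(\rho)=\rho^{Q}V(1)$, where $V(1)<\infty$ because the unit norm ball is bounded. The layer-cake formula then gives $\int g(\|\cdot\|) = \int_0^\infty g(\rho)\, dV(\rho) = QV(1)\int_0^\infty g(\rho)\rho^{Q-1}d\rho$.

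Applying this with $g(\rho)=\chi_{(r,\infty)}(\rho)(1+R\rho)^{-s}$, the left-hand side equals $QV(1)\int_r^\infty (1+R\rho)^{-s}\rho^{Q-1}\,d\rho$. We then bound $(1+R\rho)^{-s}\le (R\rho)^{-s}$. The integral is then at most
\begin{align*}
    R^{-s}\int_r^\infty \rho^{Q-1-s}\,d\rho = \frac{R^{-s} r^{Q-s}}{s-Q},
\end{align*}
which is finite precisely because $s>Q$. This yields the claim with $C=QV(1)/(s-Q)$, depending only on $G$ and $s$.

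I expect no real obstacle. The only point needing a line of justification is the scaling identity $V(\rho)=\rho^Q V(1)$. It follows from the change of variables $(x,u)=\delta_\rho(y,v)$, whose Jacobian is $\rho^{d_1}\rho^{2d_2}=\rho^{Q}$, together with $\|\delta_\rho(y,v)\|=\rho\|(y,v)\|$, which is immediate from \eqref{Definition of homogeneous norm}. The only other ingredient is finiteness of $V(1)$, which holds since the unit norm ball is contained in $\{|x|\le 1,\ |u|\le 1\}$.
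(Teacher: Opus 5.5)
Your proof is correct and complete. The scaling identity $V(\rho)=\rho^{Q}V(1)$, which comes from the Jacobian $\rho^{Q}$ of $\delta_\rho$ and the homogeneity of $\|\cdot\|$, identifies the push-forward of Haar measure under $(x,u)\mapsto\|(x,u)\|$ as $QV(1)\rho^{Q-1}\,d\rho$. After the trivial bound $(1+R\rho)^{-s}\le (R\rho)^{-s}$, everything reduces to $\int_r^\infty \rho^{Q-1-s}\,d\rho=r^{Q-s}/(s-Q)$, which is exactly where $s>Q$ is used.

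The paper gives no proof of its own; it quotes the lemma from Molla--Singh. Your route is the standard homogeneous polar-coordinates argument one would give for it. An equivalent variant splits $\{\|(x,u)\|>r\}$ into shells $\{2^k r<\|(x,u)\|\le 2^{k+1}r\}$, $k\ge 0$, each of measure $\lesssim (2^k r)^{Q}$ and carrying integrand at most $(R2^k r)^{-s}$, and then sums the geometric series $\sum_k 2^{k(Q-s)}$.
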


\begin{lemma}
\label{Lemma: Integration of distance and weight}
Let $R>0$. Then for all $\alpha+\beta>Q$ and all $0\leq \alpha <d_1$ we have
\begin{align*}
    \int_G \frac{d(x,u)}{(1+R\|(x,u)\|)^{\beta} (1+R|x|)^{\alpha}} &\leq C R^{-Q} .
\end{align*}
    
\end{lemma}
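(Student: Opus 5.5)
The estimate is scale invariant, so the plan is to reduce to $R=1$ by homogeneity and then split the integral according to the two weights. Under $(x,u)=\delta_{R^{-1}}(y,v)$ we have $d(x,u)=R^{-Q}\,d(y,v)$, together with $R\|(x,u)\|=\|(y,v)\|$ and $R|x|=|y|$. The integral therefore equals $R^{-Q}I$, where
\begin{align*}
I=\int_G \frac{d(y,v)}{(1+\|(y,v)\|)^{\beta}(1+|y|)^{\alpha}} ,
\end{align*}
and it suffices to prove $I<\infty$ whenever $\alpha+\beta>Q$ and $0\le\alpha<d_1$. On the unit ball $\{\|(y,v)\|\le 1\}$ the integrand is bounded by $1$, so that part contributes $O(1)$. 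It remains to handle the region $\|(y,v)\|>1$, where $(1+\|(y,v)\|)\sim\|(y,v)\|\sim \max\{|y|,|v|^{1/2}\}$.

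On this exterior region I would decompose dyadically, setting $A_j=\{2^{j}<\|(y,v)\|\le 2^{j+1}\}$ for $j\ge0$. On $A_j$ the first factor contributes $2^{-j\beta}$, so the task is to estimate $\int_{A_j}(1+|y|)^{-\alpha}\,d(y,v)$. Since $A_j\subseteq\{|y|\lesssim 2^{j}\}\times\{|v|\lesssim 2^{2j}\}$ by \eqref{Decomposition of ball into Euclidean balls}, we get
\begin{align*}
\int_{A_j}\frac{d(y,v)}{(1+|y|)^{\alpha}} \lesssim 2^{2jd_2}\int_{|y|\lesssim 2^j}\frac{dy}{(1+|y|)^{\alpha}} \lesssim 2^{2jd_2}\,2^{j(d_1-\alpha)} = 2^{j(Q-\alpha)} .
\end{align*}
The $y$-integral here is evaluated in polar coordinates in $\mathbb{R}^{d_1}$, and the condition $\alpha<d_1$ is exactly what makes it grow like $2^{j(d_1-\alpha)}$ rather than being dominated by small $|y|$ (giving a logarithm or a constant). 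Summing gives $\sum_{j\ge0}2^{-j\beta}2^{j(Q-\alpha)}$, which is finite precisely because $\alpha+\beta>Q$.

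The only point needing care is the $y$-integral in the second step: it is where the hypothesis $\alpha<d_1$ enters. Without that condition the bound $2^{j(d_1-\alpha)}$ fails and the sum would have to be arranged differently. Apart from that, the argument is routine bookkeeping, and I expect no genuine obstacle.
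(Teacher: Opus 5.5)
Your proof is correct. It shares its first step with the paper: the dilation $\delta_{R^{-1}}$ reduces everything to showing that the integral with $R=1$ is finite. The second step is organized differently.

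The paper uses no decomposition at all. It splits $\beta=\beta_1+\beta_2$ with $\beta_1>d_1-\alpha$ and $\beta_2>2d_2$; this is possible exactly because $\alpha+\beta>Q$, and $\beta_1$ is positive because $\alpha<d_1$. It then uses $(1+|x|+|u|^{1/2})^{\beta}\ge(1+|x|)^{\beta_1}(1+|u|^{1/2})^{\beta_2}$ and Fubini, so the integral is dominated by the product $\int_{\mathbb{R}^{d_1}}(1+|x|)^{-\beta_1-\alpha}\,dx\,\int_{\mathbb{R}^{d_2}}(1+|u|)^{-\beta_2/2}\,du$ of two convergent Euclidean integrals.

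You instead cut the exterior of the unit ball into dyadic shells of the homogeneous norm, bound each shell by a box, and sum a geometric series. The paper's route is shorter. Yours makes the shell scaling $2^{j(Q-\alpha)}$ explicit, shows directly that $\alpha+\beta>Q$ is exactly the convergence condition of the resulting series, and needs no choice of exponent splitting. The hypothesis $\alpha<d_1$ enters at the corresponding point in each argument. In the paper it leaves room for a positive $\beta_1$ while keeping $\beta_2>2d_2$; in yours it gives the growth $2^{j(d_1-\alpha)}$ of the $y$-integral.

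One small remark on your citation. The inclusion $A_j\subseteq\{|y|\le 2^{j+1}\}\times\{|v|\le 2^{2j+2}\}$ follows immediately from the definition \eqref{Definition of homogeneous norm} of $\|\cdot\|$. The inclusion \eqref{Decomposition of ball into Euclidean balls} that you cite is stated for sub-Riemannian balls, so it is not quite the right reference, though this is harmless.
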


\begin{proof}
Using \eqref{Definition of homogeneous norm} and making change of variable $(x,u) \mapsto \delta_{R^{-1}}(x, u)$ we get
\begin{align*}
    \int_G \frac{d(x,u)}{(1+R\|(x,u)\|)^{\beta} (1+R|x|)^{\alpha}} &\lesssim R^{-Q} \int_G \frac{d(x,u)}{(1+|x|+|u|^{1/2})^{\beta} (1+|x|)^{\alpha}} .
\end{align*}
Now since $\beta>Q-\alpha$ and $d_1-\alpha>0$, we decompose $\beta=\beta_1 + \beta_2$ in such a way that $\beta_1>d_1-\alpha$ and $\beta_2>2 d_2$. Hence
\begin{align*}
    \int_G \frac{d(x,u)}{(1+|x|+|u|^{1/2})^{\beta} (1+|x|)^{\alpha}} &\leq C \int_{\mathbb{R}^{d_1}} \frac{dx}{(1+|x|)^{\beta_1+\alpha}} \int_{\mathbb{R}^{d_2}} \frac{du}{(1+|u|)^{\beta_2/2}} \leq C .
\end{align*}
This completes the proof of the lemma.
\end{proof}

\subsection{Criterion for weak-type \texorpdfstring{$(1,1)$}{} and \texorpdfstring{$L^p$}{}-boundedness}
\label{Subsection: Littlewood-Paley}
In this subsection we discuss about the various criterion for a sublinear operator $T$ to be of weak-type $(1,1)$ or $L^p$-bounded for $1<p<\infty$. For $t>0$, define 
\begin{align}
\label{Definition of approximate identity}
    A_t &= \exp{(-t^2 \mathcal{L})} .
\end{align}
It is known that for all $t>0$, the heat kernel $\mathcal{K}_{\exp{(-t^2\mathcal{L})}}$ satisfies the following estimates (see \cite{Varopoulos_Analysis_Lie_Group_1988}, \cite{Alexopoulos_Spectral_multi_poly_growth_1994}).
\begin{align*}
    |\mathcal{K}_{\exp{(-t^2\mathcal{L})}}(x,u)| & \leq C t^{-Q} \exp{\left\{-c \tfrac{\|(x,u)\|^2}{t^2} \right\}} \quad \text{for} \quad t>0 .
\end{align*}

\begin{lemma}\cite[Lemma 2.1]{Duong_Ouhabaz_Sikora_Plancherel_estimate_2002}
\label{Lemma: L2 norm of the heat kernel}
There exists some $C, c>0$ such that
\begin{align*}
    \int_{G \setminus B((x,u), r)} |\mathcal{K}_{e^{-t \mathcal{L}}}((z,s)^{-1}(x,u))|^2 \, d(z,s) &\leq C t^{-Q/2} e^{-c \, r^2/t} .
\end{align*}
    
\end{lemma}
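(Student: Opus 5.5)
Here we need the estimate
\[
\int_{G\setminus B((x,u),r)} |\mathcal{K}_{e^{-t\mathcal{L}}}((z,s)^{-1}(x,u))|^2\, d(z,s)\le C t^{-Q/2} e^{-c r^2/t}.
\]
We derive it from the Gaussian upper bound for the heat kernel recalled just above. With $t^2$ replaced by $t$, that bound reads
\[
|\mathcal{K}_{e^{-t\mathcal{L}}}(y,v)| \le C t^{-Q/2} \exp\{-c\|(y,v)\|^2/t\}.
\]

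\emph{Step 1: reduce to a ball at the identity.} Substitute $(y,v)=(z,s)^{-1}(x,u)$. Haar measure on $G$ is unimodular and equals Lebesgue measure, so inversion and left translation preserve it. The sub-Riemannian distance is left-invariant, so $\varrho((z,s),(x,u))=\varrho(0,(y,v))$. Hence the region $G\setminus B((x,u),r)$ corresponds to $\{\varrho(0,(y,v))\ge r\}$.

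The homogeneous norm $\|\cdot\|$ of \eqref{Definition of homogeneous norm} and $\varrho(0,\cdot)$ are both homogeneous of degree one under $\delta_R$ and continuous. They are therefore equivalent: $\varrho(0,\cdot)\sim\|\cdot\|$. So it suffices to bound
\[
I=\int_{\|(y,v)\|\ge c_0 r} C t^{-Q}\exp\{-2c\|(y,v)\|^2/t\}\, d(y,v).
\]

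\emph{Step 2: split the exponential.} Write $e^{-2c\|\cdot\|^2/t}\le e^{-c c_0^2 r^2/t}\,e^{-c\|\cdot\|^2/t}$ on the domain of integration. Drop the restriction on the domain and scale with $(y,v)=\delta_{\sqrt t}(y',v')$, whose Jacobian is $t^{Q/2}$. Since $\|\delta_{\sqrt t}(y',v')\|^2=t\|(y',v')\|^2$, this gives
\[
\int_G e^{-c\|(y,v)\|^2/t}\,d(y,v)=t^{Q/2}\int_G e^{-c\|(y',v')\|^2}\,d(y',v').
\]
The last integral is finite, for instance by Lemma~\ref{lemma: outside distance} or directly, since $\|\cdot\|^4\ge |y'|^4+|v'|^2$.

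\emph{Step 3: conclude.} Combining the two steps,
\[
I\le C t^{-Q}t^{Q/2}e^{-c' r^2/t}=Ct^{-Q/2}e^{-c'r^2/t},
\]
which is the claim after renaming constants.

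The only mildly delicate point is the equivalence of the sub-Riemannian distance with $\|\cdot\|$ and the correct use of left-invariance. Both are standard for stratified groups, so I expect no real obstacle.
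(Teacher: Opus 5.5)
Your argument is correct. The paper does not prove this lemma; it imports Lemma~2.1 of Duong--Ouhabaz--Sikora, which is formulated on an arbitrary doubling metric measure space with Gaussian heat kernel bounds. In that generality the argument sums the squared Gaussian bound $\mu(B(y,\sqrt t))^{-2}e^{-2c\varrho(x,y)^2/t}$ over dyadic annuli $2^k r\le \varrho(x,y)<2^{k+1}r$. It controls each annulus volume by doubling, $\mu(B(y,2^{k+1}r))\le C(1+2^{k+1}r/\sqrt t)^{Q}\mu(B(y,\sqrt t))$.

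You instead use the structure of $G$ directly:
\begin{itemize}
\item left invariance of $\varrho$, together with invariance of Lebesgue measure under inversion and translations, reduces everything to the region $\{\|w\|\ge c_0 r\}$ around the identity;
\item the dilation $\delta_{\sqrt t}$ evaluates the Gaussian integral exactly as $t^{Q/2}\int_G e^{-c\|w\|^2}\,dw$, so no annular decomposition is needed.
\end{itemize}
Your version is shorter and self-contained on $G$, but it relies on exact homogeneity. The cited lemma needs only doubling plus Gaussian bounds.

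Three small remarks.
\begin{itemize}
\item $\|w\|^4=|y'|^4+|v'|^2$ holds with equality by definition. Integrability then follows from $\|w\|^2\ge \tfrac12(|y'|^2+|v'|)$.
\item If you prefer to invoke Lemma~\ref{lemma: outside distance}, you must treat $\{\|w\|\le 1\}$ separately, since that bound degenerates as $r\to 0$.
\item In Step~1 only the one-sided inequality $\varrho(0,w)\le C\|w\|$ is actually needed.
\end{itemize}
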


\begin{proposition}\cite[Proposition 3.1]{Coulhon_Duong_Li_Square_function}, \cite[Theorem 4.5]{Duong_Ouhabaz_Sikora_Plancherel_estimate_2002}
\label{Proposition: Criterion for weak boundedness}
Suppose $\|F\|_{L^{\infty}} \leq C$ and
\begin{align*}
    |F(\mathcal{L})(I-A_t)f(x,u)| &\leq \int_G |K_t((z,s)^{-1}(x,u))| |f(z,s)| \, d(z,s) \quad \text{for all}\ \  t>0 .
\end{align*}
If
\begin{align*}
    \sup_{t>0} \int_{\|(x,u)\| \geq t} |K_{t}(x,u)| \, d(x,u) \leq C ,
\end{align*}
then $F(\mathcal{L})$ is of weak-type $(1,1)$.
    
\end{proposition}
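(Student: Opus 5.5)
This is a Calderón–Zygmund type argument in the spirit of Duong–Ouhabaz–Sikora, run on the space of homogeneous type $(G,\varrho,|\cdot|)$. The goal is to show $|\{|F(\mathcal{L})f|>\lambda\}|\lesssim \lambda^{-1}\|f\|_{L^1}$ for $f\in L^1(G)\cap L^2(G)$.

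\textbf{Decomposition.} Fix $\lambda>0$ and take the Calderón–Zygmund decomposition of $f$ at height $\lambda$ with respect to the sub-Riemannian balls. This gives $f=g+\sum_i b_i$ with the following properties:
\begin{itemize}
\item $\|g\|_{L^\infty}\lesssim\lambda$ and $\|g\|_{L^1}\lesssim \|f\|_{L^1}$;
\item each $b_i$ is supported in a ball $B_i=B((x_i,u_i),r_i)$ with $\|b_i\|_{L^1}\lesssim \lambda|B_i|$;
\item $\sum_i|B_i|\lesssim\lambda^{-1}\|f\|_{L^1}$, and the balls have bounded overlap.
\end{itemize}
For the good part, $\|F\|_{L^\infty}\le C$ gives $L^2$-boundedness of $F(\mathcal{L})$. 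Chebyshev then yields $|\{|F(\mathcal{L})g|>\lambda/3\}|\lesssim \lambda^{-2}\|g\|_{L^2}^2\lesssim \lambda^{-1}\|f\|_{L^1}$.

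\textbf{Bad part.} Set $t_i=r_i$ and split
\[
F(\mathcal{L})b_i=F(\mathcal{L})A_{t_i}b_i+F(\mathcal{L})(I-A_{t_i})b_i .
\]

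For the first term, use $L^2$-boundedness again:
\[
\Big|\Big\{\Big|F(\mathcal{L})\sum_i A_{t_i}b_i\Big|>\lambda/3\Big\}\Big|\lesssim\lambda^{-2}\Big\|\sum_i A_{t_i}b_i\Big\|_{L^2}^2 .
\]
The key estimate is $\|\sum_i A_{t_i}b_i\|_{L^2}^2\lesssim\lambda\|f\|_{L^1}$. I would prove it by duality, against $h\in L^2$ with $\|h\|_{L^2}=1$. The Gaussian heat kernel bound, with $\mathcal{K}_{A_{t_i}}(\cdot)\lesssim t_i^{-Q}e^{-c\|\cdot\|^2/t_i^2}$, gives
\[
\sup_{(z,s)\in B_i}|A_{t_i}h(z,s)|\lesssim \inf_{B_i}\mathcal{M}h .
\]
Hence
\[
\Big|\Big\langle\sum_i A_{t_i}b_i,h\Big\rangle\Big|\lesssim\lambda\sum_i|B_i|\inf_{B_i}\mathcal{M}h\lesssim \lambda\int_{\cup B_i}\mathcal{M}h .
\]
By Cauchy–Schwarz and the $L^2$-boundedness of $\mathcal{M}$ (Lemma \ref{Lemma: Boundedness of maximal and Euclidean maximal}), this is at most $\lambda|\cup B_i|^{1/2}$. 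Squaring gives the desired bound. I would take the pointwise control of the heat semigroup by $\mathcal{M}$ from the Gaussian bound combined with Lemma \ref{Pointwise convolution dominated by Maximal function}.

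For the second term, remove the set $\Omega^*=\cup_i 2B_i$, whose measure is $\lesssim\lambda^{-1}\|f\|_{L^1}$ by doubling. By the assumed domination,
\[
\int_{G\setminus 2B_i}|F(\mathcal{L})(I-A_{t_i})b_i|\le\int|b_i(z,s)|\int_{\varrho((z,s),(x,u))\ge r_i}|K_{t_i}((z,s)^{-1}(x,u))|\,d(x,u)\,d(z,s).
\]
Here I use that for $(z,s)\in B_i$ and $(x,u)\notin 2B_i$ the distance is $\ge r_i$. By left invariance and the equivalence of $\varrho$ with $\|\cdot\|$ (after adjusting constants in the radius), the inner integral is controlled by the hypothesis $\sup_t\int_{\|\cdot\|\ge t}|K_t|\le C$. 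Summing over $i$ gives $\lesssim\|f\|_{L^1}$, and Chebyshev handles the level $\lambda/3$.

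The main obstacle is the $L^2$ bound for $\sum_i A_{t_i}b_i$, which needs the pointwise maximal-function control of the heat semigroup uniformly over each ball. There is also a technical point: the hypothesis is stated with the homogeneous norm $\|\cdot\|$, while the Calderón–Zygmund balls are $\varrho$-balls. I would handle this by dilating the excluded balls by the constant comparing $\varrho$ and $\|\cdot\|$, and rescaling $t_i=cr_i$.
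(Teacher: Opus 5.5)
Your argument is correct and is essentially the standard Duong--McIntosh Calder\'on--Zygmund proof behind the cited results \cite{Coulhon_Duong_Li_Square_function, Duong_Ouhabaz_Sikora_Plancherel_estimate_2002}. The paper gives no proof of its own and only quotes these. The structure matches: the good part via $L^2$, $\sum_i A_{t_i}b_i$ via the Gaussian bound and duality against $\mathcal{M}$, and $(I-A_{t_i})b_i$ off the dilated balls via the kernel hypothesis, with $t_i\sim r_i$ adjusted to compare $\varrho$ and $\|\cdot\|$.

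One step should be stated precisely: $\sup_{B_i}|A_{t_i}h|\lesssim\inf_{B_i}\mathcal{M}h$. Lemma~\ref{Pointwise convolution dominated by Maximal function} only controls $A_{t_i}h$ at the same point where $\mathcal{M}h$ is evaluated. To compare two points of $B_i$, you also need the triangle inequality to move the Gaussian's centre within $B_i$, at the cost of halving the exponent constant. This is harmless since $\varrho\le 2r_i\sim t_i$ there.
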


Since $(G, \varrho, |\cdot|)$ is a space of homogeneous type, we have the following results, which will be used later to prove $L^p$-boundedness of certain operators. For a ball $B$ with radius $r_B$, we define
\begin{align*}
    U_1(B) := 4B , \quad \text{and} \quad U_j(B) = 2^{j+1}B \setminus 2^j B \quad j=2,3 \ldots .
\end{align*}
\begin{proposition}\cite[Theorem 2.1]{Pascal_Necessary_Sufficient_Lp_boundedness_2007}
\label{Proposition: Lp boundedness criteria from p less than 2 case}
Let $T$ be a sublinear operator which is bounded on $L^2(G)$. Assume that for $j \geq 2$,
\begin{align*}
    \left(\int_{U_j(B)} |T(I-A_{r_B})f(x,u)|^2 \, d(x,u) \right)^{1/2} &\leq C g(j) \, |2^{j+1}B|^{-1/2} \int_B |f(x,u)| \, d(x,u) .
\end{align*}
If $\sum_j g(j) < \infty$, then $T$ is of weak-type $(1,1)$ and hence bounded on $L^p(G)$ for $1<p<2$.
    
\end{proposition}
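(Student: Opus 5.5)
The plan is the standard Calder\'on--Zygmund argument of Duong--McIntosh type, with the heat semigroup $A_t=\exp(-t^2\mathcal{L})$ replacing the mean-zero condition. Once weak-type $(1,1)$ is proved, boundedness on $L^p(G)$ for $1<p<2$ follows from Marcinkiewicz interpolation with the assumed $L^2$ bound, since $T$ is sublinear. To prove weak-type $(1,1)$, fix $f\in L^1(G)\cap L^2(G)$ and $\lambda>0$. Because $(G,\varrho,|\cdot|)$ is of homogeneous type, I take a Calder\'on--Zygmund decomposition $f=g+\sum_i b_i$ at height $\lambda$. Here $|g|\le C\lambda$ and $\|g\|_1\le C\|f\|_1$; each $b_i$ is supported in a ball $B_i$ of radius $r_i$ with $\|b_i\|_1\le C\lambda|B_i|$; and $\sum_i|B_i|\le C\lambda^{-1}\|f\|_1$, the balls having bounded overlap. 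I then split each bad piece as $b_i=A_{r_i}b_i+(I-A_{r_i})b_i$. Sublinearity gives
\begin{align*}
|Tf|\le |Tg|+\Big|T\Big(\sum_i A_{r_i}b_i\Big)\Big|+\sum_i |T(I-A_{r_i})b_i| .
\end{align*}
For the infinite sums I first work with finitely many $b_i$ and then pass to the limit.

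The term $Tg$ is handled by Chebyshev's inequality, the $L^2$ bound of $T$, and $\|g\|_2^2\le C\lambda\|f\|_1$. The middle term is where I expect the main obstacle: I must show $\|\sum_i A_{r_i}b_i\|_2^2\le C\lambda\|f\|_1$. I will use the Gaussian bound $|\mathcal{K}_{\exp(-t^2\mathcal{L})}(x,u)|\le Ct^{-Q}e^{-c\|(x,u)\|^2/t^2}$. Summing over the annuli $U_j(B_i)$ and using the doubling property, this gives the pointwise bound $|A_{r_i}b_i|\le C\lambda\sum_j e^{-c4^j}\chi_{2^{j+1}B_i}$. Pairing with $h\in L^2$ and using $\chi_{2^{j+1}B_i}\le$ a multiple of $2^{jQ}$ times averages over $B_i$, one obtains
\begin{align*}
\Big|\Big\langle \sum_i A_{r_i}b_i,h\Big\rangle\Big|\le C\lambda\int \sum_i\chi_{B_i}\,\mathcal{M}h\le C\lambda\Big\|\sum_i\chi_{B_i}\Big\|_2\|\mathcal{M}h\|_2 .
\end{align*}
By bounded overlap, $\|\sum_i\chi_{B_i}\|_2^2\le C\sum_i|B_i|\le C\lambda^{-1}\|f\|_1$, and $\mathcal{M}$ is bounded on $L^2(G)$ by Lemma \ref{Lemma: Boundedness of maximal and Euclidean maximal}. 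This gives the claim, and Chebyshev then controls this term.

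For the last term, I discard the exceptional set $E=\bigcup_i 4B_i$, which satisfies $|E|\le C\lambda^{-1}\|f\|_1$. Outside $E$, Cauchy--Schwarz on each annulus and the hypothesis give
\begin{align*}
\int_{U_j(B_i)}|T(I-A_{r_i})b_i|\le |2^{j+1}B_i|^{1/2}\Big(\int_{U_j(B_i)}|T(I-A_{r_i})b_i|^2\Big)^{1/2}\le Cg(j)\|b_i\|_1 .
\end{align*}
Summing over $j\ge2$ with $\sum_j g(j)<\infty$, and then over $i$, yields $\int_{G\setminus E}\sum_i|T(I-A_{r_i})b_i|\le C\|f\|_1$. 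Chebyshev's inequality finishes the weak-type estimate. Throughout, the constants depend only on the doubling constant, the heat kernel bounds, and $\|T\|_{L^2\to L^2}$.
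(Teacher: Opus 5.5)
Your argument is correct and is the standard Calder\'on--Zygmund/heat-semigroup proof of the cited result (Auscher's Theorem 2.1, which the paper quotes without proof). With the paper's normalization $|2^{j+1}B|^{-1/2}$, Cauchy--Schwarz on each annulus gives $g(j)\|b_i\|_1$ directly, as you found. Auscher's theorem also assumes an off-diagonal $L^1\to L^2$ bound for $A_{r_B}$, which the paper's statement leaves implicit; your use of the Gaussian heat kernel bound to show $\|\sum_i A_{r_i}b_i\|_2^2\le C\lambda\|f\|_1$ supplies exactly that missing hypothesis.
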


\begin{proposition}\cite[Theorem 2.2]{Pascal_Necessary_Sufficient_Lp_boundedness_2007}
\label{Proposition: Lp Boundedness for p bigger than 2 case}
Let $T$ be a sublinear operator which is bounded on $L^2(G)$. Assume
\begin{align}
\label{Assumption 1 in prop for bigger p}
    \left(\frac{1}{|B|} \int_B |T(I-A_{r_B})f(x,u)|^2 \, d(x,u) \right)^{1/2} &\leq C \mathcal{M}(|f|^2)^{1/2}(x,u) ,
\end{align}
and
\begin{align}
\label{Assumption 2 in prop for bigger p}
    \|TA_{r_B}f\|_{L^{\infty}(B)} &\leq C \mathcal{M}(|Tf|^2)^{1/2}(x,u) ,
\end{align}
for all $f \in L^2(G)$ and all $B$. If $2<p<\infty$ and $Tf \in L^p(G)$ for $f \in L^p(G)$, then $T$ is of strong-type $(p,p)$ and the operator norm is bounded by a constant depending only on $(2,2)$ norm, doubling constant, $Q$, $p$ and the constants $C$ in the above two assumptions.
    
\end{proposition}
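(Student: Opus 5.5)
This is Auscher's good-$\lambda$ argument (the result is quoted from \cite{Pascal_Necessary_Sufficient_Lp_boundedness_2007}). I would reproduce it directly on $(G,\varrho,|\cdot|)$, using only that this is a space of homogeneous type, that $\mathcal{M}$ is of weak type $(1,1)$, and that $\mathcal{M}$ is bounded on $L^{q}$ for $q>1$ (Lemma \ref{Lemma: Boundedness of maximal and Euclidean maximal}). Fix $2<p<\infty$ and $f\in L^2(G)\cap L^p(G)$, so that $Tf\in L^p(G)$ by hypothesis. Set $q=p/2>1$ and define
\begin{align*}
F:=\mathcal{M}(|Tf|^2), \qquad g:=\mathcal{M}(|f|^2).
\end{align*}
Both lie in $L^{q}(G)$. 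The goal is the good-$\lambda$ inequality
\begin{align*}
|\{F>K\lambda,\ g\le\gamma\lambda\}| \le \varepsilon(K,\gamma)\,|\{F>\lambda\}|, \qquad \varepsilon(K,\gamma)\le C\Big(\frac{1}{K}+\frac{\gamma}{K}\Big),
\end{align*}
valid for all $\lambda>0$, some large $K$ and some small $\gamma$.

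\textbf{Localization.} Since $F\in L^{q}$, the open set $E_\lambda=\{F>\lambda\}$ has finite measure. I would take a Whitney-type covering of $E_\lambda$ by balls $B$ of bounded overlap such that a fixed dilate $c_0B$ meets $G\setminus E_\lambda$. Fix such a ball $B$ and suppose there is a point $\bar y\in B$ with $g(\bar y)\le \gamma\lambda$; otherwise the ball contributes nothing. Write
\begin{align*}
Tf=T(I-A_{r_B})f+TA_{r_B}f.
\end{align*}
For $z\in B$ I would bound $F(z)$ by $\max\{\mathcal{M}(|Tf|^2\chi_{cB})(z),\,C\lambda\}$. The contribution of balls centred near $B$ but larger than $cB$ is at most $C\lambda$, because such balls also contain a point outside $E_\lambda$, where $F\le\lambda$. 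This step is where the doubling property and the Whitney geometry enter.

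\textbf{Two pieces.} On $cB$ I split
\begin{align*}
|Tf|^2\le 2|T(I-A_{r_B})f|^2+2|TA_{r_B}f|^2 .
\end{align*}
By assumption \eqref{Assumption 2 in prop for bigger p}, applied on $cB$ at a point outside $E_\lambda$, we get $|TA_{r_B}f|^2\le C\lambda$ pointwise on $cB$. Hence this piece contributes at most $C\lambda$ to $\mathcal{M}(|Tf|^2\chi_{cB})$. For the other piece, the weak $(1,1)$ bound for $\mathcal{M}$ together with assumption \eqref{Assumption 1 in prop for bigger p} (on the ball $cB\ni\bar y$) gives
\begin{align*}
|\{z\in B:\mathcal{M}(|T(I-A_{r_B})f|^2\chi_{cB})(z)>K\lambda/4\}| \le \frac{C}{K\lambda}\int_{cB}|T(I-A_{r_B})f|^2 \le \frac{C|B|}{K\lambda}\,g(\bar y)\le \frac{C\gamma}{K}|B| .
\end{align*}
Choosing $K$ larger than the accumulated constants $C$ that multiply $\lambda$, and summing over the Whitney balls using bounded overlap, yields the good-$\lambda$ inequality. (Whether the small factor comes out as $\gamma/K$ or $1/K$ depends on how the constants are tracked, but either suffices.)

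\textbf{Conclusion and main obstacle.} Integrating the good-$\lambda$ inequality against $q\lambda^{q-1}\,d\lambda$ gives
\begin{align*}
\|F\|_{q}^{q}\le K^{q}\varepsilon\,\|F\|_{q}^{q}+C(\gamma,K)\,\|g\|_{q}^{q}.
\end{align*}
Because $\|F\|_q<\infty$ (this is exactly where the hypothesis $Tf\in L^p$ is used), I can absorb the first term once $K^{q}\varepsilon<1/2$. Then
\begin{align*}
\|Tf\|_p^2\le\|F\|_q\lesssim\|g\|_q\lesssim\|f\|_p^2,
\end{align*}
where the last step uses $q>1$, and density finishes the proof. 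The constants depend only on the $(2,2)$ norm of $T$, the doubling constant, $Q$, $p$ and the constants in \eqref{Assumption 1 in prop for bigger p} and \eqref{Assumption 2 in prop for bigger p}. I expect the main obstacle to be the localization step: controlling $F$ on $B$ by the truncated maximal function plus $C\lambda$, and using a single good point outside $E_\lambda$ to feed assumption \eqref{Assumption 2 in prop for bigger p}. Everything else is routine.
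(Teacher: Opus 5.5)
Your proposal is correct and is essentially Auscher's good-$\lambda$ proof of the quoted theorem, which the paper simply cites without reproducing; the only bookkeeping fix is to split $Tf$ on $cB$ using $A_{r_{cB}}$ rather than $A_{r_B}$, so that both hypotheses are applied to the single ball $cB$ exactly as stated. However, your parenthetical claim that a factor $1/K$ would ``also suffice'' is false, because absorption requires $K^{q}\varepsilon<1/2$ and $K^{q}\cdot K^{-1}=K^{q-1}\ge 1$ for $q=p/2>1$; the argument works only because your localization gives $\varepsilon\le C\gamma/K$ with no $1/K$ term (the $C\lambda$ contributions, in particular the one from the $L^{\infty}$ bound \eqref{Assumption 2 in prop for bigger p}, are eliminated outright by taking $K$ large), after which you fix $K$ and then choose $\gamma$ small.
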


Before end this subsection, let us record a useful lemma for our later use.
\begin{lemma}\cite[Lemma 2.2]{Chen_Duong_Yan_Stein_Square_function_Homogeneous_2013}
\label{Lemma: Finiteness of Sobolev norm for Bochner-Riesz}
Let $\varphi \in C_c^{\infty}(0, \infty)$ supported on $[1/4,1]$. Suppose $j \in \mathbb{Z}$, $m \in 2\mathbb{N}$ and $\alpha>-1/2$. Then for any $\alpha>s-1/q$ with $2\leq q \leq \infty$, there exists constant $C>0$ such that
\begin{align*}
    \sup_{j \in \mathbb{Z} : j \geq -1} \|\varphi(\eta)(1-2^{-mj} \eta^m)_{+}^{\alpha} \|_{L_s^q(\mathbb{R})} &\leq C .
\end{align*}
    
\end{lemma}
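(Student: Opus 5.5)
The plan is to split the supremum over $j \geq -1$ into a \emph{large-$j$ regime}, where the cut-off factor is smooth on $\supp\varphi$, and finitely many \emph{small-$j$ cases}, where the singularity of $(\cdot)_+^{\alpha}$ lies inside $[1/4,1]$. Throughout, set $g_j(\eta) := \varphi(\eta)(1-2^{-mj}\eta^m)_+^{\alpha}$.

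\emph{Large $j$.} Since $\supp \varphi \subseteq [1/4,1]$, on the support we have $0\le 2^{-mj}\eta^m \le 2^{-mj}$. For $j \geq 1$ this quantity is at most $2^{-m}\le 1/4$. Hence $1-2^{-mj}\eta^m \in [3/4,1]$ there, the positive part is inactive, and $g_j = \varphi \cdot \Phi_j$ with $\Phi_j(\eta) = (1-2^{-mj}\eta^m)^{\alpha}$ smooth on a neighbourhood of $[1/4,1]$. All derivatives of $\Phi_j$ on $[1/4,1]$ are bounded uniformly in $j\ge1$, since each derivative carries a factor $2^{-mj}\le 1$. So $\|g_j\|_{C^N}\le C_N$ uniformly in $j \geq 1$, and therefore $\|g_j\|_{L^q_s}\lesssim \|g_j\|_{C^{N}}$ with $N>s+1$ is uniformly bounded. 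This regime needs no condition on $\alpha$ beyond $\alpha > -1/2$ being finite.

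\emph{Small $j$.} Only $j=-1,0$ remain, so it suffices to show $g_j\in L^q_s$ for each of these two fixed $j$. Put $a=2^{j}$. Since $m$ is even, $1-a^{-m}\eta^m = (a-\eta)\,h(\eta)$ on $[1/4,1]$ with $h$ smooth and strictly positive on $[1/4,1]$. Hence
\[
g_j(\eta) = \psi(\eta)\,(a-\eta)_+^{\alpha}, \qquad \psi := \varphi\, h^{\alpha} \in C_c^\infty .
\]
Using a partition of unity, smooth factors away from $\eta=a$ are harmless. We are thus reduced to the model estimate
\[
\chi(\eta)(a-\eta)_+^{\alpha}\in L^q_s(\mathbb{R}) \quad \text{for } \chi\in C_c^\infty \text{ and } s<\alpha+1/q .
\]
For this, apply a Littlewood--Paley decomposition $\sum_k \Delta_k$. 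The piece $\Delta_k[\chi(a-\cdot)_+^{\alpha}]$ is essentially of size $2^{-k\alpha}$ on an interval of length $\sim 2^{-k}$ around $a$, with rapid decay $(1+2^k|\eta-a|)^{-N}$ away from it. This follows from the homogeneity of $t_+^{\alpha}$ and from integrating by parts against the smooth, mean-zero kernel of $\Delta_k$ (this uses $\alpha>-1/2$ for local integrability and $L^2$ membership). Consequently $\|\Delta_k(\cdot)\|_{L^q}\lesssim 2^{-k(\alpha+1/q)}$, which places the function in $B^{\alpha+1/q}_{q,\infty}$. Since $B^{\alpha+1/q}_{q,\infty}\hookrightarrow L^q_s$ for $s<\alpha+1/q$ (sum $2^{ks}2^{-k(\alpha+1/q)}$ geometrically; for $2\le q<\infty$ use the square function with the $\ell^1$ bound), the model estimate follows.

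The step requiring the most care is this Besov-type bound on the dyadic pieces, in particular the decay estimate for $\Delta_k$ applied to the truncated power. I would carry it out by writing $\widehat{(\cdot)_+^{\alpha}\chi}(\xi)=O(|\xi|^{-1-\alpha})$ with symbol-type derivative bounds, and then bounding $\Delta_k$ pieces in $L^2$ and $L^\infty$ and interpolating. The case $q=\infty$ is the H\"older-continuity statement $(a-\eta)_+^\alpha\in C^{\alpha}$, valid when $s<\alpha$. Taking the maximum over the two small $j$ and the uniform bound for $j\ge1$ gives the claim.
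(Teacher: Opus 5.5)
Your proof is correct. The paper gives no argument for this lemma --- it is quoted verbatim from Chen--Duong--Yan --- so your proposal supplies a self-contained proof rather than reproducing something in the text.

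Your structure is the natural one. For $j\ge 1$ one has $2^{-mj}\eta^m\le 2^{-m}\le 1/4$ on $\supp\varphi$, so the functions $\varphi(\eta)(1-2^{-mj}\eta^m)^{\alpha}$ are smooth with $C^N$ norms bounded uniformly in $j$. The zero $\eta=2^j$ can meet $[1/4,1]$ only for $j\in\{-1,0\}$, so the supremum reduces to two fixed functions. The factorization $1-(\eta/a)^m=(a-\eta)h(\eta)$ then reduces these to the model $f=\chi(\eta)(a-\eta)_+^{\alpha}$.

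The Fourier-side argument you sketch at the end is the cleanest finish and deserves to be the main one. The size bound $|\widehat f(\xi)|\lesssim(1+|\xi|)^{-1-\alpha}$ holds for all $\alpha>-1$, e.g.\ via a dyadic decomposition of $t_+^{\alpha}$ near $t=0$. It gives $\|\Delta_k f\|_{L^2}\lesssim 2^{-k(\alpha+1/2)}$ and $\|\Delta_k f\|_{L^\infty}\le\|\widehat{\Delta_k f}\|_{L^1}\lesssim 2^{-k\alpha}$. The inequality $\|g\|_{L^q}\le\|g\|_{L^2}^{2/q}\|g\|_{L^\infty}^{1-2/q}$ then yields $\|\Delta_k f\|_{L^q}\lesssim 2^{-k(\alpha+1/q)}$ for $2\le q\le\infty$. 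This shows exactly why the lemma is stated for $q\ge 2$ and why the threshold is $s<\alpha+1/q$, which the bare citation does not.

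Neither symbol-type derivative bounds on $\widehat f$ nor a square function are needed. The estimate $\|(1-\partial_\eta^2)^{s/2}f\|_{L^q}\le\sum_{k\ge0}\|(1-\partial_\eta^2)^{s/2}\Delta_k f\|_{L^q}\lesssim\sum_{k\ge0}2^{ks}\|\Delta_k f\|_{L^q}$, with $\Delta_0$ the low-frequency piece, works for every $q$, including $q=\infty$.

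Three small points to tighten:
\begin{itemize}
\item For $j=0$ the zero $\eta=1$ sits at the endpoint of $\supp\varphi$, where $\varphi$ vanishes to infinite order. So $\varphi(\eta)(1-\eta^m)_+^{\alpha}$ is already in $C_c^\infty$, and only $j=-1$, with singularity at the interior point $\eta=1/2$, genuinely needs the model estimate. Treating $j=0$ the same way is harmless.
\item Evenness of $m$ plays no role in the factorization: $h(\eta)=a^{-1}\sum_{i=0}^{m-1}(\eta/a)^i\ge a^{-1}$ for all $\eta\ge0$ and every $m$.
\item Your remark that the case $q=\infty$ ``is the H\"older-continuity statement'' is wrong for $-1/2<\alpha\le 0$. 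There $f$ is discontinuous or unbounded, and the admissible $s<\alpha$ are negative. The bound $\|\Delta_k f\|_{L^\infty}\lesssim 2^{-k\alpha}$ above covers that range verbatim, so just use it.
\end{itemize}
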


\subsection{Bilinear spectral multipliers on M\'etivier groups}
\label{Subsection: Bilinear spectral multiplier}
Let $m$ be a compactly supported smooth function on $\mathbb{R}^2$. Also, let $\mathcal{L}_1 := \mathcal{L} \otimes I$ and $\mathcal{L}_2 := I \otimes \mathcal{L}$. Then the operators $\mathcal{L}_1$ and $\mathcal{L}_2$ commute strongly (see \cite[Lemma 7.24]{Konrad_Unbounded_Selfadjoint_operator_2012}). Therefore using bivariate spectral theorem (see \cite[Theorem 5.21]{Konrad_Unbounded_Selfadjoint_operator_2012}) one can write
\begin{align}
\label{Bivariate bochner-Riesz multiplier}
    & m(\mathcal{L}_1,\mathcal{L}_2)(f \otimes g)((x,u),(x',u'))
    = \frac{1}{(2\pi)^{2 d_2}} \int_{\mathfrak{g}_{2,r}^{*}} \int_{\mathfrak{g}_{2,r}^{*}} e^{i \langle \lambda_1 , u \rangle} e^{i \langle \lambda_2 , u' \rangle} \sum_{\mathbf{k}_1, \mathbf{k}_2 \in \mathbb{N}^{\Lambda}} m(\eta_{\mathbf{k}_1}^{\lambda_1}, \eta_{\mathbf{k}_2}^{\lambda_2}) \\
    &\nonumber \hspace{3cm} \times \left[f^{\lambda_1} \times_{\lambda_1} \varphi_{\mathbf{k}_1}^{\mathbf{b}^{\lambda_1}, \mathbf{r}_1}(R_{\lambda_1}^{-1}\cdot) \right](x) \left[g^{\lambda_2} \times_{\lambda_2} \varphi_{\mathbf{k}_2}^{\mathbf{b}^{\lambda_2}, \mathbf{r}_2}(R_{\lambda_2}^{-1}\cdot) \right](x') \,  d\lambda_1 \,  d\lambda_2 .
\end{align}

As discussed in \cite[Section 3]{Bagchi_Molla_Singh_Bilinear_Metivier} for $f, g \in \mathcal{S}(G)$ the bilinear spectral multiplier associated to $m$ is defined by
\begin{align}
\label{Definition: Bilinear spectral multiplier}
    \mathcal{B}_m(f, g)(x,u) &:= m(\mathcal{L}_1,\mathcal{L}_2)(f \otimes g)((x,u),(x,u)) .
\end{align}

Regarding the $L^{p_1}(G) \times L^{p_2}(G)$ to $L^p(G)$ boundedness of some nice class of the bilinear spectral multiplier we have the following result.
\begin{lemma}
\label{Lemma: Pointwise kernel estimate for Bj}
Let $j\geq 0$, $R>0$ and $\kappa \in \mathbb{R}$. Suppose $m_{j, \kappa} \in C_c^{\infty}([0,2R^2]^2)$ such that for any non-negative integer $\beta_1$ and $\beta_2$ and large positive integer $N$, it satisfies
\begin{align}
\label{Pointwise assumption on m func}
    |\partial_{\eta_1}^{\beta_1} \partial_{\eta_2}^{\beta_2} m_{j, \kappa}(\eta_1, \eta_2)| &\leq C (1+|\kappa|)^{N} 2^{j(\beta_1+\beta_2)} R^{-2(\beta_1+\beta_2)} ,
\end{align}
for $\beta_1+\beta_2 \leq N$. Then
\begin{align}
\label{Pointwise bilinear estimate in maximal funct}
     |\mathcal{B}_{m_{j, \kappa}}(f,g)(x,u)| &\leq C (1+|\kappa|)^N 2^{j2Q} \mathcal{M}f(x,u) \mathcal{M}g(x,u) ,
\end{align}
and for $p_1, p_2, p \geq 1$ with $1/p_1 +1/p_2 \geq 1/p$, we have 
    \begin{align*}
     \|\mathcal{B}_{m_{j, \kappa}}(f,g)\|_{L^p} &\leq C \, (1+|\kappa|)^N R^{Q(\frac{1}{p_1}+\frac{1}{p_2}-1)} 2^{j Q (2+\frac{1}{p}-\frac{1}{p_1}-\frac{1}{p_2})} \|f\|_{L^{p_1}} \|g\|_{L^{p_2}} ,
\end{align*}
for some constant $C>0$, independent of $j$.
\end{lemma}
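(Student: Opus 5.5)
The plan is to write $m_{j,\kappa}$ as a superposition of tensor products and then reduce everything to the linear pointwise bound \eqref{Linear multiplier estimate by maximal function}. Since $m_{j,\kappa}$ is supported in $[0,2R^2]^2$, we fix a cutoff $\chi\in C_c^\infty(\mathbb{R})$ with $\chi=1$ on $[0,2]$ and supported in $[-1,3]$. Set $\tilde m(\eta_1,\eta_2)=m_{j,\kappa}(R^2\eta_1,R^2\eta_2)$ and expand it in a Fourier integral,
\begin{align*}
m_{j,\kappa}(\eta_1,\eta_2)=\int_{\mathbb{R}^2}\widehat{\tilde m}(\xi_1,\xi_2)\,\chi(\eta_1/R^2)e^{i\xi_1\eta_1/R^2}\,\chi(\eta_2/R^2)e^{i\xi_2\eta_2/R^2}\,d\xi_1\,d\xi_2 .
\end{align*}
By \eqref{Bivariate bochner-Riesz multiplier} and \eqref{Definition: Bilinear spectral multiplier}, this gives
\begin{align*}
\mathcal{B}_{m_{j,\kappa}}(f,g)(x,u)=\int\widehat{\tilde m}(\xi)\,H_{\xi_1}(\mathcal{L})f(x,u)\,H_{\xi_2}(\mathcal{L})g(x,u)\,d\xi ,
\end{align*}
where $H_{\xi_i}(\eta)=\chi(\eta/R^2)e^{i\xi_i\eta/R^2}$. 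Each $H_{\xi_i}(R^2\cdot)$ is compactly supported and smooth, with $\|H_{\xi_i}(R^2\cdot)\|_{L^\infty_{Q+\epsilon}}\lesssim(1+|\xi_i|)^{Q+\epsilon}$. The support of $H_{\xi_i}$ lies in $[-R^2,3R^2]$ rather than $[0,R^2]$, but after rescaling $R$ by a constant, and using that $\mathcal{L}\ge0$, \eqref{Linear multiplier estimate by maximal function} applies. It gives $|H_{\xi_i}(\mathcal{L})f|\lesssim(1+|\xi_i|)^{Q+\epsilon}\mathcal{M}f$.

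The pointwise bound therefore reduces to
\begin{align*}
\int|\widehat{\tilde m}(\xi)|(1+|\xi_1|)^{Q+\epsilon}(1+|\xi_2|)^{Q+\epsilon}\,d\xi\lesssim(1+|\kappa|)^N2^{2jQ}.
\end{align*}
To see this, note that \eqref{Pointwise assumption on m func} says $|\partial^\beta\tilde m|\lesssim(1+|\kappa|)^N2^{j|\beta|}$ on a fixed compact set. Integrating by parts then gives
\begin{align*}
|\widehat{\tilde m}(\xi)|\lesssim(1+|\kappa|)^N\min\{1,(2^j/|\xi_1|)^{a}\}\min\{1,(2^j/|\xi_2|)^{b}\}
\end{align*}
for $a,b\le N/2$. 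Choosing $a,b>Q+1+\epsilon$ and integrating each variable separately yields $2^{j(Q+1+\epsilon)}$ per variable. This overshoots the target $2^{jQ}$ per variable by $2^{j(1+\epsilon)}$.

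This exponent bookkeeping is the main obstacle. The fix I would try first is to interpolate through $L^2$-based Sobolev norms. Specifically, bound $\int|\widehat{\tilde m}|\,w(\xi)\,d\xi$ by Cauchy--Schwarz against $(1+|\xi_1|)^{-1/2-\epsilon}(1+|\xi_2|)^{-1/2-\epsilon}$. This controls it by a mixed Sobolev norm of $\tilde m$ of order $Q+\tfrac12+2\epsilon$ in each variable, which is of size $2^{j(Q+1/2+2\epsilon)}$ per variable. If the exponents still do not match exactly, I would instead use the cruder exponent the argument naturally produces, since only polynomial growth in $2^j$ matters downstream. Ideally, though, one uses the sharper input $\|F(R^2\cdot)\|_{L^\infty_{\beta+\epsilon}}$ with $\beta$ slightly above $Q$ together with dyadic pieces of $\xi$, and absorbs the $\epsilon$ losses into $N$ (which is large).

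For the $L^p$ estimate, I would not use the maximal bound at all. Instead, Hölder's inequality gives
\begin{align*}
\|\mathcal{B}_{m_{j,\kappa}}(f,g)\|_{L^p}\le\int|\widehat{\tilde m}(\xi)|\,\|H_{\xi_1}(\mathcal{L})f\|_{L^{q_1}}\|H_{\xi_2}(\mathcal{L})g\|_{L^{q_2}}\,d\xi
\end{align*}
for any $q_1,q_2$ with $1/q_1+1/q_2=1/p$; the hypothesis $1/p_1+1/p_2\ge1/p$ allows choosing $q_i\ge p_i$. The kernel of $H_{\xi}(\mathcal{L})$ is bounded in $L^r$, by \eqref{Kernel estimate for linear multiplier}, by $R^{Q(1-1/r)}(1+|\xi|)^{Q+\epsilon}$ with $1+1/q_i=1/r_i+1/p_i$. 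Young's inequality then gives $\|H_{\xi_i}(\mathcal{L})f\|_{L^{q_i}}\lesssim R^{Q(1/p_i-1/q_i)}(1+|\xi_i|)^{Q+\epsilon}\|f\|_{L^{p_i}}$. The product of the $R$-powers is $R^{Q(1/p_1+1/p_2-1/p)}$, and integrating against $|\widehat{\tilde m}|$ gives a power of $2^j$ exactly as in the pointwise case. Matching the precise exponent $2^{jQ(2+1/p-1/p_1-1/p_2)}$ again comes down to the same weight bookkeeping, tracking the $L^1$ norm in the $\xi$ integral more carefully.
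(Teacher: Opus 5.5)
There is a genuine gap, and it is not a matter of exponent bookkeeping. Your reduction to
\[
|\mathcal{B}_{m_{j,\kappa}}(f,g)|\lesssim\Big(\int_{\mathbb{R}^2}|\widehat{\tilde m}(\xi)|\,(1+|\xi_1|)^{Q+\epsilon}(1+|\xi_2|)^{Q+\epsilon}\,d\xi\Big)\,\mathcal{M}f\,\mathcal{M}g
\]
is fine. It needs \eqref{Linear multiplier estimate by maximal function} for smooth multipliers supported in $[-3R^2,3R^2]$, which the usual heat-semigroup proof gives; $\mathcal{L}\ge0$ alone does not let you cut $H_{\xi}$ off at $0$. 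The problem is that under \eqref{Pointwise assumption on m func} this integral can really be of size $2^{j(2Q+1)}$.

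Take $m_{j,\kappa}(\eta)=\tilde m(\eta/R^2)$ with $\tilde m=(1+|\kappa|)^N\sum_k\epsilon_k\psi(2^j\eta-k)$. This is about $2^{2j}$ disjoint bumps of width $2^{-j}$ inside $[\tfrac12,\tfrac32]^2$, with signs $\epsilon_k=\pm1$, and it satisfies the hypothesis. For a suitable choice of signs (Khintchine), $\widehat{\tilde m}$ is spread over $|\xi_i|\lesssim 2^j$ with $\|\widehat{\tilde m}\|_{L^1}\sim(1+|\kappa|)^N2^j$. A fixed fraction of that mass sits where your weight is $\sim 2^{2jQ}$.

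So any argument that sizes the kernel by integrating $|\widehat{\tilde m}|$ against such a weight loses a full factor $2^j$. Your Cauchy--Schwarz variant stops at $2^{j(2Q+1+4\epsilon)}$. ``Absorbing the losses into $N$'' is impossible, since $N$ only enters as the exponent of $1+|\kappa|$. Accepting a cruder power of $2^j$ proves a different statement.

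The $L^p$ part inherits the same loss. Because it uses the uniform per-mode bound $(1+|\xi_i|)^{Q+\epsilon}$ instead of $(1+|\xi_i|)^{Q/r_i}$, it also cannot reproduce the $p$-dependence of $2^{jQ(2+1/p-1/p_1-1/p_2)}$. Your power $R^{Q(1/p_1+1/p_2-1/p)}$, by contrast, is the one forced by dilation invariance. The printed $R^{Q(1/p_1+1/p_2-1)}$ is correct only for $p=1$, and the paper's own endpoint bounds at $p=\infty$ and $p=1$ interpolate to your exponent.

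The paper argues in the same spirit with different tensor atoms. It Fourier-expands $F(\eta)=e^{\eta_1+\eta_2}m_{j,\kappa}(R^2\eta)$, so the atoms are the complex-time heat operators $e^{-(1-i\tau_i)R^{-2}\mathcal{L}}$. Their Gaussian bounds give directly $|K(y,z)|(1+R2^{-j}\|y\|)^{N_1}(1+R2^{-j}\|z\|)^{N_2}\lesssim R^{2Q}\int|\widehat{F}(\tau)|(1+2^{-j}|\tau_1|)^{N_1}(1+2^{-j}|\tau_2|)^{N_2}\,d\tau$. Lemma~\ref{Pointwise convolution dominated by Maximal function}, with $\|(1+R2^{-j}\|\cdot\|)^{-N_i}\|_{L^1}\sim(2^j/R)^Q$, then gives the maximal bound. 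H\"older--Young at $p=\infty$ and $p=1$ plus bilinear interpolation give the $L^p$ bound.

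Putting the scale $2^j/R$ into the weight avoids your $\epsilon$-losses, but the $\tau$-integral is again an $L^1$ norm of a Fourier transform and hits the same wall. When the paper bounds the Sobolev norm of $G_j$, it drops the change-of-variables factor in $\|(\partial^\beta F)(2^{-j}\cdot)\|_{L^2(\mathbb{R}^2)}=2^j\|\partial^\beta F\|_{L^2}$. Already for $\beta=0$ and $m_{j,\kappa}=\chi(\eta_1/R^2)\chi(\eta_2/R^2)$ the left side is $\sim 2^j$, not $O(1)$. Carried out correctly, that argument gives $R^{2Q}2^j(1+|\kappa|)^N$ and hence $2^{j(2Q+1)}$. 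Switching to the paper's atoms therefore does not close your gap; the missing idea is to size the kernel without $\|\widehat{m}\|_{L^1}$.

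One way to do this has two steps.
\begin{enumerate}
\item Bound the size: $|K(y,z)|\le\int|m_{j,\kappa}|\,d\mu\lesssim(1+|\kappa|)^NR^{2Q}$, where $d\mu=c\,(\eta_1\eta_2)^{Q/2-1}d\eta_1\,d\eta_2$ is the Plancherel measure of $(\mathcal{L}_1,\mathcal{L}_2)$. To see this, factor $m=\operatorname{sgn}(m)|m|^{1/2}\cdot|m|^{1/2}$ and use Cauchy--Schwarz and translation invariance.
\item Get decay at scale $2^j/R$ from finite propagation speed. Expand $m_{j,\kappa}(\lambda_1^2,\lambda_2^2)$ in $\cos(t_1\lambda_1)\cos(t_2\lambda_2)$ and split $|t_i|\sim 2^{l_i}$ dyadically. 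The pieces with $2^{l_i}\gg 2^j/R$ gain $(2^j/(R2^{l_i}))^M$ in sup norm, by vanishing moments and \eqref{Pointwise assumption on m func}. Their kernels live in $\{\varrho\lesssim 2^{l_i}\}$.
\end{enumerate}
Together these give $|K(y,z)|\lesssim(1+|\kappa|)^NR^{2Q}(1+R2^{-j}\|y\|)^{-M}(1+R2^{-j}\|z\|)^{-M}$ with no loss. Lemma~\ref{Pointwise convolution dominated by Maximal function} then yields exactly $2^{2jQ}$. Your H\"older--Young step, applied to this kernel rather than mode by mode, yields $R^{Q(1/p_1+1/p_2-1/p)}2^{jQ(2+1/p-1/p_1-1/p_2)}$ without interpolation.
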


\begin{proof}
The idea of the proof is taken from combination of \cite[Lemma 4.3]{Duong_Ouhabaz_Sikora_Plancherel_estimate_2002} and \cite[Lemma 3.3]{Jeong_Lee_Vargas_Bilinear_Bochner_Riesz_2018}. Let us set $F(\eta_{1},\eta_{2}) = \exp(\eta_{1}+\eta_{2}) m_{j, \kappa}(R^2\eta_{1},R^2\eta_{2})$. So that
\begin{align*}
    m_{j, \kappa}(\eta_{1},\eta_{2}) &= F(R^{-2}\eta_{1},R^{-2} \eta_{2}) \exp(R^{-2}(\eta_{1}+\eta_{2})) .
\end{align*}
Using Fourier inversion formula $m_{j, \kappa}$ can be expressed as
\begin{align*}
    m_{j, \kappa}(\eta_{1},\eta_{2}) &= \frac{1}{4 \pi^2}\int_{\mathbb{R}^2} \widehat{F}(\tau_1, \tau_2) \exp((i \tau_1-1) R^{-2} \eta_{1}) \exp((i \tau_2-1) R^{-2} \eta_{2}) \, d\tau_1 \, d\tau_2 .
\end{align*}
Therefore from \eqref{Definition: Bilinear spectral multiplier} for $f, g \in \mathcal{S}(G)$, we obtain
\begin{align}
\label{Bilinear multiplier in terms of kernel}
    & \mathcal{B}_{m_{j, \kappa}}(f,g)(x,u) = m_{j, \kappa}(\mathcal{L}_1,\mathcal{L}_2)(f \otimes g)((x,u),(x,u)) \\
    &\nonumber = \int_G \int_G \mathcal{K}_{m_{j, \kappa}(\mathcal{L}_1,\mathcal{L}_2)}((y,t)^{-1}(x,u),(z,s)^{-1}(x,u)) f(y,t) g(z,s) \ d(y,t) \ d(z,s) ,
\end{align}
where
\begin{align*}
    \mathcal{K}_{m_{j, \kappa}(\mathcal{L}_1,\mathcal{L}_2)}((y,t),(z,s)) &= \frac{1}{4 \pi^2}\int_{\mathbb{R}^2} \widehat{F}(\tau_1, \tau_2) \mathcal{K}_{\exp((i \tau_1-1) R^{-2} \mathcal{L})}(y,t) \mathcal{K}_{\exp((i \tau_2-1) R^{-2} \mathcal{L})}(z,s) d\tau_1 d\tau_2 .
\end{align*}
Hence for $N_1, N_2 \geq 0$, we can see that
\begin{align*}
    & | \mathcal{K}_{m_{j, \kappa}(\mathcal{L}_1,\mathcal{L}_2)}((y,t),(z,s)) | (1+ R 2^{-j} \|(y,t)\|)^{N_1} (1+ R 2^{-j} \|(z,s)\|)^{N_2} \\
    &\leq C \int_{\mathbb{R}^2} |\widehat{F}(\tau_1, \tau_2)| | \mathcal{K}_{\exp((i \tau_1-1) R^{-2} \mathcal{L})}(y,t) \mathcal{K}_{\exp((i \tau_2-1) R^{-2} \mathcal{L})}(z,s)| \\
    &\hspace{5cm} (1+ R 2^{-j} \|(y,t)\|)^{N_1} (1+ R 2^{-j} \|(z,s)\|)^{N_2} d\tau_1 d\tau_2 .
\end{align*}
Note that from \cite[Theorem 7.3]{Ouhabaz_Analysis_heat_equation_domain_2005} we have the following pointwise estimate:
\begin{align*}
    |\mathcal{K}_{\exp((i \tau_1-1) R^{-2} \mathcal{L})}(x,u)| &\leq C R^Q \exp{\left\{-c \tfrac{R^2 \|(x,u)\|^2}{(1+\tau_1^2)} \right\}} .
\end{align*}
Thus, it follows from the above estimate that
\begin{align*}
    & |\mathcal{K}_{\exp((i \tau_1-1) R^{-2} \mathcal{L})}(y,t) \mathcal{K}_{\exp((i \tau_2-1) R^{-2} \mathcal{L})}(z,s)| (1+ R 2^{-j} \|(y,t)\|)^{N_1} (1+ R 2^{-j} \|(z,s)\|)^{N_2} \\
    & \leq C R^{2Q} (1+2^{-j}|\tau_1|)^{N_1} (1+2^{-j}|\tau_2|)^{N_2} .
\end{align*}
Now taking $N_1+N_2+2=N$ and an application of H\"older's inequality implies
\begin{align}
\label{Use of Holders inequality in pointwise kernel estimate}
    &\left| \mathcal{K}_{m_{j, \kappa}(\mathcal{L}_1,\mathcal{L}_2)}((y,t),(z,s)) \right| (1+ R 2^{-j} \|(y,t)\|)^{N_1} (1+ R 2^{-j} \|(z,s)\|)^{N_2} \\
    &\nonumber \leq C R^{2Q} \int_{\mathbb{R}^2} |\widehat{F}(\tau_1, \tau_2)| (1+2^{-j} |\tau_1|)^{N_1} (1+2^{-j} |\tau_2|)^{N_2} \, d\tau_1 \, d\tau_2 \\
    &\nonumber \leq C R^{2Q} 2^{2 j} \left(\int_{\mathbb{R}^2} |\widehat{F}(2^j \tau_1, 2^j \tau_2)|^2 (1+|\tau_1|)^{2 N_1+2}(1+|\tau_2|)^{2 N_2+2} d\tau_1 d\tau_2 \right)^{\frac{1}{2}} \\
    &\nonumber \hspace{9cm} \left(\int_{\mathbb{R}^2} \frac{\ d\tau_1 d\tau_2}{(1+|\tau_1|)^2(1+|\tau_2|)^2} \right)^{\frac{1}{2}} \\
    &\nonumber \leq C R^{2Q} 2^{2 j} \sum_{0\leq\beta_1 \leq N_1+1,\  0\leq\beta_2 \leq N_2+1} \left(\int_{\mathbb{R}^2} |\partial_{\eta_1}^{\beta_1} \partial_{\eta_2}^{\beta_2} G_j(\eta_1, \eta_2)|^2 d\eta_1 d\eta_2 \right)^{\frac{1}{2}} ,
\end{align}
where $\widehat{G}_j(\tau_1, \tau_2) = \widehat{F}(2^j \tau_1, 2^j \tau_2)$.

Using \eqref{Pointwise assumption on m func} we can see
\begin{align*}
    & \sum_{\beta_1 \leq N_1+1,\  \beta_2 \leq N_2+1} \left(\int_{\mathbb{R}^2} |\partial_{\eta_1}^{\beta_1} \partial_{\eta_2}^{\beta_2} G_j(\eta_1, \eta_2)|^2 d\eta_1 d\eta_2 \right)^{\frac{1}{2}} \\
    &= 2^{-2 j} \sum_{\beta_1 \leq N_1+1,\ \beta_2 \leq N_2+1} 2^{-j (\beta_1+ \beta_2)} \left(\int_{\mathbb{R}^2} |(\partial_{\eta_1}^{\beta_1} \partial_{\eta_2}^{\beta_2} F)(2^{-j} \eta_1, 2^{-j} \eta_2)|^2 d\eta_1 d\eta_2 \right)^{\frac{1}{2}} \\
    &\leq C 2^{-2 j} (1+|\kappa|)^{N} \sum_{\beta_1 \leq N_1+1,\ \beta_2 \leq N_2+1} 2^{-j (\beta_1+ \beta_2)} R^{2(\beta_1+\beta_2)} 2^{j (\beta_1+ \beta_2)} R^{-2(\beta_1+\beta_2)} \\
    &\leq C 2^{-2 j} (1+|\kappa|)^{N} .
\end{align*}
Therefor plugging the above estimate into \eqref{Use of Holders inequality in pointwise kernel estimate} we obtain 
\begin{align}
\label{Pointwise kernel estimate}
    \left| \mathcal{K}_{m_{j, \kappa}(\mathcal{L}_1,\mathcal{L}_2)}((y,t),(z,s)) \right| (1+ R 2^{-j} \|(y,t)\|)^{N_1} (1+ R 2^{-j} \|(z,s)\|)^{N_2} &\leq C R^{2Q} (1+|\kappa|)^N .
\end{align}
Let us set
\begin{align*}
    k_1(x,u) = \frac{1}{(1+ R 2^{-j} \|(x,u)\|)^{N_1}} \quad \text{and} \quad k_2(x,u) = \frac{1}{(1+ R 2^{-j} \|(x,u)\|)^{N_2}} .
\end{align*}
Consequently, plugging the estimate \eqref{Pointwise kernel estimate} into \eqref{Bilinear multiplier in terms of kernel} and using Lemma \ref{Pointwise convolution dominated by Maximal function} yields
\begin{align*}
    |\mathcal{B}_{m_{j, \kappa}}(f,g)(x,u)| &\leq C R^{2Q} \, (1+|\kappa|)^N (|f|*k_1)(x,u) \, (|g|*k_2)(x,u) \\
    &\leq C (1+|\kappa|)^N 2^{j2Q} \mathcal{M}f(x,u) \mathcal{M}g(x,u) ,
\end{align*}
provided we choose $N_1, N_2 >Q$. This proves the estimate \eqref{Pointwise bilinear estimate in maximal funct}.

Similarly as in the previous estimate and in addition with H\"older's inequality we obtain
\begin{align}
\label{L infinity estimate for bilinear multiplier}
    |\mathcal{B}_{m_{j, \kappa}}(f,g)(x,u)| &\leq C R^{2Q} \, (1+|\kappa|)^N (|f|*k_1)(x,u) \, (|g|*k_2)(x,u) \\ 
    &\nonumber \leq C R^{2Q} \, (1+|\kappa|)^N \|f\|_{L^{p_1}} \|k_1\|_{L^{p_1'}} \|g\|_{L^{p_2}} \|k_2\|_{L^{p_2'}} \\
    &\nonumber \leq C R^{Q(2-\frac{1}{p_1'}-\frac{1}{p_2'})} \, (1+|\kappa|)^N 2^{j Q(\frac{1}{p_1'}+\frac{1}{p_2'})} \|f\|_{L^{p_1}} \|g\|_{L^{p_2}} ,
\end{align}
provided we choose $N_1> Q/p_1'$ and $ N_2 >Q/p_2'$.

So that for $N_1> Q/p_1'$ and $ N_2 >Q/p_2'$ we have
\begin{align}
\label{Pointwise estimate in addition Holder}
    \|\mathcal{B}_{m_{j, \kappa}}(f,g)\|_{L^{\infty}} &\leq C R^{Q(\frac{1}{p_1}+\frac{1}{p_2})} \, (1+|\kappa|)^N 2^{j Q(2-\frac{1}{p_1}-\frac{1}{p_2})} \|f\|_{L^{p_1}} \|g\|_{L^{p_2}} .
\end{align}
On the other hand, for the $L^1$-estimate, similar to \eqref{L infinity estimate for bilinear multiplier}, using H\"older's inequality and Young's convolution inequality yields
\begin{align}
\label{L1 estimate before interpolation}
    \|\mathcal{B}_{m_{j, \kappa}}(f,g)\|_{L^{1}} &\leq C R^{2Q} \, (1+|\kappa|)^N \left\||f| * k_1 \right\|_{L^{p_1}} \left\||g|* k_2 \right\|_{L^{{p_1}'}} \\
    &\nonumber \leq C R^{2Q} \, (1+|\kappa|)^N \|f\|_{L^{p_1}} \|k_1\|_{L^1} \|g\|_{L^{p_2}} \|k_2\|_{L^{r_2}} \\
    &\nonumber \leq C R^{Q(1-\frac{1}{r_2})} \, (1+|\kappa|)^N 2^{j Q (1+\frac{1}{r_2})} \|f\|_{L^{p_1}} \|g\|_{L^{p_2}} \\
    &\nonumber \leq C R^{Q(\frac{1}{p_1}+\frac{1}{p_2}-1)} \, (1+|\kappa|)^N 2^{j Q (3-\frac{1}{p_1}-\frac{1}{p_2})} \|f\|_{L^{p_1}} \|g\|_{L^{p_2}} ,
\end{align}
where $1+\frac{1}{p_1'}=\frac{1}{p_2} + \frac{1}{r_2}$ with $r_2 \geq 1$ and provided we choose $N_1>Q$, $N_2>Q/r_2$.

Note that since $r_2 \geq 1$ in the last estimate, we always have $\frac{1}{p_1} + \frac{1}{p_2} = 2-\frac{1}{r_2} \geq 1$. Therefore using the bilinear interpolation between \eqref{Pointwise estimate in addition Holder} and \eqref{L1 estimate before interpolation} for $p_1, p_2, p \geq 1$ with $1/p_1 +1/p_2 \geq 1/p$ and choosing $N$ sufficiently large we obtain
\begin{align*}
    \|\mathcal{B}_{m_{j, \kappa}}(f,g)\|_{L^{p}} &\leq C R^{(\frac{1}{p_1}+\frac{1}{p_2}-\frac{1}{p})} \, (1+|\kappa|)^N 2^{j Q (2+\frac{1}{p}-\frac{1}{p_1}-\frac{1}{p_2})} \|f\|_{L^{p_1}} \|g\|_{L^{p_2}} .
\end{align*}
This completes the proof of the Lemma.
\end{proof}

\section{Square functions on M\'etivier groups}
\label{Section: Square function on Metivier groups}
This section is divided into three subsections. In the first subsection, we discuss about the local square functions on M\'etivier groups. In the second subsection, we consider the global version of the local square function on M\'etivier groups and prove that $L^p$-boundedness of two square functions are equivalent. The final subsection is devoted to the proof of Theorem \ref{Theorem: Stein square estimate} and \ref{Theorem: Stein square function for p less that 2 case}.

\subsection{Local square function estimate on M\'etivier groups}
\label{Subsection: Local Square function estimate}
In this subsection, we define a local square function with localized frequency on M\'etivier groups and discuss their corresponding $L^p$-boundedness results and also give some applications. This is one of the main contribution of this paper and will be our key ingredients in the forthcoming proof of Proposition \ref{Proposition: Square function estimate}.

Let $\phi \in C_c^{\infty}([-1,1])$ and $|\phi(t)| \leq 1$ for all $t \in [-1,1]$. For all $\delta \in (0,1]$, we define the local square function with localized frequency by
\begin{align}
\label{Definition: Square function}
    \mathfrak{S}_{\delta, loc}^{\phi}(\mathcal{L})f(x,u) &= \left(\int_{1/2}^{2} \Big| \phi\left(\frac{t-\mathcal{L}}{\delta} \right) f(x,u) \Big|^2 dt \right)^{1/2} ,
\end{align}

The following proposition describes the $L^p$-boundedness of the above square function $\mathfrak{S}_{\delta, loc}^{\phi}(\mathcal{L})$.

\begin{proposition}
\label{Proposition: Local Square function estimate}
Let $\mathfrak{p}_G \leq p < \infty$. Whenever $\alpha>\alpha_d(p)$ we have
\begin{align}
\label{Local Square function estimate for p biggar than infinity}
    \|\mathfrak{S}_{\delta, loc}^{\phi}(\mathcal{L})f\|_{L^p} &\leq C \delta^{\frac{1}{2}-\alpha} \|f\|_{L^p} .
\end{align}
Moreover,
\begin{align}
\label{Local Square function estimate for p=2 case}
    \|\mathfrak{S}_{\delta, loc}^{\phi}(\mathcal{L})f\|_{L^2} &\leq C \delta^{\frac{1}{2}} \|f\|_{L^2} .
\end{align}
    
\end{proposition}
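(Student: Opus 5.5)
The $L^2$ estimate \eqref{Local Square function estimate for p=2 case} follows from the spectral theorem and Fubini. We have
\begin{align*}
\|\mathfrak{S}_{\delta, loc}^{\phi}(\mathcal{L})f\|_{L^2}^2=\int_0^\infty \int_{1/2}^2 \Big|\phi\Big(\frac{t-\eta}{\delta}\Big)\Big|^2 dt\, d\langle E(\eta)f,f\rangle \leq \delta\|\phi\|_{L^2}^2\|f\|_{L^2}^2 .
\end{align*}
For $\mathfrak{p}_G\le p<\infty$, interpolation (the operator is linear into $L^p(L^2)$) shows it is enough to prove the endpoint $p=\mathfrak{p}_G$ with $\alpha>\alpha_d(\mathfrak{p}_G)$, together with the trivial $L^\infty$-type bound at $p=\infty$. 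The $p=\infty$ bound is $\delta^{1/2-\alpha}$ with $\alpha>(d-1)/2$; it comes from kernel size, using Lemma \ref{Lemma: Pointwise kernel estimate for linear kernel} and Proposition \ref{Prop: Weighted Plancherel using weight and distance}.

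For the main estimate I would run a Seeger/Christ style argument via duality. Write
\begin{align*}
\|\mathfrak{S}_{\delta,loc}^{\phi}f\|_{L^p}^2=\Big\|\int_{1/2}^2|\phi((t-\mathcal{L})/\delta)f|^2dt\Big\|_{L^{p/2}},
\end{align*}
and test against $w\ge0$ with $\|w\|_{L^{(p/2)'}}=1$. The key intermediate bound is a weighted $L^2$ inequality (the analogue of Lemma \ref{Lemma: Weighted L2 estimate for square function} referenced in the Remark):
\begin{align*}
\int_G\int_{1/2}^2|\phi((t-\mathcal{L})/\delta)f|^2\,dt\,w \lesssim \delta\int_G |f|^2\,\mathfrak{M}w ,
\end{align*}
where $\mathfrak{M}w$ is a suitable average of $w$. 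To obtain it, decompose the kernel of $\phi((t-\mathcal{L})/\delta)$ spatially at scale $\delta^{-1}$. The part supported in $\|(x,u)\|\lesssim\delta^{-1}$ is handled by Plancherel in $t$ (the $\delta$ gain) and then averaging $w$ over the ball $B(0,C\delta^{-1})$. The tails decay rapidly by Proposition \ref{Prop: Weighted Plancherel using weight and distance}. The naive averaging gives $\mathcal{M}$ with the homogeneous dimension $Q$, which is too lossy. To see the topological dimension instead, I would further split with $F_M(\mathcal{L},T)$ as in \eqref{In the Thete reducing M from Z to l}. On the piece with $|\lambda|\sim 2^{-M}$, Proposition \ref{Proposition : First layer weighted Plancherel} localizes the kernel in $x$ to $|x|\lesssim 2^M$ (capped by $\delta^{-1}$), while in $u$ it lives at scale $\delta^{-1}2^{M}$-ish. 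So the relevant averages are over anisotropic Euclidean boxes $B^{|\cdot|}(0,r_1)\times B^{|\cdot|}(0,r_2)$, which are controlled by the strong maximal function $\mathcal{M}^{|\cdot|}_r$ of Lemma \ref{Lemma: Boundedness of maximal and Euclidean maximal}.

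The remaining step is to convert this into $L^p$ bounds. Since $(p/2)'>1$, $\|\mathcal{M}^{|\cdot|}_r w\|_{L^{(p/2)'}}\lesssim1$. Combining the weighted $L^2$ bound with Hölder's inequality then handles the "local" region. The "far/oscillatory" region, meaning pieces where the box volume is large compared with the $L^2$ mass, would be bounded instead with the restriction estimate of Proposition \ref{Proposition: Truncated Restriction Estimate strong form} (by duality $L^2\to L^{p}$ for $p\ge \mathfrak{p}_G$, with the factor $2^{-Md_2(1/p'-1/2)}$). That factor is summed in $M$ against the $2^{M}$ growth of the $x$-support. Summing both contributions should produce $\delta^{1/2-\alpha}$ with $\alpha>d(1/2-1/p)-1/2$, since the volume of the effective box is $\delta^{-d}$ rather than $\delta^{-Q}$.

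The main obstacle is exactly this bookkeeping in $M$. One must balance the $|x|$-weighted Plancherel gain against the restriction gain $2^{-Md_2(\cdot)}$ so that the geometric series in $M\ge -l_0$ converges at the endpoint $p=\mathfrak{p}_G$, and so that only a $\delta^{-\epsilon}$ loss appears. For general Métivier groups the Sobolev term with $2^{-M\beta\theta_p}$ in \eqref{Metivier group restriction} must also be absorbed; I would first try taking $\beta$ slightly above $1/2$ and using the smoothness of $\phi$ at scale $\delta$.
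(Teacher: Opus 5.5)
Your $L^2$ bound is the paper's argument. Reducing $\mathfrak{p}_G\le p<\infty$ to the endpoint $p=\mathfrak{p}_G$ plus an $L^\infty$ bound is also a legitimate alternative. The paper instead proves the weighted Lemma~\ref{Lemma: Weighted L2 estimate for square function} for all $1\le q\le\mathfrak{p}_G'$ and, for each $p$, takes $q$ slightly below $p'$.

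The $L^\infty$ bound $\delta^{1-d/2-\epsilon}$ does not come from kernel size at fixed $t$. That route only gives $\|\mathcal{K}_{\phi((t-\mathcal{L})/\delta)}\|_{L^1}\lesssim\delta^{(1-d)/2-\epsilon}$, i.e.\ $\alpha>d/2$ at $p=\infty$, and interpolation would then lose a power of $\delta$ for every $p>\mathfrak{p}_G$. You must dualize in $t$. Write $\bigl(\int_{1/2}^2|\phi(\tfrac{t-\mathcal{L}}{\delta})f|^2dt\bigr)^{1/2}=\sup_{\|a\|_{L^2}\le 1}|\Phi_a(\mathcal{L})f|$ with $\Phi_a(\eta)=\int a(t)\phi(\tfrac{t-\eta}{\delta})\,dt$. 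Then $\|\Phi_a\|_{L^2_\beta}\lesssim\delta^{1-\beta}$, and Proposition~\ref{Prop: Weighted Plancherel using weight and distance} with Lemma~\ref{Lemma: Integration of distance and weight} gives $\|\mathcal{K}_{\Phi_a(\mathcal{L})}\|_{L^1}\lesssim\delta^{1-d/2-\epsilon}$.

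The genuine gap is in the main step. Your weighted inequality $\int\!\int|\phi(\tfrac{t-\mathcal{L}}{\delta})f|^2dt\,w\lesssim\delta\int|f|^2\mathfrak{M}w$, with $\mathfrak{M}$ bounded on $L^{(p/2)'}$, would give $\|\mathfrak{S}^{\phi}_{\delta,loc}(\mathcal{L})f\|_{L^p}\lesssim\delta^{1/2}\|f\|_{L^p}$ for every $p$. That is false, since it would beat the sharp threshold in Corollary~\ref{Corollary: Lp boundedness of Bochner-Riesz on mativier}. The loss $\delta^{-d(2/q-1)}$ must sit in the local term, and in the paper it comes exactly from the restriction estimate, not from a separate ``far/oscillatory'' region. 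The mechanism, applied to each piece $\chi_{\widetilde B}(\psi_{\ell,\delta}\phi_{\delta_t,j})_{t,M}(\mathcal{L},T)\chi_{S}$ with $\widetilde B$ a box of size $2^M\times 2^{2j}$ in $(x,u)$, is:
\begin{itemize}
\item Hölder: $\langle|g|^2,\chi_{\widetilde B}w\rangle\le\|\chi_{\widetilde B}w\|_{L^r}\|g\|_{L^{q'}}^2$ with $1/r=2/q-1$;
\item the dual restriction bound $L^2\to L^{q'}$ of size $\delta^{1/2}2^{-Md_2(1/q-1/2)}$;
\item the weight bound $\|\chi_{\widetilde B}w\|_{L^r}\le|\widetilde B|^{2/q-1}\inf_{\widetilde B}\mathcal{M}^{|\cdot|}_r w$;
\item Plancherel in $t$ through the $\Omega_{\nu'}$ decomposition.
\end{itemize}
The product $|\widetilde B|^{2/q-1}2^{-2Md_2(1/q-1/2)}\approx 2^{jd(2/q-1)}2^{(M-j)(d_1-d_2)(2/q-1)}$ is where $d$ replaces $Q$.

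Your sketch also misses three concrete points.
\begin{itemize}
\item Boundedness of $\mathcal{M}^{|\cdot|}_r$ on $L^{(p/2)'}$ needs $r<(p/2)'$, i.e.\ $q<p'$ strictly; $(p/2)'>1$ is not enough.
\item For general Métivier groups the Sobolev term in \eqref{Metivier group restriction} contributes $2^{(j-M)\beta\theta_q}$ for $M\le j$, with $\beta>1/2$ forced by \eqref{equation: cowling-sikora norm relation}. The sum over $-l_0\le M\le j$ then closes only if $d_1-d_2-r\theta_q\ge 0$, i.e.\ $q\le\frac{p_{d_2}+2(d_1-d_2)}{p_{d_2}+d_1-d_2}$. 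Letting $\beta\downarrow 1/2$ cannot remove this constraint. It is exactly why the statement needs $p\ge\mathfrak{p}_G$ (with $P_{4,3}=6$, $P_{8,6}=17/5$, $P_{8,7}=14/3$) rather than $p\ge p_{d_2}$.
\item Truncating the kernel at scale $\delta^{-1}$ destroys the spectral structure that restriction and $t$-orthogonality require. The paper avoids this with finite propagation speed, via the $\vartheta_j$ decomposition of $\widehat{\phi_{\delta_t}}$. Each piece is then both a spectral multiplier and supported in $\varrho\le 2^{j+2}$, and is only sandwiched between $\chi_{B_m}$ and $\chi_{S_m}$.
\end{itemize}
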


In the following we show that the proof of the above proposition is a consequence of the following lemma. From subsection \ref{Subsection: Variant of Hardy-littlewood} recall that $\mathcal{M}_{r}$ and $\mathcal{M}_{r}^{|\cdot|}$ denotes the maximal functions related to the Hardy-Littlewood maximal functions and strong Hardy-Littlewood maximal functions on M\'etivier groups respectively.

\begin{lemma}
\label{Lemma: Weighted L2 estimate for square function}
Let $1\leq q \leq \mathfrak{p}_G'$ and set $1/r = 2/q-1$. Then for any $\omega \geq 0$, we have
\begin{align*}
    \int_{\mathbb{R}^d} |\mathfrak{S}_{\delta, loc}^{\phi}(\mathcal{L})f(x,u)|^2 \omega(x,u) \, d(x,u) &\leq C \delta^{2-d(\frac{2}{q}-1)-\epsilon} \int_{\mathbb{R}^d} | f(x,u)|^2 \mathcal{M}_{r}^{|\cdot|}\omega(x,u)\, d(x,u) \\
    &\hspace{1cm} + C \delta^{2-d (\frac{2}{q}-1)} \int_{\mathbb{R}^d} |f(x,u)|^2 \mathcal{M}_{r}\omega(x,u) \, d(x,u) .
\end{align*}
\end{lemma}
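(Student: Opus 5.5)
The plan is to follow the Euclidean weighted-$L^2$ argument of Christ/Seeger (see also \cite{Chen_Duong_Yan_Stein_Square_function_Homogeneous_2013}). The new ingredients are the truncated restriction estimate Proposition \ref{Proposition: Truncated Restriction Estimate strong form} and the first-layer weighted Plancherel bound Proposition \ref{Proposition : First layer weighted Plancherel}. The first-layer weight will produce the strong maximal function $\mathcal{M}^{|\cdot|}_r$.

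\emph{Step 1: localisation.} Write $\phi((t-\mathcal{L})/\delta)=F^t(\sqrt{\mathcal{L}})$, where $F^t$ is supported in an interval of length $\sim\delta$ around $\sqrt t\sim 1$. Split $F^t$ using the Fourier transform in the spectral variable: $F^t(\sqrt{\mathcal{L}})=\sum_{\ell\ge 0}F^t_\ell(\sqrt{\mathcal{L}})$. The piece $\ell=0$ has a kernel essentially supported in $\|(x,u)\|\lesssim \delta^{-1}$, and the piece $\ell$ has a kernel concentrated at distance $\sim 2^\ell\delta^{-1}$ with size $O(2^{-\ell N})$. Next, decompose each piece with \eqref{In the Thete reducing M from Z to l} into $F^t_{\ell,M}(\mathcal{L},T)$, $M\ge -l_0$. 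Cover $G$ by sets $Q$ of the product form $B^{|\cdot|}(a,c\rho)\times B^{|\cdot|}(b,c\rho^2)$ with $\rho=2^\ell\delta^{-1}$, which is possible by \eqref{Decomposition of ball into Euclidean balls}. Then $\int |\mathfrak{S}f|^2\omega$ is bounded by a sum over $Q$ of $\int_Q|\mathfrak{S}(f\chi_{Q^*})|^2\omega$, plus the tails.

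\emph{Step 2: the local estimate.} On each $Q$, apply H\"older with exponents $(r', r)$:
\[
\int_Q|\mathfrak{S}(f\chi_{Q^*})|^2\omega\le \|\mathfrak{S}(f\chi_{Q^*})\|_{L^{2r'}}^2\|\omega\|_{L^r(Q)}.
\]
Here $1/(2r')=1-1/q=1/q'$. Use duality ($TT^*$) together with the restriction estimate \eqref{Metivier group restriction} for $1\le q\le\mathfrak{p}_G'$, integrated in $t$. Since $\|F^t(R\cdot)\|_{L^2}\sim\delta^{1/2}$, this gives
\[
\|\mathfrak{S}(f\chi_{Q^*})\|_{L^{q'}}^2\lesssim \delta\cdot\delta\,\|f\chi_{Q^*}\|_{L^q}^2 .
\]
Here the extra $\delta$ comes from $\int_{1/2}^2$ over a $\delta$-window in $t$; more precisely, $\int\|\phi((t-\mathcal L)/\delta)g\|_2^2\,dt\lesssim\delta\|g\|_2^2$, combined with the $L^q\to L^2$ restriction with $\delta$-localisation. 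Then apply H\"older to $\|f\chi_{Q^*}\|_q^2\le\|f^2\|_{L^1(Q^*)}\ldots$, or rather $\|f\|_{L^q(Q^*)}^2\le \|f\|_{L^2(Q^*)}^2|Q^*|^{2/q-1}$, and use $\|\omega\|_{L^r(Q)}\le |Q|^{1/r}\inf_{Q^*}\mathcal{M}_r^{|\cdot|}\omega$. The volume factors combine as $|Q|^{2/q-1}\cdot(\text{restriction factor})$. The restriction gain $2^{-Md_2(1/q-1/2)}$ is what turns the homogeneous $Q$ into $d$.

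\emph{Step 3: the main obstacle, the bookkeeping in $M$.} For large $M$ the spectral piece lives where $|\lambda|\sim 2^{-M}$. By Proposition \ref{Proposition : First layer weighted Plancherel}, its kernel spreads in the $u$ direction only at scale $\rho^2$, but in the $x$ direction only at scale $\sim 2^{M}\rho$ (rather than $\rho$ alone). I would therefore choose boxes of size $2^M\rho\times\rho^2$ for the piece $M$. This anisotropy is exactly why the strong maximal function $\mathcal{M}^{|\cdot|}_r$, which averages over product balls of independent radii, is needed. The factor $|Q|^{2/q-1}$ equals $(2^{Md_1}\rho^{Q})^{2/q-1}$. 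It is balanced by the restriction gain $2^{-2Md_2(1/q-1/2)}$ and by the loss from the Sobolev term in \eqref{Metivier group restriction}. Summing over $M$ with the rapid decay furnished by choosing $\beta$ large gives $\delta^{2-d(2/q-1)-\epsilon}\int|f|^2\mathcal{M}_r^{|\cdot|}\omega$; the $\epsilon$ comes from summing logarithmically many $M$ up to $2^M\sim\delta^{-1}$. Balancing these exponents to land exactly on $d$ is where I expect the difficulty.

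\emph{Step 4: tails.} The pieces with $\ell\ge1$ are handled similarly with the decay $2^{-\ell N}$. The region where $M$ is large (beyond $\delta^{-1}$) and the kernel tails are controlled by the isotropic kernel bounds of Lemma \ref{Lemma: Pointwise kernel estimate for linear kernel} and Lemma \ref{Pointwise convolution dominated by Maximal function}, with balls of radius $\rho$. This yields the second term, $\delta^{2-d(2/q-1)}\int|f|^2\mathcal{M}_r\omega$.
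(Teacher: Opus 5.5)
Your overall architecture matches the paper's: a wave-side decomposition with finite propagation speed at scale $\rho=2^j\gtrsim\delta^{-1}$, the $\Theta(2^MT)$ splitting, the truncated restriction estimate, the first-layer weighted Plancherel bound, and anisotropic boxes that force $\mathcal M_r^{|\cdot|}$. The two computations that actually turn $Q$ into $d$ are wrong, however.

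\textbf{Step 2 counts the volume twice.} You pass through an $L^q\to L^{q'}$ bound and then use $\|f\|_{L^q(Q^*)}^2\le|Q^*|^{2/q-1}\|f\|_{L^2(Q^*)}^2$. Combined with $\|\omega\|_{L^r(Q)}\le|Q|^{1/r}\inf\mathcal M_r^{|\cdot|}\omega$ and $1/r=2/q-1$, this gives $|Q|^{2(2/q-1)}$, not $|Q|^{2/q-1}$, which doubles the loss already in $\mathbb R^n$. The paper never puts an $L^q$ norm on the input. It uses the dual, $L^2\to L^{q'}$, form of \eqref{Metivier group restriction}, with bound $\delta^{1/2}2^{-Md_2(1/q-1/2)}(\cdots)$. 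The second factor $\delta$ comes from $L^2$-orthogonality in $t$ through the partition $\Omega_{\nu'}(\sqrt{\mathcal L})$, see \eqref{Calculation for Omega to sum nu}. Then $\|\chi_S f\|_{L^2}^2\inf\mathcal M_r^{|\cdot|}\omega\le\int_S|f|^2\mathcal M_r^{|\cdot|}\omega$, so the only volume factor is $|\widetilde B|^{2/q-1}$, coming from the weight.

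\textbf{Step 3 uses the wrong box size.} At spectral scale $\sqrt{\mathcal L}\sim1$, Proposition \ref{Proposition : First layer weighted Plancherel} gives $\int||x|^{\alpha}\mathcal K_{F_M(\mathcal L,T)}|^2\lesssim 2^{2M\alpha}2^{-Md_2}\|F\|_{L^2}^2$. So the $M$-piece spreads in $x$ at scale $2^M$, not $2^M\rho$, and finite propagation caps this at $\rho$. With your boxes, $(2^{Md_1}\rho^{Q})^{2/q-1}2^{-Md_2(2/q-1)}$ increases with $M$ and keeps $\rho^{Q}$, so no choice of $\beta$ can balance it.

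For $2^M\le\rho$ the paper cuts the translated $\rho$-ball, which lies in $B^{|\cdot|}(0,C\rho)\times B^{|\cdot|}(0,C\rho^2)$, into $\sim(\rho2^{-M})^{d_1}$ slabs. Each slab has $x$-width $2^M$ and full $u$-height $\rho^2$, and is enlarged in $x$ by $\rho^{\gamma}$; the resulting overlap $\rho^{C\gamma}$ is where the $\epsilon$ comes from.
\begin{itemize}
\item On the near part, $|\widetilde B|^{2/q-1}2^{-Md_2(2/q-1)}\lesssim\rho^{C\gamma}\bigl(2^{M(d_1-d_2)}\rho^{2d_2}\bigr)^{2/q-1}\le\rho^{C\gamma}\rho^{d(2/q-1)}$.
\item The Sobolev term in \eqref{Metivier group restriction} costs $(\rho2^{-M})^{2\beta\theta_q}$. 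This is absorbed, up to $\rho^{\epsilon}$, by the gain $(\rho2^{-M})^{-(d_1-d_2)/r}$ exactly when $d_1-d_2-r\theta_q\ge0$. That condition is the source of $q\le\mathfrak p_G'$ and of the exceptional values $P_{4,3},P_{8,6},P_{8,7}$.
\item The far part is controlled by Proposition \ref{Proposition : First layer weighted Plancherel} with decay $\rho^{-\gamma N}$, and lands in the isotropic $\mathcal M_r$ term.
\end{itemize}

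\textbf{Step 4 does not work for large $M$.} For $2^M>\rho$ you still need the restriction estimate, now on the whole ball: $\rho^{Q(2/q-1)}2^{-Md_2(2/q-1)}\le\rho^{d(2/q-1)}$. Replacing it by Lemma \ref{Lemma: Pointwise kernel estimate for linear kernel} would cost $\|F(R^2\cdot)\|_{L^\infty_{Q+\epsilon}}\sim\delta^{-Q-\epsilon}$ for a multiplier of width $\delta$. That lemma also concerns $F(\mathcal L)$, not $F_M(\mathcal L,T)$.

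Finally, the $\Theta(2^MT)$ cutoff must be inserted only after localizing with $\chi_{B_m}\cdots\chi_{S_m}$, since it destroys finite propagation speed.
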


Assuming the above lemma for the moment, let us complete the proof of Proposition \ref{Proposition: Local Square function estimate}.

\begin{proof}[Proof of Proposition \ref{Proposition: Local Square function estimate}]
Let us first start with the proof of \eqref{Local Square function estimate for p=2 case}. From the spectral theorem for $f \in L^2(G)$ we have
\begin{align}
\label{L2 estimate of the square function}
    \|\mathfrak{S}_{\delta, loc}^{\phi}(\mathcal{L})f\|_{L^2} &= \left(\int_{1/2}^{2} \left\langle \phi^2\left(\frac{t-\mathcal{L}}{\delta} \right) f, f \right\rangle \, dt \right)^{1/2} \\
    &\nonumber = \left\| \left(\int_{1/2}^{2} \phi^2\left(\frac{t-\mathcal{L}}{\delta} \right) \, dt \right)^{1/2} f \right\|_{L^2} = \sup_{\eta} \left\{ \int_{1/2}^{2} \phi^2\left(\frac{t-\eta}{\delta} \right) \, dt \right\}^{1/2} \|f\|_{L^2} .
\end{align}
Since $\supp{\phi} \subseteq [-1, 1]$, we have $\eta-\delta \leq t \leq \eta+ \delta$. Therefore we obtain
\begin{align*}
    \|\mathfrak{S}_{\delta, loc}^{\phi}(\mathcal{L})f\|_{L^2} &\leq C \sup_{\eta} \left( \int_{\eta-\delta}^{\eta+\delta} dt \right)^{1/2} \|f\|_{L^2} \leq C \delta^{1/2} \|f\|_{L^2} .
\end{align*}
This completes the proof for $p=2$ case. Now we move to the proof for $\mathfrak{p}_G \leq p <\infty$.

\medskip
Recall that $\alpha_d(p) = d(\frac{1}{2}-\frac{1}{p}) - \frac{1}{2}$ for $\mathfrak{p}_G \leq p < \infty$. Therefore in order to prove \eqref{Local Square function estimate for p biggar than infinity}, enough to show that, for $1\leq q \leq \mathfrak{p}_G'$ and $2\leq p<q'$ we have
\begin{align}
\label{Reduction of square function to later Sangyuklee}
    \|\mathfrak{S}_{\delta, loc}^{\phi}(\mathcal{L})f\|_{L^p} &\leq C \delta^{1-d(\frac{1}{q}-\frac{1}{2})-\epsilon} \|f\|_{L^p} ,
\end{align}
for some $\epsilon>0$.

The proof of \eqref{Local Square function estimate for p biggar than infinity} then follows from \eqref{Reduction of square function to later Sangyuklee} with an interpolation between the two instances $q=1$ and $q=\mathfrak{p}_G'$. Therefore in the following we prove \eqref{Reduction of square function to later Sangyuklee}.

\medskip
Since for $1\leq q \leq \mathfrak{p}_G'$, we have $d(\frac{1}{q}-\frac{1}{2}) - \frac{1}{2}-\epsilon>0$ for sufficiently small $\epsilon>0$, hence the estimate \eqref{Reduction of square function to later Sangyuklee} at $p=2$ follows from \eqref{Local Square function estimate for p=2 case}. Therefore, it remains to prove \eqref{Reduction of square function to later Sangyuklee} for $2<p<q'$. In this case we show that proof of \eqref{Reduction of square function to later Sangyuklee} is a consequence of the Lemma \ref{Lemma: Weighted L2 estimate for square function}.

\medskip
We take $\omega \in L^s$ such that $\|\omega\|_{L^s} \leq 1$ with $1/s+2/p=1$. Since we also have $1/r = 2/q-1$, we can easily see that $r <s$. Therefore using duality we write
\begin{align}
\label{Use of duality in square function}
    \|\mathfrak{S}_{\delta, loc}^{\phi}(\mathcal{L})f\|_{L^p} &= \| |\mathfrak{S}_{\delta, loc}^{\phi}(\mathcal{L})f|^2 \|_{L^{p/2}}^{1/2} = \left\{ \sup_{\substack{\omega \in L^s \\ \|\omega\|_{L^s} \leq 1}} \left| \int_{\mathbb{R}^d} |\mathfrak{S}_{\delta, loc}^{\phi}(\mathcal{L})f(x,u)|^2 \omega(x,u) \, d(x,u) \right|\right\}^{1/2} .
\end{align}
Now using Lemma \ref{Lemma: Weighted L2 estimate for square function}, H\"older's inequality, Lemma \ref{Lemma: Boundedness of maximal and Euclidean maximal} and the fact $r < s$, we obtain
\begin{align*}
    \int_{\mathbb{R}^d} |\mathfrak{S}_{\delta, loc}^{\phi}(\mathcal{L})f(x,u)|^2 \omega(x,u) \, d(x,u) &\leq C \delta^{2\{1-d(\frac{1}{q}-\frac{1}{2})\}-\epsilon} \||f|^2\|_{L^{p/2}} \|\mathcal{M}_{r}^{|\cdot|}\omega\|_{L^s} \\
    &\hspace{2cm} + C \delta^{2\{1-d(\frac{1}{q}-\frac{1}{2})\}} \||f|^2\|_{L^{p/2}} \|\mathcal{M}_{r}\omega\|_{L^s} \\
    &\leq C \delta^{2\{1-d(\frac{1}{q}-\frac{1}{2})\}-2\epsilon} \|f\|_{L^p}^2 \|\omega\|_{L^s} .
\end{align*}
Therefore plugging the above estimate into \eqref{Use of duality in square function} we get
\begin{align*}
    \|\mathfrak{S}_{\delta, loc}^{\phi}(\mathcal{L})f\|_{L^p} &\leq C \delta^{1-d(\frac{1}{q}-\frac{1}{2})-\epsilon} \|f\|_{L^p} .
\end{align*}
This completes the proof of \eqref{Reduction of square function to later Sangyuklee} and hence the proof of Proposition \ref{Proposition: Local Square function estimate} is also completed.    
\end{proof}

Now we proceed to prove Lemma \ref{Lemma: Weighted L2 estimate for square function}.
\begin{proof}[Proof of Lemma \ref{Lemma: Weighted L2 estimate for square function}]
First note that
\begin{align}
\label{Dominate local by slightly different}
    |\mathfrak{S}_{\delta, loc}^{\phi}(\mathcal{L})f(x,u)|^2 &\leq 2\sqrt{2}  \int_{1/\sqrt{2}}^{\sqrt{2}} \Big| \phi\left(\delta_t^{-1} \left(1- \frac{\mathcal{L}}{t^2} \right) \right) f(x,u) \Big|^2 \, dt ,
\end{align}
where $\delta_t^{-1}= \delta^{-1} t^2 \sim \delta^{-1}$, since $t \in [1/\sqrt{2}, \sqrt{2}]$.

Let us set $\phi_{\delta_t}(s) = \phi(\delta_t^{-1}(1-s^2))$. Since $\phi_{\delta_t}$ is an even function, by Fourier inversion formula we can write
\begin{align*}
    \phi_{\delta_t}(s) &= \frac{1}{2\pi} \int_{\mathbb{R}} \widehat{\phi_{\delta_t}}(u) \cos(s u)\, du .
\end{align*}

For $\delta \in (0, 1]$, choose an integer $j_0$ such that $2^{-j_0-1} \leq \delta < 2^{-j_0}$. Also let $\sigma$ be a bump function on $\mathbb{R}$, such that it is $1$ on $[-1,1]$ and supported on $[-2,2]$. With the help of $\sigma$, for any integer $j \geq j_0$, we define
\begin{align}
\label{Definition of zeta}
    \vartheta_j(s) = \left\{\begin{array}{ll}
        \sigma(2^{-j_0} s) & \text{if} \quad j=j_0  \\
        \sigma(2^{-j}s) - \sigma(2^{-j+1}s) & \text{if} \quad j>j_0 . 
    \end{array} \right.
\end{align}
Then $\vartheta_j$ become the partition of unity, that is
\begin{align}
\label{Partition of unity see Stein}
    \sum_{j \geq j_0} \vartheta_j(s) &= 1 \quad \text{for all} \quad s>0 .
\end{align}
If we define
\begin{align}
\label{Decomposition in the Fourier transform side}
    \phi_{\delta_t, j}(s) &= \frac{1}{2\pi} \int_{\mathbb{R}} \vartheta_j(u) \widehat{\phi_{\delta_t}}(u) \cos(s u)\, du ,
\end{align}
then from \eqref{Partition of unity see Stein}, for $s>0$ we immediately obtain
\begin{align}
\label{Partition of unity for further cutoff}
    \phi_{\delta_t}(s) &= \sum_{j \geq j_0} \phi_{\delta_t, j}(s) .
\end{align}
At this point let us record a estimate for $\phi_{\delta_t, j}$, which will be very useful later in our proof. In fact we have the following bound
\begin{align}
\label{Pointwise bound for phi delta j function}
    |\phi_{\delta_t, j}(s)| &\leq \left\{\begin{array}{ll}
        C_N \, 2^{(j_0-j) N} & \text{if} \quad s \in [1/4, 8] \\
        C_N \, 2^{j-j_0} (1+2^j |s-1|)^{-N} & \text{elsewhere} ,
    \end{array} \right.
\end{align}
for all $N \geq 0$ and $j \geq j_0$ (see \cite[p. 18]{Christ_almost_Everywhere_Bochner_Riesz_1985}, \cite[p. 23]{Chen_Duong_He_Lee_Yan_Bochner_Hermite_2021}).

Then from \eqref{Decomposition in the Fourier transform side} and \eqref{Partition of unity for further cutoff}, using spectral theorem for $f \in L^2(G) \cap L^p(G)$ we write
\begin{align}
\label{Decomposition of phi delta for j bigger j not}
    \phi\left(\delta_t^{-1} \left(1-\frac{\mathcal{L}}{t^2} \right) \right)f &= \sum_{j \geq j_0} \phi_{\delta_t, j}(\mathcal{\sqrt{L}}/t )f ,
\end{align}
where
\begin{align}
\label{Writting phi in terms of wave operator}
    \phi_{\delta_t, j}(\mathcal{\sqrt{L}}/t )f &= \frac{1}{2\pi} \int_{\mathbb{R}} \vartheta_j(u) \widehat{\phi_{\delta_t}}(u) \cos(\mathcal{\sqrt{L}} u/t)f\, du .
\end{align}
Therefore in view of the above decomposition \eqref{Decomposition of phi delta for j bigger j not}, from \eqref{Dominate local by slightly different} for any $\omega \geq 0$, we get
\begin{align}
\label{Final estimate after further decomposition cutoff}
    \int_{\mathbb{R}^d} |\mathfrak{S}_{\delta, loc}^{\phi}(\mathcal{L})f(x,u)|^2 \omega(x,u) \, d(x,u) & \leq C \left[ \sum_{j \geq j_0} \left\{ \int_{1/\sqrt{2}}^{\sqrt{2}} \left\langle \Big| \phi_{\delta_t, j} (\mathcal{\sqrt{L}}/t ) f \Big|^2, \omega \right\rangle \, dt \right\}^{\frac{1}{2}} \right]^2 .
\end{align}
First note that from \eqref{Definition of zeta} for $j \geq j_0$, $\supp{\vartheta_j} \subseteq [-2^{j+1}, 2^{j+1}]$. Let $\mathcal{K}_{\phi_{\delta_t,j}(\sqrt{\mathcal{L}}/t)}$ denote the convolution kernel of the operator $\phi_{\delta_t,j}(\sqrt{\mathcal{L}}/t)$. Then for $(x,u) \in G$ we write
\begin{align*}
    \mathcal{K}_{\phi_{\delta_t,j}(\sqrt{\mathcal{L}}/t)}((y,t)^{-1}(x,u)) =: \widetilde{\mathcal{K}_{\phi_{\delta_t,j}}}(\sqrt{\mathcal{L}}/t)((x,u),(y,t)) .
\end{align*}
Since $\cos(\mathcal{\sqrt{L}})$ satisfies the finite speed propagation property, from \eqref{Writting phi in terms of wave operator} we can see
\begin{align}
\label{Support of phi using finite speed}
    \supp{\widetilde{\mathcal{K}_{\phi_{\delta_t,j}}}(\sqrt{\mathcal{L}}/t)} \subseteq \mathcal{D}_{\varrho_j} := \{((x,u), (y,t)) : \varrho((x,u), (y,t)) \leq 2^{j+2} \} ,
\end{align}
Then similarly as in \cite[subsec. 5.3]{Bagchi_Molla_Singh_Bilinear_Metivier} we choose a sequence $\{(x_m, u_m)\}_{m\in \mathbb{N}}$ such that for $m \neq l$, $\varrho((x_m,u_m), (x_l,u_l)) > 2^{j+2}/10$ and $\sup_{(x,u) \in \mathbb{R}^d} \inf_{m} \varrho((x,u), (x_m,u_m)) \leq 2^{j+2}/10$. Let us define
\begin{align*}
    S_m &= \Bar{B}\left((x_m,u_m), \tfrac{2^{j+2}}{10}\right) \setminus \bigcup_{l<m} \Bar{B}\left((x_l,u_l), \tfrac{2^{j+2}}{10}\right) .
\end{align*}
We can easily see that whenever $m \neq l$, we have $B\left((x_m,u_m), \frac{2^{j+2}}{20}\right) \cap B\left((x_l,u_l), \frac{2^{j+2}}{20}\right) \neq \emptyset$. So that an application of doubling property of balls gives the following bounded overlapping property 
\begin{align}
\label{Bounded overlapping property}
    \sup_{m} \# \{l : \varrho((x_m,u_m),(x_l,u_l)) \leq 2 \cdot 2^{j+2}\} \leq C .
\end{align}   
From \eqref{Support of phi using finite speed}, we also obtain
\begin{align*}
    \mathcal{D}_{\varrho_j} &\subseteq \bigcup_{l,m: \varrho((x_m,u_m),(x_l,u_l)) < 2 \cdot 2^{j+2}} S_l \times S_m .
\end{align*}
Consequently, we write
\begin{align*}
    \phi_{\delta_t, j} (\mathcal{\sqrt{L}}/t )f &= \sum_{l, m: \varrho((x_m,u_m),(x_l,u_l)) < 2 \cdot 2^{j+2}} \chi_{S_l} \phi_{\delta_t, j}(\mathcal{\sqrt{L}}/t)\chi_{S_m} f .
\end{align*}
It is easy to see that if $\varrho((x_m,u_m),(x_l,u_l)) < 2 \cdot 2^{j+2}$, then $S_l \subset B((x_m,u_m), 3 \cdot 2^{j+2})$. We denote $B_m := B((x_m,u_m), 3 \cdot 2^{j+2})$. Therefore using the fact that the sets $S_l$ are disjoint, an application of bounded overlapping property \eqref{Bounded overlapping property} and $S_l \subset B_m$ yields
\begin{align}
\label{Application of bounded overlapp and other}
    \left\langle \Big| \phi_{\delta_t, j} (\mathcal{\sqrt{L}}/t ) f \Big|^2, \omega \right\rangle &= \sum_{l} \left\langle \Big| \sum_{m: \varrho((x_m,u_m),(x_l,u_l)) < 2 \cdot 2^{j+2}} \chi_{S_l} \phi_{\delta_t, j} (\mathcal{\sqrt{L}}/t )\chi_{S_m} f \Big|^2, \omega \right\rangle \\
    &\nonumber \leq C \sum_{l} \sum_{m: \varrho((x_m,u_m),(x_l,u_l)) < 2 \cdot 2^{j+2}} \left\langle \Big| \chi_{S_l} \phi_{\delta_t, j}(\mathcal{\sqrt{L}}/t )\chi_{S_m} f \Big|^2, \omega \right\rangle \\
    &\nonumber \leq C \sum_{m} \left\langle \Big| \chi_{B_m} \phi_{\delta_t, j} (\mathcal{\sqrt{L}}/t )\chi_{S_m} f \Big|^2, \omega \right\rangle .
\end{align}
Note that the function $\phi_{\delta_t,j}$ is not compactly supported, therefore we decompose it further. Choose an even bump function $\zeta$ such that it is $1$ on $(-2,2)$ and supported on $(-4,4)$. For any integer $\ell \geq 0$, if we define
\begin{align}
\label{Definition of psi l delta cutoff}
    \psi_{\ell, \delta}(s) = \left\{\begin{array}{ll}
        \zeta(\delta^{-1} (1-s)) & \text{if} \quad \ell=0  \\
        \zeta(2^{-\ell} \delta^{-1} (1-s)) - \zeta(2^{-\ell+1} \delta^{-1} (1-s)) & \text{if} \quad \ell \geq 1 ,
    \end{array} \right.
\end{align}
then it satisfies
\begin{align*}
    \sum_{\ell=0}^{\infty} \psi_{\ell, \delta}(s) &= 1 .
\end{align*}
Consequently, for any $g \in L^2(G) \cap L^p(G)$ we write
\begin{align*}
    \phi_{\delta_t, j} (\mathcal{\sqrt{L}}/t ) g &= \sum_{\ell=0}^{\infty} (\psi_{\ell, \delta} \phi_{\delta_t, j}) (\mathcal{\sqrt{L}}/t ) g .
\end{align*}
Hence, from \eqref{Application of bounded overlapp and other} we obtain
\begin{align}
\label{Decomposition of ell}
    \left\langle \Big| \phi_{\delta_t, j} (\mathcal{\sqrt{L}}/t ) f \Big|^2, \omega \right\rangle &\leq C \sum_{m} \left\langle \Big|\sum_{\ell=0}^{j_0} \chi_{B_m} (\psi_{\ell, \delta} \phi_{\delta_t, j}) (\mathcal{\sqrt{L}}/t ) \chi_{S_m} f \Big|^2, \omega \right\rangle \\
    &\nonumber \hspace{1cm} + C \sum_{m} \left\langle \Big|\sum_{\ell=j_0+1}^{\infty} \chi_{B_m} (\psi_{\ell, \delta} \phi_{\delta_t, j}) (\mathcal{\sqrt{L}}/t ) \chi_{S_m} f \Big|^2, \omega \right\rangle ,
\end{align}
where $j_0$ be the integer chosen earlier just before \eqref{Definition of zeta}. Note that \eqref{Definition of psi l delta cutoff}, implies $\supp{\psi_{\ell, \delta}} \subseteq [1-2^{\ell+2} \delta, 1+2^{\ell+2} \delta]$. We set
\begin{align}
\label{Definition of s/t in suffix t case}
    (\psi_{\ell, \delta} \phi_{\delta_t, j})_t(s) := (\psi_{\ell, \delta} \phi_{\delta_t, j})(s/t) .
\end{align}
So that support of $(\psi_{\ell, \delta} \phi_{\delta_t, j})_t$ is contained in $[t(1-2^{\ell+2} \delta), t(1+2^{\ell+2} \delta)]$. Let $\Theta$ be the bump function as in \eqref{Definition: Cutoff function theta preli} with $R=t(1+2^{\ell+2} \delta)$. Then for $M \in \mathbb{Z}$, we define the function $(\psi_{\ell, \delta} \phi_{\delta_t, j})_{t, M} : \mathbb{R} \times \mathbb{R} \to \mathbb{C}$ by 
\begin{align}
\label{Introducing theta in multiplier}
    (\psi_{\ell, \delta} \phi_{\delta_t, j})_{t, M}(\eta, \tau) := \left\{\begin{array}{ll}
       (\psi_{\ell, \delta} \phi_{\delta_t, j})_{t}(\sqrt{\eta}) \Theta( 2^{M} \tau),  & \quad \text{if} \quad \eta \geq 0 \\
        0 & \quad \text{elsewhere} .
    \end{array} \right.
\end{align}
Then similarly as in \eqref{In the Thete reducing M from Z to l} we can write
\begin{align*}
    (\psi_{\ell, \delta} \phi_{\delta_t, j})_t (\mathcal{\sqrt{L}}) &= \sum_{M =-l_0}^{\infty} (\psi_{\ell, \delta} \phi_{\delta_t, j})_{t, M}(\mathcal{L}, T) = \left(\sum_{M =-l_0}^{j} + \sum_{M=j+1}^{\infty} \right) (\psi_{\ell, \delta} \phi_{\delta_t, j})_{t, M}(\mathcal{L}, T) .
\end{align*}
Using the above decomposition and from \eqref{Decomposition of ell} we obtain
\begin{align}
\label{Spliting inner product in the sum of M}
    \left\langle \Big| \phi_{\delta_t, j} (\mathcal{\sqrt{L}}/t ) f \Big|^2, \omega \right\rangle & \leq C \sum_{m} \left\langle \Big| \sum_{M =-l_0}^{j} \sum_{\ell=0}^{j_0} \chi_{B_m} (\psi_{\ell, \delta} \phi_{\delta_t, j})_{t, M}(\mathcal{L}, T) \chi_{S_m} f \Big|^2, \omega \right\rangle \\
    &\nonumber \hspace{.5cm} + C \sum_{m} \left\langle \Big| \sum_{M=j+1}^{\infty} \sum_{\ell=0}^{j_0} \chi_{B_m} (\psi_{\ell, \delta} \phi_{\delta_t, j})_{t, M}(\mathcal{L}, T) \chi_{S_m} f \Big|^2, \omega \right\rangle \\
    &\nonumber \hspace{1cm} + C \sum_{m} \left\langle \Big|\sum_{\ell=j_0+1}^{\infty} \chi_{B_m} (\psi_{\ell, \delta} \phi_{\delta_t, j}) (\mathcal{\sqrt{L}}/t ) \chi_{S_m} f \Big|^2, \omega \right\rangle \\
    &\nonumber =: E_1(t) + E_2(t) + E_3(t) .
\end{align}
Let us first start with the estimate of $E_1(t)$. Using H\"older's inequality we get
\begin{align}
\label{First inequality in the estimate of E1 case}
    E_1(t) &\leq C (j+l_0+1) \sum_{m} \sum_{M =-l_0}^{j} \left\langle \Big|\sum_{\ell=0}^{j_0} \chi_{B_m} (\psi_{\ell, \delta} \phi_{\delta_t, j})_{t, M}(\mathcal{L}, T) \chi_{S_m} f \Big|^2, \omega \right\rangle .
\end{align}
Recall that $B_m := B((x_m,u_m), 12 \cdot 2^{j})$. For each $m \in \mathbb{N}$, let us denote $B_{m,0} := (x_m, u_m)^{-1} B_m$ and $S_{m,0} := (x_m, u_m)^{-1} S_{m}$. Then using the translation invariance property, we see that
\begin{align}
\label{use of translation invariance in inner product}
    \left\langle \Big|\sum_{\ell=0}^{j_0} \chi_{B_m} (\psi_{\ell, \delta} \phi_{\delta_t, j})_{t, M}(\mathcal{L}, T) \chi_{S_m} f \Big|^2, \omega \right\rangle &= \left\langle \Big|\sum_{\ell=0}^{j_0} \chi_{B_{m,0}} (\psi_{\ell, \delta} \phi_{\delta_t, j})_{t, M}(\mathcal{L}, T) \chi_{S_{m,0}} f \Big|^2, \omega \right\rangle .
\end{align}
Also note that $S_{m,0} \subseteq B_{m,0}= B(0, 12 \cdot 2^{j})$. From \eqref{Decomposition of ball into Euclidean balls} there exists $C>0$ such that
\begin{align*}
    B(0, 12 \cdot 2^{j}) \subseteq B^{|\cdot|}(0, C 12 \cdot 2^{j}) \times B^{|\cdot|}(0, C 144 \cdot 2^{2j}) .
\end{align*}
Note in the estimate of $E_1$, we always have $-l_0 \leq M \leq j$. Accordingly, for each $M \in \{-l_0, \ldots, j\}$, we decompose $B^{|\cdot|}(0, C 12 \cdot 2^{j}) \times B^{|\cdot|}(0, C 144 \cdot 2^{2j})$ with respect to the first layer into $N_M$ number of disjoint sets $S_{m,0,n}^{M}$ such that
\begin{align}
\label{Expression: Decomposition of ball into smaller balls}
    B_{m,0} &= \bigcup_{n=1}^{N_{M}} S_{m,0,n}^{M} ,
\end{align}
with the property 
\begin{align}
\label{Property of the smaller balls}
    S_{m,0,n}^{M} \subseteq B^{|\cdot|}\left(x_{m,0, n}^{M}, C 12 \cdot 2^{M}\right) \times B^{|\cdot|}\left(0, C 144 \cdot 2^{2j}\right)
\end{align}
and whenever $n \neq n'$,  $|x_{m,0,n}^{M}-x_{m,0,n'}^{M}|> C 12 \cdot 2^{M}/2 $ holds. Furthermore, the number of subsets $N_{M}$ in this decomposition is bounded by constant times $2^{(j-M)d_1}$. For each $1\leq n \leq N_{M}$ and $\gamma>0$, we also define
\begin{align}
\label{Defination of dilated balls}
    \widetilde{B}_{m,0,n}^{M} &:= B^{|\cdot|}\left(x_{m,0, n}^{M}, C 12 \cdot 2^{M} 2^{\gamma j+1} \right) \times B^{|\cdot|}\left(0, C 144 \cdot 2^{2j}\right) .
\end{align}
Let $N_{\gamma}$ denote the number of overlapping balls $\widetilde{B}_{m,0,n}^{M}$ for $1\leq n \leq N_{M}$. Then this can be estimated as
\begin{align}
\label{Number of overlapping balls}
    N_{\gamma} \leq C 2^{C\gamma j} .
\end{align}
With the aid of the above decomposition \eqref{Expression: Decomposition of ball into smaller balls}, we express $\chi_{S_{m,0}} f$ as follows: 
\begin{align}
\label{Decomposition of f and g}
    \chi_{S_{m,0}} f = \sum_{n=1}^{N_{M}} \chi_{S_{m,0,n}^{M}} f .
\end{align}
The above expression leads us to the following decomposition.
\begin{align*}
    & \chi_{B_{m,0}}(x,u) (\psi_{\ell, \delta} \phi_{\delta_t, j})_{t, M}(\mathcal{L}, T) \chi_{S_{m,0}} f(x,u) \\
    &\nonumber = \sum_{n=1}^{N_M} \chi_{B_{m,0}}(x,u) \chi_{\widetilde{B}_{m,0,n}^{M}}(x,u) (\psi_{\ell, \delta} \phi_{\delta_t, j})_{t, M}(\mathcal{L}, T) \chi_{S_{m,0,n}^M} f(x,u) \\
    &\nonumber \hspace{2cm} + \sum_{n=1}^{N_M} \chi_{B_{m,0}}(x,u) (1-\chi_{\widetilde{B}_{m,0,n}^{M}})(x,u) (\psi_{\ell, \delta} \phi_{\delta_t, j})_{t, M}(\mathcal{L}, T) \chi_{S_{m,0,n}^M} f(x,u) .
\end{align*}
Hence with the help of above decomposition, using the fact \eqref{Number of overlapping balls} and from \eqref{use of translation invariance in inner product} we obtain
\begin{align}
\label{Decomposition the expression into inner and outer part}
    & \left\langle \Big|\sum_{\ell=0}^{j_0} \chi_{B_m} (\psi_{\ell, \delta} \phi_{\delta_t, j})_{t, M}(\mathcal{L}, T) \chi_{S_m} f \Big|^2, \omega \right\rangle \\
    &\nonumber \leq C 2^{C\gamma j} \sum_{n=1}^{N_M} \left\langle \Big|\sum_{\ell=0}^{j_0} \chi_{\widetilde{B}_{m,0,n}^{M}} (\psi_{\ell, \delta} \phi_{\delta_t, j})_{t, M}(\mathcal{L}, T) \chi_{S_{m,0,n}^M} f \Big|^2, \chi_{\widetilde{B}_{m,0,n}^{M}} \omega \right\rangle \\
    &\nonumber \hspace{2cm} + \left\langle \Big|\sum_{\ell=0}^{j_0} \sum_{n=1}^{N_M} \chi_{B_{m,0}} (1-\chi_{\widetilde{B}_{m,0,n}^{M}}) (\psi_{\ell, \delta} \phi_{\delta_t, j})_{t, M}(\mathcal{L}, T) \chi_{S_{m,0,n}^M} f \Big|^2, \chi_{B_{m,0}} \omega \right\rangle .
\end{align}
Therefore plugging the estimate \eqref{Decomposition the expression into inner and outer part} into \eqref{First inequality in the estimate of E1 case} and using H\"older's inequality (since $\frac{1}{r} + \frac{2}{q'}= 1$) implies
\begin{align}
\label{After splitting final estimate of the first term}
    E_1(t) & \leq C j 2^{C\gamma j} \sum_{m} \sum_{M =-l_0}^{j} \sum_{n=1}^{N_M} \left\langle \Big|\sum_{\ell=0}^{j_0} \chi_{\widetilde{B}_{m,0,n}^{M}} (\psi_{\ell, \delta} \phi_{\delta_t, j})_{t, M}(\mathcal{L}, T) \chi_{S_{m,0,n}^M} f \Big|^2, \chi_{\widetilde{B}_{m,0,n}^{M}} \omega \right\rangle \\
    &\nonumber \hspace{0.5cm} + C j \sum_{m} \sum_{M =-l_0}^{j} \left\langle \Big|\sum_{n=1}^{N_M} \sum_{\ell=0}^{j_0} \chi_{B_{m,0}} (1-\chi_{\widetilde{B}_{m,0,n}^{M}}) (\psi_{\ell, \delta} \phi_{\delta_t, j})_{t, M}(\mathcal{L}, T) \chi_{S_{m,0,n}^M} f \Big|^2, \chi_{B_{m,0}} \omega \right\rangle \\
    &\nonumber \leq C j 2^{C\gamma j} \sum_{m} \sum_{M =-l_0}^{j} \sum_{n=1}^{N_M} \| \chi_{\widetilde{B}_{m,0,n}^{M}} \omega\|_{L^{r}} \left\|\sum_{\ell=0}^{j_0} \chi_{\widetilde{B}_{m,0,n}^{M}} (\psi_{\ell, \delta} \phi_{\delta_t, j})_{t, M}(\mathcal{L}, T) \chi_{S_{m,0,n}^M} f \right\|_{L^{q'}}^2 \\
    &\nonumber \hspace{0.5cm} + C j \sum_{m} \sum_{M =-l_0}^{j} \| \chi_{B_{m,0}} \omega\|_{L^{r}} \left\|\sum_{n=1}^{N_M} \sum_{\ell=0}^{j_0} \chi_{B_{m,0}} (1-\chi_{\widetilde{B}_{m,0,n}^{M}}) (\psi_{\ell, \delta} \phi_{\delta_t, j})_{t, M}(\mathcal{L}, T) \chi_{S_{m,0,n}^M} f \right\|_{L^{q'}}^2 .
\end{align}
At this point we stop with our estimate of $E_1(t)$ and let us continue with the other two estimates of $E_2(t)$ and $E_3(t)$.

For the estimate of $E_2(t)$, recall that we have $1/r +2/q' =1$. Hence, applying H\"older's inequality provides
\begin{align}
\label{After splitting estimate of the second term}
    E_2(t) & = C \sum_{m} \left\langle \Big| \sum_{M=j+1}^{\infty} \sum_{\ell=0}^{j_0} \chi_{B_m} (\psi_{\ell, \delta} \phi_{\delta_t, j})_{t, M}(\mathcal{L}, T) \chi_{S_m} f \Big|^2, \chi_{B_m} \omega \right\rangle \\
    &\nonumber \leq C \sum_{m} \|\chi_{B_m} \omega\|_{L^{r}} \left\|\sum_{M=j+1}^{\infty} \sum_{\ell=0}^{j_0} \chi_{B_m} (\psi_{\ell, \delta} \phi_{\delta_t, j})_{t, M}(\mathcal{L}, T) \chi_{S_m} f \right\|_{L^{q'}}^2 .
\end{align}

For the estimate of $E_3(t)$, similarly as earlier using H\"older's inequality with the exponent $r$ and $q'/2$ yields
\begin{align}
\label{After splitting final estimate of the third term}
    E_3(t) &= C \sum_{m} \left\langle \Big|\sum_{\ell=j_0+1}^{\infty} \chi_{B_m} (\psi_{\ell, \delta} \phi_{\delta_t, j}) (\mathcal{\sqrt{L}}/t ) \chi_{S_m} f \Big|^2, \chi_{B_m} \omega \right\rangle \\
    &\nonumber \leq C \sum_{m} \|\chi_{B_m} \omega\|_{L^{r}} \left\| \sum_{\ell=j_0+1}^{\infty} \chi_{B_m} (\psi_{\ell, \delta} \phi_{\delta_t, j}) (\mathcal{\sqrt{L}}/t )\chi_{S_m} f \right\|_{L^{q'}}^2 .
\end{align}

Therefore, combining all the estimates of $E_1(t)$ \eqref{After splitting final estimate of the first term}, $E_2(t) $ \eqref{After splitting estimate of the second term}, $E_3(t)$ \eqref{After splitting final estimate of the third term} and plugging them into the estimate \eqref{Spliting inner product in the sum of M} we obtain
\begin{align*}
    & \left\langle \Big| \phi_{\delta_t, j} (\mathcal{\sqrt{L}}/t ) f \Big|^2, \omega \right\rangle \\
    &\nonumber \leq C j 2^{C\gamma j} \sum_{m} \sum_{M =-l_0}^{j} \sum_{n=1}^{N_M} \| \chi_{\widetilde{B}_{m,0,n}^{M}} \omega\|_{L^{r}} \left\{\sum_{\ell=0}^{j_0} \left\| \chi_{\widetilde{B}_{m,0,n}^{M}} (\psi_{\ell, \delta} \phi_{\delta_t, j})_{t, M}(\mathcal{L}, T) \chi_{S_{m,0,n}^M} f \right\|_{L^{q'}} \right\}^2 \\
    &\nonumber + C j \sum_{m} \sum_{M =-l_0}^{j} \| \chi_{B_{m,0}} \omega\|_{L^{r}} \left\{\sum_{n=1}^{N_M} \sum_{\ell=0}^{j_0} \left\| \chi_{B_{m,0}} (1-\chi_{\widetilde{B}_{m,0,n}^{M}}) (\psi_{\ell, \delta} \phi_{\delta_t, j})_{t, M}(\mathcal{L}, T) \chi_{S_{m,0,n}^M} f \right\|_{L^{q'}} \right\}^2 \\
    &\nonumber \hspace{2cm} + C \sum_{m} \|\chi_{B_m} \omega\|_{L^{r}} \left\{\sum_{M=j+1}^{\infty} \sum_{\ell=0}^{j_0} \left\| \chi_{B_m} (\psi_{\ell, \delta} \phi_{\delta_t, j})_{t, M}(\mathcal{L}, T) \chi_{S_m} f \right\|_{L^{q'}} \right\}^2 \\
    &\hspace{3cm} + C \sum_{m} \|\chi_{B_m} \omega\|_{L^{r}} \left\{\sum_{\ell=j_0+1}^{\infty} \left\| \chi_{B_m} (\psi_{\ell, \delta} \phi_{\delta_t, j}) (\mathcal{\sqrt{L}}/t )\chi_{S_m} f \right\|_{L^{q'}} \right\}^2 .
\end{align*}

With the help of the above estimate, using the fact $\sqrt{a+b+c+d} \leq \sqrt{a}+\sqrt{b}+\sqrt{c}+\sqrt{d}$ for any $a,b,c,d>0$ and applying Minkowski's inequality yields
\begin{align}
\label{Final estimate after finite spped cutoff}
    &\left( \int_{1/\sqrt{2}}^{\sqrt{2}} \left\langle \Big| \phi_{\delta_t, j} (\mathcal{\sqrt{L}}/t ) f \Big|^2, \omega \right\rangle \, dt \right)^{\frac{1}{2}} \\
    &\nonumber \leq C j^{\frac{1}{2}} 2^{C\gamma j} \sum_{\ell=0}^{j_0} \left[ \int_{\frac{1}{\sqrt{2}}}^{\sqrt{2}} \sum_{M =-l_0}^{j} \sum_{n=1}^{N_M} \sum_{m} \| \chi_{\widetilde{B}_{m,0,n}^{M}} \omega\|_{L^{r}} \left\| \chi_{\widetilde{B}_{m,0,n}^{M}} (\psi_{\ell, \delta} \phi_{\delta_t, j})_{t, M}(\mathcal{L}, T) \chi_{S_{m,0,n}^M} f \right\|_{L^{q'}}^2 dt \right]^{\frac{1}{2}} \\
    &\nonumber + C j^{\frac{1}{2}} \sum_{\ell=0}^{j_0} \left[ \int_{\frac{1}{\sqrt{2}}}^{\sqrt{2}} \sum_{m} \| \chi_{B_{m,0}} \omega\|_{L^{r}} \right. \\
    &\nonumber \hspace{3cm} \left. \times \sum_{M =-l_0}^{j} \left\{\sum_{n=1}^{N_M} \left\| \chi_{B_{m,0}} (1-\chi_{\widetilde{B}_{m,0,n}^{M}}) (\psi_{\ell, \delta} \phi_{\delta_t, j})_{t, M}(\mathcal{L}, T) \chi_{S_{m,0,n}^M} f \right\|_{L^{q'}} \right\}^2 \, dt \right]^{\frac{1}{2}} \\
    &\nonumber \hspace{1cm} + C \sum_{M=j+1}^{\infty} \sum_{\ell=0}^{j_0} \left[ \int_{\frac{1}{\sqrt{2}}}^{\sqrt{2}} \sum_{m} \|\chi_{B_m} \omega\|_{L^{r}} \left\| \chi_{B_m} (\psi_{\ell, \delta} \phi_{\delta_t, j})_{t, M}(\mathcal{L}, T) \chi_{S_m} f \right\|_{L^{q'}}^2 \, dt \right]^{\frac{1}{2}} \\
    &\nonumber \hspace{2cm} + C \sum_{\ell=j_0+1}^{\infty} \left[ \int_{\frac{1}{\sqrt{2}}}^{\sqrt{2}} \sum_{m} \|\chi_{B_m} \omega\|_{L^{r}} \left\| \chi_{B_m} (\psi_{\ell, \delta} \phi_{\delta_t, j}) (\mathcal{\sqrt{L}}/t )\chi_{S_m} f \right\|_{L^{q'}}^2 \, dt \right]^{\frac{1}{2}} \\
    &\nonumber =: I_{1}(j) + I_{2}(j) + I_{3}(j) + I_{4}(j) .
\end{align}

In the following we estimate each of $I_{1}(j)$, $I_{2}(j)$, $I_{3}(j)$ and $I_{4}(j)$ seperately.

\medskip
\noindent \textbf{Estimate of \texorpdfstring{$I_{1}(j)$}{}:}
Note that $[\frac{1}{\sqrt{2}}, \sqrt{2}] = \frac{1}{\sqrt{2}}[1,2]$. For $\nu= 0, 1, \ldots, \nu_0= \lfloor 2/\delta \rfloor +1$, let us set 
\begin{align*}
    I_{\nu} &= \tfrac{1}{\sqrt{2}}[1 +\nu \delta, 1 +(\nu+1) \delta] .
\end{align*}
Then we can cover the interval $[1/\sqrt{2}, \sqrt{2}]$ as follows
\begin{align*}
    \left[\frac{1}{\sqrt{2}}, \sqrt{2} \right] \subseteq \bigcup_{\nu=0}^{\nu_0} I_{\nu} .
\end{align*}
Now take a bump function $\Omega$ supported in $(-1,1)$ such that $\sum_{\nu' \in \mathbb{Z}} \Omega(s+\nu') =1$ for all $s \in \mathbb{R}$. In particular taking $s$ to be of the form $\frac{1-s \sqrt{2}}{\delta}$ we get
\begin{align}
\label{Introducing the cutoff omega}
    1 &= \sum_{\nu' \in \mathbb{Z}} \Omega\left(\nu'+ \frac{1-s \sqrt{2}}{\delta} \right) =: \sum_{\nu' \in \mathbb{Z}} \Omega_{\nu'}(s) .
\end{align}
Recall that from \eqref{Introducing theta in multiplier}, $(\psi_{\ell, \delta} \phi_{\delta_t, j})_{t, M}(\eta, \tau) = (\psi_{\ell, \delta} \phi_{\delta_t, j})(\sqrt{\eta}/t) \Theta( 2^{M} \tau)$. Then we write
\begin{align*}
    (\psi_{\ell, \delta} \phi_{\delta_t, j})(s/t) &= \sum_{\nu' \in \mathbb{Z}} (\psi_{\ell, \delta} \phi_{\delta_t, j})(s/t) \Omega_{\nu'}(s) .
\end{align*}
For $t \in I_{\nu}$, The above summand of $\nu'$ is non-zero only if the term $\psi_{\ell, \delta}(s/t) \Omega_{\nu'}(s) \neq 0$, which only happen when $\nu-2^{\ell+6} \leq \nu' \leq \nu + 2^{\ell+ 6}$. Using this fact we get
\begin{align}
\label{Decomposition of operator using Omega}
    (\psi_{\ell, \delta} \phi_{\delta_t, j})_{t, M}(\mathcal{L}, T) &= \sum_{\nu' = \nu- 2^{\ell+6}}^{\nu+ 2^{\ell+6}} (\psi_{\ell, \delta} \phi_{\delta_t, j})_{t, M}(\mathcal{L}, T) \Omega_{\nu'}(\mathcal{\sqrt{L}}) .
\end{align}
In view of the above decomposition we obtain
\begin{align}
\label{Estimate of I1j before two parts}
    I_{1}(j) &\leq C j^{1/2} 2^{C \gamma j} \sum_{\ell=0}^{j_0} \Bigg[ \sum_{M =-l_0}^{j} \sum_{n=1}^{N_M} \sum_{m} \| \chi_{\widetilde{B}_{m,0,n}^{M}} \omega\|_{L^{r}} \sum_{\nu=0}^{\nu_0} \int_{I_{\nu}} \Bigg(\sum_{\nu' = \nu- 2^{\ell+6}}^{\nu+ 2^{\ell+6}} \\
    &\nonumber \hspace{3cm} \left\| \chi_{\widetilde{B}_{m,0,n}^{M}} (\psi_{\ell, \delta} \phi_{\delta_t, j})_{t, M}(\mathcal{L}, T) \Omega_{\nu'}(\mathcal{\sqrt{L}}) (\chi_{S_{m,0,n}^M} f) \right\|_{L^{q'}} \Bigg)^2 \, dt \Bigg]^{\frac{1}{2}} .
\end{align}
Now for the remaining estimate of $I_1(j)$ we divide the proof into two parts, one for M\'etivier groups and the other for Heisenberg-type groups.

\medskip

\noindent \textbf{M\'etivier groups:}
Let us first estimate the $L^2 \to L^{q'}$ operator norm of the following. From \eqref{Definition of s/t in suffix t case} recall that support of $(\psi_{\ell, \delta} \phi_{\delta_t, j})_t$ is contained in $[t(1-2^{\ell+2} \delta), t(1+2^{\ell+2} \delta)]$. For $\beta>1/2$, applying \eqref{Metivier group restriction} of Proposition \ref{Proposition: Truncated Restriction Estimate strong form} yields
\begin{align}
\label{Use of truncated restriction type estimate}
    & \| \chi_{\widetilde{B}_{m,0,n}^{M}} (\psi_{\ell, \delta} \phi_{\delta_t, j})_{t, M}(\mathcal{L}, T) \|_{L^2 \to L^{q'}} \\
    &\nonumber = \|(\psi_{\ell, \delta} \phi_{\delta_t, j})_{t, M}(\mathcal{L}, T) \chi_{\widetilde{B}_{m,0,n}^{M}} \|_{L^{q} \to L^2} \\
    &\nonumber \leq C (t(1+2^{\ell+2} \delta))^{Q(\frac{1}{q}-\frac{1}{2})} 2^{-M d_2 (\frac{1}{q}-\frac{1}{2})} \Big(\|(\psi_{\ell, \delta} \phi_{\delta_t, j})_t(t(1+2^{\ell+2} \delta) \cdot)\|_{L^2} \\
    &\hspace{2cm}\nonumber + 2^{-M \beta \theta_q} \|(\psi_{\ell, \delta} \phi_{\delta_t, j})_t(t(1+2^{\ell+2} \delta) \cdot)\|_{L^2}^{1-\theta_q} \|(\psi_{\ell, \delta} \phi_{\delta_t, j})_t(t(1+2^{\ell+2} \delta) \cdot)\|_{L^2_{\beta}}^{\theta_q} \Big) \\
    &\nonumber \leq C 2^{-M d_2 (\frac{1}{q}-\frac{1}{2})} \Big(\|(\psi_{\ell, \delta} \phi_{\delta_t, j})((1+2^{\ell+2} \delta) \cdot)\|_{L^2} \\
    &\nonumber \hspace{2cm} + 2^{-M \beta \theta_q} \|(\psi_{\ell, \delta} \phi_{\delta_t, j})((1+2^{\ell+2} \delta) \cdot)\|_{L^2}^{1-\theta_q} \|(\psi_{\ell, \delta} \phi_{\delta_t, j})((1+2^{\ell+2} \delta) \cdot)\|_{L^2_{\beta}}^{\theta_q} \Big) .
\end{align}
where in the last line we have used the facts $t \sim 1$ and since $0\leq \ell \leq j_0$, so that $2^{\ell} \delta \leq 1$.

From \eqref{Definition of psi l delta cutoff}, we can see that $\psi_{\ell, \delta}(s) = 0$ if $s \in (1-2^{\ell} \delta, 1+2^{\ell} \delta)$. So that from \eqref{Pointwise bound for phi delta j function}, for any $\beta \geq 0$, we have the following estimate,
\begin{align}
\label{Sobolev norm of psi phi}
    \|(\psi_{\ell, \delta} \phi_{\delta_t, j})((1+2^{\ell+2} \delta) \cdot)\|_{L^2_{\beta}} &\leq C \left\{\begin{array}{ll}
        \delta^{1/2} 2^{(j_0-j)N} 2^{j \beta}, & \quad \ell =0   \\
        \delta^{1/2} 2^{\ell/2} 2^{j-j_0} (2^{j+\ell} \delta)^{-N-1} 2^{j \beta} , & \quad 1 \leq \ell \leq j_0 \\
        \delta^{1/2} 2^{(j_0-j)N} 2^{-\ell N} 2^{j \beta} , & \quad 0 \leq \ell \leq j_0 . \\
    \end{array} \right.
\end{align}
Therefore using the above estimate we get
\begin{align}
\label{Estimate of L2 and L2 sobolev norm}
    & \|(\psi_{\ell, \delta} \phi_{\delta_t, j})((1+2^{\ell+2} \delta) \cdot)\|_{L^2} + 2^{-M \beta \theta_q} \|(\psi_{\ell, \delta} \phi_{\delta_t, j})((1+2^{\ell+2} \delta) \cdot)\|_{L^2}^{1-\theta_q} \\
    &\nonumber \hspace{8cm} \times \|(\psi_{\ell, \delta} \phi_{\delta_t, j})((1+2^{\ell+2} \delta) \cdot)\|_{L^2_{\beta}}^{\theta_q} \\
    &\nonumber \leq C \delta^{1/2} 2^{(j_0-j)N} 2^{-\ell N} + C 2^{-M \beta \theta_q} (\delta^{1/2} 2^{(j_0-j)N} 2^{-\ell N})^{1-\theta_q} (\delta^{1/2} 2^{(j_0-j)N} 2^{-\ell N} 2^{j \beta})^{\theta_q} \\
    &\nonumber \leq C \delta^{1/2} 2^{(j_0-j)N} 2^{-\ell N} \max\{1, 2^{(j-M) \beta \theta_q}\} .
\end{align}
Consequently plugging the estimate \eqref{Estimate of L2 and L2 sobolev norm} into \eqref{Use of truncated restriction type estimate} we obtain
\begin{align}
\label{Operator norm estimate of truncated operator}
    \| \chi_{\widetilde{B}_{m,0,n}^{M}} (\psi_{\ell, \delta} \phi_{\delta_t, j})_{t, M}(\mathcal{L}, T) \|_{L^2 \to L^{q'}} & \leq C \delta^{1/2} 2^{(j_0-j)N} 2^{-\ell N} \max\{1, 2^{(j-M) \beta \theta_q}\} 2^{-M d_2 (\frac{1}{q}-\frac{1}{2})} .
\end{align}
Note that from \eqref{Defination of dilated balls}, we have $|\widetilde{B}_{m,0,n}^{M}| \lesssim 2^{(M d_1 +C \gamma j + 2j d_2)}$ and also recall that $\frac{1}{2r} = \frac{1}{q}-\frac{1}{2}$. Since $\beta>1/2$, we write $\beta=1/2+\varepsilon$ for $\varepsilon>0$. Therefore from \eqref{Operator norm estimate of truncated operator} and using $0\leq M \leq j$ yields
\begin{align*}
    & \left\| \chi_{\widetilde{B}_{m,0,n}^{M}} (\psi_{\ell, \delta} \phi_{\delta_t, j})_{t, M}(\mathcal{L}, T) \Omega_{\nu'}(\mathcal{\sqrt{L}}) (\chi_{S_{m,0,n}^M} f) \right\|_{L^{q'}} \\
    &\leq C \delta^{1/2} 2^{(j_0-j)N} 2^{-\ell N} 2^{(j-M) (\frac{1}{2}+\varepsilon) \theta_q} |\widetilde{B}_{m,0,n}^{M}|^{\frac{1}{2}-\frac{1}{q}} 2^{(M d_1 +C \gamma j + 2j d_2)(\frac{1}{q}-\frac{1}{2})} \\
    &\hspace{8cm} 2^{-M d_2 (\frac{1}{q}-\frac{1}{2})} \|\Omega_{\nu'}(\mathcal{\sqrt{L}}) (\chi_{S_{m,0,n}^M} f ) \|_{L^{2}} \\
    &\leq C \delta^{1/2} 2^{(j_0-j)N} 2^{-\ell N} |\widetilde{B}_{m,0,n}^{M}|^{\frac{1}{2}-\frac{1}{q}} 2^{C \gamma j/(2r)} 2^{j d(\frac{1}{q}-\frac{1}{2})} 2^{(M-j)(d_1- d_2-r \theta_q -2 r \varepsilon \theta_q)/(2r)} \\
    &\hspace{9cm} \|\Omega_{\nu'}(\mathcal{\sqrt{L}}) (\chi_{S_{m,0,n}^M} f ) \|_{L^{2}} .
\end{align*}
Now putting the above estimate into \eqref{Estimate of I1j before two parts} we get
\begin{align}
\label{Estimate of I1j after putting restriction estimate}
    &I_{1}(j) \leq C j^{1/2} 2^{C \gamma j} \delta^{1/2} 2^{(j_0-j)N} 2^{jd(\frac{1}{q}-\frac{1}{2})} \sum_{\ell=0}^{j_0} 2^{-\ell N} \left[\sum_{M =-l_0}^{j} 2^{2(M-j)(d_1- d_2-r \theta_q -2 r \varepsilon \theta_q)/(2r)} \right. \\
    &\nonumber \left. \sum_{m} \sum_{n=1}^{N_M} |\widetilde{B}_{m,0,n}^{M}|^{1-\frac{2}{q}} \| \chi_{\widetilde{B}_{m,0,n}^{M}} \omega\|_{L^{r}} \sum_{\nu=0}^{\nu_0} \int_{I_{\nu}} \left(\sum_{\nu' = \nu- 2^{\ell+6}}^{\nu+ 2^{\ell+6}} \|\Omega_{\nu'}(\mathcal{\sqrt{L}}) (\chi_{S_{m,0,n}^M} f) \|_{L^{2}} \right)^2 \, dt \right]^{\frac{1}{2}} .
\end{align}
Note that, since $1/r=2/q-1$, we have
\begin{align}
\label{Estimate of weight in terms of maximal function}
    \|\chi_{\widetilde{B}_{m,0,n}^{M}} \omega\|_{L^{r}} &= |\widetilde{B}_{m,0,n}^{M}|^{1/r} \left( \frac{1}{|\widetilde{B}_{m,0,n}^{M}|} \int_{\widetilde{B}_{m,0,n}^{M}} \omega^{r} \right)^{1/r} \\
    &\nonumber \leq |\widetilde{B}_{m,0,n}^{M}|^{\frac{2}{q}-1} \inf_{\widetilde{B}_{m,0,n}^{M} \ni (x,u)} \mathcal{M}_{r}^{|\cdot|}\omega(x,u) .
\end{align}
Applying Cauchy-Schwartz inequality, using the fact $|I_{\nu}| \leq C \delta$ and \eqref{Introducing the cutoff omega} yields
\begin{align}
\label{Calculation for Omega to sum nu}
    & \sum_{\nu=0}^{\nu_0} \int_{I_{\nu}} \left(\sum_{\nu' = \nu- 2^{\ell+6}}^{\nu+ 2^{\ell+6}} \|\Omega_{\nu'}(\mathcal{\sqrt{L}}) (\chi_{S_{m,0,n}^M} f) \|_{L^{2}} \right)^2 \, dt \\
    &\nonumber \leq C 2^{\ell} \sum_{\nu \in \mathbb{Z}} \sum_{\nu' = \nu- 2^{\ell+6}}^{\nu+ 2^{\ell+6}} \|\Omega_{\nu'}(\mathcal{\sqrt{L}}) (\chi_{S_{m,0,n}^M} f ) \|_{L^{2}}^2 \left( \int_{I_{\nu}} dt \right) \leq C 2^{2\ell} \delta \|\chi_{S_{m,0,n}^M} f \|_{L^{2}}^2 .
\end{align}
Plugging the estimates \eqref{Estimate of weight in terms of maximal function} and \eqref{Calculation for Omega to sum nu} into the estimate \eqref{Estimate of I1j after putting restriction estimate} and choosing $\epsilon_1>2 r \varepsilon \theta_q>0$ provides
\begin{align}
\label{Final estimate of I1j case}
    I_1(j) &\leq C \delta j^{1/2} 2^{C \gamma j} 2^{(j_0-j)N} 2^{jd(\frac{1}{q}-\frac{1}{2})} \sum_{\ell=0}^{j_0} 2^{-\ell (N-1)} \left[\sum_{M =-l_0}^{j} 2^{2(M-j)(d_1- d_2-r \theta_q -2 r \varepsilon \theta_q+\epsilon_1)/(2r)} \right. \\
    &\nonumber \hspace{5cm} \left. 2^{(j-M)\epsilon_1/r} \sum_{m} \sum_{n=1}^{N_M} \int_{S_{m,0,n}^M} | f(x,u)|^2 \mathcal{M}_{r}^{|\cdot|}\omega(x,u)\, d(x,u) \right]^{\frac{1}{2}} \\
    &\nonumber \leq C \delta j^{1/2} 2^{C \gamma j+j \epsilon_1/2r} 2^{(j_0-j)N} 2^{jd(\frac{1}{q}-\frac{1}{2})} \sum_{\ell=0}^{j_0} 2^{-\ell (N-1)} \left[\sum_{M =-l_0}^{j} 2^{2(M-j)(d_1- d_2-r \theta_q -2 r \varepsilon \theta_q+\epsilon_1)/(2r)} \right. \\
    &\nonumber \hspace{6cm} \left. \int_{\mathbb{R}^d} | f(x,u)|^2 \mathcal{M}_{r}^{|\cdot|}\omega(x,u)\, d(x,u) \right]^{\frac{1}{2}} \\
    &\nonumber \leq C \delta 2^{C \gamma j + j \epsilon_2} 2^{(j_0-j)N} 2^{jd(\frac{1}{q}-\frac{1}{2})} \left( \int_{\mathbb{R}^d} | f(x,u)|^2 \mathcal{M}_{r}^{|\cdot|}\omega(x,u)\, d(x,u) \right)^{\frac{1}{2}} ,
\end{align}
where $\epsilon_2 >0$, which depends on $\epsilon_1$ and provided we choose $N>1$ as well as $d_1-d_2-r \theta_q \geq 0$.

Note that for $\theta_q = p_{d_2}(1-1/q)$ and $\frac{1}{2r} = \frac{1}{q}-\frac{1}{2}$ for $1\leq q \leq 2$, the condition $d_1-d_2-r \theta_q \geq 0$ is equivalent to
\begin{align*}
    1\leq q \leq \frac{p_{d_2} + 2(d_1- d_2)}{p_{d_2}+ d_1 - d_2}:=\mathscr{P}_{d_1, d_2}.
\end{align*}
From \eqref{Reduction of square function to later Sangyuklee}, recall that we have $1\leq q \leq \mathfrak{p}_G'$, where $\mathfrak{p}_G$ is defined as in \eqref{Definition of mathfrak pG}. Since for $(d_1, d_2) \notin \{(4,3), (8,6), (8, 7)\}$, we have $d_1 > 3d_2/2$ (see \cite[Proposition 2.1]{Bagchi_Molla_Singh_Bilinear_Metivier}), which in turn implies that $ p_{d_2}' \leq \mathscr{P}_{d_1, d_2}$. Therefore for $1\leq q \leq p_{d_2}'$, the condition $d_1-d_2-r \theta_q \geq 0$ always hold.

On the other hand, for $(d_1,d_2)=(4,3), (8,6)$ and $(8,7)$ we have $\mathscr{P}_{d_1,d_2}=6/5$, $17/12$ and $14/11$ respectively, which is same as $P_{d_1, d_2}'$. Therefore the condition $d_1-d_2-r \theta_q \geq 0$ also hold for these points.

\medskip

\noindent \textbf{Heisenberg-type groups:}
In case of Heisenberg-type groups, one has follow the same calculation as we just did for M\'etivier group case with $\theta_q =0$ and instead of \eqref{Metivier group restriction} of Proposition \ref{Proposition: Truncated Restriction Estimate strong form} one have to apply \eqref{Heisenberg type restriction} of Proposition \ref{Proposition: Truncated Restriction Estimate strong form}. Therefore, following the same calculation as in M\'etivier group case we get the required estimate for Heisenberg-type groups.

\medskip
\noindent \textbf{Estimate of \texorpdfstring{$I_{2}(j)$}{}:}
Note that we have
\begin{align}
\label{Rewriting of I2 estimate}
    I_{2}(j) &= C j^{1/2} \sum_{\ell=0}^{j_0} \left[ \sum_{m} \| \chi_{B_{m,0}} \omega\|_{L^{r}} \sum_{M =-l_0}^{j} \int_{1/\sqrt{2}}^{\sqrt{2}} \right. \\
    &\nonumber \hspace{1cm} \left. \left\{\sum_{n=1}^{N_M} \left\| \chi_{B_{m,0}} (1-\chi_{\widetilde{B}_{m,0,n}^{M}}) (\psi_{\ell, \delta} \phi_{\delta_t, j})_{t, M}(\mathcal{L}, T) (\chi_{S_{m,0,n}^M} f) \right\|_{L^{q'}} \right\}^2 \, dt \right]^{\frac{1}{2}} .
\end{align}
Let us start with finding the $L^2 \to L^{q'}$ operator norm of the following, which again by duality
\begin{align}
\label{Use of duality in finiding operator norm}
    \| \chi_{B_{m,0}} (1-\chi_{\widetilde{B}_{m,0,n}^{M}}) &(\psi_{\ell, \delta} \phi_{\delta_t, j})_{t, M}(\mathcal{L}, T) \chi_{S_{m,0,n}^M} \|_{L^2 \to L^{q'}} \\
    &\nonumber = \|\chi_{S_{m,0,n}^M} (\psi_{\ell, \delta} \phi_{\delta_t, j})_{t, M}(\mathcal{L}, T) \chi_{B_{m,0}} (1-\chi_{\widetilde{B}_{m,0,n}^{M}}) \|_{L^{q} \to L^2} .
\end{align}
In order to estimate the above, we first calculate the following $L^1 \to L^2$ operator norm:
\begin{align*}
    \|\chi_{S_{m,0,n}^M} (\psi_{\ell, \delta} \phi_{\delta_t, j})_{t, M}(\mathcal{L}, T) \chi_{B_{m,0}} (1-\chi_{\widetilde{B}_{m,0,n}^{M}}) \|_{L^{1} \to L^2} .
\end{align*}
An application of Minkowski's integral inequality implies
\begin{align}
\label{Use of Minkowski inequality in I2 case}
    & \|\chi_{S_{m,0,n}^M} (\psi_{\ell, \delta} \phi_{\delta_t, j})_{t, M}(\mathcal{L}, T) \chi_{B_{m,0}} (1-\chi_{\widetilde{B}_{m,0,n}^{M}}) f\|_{L^2} \\
    &\nonumber \leq \int_{\mathbb{R}^d} |f(y,t)| \Bigg( \int_{\mathbb{R}^d} \chi_{B_{m,0}}(y,t) (1-\chi_{\widetilde{B}_{m,0,n}^{M}})(y,t) \chi_{S_{m,0,n}^M}(x,u) \\
    &\nonumber \hspace{4cm} \times |\mathcal{K}_{(\psi_{\ell, \delta} \phi_{\delta_t, j})_{t, M}(\mathcal{L}, T)}((y,t)^{-1}(x,u))|^2 \, d(x,u) \Bigg)^{1/2}\, d(y,t) .
\end{align}
Note that if $(x,u) \in \supp{\chi_{S_{m,0,n}^M}}$ and $(y,t) \in \supp{\chi_{B_{m,0}} (1-\chi_{\widetilde{B}_{m,0,n}^{M}})}$, then
\begin{align*}
     |x-x_{m,0,n}^{M}| \leq C 2^{M} \quad \text{and} \quad |y-x_{m,0,n}^{M}| \geq C 2^{\gamma j+1} 2^{M} ,
\end{align*}
which in turn again implies $|x-y| \geq C 2^{\gamma j} 2^{M}$.

Therefore, using the above observation along with the translation invariance of the Haar measure and Proposition \ref{Proposition : First layer weighted Plancherel} for $N >Q/2$ we see that,
\begin{align*}
    & \Big( \int_{G} |\chi_{B_{m,0}}(y,t) (1-\chi_{\widetilde{B}_{m,0,n}^{M}})(y,t) \chi_{S_{m,0,n}^M}(x,u) |\mathcal{K}_{(\psi_{\ell, \delta} \phi_{\delta_t, j})_{t, M}(\mathcal{L}, T)}((y,t)^{-1}(x,u))|^2 \ d(x,u) \Big)^{1/2} \\
    &\nonumber \leq C (2^{\gamma j} 2^{M})^{-N} \Big( \int_{G} ||x-y|^{N} \mathcal{K}_{(\psi_{\ell, \delta} \phi_{\delta_t, j})_{t, M}(\mathcal{L}, T)}(x-y, u-t-\tfrac{1}{2}[y,x])|^2 \ d(x,u) \Big)^{1/2} \\
    &\nonumber \leq C 2^{-\gamma j N} 2^{-M N} \Big( \int_{G} ||x|^{N} \mathcal{K}_{(\psi_{\ell, \delta} \phi_{\delta_t, j})_{t, M}(\mathcal{L}, T)}(x, u)|^2 \ d(x,u) \Big)^{1/2} \\
    &\nonumber \leq C 2^{-\gamma j N} 2^{-M N} 2^{M(N-d_2/2)} (t(1+2^{\ell+2} \delta))^{Q/2-N} \|(\psi_{\ell, \delta} \phi_{\delta_t, j})_t(t(1+2^{\ell+2} \delta) \cdot)\|_{L^2} \\
    &\nonumber \leq C 2^{-\gamma j N} 2^{-M d_2/2} \|(\psi_{\ell, \delta} \phi_{\delta_t, j})((1+2^{\ell+2} \delta) \cdot)\|_{L^2} ,
\end{align*}
where we have used $(1+2^{\ell+2} \delta) \geq 1$ and $t \sim 1$.

Recall that $B_{m,0} := B(0, 12 \cdot 2^{j})$. Hence putting the above estimate into \eqref{Use of Minkowski inequality in I2 case}, using \eqref{Sobolev norm of psi phi} with $\beta=0$ and H\"older's inequality we get
\begin{align*}
    & \|\chi_{S_{m,0,n}^M} (\psi_{\ell, \delta} \phi_{\delta_t, j})_{t, M}(\mathcal{L}, T) \chi_{B_{m,0}} (1-\chi_{\widetilde{B}_{m,0,n}^{M}}) f\|_{L^2} \\
    &\leq \delta^{1/2} 2^{(j_0-j)N} 2^{-\ell N} 2^{-\gamma j N} 2^{-M d_2/2} \|\chi_{B_{m,0}} (1-\chi_{\widetilde{B}_{m,0,n}^{M}}) f\|_{L^1} \\
    &\leq \delta^{1/2} 2^{(j_0-j)N} 2^{-\ell N} 2^{-\gamma j N} 2^{-M d_2/2} 2^{jQ(1-\frac{1}{q})} \|\chi_{B_{m,0}} (1-\chi_{\widetilde{B}_{m,0,n}^{M}}) f\|_{L^{q}} .
\end{align*}
Therefore in view of the above estimate and from \eqref{Use of duality in finiding operator norm}, we can see that
\begin{align}
\label{L2 to Lq prime norm for I2 case}
    & \left\| \chi_{B_{m,0}} (1-\chi_{\widetilde{B}_{m,0,n}^{M}}) (\psi_{\ell, \delta} \phi_{\delta_t, j})_{t, M}(\mathcal{L}, T) (\chi_{S_{m,0,n}^M} f)\right\|_{L^{q'}} \\
    &\nonumber \leq C \delta^{1/2} 2^{(j_0-j)N} 2^{-\ell N} 2^{-\gamma j N} 2^{-M d_2/2} 2^{jQ(1-\frac{1}{q})} \|\chi_{S_{m,0,n}^M} f\|_{L^2} .
\end{align}
On the other hand, since $1/r=2/q-1$, we have
\begin{align}
\label{Estimate of weight in terms of original maximal function}
    \|\chi_{B_{m,0}} \omega\|_{L^{r}} &= |B_{m,0}|^{1/r} \left( \frac{1}{|B_{m,0}|} \int_{B_{m,0}} \omega^{r} \right)^{1/r} \leq |B_{m,0}|^{\frac{2}{q}-1} \inf_{B_{m,0} \ni (x,u)} \mathcal{M}_{r}\omega(x,u) .
\end{align}
Hence plugging the estimates \eqref{L2 to Lq prime norm for I2 case}, \eqref{Estimate of weight in terms of original maximal function} into \eqref{Rewriting of I2 estimate}, using H\"older's inequality and the fact $N_M \lesssim 2^{j d_1}$ (see just below of \eqref{Property of the smaller balls}) we obtain
\begin{align}
\label{Final estimate of I2j case}
    I_{2}(j) &\leq C j^{1/2} \delta^{1/2} 2^{(j_0-j)N} 2^{-\gamma j N} 2^{jQ(1-\frac{1}{q})} \sum_{\ell=0}^{j_0} 2^{-\ell N} \left[ \sum_{m} |B_{m,0}|^{\frac{2}{q}-1} \right. \\
    &\nonumber \hspace{2cm} \left. \inf_{B_{m,0} \ni (x,u)} \mathcal{M}_{r}\omega(x,u) \sum_{M =-l_0}^{j} 2^{-M d_2} \left(\sum_{n=1}^{N_M} \|\chi_{S_{m,0,n}^M} f\|_{L^2} \right)^2 \int_{1/\sqrt{2}}^{\sqrt{2}} dt \right]^{\frac{1}{2}} \\
    &\nonumber \leq C j^{1/2} \delta^{1/2} 2^{(j_0-j)N} 2^{-\gamma j N} 2^{jQ(1-\frac{1}{q})} 2^{jQ(\frac{1}{q}-\frac{1}{2})} 2^{j d_1/2} \\
    &\nonumber \hspace{4cm} \left[ \sum_{M =-l_0}^{j} 2^{-M d_2} \sum_{m} \int_{B_{m,0}} | f(x,u)|^2 \mathcal{M}_{r}\omega(x,u) \, d(x,u) \right]^{\frac{1}{2}} \\
    &\nonumber \leq C \delta 2^{(j_0-j)(N+1/2)} 2^{-\gamma j N} 2^{j(Q/2+d_1/2+1/2+\epsilon_3)} \left( \int_{\mathbb{R}^d} | f(x,u)|^2 \mathcal{M}_{r}\omega(x,u)\, d(x,u) \right)^{\frac{1}{2}} ,
\end{align}
where $\epsilon_3>0$ and we have used the fact $\delta^{-1} \sim 2^{j_0}$ and we choose $N>0$.

\medskip
\noindent \textbf{Estimate of \texorpdfstring{$I_{3}(j)$}{}:}
Similarly as in the estimate of $I_1(j)$, using \eqref{Decomposition of operator using Omega} we have
\begin{align}
\label{First estimate of I1jk equation cutoff}
    I_{3}(j) &\leq C \sum_{M=j+1}^{\infty} \sum_{\ell=0}^{j_0} \left[ \sum_{m} \|\chi_{B_m} \omega\|_{L^{r}} \sum_{\nu=0}^{\nu_0} \int_{I_{\nu}} \Bigg(\sum_{\nu' = \nu- 2^{\ell+6}}^{\nu+ 2^{\ell+6}} \right. \\
    &\nonumber \hspace{3cm} \left. \left\| \chi_{B_m} (\psi_{\ell, \delta} \phi_{\delta_t, j})_{t, M}(\mathcal{L}, T) \Omega_{\nu'}(\mathcal{\sqrt{L}}) \left(\chi_{S_m} f \right) \right\|_{L^{p'}} \Bigg)^2 \, dt \right]^{\frac{1}{2}} .
\end{align}
Recall that $|B_m| \lesssim 2^{j Q}$. Then with the help of the estimate \eqref{Operator norm estimate of truncated operator} for $\beta>1/2$ we obtain
\begin{align}
\label{Operator norm estimate for I3 case}
    & \| \chi_{B_m} (\psi_{\ell, \delta} \phi_{\delta_t, j})_{t, M}(\mathcal{L}, T) \Omega_{\nu'}(\mathcal{\sqrt{L}}) \left(\chi_{S_m} f \right) \|_{L^{q'}} \\
    &\nonumber \leq C \delta^{1/2} 2^{(j_0-j)N} 2^{-\ell N} \max\{1, 2^{(j-M) \beta \theta_q}\} |B_m|^{\frac{1}{2}-\frac{1}{q}} 2^{jQ(\frac{1}{q}-\frac{1}{2})} 2^{-M d_2 (\frac{1}{q}-\frac{1}{2})} \|\Omega_{\nu'}(\mathcal{\sqrt{L}}) \left(\chi_{S_m} f \right) \|_{L^{2}} .
\end{align}
Also note that similar to \eqref{Estimate of weight in terms of original maximal function} here also
\begin{align}
\label{Creating maximal function in I3 case}
    \|\chi_{B_{m}} \omega\|_{L^{r}} &\leq |B_{m}|^{\frac{2}{q}-1} \inf_{B_{m} \ni (x,u)} \mathcal{M}_{r}\omega(x,u) .
\end{align}
Plugging the above two estimates \eqref{Operator norm estimate for I3 case} and \eqref{Creating maximal function in I3 case} into \eqref{First estimate of I1jk equation cutoff} yields
\begin{align}
\label{Estimate of I1 j k after putting pointwise bound cutoff}
    I_{3}(j) &\leq C \delta^{1/2} 2^{(j_0-j)N} 2^{jQ(\frac{1}{q}-\frac{1}{2})} \sum_{M=j+1}^{\infty} 2^{-M d_2 (\frac{1}{q}-\frac{1}{2})} \sum_{\ell=0}^{j_0} 2^{-\ell N} \left[ \sum_{m} \right. \\
    &\nonumber \hspace{1cm} \left. \inf_{B_{m} \ni (x,u)} \mathcal{M}_{r}\omega(x,u) \sum_{\nu=0}^{\nu_0} \int_{I_{\nu}} \left(\sum_{\nu' = \nu- 2^{\ell+6}}^{\nu+ 2^{\ell+6}} \|\Omega_{\nu'}(\mathcal{\sqrt{L}}) \left(\chi_{S_m} f \right) \|_{L^{2}} \right)^2 \, dt \right]^{\frac{1}{2}} .
\end{align}
Similarly as in \eqref{Calculation for Omega to sum nu} here we have
\begin{align}
\label{Estimate after changing nu and nu prime cutoff}
    \sum_{\nu=0}^{\nu_0} \int_{I_{\nu}} \left(\sum_{\nu' = \nu- 2^{\ell+6}}^{\nu+ 2^{\ell+6}} \|\Omega_{\nu'}(\mathcal{\sqrt{L}}) \left(\chi_{S_m} f \right) \|_{L^{2}} \right)^2 \, dt  &\leq C 2^{2\ell} \delta \|\chi_{S_m} f \|_{L^{2}}^2 .
\end{align}
Now putting the above estimate \eqref{Estimate after changing nu and nu prime cutoff} into \eqref{Estimate of I1 j k after putting pointwise bound cutoff} gives
\begin{align}
\label{Final estimate of I3j case}
    I_{3}(j) &\leq C \delta 2^{(j_0-j)N} 2^{jQ(\frac{1}{q}-\frac{1}{2})} \sum_{M=j+1}^{\infty} 2^{-M d_2 (\frac{1}{q}-\frac{1}{2})} \sum_{\ell=0}^{j_0} 2^{-\ell (N-1)} \left[ \sum_{m} \int_{S_m} |f(x,u)|^2 \mathcal{M}_{r}\omega(x,u) \, d(x,u) \right]^{\frac{1}{2}} \\
    &\nonumber \leq C \delta 2^{(j_0-j)N} 2^{j d (\frac{1}{q}-\frac{1}{2})} \left( \int_{\mathbb{R}^d} |f(x,u)|^2 \mathcal{M}_{r}\omega(x,u) \, d(x,u) \right)^{\frac{1}{2}} ,
\end{align}
by choosing $N>1$.

\medskip
\noindent \textbf{Estimate of \texorpdfstring{$I_{4}(j)$}{}:}
In this case we have
\begin{align*}
    I_4(j) &\leq C \sum_{\ell=j_0+1}^{\infty} \left( \int_{1/\sqrt{2}}^{\sqrt{2}} \sum_{m} \|\chi_{B_m} \omega\|_{L^{r}} \left\| \chi_{B_m} (\psi_{\ell, \delta} \phi_{\delta_t, j}) (\mathcal{\sqrt{L}}/t )\chi_{S_m} f \right\|_{L^{q'}}^2 \, dt \right)^{\frac{1}{2}} .
\end{align*}
Note that $|B_m| \lesssim 2^{j Q}$. Using the fact \eqref{Definition of s/t in suffix t case} and Corollary \eqref{Corollary: Truncated restriction equation weak form I} we have
\begin{align}
\label{Operator norm for I4 case}
    & \| \chi_{B_m} (\psi_{\ell, \delta} \phi_{\delta_t, j})(\mathcal{\sqrt{L}}/t ) \|_{L^2 \to L^{q'}} = \|(\psi_{\ell, \delta} \phi_{\delta_t, j})_t (\mathcal{\sqrt{L}}) \chi_{B_m} \|_{L^{q} \to L^2} \\
    &\nonumber \leq C ( t(1+2^{\ell+2} \delta))^{Q(\frac{1}{q}-\frac{1}{2})} \|(\psi_{\ell, \delta} \phi_{\delta_t, j})_t(t(1+2^{\ell+2} \delta) \cdot)\|_{L^{\infty}} \\
    &\nonumber \leq C |B_m|^{\frac{1}{2}-\frac{1}{q}} (2^{j} (1+2^{\ell+2} \delta))^{Q(\frac{1}{q}-\frac{1}{2})} \|(\psi_{\ell, \delta} \phi_{\delta_t, j})((1+2^{\ell+2} \delta) \cdot)\|_{L^{\infty}} .
\end{align}
With the help of \eqref{Pointwise bound for phi delta j function}, for $\ell \geq j_0 +1$ one can see that
\begin{align}
\label{L infinity norm of psi phi}
    \|(\psi_{\ell, \delta} \phi_{\delta_t, j})((1+2^{\ell+2} \delta) \cdot)\|_{L^{\infty}} &\leq C 2^{j-j_0} (2^{j+\ell} \delta)^{-N} ,
\end{align}
for any $N \geq 0$.

Consequently, putting the above estimate \eqref{L infinity norm of psi phi} into \eqref{Operator norm for I4 case} implies
\begin{align}
\label{L2 to Lq prime norm estimate in I4 case}
    & \| \chi_{B_m} (\psi_{\ell, \delta} \phi_{\delta_t, j}) (\mathcal{\sqrt{L}}/t )\chi_{S_m} f \|_{L^{q'}} \\
    &\nonumber \leq C 2^{j-j_0} (2^{j+\ell} \delta)^{-N} |B_m|^{\frac{1}{2}-\frac{1}{q}} (2^{j} (1+2^{\ell+2} \delta))^{Q(\frac{1}{q}-\frac{1}{2})} \|\chi_{S_m} f\|_{L^2} .
\end{align}
Since $\ell \geq j_0 +1$ and $2^{-j_0 -1} \leq \delta < 2^{-j_0}$, so that $2^{\ell} \delta >1$. Therefore using the above estimate \eqref{L2 to Lq prime norm estimate in I4 case} and \eqref{Creating maximal function in I3 case}, we obtain
\begin{align}
\label{Final estimate of I4j case}
    I_4(j) &\leq C \sum_{\ell = j_0 + 1}^{\infty} 2^{j-j_0} (2^{j+\ell} \delta)^{Q(\frac{1}{q}-\frac{1}{2})-N} \Biggl(\sum_{m} \int_{S_m} |f(x,u)|^2 \mathcal{M}_{r}\omega(x,u) \, d(x,u) \int_{1/\sqrt{2}}^{\sqrt{2}} dt \Biggr)^{\frac{1}{2}} \\
    &\nonumber \leq C \delta 2^{j[Q(\frac{1}{q}-\frac{1}{2})-N+1]} \sum_{2^{\ell} \delta > 1} (2^{\ell} \delta)^{Q(\frac{1}{q}-\frac{1}{2})-N} \left( \int_{\mathbb{R}^d} |f(x,u)|^2 \mathcal{M}_{r}\omega(x,u) \, d(x,u) \right)^{\frac{1}{2}} \\
    &\nonumber \leq C \delta 2^{j[Q(\frac{1}{q}-\frac{1}{2})-N+1]} \left( \int_{\mathbb{R}^d} |f(x,u)|^2 \mathcal{M}_{r}\omega(x,u) \, d(x,u) \right)^{\frac{1}{2}} ,
\end{align}
provided we choose $N>Q(\frac{1}{q}-\frac{1}{2})$.

Finally, combining all the estimates $I_1(j)$ \eqref{Final estimate of I1j case}, $I_2(j)$ \eqref{Final estimate of I2j case}, $I_3(j)$ \eqref{Final estimate of I3j case}, $I_4(j)$ \eqref{Final estimate of I4j case} and putting them into \eqref{Final estimate after finite spped cutoff} and then again plugging \eqref{Final estimate after finite spped cutoff} into the estimate \eqref{Final estimate after further decomposition cutoff} we get
\begin{align*}
    & \left(\int_{\mathbb{R}^d} |\mathfrak{S}_{\delta, loc}^{\phi}(\mathcal{L})f(x,u)|^2 \omega(x,u) \, d(x,u) \right)^{1/2} \\
    &\leq C \delta 2^{j_0 N} \sum_{j \geq j_0} 2^{-j\{N-d(\frac{1}{q}-\frac{1}{2})-C \gamma -\epsilon_2\}} \left( \int_{\mathbb{R}^d} | f(x,u)|^2 \mathcal{M}_{r}^{|\cdot|}\omega(x,u)\, d(x,u) \right)^{\frac{1}{2}} \\
    &+ C \Big[ \delta 2^{j_0 (N+1/2)} \sum_{j \geq j_0} 2^{-j\{(N+1/2)+\gamma N-(Q/2+d_1/2+1/2+\epsilon_3)\}} + \delta 2^{j_0 N} \sum_{j \geq j_0} 2^{-j\{N-d (\frac{1}{q}-\frac{1}{2})\}} \\
    &\hspace{4cm} + \delta \sum_{j \geq j_0} 2^{-j\{N-Q(\frac{1}{q}-\frac{1}{2})-1\}} \Big] \left( \int_{\mathbb{R}^d} |f(x,u)|^2 \mathcal{M}_{r}\omega(x,u) \, d(x,u) \right)^{\frac{1}{2}} \\
    &\leq C \delta 2^{j_0\{d(\frac{1}{q}+\frac{1}{2})+C \gamma+\epsilon_2\}} \left( \int_{\mathbb{R}^d} | f(x,u)|^2 \mathcal{M}_{r}^{|\cdot|}\omega(x,u)\, d(x,u) \right)^{\frac{1}{2}} + C \delta \Big[ 2^{-j_0\{\gamma N-(Q/2+2d_1+1/2+\epsilon_3)\}} \\
    &\hspace{3cm} + 2^{j_0\{d (\frac{1}{q}-\frac{1}{2})\}} + 2^{-j_0\{N-Q(\frac{1}{q}-\frac{1}{2})-1\}} \Big] \left( \int_{\mathbb{R}^d} |f(x,u)|^2 \mathcal{M}_{r}\omega(x,u) \, d(x,u) \right)^{\frac{1}{2}} \\
    &\leq C \delta^{1-d(\frac{1}{q}-\frac{1}{2})-\epsilon/2} \left( \int_{\mathbb{R}^d} | f(x,u)|^2 \mathcal{M}_{r}^{|\cdot|}\omega(x,u)\, d(x,u) \right)^{\frac{1}{2}} \\
    &\hspace{4cm} + C \delta^{1-d (\frac{1}{q}-\frac{1}{2})} \left( \int_{\mathbb{R}^d} |f(x,u)|^2 \mathcal{M}_{r}\omega(x,u) \, d(x,u) \right)^{\frac{1}{2}} ,
\end{align*}
where $\epsilon/2=C \gamma+\epsilon_2>0$ is very small, which we get by choosing $\gamma, \epsilon_2, \epsilon_3>0$ sufficiently small and choosing $N$ sufficiently large. We have also used $\delta^{-1} \sim 2^{j_0}$.

Hence we obtain
\begin{align*}
    \int_{\mathbb{R}^d} |\mathfrak{S}_{\delta, loc}^{\phi}(\mathcal{L})f(x,u)|^2 \omega(x,u) \, d(x,u) &\leq C \delta^{2-d(\frac{2}{q}-1)-\epsilon} \int_{\mathbb{R}^d} | f(x,u)|^2 \mathcal{M}_{r}^{|\cdot|}\omega(x,u)\, d(x,u) \\
    &\hspace{1.5cm} + C \delta^{2-d (\frac{2}{q}-1)} \int_{\mathbb{R}^d} |f(x,u)|^2 \mathcal{M}_{r}\omega(x,u) \, d(x,u) .
\end{align*}
This completes the proof of the Lemma \ref{Lemma: Weighted L2 estimate for square function}.    
\end{proof}

\medskip

For the proof of Theorem \ref{Theorem: Lp to L2 local smoothing for Metivier} we need to consider slightly different square function then the one defined in \eqref{Definition: Square function}. Let $\zeta : \mathbb{R}^2 \to \mathbb{R}$ be a function which is non-vanishing and smooth on the set
\begin{align*}
    \{(\eta, t) : \tfrac{1}{2} \leq \eta \leq 4 , \tfrac{1}{2} < t < \tfrac{5}{2} \} .
\end{align*}
For $\phi$ as in \eqref{Definition: Square function}, set
\begin{align*}
    \widetilde{\phi}\left(\frac{t-\eta}{\delta} \right) = \phi\left(\zeta(\eta, t) \frac{t-\eta}{\delta} \right) .
\end{align*}
Then we define
\begin{align*}
    \widetilde{\phi}\left(\frac{t-\mathcal{L}}{\delta} \right)f(x,u) &= \frac{1}{(2\pi)^{d_2}} \int_{\mathfrak{g}_{2,r}^{*}} \sum_{\mathbf{k} \in \mathbb{N}^\Lambda} \phi\left(\zeta(\eta_{\mathbf{k}}^{\lambda}, t) \frac{t-\eta_{\mathbf{k}}^{\lambda}}{\delta} \right) \left[f^{\lambda} \times_{\lambda} \varphi_{\mathbf{k}}^{\mathbf{b}^{\lambda}, \mathbf{r}}(R_{\lambda}^{-1}\cdot) \right](x) \, e^{i \langle \lambda, u \rangle} \, d\lambda .
\end{align*}

Now corresponding to the $\widetilde{\phi}$, we consider the local square function with localized frequency denoted by $\mathfrak{S}_{\delta, loc}^{\widetilde{\phi}}(\mathcal{L})$ and defined as in \eqref{Definition: Square function} with $\phi$ replaced with $\widetilde{\phi}$. Then we have the following result.

\begin{proposition}
\label{Proposition: Generalized Square function estimate}
Let $\mathfrak{p}_G \leq p < \infty$. Whenever $\alpha>\alpha_d(p)$ we have
\begin{align*}
    \|\mathfrak{S}_{\delta, loc}^{\widetilde{\phi}}(\mathcal{L})f\|_{L^p} &\leq C \delta^{\frac{1}{2}-\alpha} \|f\|_{L^p} .
\end{align*}
Moreover,
\begin{align*}
    \|\mathfrak{S}_{\delta, loc}^{\widetilde{\phi}}(\mathcal{L})f\|_{L^2} &\leq C \delta^{\frac{1}{2}} \|f\|_{L^2} .
\end{align*}
    
\end{proposition}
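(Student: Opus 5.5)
The plan is to run the proof of Proposition \ref{Proposition: Local Square function estimate} again with $\phi$ replaced by $\widetilde{\phi}$. The only properties of $\phi$ that proof used are smoothness, support in a $\delta$-neighbourhood of the diagonal $\eta=t$, and uniform symbol bounds; all three survive the insertion of $\zeta$.

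For the $L^2$ bound I argue as in \eqref{L2 estimate of the square function}. The spectral theorem reduces matters to $\sup_{\eta}\int_{1/2}^{2}|\widetilde{\phi}((t-\eta)/\delta)|^2\,dt$. Since $\zeta$ is smooth and non-vanishing on the relevant compact set, $|\zeta|\geq c>0$ there. Hence $\widetilde{\phi}((t-\eta)/\delta)\neq 0$ forces $|t-\eta|\leq c^{-1}\delta$, and the integral is $O(\delta)$. This gives the bound $C\delta^{1/2}$.

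For $\mathfrak{p}_G\le p<\infty$ the duality reduction to a weighted inequality goes through unchanged. So it suffices to prove Lemma \ref{Lemma: Weighted L2 estimate for square function} with $\widetilde{\phi}$ in place of $\phi$. As in \eqref{Dominate local by slightly different}, I substitute $t\mapsto t^2$ and write the multiplier as a function of $s=\sqrt{\eta}/t$:
\begin{align*}
\widetilde{\phi}_{\delta_t}(s)=\phi\big(\zeta(t^2s^2,t^2)\,\delta^{-1}t^2(1-s^2)\big).
\end{align*}
This is even in $s$, smooth, supported where $|1-s|\lesssim\delta$, and satisfies $|\partial_s^k\widetilde{\phi}_{\delta_t}|\lesssim_k\delta^{-k}$ uniformly in $t\in[1/\sqrt2,\sqrt2]$. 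The values of $\eta$ outside the set where $\zeta$ is controlled do not occur, because the support condition localizes $\eta$ near $t$.

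Next I decompose $\widehat{\widetilde{\phi}_{\delta_t}}$ with the dyadic pieces $\vartheta_j$ of \eqref{Definition of zeta}, exactly as before. This gives pieces $\widetilde{\phi}_{\delta_t,j}$ whose convolution kernels have support of radius $2^{j+2}$ by finite speed of propagation of $\cos(u\sqrt{\mathcal L}/t)$.

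The one genuinely new point is the pointwise bound \eqref{Pointwise bound for phi delta j function} for $\widetilde{\phi}_{\delta_t,j}$. It follows from the same argument as in \cite{Christ_almost_Everywhere_Bochner_Riesz_1985}. Rescaling by $\delta$, the function $g(v)=\widetilde{\phi}_{\delta_t}(1+\delta v)$ is a compactly supported smooth bump whose $C^N$ norms are uniform in $\delta$ and $t$. Its Fourier transform therefore decays like $(1+\delta|u|)^{-N}$. The Fourier cutoff $\vartheta_j$ at $|u|\sim2^j$ then gives the factor $2^{(j_0-j)N}$ on $[1/4,8]$ and the stated off-diagonal decay elsewhere.

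With \eqref{Pointwise bound for phi delta j function} established, the remaining steps carry over verbatim:
\begin{itemize}
\item the spatial decomposition into $S_m$, $B_m$;
\item the cutoffs $\psi_{\ell,\delta}$, $\Theta(2^M\cdot)$ and $\Omega_{\nu'}$;
\item the restriction estimates of Proposition \ref{Proposition: Truncated Restriction Estimate strong form} and Corollary \ref{Corollary: Truncated restriction equation weak form I};
\item the weighted Plancherel estimate, Proposition \ref{Proposition : First layer weighted Plancherel};
\item the maximal-function bounds of Lemma \ref{Lemma: Boundedness of maximal and Euclidean maximal}.
\end{itemize}
These use only the Sobolev and $L^\infty$ norms in \eqref{Sobolev norm of psi phi} and \eqref{L infinity norm of psi phi}, which depend solely on \eqref{Pointwise bound for phi delta j function} and on the support property.

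The expected obstacle is uniformity: the constants in these norm bounds must not depend on $t$, even though $\zeta$ now couples $t$ and $\eta$. The compactness and non-vanishing of $\zeta$ on the stated set supply this. Interpolating between $q=1$ and $q=\mathfrak{p}_G'$ then yields the bound $\delta^{1/2-\alpha}$ for $\alpha>\alpha_d(p)$.
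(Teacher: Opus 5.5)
Your plan is the paper's own. The paper proves this proposition in one sentence: since $\zeta$ is smooth and non-vanishing where $\eta\sim1$, $t\sim1$, the proof of Proposition~\ref{Proposition: Local Square function estimate} goes through with obvious modifications. You have correctly spelled out those modifications: uniform symbol bounds for $\widetilde{\phi}_{\delta_t}$ after $t\mapsto t^2$, the rescaling argument that re-derives \eqref{Pointwise bound for phi delta j function}, and the observation that every later step only uses that bound, its derivative analogues, and the support property.

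The one step that fails as written is your claim that values of $\eta$ outside the set where $\zeta$ is controlled ``do not occur, because the support condition localizes $\eta$ near $t$''. This is circular. The localization $|t-\eta|\lesssim\delta$ is deduced from a lower bound $|\zeta|\geq c$, and that bound is only assumed on $\{1/2\le\eta\le4,\ 1/2<t<5/2\}$. The same circularity sits behind your claim that $\widetilde{\phi}_{\delta_t}$ is supported where $\bigl|1-|s|\bigr|\lesssim\delta$, since $s$ near $0$ or large corresponds to $\eta$ outside that set.

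Under the literal hypotheses the statement can actually fail. Suppose $\zeta(\eta,t)=0$ whenever $\eta\geq10$, and $\phi(0)\neq0$. Then $\widetilde{\phi}((t-\eta)/\delta)=\phi(0)$ there. For $f$ with spectral support in $[10,\infty)$ this gives $\|\mathfrak{S}_{\delta,loc}^{\widetilde{\phi}}(\mathcal{L})f\|_{L^2}=\sqrt{3/2}\,|\phi(0)|\,\|f\|_{L^2}$, with no gain $\delta^{1/2}$.

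So you must state explicitly the hypothesis the paper uses tacitly:
\begin{itemize}
\item $\zeta$ is smooth with $|\zeta|\geq c>0$ on a neighbourhood $U$ of the diagonal segment $\{(t,t): t\in[1/2,2]\}$;
\item $\widetilde{\phi}((t-\eta)/\delta)=0$ for all $t\in[1/2,2]$, $\eta\geq0$ with $(\eta,t)\notin U$.
\end{itemize}
This does hold in the only place the proposition is used, the proof of Theorem~\ref{Theorem: Lp to L2 local smoothing for Metivier}. There $\widetilde{\phi}((r-\eta)/\nu)=\phi(\nu^{-1}(\eta^{a/2}-r^{a/2}))$, which for every $\eta>0$ is supported near the diagonal. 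With this hypothesis added, your argument is complete.
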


\begin{proof}
Since $\zeta$ is smooth and non-vanishing on the set where $\eta \sim 1$ and $t \sim 1$. Therefore the same proof as of Proposition \ref{Proposition: Local Square function estimate} will go through with obvious modification.
\end{proof}

As an application of the $L^p$-boundedness of the local square function with localized frequency $\mathfrak{S}_{\delta, loc}^{\phi}(\mathcal{L})$ we obtain the following result, which will be useful later in the proof of Theorem \ref{Theorem: Bilinear maximal on Metivier}.
\begin{proposition}
\label{Prop: Maximal Lp norm estimates of localized}
Whenever $\mathfrak{p}_G \leq p<\infty$ and $\alpha>\alpha_d(p)$ we have
\begin{align}
\label{Maximal Lp estimate for p bigger than pg}
    \left\|\sup_{t>0} \left|\phi\left(\delta^{-1} \left(1-\frac{\mathcal{L}}{t^2} \right) \right) f \right| \right\|_{L^p} &\leq C \delta^{-\alpha} \|f\|_{L^p} ,
\end{align}   
and
\begin{align}
\label{Maximal L2 estimate}
    \left\|\sup_{t>0} \left|\phi\left(\delta^{-1} \left(1-\frac{\mathcal{L}}{t^2} \right) \right) f \right| \right\|_{L^2} &\leq C \|f\|_{L^2} .
\end{align}
If we take $\phi^{\beta}(t) = t^{\beta} \phi(t)$ for $0 \leq \beta \leq N$, then the same conclusion \eqref{Maximal Lp estimate for p bigger than pg} and \eqref{Maximal L2 estimate} hold with $\phi$ replaced by $\phi^{\beta}$.

In particular, if $\psi^k(s) = s^{-k} \psi(s)$ for $k>0$ and $\psi \in C_c^{\infty}([1/2,2])$, then for $\mathfrak{p}_G \leq p<\infty$ or $p=2$ and $\alpha>\alpha_d(p)$ we have
\begin{align}
\label{Maximal Lp estimate with negative power}
    \left\|\sup_{t>0} \left|\psi^k\left(\delta^{-1} \left(1-\frac{\mathcal{L}}{t^2} \right) \right) f \right| \right\|_{L^p} &\leq C \, 2^{N+k} k^{N+1} \delta^{-\alpha} \|f\|_{L^p} ,
\end{align}
where $N$ is some fixed positive number.
\end{proposition}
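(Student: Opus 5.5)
The plan is to dominate the supremum over $t$ by the local square function $\mathfrak{S}_{\delta, loc}^{\phi}(\mathcal{L})$ through a Sobolev embedding in the $t$ variable, and then apply Proposition \ref{Proposition: Local Square function estimate}. First I would localize in scale. Let $\{\chi(2^{-k}\sqrt{\mathcal{L}})\}$ be a Littlewood--Paley partition. Since $\phi(\delta^{-1}(1-\eta/t^2))$ is supported where $\eta/t^2\in[1-\delta,1+\delta]$, only $O(1)$ pieces $f_k=\tilde\chi(2^{-k}\sqrt{\mathcal{L}})f$ contribute for $t\in[2^{k},2^{k+1}]$. This gives
\begin{align*}
\sup_{t>0}\Big|\phi\Big(\delta^{-1}\Big(1-\frac{\mathcal{L}}{t^2}\Big)\Big)f\Big|\le \Big(\sum_{k}\sup_{t\in[2^k,2^{k+1}]}\Big|\phi\Big(\delta^{-1}\Big(1-\frac{\mathcal{L}}{t^2}\Big)\Big)f_k\Big|^2\Big)^{1/2}.
\end{align*}
By dilation invariance (the dilations $\delta_R$ commute with $\mathcal{L}$ up to scaling), each sup over a dyadic block can be rescaled to $t\in[1,2]$.

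For fixed $k$, I would use the one-dimensional estimate $\sup_{t\in[1,2]}|F(t)|^2\le |F(1)|^2+2\big(\int_1^2|F|^2\big)^{1/2}\big(\int_1^2|F'|^2\big)^{1/2}$. Here $\partial_t$ of $\phi(\delta^{-1}(1-\mathcal{L}/t^2))$ equals $\delta^{-1}\cdot 2\mathcal{L}t^{-3}\phi'(\cdots)$, and $\mathcal{L}t^{-3}\sim 1$ on the support. After the change of variables $t\mapsto t^2$ (or using the variant $\widetilde{\phi}$ of Proposition \ref{Proposition: Generalized Square function estimate} to absorb the smooth factor $\zeta$), both integrals become local square functions of type $\mathfrak{S}^{\phi}_{\delta,loc}$ and $\mathfrak{S}^{\phi'}_{\delta,loc}$. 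Hence pointwise
\begin{align*}
\sup_{t\in[1,2]}|\cdots f_k|^2\lesssim |\phi(\delta^{-1}(1-\mathcal{L}))f_k|^2+\delta^{-1}\,\mathfrak{S}^{\phi}_{\delta,loc}f_k\,\mathfrak{S}^{\phi'}_{\delta,loc}f_k .
\end{align*}
Summing in $k$ and taking $L^{p/2}$ norms, the bound $\|\mathfrak{S}\|_{L^p\to L^p}\lesssim\delta^{1/2-\alpha}$ from Proposition \ref{Proposition: Local Square function estimate} gives $\delta^{-1}\delta^{1-2\alpha}=\delta^{-2\alpha}$. This yields \eqref{Maximal Lp estimate for p bigger than pg}. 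With $\delta^{1/2}$ at $p=2$ it gives \eqref{Maximal L2 estimate}. The first term is handled by Corollary \ref{Corollary: Lp boundedness of Bochner-Riesz on mativier}-type bounds, or simply by the same argument at a fixed $t$.

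The main obstacle is making the $k$-sum compatible with the single-scale square function bound for $p>2$. I would run the proof of Proposition \ref{Proposition: Local Square function estimate} in vector-valued form: the weighted inequality of Lemma \ref{Lemma: Weighted L2 estimate for square function} is $\ell^2$-summable in $k$ after rescaling, since the maximal functions are dilation invariant. Then finish with Littlewood--Paley $\|(\sum_k|f_k|^2)^{1/2}\|_p\lesssim\|f\|_p$. For $\phi^\beta$ nothing changes, since it is another bump in $C_c^\infty([-1,1])$ with derivative norms $\lesssim N^{O(1)}$. For $\psi^k$, the function $s^{-k}\psi(s)$ on $[1/2,2]$ is a bump with $C^N$ norms $\lesssim 2^{N+k}k^{N+1}$. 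Since the constants in Proposition \ref{Proposition: Local Square function estimate} depend only on finitely many derivatives of $\phi$, via \eqref{Pointwise bound for phi delta j function}, this gives \eqref{Maximal Lp estimate with negative power} after rescaling $\psi$ to support in $[-1,1]$.
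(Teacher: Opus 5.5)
Your argument is correct, and its core matches the paper's. Both apply a Sobolev embedding in $t$ on each dyadic block; the paper uses Stein's form $\sup_{1\le t\le 2}|F(t)|\le \delta^{-1/2}\|F\|_{L^2[1,2]}+\delta^{1/2}\|F'\|_{L^2[1,2]}$. In both, the $\delta^{-1}$ from $\partial_t$ is paid for by two square-function gains $\delta^{1/2-\alpha}$.

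You differ in how the dyadic scales are summed. The paper does not localize $f$. It bounds $\sup_k$ of the block quantities by their $\ell^2$-sum, which is the global square function $\mathfrak{S}^{\phi}_{\delta}(\mathcal{L})f$ (and its $\phi'$-analogue). So what it really needs is Proposition~\ref{Proposition: Square function estimate}, derived from the local estimate by adapting Guo--Roos--Yung; the text cites Proposition~\ref{Proposition: Local Square function estimate} directly and glosses over this step.

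You instead insert a Littlewood--Paley decomposition and prove the needed $\ell^2$-valued bound straight from Lemma~\ref{Lemma: Weighted L2 estimate for square function}. This works because $\mathcal{M}_r$ and $\mathcal{M}^{|\cdot|}_r$ commute with the dilations $\delta_R$, so the weighted inequalities at all scales $2^k$ can be summed against a single weight $\omega$. This makes the local-to-global passage explicit and self-contained. The cost is the Littlewood--Paley inequality for $\sqrt{\mathcal{L}}$ (cf.\ Proposition~\ref{Proposition: Lp boundedness of square function with bump}), plus the weighted inequality for the $t$-parametrised multipliers $\phi(\delta^{-1}(1-\mathcal{L}/t^2))$. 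The latter is in fact the form the paper's proof of that lemma reduces to in \eqref{Dominate local by slightly different}.

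Two small repairs are needed. First, do not keep the endpoint term $|F(1)|^2$; bounding it through Corollary~\ref{Corollary: Lp boundedness of Bochner-Riesz on mativier} would itself require an $\ell^2$-valued version. Instead average the base point to get $\sup_{[1,2]}|F|^2\le\int_1^2|F|^2+2\|F\|_{L^2}\|F'\|_{L^2}$, whose first term is again a square function, of size $\delta^{1-2\alpha}\le\delta^{-2\alpha}$. Second, your claim that only $O(1)$ pieces $f_k$ meet a block fails for $\delta$ close to $1$, since the multiplier's support then reaches $\mathcal{L}\approx 0$. For $\delta\ge 1/2$, however, the supremum is trivially $\lesssim\mathcal{M}f$ by Lemma~\ref{Lemma: Pointwise kernel estimate for linear kernel}.
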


\begin{proof}
From \cite[Lemma 1, p. 499]{Stein_Harmonic_Analysis_1993}, we have
\begin{align*}
    \sup_{1\leq t \leq 2} |F(t)| &\leq \delta^{-1/2} \left(\int_{1}^2 |F(t)|^2 \, dt \right)^{1/2} + \delta^{1/2} \left(\int_{1}^2 |F'(t)|^2 \, dt \right)^{1/2} .
\end{align*}
Therefore
\begin{align*}
    &\sup_{t>0} \left|\phi\left(\delta^{-1} \left(1-\frac{\mathcal{L}}{t^2} \right) \right) f(x,u) \right| = \sup_{k \in \mathbb{Z}} \sup_{1\leq t \leq 2} \left|\phi\left(\delta^{-1} \left(1-\frac{\mathcal{L}}{(2^k t)^2} \right) \right) f(x,u) \right| \\
    &\leq \sup_{k \in \mathbb{Z}} \Bigg[\delta^{-1/2} \left(\int_1^{2} \Big| \phi\left(\delta^{-1} \left(1-\frac{\mathcal{L}}{(2^k t)^2} \right) \right) f(x,u) \Big|^2 \, dt \right)^{1/2} \\
    &\hspace{5cm} + \delta^{1/2} \left(\int_1^{2} \Big| \frac{\partial}{\partial t}\phi\left(\delta^{-1} \left(1-\frac{\mathcal{L}}{(2^k t)^2} \right) \right) f(x,u) \Big|^2 \, dt \right)^{1/2} \Bigg] .
\end{align*}
Note that for $1\leq t \leq 2$, the factor $\delta \frac{\partial}{\partial t}\phi\left(\delta^{-1} \left(1-\frac{\mathcal{L}}{(2^k t)^2} \right) \right)$ satisfies same quantitative estimate as $\phi\left(\delta^{-1} \left(1-\frac{\mathcal{L}}{(2^k t)^2} \right) \right)$. Hence, using Proposition \ref{Proposition: Local Square function estimate} we get the required estimate \eqref{Maximal Lp estimate for p bigger than pg} and \eqref{Maximal L2 estimate}. 

Since $\phi \in C_c^{\infty}$ and $0\leq \beta\leq N$, estimate for $\phi^{\beta}$ follows easily by repeating the same argument as of \eqref{Maximal Lp estimate for p bigger than pg} and \eqref{Maximal L2 estimate} with $\phi$ replaced by $\phi^{\beta}$ with obvious modification.

On the other hand, to estimate \eqref{Maximal Lp estimate with negative power}, similarly as above, applying Proposition \ref{Proposition: Local Square function estimate} for $\mathfrak{p}_G \leq p<\infty$ or $p=2$ and $\alpha>\alpha_d(p)$ we can see that
\begin{align}
\label{Calculation for maximal estimate with negative}
    \left\|\sup_{t>0} \left|\psi^k\left(\delta^{-1} \left(1-\frac{\mathcal{L}}{t^2} \right) \right) f \right| \right\|_{L^p} &\leq C \, \sup_{s \in [1/2,2], 0\leq m\leq N} \left|\frac{d^m \psi^k}{ds^m}(s) \right| \delta^{-\alpha} \|f\|_{L^p} ,
\end{align}
where $N$ is some fixed positive number.

Since $\psi \in C_c^{\infty}([1/2,2])$, it satisfies
\begin{align}
\label{Calculation for psi with negetive factor outside}
    \sup_{s \in [1/2,2], 0\leq m\leq N} \left|\frac{d^m \psi^k}{ds^m}(s) \right| &\leq C 2^{N+k} k^{N+1} .
\end{align}
Now plugging the above estimate \eqref{Calculation for psi with negetive factor outside} into \eqref{Calculation for maximal estimate with negative} we get \eqref{Maximal Lp estimate with negative power}.
\end{proof}

\medskip

Now we consider the discrete version of the local square function with localized frequency \eqref{Definition: Square function}. Let $\phi$ be as in \eqref{Definition: Square function}. Corresponding to this $\phi$ and for every $\delta \in (0,1/4]$, we define the discrete square function by
\begin{align*}
    \mathfrak{D}_{\delta}^{\phi}(\mathcal{L})f(x,u) &= \left(\sum_{\nu \in \delta \mathbb{Z} \cap [0,2]} \Big|\phi\left(\frac{\nu-\mathcal{L}}{\delta} \right)f(x,u) \Big|^2 \right)^{1/2} .
\end{align*}
As an another application of Proposition \ref{Proposition: Local Square function estimate} we get the $L^p$-boundedness of the discrete square function $\mathfrak{D}_{\delta}^{\phi}(\mathcal{L})$. In fact we the following result.

\begin{proposition}
\label{Proposition: Discrete square function estimate}
Let $\mathfrak{p}_G \leq p <\infty$. Whenever $\alpha>\alpha_d(p)$ we have
\begin{align}
\label{Lp boundedness of square function for bigger p}
    \|\mathfrak{D}_{\delta}^{\phi}(\mathcal{L})f\|_{L^p} &\leq C \delta^{-\alpha} \|f\|_{L^p} .
\end{align}
Moreover,
\begin{align}
\label{L2 boundedness of discrete square function}
    \|\mathfrak{D}_{\delta}^{\phi}(\mathcal{L})f\|_{L^2} &\leq C \|f\|_{L^2} .
\end{align}    
\end{proposition}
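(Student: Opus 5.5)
The plan is to reduce the discrete square function to the continuous local square function $\mathfrak{S}_{\delta, loc}^{\phi}(\mathcal{L})$ of Proposition \ref{Proposition: Local Square function estimate}, via a one-dimensional Sobolev embedding in the $t$-variable applied on each interval of length $\delta$. Fix $\nu \in \delta\mathbb{Z}\cap[0,2]$ and set $F(t) = \phi\big(\frac{t-\mathcal{L}}{\delta}\big)f(x,u)$ for $t \in J_\nu := [\nu, \nu+\delta]$. The elementary inequality used in the proof of Proposition \ref{Prop: Maximal Lp norm estimates of localized} (\cite[Lemma 1, p. 499]{Stein_Harmonic_Analysis_1993}), rescaled to an interval of length $\delta$, gives
\begin{align*}
|F(\nu)|^2 \lesssim \delta^{-1}\int_{J_\nu}|F(t)|^2\,dt + \delta\int_{J_\nu}|F'(t)|^2\,dt .
\end{align*}
Here $F'(t) = \delta^{-1}(\phi')\big(\frac{t-\mathcal{L}}{\delta}\big)f$. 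Since the intervals $J_\nu$ have bounded overlap, summing over $\nu$ yields the pointwise bound
\begin{align*}
\mathfrak{D}_{\delta}^{\phi}(\mathcal{L})f \lesssim \delta^{-1/2}\Big(\int_{0}^{3}\Big|\phi\Big(\tfrac{t-\mathcal{L}}{\delta}\Big)f\Big|^2dt\Big)^{1/2} + \delta^{-1/2}\Big(\int_{0}^{3}\Big|\phi'\Big(\tfrac{t-\mathcal{L}}{\delta}\Big)f\Big|^2dt\Big)^{1/2}.
\end{align*}
Note that $\phi'$ is again in $C_c^\infty([-1,1])$; after normalizing it by a constant, it is an admissible bump for Proposition \ref{Proposition: Local Square function estimate}.

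The main technical point is that these integrals run over $t\in[0,3]$ rather than $[1/2,2]$. On $[1/2,3]$ I would simply rescale: the range $[1/2,3]$ is covered by boundedly many dyadic-type pieces $[2^{k-1},2^{k+1}]$ with $|k|\le 1$. For each piece, I replace $\mathcal{L}$ by $2^{-k}\mathcal{L}$ (the dilation $\delta_{R}$ preserves $L^p$ norms up to a constant) and $\delta$ by $2^{-k}\delta$. The proposition then applies with constants uniform in such $k$.

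For small $t\in[0,1/2]$, where $\mathcal{L}$ is localized near $t \lesssim 1$ at a scale $\delta$ comparable to or larger than $t$, I would argue differently. Either decompose dyadically $t\sim 2^{-k}$, $2^{-k}\ge\delta$, and apply the rescaled Proposition \ref{Proposition: Local Square function estimate} with $\delta$ replaced by $2^{k}\delta$, which gives a summable geometric series since the exponent $\frac12-\alpha$ is favorable once multiplied by the length factor. Or, more crudely, treat the part $t\lesssim\delta$ directly by Lemma \ref{Lemma: Pointwise kernel estimate for linear kernel}, via \eqref{Linear multiplier estimate by maximal function}: there are only $O(1)$ such $\nu$, and each term is bounded by $\mathcal{M}f$. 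I expect this bookkeeping near $t=0$ to be the only delicate step. If it becomes awkward, a cleaner alternative is to insert a cutoff $\chi(\mathcal{L})$ supported in $[1/4,4]$, which is harmless for $\nu\ge 1/2$, and handle the remaining $\nu<1/2$ by the same dilation argument at scale $2^{-k}$.

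Taking $L^p$ norms and invoking \eqref{Local Square function estimate for p biggar than infinity} for $\mathfrak{p}_G\le p<\infty$ gives
\begin{align*}
\|\mathfrak{D}_{\delta}^{\phi}(\mathcal{L})f\|_{L^p}\lesssim \delta^{-1/2}\,\delta^{\frac12-\alpha}\|f\|_{L^p}=\delta^{-\alpha}\|f\|_{L^p},
\end{align*}
which is \eqref{Lp boundedness of square function for bigger p}. Using \eqref{Local Square function estimate for p=2 case} instead gives $\delta^{-1/2}\delta^{1/2}=1$, which is \eqref{L2 boundedness of discrete square function}. Alternatively, the $L^2$ bound follows immediately from the spectral theorem: $\sum_{\nu}\phi^2\big(\frac{\nu-\eta}{\delta}\big)\le C$ uniformly in $\eta$, because only $O(1)$ values of $\nu\in\delta\mathbb{Z}$ lie within $\delta$ of $\eta$.
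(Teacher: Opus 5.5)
Your proposal is correct and is essentially the paper's argument. The paper likewise uses the Sobolev-in-$t$ inequality (as in Jeong--Lee--Vargas) to dominate the discrete sum by $\delta^{-1/2}$ times the local square functions for $\phi$ and $\phi'$, then applies Proposition~\ref{Proposition: Local Square function estimate}; it rescales the dyadic pieces $\nu\sim 2^{-\ell}$ by the group dilations so that $\delta$ becomes $2^{\ell}\delta$ (summable because $\alpha>\alpha_d(p)>0$), treats the $O(1)$ terms with $\nu\le 4\delta$ by the kernel bound of Lemma~\ref{Lemma: Pointwise kernel estimate for linear kernel}, and gets the $L^2$ bound from the spectral theorem. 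The only difference is the order of operations: the paper splits the $\nu$-range dyadically first and applies the Sobolev step only on the unit piece $\nu\in[1/2,1]$, whereas you apply it globally and then split the continuous $t$-integral, and the two bookkeepings are equivalent.
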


\begin{proof}
Estimate of \eqref{L2 boundedness of discrete square function} is easy. Indeed, using the spectral decomposition of $\mathcal{L}$ (see \eqref{Definition: general spectral multipler on Metivier}), orthogonality and the support of $\phi_{\delta, \nu}$ we get the estimate \eqref{L2 boundedness of discrete square function}.

Hence it remains to estimate \eqref{Lp boundedness of square function for bigger p}. First we divide the interval $[0,2]$ as $[0,2] = [0, 4\delta] \cup [4\delta, 2]$ and decompose $[4\delta, 2]$ as follows
\begin{align}
\label{Decomposition of interval into two and further}
    [4\delta, 2] & = \bigcup_{\ell=-1}^{\ell_0} I_{\ell} ,
\end{align}
where $I_{\ell}= [2^{-\ell-1}, 2^{-\ell}] \cap [4\delta, 2]$ and $\ell_0+1$ is the smallest non-negative integer such that $[2^{-\ell_0-1}, 2^{-\ell_0}] \cap [4\delta, 2] = \emptyset$.

We set $\phi_{\delta,\nu}(\eta)=\phi(\delta^{-1}(\nu-\eta))$. Therefore in view of the above decomposition we have
\begin{align}
\label{Decompositing the interval 01 into dyadically}
    \mathfrak{D}_{\delta}^{\phi}(\mathcal{L})f(x,u) &\leq \left( \sum_{\nu \in \delta \mathbb{Z} \cap [0, 4\delta]} |\phi_{\delta, \nu}(\mathcal{L})f(x,u)|^2 \right)^{1/2} + \sum_{\ell=-1}^{\ell_0} \left( \sum_{\nu \in \delta \mathbb{Z} \cap I_{\ell}} |\phi_{\delta, \nu}(\mathcal{L})f(x,u)|^2 \right)^{1/2} .
\end{align}
Let us first estimate the second term in the right hand side of the above expression. For the case $\ell=0$, similarly as in \cite[Lemma 2.3]{Jeong_Lee_Vargas_Bilinear_Bochner_Riesz_2018} we obtain
\begin{align*}
    & \left(\sum_{\nu \in \delta \mathbb{Z} \cap I_0} \left|\phi\left(\frac{\nu-\mathcal{L}}{\delta} \right)f(x,u) \right|^2 \right)^{1/2} \\
    &\leq \delta^{-\frac{1}{2}} \left[ \left( \int_{1/2}^{2} \left| \phi\left(\frac{t-\mathcal{L}}{\delta} \right)f(x,u) \right|^2 \, dt \right)^{1/2} + \left( \int_{1/2}^{2} \left| \phi'\left(\frac{t-\mathcal{L}}{\delta} \right)f(x,u) \right|^2 \, dt \right)^{1/2} \right] .
\end{align*}
Since $\phi$ is $C_c^{\infty}$ and so does $\phi'$. Therefore applying Proposition \ref{Proposition: Local Square function estimate} for $\phi$ and $\phi'$, whenever $\mathfrak{p}_G \leq p<\infty$ and $\alpha>\alpha_d(p)$,
\begin{align}
 \label{Estimate of square function for k zero}
    \left\|\left(\sum_{\nu \in \delta \mathbb{Z} \cap I_0} \left|\phi\left(\frac{\nu-\mathcal{L}}{\delta} \right)f \right|^2 \right)^{1/2} \right\|_{L^{p}} &\leq C \delta^{-\frac{1}{2}} \delta^{\frac{1}{2}-\alpha} \|f\|_{L^{p}} = C \delta^{-\alpha} \|f\|_{L^{p}} .
\end{align}
Now we estimate the second term in the right hand side of \eqref{Decompositing the interval 01 into dyadically} for $\ell=-1,1, \ldots, \ell_0$. Let us calculate the following. Using the spectral decomposition of $\mathcal{L}$ (see \eqref{Definition: general spectral multipler on Metivier}), making change of variable $(z,s) \mapsto (2^{-\ell/2}z, 2^{-\ell}s)$ and $\lambda \mapsto 2^{\ell} \lambda$ we have
\begin{align}
\label{Dilation for Phij, beta}
    & \phi_{2^{\ell} \delta, 2^{\ell}\nu}(\mathcal{L}) (\delta_{2^{\ell/2}} f)(\delta_{2^{-\ell/2}}(x,u)) \\
    &\nonumber = \frac{1}{(2\pi)^{d_2}} \int_{\mathfrak{g}_{2,r}^{*}} \int_{\mathbb{R}^{d_1}} \int_{\mathbb{R}^{d_2}} \sum_{\mathbf{k} \in \mathbb{N}^\Lambda} \phi(\delta^{-1} 2^{-\ell}(2^{\ell} \nu-\eta_{\mathbf{k}}^{\lambda})) f(2^{\ell/2} z,2^{\ell}s) \\
    &\nonumber \hspace{4cm} \varphi_{\mathbf{k}}^{\mathbf{b}^{\lambda}, \mathbf{r}}(R_{\lambda}^{-1}(2^{-\ell/2} x-z)) e^{-i\langle \lambda, s \rangle} e^{\frac{i}{2}\omega_{\lambda}(2^{-\ell/2} x, z)} \ e^{i \langle \lambda, 2^{-\ell} u \rangle} \, ds \, dz \, d\lambda \\
    &\nonumber = \frac{2^{-\ell(Q/2-d_2)}}{(2\pi)^{d_2}} \int_{\mathfrak{g}_{2,r}^{*}} \int_{\mathbb{R}^{d_1}} \int_{\mathbb{R}^{d_2}} \sum_{\mathbf{k} \in \mathbb{N}^\Lambda} \phi(\delta^{-1}(\nu-\eta_{\mathbf{k}}^{\lambda})) f(z,s) \\
    &\nonumber \hspace{6cm} 2^{\ell d_1/2} \varphi_{\mathbf{k}}^{\mathbf{b}^{\lambda}, \mathbf{r}}(R_{\lambda}^{-1}(x- z)) e^{-i\langle \lambda, s \rangle} e^{\frac{i}{2}\omega_{\lambda}(x, z)} \, e^{i \langle \lambda, u \rangle} \, ds \, dz \, d\lambda \\
    &\nonumber= \phi_{\delta, \nu}(\mathcal{L}) f(x,u) ,
\end{align}
where we have used the following facts: Since the functions $\lambda \mapsto b_n^{\lambda}$ are homogeneous of degree $1$ and the functions $\lambda \mapsto R_{\lambda}$ are homogeneous of degree $0$ respectively, hence we get (see \cite[eq. (3.7), (3.8)]{Molla_Singh_Commutator_Metivier_Arxiv})
\begin{align*}
    \eta_{\mathbf{k}}^{2^{\ell} \lambda} = 2^{\ell} \eta_{\mathbf{k}}^{\lambda}, \quad \text{and} \quad \varphi_{\mathbf{k}}^{\mathbf{b}^{2^{\ell} \lambda}, \mathbf{r}}(R_{2^{\ell} \lambda}^{-1} 2^{-\ell/2} x) = 2^{\ell d_1/2} \varphi_{\mathbf{k}}^{\mathbf{b}^{\lambda}, \mathbf{r}}(R_{\lambda}^{-1} x) ,
\end{align*}
and using the fact \cite[eq. (3.4) of Proposition 3.4]{Niedorf_p_specific_Heisenberg_group_2024} we also have
\begin{align*}
    \omega_{2^{\ell} \lambda}(2^{-\ell/2} x, 2^{-\ell/2} z) = \omega_{\lambda}(x, z) .
\end{align*}
Therefore using \eqref{Dilation for Phij, beta} and \eqref{Estimate of square function for k zero} for $\mathfrak{p}_G \leq p<\infty$, $\alpha>\alpha_d(p)$ and since $2^k \delta \leq 1/4$ for $k \geq -1$, we obtain
\begin{align}
\label{Estimate for the other ks}
    \left\|\left( \sum_{\nu \in \delta \mathbb{Z} \cap I_{\ell}} |\phi_{\delta, \nu}(\mathcal{L})f|^2 \right)^{1/2} \right\|_{L^{p}} &= 2^{\frac{\ell Q}{2p}} \left\|\left( \sum_{\nu \in 2^{\ell} \delta \mathbb{Z} \cap I_0} |\phi_{2^{\ell} \delta, \nu}(\mathcal{L}) (\delta_{2^{\ell/2}} f)(x,u)|^2 \right)^{1/2} \right\|_{L^{p}} \\
    &\nonumber\leq C \delta^{-\alpha} 2^{-\ell \alpha} \|f\|_{L^{p}} .
\end{align}
Since $\mathfrak{p}_G \leq p<\infty$, we have $\alpha>\alpha_d(p)>0$. So that using the above estimate and \eqref{Estimate of square function for k zero} we get
\begin{align}
\label{Estimte of square function when k is there}
    \sum_{\ell=-1}^{\ell_0} \left\| \left( \sum_{\nu \in \delta \mathbb{Z} \cap I_{\ell}} |\phi_{\delta, \nu}(\mathcal{L})f|^2 \right)^{1/2} \right\|_{L^{p}} &\leq C \sum_{\ell=-1}^{\ell_0} \delta^{-\alpha} 2^{-\ell \alpha} \|f\|_{L^{p}} \leq C \delta^{-\alpha} \|f\|_{L^{p}} .
\end{align}
Hence it remains to estimate the first factor in the right hand side of \eqref{Decompositing the interval 01 into dyadically}. Since in this case $0\leq \nu \leq 4\delta$, so that $\supp \phi_{\delta, \nu} \subseteq [0, 4\delta]$. Therefore for any $N>0$, Lemma \eqref{Lemma: Pointwise kernel estimate for linear kernel} yields
\begin{align*}
    |\mathcal{K}_{\phi_{\delta, \nu}}(x,u)| &\leq C \delta^{Q/2} (1+\delta^{1/2} \|(x,u)\|)^{-N} .
\end{align*}
The above estimate immediately tells that whenever $0\leq \nu \leq 4\delta$, choosing $N>Q$ we have $\|\mathcal{K}_{\phi_{\delta, \nu}}\|_{L^1} \leq C$. Then an application of Young's convolution inequality gives
\begin{align*}
    \|\phi_{\delta, \nu}(\mathcal{L})f\|_{L^{p}} \leq C \|f\|_{L^{p}} .
\end{align*}
Since the number of terms such that $\nu \in \delta \mathbb{Z} \cap [0, 4\delta]$ is bounded, we can conclude that
\begin{align}
\label{Estimate of square function when nu is near zero}
    \left\|\Bigg( \sum_{\nu \in \delta \mathbb{Z} \cap [0, 4\delta]} |\phi_{\delta, \nu}(\mathcal{L})f|^2 \Bigg)^{1/2} \right\|_{L^{p}} &\leq C \|f\|_{L^{p}} .
\end{align}
Combining the estimates \eqref{Estimte of square function when k is there} and \eqref{Estimate of square function when nu is near zero} completes the proof for the case $\mathfrak{p}_G \leq p<\infty$.
\end{proof}

\subsection{Square function estimates on M\'etivier groups}
\label{Subsection Third section}

In this subsection, we discuss about the $L^p$-boundedness of the global version of the local square function with localized frequency $\mathfrak{S}_{\delta, loc}^{\phi}(\mathcal{L})$ (Proposition \ref{Proposition: Square function estimate}) as well as other square function estimate (Proposition \ref{Proposition: Lp boundedness of square function with bump}) and some estimate of the maximal operator related to square function of Bochner-Riesz operator (Proposition \ref{Proposition: Maximal square function estimate}).

We define the square function (global) with localized frequency by
\begin{align*}
    \mathfrak{S}_{\delta}^{\phi}(\mathcal{L})f(x,u) &= \left(\int_0^{\infty} \Big| \phi\left(\delta^{-1} \left(1-\frac{\mathcal{L}}{t^2} \right) \right) f(x,u) \Big|^2 \frac{dt}{t} \right)^{1/2} .
\end{align*}

In the following we show that the $L^p$-boundedness of $\mathfrak{S}_{\delta, loc}^{\phi}(\mathcal{L})$ and $\mathfrak{S}_{\delta}^{\phi}(\mathcal{L})$ are equivalent. In fact, note that enough to prove that Proposition \ref{Proposition: Local Square function estimate} implies the following result.

\begin{proposition}
\label{Proposition: Square function estimate}
Let $\mathfrak{p}_G \leq p < \infty$. Whenever $\alpha>\alpha_d(p)$ we have
\begin{align}
\label{Square function estimate for p biggar than infinity}
    \|\mathfrak{S}_{\delta}^{\phi}(\mathcal{L})f\|_{L^p} &\leq C \delta^{\frac{1}{2}-\alpha} \|f\|_{L^p} .
\end{align}
Moreover,
\begin{align}
\label{Square function estimate for p=2 case}
    \|\mathfrak{S}_{\delta}^{\phi}(\mathcal{L})f\|_{L^2} &\leq C \delta^{\frac{1}{2}} \|f\|_{L^2} .
\end{align}
If we take $\phi^{\beta}(t) = t^{\beta} \phi(t)$ for $0 \leq \beta \leq N$, then the same conclusion \eqref{Square function estimate for p biggar than infinity} and \eqref{Square function estimate for p=2 case} hold with $\phi$ replaced by $\phi^{\beta}$.

In particular, if $\psi^k(s) = s^{-k} \psi(s)$ for $k>0$ and $\psi \in C_c^{\infty}([1/2,2])$, then for $\mathfrak{p}_G \leq p<\infty$ or $p=2$ and $\alpha>\alpha_d(p)$ we have
\begin{align}
\label{Square function Lp estimate with negative power}
    \left\|\mathfrak{S}_{\delta}^{\psi^k}(\mathcal{L})f \right\|_{L^p} &\leq C \, 2^{N+k} k^{N+1} \delta^{\frac{1}{2}-\alpha} \|f\|_{L^p} ,
\end{align}
where $N$ is some fixed positive number.
    
\end{proposition}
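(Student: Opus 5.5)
The plan is to pass from the local square function to the global one by a Littlewood--Paley decomposition in $\mathcal{L}$ combined with dilation invariance. We start with the $L^2$ estimate \eqref{Square function estimate for p=2 case}. By the spectral theorem,
\begin{align*}
\|\mathfrak{S}_{\delta}^{\phi}(\mathcal{L})f\|_{L^2}^2 \leq \sup_{\eta>0}\int_0^\infty \big|\phi(\delta^{-1}(1-\eta/t^2))\big|^2\,\frac{dt}{t}\,\|f\|_{L^2}^2 .
\end{align*}
Since $\supp\phi\subseteq[-1,1]$, the integrand vanishes unless $|t^2-\eta|\le \delta t^2$, i.e.\ $t^2\in[\eta/(1+\delta),\eta/(1-\delta)]$ (for $\delta<1$; the case $\delta\sim1$ is trivial). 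This $t$-interval has $dt/t$-measure $\lesssim\delta$, which gives the bound $C\delta^{1/2}$.

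For $\mathfrak{p}_G\le p<\infty$, we write $\int_0^\infty=\sum_{k\in\mathbb{Z}}\int_{2^k}^{2^{k+1}}$. We then insert a Littlewood--Paley cutoff $\chi_k(\mathcal{L})=\chi(2^{-2k}\mathcal{L})$, with $\chi$ supported in $[1/8,8]$ and equal to $1$ on $[1/4,4]$, so that for $t\in[2^k,2^{k+1}]$ and $\delta\le1/2$,
\begin{align*}
\phi\big(\delta^{-1}(1-\mathcal{L}/t^2)\big)=\phi\big(\delta^{-1}(1-\mathcal{L}/t^2)\big)\chi_k(\mathcal{L}).
\end{align*}
This identity holds because the multiplier forces $\mathcal{L}/t^2\in[1-\delta,1+\delta]$. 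Then
\begin{align*}
\mathfrak{S}_{\delta}^{\phi}(\mathcal{L})f\le\Big(\sum_k\big|\mathfrak{S}_{k}\,\chi_k(\mathcal{L})f\big|^2\Big)^{1/2},
\end{align*}
where $\mathfrak{S}_k$ denotes the piece of the square function with $t$ restricted to $[2^k,2^{k+1}]$. After the substitution $s=t^2$ and use of the dilations $\delta_R$ together with the homogeneity computation already done in \eqref{Dilation for Phij, beta}, $\mathfrak{S}_k$ is a rescaled copy of $\mathfrak{S}^{\phi}_{\delta,loc}(\mathcal{L})$: the interval $[1,4]$ is covered by $[1/2,2]$ and a dilate of it, and the variant of the function $\eta\mapsto\phi(\delta^{-1}(1-\eta/s))$ differs from $\phi((s-\eta)/\delta)$ only by a smooth non-vanishing factor, so Proposition \ref{Proposition: Generalized Square function estimate} applies. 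Hence each $\mathfrak{S}_k$ is bounded on $L^p$ with norm $C\delta^{1/2-\alpha}$, uniformly in $k$.

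The main obstacle is that these uniform bounds must be summed in $\ell^2$. We need the vector-valued estimate
\begin{align*}
\Big\|\Big(\sum_k|\mathfrak{S}_kg_k|^2\Big)^{1/2}\Big\|_{L^p}\lesssim\delta^{1/2-\alpha}\Big\|\Big(\sum_k|g_k|^2\Big)^{1/2}\Big\|_{L^p},
\end{align*}
followed by the Littlewood--Paley inequality $\|(\sum_k|\chi_k(\mathcal{L})f|^2)^{1/2}\|_{L^p}\lesssim\|f\|_{L^p}$, which follows from standard Hörmander multiplier theory for $\mathcal{L}$ (e.g.\ Christ, Mauceri--Meda, via Lemma \ref{Lemma: Pointwise kernel estimate for linear kernel}). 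To obtain the vector-valued bound we would redo the duality argument of Proposition \ref{Proposition: Local Square function estimate}. Lemma \ref{Lemma: Weighted L2 estimate for square function} is a weighted $L^2$ inequality whose right-hand side involves only $\mathcal{M}_r\omega$ and $\mathcal{M}_r^{|\cdot|}\omega$, and these maximal operators commute with the dilations $\delta_R$. The rescaled lemma therefore holds for every $\mathfrak{S}_k$ with the same weights, and we can sum directly:
\begin{align*}
\int\sum_k|\mathfrak{S}_kg_k|^2\omega\lesssim\delta^{2-d(2/q-1)-\epsilon}\int\sum_k|g_k|^2\big(\mathcal{M}_r^{|\cdot|}\omega+\mathcal{M}_r\omega\big).
\end{align*}
Taking $\omega\in L^s$ with $1/s+2/p=1$ and using Hölder together with Lemma \ref{Lemma: Boundedness of maximal and Euclidean maximal} yields the vector-valued bound for $2<p<q'$, and interpolation in $q$ then gives \eqref{Square function estimate for p biggar than infinity}, exactly as in the proof of Proposition \ref{Proposition: Local Square function estimate}.

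The remaining statements follow from the same argument. For $\phi^\beta$ and $\psi^k$ we rerun the proof with the new bump. The lemma's constants depend only on finitely many $C^N$ norms of the bump, and for $\psi^k$ these are bounded by $2^{N+k}k^{N+1}$, as in \eqref{Calculation for psi with negetive factor outside}; this gives \eqref{Square function Lp estimate with negative power}.
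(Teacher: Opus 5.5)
Your argument is correct, but it follows a different route from the paper.

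\textbf{The paper's route.} The paper treats the local $L^p$ bound of Proposition~\ref{Proposition: Local Square function estimate} as a black box. It then runs the local-to-global scheme of Guo--Roos--Yung, in which the off-diagonal part (their (7.9)) is controlled by kernel tail bounds. This is where Lemma~\ref{Lemma: Pointwise kernel estimate for linear kernel} and Lemma~\ref{lemma: outside distance} enter.

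\textbf{Your route.} You use the weighted inequality of Lemma~\ref{Lemma: Weighted L2 estimate for square function} directly. Since $\mathcal{M}_r$ and $\mathcal{M}_r^{|\cdot|}$ commute with the dilations $\delta_R$ (which map sub-Riemannian balls, and products of Euclidean balls, to sets of the same type), one weight serves every dyadic piece. The $\ell^2$-sum over scales is then free, and afterwards only the Littlewood--Paley inequality for $\mathcal{L}$ is needed. No spatial localization or kernel tail estimates are required.

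\textbf{Trade-off.} Your argument is shorter and loses nothing in $\delta$ beyond what the lemma already loses. However, it needs the local estimate in this weighted, dilation-covariant form. The paper's argument upgrades any local $L^p$ bound, however obtained, at the price of the off-diagonal analysis.

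\textbf{Small repairs.}
\begin{itemize}
\item For $\delta\le 1/2$ and $t\in[2^k,2^{k+1}]$, the spectral support of the piece lies in $4^k[1/2,6]$. A plateau $[1/4,4]$ for $\chi$ is therefore too short; take, e.g., $\chi\equiv 1$ on $[1/4,8]$.
\item For $\delta$ close to $1$ the plateau would have to reach down to $0$. That range needs a separate, routine argument; for $\delta=1$, for instance, the multiplier $u\mapsto\phi(1-u)$ is flat at $u=0$, which a standard Littlewood--Paley argument handles.
\end{itemize}

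\textbf{A simplification.} You do not need to pass through the $\widetilde{\phi}$-variant. From \eqref{Dominate local by slightly different} onward, the proof of Lemma~\ref{Lemma: Weighted L2 estimate for square function} works with $\phi(\delta_t^{-1}(1-\mathcal{L}/t^2))$ and uses only $\delta_t\sim\delta$. It therefore applies verbatim to the rescaled pieces $\int_1^2|\phi(\delta^{-1}(1-\mathcal{L}/t^2))g|^2\,dt/t$, after splitting $[1,2]$ into two dilates of $[1/\sqrt{2},\sqrt{2}]$.
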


\begin{proof}
The proof follows from adapting the ideas of \cite[Proposition 4.2]{Guo_Roos_Yung_Sharp_Variation_2020} into our setup. The proof goes exactly in the same line, only difference is that in order to prove eq. (7.9) of  \cite{Guo_Roos_Yung_Sharp_Variation_2020} in our case, one has to use Lemma \ref{Lemma: Pointwise kernel estimate for linear kernel} and Lemma \ref{lemma: outside distance}.
\end{proof}

\medskip
Next we consider the following maximal operator
\begin{align*}
    \mathcal{M}^{\alpha}(\mathcal{L})f(x,u) &= \sup_{R>0} \left(\frac{1}{R} \int_{0}^R |S_t^{\alpha}(\mathcal{L})f(x,u)|^2 \, dt \right)^{1/2} .
\end{align*}

As a application of Theorem \ref{Theorem: Stein square estimate}, we have the following $L^p$-boundedness result for the operator $\mathcal{M}^{\alpha}(\mathcal{L})$, which will be very useful in the proof of Theorem \ref{Theorem: Bilinear maximal on Metivier} and Theorem \ref{Theorem: Bilinear Stein square function estimate}.

\begin{proposition}
\label{Proposition: Maximal square function estimate}
Let $\mathfrak{p}_G \leq p< \infty$ or $p=2$. Then whenever $\alpha>\alpha_d(p)-1/2$ we have
\begin{align*}
    \|\mathcal{M}^{\alpha}(\mathcal{L})f\|_{L^p} &\leq C \|f\|_{L^p} .
\end{align*}
    
\end{proposition}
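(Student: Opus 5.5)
The plan is to deduce Proposition \ref{Proposition: Maximal square function estimate} from Theorem \ref{Theorem: Stein square estimate} through the classical identity relating Bochner-Riesz means of different orders (Stein's argument, as in \cite{Carbery_Maximal_Radial_Fourier_Mult_1985} and \cite{Jotsaroop_Shrivastava_Maximal_Bochner_Riesz_2022}). For $\beta>-1/2$ and $\alpha=\beta+\gamma$ with $\gamma>1/2$, the one-variable identity $(1-s^2/t^2)_+^{\beta+\gamma} = c_{\beta,\gamma}\, t^{-2(\beta+\gamma)} \int_0^t (t^2-r^2)^{\gamma-1} r^{2\beta+1} (1-s^2/r^2)_+^{\beta}\, dr$ holds. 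Applied through the spectral calculus of $\mathcal{L}$, it expresses $S^{\beta+\gamma}_t(\mathcal{L})$ as an average of $S^{\beta}_r(\mathcal{L})$. The actual approach is a closely related one: write $S_t^{\alpha}(\mathcal{L})f$ via its $t$-derivative, and control the $L^2(dt/R)$-average of $S^\alpha_t$ over $[0,R]$ pointwise by the Stein square function $\mathfrak{S}^{\alpha+1/2}(\mathcal{L})$ and a harmless maximal function.

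\textbf{Step 1.} For fixed $(x,u)$ and $0<t<R$, write
\begin{align*}
S^\alpha_t(\mathcal{L})f = \frac{1}{t}\int_0^t \frac{\partial}{\partial s}\bigl(s\, S^\alpha_s(\mathcal{L})f\bigr)\,ds = \frac1t\int_0^t S^\alpha_s(\mathcal{L})f\,ds + \frac1t\int_0^t s\,\frac{\partial}{\partial s} S_s^\alpha(\mathcal{L})f\,ds .
\end{align*}
This decomposition does not yet gain smoothness, so instead I would use the standard inequality
\begin{align*}
\Bigl(\frac1R\int_0^R |S^\alpha_t f|^2dt\Bigr)^{1/2} \lesssim \sup_{t>0}|S^{\alpha+1}_t f| + \Bigl(\int_0^\infty \bigl|\tfrac{\partial}{\partial t}S^{\alpha+1}_t f\bigr|^2 t\,dt\Bigr)^{1/2}.
\end{align*}
It follows from the identity $S^{\alpha}_t = S^{\alpha+1}_t + \frac{t}{2(\alpha+1)}\partial_t S^{\alpha+1}_t$, which is verified directly on the multiplier $(1-\eta/t^2)_+^{\alpha+1}$: differentiating in $t$ gives $t\partial_t(1-\eta/t^2)^{\alpha+1}=2(\alpha+1)(\eta/t^2)(1-\eta/t^2)^{\alpha}$, and $\eta/t^2 = 1-(1-\eta/t^2)$. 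The square of the second term integrated against $dt/R$ over $[0,R]$ is bounded by $R^{-1}\int_0^R t^2|\partial_tS^{\alpha+1}_t f|^2dt\le \int_0^\infty t|\partial_t S^{\alpha+1}_tf|^2dt$, since $t\le R$. Hence
\begin{align*}
\mathcal{M}^\alpha(\mathcal{L})f \lesssim S^{\alpha+1}_*(\mathcal{L})f + \mathfrak{S}^{\alpha+1}(\mathcal{L})f .
\end{align*}

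\textbf{Step 2.} The hypothesis $\alpha>\alpha_d(p)-1/2$ gives $\alpha+1>\alpha_d(p)+1/2$. This is exactly the range in Theorem \ref{Theorem: Stein square estimate}, so $\mathfrak{S}^{\alpha+1}(\mathcal{L})$ is bounded on $L^p$ for $\mathfrak{p}_G\le p<\infty$ or $p=2$. For the maximal term, $\alpha+1>\alpha_d(p)$, so Corollary \ref{Corollary: Boundedness of linear maximal Bochner-Riesz} applies. To avoid circularity, since the corollary is derived later, I would instead bound $S_*^{\alpha+1}$ directly. The same identity gives $S^{\alpha+1}_t f = S^{\alpha+1}_0f + \int_0^t\partial_s S^{\alpha+1}_s f\,ds$, so by Cauchy--Schwarz, $|S^{\alpha+1}_tf|\le (\int_0^\infty s^{-1}|\dots|^2)^{1/2}(\dots)$. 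This is the weak point: the factor $\log t$ diverges. The fix is the standard trick of Carbery \cite{Carbery_Maximal_Radial_Fourier_Mult_1985}, already used in Theorem \ref{Theorem: Maximal spectral multiplier}: write $(1-\eta/t^2)_+^{\alpha+1} = c\int_0^t (1-r^2/t^2)^{\varepsilon-1}\, (r^2/t^2)\,\bigl(r\,\partial_r(1-\eta/r^2)_+^{\alpha'}\bigr)\,\frac{dr}{r}$ with $\alpha+1=\alpha'+\varepsilon$ and $\varepsilon>1/2$ small. Cauchy--Schwarz then yields $S^{\alpha+1}_*f\lesssim \mathfrak{S}^{\alpha'}(\mathcal{L})f$, and $\alpha'>\alpha_d(p)+1/2$ can still be arranged because the hypothesis is an open inequality.

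\textbf{Main obstacle.} I expect the main difficulty to be the bookkeeping in Step 2: choosing $\varepsilon>1/2$ while keeping $\alpha'=\alpha+1-\varepsilon$ above the threshold $\alpha_d(p)+1/2$ of Theorem \ref{Theorem: Stein square estimate}. The margin is $\alpha-\alpha_d(p)+1/2>0$, so $\varepsilon\in(1/2,\,1/2+(\alpha-\alpha_d(p)+1/2))$ works. Everything else reduces to spectral-calculus identities on scalar multipliers, which transfer verbatim to $\mathcal{L}$ via \eqref{Defintion: Spectral resolution for subLaplacian}.
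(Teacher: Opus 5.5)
Your Step 1 is correct and is exactly the first step of the paper's argument. The identity $S^{\alpha}_t=S^{\alpha+1}_t+\frac{t}{2(\alpha+1)}\partial_tS^{\alpha+1}_t$ together with $t\le R$ gives $\mathcal{M}^{\alpha}(\mathcal{L})f\lesssim S^{\alpha+1}_*(\mathcal{L})f+\mathfrak{S}^{\alpha+1}(\mathcal{L})f$, and Theorem~\ref{Theorem: Stein square estimate} handles the square function.

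The gap is in Step 2. The subordination formula you write is false. At $\eta=0$ the factor $r\partial_r(1-\eta/r^2)_+^{\alpha'}$ vanishes identically, so the right-hand side is $0$ while the left-hand side is $1$. This is not a matter of constants: $\partial_r$ annihilates the low-frequency region where the Bochner--Riesz multiplier is $\equiv 1$. In Carbery's language, $(1-\eta)_+^{\alpha+1}\notin L^2_{\alpha'}(\mathbb{R}^+)$ because of its behaviour as $\eta\to 0$. So no identity of this type can dominate $S^{\alpha+1}_*$ by $\mathfrak{S}^{\alpha'}$ alone; a low-frequency maximal-function term is unavoidable.

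Even granting the formula, your bookkeeping loses part of the range. The conditions $\varepsilon>1/2$ and $\alpha'=\alpha+1-\varepsilon>\alpha_d(p)+1/2$ force $\varepsilon<\alpha-\alpha_d(p)+1/2$. Your upper endpoint $\alpha-\alpha_d(p)+1$ only yields $\alpha'>\alpha_d(p)$. The interval $(1/2,\alpha-\alpha_d(p)+1/2)$ is empty unless $\alpha>\alpha_d(p)$, so the range $\alpha_d(p)-1/2<\alpha\le\alpha_d(p)$ claimed in the proposition would be missed.

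The repair is easy, and your first instinct was fine. Corollary~\ref{Corollary: Boundedness of linear maximal Bochner-Riesz} is not circular here: it comes from Theorem~\ref{Theorem: Maximal spectral multiplier}, which uses only Theorems~\ref{Theorem: Stein square estimate} and \ref{Theorem: Stein square function for p less that 2 case}. It applies because $\alpha+1>\alpha_d(p)$.

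The paper avoids even that by iterating your Step 1. Telescoping $S^{\alpha}_t=\sum_{k=1}^{N}(S^{\alpha+k-1}_t-S^{\alpha+k}_t)+S^{\alpha+N}_t$ gives
\[
\mathcal{M}^{\alpha}(\mathcal{L})f\le\sum_{k=1}^{N}\frac{1}{2(\alpha+k)}\,\mathfrak{S}^{\alpha+k}(\mathcal{L})f+S^{\alpha+N}_*(\mathcal{L})f .
\]
For $\alpha+N>(Q-1)/2$, the last term is bounded on $L^p$, $2\le p\le\infty$, by the crude homogeneous-dimension result of Mauceri--Meda \cite[Corollary 2.8]{Mauceri_Meda_Multipliers_Stratified_groups_1990}. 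Every square function in the sum has order $\ge\alpha+1>\alpha_d(p)+1/2$.

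If you prefer a Carbery-type argument, split $(1-\eta)_+^{\alpha+1}$ into two pieces. The smooth piece near $\eta=0$ has maximal operator dominated by $\mathcal{M}f$ via \eqref{Linear multiplier estimate by maximal function}. The piece supported near $\eta=1$ lies in $L^2_{\alpha'}(\mathbb{R}^+)$ for all $\alpha'<\alpha+3/2$. Then \eqref{After application of Holder final estimate} with $\alpha'\in(\alpha_d(p)+1/2,\alpha+3/2)$ finishes the proof. Note that here the square-function order may exceed the Bochner--Riesz order, unlike in your $\varepsilon>1/2$ scheme.
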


\begin{proof}
Let us write
\begin{align*}
    S_t^{\alpha}(\mathcal{L})f &= \sum_{k=1}^{N} (S_t^{\alpha+k-1}(\mathcal{L})f-S_t^{\alpha+k}(\mathcal{L})f) + S_t^{\alpha+N}(\mathcal{L})f ,
\end{align*}
where $N>0$ will be chosen later.

Hence we obtain
\begin{align}
\label{Creating difference technique in square function}
    \left(\frac{1}{R} \int_0^{R} |S_t^{\alpha}(\mathcal{L})f(x,u)|^2 \, dt \right)^{1/2} &\leq \sum_{k=1}^N \left(\int_0^{\infty} |S_t^{\alpha+k-1}(\mathcal{L})f(x,u)-S_t^{\alpha+k}(\mathcal{L})f(x,u)|^2 \, \frac{dt}{t} \right)^{1/2} \\
    &\nonumber \hspace{3.5cm} + \left(\frac{1}{R}\int_0^{R} |S_t^{\alpha+N}(\mathcal{L})f(x,u)|^2 \, dt \right)^{1/2} .
\end{align}
First note that Stein's square function can also be written as
\begin{align}
\label{Equivalent expression of Stein square function}
    \mathfrak{S}^{\alpha}(\mathcal{L})f(x, u) &= 2\alpha \left( \int_0^{\infty} \left| S_t^{\alpha}(\mathcal{L})f(x,u) - S_t^{\alpha-1}(\mathcal{L})f(x,u) \right|^2 \, \frac{dt}{t} \right)^{1/2} .
\end{align}
Therefore from \eqref{Creating difference technique in square function} we see that
\begin{align*}
    \mathcal{M}^{\alpha}(\mathcal{L})f(x,u) &\leq \sum_{k=1}^N  \frac{1}{2(\alpha+k)} \mathfrak{S}^{\alpha+k}(\mathcal{L})f(x,u) + S_{*}^{\alpha+N}(\mathcal{L})f(x,u) .
\end{align*}
Choose $N>0$ large such that $\alpha+N>\frac{Q-1}{2}$, then $S_{*}^{\alpha+N}$ is bounded on $L^p(G)$ for $2\leq p\leq \infty$ (see \cite[Corollary 2.8]{Mauceri_Meda_Multipliers_Stratified_groups_1990}). On the other hand, from Theorem \ref{Theorem: Stein square estimate} we have $\mathfrak{S}^{\alpha+k}(\mathcal{L})$ is bounded on $L^p(G)$ for $\mathfrak{p}_G \leq p<\infty$ or $p=2$ if $\alpha+k>\alpha_d(p)+1/2$, that is, $\alpha>\alpha_d(p)-1/2$ for all $k \geq 1$. Combining both these results we get the required estimate.
\end{proof}

Let $\varphi \in C_c^{\infty}(\mathbb{R})$ such that it is supported on $[1,2]$. Then we end this subsection by considering the following square function associated to $\varphi$, which is defined by
\begin{align}
\label{Square function corresponding to bump function}
    \mathcal{G}_{\varphi}(\mathcal{L})f(x,u) &= \left(\int_{0}^{\infty} \left|\varphi\left(R^{-1}\mathcal{L}\right)f(x,u)\right|^2 \frac{dR}{R} \right)^{1/2} .
\end{align}

Regarding the $L^p$-boundedness of $\mathcal{G}_{\varphi}(\mathcal{L})$ we have the following result, which will play an important role in the proof of Theorem \ref{Theorem: Analogue of Niedorf theorem}.

\begin{proposition}
\label{Proposition: Lp boundedness of square function with bump}
Let $\varphi \in C_c^{\infty}(\mathbb{R})$ with $\supp{\varphi} \subseteq [1,2]$. Then for $1<p<\infty$,
\begin{align}
\label{Equivalence of other square function for phi}
    C^{-1} \|f\|_{L^p} \leq \|\mathcal{G}_{\varphi}(\mathcal{L})f\|_{L^p} \leq C \|f\|_{L^p} .
\end{align}
Moreover, $\mathcal{G}_{\varphi}(\mathcal{L})$ is of weak-type $(1,1)$.
    
\end{proposition}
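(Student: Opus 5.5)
The upper bound in \eqref{Equivalence of other square function for phi} is a vector-valued Calder\'on--Zygmund statement, and the lower bound will follow from it by duality. We regard $\mathcal{G}_{\varphi}(\mathcal{L})$ as the linear operator $f\mapsto \{\varphi(R^{-1}\mathcal{L})f\}_{R>0}$ taking values in the Hilbert space $\mathcal{H}=L^2((0,\infty),dR/R)$. Its kernel is $\mathcal{K}(x,u)=\{\mathcal{K}_{\varphi(R^{-1}\mathcal{L})}(x,u)\}_R$.

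The $L^2$ bound comes from the spectral theorem. We have
\begin{align*}
\|\mathcal{G}_{\varphi}(\mathcal{L})f\|_{L^2}^2=\Big\langle \Big(\int_0^\infty |\varphi(\eta/R)|^2\tfrac{dR}{R}\Big)\Big|_{\eta=\mathcal{L}}f,f\Big\rangle = c_\varphi\|f\|_{L^2}^2,
\end{align*}
where $c_\varphi=\int_0^\infty|\varphi(s)|^2\,ds/s$ does not depend on $\eta>0$. Since $\mathcal{L}$ has no nonzero $L^2$ kernel on $G$, this is an isometry up to the constant $c_\varphi$.

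For weak type $(1,1)$ and $L^p$ boundedness with $1<p<2$, we use the H\"ormander condition for $\mathcal{H}$-valued kernels. Applying Lemma \ref{Lemma: Pointwise kernel estimate for linear kernel} with $R^2$ replaced by $2R$ gives, for every $N$,
\begin{align*}
|\mathcal{K}_{\varphi(R^{-1}\mathcal{L})}(x,u)|\le C_N R^{Q/2}(1+R^{1/2}\|(x,u)\|)^{-N}.
\end{align*}
The same bound holds for $X^{\Gamma}\mathcal{K}$ with an extra factor $R^{|\Gamma|/2}$. Integrating in $dR/R$ yields the standard size and smoothness estimates
\begin{align*}
\|\mathcal{K}(x,u)\|_{\mathcal{H}}\lesssim \|(x,u)\|^{-Q},\qquad \|X_i\mathcal{K}(x,u)\|_{\mathcal{H}}\lesssim\|(x,u)\|^{-Q-1}.
\end{align*}
To see this, split $R$ according to whether $R^{1/2}\|(x,u)\|\le1$ or $>1$; each part gives a convergent geometric-type integral. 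By the mean value theorem on the group (left-invariant fields), these estimates imply $\int_{\|(y,t)^{-1}(x,u)\|>2\|(y,t)\|}\|\mathcal{K}((y,t)^{-1}(x,u))-\mathcal{K}(x,u)\|_{\mathcal{H}}\,d(x,u)\le C$. This may need a right-translation version, obtained by writing the right-invariant fields in terms of the left-invariant ones. Vector-valued Calder\'on--Zygmund theory on the homogeneous space $G$ then gives weak type $(1,1)$ and the upper bound for $1<p\le2$. For $2<p<\infty$ we cannot simply dualize a square function, so we apply the vector-valued CZ theorem to the adjoint. The adjoint $\{g_R\}\mapsto\int\overline{\varphi}(R^{-1}\mathcal{L})g_R\,dR/R$ is an $\mathcal{H}$-to-scalar operator with the same kernel bounds, so it is bounded on $L^{p'}$. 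By duality, $\mathcal{G}_\varphi(\mathcal{L})$ is then bounded on $L^p$ for $p>2$.

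The lower bound follows by polarizing the $L^2$ identity. For $f\in L^2\cap L^p$ and $g\in L^2\cap L^{p'}$,
\begin{align*}
c_\varphi\langle f,g\rangle=\int_G\int_0^\infty \varphi(R^{-1}\mathcal{L})f\,\overline{\varphi(R^{-1}\mathcal{L})g}\,\tfrac{dR}{R}\,d(x,u).
\end{align*}
By Cauchy--Schwarz in $R$ and H\"older in $(x,u)$, this is at most $\|\mathcal{G}_\varphi f\|_{L^p}\|\mathcal{G}_{\varphi} g\|_{L^{p'}}\lesssim \|\mathcal{G}_\varphi f\|_{L^p}\|g\|_{L^{p'}}$, using the upper bound at $p'$. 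Taking the supremum over $g$ gives $\|f\|_{L^p}\le C\|\mathcal{G}_\varphi f\|_{L^p}$, and density finishes the argument. The main obstacle is verifying the H\"ormander smoothness condition uniformly after integrating over all scales $R$. The key points are the derivative bound \eqref{Pointwise Kernel estimate with weight second} and the correct handling of the noncommutative translation, i.e.\ left versus right derivatives of the kernel.
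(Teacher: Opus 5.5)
Your argument is correct, but it takes a different route from the paper.

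\textbf{The paper's route.} The paper only sketches its proof, adapting Chen--Duong--Yan's treatment of Stein's square function on spaces of homogeneous type. For $1<p<2$ it verifies the criterion of Proposition \ref{Proposition: Lp boundedness criteria from p less than 2 case} for $\mathcal{G}_\varphi(\mathcal{L})(I-A_{r_B})$, using the weighted Plancherel estimate of Proposition \ref{Prop: Weighted Plancherel using weight and distance}. For $2<p<\infty$ it uses the criterion of Proposition \ref{Proposition: Lp Boundedness for p bigger than 2 case} together with the $L^2$ off-diagonal heat kernel bound of Lemma \ref{Lemma: L2 norm of the heat kernel}. This machinery needs only $L^2$-type (Plancherel) kernel information and heat-kernel bounds, never pointwise derivative bounds. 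That makes it the natural framework when the multiplier has limited smoothness, as for the Bochner--Riesz pieces elsewhere in the paper.

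\textbf{Your route.} You exploit that $\varphi\in C_c^\infty$. The pointwise estimates \eqref{Kernel estimate for linear multiplier} and \eqref{Pointwise Kernel estimate with weight second} hold with arbitrary decay, so integrating in $dR/R$ gives the $L^2((0,\infty),dR/R)$-valued size and gradient bounds $\|(x,u)\|^{-Q}$ and $\|(x,u)\|^{-Q-1}$. Standard Hilbert-space-valued Calder\'on--Zygmund theory, applied to the operator and to its vector-to-scalar adjoint, then yields weak type $(1,1)$ and all $1<p<\infty$ at once. This is more elementary and self-contained.

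\textbf{The translation bookkeeping.} The price of your route is the left/right translation issue you flag, and it is harmless. One fix: the right-invariant fields are $X_j$ plus (linear in $x$)$\,\cdot\,T_k$, and $T_k$ has homogeneous degree two. Hence $|x|\,|T\mathcal{K}|$ obeys the same bound $R^{(Q+1)/2}(1+R^{1/2}\|(x,u)\|)^{-N+1}$ as $|X\mathcal{K}|$. More simply, use the identity $\mathcal{K}_{F(\mathcal{L})}(w^{-1})=\overline{\mathcal{K}_{\bar F(\mathcal{L})}(w)}$, which converts left translations into right ones.

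\textbf{Lower bound.} Your polarization argument is the standard one. It also fills in a step that the paper's sketch leaves implicit. It tacitly requires $\varphi\not\equiv 0$, so that $c_\varphi>0$; the statement assumes this too.
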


\begin{proof}
The proof of the above proposition can be easily completed using the similar ideas as of \cite[Theorem 1.1]{Chen_Duong_Yan_Stein_Square_function_Homogeneous_2013}. There it was proved for Stein's square function on space of homogeneous type. Since $\varphi \in C_c^{\infty}(\mathbb{R})$ one can easily adapt the proof of \cite[Theorem 1.1]{Chen_Duong_Yan_Stein_Square_function_Homogeneous_2013} to our setup. In fact, for $1<p<2$ one have to use Proposition \ref{Prop: Weighted Plancherel using weight and distance} and Proposition \ref{Proposition: Lp boundedness criteria from p less than 2 case}; while for $2<p<\infty$ one have to use Proposition \ref{Proposition: Lp Boundedness for p bigger than 2 case} and Lemma \ref{Lemma: L2 norm of the heat kernel}.
\end{proof}

\subsection{Proof of Theorem \ref{Theorem: Stein square estimate} and Theorem \ref{Theorem: Stein square function for p less that 2 case}: Stein's square function on M\'etivier groups}
\label{Subsection: Stein square function}
This subsection is devoted to the proof of Theorem \ref{Theorem: Stein square estimate} and Theorem \ref{Theorem: Stein square function for p less that 2 case}. The idea of the proof are as follows: Boundedness of $\mathfrak{S}^{\alpha}(\mathcal{L})$ at $p=2$ follows from orthogonality (\emph{Plancherel theorem}), and for $\mathfrak{p}_G \leq p<\infty$ can be deduced from the $L^p$-boundedness of the square function $\mathfrak{S}_{\delta}^{\phi}(\mathcal{L})$. While for the range $1<p<2$, first we prove weak-type $(1,1)$ boundedness of $\mathfrak{S}^{\alpha}(\mathcal{L})$ and then interpolating with the case $p=2$ we get the required estimate.

\begin{proof}[Proof of Theorem \ref{Theorem: Stein square estimate}]

Let $\Psi \in C_c^{\infty}(1/2,2)$ such that for any $s>0$ it satisfies
\begin{align*}
    s^{\alpha-1} = \sum_{j \in \mathbb{Z}} 2^{-j (\alpha-1)} \Psi(2^j s) .
\end{align*}
Therefore, for $\eta>0$ we have
\begin{align*}
    \frac{\eta}{t^2} \left(1-\frac{\eta}{t^2} \right)_{+}^{\alpha-1} &= \sum_{j=0}^{\infty} 2^{-j (\alpha-1)} \Psi\left(2^j\left(1-\frac{\eta}{t^2} \right) \right) + \sum_{j=0}^{\infty} 2^{-j \alpha} \left(2^j\left(1-\frac{\eta}{t^2} \right) \right) \Psi\left(2^j\left(1-\frac{\eta}{t^2} \right) \right) .
\end{align*}
Recall we defined $\Psi^1(s) = s \Psi(s)$. Note that $t \frac{\partial}{\partial t} \left(1-\frac{\eta}{t^2} \right)_{+}^{\alpha} = 2 \alpha \frac{\eta}{t^2} \left(1-\frac{\eta}{t^2} \right)_{+}^{\alpha-1}$ and hence we get
\begin{align*}
    \mathfrak{S}^{\alpha}(\mathcal{L})f(x, u) &\leq 2\alpha \sum_{j=0}^{\infty} 2^{-j (\alpha-1)} \left( \int_0^{\infty} \left|\Psi\left(2^j \left(1-\frac{\mathcal{L}}{t^2} \right)\right) \right|^2 \, \frac{dt}{t} \right)^{1/2} \\
    &\hspace{2cm} + 2\alpha \sum_{j=0}^{\infty} 2^{-j \alpha} \left( \int_0^{\infty} \left|\Psi^1 \left(2^j \left(1-\frac{\mathcal{L}}{t^2} \right)\right) \right|^2 \, \frac{dt}{t} \right)^{1/2} .
\end{align*}
Now taking $L^p$-norm in both side of the above estimate and applying Proposition \ref{Proposition: Square function estimate} for $\mathfrak{p}_G \leq p<\infty$ we obtain
\begin{align*}
    \|\mathfrak{S}^{\alpha}(\mathcal{L})f\|_{L^p} &\leq C \sum_{j=0}^{\infty} 2^{-j (\alpha-1)} 2^{-j(\frac{1}{2}-\alpha_d(p)-\epsilon)} \|f\|_{L^p} \leq C \|f\|_{L^p} ,
\end{align*}
where $\epsilon>0$ and since $\alpha> d(\frac{1}{2}-\frac{1}{p})$.

Now it remains to prove Theorem \ref{Theorem: Stein square estimate} for $p=2$. If we set $\phi_t^{\alpha}(s) = 2\alpha \frac{s}{t^2}(1-\frac{s}{t^2})_{+}^{\alpha-1}$, then orthogonality yields
\begin{align*}
    \|\mathfrak{S}^{\alpha}(\mathcal{L})f\|_{L^2}^2 &= \int_0^{\infty} \int_{\mathfrak{g}_{2,r}^{*}} \sum_{\mathbf{k} \in \mathbb{N}^\Lambda} |\phi_t^{\alpha}(\eta_{\mathbf{k}}^{\lambda})|^2 \|f^{\lambda} \times_{\lambda} \varphi_{\mathbf{k}}^{\mathbf{b}^{\lambda}, \mathbf{r}}(R_{\lambda}^{-1}\cdot)\|_{L^2}^2 \, d\lambda \, \frac{dt}{t} .
\end{align*}
Note, from \cite[p. 278]{Stein_Weiss_Fourier_Analysis_book} whenever $\alpha>\frac{1}{2}$ we have
\begin{align*}
    \int_{0}^{\infty} |\phi_t^{\alpha}(\eta_{\mathbf{k}}^{\lambda})|^2 \, \frac{dt}{t} &= 4 \alpha^2 \int_{\sqrt{\eta_{\mathbf{k}}^{\lambda}}}^{\infty} \left(\tfrac{\sqrt{\eta_{\mathbf{k}}^{\lambda}}}{t} \right)^4 \left(1-\left(\tfrac{\sqrt{\eta_{\mathbf{k}}^{\lambda}}}{t} \right)^2 \right)^{2\alpha-2} \, \frac{dt}{t} \\
    &= 4\alpha^2 [2(2\alpha-1)(2\alpha+1)]^{-1} ,
\end{align*}
Therefore with the help of the above estimate whenever $\alpha>\frac{1}{2}$ we obtain
\begin{align*}
    \|\mathfrak{S}^{\alpha}(\mathcal{L})f\|_{L^2}^2 &\leq C \int_{\mathfrak{g}_{2,r}^{*}} \sum_{\mathbf{k} \in \mathbb{N}^\Lambda} \|f^{\lambda} \times_{\lambda} \varphi_{\mathbf{k}}^{\mathbf{b}^{\lambda}, \mathbf{r}}(R_{\lambda}^{-1}\cdot)\|_{L^2}^2 \, d\lambda \leq C \|f\|_{L^2}^2 .
\end{align*}
This completes the proof of Theorem \ref{Theorem: Stein square estimate}.
\end{proof}

\medskip
\begin{proof}[Proof of Theorem \ref{Theorem: Stein square function for p less that 2 case}]
First note that to prove Theorem \ref{Theorem: Stein square function for p less that 2 case}, enough to show whenever $\alpha>(d+1)/2$, the operator $\mathfrak{S}^{\alpha}(\mathcal{L})$ is of weak-type $(1,1)$. Since the $L^p$-boundedness of $\mathfrak{S}^{\alpha}(\mathcal{L})$ for $1<p<2$ follows from the interpolation of Theorem \ref{Theorem: Stein square estimate} with $p=2$ case and the weak-type $(1,1)$ boundedness. Moreover, in order to prove weak-type $(1,1)$ boundedness for $\mathfrak{S}^{\alpha}(\mathcal{L})$, we apply Proposition \ref{Proposition: Criterion for weak boundedness} with $F(\mathcal{L})=\mathfrak{S}^{\alpha}(\mathcal{L})$.

From \eqref{Definition of approximate identity} recall that $A_t = \exp{(-t^2 \mathcal{L})}$. Let us write
\begin{align}
\label{Expression of square function with approximate identity}
    \mathfrak{S}^{\alpha}(\mathcal{L})(I-A_t)f(x,u) &= \left( \int_0^{\infty} | F^{\alpha}_{R,t}(\sqrt{\mathcal{L}})f(x, u) |^2 \, \frac{dR}{R} \right)^{1/2} ,
\end{align}
where
\begin{align}
\label{Definition of F R t alpha}
    F^{\alpha}_{R,t}(\eta) &= R \frac{\partial}{\partial R} S^{\alpha}_R(\eta^2) (1-e^{-t^2 \eta^2}) .
\end{align}
Let $\varphi \in C_c^{\infty}(0, \infty)$ compactly supported smooth function supported on $[1/4,1]$ such that
\begin{align*}
    \sum_{j \in \mathbb{Z}} \varphi(2^j \eta) = 1 \quad \quad \text{for any} \ \ \eta>0 .
\end{align*}
Since $\supp{F^{\alpha}_{R,t}} \subseteq [0,R]$ and $\supp{\varphi} \subseteq [1/4,1]$, using the above partition of unity we obtain
\begin{align}
\label{Use of partition of unity in F R alpha}
    F^{\alpha}_{R,t}(\sqrt{\mathcal{L}})f &= \sum_{j=-1}^{\infty} F^{\alpha}_{R,t, j}(\sqrt{\mathcal{L}})f ,
\end{align}
where $F^{\alpha}_{R,t, j}(\eta)= F^{\alpha}_{R,t}(\eta) \varphi(2^j \eta/R)$.

Therefore plugging \eqref{Use of partition of unity in F R alpha} into \eqref{Expression of square function with approximate identity} and applying Minkowski's integral inequality yields
\begin{align*}
    \mathfrak{S}^{\alpha}(\mathcal{L})(I-A_t)f(x,u)  &\leq \sum_{j=-1}^{\infty} \int_G \left(\int_{0}^{\infty} |\mathcal{K}_{F^{\alpha}_{R,t,j}(\sqrt{\mathcal{L}})}((y,t)^{-1}(x,u))|^2 \, \frac{dR}{R} \right)^{1/2} |f(y,t)|\, d(y,t) \\
    &=: \int_G |K_t^{\alpha}((y,t)^{-1}(x,u))| |f(y,t)|\, d(y,t) ,
\end{align*}
where 
\begin{align*}
    |K_t^{\alpha}(x,u)| &:= \sum_{j=-1}^{\infty} \left(\int_{0}^{\infty} |\mathcal{K}_{F^{\alpha}_{R,t,j}(\sqrt{\mathcal{L}})}(x,u)|^2 \, \frac{dR}{R} \right)^{1/2} .
\end{align*}
Hence, in view of Proposition \ref{Proposition: Criterion for weak boundedness} enough to prove that, whenever $\alpha>(d+1)/2$ we have
\begin{align}
\label{To show for weak 1 1 proof}
    \sup_{t>0} \int_{\|(x,u)\|\geq t} |K_t^{\alpha}(x,u)| \, d(x,u) &\leq C .
\end{align}
In order to estimate the above, we decompose it further as follows:
\begin{align}
\label{Dominate Kt alpha by Ijk sum}
    \int_{\|(x,u)\|\geq t} |K_t^{\alpha}(x,u)| \, d(x,u) &\leq \sum_{j=-1}^{\infty} \sum_{k\in \mathbb{Z}} \int_{\|(x,u)\|\geq t} \left(\int_{2^k}^{2^{k+1}} |\mathcal{K}_{F^{\alpha}_{R,t,j}(\sqrt{\mathcal{L}})}(x,u)|^2 \, \frac{dR}{R} \right)^{1/2} \, d(x,u) \\
    &\nonumber =: \sum_{j=-1}^{\infty} \sum_{k\in \mathbb{Z}} I_{j,k} .
\end{align}
Let us start with estimating $I_{j,k}$ for $j\geq -2$ and $k \in \mathbb{Z}$. Suppose $\beta, \sigma, \gamma>0$. An application of H\"older's inequality yields
\begin{align}
\label{Estimate of Ijk part}
    I_{j,k} &\leq \left(\int_{2^k}^{2^{k+1}} \int_{G} |(1+2^{-j}R\|(x,u)\|)^{\beta+\sigma} (1+2^{-j}R|x|)^{\gamma} \mathcal{K}_{F^{\alpha}_{R,t,j}(\sqrt{\mathcal{L}})}(x,u)|^2 \, d(x,u) \, \frac{dR}{R} \right)^{1/2} \\
    &\nonumber \hspace{4cm} \times (1+2^{k-j} t)^{-\beta} \left(\int_{G} \frac{d(x,u)}{(1+2^{k-j} \|(x,u)\|)^{2\sigma} (1+2^{k-j} |x|)^{2\gamma}} \right)^{1/2} .
\end{align}
Note that since $\supp{F^{\alpha}_{R,t,j}} \subseteq [2^{-j}R/4,2^{-j}R]$, from Proposition \ref{Prop: Weighted Plancherel using weight and distance} for all $\beta, \sigma, \varepsilon>0$ and $0\leq \gamma<d_2/2$ we have
\begin{align}
\label{Application of weighted plancherel}
    & \int_{G} |(1+2^{-j}R\|(x,u)\|)^{\beta+\sigma} (1+2^{-j}R|x|)^{\gamma} \mathcal{K}_{F^{\alpha}_{R,t,j}(\sqrt{\mathcal{L}})}(x,u)|^2 \, d(x,u) \\
    &\nonumber \leq C 2^{-jQ} R^Q \|F^{\alpha}_{R,t,j}(2^{-j}R \cdot)\|_{L^2_{\beta+\sigma+\varepsilon}}^2 \\
    &\nonumber \leq C t^{-Q} \max\{1,(2^{-j} Rt)^Q\} \|F^{\alpha}_{R,t,j}(2^{-j}R \cdot)\|_{L^2_{\beta+\sigma+\varepsilon}}^2 .
\end{align}
If $m \in N$ such that $m>\beta+\sigma+\varepsilon$, then for any $N \geq 0$ from \eqref{Definition of F R t alpha} we obtain
\begin{align}
\label{Computation of the sobolev norm}
    \|F^{\alpha}_{R,t,j}(2^{-j}R \cdot)\|_{L^2_{\beta+\sigma+\varepsilon}} &\leq C \|(2^{-j} \eta)^2 \varphi(\eta) (1-2^{-2j} \eta^2)_{+}^{\alpha-1}\|_{L^2_{\beta+\sigma+\varepsilon}} \|(1-e^{-(2^{-j}Rt)^2 \eta^2})\|_{C^{m}([1/4,1])} \\
    &\nonumber \leq C 2^{-2j} \min\{1, (2^{-j}Rt)^{2N}\} ,
\end{align}
where in the last line we have used Lemma \ref{Lemma: Finiteness of Sobolev norm for Bochner-Riesz} for $\alpha>\beta+\sigma+\epsilon+1/2$.

On the other hand from Lemma \ref{Lemma: Integration of distance and weight} for $\sigma+\gamma>Q/2$ and $0\leq \gamma<d_1/2$ yields
\begin{align}
\label{Distance weight integral}
    \int_{G} \frac{d(x,u)}{(1+2^{k-j} \|(x,u)\|)^{2\sigma} (1+2^{k-j} |x|)^{2\gamma}} &\leq C 2^{(-k+j) Q} .
\end{align}
Note that, since $G$ is M\'etivier group, we always have $d_1>d_2$ (see \cite[p. 12]{Bagchi_Molla_Singh_Bilinear_Metivier}). So that $\sigma+\gamma>Q/2$ and $0\leq \gamma<d_2/2$ implies $\sigma>d/2$. Since we have $\alpha>(d+1)/2$, by choosing $\beta, \epsilon>0$ sufficiently small we can make sure that $\alpha> \beta+\sigma+\epsilon+1/2$.

Hence plugging the estimates \eqref{Application of weighted plancherel}, \eqref{Computation of the sobolev norm} and \eqref{Distance weight integral} into \eqref{Estimate of Ijk part} for $\alpha>(d+1)/2$ we obtain
\begin{align*}
    I_{j,k} &\leq C \Bigg(\int_{2^k}^{2^{k+1}} t^{-Q} \max\{1,(2^{-j} Rt)^Q\} 2^{-4j} \min\{1, (2^{-j}Rt)^{4N}\} \, \frac{dR}{R} \Bigg)^{1/2} (2^{k-j} t)^{-\beta} 2^{(-k+j) Q/2} \\
    &\leq C \Bigg(\int_{2^k}^{2^{k+1}} \max\{1,(2^{-j} Rt)^Q\} 2^{-4j} \min\{1, (2^{-j}Rt)^{4N}\} (2^{k-j}t)^{-2\beta-Q} \, \frac{dR}{R} \Bigg)^{1/2} \\
    &\leq C 2^{-2j} \max\{1,(2^{k} s_j)^{Q/2}\} \min\{1, (2^{k}s_j)^{2N}\} (2^{k}s_j)^{-\beta-Q/2} ,
\end{align*}
where $s_j=2^{-j}t$.

Therefore with the help of above estimate and from \eqref{Dominate Kt alpha by Ijk sum} whenever $\alpha>(d+1)/2$ we have
\begin{align*}
    \int_{\|(x,u)\|\geq t} |K_t^{\alpha}(x,u)| \, d(x,u) &= C \sum_{j=-1}^{\infty} 2^{-2j} \Big( \sum_{k: 2^k s_j \leq 1} (2^{k}s_j)^{2N-\beta-Q/2} + \sum_{k: 2^k s_j>1} (2^{k}s_j)^{-\beta} \Big) \\
    &\leq C \sum_{j=-1}^{\infty} 2^{-2j} \leq C ,
\end{align*}
provided we choose $N$ sufficiently large, that is, $2N>\beta+Q/2$.

This completes the proof of the estimate \eqref{To show for weak 1 1 proof}.
\end{proof}


\section{Applications of Stein's square function on M\'etivier groups}
\label{Section: Applications of Steins square function}
This section is devoted to the proof of Theorem \ref{Theorem: Analogue of Niedorf theorem}, \ref{Theorem: Maximal spectral multiplier}, \ref{Theorem: Lp to L2 local smoothing for Metivier}, \ref{Theorem: Bilinear maximal on Metivier}, \ref{Theorem: Bilinear Stein square function estimate}, \ref{Theorem: Bilinear maximal spectral multiplier}, which we obtain using the boundedness of the various square functions defined on M\'etivier groups and Theorem \ref{Theorem: Stein square estimate}, \ref{Theorem: Stein square function for p less that 2 case} already proved in Section \ref{Section: Square function on Metivier groups}.

\subsection{Proof of Theorem \ref{Theorem: Analogue of Niedorf theorem}: Sharp spectral multipliers on M\'etivier groups}
\label{Subsection: Sharp spectral multiplier}
This subsection is devoted to the proof of Theorem \ref{Theorem: Analogue of Niedorf theorem}. First note that by duality enough to prove Theorem \ref{Theorem: Analogue of Niedorf theorem} only for $\mathfrak{p}_G \leq p<\infty$. Let $\varphi \in C_c^{\infty}(\mathbb{R})$ such that it is supported on $[1,2]$. Let us set $\sigma(\eta) = \eta \varphi(\eta)$ and then from \eqref{Square function corresponding to bump function} recall that
\begin{align*}
    \mathcal{G}_{\sigma}(\mathcal{L})f(x,u) &= \left(\int_{0}^{\infty} \left|\sigma\left(R^{-1}\mathcal{L}\right)f(x,u)\right|^2 \frac{dR}{R} \right)^{1/2} ,
\end{align*}
Hence from Proposition \ref{Proposition: Lp boundedness of square function with bump} we have
\begin{align}
\label{spectral mult to localised square function}
    \|F(\mathcal{L})f\|_{L^p} &\leq C \|\mathcal{G}_{\sigma}(\mathcal{L})(F(\mathcal{L})f)\|_{L^p} .
\end{align}
Now we make the following claim: for $\alpha>1/2$
\begin{align}
\label{claim: Localised square to Stein square function}
    \|\mathcal{G}_{\sigma}(\mathcal{L})(F(\mathcal{L})f)\|_{L^p} &\leq C \|F\|_{L^{2}_{\alpha, sloc}} \|\mathfrak{S}^{\alpha}(\mathcal{L})f\|_{L^p} .
\end{align}
Note that if we assume the above claim \eqref{claim: Localised square to Stein square function} for the moment, then combining \eqref{spectral mult to localised square function}, \eqref{claim: Localised square to Stein square function} and Theorem \ref{Theorem: Stein square estimate} we immediately get Theorem \ref{Theorem: Analogue of Niedorf theorem}.

Hence it remains to prove \eqref{claim: Localised square to Stein square function}. First we recall the well known Riemann-Liouville formula: (see \cite{Carbery_Maximal_Radial_Fourier_Mult_1985}) Let $h \in L^2_{\alpha}(\mathbb{R})$ for $\alpha>1/2$ and is supported on $(-\infty, a]$ for some $a>0$, then we have
\begin{align}
\label{Riemann-Liouville formula}
    h(\eta) &= C \int_{\eta}^{\infty} (s-\eta)^{\alpha-1} \frac{d^{\alpha}h}{ds^{\alpha}}(s) \, ds,  \quad \quad \quad a.e.
\end{align}
where $\left(\frac{d^{\alpha}h}{ds^{\alpha}}\right)^{\widehat{}}(\xi) = (-i \xi)^{\alpha} \widehat{h}(\xi)$.

Therefore applying \eqref{Riemann-Liouville formula} to $\sigma(R^{-1}\eta)=\varphi(R^{-1}\eta) F(\eta)$ with $R>0$, for $\alpha>1/2$ we get
\begin{align}
\label{Equality between sigma and Lamda}
    \sigma(R^{-1}\eta) F(\eta) &= C R^{-1} \int_{R}^{2R} \frac{\eta}{s} \left(1-\frac{\eta}{s} \right)_{+}^{\alpha-1} s^{\alpha} \frac{d^{\alpha}}{ds^{\alpha}}[\varphi(R^{-1} \cdot) F(\cdot)](s) \, ds =: C \Lambda_R(\eta) .
\end{align}
In view of the above expression we see that
\begin{align}
\label{Square function with sigma in terms of Gamma}
    \mathcal{G}_{\sigma}(\mathcal{L})(F(\mathcal{L})f)(x,u) &= C \left(\int_{0}^{\infty} \left| \Lambda_R(\mathcal{L})f(x,u)\right|^2 \frac{dR}{R} \right)^{1/2} ,
\end{align}
where
\begin{align}
\label{Expression of captial Lamda}
    \Lambda_R(\mathcal{L})f(x,u) &= R^{-1} \int_{R}^{2R} \frac{\mathcal{L}}{s} \left(1-\frac{\mathcal{L}}{s} \right)_{+}^{\alpha-1}f(x,u) s^{\alpha} \frac{d^{\alpha}}{ds^{\alpha}}[\varphi(R^{-1} \cdot) F(\cdot)](s) \, ds .
\end{align}
From \eqref{Expression of captial Lamda} applying Cauchy-Schwartz inequality yields
\begin{align*}
    & |\Lambda_R(\mathcal{L})f(x,u)| \\
    &\leq \left( \frac{1}{R} \int_{R}^{2R} \left|\frac{\mathcal{L}}{s} \left(1-\frac{\mathcal{L}}{s} \right)_{+}^{\alpha-1}f(x,u) \right|^2 \, ds \right)^{1/2} \left( \frac{1}{R} \int_{R}^{2R} \left| s^{\alpha} \frac{d^{\alpha}}{ds^{\alpha}}[\varphi(R^{-1} \cdot) F(\cdot)](s) \right|^2 \, ds \right)^{1/2} .
\end{align*}
Using $\left(\frac{d^{\alpha}h}{ds^{\alpha}}\right)^{\widehat{}}(\xi) = (-i \xi)^{\alpha} \widehat{h}(\xi)$, one can see that
\begin{align*}
    \left( \frac{1}{R} \int_{R}^{2R} \left| s^{\alpha} \frac{d^{\alpha}}{ds^{\alpha}}[\varphi(R^{-1} \cdot) F(\cdot)](s) \right|^2 \, ds \right)^{1/2} &\sim \|\varphi F(R\cdot)\|_{L^2_{\alpha}(\mathbb{R})} .
\end{align*}
Hence with the help of the above estimate we obtain
\begin{align}
\label{Square function with Gamma r part}
    \left(\int_{0}^{\infty} \left|\Lambda_R(\mathcal{L})f(x,u)\right|^2 \frac{dR}{R} \right)^{1/2} & \leq C \sup_{R>0} \|\varphi F(R\cdot)\|_{L^2_{\alpha}(\mathbb{R})} \left(\int_{0}^{\infty} \left|\frac{\mathcal{L}}{s} \left(1-\frac{\mathcal{L}}{s} \right)_{+}^{\alpha-1}f(x,u) \right|^2 \, \frac{ds}{s} \right)^{1/2} \\
    &\nonumber \leq C \sup_{R>0} \|\varphi F(R\cdot)\|_{L^2_{\alpha}(\mathbb{R})} \, \mathfrak{S}^{\alpha}(\mathcal{L})f(x,u) .
\end{align}
Therefore plugging the above estimate \eqref{Square function with Gamma r part} into \eqref{Square function with sigma in terms of Gamma} yields
\begin{align*}
    \|\mathcal{G}_{\sigma}(\mathcal{L})(F(\mathcal{L})f)\|_{L^p} &\leq C \|F\|_{L^{2}_{\alpha, sloc}} \|\mathfrak{S}^{\alpha}(\mathcal{L})f\|_{L^p} ,
\end{align*}
which completes the proof of the claim \eqref{claim: Localised square to Stein square function}.


\subsection{Proof of Theorem \ref{Theorem: Maximal spectral multiplier}: Maximal spectral multipliers on M\'etivier groups}
\label{Subsection: Boundedness of maximal spectral multiplier}
In this subsection, we prove Theorem \ref{Theorem: Maximal spectral multiplier} as an application of the $L^p$-boundedness of $\mathfrak{S}^{\alpha}(\mathcal{L})$.

Take $F \in C_c^{\infty}(0, \infty)$ and applying \eqref{Riemann-Liouville formula} with $h(\eta) = \frac{F(\eta)}{\eta}$ we get
\begin{align}
\label{Main identity to use maximal multiplier result}
    F\left(\frac{\eta}{R} \right) &= C \int_{0}^{\infty} \frac{\eta}{Rs} \left(1-\frac{\eta}{Rs} \right)_{+}^{\alpha-1} s^{\alpha} \frac{d^{\alpha}}{ds^{\alpha}}\left(\frac{F(s)}{s} \right) \, ds .
\end{align}
Therefore we obtain
\begin{align*}
    F\left(\frac{\mathcal{L}}{R} \right)f(x,u) &= C \int_{0}^{\infty} \frac{\mathcal{L}}{Rs} \left(1-\frac{\mathcal{L}}{Rs} \right)_{+}^{\alpha-1}f(x,u) \, s^{\alpha+1} \frac{d^{\alpha}}{ds^{\alpha}}\left(\frac{F(s)}{s} \right) \, \frac{ds}{s} .
\end{align*}
Applying Cauchy-Schwartz inequality yields
\begin{align}
\label{Application of Holder in maximal spectral}
    & \left|F\left(\frac{\mathcal{L}}{R} \right)f(x,u)\right| \\
    &\nonumber \leq C \left(\int_{0}^{\infty} \left|\frac{\mathcal{L}}{Rs} \left(1-\frac{\mathcal{L}}{Rs} \right)_{+}^{\alpha-1}f(x,u) \right|^2 \frac{ds}{s} \right)^{1/2} \left( \int_0^{\infty} \left|s^{\alpha+1} \frac{d^{\alpha}}{ds^{\alpha}}\left(\frac{F(s)}{s} \right) \right|^2 \, \frac{ds}{s} \right)^{1/2} .
\end{align}
Therefore, from \eqref{Application of Holder in maximal spectral} we obtain
\begin{align}
\label{After application of Holder final estimate}
    F^{*}(\mathcal{L})f(x,u) &\leq C \|F\|_{L^2_{\alpha}(\mathbb{R}^+)} \mathfrak{S}^{\alpha}(\mathcal{L})f(x,u) .
\end{align}
Finally, taking the $L^p$-norm on both sides of the above estimate and applying Theorem \ref{Theorem: Stein square estimate} and Theorem \ref{Theorem: Stein square function for p less that 2 case} we obtain Theorem \ref{Theorem: Maximal spectral multiplier}.


\subsection{Proof of Theorem \ref{Theorem: Lp to L2 local smoothing for Metivier}: Regularity estimate for the solution of fractional Schr\"odinger equation on M\'etivier groups}
\label{Subsection: Local smoothing}
In this subsection we give the proof of Theorem \ref{Theorem: Lp to L2 local smoothing for Metivier}. First note that it is enough to prove the Theorem \ref{Theorem: Lp to L2 local smoothing for Metivier} for large $k$. Recall that $\chi \in C_c^{\infty}(\frac{5}{8}, \frac{15}{8})$ and non-negative such that $\sum_{k \in \mathbb{Z}} \chi(2^{-k} \eta) = 1$ for $\eta>0$. Let $\phi$ be a real valued even function which is supported on $[-1/4,1/4]$ such that it $\widehat{\phi}(s)>C>0$ for all $s \in I$ and some constant $C>0$. Then we have
\begin{align}
\label{Lp norm before finite slicing}
    \left\|\left( \int_I |e^{i s \mathcal{L}^{a/2}} \chi(2^{-k} \sqrt{\mathcal{L}}) f|^2 \, ds \right)^{1/2} \right\|_{L^p} &\leq C \left\|\left( \int_{\mathbb{R}} |\widehat{\phi}(s) e^{i s \mathcal{L}^{a/2}} \chi(2^{-k} \sqrt{\mathcal{L}}) f|^2 \, ds \right)^{1/2} \right\|_{L^p} .
\end{align}
Applying the Plancherel theorem in $s$-variable, using the definition of Fourier transform and using the support condition of $\phi$ and $\chi$ for each $(x,u) \in G$ we can write
\begin{align}
\label{Use of Plancherel in local smoothing}
    \left( \int_{\mathbb{R}} |\widehat{\phi}(s) e^{i s \mathcal{L}^{a/2}} \chi(2^{-k} \sqrt{\mathcal{L}}) f(x,u)|^2 \, ds \right)^{1/2} & = \left( \int_{2^{(k-3)a}}^{2^{(k+3)a}} |\phi(\mathcal{L}^{a/2}-\tau) \chi(2^{-k} \sqrt{\mathcal{L}})f(x,u) |^2 \, d\tau \right)^{\frac{1}{2}} .
\end{align}
Again splitting the above integral into finitely many pieces enough to consider the the case $\tau \in [(2^{k}R)^{a}, (2^{k+1} R)^{a}]$ for $R \sim 1$. Set $\nu= (2^k R)^{-a}$. Then making the change of variable $\tau = \nu^{-1} r^{a/2}$ and using the fact (see \eqref{Dilation for Phij, beta})
\begin{align*}
    \phi(\mathcal{L}^{a/2}-\nu^{-1} r^{a/2}) \chi(2^{-k} \sqrt{\mathcal{L}})f(\delta_{\nu^{1/a}}(x, u)) &= \phi(\nu^{-1}(\mathcal{L}^{a/2}- r^{a/2})) \chi(R \sqrt{\mathcal{L}})(\delta_{\nu^{1/a}}f)(x, u) .
\end{align*}
we see that
\begin{align}
\label{making change of variable in tau}
    & \int_{\nu^{-1}}^{\nu^{-1} 2^{a}} |\phi(\mathcal{L}^{a/2}-\tau) \chi(2^{-k} \sqrt{\mathcal{L}})f(x,u) |^2 \, d\tau \\
    &\nonumber \sim \int_{1}^{4} |\phi(\nu^{-1}(\mathcal{L}^{a/2}- r^{a/2})) \chi(R \sqrt{\mathcal{L}})(\delta_{\nu^{1/a}}f)(\delta_{\nu^{-1/a}}(x, u))|^2 \, \nu^{-1} \, dr ,
\end{align}
Since $\phi$ is even, let us write
\begin{align*}
    \phi(\nu^{-1}(\eta^{a/2}- r^{a/2})) &= \phi\left(\zeta(\eta, r) \frac{r-\eta}{\nu} \right) \quad \quad \text{where} \quad \zeta(\eta,r) = \frac{\eta^{a/2}- r^{a/2}}{\eta- r} .
\end{align*}
Note that the function $\zeta$ is non-vanishing and smooth on $(0, \infty)^2$. Let us set $\widetilde{\phi}\left(\frac{r-\eta}{\nu} \right) = \phi\left(\zeta(\eta, r) \frac{r-\eta}{\nu} \right)$. Therefore we have
\begin{align}
\label{After change of variable from R to normal}
    & \int_{\nu^{-1}}^{\nu^{-1} 2^{a}} |\phi(\mathcal{L}^{a/2}-\tau) \chi(2^{-k} \sqrt{\mathcal{L}})f(x,u) |^2 \, d\tau \\
    &\nonumber \sim \nu^{-1} \int_{1}^{4} \Big|\widetilde{\phi}\left(\frac{r-\mathcal{L}}{\nu} \right) \chi(R \sqrt{\mathcal{L}})(\delta_{\nu^{1/a}}f)(\delta_{\nu^{-1/a}}(x, u)) \Big|^2 \, dr ,
\end{align}
where
\begin{align*}
    \widetilde{\phi}\left(\frac{r-\mathcal{L}}{\nu} \right)g(x,u) &= \frac{1}{(2\pi)^{d_2}} \int_{\mathfrak{g}_{2,r}^{*}} \sum_{\mathbf{k} \in \mathbb{N}^\Lambda} \phi\left(\zeta(\eta_{\mathbf{k}}^{\lambda}, r) \frac{r-\eta_{\mathbf{k}}^{\lambda}}{\nu} \right) \left[g^{\lambda} \times_{\lambda} \varphi_{\mathbf{k}}^{\mathbf{b}^{\lambda}, \mathbf{r}}(R_{\lambda}^{-1}\cdot) \right](x) \, e^{i \langle \lambda, u \rangle} \, d\lambda .
\end{align*}
Therefore, combining \eqref{After change of variable from R to normal}, \eqref{Use of Plancherel in local smoothing} and from \eqref{Lp norm before finite slicing} we obtain
\begin{align}
\label{Reduction to phi tilde}
    \left\|\left( \int_I |e^{i s \mathcal{L}^{a/2}} \chi(2^{-k} \sqrt{\mathcal{L}}) f|^2 \, ds \right)^{\frac{1}{2}} \right\|_{L^p} &\lesssim \nu^{-\frac{1}{2}} \nu^{\frac{Q}{a}} \left\|\left( \int_{1}^{4} \Big|\widetilde{\phi}\left(\frac{r-\mathcal{L}}{\nu} \right) \chi(R \sqrt{\mathcal{L}})(\delta_{\nu^{\frac{1}{a}}}f) \Big|^2 \, dr \right)^{\frac{1}{2}} \right\|_{L^p}.
\end{align}
Since $\zeta$ is non-vanishing and smooth on $(0, \infty)^2$ and on the support of $\chi$ we have $\eta \sim 1$, applying Proposition \ref{Proposition: Generalized Square function estimate} for $\mathfrak{p}_G \leq p<\infty$ and $\alpha>\alpha_d(p)$ we obtain
\begin{align}
\label{Use of phi tilde proposition}
    \left\|\left( \int_{1}^{4} \Big|\widetilde{\phi}\left(\frac{r-\mathcal{L}}{\nu} \right) \chi(R \sqrt{\mathcal{L}})(\delta_{\nu^{1/a}}f) \Big|^2 \, dr \right)^{1/2} \right\|_{L^p} &\leq C \nu^{\frac{1}{2}-\alpha} \|\chi(R \sqrt{\mathcal{L}})(\delta_{\nu^{1/a}}f)\|_{L^p} \\
    &\nonumber \leq C \nu^{\frac{1}{2}-\alpha} \nu^{-Q/a} \|f\|_{L^p} ,
\end{align}
where in the last inequality we have used $\chi \in C_c^{\infty}(\frac{5}{8}, \frac{15}{8})$ and the fact $R \sim 1$.

Note that $\nu \sim 2^{-k a}$. Therefore combining the above two estimates \eqref{Reduction to phi tilde} and \eqref{Use of phi tilde proposition} for $\mathfrak{p}_G \leq p<\infty$ and $\alpha>\alpha_d(p)$ yields
\begin{align*}
    \left\|\left( \int_I |e^{i s \mathcal{L}^{a/2}} \chi(2^{-k} \sqrt{\mathcal{L}}) f|^2 \, ds \right)^{1/2} \right\|_{L^p} &\lesssim 2^{k a \alpha} \|f\|_{L^p} .
\end{align*}
This completes the proof of the Theorem \ref{Theorem: Lp to L2 local smoothing for Metivier}.


\subsection{Proof of Theorem \ref{Theorem: Bilinear maximal on Metivier}: Maximal bilinear Bochner-Riesz mean on M\'etivier groups}
\label{Subsection: Boundedness of bilinear maximal Bochner-Riesz means}
This subsection is devoted to the proof of Theorem \ref{Theorem: Bilinear maximal on Metivier}. In order to prove the $L^{p_1}(G) \times L^{p_2}(G)$ to $L^p(G)$ boundedness of $\mathcal{B}_*^{\alpha}$, we use the ideas from \cite{Jotsaroop_Shrivastava_Maximal_Bochner_Riesz_2022}, therefore here will be brief.

Let $\Psi$ be a bump function supported in $[1/2,2]$ such that for any $t>0$ we have
\begin{align}
\label{Definition of partition of unity for maximal}
    1 &= \sum_{j\geq 2} \Psi(2^{j}(1-t)) + \Psi_0(t) ,
\end{align}
where $\Psi_0$ is again a bump function supported in $[-3/4,3/4]$.

If we take $t=\frac{\eta_1}{R^2}$, then we can write
\begin{align}
\label{Decomposition of the bilinear Bochner-Riesz multiplier}
    \left(1-\tfrac{\eta_1+\eta_2}{R^2} \right)_{+}^{\alpha} &= \sum_{j\geq 2} \Psi \left(2^j\left(1-\tfrac{\eta_1}{R^2} \right) \right) \left(1-\tfrac{\eta_1}{R^2} \right)_{+}^{\alpha} \left(1-\tfrac{\eta_2}{R^2}\left(1-\tfrac{\eta_1}{R^2} \right)^{-1} \right)_{+}^{\alpha} + \Psi_0\left(\tfrac{\eta_1}{R^2} \right) \left(1-\tfrac{\eta_1+\eta_2}{R^2} \right)_{+}^{\alpha} \\
    &\nonumber =: \sum_{j\geq 2} m_{j,R}^{\alpha}(\eta_1, \eta_2) + m_{0,R}^{\alpha}(\eta_1, \eta_2) .
\end{align}
Therefore we have
\begin{align*}
    \mathcal{B}_R^{\alpha}(f,g)(x,u) &= \mathcal{B}_{0,R}^{\alpha}(f,g)(x,u) + \sum_{j \geq 2} \mathcal{B}_{j,R}^{\alpha}(f,g)(x,u) ,
\end{align*}
where $\mathcal{B}_{0,R}^{\alpha}$ and $\mathcal{B}_{j,R}^{\alpha}$ denote the bilinear multipliers corresponding to the multipliers $m_{0,R}^{\alpha}$ and $m_{j,R}^{\alpha}$.

Consequently we get
\begin{align*}
    \mathcal{B}_*^{\alpha}(f,g) &\leq \mathcal{B}_{0,*}^{\alpha}(f,g)(x,u) + \sum_{j \geq 2} \mathcal{B}_{j,*}^{\alpha}(f,g)(x,u) ,
\end{align*}
where $\mathcal{B}_{j,*}^{\alpha}(f,g)(x,u) = \sup_{R>0}|\mathcal{B}_{j,R}^{\alpha}(f,g)(x,u)|$ and $\mathcal{B}_{0,*}^{\alpha}$ is also defined similarly.

\medskip
\noindent \textbf{Estimate of $\mathcal{B}_{j,*}^{\alpha}$:}
\label{Estimate of BJ*}
It is enough to show that for $p_i=2$ or $\mathfrak{p}_G \leq p_i <\infty$ with $i=1,2$, whenever $\alpha> \alpha_*(p_1, p_2, d, \mathfrak{p}_G)$ we have
\begin{align}
\label{Final estimate of B alpha j case}
    \|\mathcal{B}_{j,*}^{\alpha}(f,g)\|_{L^p} &\leq C 2^{-j \epsilon'} \|f\|_{L^{p_1}} \|g\|_{L^{p_2}} ,
\end{align}
for some $\epsilon'>0$.

Set $\phi_R(\eta_1)= \left(1-\frac{\eta_1}{R^2} \right)_{+}$. Recall that from \cite[pp. 278-279]{Stein_Weiss_Fourier_Analysis_book} we have
\begin{align}
\label{Stein_Weiss expression}
    \left(1-\frac{\eta}{R^2} \right)^{\alpha} &= C_{\alpha, \beta} R^{-2\alpha} \int_{\sqrt{\eta}}^{R} (R^2-t^2)^{\alpha-\beta-1} t^{2\beta+1} \left(1-\frac{\eta}{t^2} \right)^{\beta} \, dt ,\quad \quad \text{for} \quad \eta>0 ,
\end{align}
where $C_{\alpha, \beta} = \frac{2\Gamma(\alpha+1)}{\Gamma(\beta+1) \Gamma(\alpha-\beta)}$.

From this we get
\begin{align}
\label{Use of Stein-Weiss equality}
    \left(1-\frac{\eta_2}{R^2 \phi_R(\eta_1)} \right)_{+}^{\alpha} &= C R^{-2\alpha} \phi_R(\eta_1)^{-\alpha} \int_{0}^{R \sqrt{\phi_R(\eta_1)}} (R^2 \phi_R(\eta_1)-t^2)_{+}^{\beta-1} t^{2 \delta+1} \left(1-\frac{\eta_2}{t^2} \right)_{+}^{\delta} \, dt ,
\end{align}
where $\beta>1/2$, $\delta>-1/2$ and $\alpha=\beta+\delta$.

Hence from the expression of $m_{j,R}^{\alpha}$ (see \eqref{Decomposition of the bilinear Bochner-Riesz multiplier}), we obtain
\begin{align}
\label{Writing multiplier as product of two}
    m_{j,R}^{\alpha}(\eta_1, \eta_2) &= C R^{-2\alpha} \Psi \left(2^j\left(1-\frac{\eta_1}{R^2} \right) \right) \int_{0}^{R_j} (R^2 \phi_R(\eta_1)-t^2)_{+}^{\beta-1} t^{2 \delta+1} \left(1-\frac{\eta_2}{t^2} \right)_{+}^{\delta} \, dt ,
\end{align}
where $R_j:= R \sqrt{2^{-j+1}}$, which we get by using the support condition of $\Psi$.

Therefore the above expression \eqref{Writing multiplier as product of two} of $m_{j,R}^{\alpha}$ yields 
\begin{align}
\label{Writing bilinear operator as product of two}
    \mathcal{B}_{j,R}^{\alpha}(f,g)(x,u) &= C R^{-2\alpha} \int_0^{R_j} T_{j, \beta}^{R,t}(\mathcal{L})f(x,u) \, S_{t}^{\delta}(\mathcal{L})g(x,u) t^{2 \delta+1} \, dt ,
\end{align}
where
\begin{align}
\label{Definition of the T operator in maximal}
    T_{j, \beta}^{R,t}(\mathcal{L})f(x,u) &= \frac{1}{(2\pi)^{d_2}} \int_{\mathfrak{g}_{2,r}^{*}} e^{i \langle \lambda_1 , u \rangle} \sum_{\mathbf{k}_1 \in \mathbb{N}^\Lambda} \Psi \Big(2^j\Big(1-\frac{\eta_{\mathbf{k}_1}^{\lambda_1}}{R^2} \Big) \Big) (R^2 \phi_R(\eta_{\mathbf{k}_1}^{\lambda_1})-t^2)_{+}^{\beta-1} \\
    &\nonumber \hspace{5cm} \left[f^{\lambda_1} \times_{\lambda_1} \varphi_{\mathbf{k}_1}^{\mathbf{b}^{\lambda_1}, \mathbf{r}_1}(R_{\lambda_1}^{-1}\cdot) \right](x) \,  d\lambda_1 ,
\end{align}
and $S_{t}^{\delta}(\mathcal{L})$ denote the Bochner-Riesz means of order $\delta$.

Therefore, similarly as in \cite[p. 16]{Jotsaroop_Shrivastava_Maximal_Bochner_Riesz_2022} here we obtain
\begin{align}
\label{Decomposition bilinear operator into two parts}
    & |\mathcal{B}_{j,*}^{\alpha}(f,g)(x,u)| = \sup_{R>0} |\mathcal{B}_{j,R}^{\alpha}(f,g)(x,u)| \\
    &\nonumber \leq C 2^{-\frac{j}{4}} \sup_{R>0} \left(\int_0^{\sqrt{2^{-j+1}}} |S_{j, \beta}^{R, t}(\mathcal{L}) f(x,u)\, t^{2\delta+1}|^2 \, dt \right)^{\frac{1}{2}} \sup_{R>0} \left( \frac{1}{R_j} \int_0^{R_j} |S_{t}^{\delta}(\mathcal{L}) g(x,u)|^2 \, dt \right)^{\frac{1}{2}} ,
\end{align}
where
\begin{align}
\label{Expression of S j beta  t part}
    S_{j, \beta}^{R, t}(\mathcal{L}) f(x,u) &= \frac{1}{(2\pi)^{d_2}} \int_{\mathfrak{g}_{2,r}^{*}} e^{i \langle \lambda_1 , u \rangle} \sum_{\mathbf{k}_1 \in \mathbb{N}^\Lambda} \Psi \Big(2^j\Big(1-\frac{\eta_{\mathbf{k}_1}^{\lambda_1}}{R^2} \Big) \Big) \Big(1-\frac{\eta_{\mathbf{k}_1}^{\lambda_1}}{R^2}-t^2 \Big)_{+}^{\beta-1} \\
    &\nonumber \hspace{6cm} \left[f^{\lambda_1} \times_{\lambda_1} \varphi_{\mathbf{k}_1}^{\mathbf{b}^{\lambda_1}, \mathbf{r}_1}(R_{\lambda_1}^{-1}\cdot) \right](x) \,  d\lambda_1 .
\end{align}

\medskip
Now we claim that the following proposition hold.
\begin{proposition}
\label{Proposition: Main proposition of S in full range}
Let $\mathfrak{p}_G \leq p<\infty$ or $p=2$. Then whenever $\beta>\alpha_d(p)+1/2$ we have
\begin{align*}
    \left\|\sup_{R>0} \left(\int_0^{\sqrt{2^{-j+1}}} |S_{j, \beta}^{R, t}(\mathcal{L}) f(\cdot) \, t^{2\delta+1}|^2 \, dt \right)^{1/2} \right\|_{L^p} &\leq C 2^{j(\alpha_d(p)-\alpha+1/4+\epsilon)} \|f\|_{L^p} ,
\end{align*}
where $\alpha=\beta+\delta$ and $\epsilon>0$.
    
\end{proposition}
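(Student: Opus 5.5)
The plan is to follow the Euclidean argument of Jotsaroop--Shrivastava, and the Stein--Weiss device already used to produce \eqref{Decomposition bilinear operator into two parts}. We expand the multiplier $\Psi(2^j(1-s))(1-s-t^2)_+^{\beta-1}$, with $s=\eta/R^2$ and $t\in[0,\sqrt{2^{-j+1}}]$, into pieces localized at scale $\delta=2^{-j}$ around the sphere $s=1$. Write $\Psi(2^j(1-s))=\Psi(2^j(1-s))\,\tilde\Psi(2^j(1-s))$ with $\tilde\Psi$ a slightly larger bump. Expand $(1-s-t^2)_+^{\beta-1}\Psi(2^j(1-s))$, as a function of $1-s$ on an interval of length $\sim 2^{-j}$, by the Riemann--Liouville formula \eqref{Riemann-Liouville formula} or by a Fourier series in the variable $2^j(1-s)$. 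The aim is a representation
\begin{align*}
S_{j,\beta}^{R,t}(\mathcal{L})f=\sum_{\kappa\in\mathbb{Z}} c_{\kappa}(t)\,\phi_{\kappa}\Big(2^{j}\Big(1-\frac{\mathcal{L}}{R^2}\Big)\Big)f ,
\end{align*}
where the $\phi_\kappa(s)=e^{i\kappa s}\tilde\Psi(s)$ have $C^N$ norms growing at most like $(1+|\kappa|)^N$. The coefficients should satisfy $\int_0^{\sqrt{2^{-j+1}}}\big(\sum_\kappa (1+|\kappa|)^{N}|c_\kappa(t)|\big)^2 t^{4\delta+2}\,dt \lesssim 2^{-2j(\beta-1)}\,2^{-j(2\delta+1)}\,2^{j}$ up to $2^{\epsilon j}$ losses. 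The $t$-integral of the $(\cdot)^{\beta-1}$ singularity converges when $\beta>1/2$, and the rapid decay in $\kappa$ comes from the smoothness of $\Psi$. It may be cleaner to first reduce to $\beta$ with a large integer part by repeatedly writing $(1-s-t^2)^{\beta-1}=(1-s-t^2)^{\beta}+t^2(\cdots)$, but we try the direct expansion first.

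After Minkowski and Cauchy--Schwarz in $\kappa$ and $t$, the quantity in the statement is bounded pointwise by
\begin{align*}
2^{-j(\beta-1)}2^{-j(\delta+1/2)}2^{j/2}\sum_\kappa (1+|\kappa|)^{-2}\,\sup_{R>0}\Big|\phi_\kappa\Big(2^{j}\Big(1-\tfrac{\mathcal{L}}{R^2}\Big)\Big)f\Big| .
\end{align*}
We then apply Proposition \ref{Prop: Maximal Lp norm estimates of localized} with $\delta=2^{-j}$. Its proof only uses Proposition \ref{Proposition: Local Square function estimate} through finitely many derivatives, so the modulated bumps $\phi_\kappa$ cost only a polynomial factor in $|\kappa|$, which is absorbed by the Fourier decay. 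This gives the bound $2^{j(\alpha_d(p)+\epsilon)}$ for $\mathfrak{p}_G\le p<\infty$, and an $O(1)$ bound at $p=2$ by \eqref{Maximal L2 estimate}.

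Collecting the powers gives $2^{j(\alpha_d(p)+\epsilon)}2^{-j(\beta+\delta)+j}$, and we then check the exponent against $\alpha_d(p)-\alpha+1/4$. The prefactor $2^{-j/4}$ in \eqref{Decomposition bilinear operator into two parts} is split off from the $g$-part. The precise bookkeeping of the $t^{2\delta+1}$ weight against $R_j$ should account for the remaining $1/4$; if it does not, we will redistribute the half power of $2^{-j}$ between the two factors, as in Jotsaroop--Shrivastava.

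The main obstacle is the first step: producing an expansion whose coefficients have square-integrable $t$-dependence uniformly in $R$. The $\sup_R$ must be taken outside the $t$-integral, so the coefficients cannot depend on $R$. This is why we work in the normalized variable $s=\eta/R^2$, so that the $R$-dependence sits entirely inside $\phi_\kappa(2^j(1-\mathcal{L}/R^2))$, and the supremum over $R$ is then controlled by Proposition \ref{Prop: Maximal Lp norm estimates of localized}. The condition $\beta>\alpha_d(p)+1/2$ ensures enough regularity, namely $\beta-1>-1/2$ for the $L^2_t$ integrability, with room to spare after interpolation.
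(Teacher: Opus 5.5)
\textbf{Where your expansion works.} For $t^2\le 2^{-j-1-\epsilon_0}$ (the paper's Case I) your argument is fine. There $1-\eta/R^2-t^2\gtrsim 2^{-j}$ on $\supp\Psi(2^j(1-\eta/R^2))$. So the symbol is $2^{-j(\beta-1)}$ times a bump at scale $2^{-j}$ with $t$-uniform bounds, and Proposition \ref{Prop: Maximal Lp norm estimates of localized} at scale $2^{-j}$ applies.

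\textbf{The gap: $t^2\in[2^{-j-1-\epsilon_0},2^{-j+1}]$.} Write the symbol as $\Psi(2^ju)(u-t^2)_+^{\beta-1}$ with $u=1-\eta/R^2$. Its singular point $u=t^2$ lies inside $\supp\Psi(2^j\cdot)$, so there is no ``rapid decay from the smoothness of $\Psi$''. The Fourier coefficients are of size $2^{-j(\beta-1)}|\kappa|^{-\beta}$ and no better, even after $L^2$-integration in $t$. The coefficient bound you aim for is therefore impossible: $\sum_\kappa(1+|\kappa|)^N|c_\kappa(t)|<\infty$ would make the symbol a $C^N$ function of $u$. For $\beta<1$ the symbol is not even bounded, so the individual operators $S^{R,t}_{j,\beta}(\mathcal{L})$ are not $L^2$-bounded; only the $L^2(dt)$-integral is. Applying Minkowski in $\kappa$ (or working slice by slice) and taking $\sup_R$ before using the $t$-integral discards exactly the orthogonality in $t$ on which the hypothesis $\beta>\alpha_d(p)+1/2$ rests. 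Such a scheme effectively needs Bochner--Riesz of order $\beta-1>\alpha_d(p)$, which is half a derivative more than you are given.

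\textbf{The missing idea} is to keep the integral inside and convert the supremum into a square function. The paper's Case II proceeds as follows.
\begin{enumerate}
\item Substitute $s^2=1-t^2$, which produces the factor $2^{-j\delta-j/4}$ in \eqref{Taking out delta from S calculation}. The operator becomes $\Psi(2^j(1-\mathcal{L}/R^2))S^{\gamma}_{sR}(\mathcal{L})$ with $\gamma=\beta-1$.
\item Decompose $(1-\eta/(sR)^2)_+^{\gamma}$ into shells $2^{-k\gamma}\widetilde\Psi(2^k(1-\eta/(sR)^2))$, $k\ge j-2$. Cut each shell into $\sim 2^{\epsilon k}$ pieces of width $2^{-(1+\epsilon)k}$.
\item On each piece, Taylor-expand $\Psi(2^j(1-\eta/R^2))$ so that the main terms depend on $(R,s)$ only through $r=sR$.
\item Then $\sup_R\int_{s_1}^{s_2}|V_{sR}f|^2\,ds\lesssim\int_0^\infty|V_rf|^2\,dr/r$. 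This is the global square function of Proposition \ref{Proposition: Square function estimate} at scale $2^{-(1+\epsilon)k}$, and it costs only $(2^{-(1+\epsilon)k})^{1/2-\alpha_d(p)-\epsilon}$ (Lemma \ref{Lemma: Estimate of V term}). The Taylor remainders are controlled by the Hardy--Littlewood maximal function.
\end{enumerate}
The gain of the square root of the scale is what makes $\sum_{k\ge j}2^{-k\gamma}2^{k(\alpha_d(p)-1/2+\epsilon)}$ converge, precisely when $\beta>\alpha_d(p)+1/2$.

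\textbf{Exponent count.} Your bookkeeping is also off. The correct weight is $\big(\int_0^{2^{-j/2}}t^{4\delta+2}\,dt\big)^{1/2}\sim 2^{-j(\delta+3/4)}$. Combined with $2^{-j(\beta-1)}2^{j(\alpha_d(p)+\epsilon)}$, this gives exactly $2^{j(\alpha_d(p)-\alpha+1/4+\epsilon)}$ in Case I. Your $2^{j(\alpha_d(p)-\alpha+1+\epsilon)}$ cannot be repaired by shifting powers to the $g$-factor, since the statement concerns the $f$-factor alone.
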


Assuming the claim for the moment, that is, assuming Proposition \ref{Proposition: Main proposition of S in full range} to be true, and with the help of Proposition \ref{Proposition: Maximal square function estimate} from \eqref{Decomposition bilinear operator into two parts} applying H\"older's inequality we obtain
\begin{align*}
    & \|\mathcal{B}_{j,*}^{\alpha}(f,g)\|_{L^p} \\
    &\nonumber \leq C 2^{-\frac{j}{4}} \left\|\sup_{R>0} \left(\int_0^{\sqrt{2^{-j+1}}} |S_{j, \beta}^{R, t}(\mathcal{L})f\, t^{2\delta+1}|^2 \, dt \right)^{\frac{1}{2}} \right\|_{L^{p_1}} \left\|\sup_{R>0}\left( \frac{1}{R_j} \int_0^{R_j} |S_{t}^{\delta}(\mathcal{L})g|^2 \, dt \right)^{\frac{1}{2}} \right\|_{L^{p_2}} \\
    &\leq C 2^{-\frac{j}{4}} 2^{j(\alpha_d(p_1)-\alpha+\frac{1}{4}+\epsilon)} \|f\|_{L^{p_1}} \|g\|_{L^{p_2}} ,
\end{align*}
where $\beta>\alpha_d(p_1)+1/2$ for $p_1=2$ or $\mathfrak{p}_G \leq p_1<\infty$ and $\delta>\alpha_d(p_2)-1/2$ for $p_2=2$ or $\mathfrak{p}_G \leq p_2<\infty$.

\medskip
Now since $\alpha=\beta+\delta$, for $p_i=2$ or $\mathfrak{p}_G \leq p_i <\infty$ with $i=1,2$, whenever $\alpha>\alpha_d(p_1)+\alpha_d(p_2) = \alpha_*(p_1, p_2, d, \mathfrak{p}_G)$, the above estimate yields \eqref{Final estimate of B alpha j case}.

\medskip
Let us now start proving the above claim, that is, Proposition \ref{Proposition: Main proposition of S in full range}.

\begin{proof}[Proof of Proposition \ref{Proposition: Main proposition of S in full range}]
Since the proof of the Proposition \ref{Proposition: Main proposition of S in full range} is similar to that of Theorem 5.1 in \cite{Jotsaroop_Shrivastava_Maximal_Bochner_Riesz_2022}, here we will be brief.

Let us set $\beta=\gamma+1$. Suppose $t^2 \in [0, 2^{-j-1})$, then we see that on the support of $ \Psi \left(2^j\left(1-\frac{\eta_1}{R^2} \right) \right)$, we have $1-\frac{\eta_1}{R^2}-t^2 \geq 0$. Hence we can write
\begin{align*}
    \Psi \left(2^j\left(1-\frac{\eta_1}{R^2} \right) \right) \left(1-\frac{\eta_1}{R^2}-t^2 \right)_{+}^{\gamma} &= \Psi \left(2^j\left(1-\frac{\eta_1}{R^2} \right) \right) \left(1-\frac{\eta_1}{R^2}-t^2 \right)^{\gamma} .
\end{align*}
Now for $0< \epsilon_0 << 1$, we divide the interval $[0, \sqrt{2^{-j+1}}]$ into two parts as $[0, \sqrt{2^{-j-1-\epsilon_0}}]$ and $[\sqrt{2^{-j-1-\epsilon_0}}, \sqrt{2^{-j+1}}]$.

\medskip

\noindent \textbf{Case I: $t \in [0, \sqrt{2^{-j-1-\epsilon_0}}]$.}

Estimates in this case is similar to that of \cite[Case I, proof of Theorem 5.1]{Jotsaroop_Shrivastava_Maximal_Bochner_Riesz_2022}. Instead of estimate (15) in \cite{Jotsaroop_Shrivastava_Maximal_Bochner_Riesz_2022}, here one have to use Proposition \ref{Prop: Maximal Lp norm estimates of localized}. Consequently, adapting the proof of \cite[Case I, proof of Theorem 5.1]{Jotsaroop_Shrivastava_Maximal_Bochner_Riesz_2022} in our setup for $\mathfrak{p}_G \leq p<\infty$ or $p=2$ we obtain
\begin{align}
\label{Application of Minkowski in gamma less 0 case}
    \left\|\sup_{R>0} \left(\int_0^{\sqrt{2^{-j-1-\epsilon_0}}} |S_{j, \gamma+1}^{R, t}(\mathcal{L})f\, t^{2\delta+1}|^2 \, dt \right)^{\frac{1}{2}} \right\|_{L^p} 
    &\lesssim 2^{j(\alpha_d(p)-\alpha+1/4+\epsilon)} \|f\|_{L^p} .
\end{align}

\medskip

\noindent \textbf{Case II: $t \in [\sqrt{2^{-j-1-\epsilon_0}}, \sqrt{2^{-j+1}}]$.}
\label{Case II for maximal Bochner-Riesz}

First note that in this case $t \sim 2^{-j/2}$. Then analogous to \cite[p. 21]{Jotsaroop_Shrivastava_Maximal_Bochner_Riesz_2022} here we see that
\begin{align}
\label{Taking out delta from S calculation}
    \left(\int_{\sqrt{2^{-j-1-\epsilon_0}}}^{\sqrt{2^{-j+1}}} |S_{j, \gamma+1}^{R, t}(\mathcal{L})f(x,u)\, t^{2\delta+1}|^2 \, dt \right)^{\frac{1}{2}} &\sim 2^{-j \delta} 2^{-j/4} \left(\int_{\sqrt{1-2^{-j+1}}}^{\sqrt{1-2^{-j-1-\epsilon_0}}} |\widetilde{S}_{j, \gamma+1}^{R, s}(\mathcal{L})f(x,u)|^2 \, ds \right)^{\frac{1}{2}} .
\end{align}
where
\begin{align}
\label{Definition of S j gamma tilde R s operator}
    & \widetilde{S}_{j, \gamma+1}^{R, s}(\mathcal{L})f(x,u) \\
    &\nonumber = \frac{1}{(2\pi)^{d_2}} \int_{\mathfrak{g}_{2,r}^{*}} e^{i \langle \lambda , u \rangle} \sum_{\mathbf{k} \in \mathbb{N}^\Lambda} \Psi \Big(2^j\Big(1-\tfrac{\eta_{\mathbf{k}}^{\lambda}}{R^2} \Big) \Big) \Big(1-\tfrac{\eta_{\mathbf{k}}^{\lambda}}{s^2 R^2}\Big)_{+}^{\gamma} \left[f^{\lambda} \times_{\lambda} \varphi_{\mathbf{k}}^{\mathbf{b}^{\lambda}, \mathbf{r}}(R_{\lambda}^{-1}\cdot) \right](x) \,  d\lambda .
\end{align}

\medskip
Let us set $s_1=\sqrt{1-2^{-j+1}}$ and $s_2=\sqrt{1-2^{-j-1-\epsilon_0}}$. Let $M>100$ be a large number. Then we claim the following two propositions: Proposition \ref{Proposition: Estimate of S tilde when j less than M case} and Proposition \ref{Prop: Estimate of S j tilde for j large}.

\begin{proposition}
\label{Proposition: Estimate of S tilde when j less than M case}
Let $\mathfrak{p}_G \leq p<\infty$ or $p=2$. Whenever $\gamma>\alpha_d(p)-1/2$ and $0<\epsilon<1$ we have
\begin{align*}
    \left\|\sup_{R>0} \left(\int_{s_1}^{s_2} |\widetilde{S}_{j, \gamma+1}^{R, s}(\mathcal{L})f|^2 \, ds \right)^{1/2} \right\|_{L^p} &\leq C(M, \gamma, \epsilon) \|f\|_{L^p} ,
\end{align*}
for all $2\leq j \leq M$.
    
\end{proposition}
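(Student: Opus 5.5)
The plan is to exploit that for $2\leq j\leq M$ the cutoff $\Psi(2^j(1-\eta/R^2))$ localizes $\eta/R^2$ to an interval of length comparable to $1$ (depending on $M$), bounded away from $1$ by $2^{-j-1}\geq 2^{-M-1}$. On this interval, and for $s\in[s_1,s_2]$, the factor $(1-\eta/(s^2R^2))_+^{\gamma}$ has its singularity at $\eta=s^2R^2$, and $s^2\leq 1-2^{-j-1-\epsilon_0}$. So the singular set $\{\eta/R^2=s^2\}$ may meet the support of $\Psi(2^j(1-\cdot))$ only near its lower edge. All constants are then allowed to depend on $M$, so we may simply treat $j$ as fixed.

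First I reduce the $s$-integral to Bochner--Riesz type square functions. I decompose $(1-\eta/(s^2R^2))_+^{\gamma}$ dyadically near the singularity, exactly as in the proof of Theorem \ref{Theorem: Stein square estimate}. Write
\begin{align*}
(1-\tau)_+^{\gamma}=\sum_{i\geq 0}2^{-i\gamma}\Psi^{(-\gamma)}\bigl(2^{i}(1-\tau)\bigr)+(\text{smooth part}),
\end{align*}
where $\Psi^{(-\gamma)}(u)=u^{\gamma}\Psi(u)$ and $\tau=\eta/(s^2R^2)$. The smooth part is multiplied by the $C_c^\infty$ cutoff $\Psi(2^j(1-\eta/R^2))$. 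Its $\sup_R$ of the $L^2(ds)$ norm is then controlled by a smooth family whose kernels satisfy Lemma \ref{Lemma: Pointwise kernel estimate for linear kernel} uniformly in $s\sim1$. By \eqref{Linear multiplier estimate by maximal function} it is dominated by $C(M)\mathcal{M}f$, which is bounded on $L^p$. For the $i$-th singular piece, I change variables $t=sR$ and pull out the cutoff $\Psi(2^j(1-\mathcal{L}/R^2))$. Since $j\leq M$, this cutoff has all derivatives bounded by $C(M)$, so via the functional calculus its composition contributes only an operator dominated by $\mathcal{M}$ (again Lemma \ref{Lemma: Pointwise kernel estimate for linear kernel}).

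The key step is to bound, for $\delta=2^{-i}$,
\begin{align*}
\Bigl\|\sup_{R>0}\Bigl(\int_{s_1}^{s_2}\bigl|\Psi^{(-\gamma)}\bigl(\delta^{-1}(1-\tfrac{\mathcal{L}}{s^2R^2})\bigr)\Psi\bigl(2^j(1-\tfrac{\mathcal{L}}{R^2})\bigr)f\bigr|^2ds\Bigr)^{1/2}\Bigr\|_{L^p}
\end{align*}
by $C(M)\delta^{-\alpha_d(p)-\epsilon}\|f\|_{L^p}$. The $\sup_R$ of a local $ds$-integral over $s\sim1$ is handled by the Sobolev embedding trick used in Proposition \ref{Prop: Maximal Lp norm estimates of localized}. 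In $\log R$ over dyadic blocks, this costs $\delta^{-1/2}$ times local square functions in both variables, and the sup over dyadic scales is bounded by the full square function $\mathfrak{S}^{\phi}_{\delta}(\mathcal{L})$. Proposition \ref{Proposition: Square function estimate} gives $\delta^{1/2-\alpha_d(p)-\epsilon}$ for $\mathfrak{p}_G\leq p<\infty$ and $\delta^{1/2}$ at $p=2$. Together with the $\delta^{-1/2}$ loss, the $i$-th piece is $O(2^{-i(\gamma-\alpha_d(p)+1/2-\epsilon)})$, where the extra $1/2$ comes from the length-$\delta$ $s$-localization. This is summable exactly when $\gamma>\alpha_d(p)-1/2$ and $\epsilon$ is small.

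The main obstacle is that the sup over $R$ is taken outside the $s$-integral rather than over a single dilation parameter, while the derivative in $R$ must be traded for the $\delta^{-1}$ gain correctly. I would first try using \eqref{Square function Lp estimate with negative power} for the $\partial_R$ derivatives, which produce factors $\psi^k$-type multipliers. This is accounted for by the $2^{N+k}k^{N+1}$ bounds and the fixed range $j\leq M$, giving the constant $C(M,\gamma,\epsilon)$.
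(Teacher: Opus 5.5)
Your overall plan (dyadic decomposition of $(1-\tau)_+^{\gamma}$, removing the fixed-$j$ cutoff at cost $C(M)$, then a square-function bound for each piece) can be made to work. However, the exponent bookkeeping in your key step does not close. The Sobolev-embedding step in $R$ costs $\delta^{-1/2}$, and this exactly cancels the $\delta^{1/2}$ supplied by Proposition \ref{Proposition: Square function estimate}. You state the key-step bound yourself as $C(M)\delta^{-\alpha_d(p)-\epsilon}$. Multiplying by the prefactor $2^{-i\gamma}=\delta^{\gamma}$ gives $2^{-i(\gamma-\alpha_d(p)-\epsilon)}$, which is summable only when $\gamma>\alpha_d(p)$.

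The additional $\delta^{1/2}$ you attribute to ``the length-$\delta$ $s$-localization'' is double counting. The multiplier $\Psi^{(-\gamma)}(\delta^{-1}(1-\eta/(s^2R^2)))$ depends on $(s,R)$ only through $t=sR$. So for fixed $\eta$, the set of $(s,R)\in[s_1,s_2]\times[1,2]$ where it is nonzero has measure $O(\delta)$, not $O(\delta^2)$. The double integral $\int\int|\cdot|^2\,ds\,dR$ produced by the Sobolev trick therefore gains $\delta$ only once. That single gain is precisely the $\delta^{1/2}$ already built into $\|\mathfrak{S}^{\phi}_{\delta}(\mathcal{L})f\|_{L^p}\lesssim\delta^{1/2-\alpha}\|f\|_{L^p}$. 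As written, you lose exactly the $1/2$ that the hypothesis $\gamma>\alpha_d(p)-1/2$ requires.

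The remedy is to drop the Sobolev embedding. Set $A_t=\Psi^{(-\gamma)}(\delta^{-1}(1-\mathcal{L}/t^2))$ and first remove the $R$-dependent cutoff. By Lemma \ref{Lemma: Pointwise kernel estimate for linear kernel}, its kernel is bounded by $C(M)R^Q(1+R\|\cdot\|)^{-Q-1}$. Minkowski's inequality in $ds$ and Lemma \ref{Pointwise convolution dominated by Maximal function} then give, uniformly in $R$,
\[
\Bigl(\int_{s_1}^{s_2}\bigl|\Psi\bigl(2^j(1-\tfrac{\mathcal{L}}{R^2})\bigr)A_{sR}f\bigr|^2ds\Bigr)^{1/2}\le C(M)\,\mathcal{M}\Bigl(\sup_{R'>0}\Bigl(\frac{1}{R'}\int_{R's_1}^{R's_2}|A_tf|^2\,dt\Bigr)^{1/2}\Bigr).
\]
Since $t\le R'$ on the range of integration, $dt/R'\le dt/t$. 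Hence the inner supremum is at most $\mathfrak{S}^{\Psi^{(-\gamma)}}_{\delta}(\mathcal{L})f$, with no loss. Proposition \ref{Proposition: Square function estimate}, applied to the fixed bump $\Psi^{(-\gamma)}$, gives $\delta^{1/2-\alpha}$ for every $\alpha>\alpha_d(p)$ when $\mathfrak{p}_G\le p<\infty$, and $\delta^{1/2}$ at $p=2$. The $i$-th piece is then $C(M)2^{-i(\gamma+1/2-\alpha)}$, summable whenever $\gamma>\alpha_d(p)-1/2$ after choosing $\alpha$ close to $\alpha_d(p)$. No $\partial_R$ derivatives or $2^{N+k}k^{N+1}$ bounds are needed.

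The paper's proof is shorter still and does not decompose $(1-\tau)_+^{\gamma}$ at all. It writes $\widetilde{S}^{R,s}_{j,\gamma+1}(\mathcal{L})=\Psi(2^j(1-\mathcal{L}/R^2))S^{\gamma}_{sR}(\mathcal{L})$ and dominates the cutoff by $C2^{j(Q+\epsilon)}\mathcal{M}$ via \eqref{Dominate operator by maximal function}. It then controls the remaining $L^2$-average of $S^{\gamma}_tf$ over $t\in[Rs_1,Rs_2]$ by a Bochner--Riesz square-function bound of order $\gamma>\alpha_d(p)-1/2$ coming from Theorem \ref{Theorem: Stein square estimate}. The clean form of that bound is the maximal square function $\mathcal{M}^{\gamma}(\mathcal{L})$ of Proposition \ref{Proposition: Maximal square function estimate}, combined with the same Minkowski step; the full integral $\int_0^\infty|S^{\gamma}_tf|^2\,dt/t$ itself diverges. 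Your corrected decomposition gives an argument resting only on Proposition \ref{Proposition: Square function estimate}, at the price of the extra summation in $i$.
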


\begin{proposition}
\label{Prop: Estimate of S j tilde for j large}
Let $\mathfrak{p}_G \leq p<\infty$ or $p=2$ and $\gamma>\alpha_d(p)-1/2$. Then for all $j \geq M$ and $0<\epsilon_1<1$ we have
\begin{align*}
    \left\|\sup_{R>0} \left(\int_{s_1}^{s_2} |\widetilde{S}_{j, \gamma+1}^{R, s}(\mathcal{L}) f|^2 \, ds \right)^{1/2} \right\|_{L^p} &\leq C 2^{-j \gamma} 2^{j(\alpha_d(p)-1/2)} 2^{\epsilon_1 j} \|f\|_{L^p} .
\end{align*}
    
\end{proposition}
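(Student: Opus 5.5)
We plan to reduce the maximal square function to the global square function $\mathfrak{S}_{\delta}^{\phi}(\mathcal{L})$ of Proposition \ref{Proposition: Square function estimate}, after a dyadic decomposition of the Bochner--Riesz factor $(1-\tfrac{\eta}{s^2R^2})_+^{\gamma}$ at scales finer than $2^{-j}$.

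\emph{Setup and dyadic decomposition.} For $s\in[s_1,s_2]$ we have $s^2\in[1-2^{-j+1},1-2^{-j-1-\epsilon_0}]$. On the support of $\Psi(2^j(1-\eta/R^2))$ we have $\eta/R^2\in[1-2^{-j+1},1-2^{-j-1}]$. Hence the quantity $1-\tfrac{\eta}{s^2R^2}$ is $O(2^{-j})$, and since $j\geq M$ is large all ratios are comparable to $1$. We take a bump $\widetilde\Psi$ supported in $[1/2,2]$ and write
\begin{align*}
(1-\tau)_+^{\gamma}=\sum_{k\geq j-C}2^{-k\gamma}\,\widetilde\Psi\big(2^k(1-\tau)\big),\qquad \tau=\tfrac{\eta}{s^2R^2}.
\end{align*}
Only $k\geq j-C$ occurs, because $1-\tau\lesssim 2^{-j}$. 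By Minkowski's inequality it then suffices to prove, for each such $k$ and for $\alpha=\alpha_d(p)+\epsilon_1$,
\begin{align*}
\Big\|\sup_{R>0}\Big(\int_{s_1}^{s_2}\Big|\Psi\big(2^j(1-\tfrac{\mathcal{L}}{R^2})\big)\widetilde\Psi\big(2^k(1-\tfrac{\mathcal{L}}{s^2R^2})\big)f\Big|^2ds\Big)^{1/2}\Big\|_{L^p}\lesssim 2^{-k(\frac12-\alpha)}\|f\|_{L^p}.
\end{align*}
Summing $2^{-k\gamma}2^{-k(1/2-\alpha)}$ over $k\geq j-C$ converges exactly when $\gamma>\alpha_d(p)-1/2+\epsilon_1$. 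Since $\epsilon_1$ can be shrunk, the sum gives the stated bound $2^{-j\gamma}2^{j(\alpha_d(p)-1/2)}2^{\epsilon_1 j}$. At $p=2$ we use instead the $\delta^{1/2}$ bound of Proposition \ref{Proposition: Square function estimate}, which yields the same exponent.

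\emph{Separating the $R$-dependent factor.} Put $t=sR$ and $x=\eta/t^2$, so that the multiplier becomes $\Psi(2^j(1-s^2x))\,\widetilde\Psi(2^k(1-x))$. The main obstacle is that the first factor depends on $s$ and $R$ separately, not only on $t$. The resolution is as follows. On the support of $\widetilde\Psi(2^k(1-x))$ the variable $x$ ranges over an interval of length $\sim2^{-k}\le 2^{-j+C}$. In the rescaled variable $y=2^k(1-x)$, the function $y\mapsto\Psi(2^j(1-s^2(1-2^{-k}y)))$ therefore has $m$-th derivatives bounded by $C_m2^{(j-k)m}\le C_m$, uniformly in $s$.

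Multiplying by a cutoff and expanding in a Fourier series on a fixed interval, we obtain
\begin{align*}
\Psi\big(2^j(1-s^2x)\big)\widetilde\Psi\big(2^k(1-x)\big)=\sum_{n\in\mathbb{Z}}c_n(s)\,\phi_n\big(2^k(1-x)\big),
\end{align*}
where $|c_n(s)|\lesssim_N(1+|n|)^{-N}$ uniformly in $s$, and $\phi_n(y)=e^{iny}\chi_0(y)\in C_c^\infty$ with $C^N$-norms $\lesssim(1+|n|)^N$. We then pull out $\sup_s|c_n(s)|$ and change variables $ds=dt/R$. Since $t\sim R$, this gives $R^{-1}\int_{s_1R}^{s_2R}|\cdot|^2dt\lesssim\int_0^\infty|\cdot|^2\,dt/t$, which no longer depends on $R$. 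Consequently the supremum over $R$ is dominated by $\sum_n(1+|n|)^{-N}\,\mathfrak{S}_{2^{-k}}^{\phi_n}(\mathcal{L})f$.

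\emph{Conclusion.} We apply Proposition \ref{Proposition: Square function estimate} to each $\phi_n$ with $\delta=2^{-k}$. The constants in that proposition depend polynomially on finitely many derivatives of $\phi_n$, as its proof via Proposition \ref{Proposition: Local Square function estimate} shows and in the same way as recorded there for $\phi^\beta$ and $\psi^k$. This gives $\lesssim(1+|n|)^{N_0}2^{-k(1/2-\alpha)}\|f\|_{L^p}$. Choosing $N>N_0+1$ makes the sum over $n$ converge and yields the $k$-estimate above; summing over $k$ as in the first step completes the argument. The only delicate point is the uniform-in-$s$ Fourier expansion of the second step, which relies on $k\ge j-C$ so that the $2^{j}$-scale factor is smooth at the $2^{-k}$ scale.
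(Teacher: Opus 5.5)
Your proof is correct, but it takes a different route from the paper at the key step.

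\textbf{Common starting point.} Both arguments begin with the decomposition $(1-\tfrac{\eta}{s^2R^2})_+^{\gamma}=\sum_{k\ge j-2}2^{-k\gamma}\widetilde\Psi(2^k(1-\tfrac{\eta}{s^2R^2}))$ on the support of $\Psi(2^j(1-\eta/R^2))$, with the $\widetilde\Psi_0$ part vanishing.

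\textbf{The paper's route.} Following Jotsaroop--Shrivastava, the paper removes the separate dependence on $s$ and $R$ in two stages. First it imposes a second partition of unity at the finer scale $2^{-(1+\epsilon)k}$, with pieces indexed by $m\lesssim 2^{k\epsilon}$. Then it Taylor-expands both $\widetilde\Psi$ and $\Psi$ about the centres of these pieces.
\begin{itemize}
\item The leading terms $V^{k,m}_{Rs,\Phi^{\beta+\ell}}$ are square functions with the fixed bumps $\Phi^{\beta+\ell}$ at scale $2^{-(1+\epsilon)k}$. They are controlled by Proposition~\ref{Proposition: Square function estimate}.
\item The Taylor remainders (the operators $X$ and $\widetilde P$) gain $2^{-\epsilon N k}$. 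They are dominated by the Hardy--Littlewood maximal function through pointwise kernel bounds.
\item The sum over $m$ and the factor $1+\epsilon$ in the exponent cost $2^{O(\epsilon)k}$, which is absorbed into $\epsilon_1$.
\end{itemize}

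\textbf{Your route.} You expand in a Fourier series in $y=2^k(1-\eta/t^2)$. This works because $y\mapsto\Psi(2^j(1-s^2)+2^{j-k}s^2y)$ is smooth at unit scale uniformly in $s$ once $k\ge j-C$. The expansion separates the variables in a single step. There is no finer partition, no remainder operator and no maximal-function estimate. There is also no loss beyond the $\epsilon_1$ already built into the square function bound.

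\textbf{What your route costs.} You must apply Proposition~\ref{Proposition: Square function estimate} to the infinite family $\phi_n(y)=e^{iny}\chi_0(y)$, with constants growing at most polynomially in $n$. Rescaling $\delta$ by a fixed constant to place the support in $[-1,1]$ is harmless. The polynomial growth does hold: the constant depends only on finitely many derivatives of the bump, for instance through the bounds \eqref{Pointwise bound for phi delta j function} in the proof of Lemma~\ref{Lemma: Weighted L2 estimate for square function}. The paper itself relies on the same kind of dependence for $\phi^{\beta}$ and $\psi^k$. Still, it is an extra piece of bookkeeping. The paper's route only ever feeds finitely many fixed bumps into that proposition, and pays for this with the Taylor-remainder analysis.
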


\medskip
Assuming the above two propositions to be true for the moment, we complete the proof of the Proposition \ref{Proposition: Main proposition of S in full range}. Combining Proposition \ref{Proposition: Estimate of S tilde when j less than M case} and Proposition \ref{Prop: Estimate of S j tilde for j large} and using \eqref{Taking out delta from S calculation} for $\beta>\alpha_d(p)+1/2$ (since $\beta=\gamma+1$) we obtain
\begin{align}
\label{Final estimate for CaseII}
    \left\|\sup_{R>0} \left(\int_{s_1}^{s_2} |S_{j, \gamma+1}^{R, t}(\mathcal{L})f(\cdot)\, t^{2\delta+1}|^2 \, dt \right)^{1/2} \right\|_{L^p} &\leq C 2^{-j \delta} 2^{-j/4} \left\|\sup_{R>0} \left(\int_{s_1}^{s_2} |\widetilde{S}_{j, \gamma+1}^{R, s}(\mathcal{L})f|^2 \, ds \right)^{1/2} \right\|_{L^p} \\
    &\nonumber \leq C 2^{-j \delta} 2^{-j/4} 2^{-j \gamma} 2^{j(\alpha_d(p)-1/2)} 2^{j \epsilon_1} \|f\|_{L^p} \\
    &\nonumber \leq C 2^{-j \delta} 2^{-j/4} 2^{-j (\alpha-\delta-1)} 2^{j(\alpha_d(p)-1/2)} 2^{j \epsilon_1} \|f\|_{L^p} \\
    &\nonumber \leq C 2^{j(\alpha_d(p)-\alpha+1/4+\epsilon_2)} \|f\|_{L^p}
\end{align}
where $\alpha=\beta+\delta=\gamma+1+\delta$ and $\epsilon_1, \epsilon_2>0$.

This completes the proof of Case II, upon assuming Proposition \ref{Proposition: Estimate of S tilde when j less than M case} and Proposition \ref{Prop: Estimate of S j tilde for j large}. Now combining the estimates for Case I and Case II, the proof of the Proposition \ref{Proposition: Main proposition of S in full range} is completed.
\end{proof}

\begin{proof}[Proof of Proposition \ref{Proposition: Estimate of S tilde when j less than M case}]
\renewcommand{\qedsymbol}{}

First note that from \eqref{Definition of S j gamma tilde R s operator}, the operator $\widetilde{S}_{j, \gamma+1}^{R, s}(\mathcal{L})$ can be written as
\begin{align*}
    \widetilde{S}_{j, \gamma+1}^{R, s}(\mathcal{L})f(x,u) &= \Psi \left(2^j\left(1-\frac{\mathcal{L}}{R^2} \right) \right) (S_{sR}^{\gamma}(\mathcal{L})f)(x,u) ,
\end{align*}
where $S_{sR}^{\gamma}(\mathcal{L})$ denote the Bochner-Riesz means of order $\gamma$ associated the sub-Laplacian $\mathcal{L}$ on M\'etivier group, as defined in \eqref{Definition: Bochner-Riesz means}.

Making the change of variable $s \to R^{-1} s$ and applying \eqref{Dominate operator by maximal function} of Lemma \ref{Lemma: Pointwise kernel estimate for linear kernel} we obtain
\begin{align*}
    \left\|\sup_{R>0} \left(\int_{s_1}^{s_2} |\widetilde{S}_{j, \gamma+1}^{R, s}(\mathcal{L})f|^2 \, ds \right)^{1/2} \right\|_{L^p} &= \left\|\sup_{R>0} \left( \int_{R s_1}^{R s_2} |\Psi \left(2^j\left(1-\frac{\mathcal{L}}{R^2} \right) \right) (S_{s}^{\gamma}(\mathcal{L})f)|^2 \, \frac{ds}{R} \right)^{1/2} \right\|_{L^p} \\
    &\leq C 2^{j(Q+\epsilon)} \left\|\left(\int_{0}^{\infty} |\mathcal{M} S_{s}^{\gamma}(\mathcal{L})f|^2 \, \frac{ds}{s} \right)^{1/2} \right\|_{L^p} ,
\end{align*}
where in the last inequality we have used $s \sim R$ when $s \in [R s_1, R s_2]$.

Note that from Theorem \ref{Theorem: Stein square estimate} for $\mathfrak{p}_G \leq p <\infty$ or $p=2$ whenever $\gamma>\alpha_{d}(p)-\frac{1}{2}$ we obtain
\begin{align}
\label{Stein squre function like estimate}
    \left\| \left(\int_0^{\infty} |S_{R}^{\gamma}(\mathcal{L}) f|^2 \, \frac{dR}{R} \right)^{1/2} \right\|_{L^{p_2}} &\leq C \|f\|_{L^{p}} .
\end{align}
Therefore applying vector-valued boundedness of the Hardy-Littlewood maximal function \cite[Theorem 1.2]{Grafakos_Liu_Yang_Vector_valued__space_homogeneous} and the estimate \eqref{Stein squre function like estimate} for $\mathfrak{p}_G \leq p <\infty$ or $p=2$ whenever $\gamma>\alpha_{d}(p)-\frac{1}{2}$ yields
\begin{align*}
    \left\|\sup_{R>0} \left(\int_{s_1}^{s_2} |\widetilde{S}_{j, \gamma+1}^{R, s}(\mathcal{L})f|^2 \, ds \right)^{1/2} \right\|_{L^p} &\leq C 2^{j(Q+\epsilon)} \left\|\left(\int_{0}^{\infty} |S_{s}^{\gamma}(\mathcal{L})f|^2 \, \frac{ds}{s} \right)^{1/2} \right\|_{L^p} \leq C \|f\|_{L^p} ,
\end{align*}
where in the last inequality we have used the fact $2\leq j \leq M$.

This completes the proof of Proposition \ref{Proposition: Estimate of S tilde when j less than M case}.
\end{proof}

\begin{proof}[Proof of Proposition \ref{Prop: Estimate of S j tilde for j large}]
\renewcommand{\qedsymbol}{}
Let $\widetilde{\Psi}$ be a bump function supported in $[1/2,2]$ such that
\begin{align}
\label{Introducting Psi tilde function for S tilde}
    \left(1-\frac{\eta}{s^2 R^2} \right)_{+}^{\gamma} &= \sum_{k \geq 2} 2^{-k \gamma} \widetilde{\Psi}\left(2^{k}\left(1-\frac{\eta}{s^2 R^2} \right) \right) + \widetilde{\Psi}_0 \left(\frac{\eta}{s^2 R^2} \right) ,
\end{align}
where $\widetilde{\Psi}_0$ is again a bump function supported in $[0,3/4]$.

Therefore from \eqref{Definition of S j gamma tilde R s operator} we obtain
\begin{align}
\label{Writing S tilde as sum of I and II part}
    & \widetilde{S}_{j, \gamma+1}^{R, s}f(x,u) \\
    &\nonumber = \frac{1}{(2\pi)^{d_2}} \int_{\mathfrak{g}_{2,r}^{*}} e^{i \langle \lambda , u \rangle} \sum_{\mathbf{k} \in \mathbb{N}^\Lambda} \sum_{k \geq j-2} 2^{-k \gamma} \widetilde{\Psi}\left(2^{k}\left(1-\frac{\eta_{\mathbf{k}}^{\lambda}}{s^2 R^2} \right) \right) \Psi \left(2^j\left(1-\frac{\eta_{\mathbf{k}}^{\lambda}}{R^2} \right) \right) \\
    &\nonumber \hspace{8cm} \left[f^{\lambda} \times_{\lambda} \varphi_{\mathbf{k}}^{\mathbf{b}^{\lambda}, \mathbf{r}}(R_{\lambda}^{-1}\cdot) \right](x) \,  d\lambda \\
    &\nonumber + \frac{1}{(2\pi)^{d_2}} \int_{\mathfrak{g}_{2,r}^{*}} e^{i \langle \lambda , u \rangle} \sum_{\mathbf{k} \in \mathbb{N}^\Lambda} \widetilde{\Psi}_0 \left(\frac{\eta_{\mathbf{k}}^{\lambda}}{s^2 R^2} \right) \Psi \left(2^j\left(1-\frac{\eta_{\mathbf{k}}^{\lambda}}{R^2} \right) \right) \left[f^{\lambda} \times_{\lambda} \varphi_{\mathbf{k}}^{\mathbf{b}^{\lambda}, \mathbf{r}}(R_{\lambda}^{-1}\cdot) \right](x) \,  d\lambda \\
    &\nonumber =: I + II ,
\end{align}
where in $I$ the sum over $k \geq 2$ reduces to $k \geq j-2$ due to the support condition of the function $\widetilde{\Psi}\left(2^{k}\left(1-\frac{\eta}{s^2 R^2} \right) \right) \Psi \left(2^j\left(1-\frac{\eta}{R^2} \right) \right)$.

First note that, because of the support consideration of $\widetilde{\Psi}_0 \left(\frac{\eta}{s^2 R^2} \right) \Psi \left(2^j\left(1-\frac{\eta}{R^2} \right) \right)$ and using the fact $j\geq M>100$, one can see that $II=0$.

Now to estimate $I$, let us choose another bump function $\Phi$ supported in $[-1,1]$ such that
\begin{align*}
    \sum_{m \in \mathbb{Z}} \Phi(t-m) &= 1, \quad t \in \mathbb{R} .
\end{align*}
Let us set $d_{k,m}^{\epsilon} = 2^k (2^{-k+1}-2^{-k(1+\epsilon)}m)$ and $\eta_{k, m}^{R s} = (\frac{\eta}{s^2 R^2}-1+2^{-k+1}-2^{-(1+\epsilon)k} m)$. Then proceeding similarly as in \cite[p. 22-24]{Jotsaroop_Shrivastava_Maximal_Bochner_Riesz_2022} from \eqref{Writing S tilde as sum of I and II part} we obtain
\begin{align}
\label{Writing S tilde as sum of U and P tilde}
    \widetilde{S}_{j, \gamma+1}^{R, s}f(x,u) & = \sum_{k \geq j-2} 2^{-k \gamma} \sum_{0\leq m \leq [\nu^{-1}]+1} \sum_{\beta=0}^{N-1} \frac{(-1)^{\beta}}{\beta !} \frac{(2 \pi i 2^{-\epsilon k})^{\beta}}{2 \pi i} \frac{\partial^{\beta} \widetilde{\Psi}}{\partial \eta^{\beta}}(d_{k,m}^{\epsilon}) U_{R s, R, \Phi^{\beta}}^{j, k, m}(\mathcal{L})f(x,u) \\
    &\nonumber \hspace{6cm} + \sum_{k \geq j-2} 2^{-k \gamma} \sum_{0\leq m \leq [\nu^{-1}]+1} \widetilde{P}_{R s, R, \Phi}^{j, k, m}(\mathcal{L}) f(x,u) ,
\end{align}
where for $\Phi^{\beta}(\eta) = \eta^{\beta} \Phi(\eta)$ we have
\begin{align}
\label{Expression of URs}
    U_{R s, R, \Phi^{\beta}}^{j, k, m}(\mathcal{L})f(x,u) &= \frac{1}{(2\pi)^{d_2}} \int_{\mathfrak{g}_{2,r}^{*}} e^{i \langle \lambda , u \rangle} \sum_{\mathbf{k} \in \mathbb{N}^\Lambda} \Phi^{\beta} \left(2^{(1+\epsilon)k} (\eta_{\mathbf{k}}^{\lambda})_{k, m}^{R s} \right) \Psi \left(2^j\left(1-\frac{\eta_{\mathbf{k}}^{\lambda}}{R^2} \right) \right) \\
    &\nonumber \hspace{6cm} \left[f^{\lambda} \times_{\lambda} \varphi_{\mathbf{k}}^{\mathbf{b}^{\lambda}, \mathbf{r}}(R_{\lambda}^{-1}\cdot) \right](x) \,  d\lambda ,
\end{align}
and
\begin{align}
\label{Definition of P tilde RsPhi}
    \widetilde{P}_{R s, R, \Phi}^{j, k, m}(\mathcal{L}) f(x,u) &= \int_{\mathbb{R}} \widehat{\widetilde{\Psi}}(\kappa) e^{2 \pi i d_{k,m}^{\epsilon} \kappa} P_{R s, R, \Phi}^{j, k, \kappa, m}(\mathcal{L}) f(x,u) \, d\kappa ,
\end{align}
with
\begin{align}
\label{Definition of PRsPhi}
    P_{R s, R, \Phi}^{j, k, \kappa, m}(\mathcal{L}) f(x,u) &= \frac{1}{(2\pi)^{d_2}} \int_{\mathfrak{g}_{2,r}^{*}} e^{i \langle \lambda , u \rangle} \sum_{\mathbf{k} \in \mathbb{N}^\Lambda} \Phi \left(2^{(1+\epsilon)k} (\eta_{\mathbf{k}}^{\lambda})_{k, m}^{R s} \right) \Psi \left(2^j\left(1-\frac{\eta_{\mathbf{k}}^{\lambda}}{R^2} \right) \right) \\
    &\nonumber \hspace{5cm} \mathcal{E}(2^k (\eta_{\mathbf{k}}^{\lambda})_{k,m}^{R s} \kappa) \left[f^{\lambda} \times_{\lambda} \varphi_{\mathbf{k}}^{\mathbf{b}^{\lambda}, \mathbf{r}}(R_{\lambda}^{-1}\cdot) \right](x) \,  d\lambda ,
\end{align}
and for $0\leq \beta \leq N$ it satisfies
\begin{align*}
    |\mathcal{E}^{(\beta)}(t)| &\leq C_k |t|^{N-\beta} .
\end{align*}
Note that for $m \leq [\nu^{-1}]+1$ and $k \geq 2$, the quantity $d_{k,m}^{\epsilon}$ is uniformly bounded in $k$ and $m$, and $\widetilde{\Psi} \in C_c^{\infty}([1/2,2])$, we see that
\begin{align}
\label{Uniform bound for Psi tilde}
    \sup_{0\leq \beta \leq N-1} \left|\frac{\partial^{\beta} \widetilde{\Psi}}{\partial \eta^{\beta}}(d_{k,m}^{\epsilon}) \right| \leq C ,
\end{align}
where $C$ is independent of $k$ and $m$.

\noindent\textbf{Estimate of $U_{R s, R, \Phi_{\beta}}^{j, k, m}(\mathcal{L})$:}
In order to estimate $U_{R s, R, \Phi^{\beta}}^{j, k, m}(\mathcal{L})$, again we proceed as in \cite[eq. (30), p. 25]{Jotsaroop_Shrivastava_Maximal_Bochner_Riesz_2022} and from \eqref{Expression of URs} we arrive at the following
\begin{align}
\label{Writing U in terms of V and X term}
    U_{R s, R, \Phi^{\beta}}^{j, k, m}(\mathcal{L})f(x,u) &= \sum_{\ell =0}^{N-1} \frac{(-1)^{\ell}}{\ell !} s^{2\ell} (2 \pi i)^{\ell-1} 2^{-(k-j)(1+\epsilon)\ell} 2^{-j \epsilon \ell} \frac{\partial^{\ell} \Psi}{\partial \eta^{\ell}}(2^j (1-a-s^2 2^{-k(1+\epsilon)}m)) \\
    &\nonumber\hspace{4cm} \times V_{R s, \Phi^{\beta+\ell}}^{k, m}(\mathcal{L})f(x,u) + X_{N, Rs, s}^{\Phi^{\beta}, k}(\mathcal{L}) f(x,u) ,
\end{align}
where
\begin{align*}
    V_{R s, \Phi^{\beta+\ell}}^{k, m}(\mathcal{L})f(x,u) &= \frac{1}{(2\pi)^{d_2}} \int_{\mathfrak{g}_{2,r}^{*}} e^{i \langle \lambda , u \rangle} \sum_{\mathbf{k} \in \mathbb{N}^\Lambda} \Phi^{\beta+\ell} \left(2^{(1+\epsilon)k} (\eta_{\mathbf{k}}^{\lambda})_{k, m}^{R s} \right) \left[f^{\lambda} \times_{\lambda} \varphi_{\mathbf{k}}^{\mathbf{b}^{\lambda}, \mathbf{r}}(R_{\lambda}^{-1}\cdot) \right](x) \,  d\lambda ,
\end{align*}
and
\begin{align}
\label{Expression of X in terms Q}
    X_{N, Rs, s}^{\Phi^{\beta}, k}(\mathcal{L}) f(x,u) &= \int_{\mathbb{R}} \widehat{\Psi}(\kappa) e^{2 \pi i 2^j (1-a-s^2 2^{-k(1+\epsilon)}m)\kappa} Q_{N, R s, s}^{\Phi^{\beta}, k, \kappa}(\mathcal{L})f(x,u) \, d\kappa ,
\end{align}
with
\begin{align}
\label{Definition of QNRs}
    Q_{N, R s, s}^{\Phi^{\beta}, k, \kappa}(\mathcal{L})f(x,u) &= \frac{1}{(2\pi)^{d_2}} \int_{\mathfrak{g}_{2,r}^{*}} e^{i \langle \lambda , u \rangle} \sum_{\mathbf{k} \in \mathbb{N}^\Lambda} \Phi^{\beta} \left(2^{(1+\epsilon)k} (\eta_{\mathbf{k}}^{\lambda})_{k, m}^{R s} \right) \mathcal{E}(2^j s^2 (\eta_{\mathbf{k}}^{\lambda})_{k,m}^{R s} \kappa) \\
    &\nonumber \hspace{6cm} \left[f^{\lambda} \times_{\lambda} \varphi_{\mathbf{k}}^{\mathbf{b}^{\lambda}, \mathbf{r}}(R_{\lambda}^{-1}\cdot) \right](x) \,  d\lambda .
\end{align}
Note that since $k \geq j-2$ that is, $j-k \leq 2$ and $\psi \in C_c^{\infty}([1/2,2])$ therefore one can see that
\begin{align}
\label{Derivative of Psi is uniformly bounded}
    \sup_{0\leq \ell \leq N-1} \left|\frac{\partial^{\ell} \Psi}{\partial \eta^{\ell}}(2^j (1-a-s^2 2^{-k(1+\epsilon)}m)) \right| \leq C ,
\end{align}
where $C$ is independent of $j, k$, $s \in [s_1, s_2]$ and $m \leq [\nu^{-1}]+1$.

\begin{lemma}
\label{Lemma: Estimate of V term}
Let $p$ and $\alpha_d(p)$ as in Proposition \ref{Prop: Estimate of S j tilde for j large}. Then the following hold
\begin{align*}
    \left\|\sup_{R>0} \left(\int_{s_1}^{s_2} |V_{R s, \Phi^{\beta+\ell}}^{k, m}(\mathcal{L}) f|^2 \, ds \right)^{1/2} \right\|_{L^p} &\lesssim_{\epsilon'} C 2^{\epsilon'(1+\epsilon)k} 2^{(1+\epsilon)(\alpha_d(p)-1/2)k} \|f\|_{L^p} ,
\end{align*}
for any $\epsilon' < \epsilon$.
    
\end{lemma}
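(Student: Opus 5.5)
The plan is to reduce the lemma to the maximal estimate for the localized square function, namely Proposition \ref{Prop: Maximal Lp norm estimates of localized} (or equivalently the square-function bound in Proposition \ref{Proposition: Square function estimate}), after identifying $V_{R s, \Phi^{\beta+\ell}}^{k, m}(\mathcal{L})$ with an operator of the form $\phi\bigl(\delta^{-1}(1-\mathcal{L}/t^2)\bigr)$.

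\textbf{Step 1: rewrite the multiplier.} By definition,
\begin{align*}
2^{(1+\epsilon)k}\,\eta_{k,m}^{Rs} = 2^{(1+\epsilon)k}\Bigl(\tfrac{\eta}{s^2R^2}-c_{k,m}\Bigr), \qquad c_{k,m}:=1-2^{-k+1}+2^{-(1+\epsilon)k}m .
\end{align*}
For $0\le m\le [\nu^{-1}]+1$ we have $c_{k,m}\sim 1$. Put $t=sR\sqrt{c_{k,m}}$ and $\delta=2^{-(1+\epsilon)k}c_{k,m}^{-1}\sim 2^{-(1+\epsilon)k}$. Then
\begin{align*}
V_{R s, \Phi^{\beta+\ell}}^{k, m}(\mathcal{L}) = \Phi^{\beta+\ell}\bigl(-\delta^{-1}(1-\mathcal{L}/t^2)\bigr).
\end{align*}
Since $\Phi^{\beta+\ell}(-\cdot)$ is again $C_c^\infty([-1,1])$ with $0\le \beta+\ell\le 2N$, it is admissible in Propositions \ref{Prop: Maximal Lp norm estimates of localized} and \ref{Proposition: Square function estimate} (up to a harmless constant normalizing $|\phi|\le 1$).

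\textbf{Step 2: remove the $s$-integral.} For fixed $R$, the integral over $s\in[s_1,s_2]\subset[1/2,1]$ is bounded by $\sup_{s}$ times $|s_2-s_1|^{1/2}\lesssim 1$. Hence
\begin{align*}
\sup_{R}\Bigl(\int_{s_1}^{s_2}|V f|^2\,ds\Bigr)^{1/2}\lesssim \sup_{t>0}\bigl|\Phi^{\beta+\ell}\bigl(-\delta^{-1}(1-\mathcal{L}/t^2)\bigr)f\bigr| .
\end{align*}
The only catch is that $\delta$ depends mildly on $m$ through $c_{k,m}$. I would handle this by fixing $m$ and $k$ first: the reparametrization absorbs $c_{k,m}$ into $t$, and $\delta$ depends only on $(k,m)$, not on $R$ or $s$. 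So for each fixed $(k,m)$ this is exactly the maximal operator of Proposition \ref{Prop: Maximal Lp norm estimates of localized}.

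\textbf{Step 3: apply the maximal estimate.} For $\mathfrak{p}_G\le p<\infty$ and any $\alpha>\alpha_d(p)$, estimate \eqref{Maximal Lp estimate for p bigger than pg} gives the bound $\delta^{-\alpha}\|f\|_{L^p}$. Choosing $\alpha=\alpha_d(p)+\epsilon'$ yields
\begin{align*}
2^{(1+\epsilon)k(\alpha_d(p)+\epsilon')}\|f\|_{L^p}.
\end{align*}
For $p=2$, estimate \eqref{Maximal L2 estimate} gives a uniform bound, which is consistent because $\alpha_d(2)=1/2$ in the paper's convention.

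\textbf{Step 4: match the exponent.} The stated bound carries $\alpha_d(p)-1/2$, i.e.\ an extra factor $2^{-(1+\epsilon)k/2}$ compared with Step 3. The plain sup bound from Step 2 forfeits this gain, so the $s$-integral must be used instead. Two routes are available:
\begin{itemize}
\item Run the Sobolev-embedding argument from the proof of Proposition \ref{Prop: Maximal Lp norm estimates of localized} directly in the variable $s$ on an interval of length $\sim\delta$. The $L^2(ds)$ average then contributes $\delta^{1/2}$.
\item Change variables $s\mapsto t=sR\sqrt{c}$ and bound the integral by the global square function $\mathfrak S_\delta^{\Phi^{\beta+\ell}}$ of Proposition \ref{Proposition: Square function estimate}, which gives $\delta^{1/2-\alpha}$. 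The supremum over $R$ is harmless: after the change of variables the $s$-integral becomes $R^{-1}\int dt$ over $t\sim R$, which is dominated by the $dt/t$ integral over all $t>0$, uniformly in $R$.
\end{itemize}
I would take the second route. It yields $\delta^{1/2-\alpha}\sim 2^{(1+\epsilon)k(\alpha_d(p)-1/2+\epsilon')}$, which is the claimed bound.

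The main obstacle is this last bookkeeping step: making the domination of $\sup_R$ by the global $dt/t$ square function uniform in $m$. This needs care with the interval $[s_1,s_2]$, whose length is $\sim 2^{-j}$, and with the $k$-dependence of $c_{k,m}$.
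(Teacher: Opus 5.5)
Your chosen route in Step 4 is exactly the paper's argument, and it is correct: rescale to $t=sR\sqrt{c_{k,m}}$ with $\delta\sim 2^{-(1+\epsilon)k}$, dominate the $s$-integral uniformly in $R$ by the global square function $\mathfrak{S}_\delta^{\Phi^{\beta+\ell}(-\cdot)}(\mathcal{L})f$, and apply Proposition \ref{Proposition: Square function estimate} to get $\delta^{1/2-\alpha}$. Steps 2--3 are an unnecessary lossy detour, and the ``obstacle'' you flag at the end is not one: $c_{k,m}\sim 1$, and $ds=\tfrac{s}{t}\,dt\le dt/t$ on the range, so the domination holds uniformly in $R$, $m$ and $j$ with no further work.
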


\begin{proof}
\renewcommand{\qedsymbol}{}
Proceeding analogously as in \cite[Lemma 5.5]{Jotsaroop_Shrivastava_Maximal_Bochner_Riesz_2022} the quantity in the lft hand side inside $L^p$-norm $\sup_{R>0} \left(\int_{s_1}^{s_2} |V_{R s, \Phi^{\beta+\ell}}^{k, m}(\mathcal{L}) f(x,u)|^2 \, ds \right)^{1/2}$ can be dominated by
\begin{align}
\label{dominate V by Stein square function}
    \left( \int_{0}^{\infty} \left|\Phi^{\beta+\ell} \left(-\delta^{-1} \left(1-\frac{\mathcal{L}}{s^2} \right) \right) f(x,u) \right|^2 \, \frac{ds}{s} \right)^{1/2} .
\end{align}
Therefore using the $L^p$-boundedness of \eqref{dominate V by Stein square function} (Proposition \ref{Proposition: Square function estimate}) we obtain at the required conclusion.
\end{proof}

\begin{lemma}
\label{Lemma: Estimate of X term}
For $1< p \leq \infty$, the following holds
\begin{align*}
    \Big\| \sup_{R>0, s \in [s_1, s_2]} |X_{N, Rs, s}^{\Phi^{\beta}, k}(\mathcal{L}) f| \Big\|_{L^p} \lesssim 2^{(1+\epsilon)(\alpha_d(p)-1/2)k} \|f\|_{L^p} .
\end{align*}
    
\end{lemma}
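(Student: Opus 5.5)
The plan is to show that $X_{N, Rs, s}^{\Phi^{\beta}, k}(\mathcal{L})$ is pointwise dominated by a constant times the Hardy--Littlewood maximal function $\mathcal{M}f$. The factor needed is $2^{C k}$ for some fixed $C$. This factor is then absorbed by a very negative power of $2^{-k}$ coming from the remainder $\mathcal{E}$ of the Taylor expansion. Since the exponent $(1+\epsilon)(\alpha_d(p)-1/2)k$ on the right-hand side can be negative, I expect to need decay of the form $2^{-Lk}$ with $L$ large. I will obtain it by taking $N$ (the Taylor order) large, exactly as in \cite[Lemma 5.6]{Jotsaroop_Shrivastava_Maximal_Bochner_Riesz_2022}.

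\textbf{Step 1.} Starting from \eqref{Expression of X in terms Q}, I apply Minkowski's inequality in $\kappa$:
\begin{align*}
\sup_{R,s}|X_{N,Rs,s}^{\Phi^\beta,k}(\mathcal{L})f| \leq \int_{\mathbb{R}} |\widehat{\Psi}(\kappa)| \sup_{R,s}|Q_{N,Rs,s}^{\Phi^\beta,k,\kappa}(\mathcal{L})f| \, d\kappa .
\end{align*}
This reduces everything to a bound for $Q$ that is uniform in $R,s$ and polynomial in $\kappa$. Such a bound is harmless, because $\widehat{\Psi}$ is Schwartz.

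\textbf{Step 2.} I view $Q_{N,Rs,s}^{\Phi^\beta,k,\kappa}(\mathcal{L})$ as a spectral multiplier $F(\mathcal{L})$ with
\begin{align*}
F(\eta)=\Phi^\beta\bigl(2^{(1+\epsilon)k}\eta_{k,m}^{Rs}\bigr)\,\mathcal{E}\bigl(2^js^2\eta_{k,m}^{Rs}\kappa\bigr).
\end{align*}
This $F$ is supported in $[0,4R^2]$, on a window of width about $R^2 2^{-(1+\epsilon)k}$, where $|\eta_{k,m}^{Rs}|\lesssim 2^{-(1+\epsilon)k}$. I then apply \eqref{Linear multiplier estimate by maximal function} of Lemma \ref{Lemma: Pointwise kernel estimate for linear kernel} with scale $R$, which gives
\begin{align*}
|F(\mathcal{L})f| \lesssim \|F(R^2\cdot)\|_{L^\infty_{Q+\epsilon}}\,\mathcal{M}f .
\end{align*}
I then need to estimate $\|F(R^2\cdot)\|_{C^{Q+1}}$. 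Using $|\mathcal{E}^{(b)}(t)|\le C|t|^{N-b}$ with $t=2^js^2\eta_{k,m}^{Rs}\kappa$, so that $|t|\lesssim 2^{j-(1+\epsilon)k}|\kappa|$, each derivative falls either on $\Phi^\beta(2^{(1+\epsilon)k}\cdot)$ or on $\mathcal{E}$. A derivative on $\Phi^\beta(2^{(1+\epsilon)k}\cdot)$ costs $2^{(1+\epsilon)k}$. A derivative on $\mathcal{E}$ costs $2^j|\kappa|$ but gains a factor $|t|^{-1}$. Since $k\ge j-2$, the total bound is
\begin{align*}
\lesssim (1+|\kappa|)^{N}\, 2^{(1+\epsilon)k(Q+1)}\,\bigl(2^{j-(1+\epsilon)k}\bigr)^{N-Q-1}\lesssim (1+|\kappa|)^N 2^{(Q+1)(1+\epsilon)k}\,2^{-(\epsilon k-2)(N-Q-1)} .
\end{align*}

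\textbf{Step 3.} Taking $N$ large compared with $Q/\epsilon$ makes this at most $2^{(1+\epsilon)(\alpha_d(p)-1/2)k}$, uniformly in $R$, $s\in[s_1,s_2]$ and $m$. Integrating against $|\widehat{\Psi}|(1+|\kappa|)^N$ and using the $L^p$-boundedness of $\mathcal{M}$ (Lemma \ref{Lemma: Boundedness of maximal and Euclidean maximal}) for $1<p\le\infty$ then gives the claim.

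\textbf{Main obstacle.} The delicate point is the derivative count in Step 2. The $\mathcal{E}$ factor has to supply enough decay to beat the $2^{Qk}$ loss coming from the narrow support. This relies on the gain $2^{j-(1+\epsilon)k}\lesssim 2^{-\epsilon k}$, which in turn needs $k\ge j-2$. I also need to confirm that \eqref{Linear multiplier estimate by maximal function} applies with the scale $R$ independent of $k$. This holds because $\mathcal{M}f$ carries no scale dependence.
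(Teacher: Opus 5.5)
Your proposal is correct and follows the paper's argument. Like the paper, you extract the Taylor-remainder gain $2^{-j\epsilon N}2^{-(k-j)(1+\epsilon)N}$ from $Q_{N,Rs,s}^{\Phi^{\beta},k,\kappa}(\mathcal{L})$, bound the derivatives of the remaining multiplier by $(1+|\kappa|)^N 2^{(1+\epsilon)k\gamma}$, use Lemma \ref{Lemma: Pointwise kernel estimate for linear kernel} to dominate pointwise by $\mathcal{M}f$ uniformly in $R,s$, and integrate against $|\widehat{\Psi}|$. The only cosmetic difference is that tracking the chain rule exactly gives the full factor $(2^{j-(1+\epsilon)k})^{N}$ rather than your $(2^{j-(1+\epsilon)k})^{N-Q-1}$, which makes no difference once $N$ is taken large.
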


\begin{proof}
\renewcommand{\qedsymbol}{}
In order to prove the required estimate, from \eqref{Expression of X in terms Q} we see that enough to prove the following: for all $N \in \mathbb{N}$ we have
\begin{align}
\label{Claim for QNRs}
    \Big\| \sup_{R>0, s \in [s_1, s_2]} |Q_{N, R s, s}^{\Phi^{\beta}, k, \kappa}(\mathcal{L})f| \Big\|_{L^p} \lesssim (1+|\kappa|)^{N} 2^{-j \epsilon N} 2^{-(k-j)(1+\epsilon)N} 2^{k(1+\epsilon)(Q+2)} \|f\|_{L^p} ,
\end{align}
for some $\epsilon'>0$ and $1<p \leq \infty$.

From \eqref{Definition of QNRs} let us write
\begin{align}
\label{Writing QNRs as kernel expression}
    &Q_{N, R s, s}^{\Phi^{\beta}, k, \kappa}(\mathcal{L})f(x,u) = 2^{-j \epsilon N} 2^{-(k-j)(1+\epsilon)N} \widetilde{Q}_{N, R s, s}^{\Phi^{\beta}, k, \kappa}(\mathcal{L})f(x,u) \\
    &\nonumber = 2^{-j \epsilon N} 2^{-(k-j)(1+\epsilon)N} \int_{G} \mathcal{K}_{\widetilde{Q}_{N, R s, s}^{\Phi^{\beta}, k, \kappa}(\mathcal{L})}((y,t)^{-1}(x,u)) f(y,t) \, d(y,t) ,
\end{align}
where 
\begin{align*}
    \widetilde{Q}_{N, R s, s}^{\Phi^{\beta}, k, \kappa}(\eta) &= 2^{j \epsilon N} 2^{(k-j)(1+\epsilon)N} \Phi^{\beta} \left(2^{(1+\epsilon)k} \eta_{k, m}^{R s} \right) \mathcal{E}(2^j s^2 \eta_{k,m}^{R s} \kappa) .
\end{align*}
Since $s \sim 1$ and $m \leq [\nu^{-1}]+1$, we see that $\widetilde{Q}_{N, R s, s}^{\Phi^{\beta}, k, \kappa}$ is supported in $[0, C R^2]$. Therefore from Lemma \ref{Lemma: Pointwise kernel estimate for linear kernel} we get
\begin{align}
\label{Kernel estimate of QNRs}
    |\mathcal{K}_{\widetilde{Q}_{N, R s, s}^{\Phi^{\beta}, k, \kappa}(\mathcal{L})}(x,u)| (1+ R \|(x,u)\|)^{Q+1} &\leq C R^Q \|\widetilde{Q}_{N, R s, s}^{\Phi^{\beta}, k, \kappa}(R^2 \cdot)\|_{L^2_{Q+2}(\mathbb{R})} .
\end{align}
Set
\begin{align*}
    \widetilde{Q}_{N, s}^{\Phi^{\beta}, k, \kappa}(\eta) = \widetilde{Q}_{N, R s, s}^{\Phi^{\beta}, k, \kappa}(R^2 \eta) &= 2^{j \epsilon N} 2^{(k-j)(1+\epsilon)N} \Phi^{\beta} \left(2^{(1+\epsilon)k} \eta_{k, m}^{s} \right) \mathcal{E}(2^j s^2 \eta_{k,m}^{s} \kappa) ,
\end{align*}
and note $\Phi^{\beta}$ is non-zero for $|\eta_{k,m}^{s}| \leq 2^{-(1+\epsilon)k}$. Then since $s \sim 1$, for any $\gamma \in \mathbb{N}$ we see that
\begin{align*}
    |\partial_{\eta}^{\gamma} \widetilde{Q}_{N, s}^{\Phi^{\beta}, k, \kappa}(\eta)| &\leq C (1+|\kappa|)^N 2^{(1+\epsilon)k \gamma} .
\end{align*}
Therefore from \eqref{Kernel estimate of QNRs} we have
\begin{align}
\label{Final estimate of kernel of Q tilde}
    |\mathcal{K}_{\widetilde{Q}_{N, R s, s}^{\Phi^{\beta}, k, \kappa}(\mathcal{L})}(x,u)| (1+ R \|(x,u)\|)^{Q+1} &\leq C R^Q (1+|\kappa|)^N 2^{(1+\epsilon) (Q+2)k} .
\end{align}
Hence with the help of the above estimate from \eqref{Writing QNRs as kernel expression} we obtain
\begin{align*}
    & |Q_{N, R s, s}^{\Phi^{\beta}, k, \kappa}(\mathcal{L})f(x,u)| \\
    &= 2^{-j \epsilon N} 2^{-(k-j)(1+\epsilon)N} R^Q (1+|\kappa|)^N 2^{k(1+\epsilon) (Q+2)} \int_{G} \frac{|f(y,t)|}{(1+ R |(y,t)^{-1}(x,u)|)^{\gamma}} \, d(y,t) \\
    &= 2^{-j \epsilon N} 2^{-(k-j)(1+\epsilon)N} (1+|\kappa|)^N 2^{k(1+\epsilon) (Q+2)}\mathcal{M}f(x,u) .
\end{align*}
From this the required claim \eqref{Claim for QNRs} follows immediately.
\end{proof}

Now combining the Lemma \ref{Lemma: Estimate of V term} and Lemma \ref{Lemma: Estimate of X term} and using the fact $s \sim 1$ and \eqref{Derivative of Psi is uniformly bounded} from \eqref{Writing U in terms of V and X term} we get
\begin{align}
\label{Estimate of U Rs phi beta part}
     \left\|\sup_{R>0} \left(\int_{s_1}^{s_2} |U_{R s, R, \Phi^{\beta}}^{j, k, m}(\mathcal{L})f|^2 \, ds \right)^{1/2} \right\|_{L^p} &\leq C 2^{\epsilon'(1+\epsilon)k} 2^{(1+\epsilon)(\alpha_d(p)-1/2)k} \|f\|_{L^p} .
\end{align}

\noindent\textbf{Estimate of $\widetilde{P}_{R s, R, \Phi}^{j, k, m}(\mathcal{L})$:}
Similarly as we have obtained \eqref{Writing U in terms of V and X term}, here from \eqref{Definition of PRsPhi} we can also write
\begin{align*}
    & P_{R s, R, \Phi}^{j, k, \kappa, m}(\mathcal{L}) f(x,u) \\
    &= \sum_{\ell =0}^{N-1} \frac{(-1)^{\ell}}{\ell !} s^{2\ell} (2 \pi i)^{\ell-1} 2^{-(k-j)(1+\epsilon)\ell} 2^{-j \epsilon \ell} \frac{\partial^{\ell} \Psi}{\partial \eta^{\ell}}(2^j (1-a-s^2 2^{-k(1+\epsilon)}m)) D_{R s, \Phi^{\ell}}^{k, m, \kappa}(\mathcal{L})f(x,u) \\
    &\hspace{4cm} + \int_{\mathbb{R}} \widehat{\Psi}(\kappa') e^{2 \pi i 2^j (1-a-s^2 2^{-k(1+\epsilon)}m)\kappa'} T_{N, R s, s}^{\Phi, k, \kappa, \kappa'}(\mathcal{L})f(x,u) \, d\kappa' ,
\end{align*}
where
\begin{align}
\label{Definition of DRs Phi}
    D_{R s, \Phi^{\ell}}^{k, m, \kappa}(\mathcal{L})f(x,u) &= \frac{1}{(2\pi)^{d_2}} \int_{\mathfrak{g}_{2,r}^{*}} e^{i \langle \lambda , u \rangle} \sum_{\mathbf{k} \in \mathbb{N}^\Lambda} \Phi^{\ell} \left(2^{(1+\epsilon)k} (\eta_{\mathbf{k}}^{\lambda})_{k, m}^{R s} \right) \mathcal{E}(2^k (\eta_{\mathbf{k}}^{\lambda})_{k,m}^{R s} \kappa) \\
    &\nonumber \hspace{7cm} \left[f^{\lambda} \times_{\lambda} \varphi_{\mathbf{k}}^{\mathbf{b}^{\lambda}, \mathbf{r}}(R_{\lambda}^{-1}\cdot) \right](x) \,  d\lambda ,
\end{align}
and
\begin{align}
\label{Definition of TNRs Phi}
    T_{N, R s, s}^{\Phi, k, \kappa, \kappa'}(\mathcal{L})f(x,u) &= \frac{1}{(2\pi)^{d_2}} \int_{\mathfrak{g}_{2,r}^{*}} e^{i \langle \lambda , u \rangle} \sum_{\mathbf{k} \in \mathbb{N}^\Lambda} \Phi \left(2^{(1+\epsilon)k} (\eta_{\mathbf{k}}^{\lambda})_{k, m}^{R s} \right) \mathcal{E}(2^k (\eta_{\mathbf{k}}^{\lambda})_{k,m}^{R s} \kappa) \mathcal{E}(2^j s^2 (\eta_{\mathbf{k}}^{\lambda})_{k,m}^{R s} \kappa') \\
    &\nonumber \hspace{7cm} \left[f^{\lambda} \times_{\lambda} \varphi_{\mathbf{k}}^{\mathbf{b}^{\lambda}, \mathbf{r}}(R_{\lambda}^{-1}\cdot) \right](x) \,  d\lambda .
\end{align}
Consequently equation \eqref{Definition of P tilde RsPhi} gives
\begin{align*}
    \widetilde{P}_{R s, R, \Phi}^{j, k, m}(\mathcal{L}) f(x,u) &= \sum_{\ell =0}^{N-1} \frac{(-1)^{\ell}}{\ell !} s^{2\ell} (2 \pi i)^{\ell-1} 2^{-(k-j)(1+\epsilon)\ell} 2^{-j \epsilon \ell} \frac{\partial^{\ell} \Psi}{\partial \eta^{\ell}}(2^j (1-a-s^2 2^{-k(1+\epsilon)}m)) \\
    &\hspace{6cm} \int_{\mathbb{R}} \widehat{\widetilde{\Psi}}(\kappa) e^{2 \pi i d_{k,m}^{\epsilon} \kappa} D_{R s, \Phi_{\ell}}^{k, m, \kappa}(\mathcal{L})f(x,u) \, d\kappa \\
    & + \int_{\mathbb{R}} \int_{\mathbb{R}} \widehat{\widetilde{\Psi}}(\kappa) \widehat{\Psi}(\kappa') e^{2 \pi i d_{k,m}^{\epsilon} \kappa} e^{2 \pi i 2^j (1-a-s^2 2^{-k(1+\epsilon)}m)\kappa'} T_{N, R s, s}^{\Phi, k, \kappa, \kappa'}(\mathcal{L})f(x,u) \, d\kappa \, d\kappa' \\
    &=: \sum_{\ell =0}^{N-1} \frac{(-1)^{\ell}}{\ell !} s^{2\ell} (2 \pi i)^{\ell-1} 2^{-(k-j)(1+\epsilon)\ell} 2^{-j \epsilon \ell} \frac{\partial^{\ell} \Psi}{\partial \eta^{\ell}}(2^j (1-a-s^2 2^{-k(1+\epsilon)}m)) \\
    &\hspace{6cm} H_{R s, \Phi^{\ell}}^{k, m}(\mathcal{L})f(x,u) + I_{N, Rs, s}^{\Phi, k}(\mathcal{L})f(x,u) .
\end{align*}

Therefore we have to estimate $H_{R s, \Phi^{\ell}}^{k, m}(\mathcal{L})$ and $I_{N, Rs, s}^{\Phi, k}(\mathcal{L})$. This two can be estimated similarly as Lemma \ref{Lemma: Estimate of X term}. In fact, since $\widetilde{\Psi} \in \mathcal{S}(\mathbb{R})$ we have
\begin{align}
\label{Dominate H by D}
    \Big\|\sup_{R>0, s \in [s_1, s_2]} |H_{R s, \Phi^{\ell}}^{k, m}(\mathcal{L})f| \Big\|_{L^p} &\leq \int_{\mathbb{R}} |\widehat{\widetilde{\Psi}}(\kappa)| 2^{-k \epsilon N} (1+|\kappa|)^N 2^{k(1+\epsilon) (Q+2)} \|\mathcal{M}f\|_{L^p} \, d\kappa \\
    &\nonumber \lesssim 2^{-k \epsilon'} \|f\|_{L^p} ,
\end{align}
for some $\epsilon'>0$ by choosing $N>0$ sufficiently large.

Similarly, we also have
\begin{align}
\label{Estimate of I in the P tilde part}
    \Big\|\sup_{R>0, s \in [s_1, s_2]} |I_{N, Rs, s}^{\Phi, k}(\mathcal{L})f| \Big\|_{L^p} & \leq C \int_{\mathbb{R}} \int_{\mathbb{R}} |\widehat{\widetilde{\Psi}}(\kappa)| |\widehat{\Psi}(\kappa')| 2^{-j \epsilon_1} 2^{-k \epsilon_2} (1+|\kappa|+ |\kappa'|)^N \|\mathcal{M}f\|_{L^p} \, d\kappa \, d\kappa' \\
    &\nonumber \leq C 2^{-j \epsilon_1} 2^{-k \epsilon_2} \|f\|_{L^p} .
\end{align}
Therefore with the help of the estimates \eqref{Dominate H by D}, \eqref{Estimate of I in the P tilde part} and the fact \eqref{Derivative of Psi is uniformly bounded} we obtain
\begin{align}
\label{Final estimate of P tilde Rs phi}
    & \left\|\sup_{R>0} \left(\int_{s_1}^{s_2} |\widetilde{P}_{R s, R, \Phi}^{j, k, m}(\mathcal{L}) f|^2 \, ds \right)^{1/2} \right\|_{L^p} \\
    &\nonumber \leq C \sum_{\ell =0}^{N-1} \frac{(-1)^{\ell}}{\ell !} (2 \pi i)^{\ell-1} 2^{-(k-j)(1+\epsilon)\ell} 2^{-j \epsilon \ell} \Big\|\sup_{R>0, s \in [s_1, s_2]} |H_{R s, \Phi^{\ell}}^{k, m}(\mathcal{L})f| \Big\|_{L^p} \\
    &\nonumber \hspace{8cm} + \Big\|\sup_{R>0, s \in [s_1, s_2]} |I_{N, Rs, s}^{\Phi, k}(\mathcal{L})f| \Big\|_{L^p} \\
    &\nonumber \leq C_{\ell} 2^{-k \epsilon''} \|f\|_{L^p} ,
\end{align}
for some $\epsilon''>0$.

Finally combining the estimates \eqref{Estimate of U Rs phi beta part} and \eqref{Final estimate of P tilde Rs phi} from the expression \eqref{Writing S tilde as sum of U and P tilde} and using the fact \eqref{Uniform bound for Psi tilde} and $\nu=\frac{2}{3}2^{-k \epsilon}$ we get
\begin{align*}
    & \left\|\sup_{R>0} \left(\int_{s_1}^{s_2} |\widetilde{S}_{j, \gamma+1}^{R, s}f|^2 \, ds \right)^{1/2} \right\|_{L^p} \\
    &\nonumber \leq C \sum_{k \geq j-2} 2^{-k \gamma} \sum_{0\leq m \leq [\nu^{-1}]+1} \sum_{\beta=0}^{N-1} \frac{(-1)^{\beta}}{\beta !} \frac{(2 \pi i 2^{-\epsilon k})^{\beta}}{2 \pi i} \frac{\partial^{\beta} \widetilde{\Psi}}{\partial \eta^{\beta}}(d_{k,m}^{\epsilon})  \left\|\sup_{R>0} \left(\int_{s_1}^{s_2} |U_{R s, R, \Phi^{\beta}}^{j, k, m}(\mathcal{L})f|^2 \, ds \right)^{1/2} \right\|_{L^p} \\
    &\nonumber \hspace{5cm} + \sum_{k \geq j-2} 2^{-k \gamma} \sum_{0\leq m \leq [\nu^{-1}]+1} \left\|\sup_{R>0} \left(\int_{s_1}^{s_2} |\widetilde{P}_{R s, R, \Phi}^{j, k, m}(\mathcal{L}) f|^2 \, ds \right)^{1/2} \right\|_{L^p} \\
    &\nonumber \leq C \sum_{k \geq j-2} 2^{-k \gamma} \sum_{0\leq m \leq [\nu^{-1}]+1} \sum_{\beta=0}^{N-1} \frac{(-1)^{\beta}}{\beta !} \frac{(2 \pi i 2^{-\epsilon k})^{\beta}}{2 \pi i} \frac{\partial^{\beta} \widetilde{\Psi}}{\partial \eta^{\beta}}(d_{k,m}^{\epsilon})  2^{\epsilon'(1+\epsilon)k} 2^{(1+\epsilon)(\alpha_d(p)-1/2)k} \|f\|_{L^p} \\
    &\nonumber \hspace{6cm} + \sum_{k \geq j-2} 2^{-k \gamma} \sum_{0\leq m \leq [\nu^{-1}]+1} 2^{-k \epsilon''} \|f\|_{L^p} \\
    &\leq C \sum_{k \geq j-2} 2^{-k \gamma} 2^{k \epsilon} 2^{\epsilon'(1+\epsilon)k} 2^{(1+\epsilon)(\alpha_d(p)-1/2)k} \|f\|_{L^p} \\
    &\leq C 2^{-j \gamma} 2^{j(\alpha_d(p)-1/2)} 2^{j \epsilon_1} \|f\|_{L^p} ,
\end{align*}
for some $\epsilon_1>0$, by choosing $\epsilon>0$ sufficiently small.

This completes the proof of the Proposition \ref{Prop: Estimate of S j tilde for j large}.
\end{proof}

\medskip
\noindent \textbf{Estimate of $\mathcal{B}_{0,*}^{\alpha}$:}
\label{Estimate of Bo part for maximal}
Estimate of $\mathcal{B}_{0,*}^{\alpha}$ can be obtained adapting the similar proof as in subsection 5.3 of \cite{Jotsaroop_Shrivastava_Maximal_Bochner_Riesz_2022}, with obvious modification. One have to use Lemma \ref{Lemma: Boundedness of maximal and Euclidean maximal} and ideas from the Estimate of $\mathcal{B}_{j,*}^{\alpha}$. Hence we omit the details here.


\subsection{Proof of Theorem \ref{Theorem: Bilinear Stein square function estimate}: Bilinear Bochner-Riesz square function on M\'etivier groups}
\label{Subsection: Bilinear Bochner-Riesz square function}
In this subsection, we give the proof of Theorem \ref{Theorem: Bilinear Stein square function estimate}. The idea of the proof is similar to the proof of the boundedness of maximal bilinear Bochner-Riesz operator already discussed in Subsection \ref{Subsection: Boundedness of bilinear maximal Bochner-Riesz means}.

Note that
\begin{align*}
    R \frac{\partial}{\partial R} \left(1-\frac{\eta_1 +\eta_2}{R^2} \right)_{+}^{\alpha+1} = 2 (\alpha+1) \frac{\eta_1 +\eta_2}{R^2} \left(1-\frac{\eta_1 +\eta_2}{R^2} \right)_{+}^{\alpha} =: \mathscr{G}^{\alpha}_{R}(\eta_1, \eta_2) .
\end{align*}
Therefore from \eqref{Definition of Bilinear Bochner-Riesz square function} Stein's square function can be written as the bilinear spectral multiplier corresponding to the bilinear multiplier $\mathscr{G}^{\alpha}_R(\eta_1, \eta_2)$ (see \eqref{Definition: Bilinear spectral multiplier}), that is,
\begin{align*}
    \mathscr{G}^{\alpha}(\mathcal{L})(f,g)(x, u) &= \left(\int_0^{\infty} |\mathcal{B}_{\mathscr{G}^{\alpha}_R}(f,g)(x,u)|^2 \, \frac{dR}{R} \right)^{1/2} \\
    &:= \left(\int_{0}^{\infty} |\mathscr{G}^{\alpha}_R(\mathcal{L}_1, \mathcal{L}_2)(f \otimes g)((x,u), (x,u))|^2 \, \frac{dR}{R} \right)^{1/2} .
\end{align*}
The idea of the proof is almost similar to Theorem \ref{Theorem: Bilinear maximal on Metivier}, hence here we will be brief.

Using the partition of unity \eqref{Definition of partition of unity for maximal} we write
\begin{align*}
    \mathscr{G}^{\alpha}_R(\eta_1, \eta_2) &= \sum_{j\geq 2} \Psi \left(2^j\left(1-\tfrac{\eta_1}{R^2} \right) \right) \tfrac{\eta_1+\eta_2}{R^2} \left(1-\tfrac{\eta_1}{R^2} \right)_{+}^{\alpha} \left(1-\tfrac{\eta_2}{R^2}\left(1-\tfrac{\eta_1}{R^2} \right)^{-1} \right)_{+}^{\alpha} \\
    & \hspace{6cm} + \Psi_0\left(\tfrac{\eta_1}{R^2} \right) \tfrac{\eta_1+\eta_2}{R^2} \left(1-\tfrac{\eta_1+\eta_2}{R^2} \right)_{+}^{\alpha} \\
    &\nonumber =: \sum_{j\geq 2} \mathscr{G}^{\alpha}_{j. R}(\eta_1, \eta_2) + \mathscr{G}^{\alpha}_{0, R}(\eta_1, \eta_2) .
\end{align*}
Consequently, we can obtain
\begin{align*}
    \mathscr{G}^{\alpha}(\mathcal{L})(f,g)(x, u) &\leq \mathscr{G}^{\alpha}_0(\mathcal{L})(f,g)(x, u) + \sum_{j\geq 2} \mathscr{G}^{\alpha}_j(\mathcal{L})(f,g)(x, u) ,
\end{align*}
where 
\begin{align*}
    \mathscr{G}^{\alpha}_j(\mathcal{L})(f,g)(x, u) &= \left(\int_0^{\infty} |\mathcal{B}_{\mathscr{G}^{\alpha}_{j,R}}(f,g)(x,u)|^2 \, \frac{dR}{R} \right)^{1/2} ,
\end{align*}
and $\mathcal{B}_{\mathscr{G}^{\alpha}_{j,R}}$ denote the bilinear spectral multiplier corresponding to the multiplier $\mathscr{G}^{\alpha}_{j,R}$ for $j=0, 2, 3, \ldots$.

Let us start with the estimate of $\mathscr{G}^{\alpha}_j(\mathcal{L})$ for $j \geq 2$. Similarly as in \eqref{Writing bilinear operator as product of two} here we have
\begin{align*}
    \mathcal{B}_{\mathscr{G}^{\alpha}_{j,R}}(f,g)(x,u) &= C R^{-2\alpha} \int_0^{R_j} \mathscr{T}_{j, \beta}^{R,t}(\mathcal{L})f(x,u) \, S_{t}^{\delta}(\mathcal{L})g(x,u) t^{2 \delta+1} \, dt \\
    &\hspace{3cm} + C R^{-2\alpha} \int_0^{R_j} T_{j, \beta}^{R,t}(\mathcal{L})f(x,u) \mathscr{S}_{t}^{\delta}(\mathcal{L})g(x,u) t^{2 \delta+1} \, dt \\
    &=: E_{j, R} + F_{j, R} ,
\end{align*}
where $T_{j, \beta}^{R,t}(\mathcal{L})$ is as defined in \eqref{Definition of the T operator in maximal} and $S_{t}^{\delta}(\mathcal{L})$ denote the Bochner-Riesz means of order $\delta$; while
\begin{align*}
    \mathscr{T}_{j, \beta}^{R,t}(\mathcal{L})f(x,u) &= \frac{1}{(2\pi)^{d_2}} \int_{\mathfrak{g}_{2,r}^{*}} e^{i \langle \lambda_1 , u \rangle} \sum_{\mathbf{k}_1 \in \mathbb{N}^\Lambda} \Psi \Big(2^j\Big(1-\frac{\eta_{\mathbf{k}_1}^{\lambda_1}}{R^2} \Big) \Big) \frac{\eta_{\mathbf{k}_1}^{\lambda_1}}{R^2} (R^2 \phi_R(\eta_{\mathbf{k}_1}^{\lambda_1})-t^2)_{+}^{\beta-1} \\
    & \hspace{5cm} \left[f^{\lambda_1} \times_{\lambda_1} \varphi_{\mathbf{k}_1}^{\mathbf{b}^{\lambda_1}, \mathbf{r}_1}(R_{\lambda_1}^{-1}\cdot) \right](x) \,  d\lambda_1 ,
\end{align*}
and
\begin{align*}
    \mathscr{S}_{t}^{\delta}(\mathcal{L})g(x,u) &= \frac{1}{(2\pi)^{d_2}} \int_{\mathfrak{g}_{2,r}^{*}} e^{i \langle \lambda_2 , u \rangle} \sum_{\mathbf{k}_2 \in \mathbb{N}^\Lambda} \Big(1-\frac{\eta_{\mathbf{k}_2}^{\lambda_2}}{t^2} \Big)_{+}^{\delta}  \frac{\eta_{\mathbf{k}_2}^{\lambda_2}}{R^2}\left[f^{\lambda_2} \times_{\lambda_2} \varphi_{\mathbf{k}_2}^{\mathbf{b}^{\lambda_2}, \mathbf{r}_2}(R_{\lambda_2}^{-1}\cdot) \right](x) \,  d\lambda_2 ,
\end{align*}

\noindent\textbf{Estimate of $F_{j, R}$:}
For the estimate of $F_{j, R}$, similarly as in \eqref{Decomposition bilinear operator into two parts} we obtain
\begin{align}
\label{Application of Cauchy-Schwartz and change}
    & \left\|\left(\int_0^{\infty} |F_{j, R}|^2 \, \frac{dR}{R} \right)^{1/2} \right\|_{L^p} \\
    &\nonumber \leq \left\|\sup_{R>0} \left(\int_0^{\sqrt{2^{-j+1}}} |S_{j, \beta}^{R, t}(\mathcal{L}) f \, t^{2\delta+1}|^2 \, dt \right)^{1/2} \right\|_{L^{p_1}} \left\|\left[\int_0^{\infty} \left(\int_0^{\sqrt{2^{-j+1}}} |\mathscr{S}_{R t}^{\delta}(\mathcal{L}) g|^2 \, dt \right) \frac{dR}{R} \right]^{1/2} \right\|_{L^{p_2}} ,
\end{align}
where $S_{j, \beta}^{R, t}(\mathcal{L})$ is as defined in \eqref{Expression of S j beta  t part}.

Note that estimate of first factor of the right hand side of the above inequality is already obtain in Proposition \ref{Proposition: Main proposition of S in full range}. Hence, it is enough to estimate the other factor. For this we have the following estimate: First note similarly as in \eqref{Stein squre function like estimate}, using Theorem \ref{Theorem: Stein square estimate} for $\mathfrak{p}_G \leq p_2 <\infty$ or $p_2=2$ whenever $\delta>\alpha_{d}(p_2)-\frac{1}{2}$ we obtain
\begin{align}
\label{Stein squre function like estimate inside bilinear square}
    \left\| \left(\int_0^{\infty} |\mathscr{S}_{R}^{\delta}(\mathcal{L}) g|^2 \, \frac{dR}{R} \right)^{1/2} \right\|_{L^{p_2}} &\leq C \|g\|_{L^{p_2}} .
\end{align}
Now making the change of variable $R \mapsto R t^{-1}$ and then using \eqref{Stein squre function like estimate inside bilinear square} with $\mathfrak{p}_G \leq p_2 <\infty$ or $p_2=2$ and whenever $\delta>\alpha_{d}(p_2)-\frac{1}{2}$ yields
\begin{align*}
    \left\|\left[\int_0^{\infty} \left(\int_0^{\sqrt{2^{-j+1}}} |\mathscr{S}_{R t}^{\delta}(\mathcal{L}) g|^2 \, dt \right) \frac{dR}{R} \right]^{1/2} \right\|_{L^{p_2}} &\leq C 2^{- 5j/4} \left\| \left(\int_0^{\infty} |\mathscr{S}_{R}^{\delta}(\mathcal{L}) g|^2 \, \frac{dR}{R} \right)^{1/2} \right\|_{L^{p_2}} \\
    &\leq C 2^{- 5j/4} \|g\|_{L^{p_2}} .
\end{align*}
Hence combining Proposition \ref{Proposition: Main proposition of S in full range} and the above estimate for $\mathfrak{p}_G \leq p_1, p_2<\infty$ or $p=2$ and whenever $\beta>\alpha_d(p_1)+\frac{1}{2}$, $\delta>\alpha_{d}(p_2)-\frac{1}{2}$ provides
\begin{align}
\label{Final estimate for F}
    \left\|\left(\int_0^{\infty} |F_{j, R}|^2 \, \frac{dR}{R} \right)^{1/2} \right\|_{L^p} &\leq C 2^{j(\alpha_d(p)-\alpha+1/4+\epsilon-5/4)} \|f\|_{L^{p_1}} \|g\|_{L^{p_2}} \leq C 2^{-j \varepsilon} \|f\|_{L^{p_1}} \|g\|_{L^{p_2}} ,
\end{align}
for some $\varepsilon>0$. Since $\alpha=\beta+\delta$, for $p_i=2$ or $\mathfrak{p}_G \leq p_i <\infty$ with $i=1,2$, the above estimate hold whenever $\alpha>\alpha_d(p_1)+\alpha_d(p_2) = \alpha_*(p_1, p_2, d, \mathfrak{p}_G)$.

\medskip
\noindent\textbf{Estimate of $E_{j, R}$:}
Similarly as in \eqref{Application of Cauchy-Schwartz and change}, in this case we obtain
\begin{align}
\label{Cauchy-Schwartz in E part}
    \left\|\left(\int_0^{\infty} |E_{j, R}|^2 \, \frac{dR}{R} \right)^{1/2} \right\|_{L^p} & \leq C 2^{-\frac{j}{4}} \left\|\left[\int_0^{\infty} \left(\int_0^{\sqrt{2^{-j+1}}} |\widetilde{\mathscr{T}}_{j, \beta}^{R, t}(\mathcal{L}) f \, t^{2\delta+1}|^2 \, dt \right) \frac{dR}{R} \right]^{1/2} \right\|_{L^{p_1}} \\
    &\nonumber \hspace{3cm} \times \left\| \sup_{R>0} \left(\frac{1}{R_j}\int_0^{R_j} |S_{t}^{\delta}(\mathcal{L}) g|^2 \, dt \right)^{1/2} \right\|_{L^{p_2}} ,
\end{align}
where
\begin{align}
\label{Expression of widetilde mathsec S j beta  t part}
    \widetilde{\mathscr{T}}_{j, \beta}^{R, t}(\mathcal{L}) f(x,u) &= \frac{1}{(2\pi)^{d_2}} \int_{\mathfrak{g}_{2,r}^{*}} e^{i \langle \lambda_1 , u \rangle} \sum_{\mathbf{k}_1 \in \mathbb{N}^\Lambda} \Psi \Big(2^j\Big(1-\frac{\eta_{\mathbf{k}_1}^{\lambda_1}}{R^2} \Big) \Big) \frac{\eta_{\mathbf{k}_1}^{\lambda_1}}{R^2} \Big(1-\frac{\eta_{\mathbf{k}_1}^{\lambda_1}}{R^2}-t^2 \Big)_{+}^{\beta-1} \\
    &\nonumber \hspace{6cm} \left[f^{\lambda_1} \times_{\lambda_1} \varphi_{\mathbf{k}_1}^{\mathbf{b}^{\lambda_1}, \mathbf{r}_1}(R_{\lambda_1}^{-1}\cdot) \right](x) \,  d\lambda_1 .
\end{align}
Estimate of the second factor in the right hand side of the above inequality follows from Proposition \ref{Proposition: Maximal square function estimate}. Therefore it remains to estimate only the first factor. Note that application of Minkowski's integral inequality gives
\begin{align}
\label{Application of Minkowski for the E part}
    & \left\|\left[\int_0^{\infty} \left(\int_0^{\sqrt{2^{-j+1}}} |\widetilde{\mathscr{T}}_{j, \beta}^{R, t}(\mathcal{L}) f \, t^{2\delta+1}|^2 \, dt \right) \frac{dR}{R} \right]^{1/2} \right\|_{L^{p_1}} \\
    &\nonumber \leq \left(\int_0^{\sqrt{2^{-j+1}}} \left\| \left( \int_0^{\infty} |\widetilde{\mathscr{T}}_{j, \beta}^{R, t}(\mathcal{L}) f|^2 \, \frac{dR}{R} \right)^{1/2} \right\|_{L^{p_1}}^2 t^{4\delta+2} \, dt \right)^{1/2} .
\end{align}

Now we claim that the following proposition hold.
\begin{proposition}
\label{Proposition: Estimate of mathscr S in full range}
Let $\mathfrak{p}_G \leq p<\infty$ or $p=2$ and $\epsilon>0$. Then whenever $\beta>\alpha_d(p)+1/2$ we have
\begin{align}
\label{To prove in mathscr T}
    \left\| \left( \int_0^{\infty} |\widetilde{\mathscr{T}}_{j, \beta}^{R, t}(\mathcal{L}) f|^2 \, \frac{dR}{R} \right)^{1/2} \right\|_{L^{p}} &\leq C 2^{j(\alpha_d(p)-\beta+1/2+\epsilon)} \|f\|_{L^p} ,
\end{align}
where the constant $C$ is independent of $t \in [0, \sqrt{2^{-j+1}}]$.
    
\end{proposition}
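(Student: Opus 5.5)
The plan is to follow the proof of Proposition \ref{Proposition: Main proposition of S in full range}, with the supremum over $R$ replaced by the $L^2(dR/R)$-norm and the $t$-integral frozen (it has already been removed by Minkowski in \eqref{Application of Minkowski for the E part}). Write $\beta=\gamma+1$ and fix $t\in[0,\sqrt{2^{-j+1}}]$. The factor $\frac{\eta}{R^2}\Psi(2^j(1-\eta/R^2))$ is a harmless smooth cutoff, since $\eta/R^2\sim1$ on the support of $\Psi(2^j(1-\cdot))$. It can therefore be absorbed into a new bump $\Psi^\sharp$ with the same support and uniformly bounded derivatives, and the multiplier becomes $\Psi^\sharp(2^j(1-\eta/R^2))(1-\eta/R^2-t^2)_+^{\gamma}$. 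I split into the same two cases, with constants uniform in $t$.

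\textbf{Case I: $t^2\le 2^{-j-1-\epsilon_0}$.} On the support we have $1-\eta/R^2-t^2\ge 2^{-j-1}(1-2^{-\epsilon_0})>0$, so the power is smooth. Put $x=2^j(1-\eta/R^2)\in[1/2,2]$. The symbol is then $2^{-j\gamma}\Psi^\sharp(x)(x-2^jt^2)^{\gamma}$, and $x-2^jt^2\ge c(\epsilon_0)>0$ there. This is $2^{-j\gamma}$ times a $C_c^\infty$ function of $x$ whose derivatives are bounded uniformly in $t$. Apply Proposition \ref{Proposition: Square function estimate} with $\delta=2^{-j}$ (after the sign flip $\phi(\cdot)\to\phi(-\cdot)$ and the change $R\leftrightarrow t$ in the variable name). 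This gives the bound $C2^{-j\gamma}2^{-j(1/2-\alpha_d(p)-\epsilon)}=C2^{j(\alpha_d(p)-\beta+1/2+\epsilon)}$, as required. Uniformity in $t$ comes from the fact that the proof of Proposition \ref{Proposition: Square function estimate} depends only on finitely many derivatives of the bump.

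\textbf{Case II: $t^2\in[2^{-j-1-\epsilon_0},2^{-j+1}]$.} Write $1-\eta/R^2-t^2=s^2(1-\eta/(s^2R^2))$ with $s=\sqrt{1-t^2}$, so the symbol becomes $s^{2\gamma}\Psi^\sharp(2^j(1-\eta/R^2))(1-\eta/(s^2R^2))_+^{\gamma}$, i.e.\ the operator $\widetilde S^{R,s}_{j,\gamma+1}$ (with $\Psi^\sharp$) at a fixed $s\in[s_1,s_2]$.

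For $2\le j\le M$, dominate $\Psi^\sharp(2^j(1-\mathcal L/R^2))$ pointwise by $2^{j(Q+\epsilon)}\mathcal M$ using \eqref{Dominate operator by maximal function}. Then use the vector-valued Fefferman–Stein inequality and the estimate $\|(\int_0^\infty|S^\gamma_{sR}f|^2dR/R)^{1/2}\|_p\lesssim\|f\|_p$ from \eqref{Stein squre function like estimate} (after rescaling $R\mapsto R/s$), valid for $\gamma>\alpha_d(p)-1/2$. This matches the required bound because $j$ is bounded.

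For $j\ge M$, repeat the decomposition \eqref{Introducting Psi tilde function for S tilde}–\eqref{Writing S tilde as sum of U and P tilde} verbatim at fixed $s$. The main terms $V^{k,m}_{Rs,\Phi^{\beta+\ell}}$ are controlled in $L^p(L^2(dR/R))$ directly by Proposition \ref{Proposition: Square function estimate} at scale $\delta=2^{-(1+\epsilon)k}$; this is in fact easier than Lemma \ref{Lemma: Estimate of V term}, because no supremum over $R$ has to be converted into an integral. The error terms $X$, $H$ and $I$ have kernels dominated by $2^{-k\epsilon'}$ times Schwartz-tail kernels at scale $R^{-1}$ (Lemma \ref{Lemma: Pointwise kernel estimate for linear kernel}). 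For these I would not use the maximal function; instead I would use Lemma \ref{Pointwise convolution dominated by Maximal function} together with Proposition \ref{Proposition: Lp boundedness of square function with bump}-type Littlewood–Paley bounds. Each error multiplier is supported where $\eta\sim R^2$, so it equals $m_R(\eta)\varphi(\eta/R^2)$ with $\varphi$ a fixed bump. Dominating $|m_R(\mathcal L)\varphi(\mathcal L/R^2)f|\lesssim 2^{-k\epsilon'}\mathcal M(\varphi(\mathcal L/R^2)f)$ and applying Fefferman–Stein and then Proposition \ref{Proposition: Lp boundedness of square function with bump} controls the $L^2(dR/R)$ norm. Summing over $k\ge j-2$ and over $m\lesssim 2^{k\epsilon}$ produces $2^{-j\gamma}2^{j(\alpha_d(p)-1/2)}2^{j\epsilon_1}$, which is the claimed bound.

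\textbf{Main obstacle.} The delicate step is this last conversion of the error terms into square-function form. The domination used in the maximal proof gives only $\sup_R$ control, which is not square-integrable in $R$. One must therefore insert the frequency localisation $\varphi(\mathcal L/R^2)$ before applying the maximal function, and check that the resulting constants remain uniform in $s$, $m$ and $t$.
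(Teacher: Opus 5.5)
\textbf{Gap: Case II with $2\le j\le M$.} After you absorb $\eta/R^2$ into $\Psi^\sharp$ and dominate $\Psi^\sharp(2^j(1-\mathcal L/R^2))$ by $2^{j(Q+\epsilon)}\mathcal M$, you need $\|(\int_0^\infty|S^\gamma_{sR}(\mathcal L)f|^2\,\frac{dR}{R})^{1/2}\|_{L^p}\lesssim\|f\|_{L^p}$. That estimate is false. Since $(1-\eta/r^2)_+^{\gamma}\to1$ as $r\to\infty$, the integral $\int^\infty|S^\gamma_r(\mathcal L)f|^2\,\frac{dr}{r}$ diverges logarithmically for every $f\neq0$. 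Already at $p=2$, the spectral theorem gives $\int_0^\infty\|S^\gamma_r(\mathcal L)f\|_{L^2}^2\,\frac{dr}{r}=\infty$. So \eqref{Stein squre function like estimate}, in the form you invoke it, cannot hold. Once $\mathcal M$ has destroyed the frequency localisation carried by $\Psi^\sharp$, nothing controls large $R$. This is exactly the obstruction you identified for the error terms, but here it hits the main term.

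\textbf{Repair.} In this subcase, do not absorb $\eta/R^2$; keep it attached to the Bochner--Riesz factor. The paper's operator $\mathscr U^{R,s}_{j,\beta}$ retains the corresponding factor $\eta/(s^2R^2)$. Write $\frac{\eta}{R^2}=s^2\frac{\eta}{s^2R^2}$ and use $\frac{\eta}{r^2}(1-\frac{\eta}{r^2})_+^{\gamma}=\frac{1}{2(\gamma+1)}\,r\partial_r(1-\frac{\eta}{r^2})_+^{\gamma+1}$. Then the pointwise $\mathcal M$-domination, vector-valued Fefferman--Stein and the substitution $r=sR$ reduce matters to $\|\mathfrak S^{\gamma+1}(\mathcal L)f\|_{L^p}$. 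Theorem \ref{Theorem: Stein square estimate} bounds this because $\gamma+1=\beta>\alpha_d(p)+\frac12$. Alternatively, insert $\varphi(\mathcal L/R^2)$ before applying $\mathcal M$, as you do for the error terms. Then bound the resulting localised Bochner--Riesz square function by the dyadic decomposition used for Theorem \ref{Theorem: Stein square estimate}.

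\textbf{The rest.} With this fix the remainder of your proposal is sound, and two of your deviations from the paper are genuine simplifications. In Case I you may freeze $t$, because Minkowski has already removed the $t$-integral. You can then apply Proposition \ref{Proposition: Square function estimate} to the single bump $\Psi(x)(1-2^{-j}x)(x-2^jt^2)^{\gamma}$. Its derivatives are bounded uniformly in $t$ since $x-2^jt^2\ge\frac12(1-2^{-\epsilon_0})$, so you avoid the negative-power estimate \eqref{Square function Lp estimate with negative power}. For $X$, $H$, $I$, your route is localise, then $\mathcal M$, then Fefferman--Stein, then Proposition \ref{Proposition: Lp boundedness of square function with bump}. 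This correctly replaces the paper's vector-valued Calder\'on--Zygmund argument. It needs only the undifferentiated kernel bound of Lemma \ref{Lemma: Pointwise kernel estimate for linear kernel}, since all these multipliers are supported where $\eta\sim R^2$ uniformly in $k,m,s,\kappa$.
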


Assuming the proposition for the moment, let us complete the estimate of $\mathscr{G}^{\alpha}_j(\mathcal{L})$ for $j \geq 2$. With the help of Proposition \ref{Proposition: Estimate of mathscr S in full range} from \eqref{Application of Minkowski for the E part} whenever $\beta>\alpha_d(p_1)+1/2$ with $\alpha=\beta+\delta$ for $p_1=2$ or $\mathfrak{p}_G \leq p_1 <\infty$ we obtain
\begin{align}
\label{Using proposition obtained estimate for scr}
    \left\|\left[\int_0^{\infty} \left(\int_0^{\sqrt{2^{-j+1}}} |\widetilde{\mathscr{T}}_{j, \beta}^{R, t}(\mathcal{L}) f \, t^{2\delta+1}|^2 \, dt \right) \frac{dR}{R} \right]^{1/2} \right\|_{L^{p_1}} &\leq C 2^{j(\alpha_d(p)-\beta+1/2+\epsilon)} 2^{-j(4\delta+3)/4} \|f\|_{L^{p_1}} \\
    &\nonumber \leq C 2^{j(\alpha_d(p)-\alpha-1/4+\epsilon)} \|f\|_{L^{p_1}} .
\end{align}
Therefore combining the estimate \eqref{Using proposition obtained estimate for scr} and Proposition \ref{Proposition: Maximal square function estimate}, for $\beta>\alpha_d(p_1)+1/2$, $\delta>\alpha_d(p_2)-1/2$ and $p_i=2$ or $\mathfrak{p}_G \leq p_i <\infty$ with $i=1,2$ from \eqref{Cauchy-Schwartz in E part} we get
\begin{align}
\label{Final estimate for E}
    \left\|\left(\int_0^{\infty} |E_{j, R}|^2 \, \frac{dR}{R} \right)^{1/2} \right\|_{L^p} &\leq C 2^{-j/4} 2^{j(\alpha_d(p)-\alpha-1/4+\epsilon)} \|f\|_{L^{p_1}} \|g\|_{L^{p_2}} \\
    &\nonumber \leq C 2^{-j \varepsilon} \|f\|_{L^{p_1}} \|g\|_{L^{p_2}} ,
\end{align}
for some $\varepsilon>0$ and since $\alpha=\beta+\delta$ and $\alpha>\alpha_d(p_1)+\alpha_d(p_2) = \alpha_*(p_1, p_2, d, \mathfrak{p}_G)$.

Now combining the estimates of $E_{j, R}$ \eqref{Final estimate for E} and $F_{j,R}$ \eqref{Final estimate for F} we get the required estimate for $\mathscr{G}^{\alpha}_j(\mathcal{L})$ with $j \geq 2$.

\begin{proof}[Proof of Proposition \ref{Proposition: Estimate of mathscr S in full range}]
\renewcommand{\qedsymbol}{}
The idea of proof of this proposition is similar to the Proposition \ref{Proposition: Main proposition of S in full range}. Set $\beta=\gamma+1$ and divide the interval of $t$ into two parts as $[0, \sqrt{2^{-j-1-\epsilon_0}}]$ and $[\sqrt{2^{-j-1-\epsilon_0}}, \sqrt{2^{-j+1}}]$.

\medskip
\noindent \textbf{Case I: $t \in [0, \sqrt{2^{-j-1-\epsilon_0}}]$.}
Estimate of this case can be concluded using the ideas from Case I of Proposition \ref{Proposition: Main proposition of S in full range} and \cite[Case I of Theorem 4.2]{Choudhary_Jotsaroop_Shrivastava_Shuin_Bilinear_square_function}. Here we have to just use \eqref{Square function Lp estimate with negative power} of Proposition \ref{Proposition: Square function estimate}. Hence we omit the details here.

\medskip
\noindent \textbf{Case II: $t \in [\sqrt{2^{-j-1-\epsilon_0}}, \sqrt{2^{-j+1}}]$.}
Similarly as in \cite[Case II of  Theorem 4.2]{Choudhary_Jotsaroop_Shrivastava_Shuin_Bilinear_square_function}, in this case it is
enough to consider the following operator
\begin{align*}
    \mathscr{U}_{j, \beta}^{R, s}(\mathcal{L}) f(x,u) &= \frac{1}{(2\pi)^{d_2}} \int_{\mathfrak{g}_{2,r}^{*}} e^{i \langle \lambda_1 , u \rangle} \sum_{\mathbf{k}_1 \in \mathbb{N}^\Lambda} \Psi \Big(2^j\Big(1-\frac{\eta_{\mathbf{k}_1}^{\lambda_1}}{R^2} \Big) \Big) \Big(1-\frac{\eta_{\mathbf{k}_1}^{\lambda_1}}{s^2 R^2} \Big)_{+}^{\gamma} \frac{\eta_{\mathbf{k}_1}^{\lambda_1}}{s^2 R^2} \\
    &\nonumber \hspace{6cm} \left[f^{\lambda_1} \times_{\lambda_1} \varphi_{\mathbf{k}_1}^{\mathbf{b}^{\lambda_1}, \mathbf{r}_1}(R_{\lambda_1}^{-1}\cdot) \right](x) \,  d\lambda_1 ,
\end{align*}
where $s \in [s_1, s_2]:= [\sqrt{1-2^{-j+1}}, \sqrt{1-2^{-j-1-\epsilon_0}}]$, and prove the following: whenever $\gamma>\alpha_d(p)-1/2$ and $\mathfrak{p}_G \leq p<\infty$ or $p=2$ we have
\begin{align}
\label{To prove in mathscr U}
    \left\| \left( \int_0^{\infty} |\mathscr{U}_{j, \gamma+1}^{R, s}(\mathcal{L}) f|^2 \, \frac{dR}{R} \right)^{1/2} \right\|_{L^{p}} &\leq C 2^{j(\alpha_d(p)-\gamma-1/2+\epsilon)} \|f\|_{L^p} ,
\end{align}
where the constant $C$ is independent of $s$.

Let $M>100$ be a large number. Again, in order to get \eqref{To prove in mathscr U}, it is enough to prove the following two propositions:

\begin{proposition}
\label{Proposition: Estimate of mathscr U when j less than M case}
Let $\mathfrak{p}_G \leq p<\infty$ or $p=2$. Whenever $\gamma>\alpha_d(p)-1/2$ and $0<\epsilon<1$ we have
\begin{align*}
    \left\|\left( \int_0^{\infty} |\mathscr{U}_{j, \gamma+1}^{R, s}(\mathcal{L}) f|^2 \, \frac{dR}{R} \right)^{1/2} \right\|_{L^p} &\leq C(M, \gamma, \epsilon) \|f\|_{L^p} ,
\end{align*}
for all $2\leq j \leq M$.
    
\end{proposition}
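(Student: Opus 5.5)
The plan follows the proof of Proposition \ref{Proposition: Estimate of S tilde when j less than M case}. When $j$ is bounded, the localizing factor $\Psi(2^j(1-\mathcal{L}/R^2))$ is harmless: we can strip it off with a pointwise maximal bound, at a cost depending only on $M$. The outcome is a Bochner--Riesz type square function in $R$, which Theorem \ref{Theorem: Stein square estimate} controls. First write
\begin{align*}
\mathscr{U}_{j, \gamma+1}^{R, s}(\mathcal{L}) f = \Psi\Big(2^j\Big(1-\frac{\mathcal{L}}{R^2}\Big)\Big)\, \mathscr{S}^{\gamma}_{sR}(\mathcal{L}) f ,
\end{align*}
where $\mathscr{S}^{\gamma}_{\rho}(\mathcal{L})$ is the spectral multiplier with symbol $\frac{\eta}{\rho^2}\big(1-\frac{\eta}{\rho^2}\big)_+^{\gamma}$. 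Then apply \eqref{Dominate operator by maximal function} of Lemma \ref{Lemma: Pointwise kernel estimate for linear kernel} pointwise in $R$. This gives
\begin{align*}
|\mathscr{U}_{j, \gamma+1}^{R, s}(\mathcal{L}) f| \leq C 2^{j(Q+\epsilon)} \mathcal{M}\big(\mathscr{S}^{\gamma}_{sR}(\mathcal{L})f\big) \leq C_M\, \mathcal{M}\big(\mathscr{S}^{\gamma}_{sR}(\mathcal{L})f\big), \qquad 2\le j\le M .
\end{align*}

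Next, integrate in $dR/R$ and use the Fefferman--Stein type vector-valued inequality for $\mathcal{M}$ on spaces of homogeneous type \cite[Theorem 1.2]{Grafakos_Liu_Yang_Vector_valued__space_homogeneous}, valid for $1<p<\infty$ and $L^2(dR/R)$-valued functions. Together these reduce the claim to
\begin{align*}
\Big\|\Big(\int_0^\infty |\mathscr{S}^{\gamma}_{sR}(\mathcal{L})f|^2\,\frac{dR}{R}\Big)^{1/2}\Big\|_{L^p} \le C\|f\|_{L^p} .
\end{align*}
The substitution $R\mapsto R/s$ leaves $dR/R$ invariant. This removes $s$, so the constant is uniform in $s\in[s_1,s_2]$.

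Finally, observe that
\begin{align*}
\frac{\eta}{R^2}\Big(1-\frac{\eta}{R^2}\Big)_+^{\gamma} = \frac{1}{2(\gamma+1)}\, R\partial_R \Big(1-\frac{\eta}{R^2}\Big)_+^{\gamma+1} .
\end{align*}
Hence the last square function is a constant multiple of the Stein square function $\mathfrak{S}^{\gamma+1}(\mathcal{L})f$ from \eqref{Stein square function for subLaplacian}. This is the same identity as \eqref{Equivalent expression of Stein square function}, used in the argument leading to \eqref{Stein squre function like estimate inside bilinear square}. Theorem \ref{Theorem: Stein square estimate} bounds $\mathfrak{S}^{\gamma+1}(\mathcal{L})$ on $L^p$ for $\mathfrak{p}_G\le p<\infty$ or $p=2$ provided $\gamma+1>\alpha_d(p)+\tfrac12$, that is, $\gamma>\alpha_d(p)-\tfrac12$, which is exactly the hypothesis.

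The only point needing care is the vector-valued maximal step, including at $p=2$, where it is standard. Note also that \eqref{Dominate operator by maximal function} has a constant uniform in $R$, so it can be used inside the $R$-integral. The exponential loss $2^{j(Q+\epsilon)}$ is absorbed into $C(M,\gamma,\epsilon)$ because $j\le M$.
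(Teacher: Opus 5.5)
Your proposal is correct and is essentially the paper's argument. The paper proves this proposition ``by the same line of argument'' as Proposition \ref{Proposition: Estimate of S tilde when j less than M case}: it factors off $\Psi(2^j(1-\mathcal{L}/R^2))$, dominates that factor by $C2^{j(Q+\epsilon)}\mathcal{M}$ via \eqref{Dominate operator by maximal function}, applies the vector-valued maximal inequality, and concludes with Theorem \ref{Theorem: Stein square estimate}. You also make explicit that the symbol carries the factor $\eta/\rho^{2}$, so after the dilation $R\mapsto R/s$ (which gives uniformity in $s$) the remaining square function is exactly $\frac{1}{2(\gamma+1)}\mathfrak{S}^{\gamma+1}(\mathcal{L})f$; this is what makes the last step legitimate, since for plain Bochner--Riesz means $S^{\gamma}_R(\mathcal{L})$ the $dR/R$-integral would diverge.
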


\begin{proposition}
\label{Prop: Estimate of math scr U for j large}
Let $\mathfrak{p}_G \leq p<\infty$ or $p=2$ and $\gamma>\alpha_d(p)-1/2$. Then for all $j \geq M$ and $0<\epsilon_1<1$ we have
\begin{align*}
    \left\|\left( \int_0^{\infty} |\mathscr{U}_{j, \gamma+1}^{R, s}(\mathcal{L}) f|^2 \, \frac{dR}{R} \right)^{1/2} \right\|_{L^p} &\leq C 2^{-j \gamma} 2^{j(\alpha_d(p)-1/2)} 2^{\epsilon_1 j} \|f\|_{L^p} .
\end{align*}
    
\end{proposition}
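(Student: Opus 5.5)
The plan is to mirror the proof of Proposition \ref{Prop: Estimate of S j tilde for j large}, replacing the supremum over $R>0$ by the $L^2(\frac{dR}{R})$ norm throughout. The first step is to expand $(1-\frac{\eta}{s^2R^2})_+^{\gamma}$ via \eqref{Introducting Psi tilde function for S tilde}. Since $j\geq M>100$, the $\widetilde{\Psi}_0$ part vanishes on the support of $\Psi(2^j(1-\eta/R^2))$, and only $k\geq j-2$ contributes. The extra factor $\frac{\eta}{s^2R^2}$ equals $1-2^{-k}(2^k(1-\frac{\eta}{s^2R^2}))$ on the support. It can therefore be absorbed by replacing $\widetilde{\Psi}$ with $\widetilde{\Psi}$ and $\widetilde{\Psi}^1(\tau)=\tau\widetilde{\Psi}(\tau)$, which costs only harmless factors. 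Next, for each $k$ the $\eta$-variable is localized at scale $2^{-(1+\epsilon)k}$ using the partition $\Phi(\cdot-m)$, $0\leq m\leq[\nu^{-1}]+1$ with $\nu\sim 2^{-k\epsilon}$. Taylor expanding $\widetilde{\Psi}$ around $d_{k,m}^{\epsilon}$ and $\Psi$ around $2^j(1-a-s^22^{-k(1+\epsilon)}m)$ gives the decomposition of \eqref{Writing S tilde as sum of U and P tilde} and \eqref{Writing U in terms of V and X term}, namely main terms $V^{k,m}_{Rs,\Phi^{\beta+\ell}}$, remainders of type $X$, and $\widetilde{P}$-terms. All coefficients are uniformly bounded by \eqref{Uniform bound for Psi tilde} and \eqref{Derivative of Psi is uniformly bounded}.

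For the main terms, $V^{k,m}_{Rs,\Phi^{\beta+\ell}}(\mathcal{L})=\Phi^{\beta+\ell}(-\delta^{-1}(1-\mathcal{L}/(sR)^2))$ with $\delta\sim 2^{-(1+\epsilon)k}$ (after a fixed shift). Since $s\sim 1$ is fixed, the change of variables $R\mapsto R/s$ shows that $\|(\int_0^\infty|V f|^2\frac{dR}{R})^{1/2}\|_{L^p}$ is exactly $\|\mathfrak{S}^{\Phi^{\beta+\ell}}_{\delta}(\mathcal{L})f\|_{L^p}$. Proposition \ref{Proposition: Square function estimate} then bounds this by $C\delta^{1/2-\alpha'}$ for any $\alpha'>\alpha_d(p)$, i.e.\ by $2^{(1+\epsilon)k(\alpha_d(p)-1/2+\epsilon')}$. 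The bound is uniform in $s$ and $m$, and this holds both for $\mathfrak{p}_G\leq p<\infty$ and for $p=2$. Summing over $m$ costs a factor $2^{k\epsilon}$. Summing over $k\geq j-2$ with the weight $2^{-k\gamma}$ gives $2^{-j\gamma}2^{j(\alpha_d(p)-1/2)}2^{\epsilon_1 j}$, because $\gamma>\alpha_d(p)-1/2$ makes the geometric series converge once $\epsilon,\epsilon'$ are small.

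The main obstacle is the remainder terms ($X$, $H$, $I$-type). In the maximal setting these were simply dominated by $\mathcal{M}f$ with gains $2^{-j\epsilon N}2^{-(k-j)(1+\epsilon)N}$. Here we need a square function in $R$, so pointwise domination by $\mathcal{M}f$ at each fixed $R$ is not enough: $\int_0^\infty|\mathcal{M}f|^2\frac{dR}{R}$ diverges. To handle this, I would exploit the spectral localization that is still present in each remainder, namely the cutoff $\Phi(2^{(1+\epsilon)k}\eta^{Rs}_{k,m})$, which forces $\eta\sim R^2$. Write each remainder as $K_R(\mathcal{L})\,\chi(\mathcal{L}/R^2)$, where $\chi$ is a fixed bump with $\chi\equiv1$ near $1$. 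The kernel bound from Lemma \ref{Lemma: Pointwise kernel estimate for linear kernel}, as in \eqref{Final estimate of kernel of Q tilde}, gives $|K_R(\mathcal{L})g|\lesssim(1+|\kappa|)^N2^{k(1+\epsilon)(Q+2)}2^{-j\epsilon N}2^{-(k-j)(1+\epsilon)N}\mathcal{M}g$ uniformly in $R$. Applying this with $g=\chi(\mathcal{L}/R^2)f$ and using the vector-valued Fefferman--Stein inequality \cite[Theorem 1.2]{Grafakos_Liu_Yang_Vector_valued__space_homogeneous} on $L^p(L^2(\frac{dR}{R}))$ reduces matters to $\|(\int|\chi(\mathcal{L}/R^2)f|^2\frac{dR}{R})^{1/2}\|_{L^p}\lesssim\|f\|_{L^p}$. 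This last bound is Proposition \ref{Proposition: Lp boundedness of square function with bump}. Choosing $N$ large absorbs the polynomial loss in $k$, and integrating in $\kappa,\kappa'$ against the Schwartz functions $\widehat{\Psi},\widehat{\widetilde{\Psi}}$ finishes these terms with a net gain of $2^{-k\epsilon''}$. Combining everything yields the claimed bound with constant independent of $s\in[s_1,s_2]$.
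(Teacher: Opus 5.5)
Your proposal is correct. For the main terms it follows the paper: the expansion \eqref{Introducting Psi tilde function for S tilde}, the localization by $\Phi(\cdot-m)$, the two Taylor expansions, and the identification of the $V$-terms with the global square function of Proposition~\ref{Proposition: Square function estimate} are what the paper means by proceeding ``analogously as of Proposition~\ref{Prop: Estimate of S j tilde for j large}''. Your absorption of $\frac{\eta}{s^2R^2}$ via $\widetilde{\Psi}-2^{-k}\widetilde{\Psi}^1$ is a cleaner form of the paper's remark that this factor can be ignored.

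One wording fix: the identification with the global square function is a dilation, not a shift. With $c_{k,m}=2^{-k+1}-2^{-(1+\epsilon)k}m=O(2^{-k})$, one has $2^{(1+\epsilon)k}\bigl(\frac{\eta}{s^2R^2}-1+c_{k,m}\bigr)=-\delta^{-1}\bigl(1-\frac{\eta}{t^2}\bigr)$, where $t=sR\sqrt{1-c_{k,m}}$ and $\delta=2^{-(1+\epsilon)k}/(1-c_{k,m})$. A literal shift of the argument by $2^{(1+\epsilon)k}c_{k,m}\sim 2^{\epsilon k}$ would move the bump out of $[-1,1]$. Since $\frac{dR}{R}$ is dilation invariant, nothing else in your argument changes.

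The genuine difference is in the remainder terms. The paper treats the $X$-term with the derivative kernel bound \eqref{Pointwise Kernel estimate with weight second}. This gives an $L^2(\frac{dR}{R})$-valued kernel bound of size $\|(x,u)\|^{-(Q+1)}$, and the paper then invokes vector-valued Calder\'on--Zygmund theory on spaces of homogeneous type. The $H$- and $I$-terms are left implicit, even though their maximal-case treatment (domination by $\mathcal{M}f$) fails here, as you point out.

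You instead use that every remainder multiplier is supported where $\eta/R^2\in[1/2,2]$, so $\chi(\mathcal{L}/R^2)$ can be inserted for free. You then dominate each remainder by $\mathcal{M}$ uniformly in $R$ via \eqref{Linear multiplier estimate by maximal function}, with the same constants $(1+|\kappa|)^N2^{-j\epsilon N}2^{-(k-j)(1+\epsilon)N}2^{k(1+\epsilon)(Q+2)}$ as in Lemma~\ref{Lemma: Estimate of X term}. You close with the $L^2(\frac{dR}{R})$-valued Fefferman--Stein inequality and Proposition~\ref{Proposition: Lp boundedness of square function with bump}. To apply the latter, substitute $R^2\mapsto R$ and split $\chi$ into finitely many pieces supported in dilates of $[1,2]$.

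Your route treats $X$, $H$ and $I$ uniformly, needs no gradient estimates, and avoids the $L^2\to L^2(L^2(\frac{dR}{R}))$ bound that the paper's CZ argument tacitly requires. The price is that the Calder\'on--Zygmund input moves into Proposition~\ref{Proposition: Lp boundedness of square function with bump}, whose proof the paper only sketches. The paper's route, by contrast, is self-contained at the level of this lemma.
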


Therefore it only remains to prove the above two propositions. Note that proof of Proposition \ref{Proposition: Estimate of mathscr U when j less than M case} can be completed using the same line of argument as of Proposition \ref{Proposition: Estimate of S tilde when j less than M case}, while the proof of Proposition \ref{Prop: Estimate of math scr U for j large} can be obtained analogously as of Proposition \ref{Prop: Estimate of S j tilde for j large} with appropriate modification. 

Note that similarly as in \eqref{Introducting Psi tilde function for S tilde} using the partition of unity here in this case we have to write $\left(1-\frac{\eta}{s^2 R^2} \right)_{+}^{\gamma} \frac{\eta}{s^2 R^2}$ in terms of $\widetilde{\Psi}$ function. But since $\frac{\eta}{s^2 R^2} \sim 1$, on the support of the function $\widetilde{\Psi}\left(2^k\left(1-\frac{\eta}{s^2 R^2} \right)_{+}^{\gamma} \right)$ for $k \geq 2$, hence we can ignore the factor $\frac{\eta}{s^2 R^2}$. Therefore the corresponding operators are basically exactly same. Let us mention that instead of Lemma \ref{Lemma: Estimate of X term}, here in this case one needs the following analogue:
\begin{lemma}
For $1< p < \infty$, the following holds
\begin{align*}
    \left\| \left( \int_0^{\infty} |X_{N, Rs, s}^{\Phi^{\beta}, k}(\mathcal{L}) f|^2 \, \frac{dR}{R} \right)^{1/2} \right\|_{L^p} \lesssim 2^{(1+\epsilon)(\alpha_d(p)-1/2)k} \|f\|_{L^p} .
\end{align*}
    
\end{lemma}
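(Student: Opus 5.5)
This lemma mirrors Lemma \ref{Lemma: Estimate of X term}, with the supremum over $R$ replaced by an $L^2(dR/R)$ norm. I will again write $X_{N,Rs,s}^{\Phi^{\beta},k}(\mathcal{L})$ through \eqref{Expression of X in terms Q} as a $\kappa$-integral of $\widehat{\Psi}(\kappa)$ times a unimodular phase times $Q_{N,Rs,s}^{\Phi^{\beta},k,\kappa}(\mathcal{L})$. Minkowski's integral inequality in $\kappa$ then reduces the claim to
\begin{align*}
    \left\| \left( \int_0^{\infty} |Q_{N, R s, s}^{\Phi^{\beta}, k, \kappa}(\mathcal{L}) f|^2 \, \frac{dR}{R} \right)^{1/2} \right\|_{L^p} \lesssim (1+|\kappa|)^{N} 2^{-j \epsilon N} 2^{-(k-j)(1+\epsilon)N} 2^{k(1+\epsilon)(Q+2)} \|f\|_{L^p} ,
\end{align*}
uniformly in $s \in [s_1,s_2]$. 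The polynomial growth in $\kappa$ is absorbed by the Schwartz decay of $\widehat{\Psi}$. Since $k \geq j-2$ and $N$ is free, the factor $2^{-j\epsilon N}2^{-(k-j)(1+\epsilon)N}2^{k(1+\epsilon)(Q+2)}$ beats $2^{(1+\epsilon)(\alpha_d(p)-1/2)k}$ once $N$ is large, exactly as in Lemma \ref{Lemma: Estimate of X term}.

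The new point is that a pointwise maximal-function bound no longer suffices: $\int_0^\infty |\mathcal{M}f|^2\,dR/R$ diverges, so we must exploit frequency localization in $R$. The multiplier $\widetilde{Q}_{N,Rs,s}^{\Phi^{\beta},k,\kappa}(\eta)$ is supported where $\eta \sim s^2R^2 \sim R^2$, because $\Phi^{\beta}(2^{(1+\epsilon)k}\eta_{k,m}^{Rs})\neq 0$ forces $\eta/(s^2R^2)$ to lie within $O(1)$ of $1$. I therefore plan to treat the family $\{\widetilde{Q}_{N,Rs,s}^{\Phi^{\beta},k,\kappa}(\mathcal{L})\}_{R>0}$ as a continuous Littlewood--Paley-type square function $\mathcal{G}$ with $R$-dependent symbol $m_R(\eta)=\widetilde{Q}_{N,s}^{\Phi^{\beta},k,\kappa}(\eta/R^2)$.

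Concretely, I will insert a fixed bump $\varphi$ with $\varphi(\eta/R^2)\equiv 1$ on the support and write $\widetilde Q_R(\mathcal{L}) = \widetilde Q_R(\mathcal{L})\varphi(R^{-2}\mathcal{L})$. By \eqref{Kernel estimate of QNRs} and Lemma \ref{Pointwise convolution dominated by Maximal function}, each $\widetilde Q_R(\mathcal{L})$ satisfies $|\widetilde Q_R(\mathcal{L})h| \lesssim A_k(\kappa)\,\mathcal{M}h$ with $A_k(\kappa)=(1+|\kappa|)^N2^{(1+\epsilon)(Q+2)k}$. This gives the pointwise bound
\begin{align*}
    \Big(\int_0^\infty |\widetilde Q_R(\mathcal{L}) f|^2\tfrac{dR}{R}\Big)^{1/2} \lesssim A_k(\kappa) \Big(\int_0^\infty |\mathcal{M}(\varphi(R^{-2}\mathcal{L})f)|^2\tfrac{dR}{R}\Big)^{1/2}.
\end{align*}
I then apply the vector-valued Fefferman--Stein inequality for $\mathcal{M}$ on spaces of homogeneous type (\cite[Theorem 1.2]{Grafakos_Liu_Yang_Vector_valued__space_homogeneous}), valid for $1<p<\infty$, followed by Proposition \ref{Proposition: Lp boundedness of square function with bump}, after the harmless substitution $R^2\mapsto R$. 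This bounds the right side by $A_k(\kappa)\|f\|_{L^p}$ and yields the displayed claim.

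The main obstacle is justifying that the factorization $\widetilde Q_R=\widetilde Q_R\,\varphi(R^{-2}\cdot)$ holds with a single $\varphi$ uniformly in $s,m,k,\kappa$, and that the kernel bound \eqref{Final estimate of kernel of Q tilde} is uniform in $R$ after rescaling. Both should follow from $s\sim 1$, $m\le[\nu^{-1}]+1$, and the derivative bounds on $\widetilde{Q}_{N,s}^{\Phi^{\beta},k,\kappa}$ already recorded. The exclusion of $p=\infty$, in contrast to Lemma \ref{Lemma: Estimate of X term}, comes precisely from the vector-valued maximal inequality.
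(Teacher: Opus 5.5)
Your argument is correct, but it takes a different route from the paper. The paper makes the same reduction, via \eqref{Expression of X in terms Q} and Minkowski in $\kappa$, to a square-function bound for $Q_{N,Rs,s}^{\Phi^{\beta},k,\kappa}(\mathcal{L})$. From there it treats the whole family in $R$ as one $L^2(dR/R)$-valued singular integral. It uses the derivative bound \eqref{Pointwise Kernel estimate with weight second} to control $X\mathcal{K}_{\widetilde{Q}_{N,Rs,s}^{\Phi^{\beta},k,\kappa}(\mathcal{L})}$, and integrates in $dR/R$ to get the vector-valued smoothness condition \eqref{Condition for vector valued Calderon-Zygmund}. It then appeals to vector-valued Calder\'on--Zygmund theory on spaces of homogeneous type. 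The $L^2$ bound this needs is left implicit in the paper; it holds because for fixed $\eta$ only $R^2\sim\eta$ contribute.

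You use only the size estimate \eqref{Final estimate of kernel of Q tilde} already exploited in Lemma \ref{Lemma: Estimate of X term}. Inserting $\varphi(R^{-2}\mathcal{L})$ moves all the $L^p$ theory onto the Fefferman--Stein inequality and Proposition \ref{Proposition: Lp boundedness of square function with bump}. This is the same mechanism the paper uses in the proof of Proposition \ref{Proposition: Estimate of S tilde when j less than M case}. What your route buys:
\begin{itemize}
\item no kernel regularity, no separate $L^2$ estimate, and no abstract vector-valued theory;
\item a loss of $2^{(1+\epsilon)(Q+2)k}$ rather than $2^{(1+\epsilon)(Q+3)k}$, which is immaterial once $N$ is large.
\end{itemize}
The paper's route would in addition give weak type $(1,1)$, which the lemma does not need.

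Your uniformity concern resolves as you expect. Since $m\le[\nu^{-1}]+1$ with $\nu=\tfrac23 2^{-k\epsilon}$, $k\ge 2$ and $s^2\ge\tfrac12$, the support satisfies $\eta/R^2\in[\tfrac18,2]$ for all parameters. So one fixed $\varphi$ works, and the rescaled kernel bound is independent of $R$.

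One cosmetic point: Proposition \ref{Proposition: Lp boundedness of square function with bump} is stated for $\supp\varphi\subseteq[1,2]$. Split your $\varphi$ into finitely many pieces supported in dilates of $[1,2]$ and use the dilation invariance of $dR/R$.
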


\begin{proof}
\renewcommand{\qedsymbol}{}
Similarly as in \eqref{Claim for QNRs}, enough to estimate the following:
\begin{align}
\label{To prove for square function for Q}
    \left\| \left( \int_0^{\infty} |Q_{N, R s, s}^{\Phi^{\beta}, k, \kappa}(\mathcal{L})f|^2 \, \frac{dR}{R} \right)^{1/2} \right\|_{L^p} \lesssim (1+|\kappa|)^{N} 2^{-j \epsilon N} 2^{-(k-j)(1+\epsilon)N} 2^{k(1+\epsilon)(Q+3)} \|f\|_{L^p} .
\end{align}
From \eqref{Writing QNRs as kernel expression} we have
\begin{align*}
    Q_{N, R s, s}^{\Phi^{\beta}, k, \kappa}(\mathcal{L})f(x,u) &= 2^{-j \epsilon N} 2^{-(k-j)(1+\epsilon)N} \int_{G} \mathcal{K}_{\widetilde{Q}_{N, R s, s}^{\Phi^{\beta}, k, \kappa}(\mathcal{L})}((y,t)^{-1}(x,u)) f(y,t) \, d(y,t) ,
\end{align*}
Analogous to \eqref{Final estimate of kernel of Q tilde}, using \eqref{Pointwise Kernel estimate with weight second} of Lemma \ref{Lemma: Pointwise kernel estimate for linear kernel} for any $\beta>0$ we obtain
\begin{align*}
    |X\mathcal{K}_{\widetilde{Q}_{N, R s, s}^{\Phi^{\beta}, k, \kappa}(\mathcal{L})}(x,u)| (1+ R \|(x,u)\|)^{\beta} &\leq C R^{Q+1} (1+|\kappa|)^N 2^{(1+\epsilon) (\beta+1)k} ,
\end{align*}
where $X=(X_1, \ldots, X_{d_1}, T_1, \ldots, T_{d_2})$.

Then choosing $\beta=Q+2$ in the previous estimate for $(x,u) \neq 0$, yields
\begin{align}
\label{Condition for vector valued Calderon-Zygmund}
    \left( \int_0^{\infty} |X\mathcal{K}_{\widetilde{Q}_{N, R s, s}^{\Phi^{\beta}, k, \kappa}(\mathcal{L})}(x,u)|^2 \, \frac{dR}{R} \right)^{1/2} & \leq C (1+|\kappa|)^N 2^{k(1+\epsilon) (Q+3)} \|(x,u)\|^{-(Q+1)} .
\end{align}
Now in view of the above estimate \eqref{Condition for vector valued Calderon-Zygmund}, one can apply vector-valued Calder\'on-Zygmund theory on space of homogeneous type (see \cite{Ruiz_Torrea_Vector-Valued_Calderon_Zygmund}) to obtain the required estimate \eqref{To prove for square function for Q}.
\end{proof}
This completes the proof of $\mathscr{G}^{\alpha}_j(\mathcal{L})$ for $j \geq 2$. Now the estimate for $\mathscr{G}^{\alpha}_0(\mathcal{L})$ can be handled similarly as in estimate of $\mathcal{B}_{0,*}^{\alpha}$ in Theorem \ref{Theorem: Bilinear maximal on Metivier}, we omit the details here.

Therefore the proof Theorem \ref{Theorem: Bilinear Stein square function estimate} is completed.
\end{proof}


\subsection{Proof of Theorem \ref{Theorem: Bilinear maximal spectral multiplier}: Maximal bilinear spectral multipliers on M\'etivier groups}
\label{Subsection: Bilinear maximal spectral multipliers}
This subsection concerns with the proof of Theorem \ref{Theorem: Bilinear maximal spectral multiplier}. The main idea of the proof is similar to the boundedness of the linear maximal spectral multipliers, see Subsection \ref{Subsection: Boundedness of maximal spectral multiplier}.

From \eqref{Main identity to use maximal multiplier result} we have
\begin{align*}
    F\left(\frac{\eta_1 + \eta_2}{R} \right) &= C \int_{0}^{\infty} \frac{\eta_1 +\eta_2}{Rs} \left(1-\frac{\eta_1 +\eta_2}{Rs} \right)_{+}^{\alpha-1} s^{\alpha} \frac{d^{\alpha}}{ds^{\alpha}}\left(\frac{F(s)}{s} \right) \, ds .
\end{align*}
Therefore we obtain
\begin{align*}
    & F\left(\frac{\mathcal{L}_1 + \mathcal{L}_2}{R} \right)(f \otimes g)((x,u), (x,u)) \\
    &= C \int_{0}^{\infty} \frac{\mathcal{L}_1 +\mathcal{L}_2}{Rs} \left(1-\frac{\mathcal{L}_1 +\mathcal{L}_2}{Rs} \right)_{+}^{\alpha-1}(f \otimes g)((x,u), (x,u)) \, s^{\alpha+1} \frac{d^{\alpha}}{ds^{\alpha}}\left(\frac{F(s)}{s} \right) \, \frac{ds}{s} .
\end{align*}
Note that
\begin{align*}
    F\left(\frac{\mathcal{L}_{bi}}{R} \right)(f, g)(x,u) &= F\left(\frac{\mathcal{L}_1 + \mathcal{L}_2}{R} \right)(f \otimes g)((x,u), (x,u)) .
\end{align*}
Hence similarly as in \eqref{After application of Holder final estimate} applying H\"older's inequality we get
\begin{align*}
    F^{*}(\mathcal{L}_{bi})(f,g)(x,u) &\leq C \|F\|_{L^2_{\alpha}(\mathbb{R}^+)} \, \mathscr{G}^{\alpha-1}(\mathcal{L})(f, g)(x,u) .
\end{align*}
Now taking $L^p$-norm on both side of the above inequality and applying Theorem \ref{Theorem: Bilinear Stein square function estimate} for $2 \leq p_1, p_2 < \infty$ and $\alpha> \alpha_*(p_1, p_2, d, \mathfrak{p}_G)+1$ yields
\begin{align*}
    \|F^{*}(\mathcal{L}_{bi})(f,g)\|_{L^p} &\leq C \|F\|_{L^2_{\alpha}(\mathbb{R}^+)} \|f\|_{L^{p_1}} \|g\|_{L^{p_2}} .
\end{align*}
This completes the proof of Theorem \ref{Theorem: Bilinear maximal spectral multiplier}.

\section*{Acknowledgments}
A substantial part of this research was carried out when the author was a Prime Minister's Research Fellow (PMRF) and research associated at Indian Institute of Science Education and Research Kolkata. Some part of this research, the author was supported from institute post-doctoral fellowship, Indian Institute of Science Education and Research Mohali. We would like to thank Sayan Bagchi for his careful reading of the manuscript and numerous useful suggestions regarding the organization of the paper as well as discussions.


\newcommand{\etalchar}[1]{$^{#1}$}
\providecommand{\bysame}{\leavevmode\hbox to3em{\hrulefill}\thinspace}
\providecommand{\MR}{\relax\ifhmode\unskip\space\fi MR }
\providecommand{\MRhref}[2]{%
  \href{http://www.ams.org/mathscinet-getitem?mr=#1}{#2}
}
\providecommand{\href}[2]{#2}

\end{document}